\newcommand\bigcheck[1]{#1 \raise1ex\hbox{$\hspace{-1ex}{}^\vee$}}
\newcommand\sucheck[1]{#1 \raise0.5ex\hbox{$\hspace{-1ex}{}^\vee$}}
\newcommand{\ch}{{\rm ch}}
\renewcommand{\Im}{\mathop{\rm Im  \, }}
\renewcommand{\sl}{s\ell}
\renewcommand{\span}{{\rm span}}
\newcommand{\sdim}{\mathop{\rm sdim \, }}
\newcommand{\tr}{\mathrm{tr} \, }
\newcommand{\tw}{\mathrm{tw}}
\newcommand{\bz}{\bar{0}}
\renewcommand{\Re}{\mathop{\rm Re  \, }}
\newcommand{\CC}{\mathbb{C}}
\newcommand{\QQ}{\mathbb{Q}}
\newcommand{\ZZ}{\mathbb{Z}}
\newcommand{\fg}{\mathfrak{g}}
\newcommand{\fh}{\mathfrak{h}}
\newcommand{\fn}{\mathfrak{n}}
\newcommand\bl{(\, . \, | \, . \, )}
\renewcommand{\tilde}{\widetilde}
\renewcommand{\hat}{\widehat}
\renewcommand\section{\@startsection {section}{1}{\z@}%
                                   {-3.5ex \@plus -1ex \@minus -.2ex}%
                                   {2.3ex \@plus.2ex}%
                                   {\normalfont\large\bfseries}}
\renewcommand\subsection{\@startsection{subsection}{2}{\z@}%
                                     {-3.25ex\@plus -1ex \@minus -.2ex}%
                                     {0ex \@plus .0ex}%
                                     {\normalfont\normalsize\bfseries}}
\newtheorem{theorem}{Theorem}[section]
\newtheorem{lemma}[theorem]{Lemma}
\newtheorem{corollary}[theorem]{Corollary}
\newtheorem{proposition}[theorem]{Proposition}
\newtheorem*{lemma*}{Lemma}
\theoremstyle{remark}
\newtheorem{remark}[theorem]{Remark}
\def\@maketitle{\newpage
 \null
 \vskip 2em
 \begin{center}%
 \vskip 3em
  {\Large\bf \@title \par}%
  \vskip 1.5em
  {\normalsize
   \lineskip .5em
   \begin{tabular}[t]{c}\@author
   \end{tabular}\par}%
  \vskip 2em

 \end{center}%
 \par
 \vskip 2.5em}
\renewcommand{\epsilon}{\varepsilon}
\definecolor{light}{gray}{.9}
\newcommand{\half}{\frac{1}{2}}
\newcommand{\thalf}{\tfrac{1}{2}}
\newcommand{\zp}{\ZZ_{>0}}
\newcommand{\la}{\lambda}
\newcommand{\La}{\Lambda}
\newcommand{\al}{\alpha}
\newcommand{\wg}{\widehat{\fg}}
\newcommand{\wh}{\widehat{\fh}}
\newcommand{\wrh}{\widehat{\rho}}
\newcommand{\tz}{(\tau, z)}
\newcommand{\tzt}{(\tau, z, t)}
\newcommand{\tzzt}{(\tau, z_1, z_2, t)}
\newcommand{\tzzzt}{(\tau, z_1, z_2, z_3, t)}
\newcommand{\tuvt}{(\tau, u, v, t)}
\newcommand{\LLa}{L(\Lambda)}
\newcommand{\tot}{\frac{\tau}{2}}
\newcommand{\tof}{\frac{\tau}{4}}
\newcommand{\tch}{\tilde{\ch}}
\newcommand{\tph}{\tilde{\Phi}}
\newcommand{\tps}{\tilde{\Psi}}
\newcommand{\tef}{\tilde{F}}
\newcommand{\tbe}{\tilde{B}}
\newcommand{\HR}{\mathrm{HR}}
\begin{document}

\title{Representations of superconformal algebras and mock theta functions}


\author{Victor G.~Kac\thanks{Department of Mathematics, M.I.T, 
Cambridge, MA 02139, USA. Email:  kac@math.mit.edu~~~~Supported in part by an NSF grant.} \ 
and Minoru Wakimoto\thanks{Email: ~~wakimoto@r6.dion.ne.jp~~~~.
Supported in part by Department of Mathematics, M.I.T.}}

\maketitle

\setcounter{section}{-1}

\noindent To  Ernest Borisovich Vinberg on his $ 80^{th} $ birthday

\section*{Abstract}
It is well known that the normaized characters of integrable highest weight modules of given level over an affine Lie algebra
$\hat{\fg}$
span an $SL_2(\ZZ)$-invariant space. This result extends to admissible $\hat{\fg}$-modules, where $\fg$ is a simple Lie algebra or $osp_{1|n}$. Applying the quantum Hamiltonian reduction (QHR) to admissible $\hat{\fg}$-modules when $\fg =sl_2$ (resp. $=osp_{1|2}$) one obtains minimal series modules over the Virasoro (resp. $N=1$ superconformal algebras), which form modular invariant families.

Another instance of modular invariance occurs for boundary level admissible modules, including when $\fg$ is a basic Lie superalgebra. For example, if $\fg=sl_{2|1}$ (resp. $=osp_{3|2}$), we thus obtain modular invariant families of $\hat{\fg}$-modules, whose QHR produces the minimal series modules for the $N=2$ superconformal algebras (resp. a modular invariant family of $N=3$ superconformal algebra modules).

However, in the case when $\fg$ is a basic Lie superalgebra different from a simple Lie algebra or $osp_{1|n}$, modular invariance of normalized supercharacters
 of admissible $\hat{\fg}$-modules holds outside of boundary levels only after their modification in the spirit of Zwegers' modification of mock theta functions. Applying the QHR, we obtain families of representations of $N=2,3,4$ and big $N=4$ superconformal algebras, whose modified (super)characters span an $SL_2(\ZZ)$-invariant space. 

\section{Introduction}

Let $ \fg $ be a basic finite-dimensional Lie superalgebra, i.e. $ \fg $ is 
simple, its even part $ \fg_{\bar{0}} $ is reductive and $ \fg $ carries a non-degenerate invariant supersymmetric bilinear form $ \bl. $ 
Given a nilpotent element $ f \in  \fg_{\bar{0}} $ and $ K \in \CC, $ one associates to the triple $ (\fg, f, K) $ a vertex algebra $ W^K (\fg, f) $, 
obtained from the universal affine vertex algebra $ V^K (\fg) $ of level 
$ K $ by the quantum Hamiltonian reduction (QHR)  \cite{FF90}, 
\cite{KRW03}, \cite{KW04}. 
The vertex algebra $ W^K (\fg, f) $ is freely and strongly generated by 
quantum fields, labeled by a basis of the centralizer  $ \fg^f $ of $f$ in $ \fg$.  

Moreover, QHR provides a functor from the category of $ V^K (\fg) $-modules to the category of $ W^K (\fg, f) $-modules (some modules may go to zero). This 
functor allows, in particular, derive the characters 
of $ W^K (\fg, f) $-modules and their modular transformations from that of $ V^K (\fg) $-modules 
\cite{FKW92}, \cite{KRW03}, \cite{A05}.

The simplest subclass of the $ W $-algebras 
$ W^K (\fg, f) $, studied in detail in \cite{KW04}, 
corresponds to $ f = e_{-\theta} $, the lowest root vector of $ \fg  $ for some choice of positive roots. The vertex algebra $ W^K (\fg, e_{-\theta}) $
carries a conformal vector $L$ with central charge
\begin{equation}
\label{0.1}
 c(K)=\frac{K \sdim \fg}{K+h^\vee} -6K+h^\vee-4,
\end{equation}
provided that $K\neq- h^\vee$. Throughout the paper $h^\vee$, called the dual Coxeter number of $\fg$, is the half of the eigenvalue in the adjoint representation of the Casimir element 
of $\fg$, for the normalization of the bilinear form by
\begin{equation}
\label{0.2}
(\theta|\theta)=2. 
\end{equation} 
This class of $ W $-algebras, called \textit{minimal}, covers all well-known superconformal algebras:
\begin{itemize}
\item[] $ W^K (s\ell _2, e_{-\theta}) $  is the Virasoro vertex algebra, 
\item[] $ W^K (spo_{2|3}, e_{-\theta}) $ is the Neveu-Schwarz algebra, 
\item[] $ W^K (s\ell _{2|1}, e_{-\theta}) $ is the $ N =2 $ superconformal algebra,
\item[] $ W^K (ps\ell _{2|2} , e_{-\theta}) $ is the $ N= 4 $ superconformal algebra, 
\item[] $ W^K (spo_{2|3}, e_{-\theta}) $ (tensored with one fermion) is the $ N =3 $ superconformal algebra, 
\item[] 
$ W^K (D (2,1; a), e_{-\theta}) $ 
(tensored with four fermions and one boson) is the big $ N=4 $ superconformal algebra.
\end{itemize}
Throughout the paper by $N$ superconformal algebra we always mean the corresponding minimal $W$-algebra.

Recall that an irreducible highest weight module $ L (\La) $ over the affine Lie superalgebra $ \wg $ is called \textit{integrable} if all root vectors of $ \wg $, attached to roots $ \al, $ such that $ (\al | \al) > 0, $ act locally nilpotently on $ L (\La)$ (throughout the paper we keep the usual normalization
(\ref{0.2}) of the invariant bilinear form). It was established in \cite{KP84} that if $ \fg $ is a Lie algebra, then the  span of the normalized characters of integrable $ \wg $-modules $ L(\La) $ of given level $ K $ is $ SL(2, \ZZ) $-invariant, since normalized numerators can be expressed in terms of theta functions (= Jacobi forms), due to the Weyl-Kac character formula \cite{K90}. (The $ SL(2, \ZZ) $-invariance of the normalized denominator easily follows from the Jacobi triple product identity.)

As we have discovered in \cite{KW88}, \cite{KW89}, modular invariance of normalized characters for affine Lie algebra $ \wg $ holds for a much larger class of modules $ L(\La) $, which we called \emph{admissible} modules (and we conjectured that these are all $ L(\La) $ with the modular invariance property, which we were able to verify only for $ \fg = s\ell_2 $). Roughly speaking, a $ \wg $-module $ L(\La) $ is called admissible, if the $ \QQ $-span of coroots of $ \wg$ coincides with that of the $ \La $-integral coroots, and with respect to the corresponding affine Lie algebra $ \wg_\La $ the weight $ \La $ becomes integrable after a shift by the Weyl vectors. We showed in \cite{KW88} that a formula, similar to the Weyl-Kac character formula holds if $ \La $ is an admissible weight: one just has to replace in the numerator the Weyl group 
of $ \wg $ by the subgroup, generated by reflections with respect to non-isotropic $ \La $-integral coroots. It follows that the numerators of normalized admissible characters are again expressed as linear combinations of Jacobi forms, which again implies modular invariance of normalized characters of admissible $ \wg $-modules. 

It was shown in \cite{FF90} and \cite{FKW92} that the QHR of admissible representations of the affine Lie algebra $\hat{ \fg}, $ associated to a simple
Lie algebra $\fg$,
 yields all minimal series representations of the corresponding $ W $-algebra 
$ W^K(\fg, f), $ when $ f $ is a principal nilpotent element. In particular, for $ \fg = s\ell_2$, 
$f$ =  pincipal (= minimal) nilpotent element, one obtains the minimal series representations of the Virasoro algebra, and at the same time shows that
their  normalized characters are modular invariant.
(Note that all integrable $\hat {\fg} $-modules go to zero under the QHR
if $\fg$ is a Lie algebra.) 

If $ \fg $ is a Lie superalgebra, which is not a Lie algebra, the situation is different in several respects. First, though the normalized affine superdenominator is modular invariant, the normalized denominator is not. However, on
the span of the normalized denominator and superdenominator and their Ramond twisted analogues the group $SL(2,\ZZ)$ acts by monomial matrices. Hence the question of modular invariance reduces to that of the span of the normalized numerators and supernumerators and their Ramond twisted analogues. 

Recall that the \emph{defect} of a basic Lie superalgebra $ \fg $ is the cardinality of a maximal isotropic set of roots $ T $, i.e. a set of linearly independent pairwise orthogonal isotropic roots of $ \fg $ (= dimension of a maximal isotropic subspace in the $ \QQ $-span of roots) \cite{KW94}. As has been explained in \cite{KW16}, one can expect modular invariance of numerators of 
normalized (super)characters of integrable (and the corresponding admissible)
$\hat{\fg}$-modules, even after their modification, only if $ (\La + \widehat{\rho} | T) = 0 $ for a set T, consisting of defect($\fg$) linearly independent isotropic pairwise orthogonal roots. 
Such $ L(\La) $ are called \emph{maximally atypical.} 
Provided that $T$ is contained in a set of simple roots of 
$\hat{\fg}$, such $ L(\La) $ are called \emph{tame}. 
The normalized numerators of (super)characters of admissible $\hat{\fg}$- modules , corresponding to tame integrable modules,
and hence their Ramond twisted analogues, 
can be expressed in terms of mock theta functions. This is a conjecture, which has been verified in many cases  
\cite{KW01}, \cite{KW16}, 
\cite{GK15}. Moreover these mock theta functions can be modified, so that the resulting normalized (super)characters span a modular invariant space \cite{KW16}.

In the case of $ \fg = spo _{n|1}, $ which is the only basic non Lie algebra of defect 0, the normalized (super)numerators and their Ramond twisted analogues are again linear combinations of theta functions, and their span is modular invariant for all the admissible respresentations \cite{KW88}. 
In particular, this holds for the Lie superalgebra $ \fg = spo_{2|1},$ when the QHR of admissible $ \hat{\fg} $-modules and their twisted analogues produces all minimal series representations of the Neveu-Schwarz and Ramond algebra \cite{KRW03}, whose normalized
characters and supercharacters span a modular invariant space. 

The level $ K  $ of an admissible $ \wg $-module is a rational number, 
expressed via the level $m$ of the corresponding integrable module by
\begin{equation}
\label{0.3} 
K = \frac{m+h^\vee}{M}-h^\vee,\,\, \hbox{where}\quad M \in \ZZ_{\geq 1}. 
\end{equation}
Of special interest are the so called boundary level $ \wg $-modules, i.e. 
the admissible $ \wg $-modules, corresponding to the trivial (integrable)
module; their levels are 
\begin{equation}
\label{0.4} 
K = \frac{h^\vee}{M}-h^\vee, \,\, \hbox{where} \quad M \in \ZZ_{\geq 1}. 
\end{equation}
\noindent In this case the (super)characters and their twisted analogues can be expressed via the affine (super)denominator, see \cite{KW89} (resp. \cite{GK15}) in the Lie algebra (resp. superalgebra) case. As a result, modular invariance still holds, and therefore it holds after the QHR. 

In particular, in the case $ \fg = s \ell_{2|1}, $ when the corresponding minimal $ W $-algebra is the $ N =2  $ superconformal algebra, the boundary levels are $K = \frac{1}{M} -1, \mbox{ where } M \in \ZZ_{\geq 2}$.
Then, by the QHR, one gets modular invariant 
representations of the $N=2$ superconformal algebras with central charge $ c = 3 - \frac{6}{M} $ \cite{RY87}.
(For $M=1$ the $\hat{\fg}$-module is trivial, hence goes to zero.)    These are the well-known $ N =2 $ minimal series representations, whose modular invariance is well known \cite{KW94}, \cite{KRW03}. However, there 
are many more tame integrable and the corresponding admissible $ \hat{\fg} $-modules; their levels are \cite{KW14}, \cite{KW15}:
\begin{equation}
\label{0.5}
 K = \frac{m+1}{M}-1, \mbox{ where } M \in \ZZ_{\geq 2}, \ m \in \ZZ_{> 0}, \ gcd(2m+2, M)=1.
\end{equation}
\noindent As shown in \cite{KW14}, \cite{KW15}, though the 
normalized (super)characters of the tame integrable $\hat{\fg}$-modules and their Ramond twisted analogues are not modular invariant, their modifications in the spirit of Zwegers 
\cite{Z08} are. Consequently, the modified (super)characters of the
corresponding, via the QHR, representations of the $ N =2 $ superconformal 
algebras with central charge
$c(K)=3(1-\frac{2m+2}{M})$ 
span a modular invariant subspace.

Likewise, in the case $ \fg = spo_{2|3}, $ 
for the boundary level 
\[ K = \frac{1}{2M} - \half, \quad \hbox{where}\,\, M \in \ZZ_{\geq 1}, \mbox{ odd}, \]
\noindent the QHR produces modular invariant representations of the $N=3$
superconformal algebras with central charge $ c = -\frac{3}{M} $, 
see \cite{KW15}, Section 6. 
However, there are many more admissible $ \wg $-modules, corresponding to tame integrable modules; their levels are
\begin{equation}
\label{0.6}
 K = \frac{2m+1}{2M} - \half, \mbox{ where } M \in \ZZ_{\geq 1}, \ m \in \ZZ_{\geq 0}, \  \gcd (M, 4m+2) = 1. 
\end{equation}
\noindent As shown in \cite{KW15}, Section 6, though the corresponding, via the QHR, representations of the $ N=3 $ superconformal algebra with central charge $ c(K) = -\frac{3(2m+1)}{M} $ are not modular invariant for $m>0$, the Zwegers like modifications of their characters are. 

It turns out that for $ \fg = spo_{2|3} $ the corresponding affine Lie superalgebra $ \wg  $ has important \textit{complementary integrable} highest weight modules $ L(\La), $ namely those, for which all root vectors of $ \wg, $ attached to roots $ \al $ with $ (\al | \al ) <0,  $ act locally nilpotently on $ L(\La). $ Such tame $ \wg $-modules were studied in \cite{KW16}, Section 6.4, and \cite{GK15}. The levels of such non-critical modules are
$K=-\frac{m+2}{4}$, where  $m\in \ZZ_{\geq 1}$.
Consequently, by (\ref{0.3}), the corresponding admissible modules have level 
\begin{equation}
\label{0.7}
K=-\frac{m}{4M}
-\frac{1}{2}, \,\,\hbox{where}\,\, m, 
M \in \ZZ_{\geq 1}.
\end{equation}
These $\hat{\fg}$-modules are studied in Section 3. Furthermore, in Section 4
 we construct, via the QHR,  the corresponding family of representations 
of the $N=3$ Neveu-Schwarz and Ramond type superconformal algebras, of 
central charge 
$c(K)=\frac{3m}{2M}-\half$ (cf. \eqref{0.1}) , where $M$ is a 
positive odd integer, coprime to $m$, such that  
their modified (super)characters span a modular invariant space 
(for $m=1$ modular invariance holds without modification), see Theorem \ref{th4.12}.

Next, for $ \fg = ps\ell_{2|2} $, the corresponding affine Lie superalgebra $ \wg $ has a family of tame integrable modules of negative integer level $ m $ (see Section 6), for which we construct in Section 7, via the QHR, $N=4$ 
superconformal algebra modules, whose modified (super)characters form a modular invariant family. In Section 8 we study the associated  family of principal admissible modules of level 
\begin{equation}
\label{0.8}
K = \frac{m}{M}, \mbox{ where }\, M \in \ZZ_{\geq 1},
\end{equation}
and compute their modified (super)characters and those of their twists. They do not span a modular invariant space. However, we show in Section 9 (see Theorems \ref{th9.6}-\ref{th9.8}) that the modified (super)characters of their QHR, which have central charge $ c(K) = -6 \left(\frac{m}{M}+1 \right), $ do span a modular invariant space, provided that $\gcd(M,2m)=1$ if $m\leq -2$. 

Finally, in Section 10 we consider $ \fg = D(2,1; a), $ a family of 17-dimensional exceptional Lie  superalgebras. The tame integrable $ \wg $-modules exist only for $ a $ of the form $ a = -\frac{p}{p+q}, $ where $ p,q \in \ZZ_{\geq 1} $ are coprime. Then the level of such a module is of the form
\begin{equation}
\label{0.9}
K = -\frac{pqn}{p+q}, \mbox{ where } n \in \ZZ_{\geq 1}.
\end{equation}
We construct four families of tame integrable $ \wg $-modules of level $ K, $ compute their modified supercharacters (there are actually two types of modifications) and show in Corollary \ref{cor10.22} that they span an $ SL_2 (\ZZ) $-invariant space. Applying the QHR to these $\hat{\fg}$-modu;es, we obtain in Section 11 a family of positive energy irreducible representations of the big 
$N=4$ Neveu-Schwarz and Ramond type superconformal algebras with central charge
$c(K)=6\frac{pqn}{p+q}$, where $n$ is a positive integer, whose modified
(super)characters span a modular invariant subspace. 

In Sections 5 and 12 we consider some other cases when the modular invariance holds without modification.

\section{Some important functions and their transformation properties}
Fix a positive integer $ m. $ A theta function (=Jacobi form) of rank 1 and degree (=index) $ m $ is defined by the following series:
\begin{equation}
\label{1.1}
\Theta_{j,m} (\tau, z, t) = 
e^{2 \pi i m t} \sum_{n \in \ZZ + \frac{j}{2m}} q^{mn^2} e^{2 \pi i m n z},\,\hbox{where}\, j \in \ZZ / 2m \ZZ,
\end{equation}
\noindent which converges in the domain $ (\tau, z, t) \in \CC^3, \Im \tau > 0, $ to a holomorphic function. 

These functions have nice elliptic and modular transformation formulas (cf., e.g., Appendix in \cite{KW14}. 
The  elliptic transformation formulas are as follows for 
$am\in \ZZ$ and $k\in \ZZ$: 
\begin{equation}
  \label{1.2}
  \Theta_{j,m} (\tau,z+a,t) = e^{\pi i ja} \Theta_{j,m}(\tau,z,t); \,\, 
\,\, \Theta_{j,m} (\tau,z+\frac{k\tau}{m},t) = q^{-\frac{k^2}{4m}}
e^{-\pi ik z} \Theta_{j+k,m}(\tau,z,t).
\end{equation}
The modular transformation formulas are:
\begin{equation}
  \label{1.3}
  \Theta_{j,m} \left( -\frac{1}{\tau}\,,\, \frac{z}{\tau}\,,\, t-
\frac{z^2}{4\tau} \right)
     = \left( \frac{-i\tau}{2m}\right)^{\tfrac12} \sum_{j'\in \ZZ   /2m\ZZ} 
         e^{-\frac{\pi i jj'}{m}} \Theta_{j',m} (\tau ,z,t)\, , 
\end{equation}
\begin{equation}
  \label{1.4}
  \Theta_{j,m}(\tau +1,z,t) =e^{\frac{\pi i j^2}{2m}}
    \Theta_{{j,m}}(\tau,z,t).
\end{equation}
We often write $ \Theta_{j,m} (\tau, z) = \Theta_{j,m} (\tau, z, 0). $
Especially important are the celebrated four Jacobi theta functions of degree two (we put $t=0$ here):
\begin{eqnarray*}
  \vartheta_{00} = \Theta_{2,2} + \Theta_{0,2} \, , \, 
     \vartheta_{01} =-\Theta_{2,2} + \Theta_{0,2}\, , \,
  \vartheta_{1 0} = \Theta_{1,2} + \Theta_{-1,2} \, , \, 
      \vartheta_{11} = i \Theta_{1,2} -i\Theta_{-1,2}\, .
\end{eqnarray*}
Formula \eqref{1.2} for $m=2$ implies that for $a,b =0$ or $1$ one has:
\begin{equation}
\label{1.5}
  \vartheta_{ab} (\tau,z+1) = (-1)^{a} \vartheta_{ab} (\tau ,z)\, ;\, 
\vartheta_{ab} (\tau,z+\tau) = (-1)^{b} q^{-\frac{1}{2}} e^{-2\pi i z} \vartheta_{ab} (\tau ,z)\, .
\end{equation}
Formula \eqref{1.3} for $ m = 2 $ implies that for $ a,b = 0 $ or $1$ one has:
 \begin{eqnarray}
 \label{1.6}
    \vartheta_{ab} \left( - \frac{1}{\tau} , \frac{z}{\tau} \right)
        &=& (-i)^{ab} (-i\tau)^{\frac12} e^{\frac{\pi i
            z^2}{\tau}} \vartheta_{ba} (\tau ,z )\, ;\\
 \label{1.7}
    \vartheta_{0a} (\tau +1,z) &=& \vartheta_{0b} (\tau,z)\, , \, {\,
      \hbox{where\,\,} } a \neq b \, ; \quad 
     \vartheta_{1a} (\tau +1,z)=  e^{\frac{\pi i}{4}}
     \vartheta_{1a}(\tau ,z)\, .
  \end{eqnarray}
  
A mock theta function of rank 1 and degree $ m\in \ZZ_{>0} $ is defined by the following series:
\begin{equation}
\label{1.8}
\Phi^{[m;s]}_{1} (\tau, z_1, z_2) = \sum_{j \in \ZZ} \ \frac{e^{2\pi i m j (z_1 + z_2) + 2 \pi i sz_1} q^{j^2 m + js}}{1 - e^{2 \pi i z_1} q^j},\,\hbox{where}\, s\in \half \ZZ,
\end{equation}
\noindent which converges in the domain $ (\tau, z_1, z_2) \in \CC^3, \Im \tau > 0, $ to a meromorphic function. 

As in \cite{KW14}-\cite{KW16}, we will extensively use in the paper the functions
\begin{equation}
\label{1.9}
 \Phi^{[m;\,s]} (\tau, z_1, z_2, t) = e^{2 \pi i m t} \left(\Phi^{[m;\,s]}_1 (\tau, z_1, z_2) -\Phi^{[m;\,s]}_1 (\tau, -z_2, -z_1) \right), 
\end{equation}
\noindent and 
\begin{equation}
\label{1.10}
\Psi_{a,b; \epsilon'} ^{[M, m, s;\epsilon]}
(\tau, z_1, z_2, t) = q^{\frac{mab}{M}} e^{\frac{2 \pi i m }{M}(bz_1 + az_2)} \Phi^{[m;\,s]} (M\tau, z_1 + a \tau + \epsilon, z_2 + b\tau + \epsilon, \frac{t}{M}), 
\end{equation}
\noindent where $ M $ is a positive integer, $ \epsilon, \epsilon' = 0 $ or 
$ \half,$
and $a,b \in \epsilon' +\ZZ.  $ 

We will often use notation 
$\Phi^{[m;s]}(\tau,z_1,z_2)=\Phi^{[m;s]}(\tau,z_1,z_2,0)$, and similarly for
$\Psi$.

The following property of the functions $ \Phi^{[m;s]} $ will be useful in the sequel. 
\begin{lemma}
\label{lemma1.1}
\[ 2 \Phi^{[m;\,s]} (2 \tau, z_1, z_2, t) = \Phi^{[2m;\,2s]} (\tau, \frac{z_1}{2}, \frac{z_2}{2}, \frac{t}{2}) + (-1)^{2s} \Phi^{[2m;\,2s]} (\tau, \frac{z_1+1}{2}, \frac{z_2-1}{2}, \frac{t}{2}). \]
\end{lemma}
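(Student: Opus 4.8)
The plan is to prove the identity by reducing everything to the defining series \eqref{1.8} for $\Phi^{[m;s]}_1$ and using the elementary ``halving'' of a lattice sum: a sum over $j\in\ZZ$ of a function $F(j)$ equals the sum over $k\in\ZZ$ of $F(2k)+F(2k+1)$. First I would strip off the $t$-dependence: by \eqref{1.9} the factor $e^{2\pi i m t}$ appears linearly, and on the left it comes in as $e^{2\pi i m(2t)}=e^{2\pi i(2m)t}$, which matches the prefactor $e^{2\pi i(2m)(t/2)}$ of each term on the right (note the degree on the right is $2m$). So it suffices to prove the identity at $t=0$, i.e.\ for $\Phi^{[m;s]}(\tau,z_1,z_2)$.

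Next I would observe that both sides of \eqref{1.9} are built from $\Phi_1$ by the antisymmetrization $(z_1,z_2)\mapsto(z_1,z_2)-(-z_2,-z_1)$, and this operation is compatible with all the substitutions in the statement: sending $(z_1,z_2)\to(\tfrac{z_1}{2},\tfrac{z_2}{2})$ and $(z_1,z_2)\to(\tfrac{z_1+1}{2},\tfrac{z_2-1}{2})$ commutes with the flip $(z_1,z_2)\to(-z_2,-z_1)$ (up to relabeling the shift $\pm1$, which is harmless because of the $(-1)^{2s}$ bookkeeping below). Hence it is enough to prove the ``$\Phi_1$-version''
\[
2\,\Phi^{[m;s]}_1(2\tau,z_1,z_2)
= \Phi^{[2m;2s]}_1\!\left(\tau,\tfrac{z_1}{2},\tfrac{z_2}{2}\right)
+ (-1)^{2s}\,\Phi^{[2m;2s]}_1\!\left(\tau,\tfrac{z_1+1}{2},\tfrac{z_2-1}{2}\right),
\]
and then symmetrize. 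I would write out the right-hand side using \eqref{1.8} with parameters $(2m,2s)$ and $\tau$, splitting the summation index $j\in\ZZ$ into $j=2k$ and $j=2k+1$. In the first piece, the exponent $q^{j^2(2m)+j(2s)}$ becomes $q^{4k^2m+4ks}=(q^2)^{2k^2m+2ks}$, i.e.\ it is exactly the $k$-th term for $\Phi^{[m;2s\cdot?]}_1$... here one has to be a bit careful: replacing $q\to q^2$ (i.e.\ $\tau\to 2\tau$) in $\Phi^{[m;s]}_1$ gives $q^{2j^2m+2js}$, so the even part of the right side matches $\Phi^{[m;2s]}_1$ — but the degree-$m$ mock theta function we want on the left is $\Phi^{[m;s]}_1(2\tau,\cdot)$, whose exponent is $q^{2(j^2m+js)}=q^{2j^2m+2js}$, which indeed matches. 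Likewise the halved $z$-arguments combine: $e^{2\pi i(2m)k\cdot\frac{z_1+z_2}{2}}=e^{2\pi i m k(z_1+z_2)}$ and $e^{2\pi i(2s)k\cdot\frac{z_1}{2}}=e^{2\pi i s k z_1}$, and the denominator $1-e^{2\pi i z_1/2\cdot?}$ — wait, with argument $z_1/2$ the denominator is $1-e^{2\pi i(z_1/2)}q^{2k}$... this does \emph{not} match $1-e^{2\pi i z_1}q^{\cdot}$, so the matching of denominators is the delicate point and is exactly where the second term (with the $+1$ shift in $z_1$) must be used to repair things.

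So the genuine mechanism is: the two terms on the right, with $z_1$-arguments $\tfrac{z_1}{2}$ and $\tfrac{z_1+1}{2}$, have denominators $1-e^{\pi i z_1}q^{j}$ and $1-e^{\pi i(z_1+1)}q^{j}=1+e^{\pi i z_1}q^{j}$, and their \emph{sum} telescopes the summation index parity: writing $u=e^{\pi i z_1}q^{j}$ one uses $\frac{A}{1-u}\pm\frac{A'}{1+u}$ to isolate even/odd powers, and the sign $(-1)^{2s}$ is precisely what is needed so that the half-integer vs integer nature of $s$ (hence of the $e^{2\pi i s z_1}$ and $q^{js}$ factors) lines up. Concretely, I would substitute $j=2k$ and $j=2k+1$ in \emph{both} right-hand terms, collect the four resulting sub-sums, and check that the $k$-sums with denominator $1-e^{2\pi i z_1}q^{2k}$ survive (doubled) and reproduce $2\Phi^{[m;s]}_1(2\tau,z_1,z_2)$, while the sub-sums with the ``wrong'' denominators $1\pm e^{\pi i z_1}q^{2k+1}$ cancel in pairs because of the $(-1)^{2s}$. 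The main obstacle is this bookkeeping: tracking the half-integer shifts through the numerator exponentials ($e^{2\pi i s z_1}$ with $s\in\tfrac12\ZZ$ produces the sign $e^{\pi i s}=(\pm i)^{2s}$-type factors) and confirming that they conspire with $(-1)^{2s}$ and with the parity split of $j$ so that exactly the right terms cancel. Once the $\Phi_1$-identity is established, antisymmetrizing in $(z_1,z_2)\mapsto(-z_2,-z_1)$ and reinstating the $e^{2\pi i(2m)t}$ factor via \eqref{1.9} finishes the proof; the only subtlety there is that the flip exchanges the two shift patterns $(\tfrac{z_1+1}{2},\tfrac{z_2-1}{2})\leftrightarrow(\tfrac{-z_2+1}{2},\tfrac{-z_1-1}{2})$, and one checks using the $1$-periodicity-type relations implicit in \eqref{1.8} (shifting $z_1$ by an integer inside $\Phi_1$) that this produces the same right-hand side, so the symmetrization is consistent.
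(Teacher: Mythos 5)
The paper's own ``proof'' is just the phrase ``a straightforward verification,'' so the real question is whether your verification would go through. Your setup is sound: reducing to the $\Phi_1$-version via \eqref{1.9} is legitimate, and the antisymmetrization $(z_1,z_2)\mapsto(-z_2,-z_1)$ is exactly compatible with the shifts, since $\bigl(-\tfrac{z_2-1}{2},-\tfrac{z_1+1}{2}\bigr)=\bigl(\tfrac{w_1+1}{2},\tfrac{w_2-1}{2}\bigr)$ with $(w_1,w_2)=(-z_2,-z_1)$ — no periodicity argument is needed there at all. You also correctly locate the heart of the matter in the pair of denominators $1-e^{\pi i z_1}q^{j}$ and $1+e^{\pi i z_1}q^{j}$.

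However, your concrete execution plan is wrong: there is no parity split of the outer summation index $j$, and the ``sub-sums with the wrong denominators'' do not cancel. The identity is a term-by-term equality over all $j\in\ZZ$. For each fixed $j$, the two right-hand numerators agree once the second is multiplied by $(-1)^{2s}$, because $\tfrac{z_1+1}{2}+\tfrac{z_2-1}{2}=\tfrac{z_1+z_2}{2}$ leaves the $e^{2\pi i mj(z_1+z_2)}$ factor untouched and $(-1)^{2s}e^{2\pi i s(z_1+1)}=e^{2\pi i s z_1}$; then
\[
\frac{1}{1-e^{\pi i z_1}q^{j}}+\frac{1}{1+e^{\pi i z_1}q^{j}}=\frac{2}{1-e^{2\pi i z_1}q^{2j}},
\]
which is exactly twice the $j$-th denominator of $\Phi_1^{[m;s]}(2\tau,z_1,z_2)$, whose numerator $e^{2\pi imj(z_1+z_2)+2\pi isz_1}(q^{2})^{mj^{2}+sj}$ also matches. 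If the odd-$j$ contributions cancelled as you claim, the left-hand side would retain only even-$j$ terms, which it does not. A further small slip: the $t$-prefactor on the left is $e^{2\pi i mt}$ (only $\tau$ is doubled, the $t$-argument is still $t$), not $e^{2\pi i m(2t)}$; it does match $e^{2\pi i(2m)(t/2)}$ on the right, so your reduction to $t=0$ survives, but as written your two prefactors are unequal.
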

\begin{proof}
A straightforward verification. 
\end{proof}

The functions $ \Phi^{[m;\,s]}_1 $ and $ \Phi^{[m;\,s]} $ have neither good elliptic transformation nor modular transformation properties. In order to achieve these properties one needs to add to these functions a real analytic correction, defined as follows \cite{Z08}, \cite{KW15}. Let $ E(x) = 2 \int_{0}^{x} e^{-\pi u^2} du. $ For each $ j \in \half \ZZ $ let
\[ R_{j;\,m}(\tau,\, v) := \]
\[ \sum\limits_{n \in \half\ZZ \atop n  \equiv
 j \!\!\!\!
\mod\!\! 2m}\left({\rm sgn} (n-\frac{1}{2} - j + 2m) \right. \left.- E \left(\left(n-2m \frac{{\rm Im}\,v}{{\rm Im}\,\tau} \right) \sqrt{\frac{{\rm Im} \,\tau}{m}}\right) \right) 
e^{-\frac{\pi i n^2}{2m}\tau + 2\pi i n v}. \]
This series converges to a real analytic function for all $ v, \tau \in \CC, \ \Im \tau > 0. $

Introduce the real analytic ``correcting'' function
\begin{equation}
	\label{1.11}
\Phi^{[m;\,s]}_{\mathrm{add}} 
(\tau, z_1, z_2, t) = \frac{1}{2} e^{2\pi i mt} 
\sum\limits^{s+2m-1}_{j=s}{R_{j;\,m} \left(\tau, \frac{z_1-z_2}{2}\right) (\Theta_{-j,\,m}-\Theta_{j,\,m}) (\tau, z_1+z_2)},
\end{equation}
\noindent and let 
\begin{equation}
\label{1.12}
\tilde{\Phi}^{[m;\,s]} := \Phi^{[m;s]} + \Phi^{[m;\,s]}_{\mathrm{add}}
\end{equation}
\noindent be the \emph{modification}  of the function $ \Phi^{[m;s]}. $ 

The function $ \tilde{\Phi}^{[m;\,s]} $ has the following elliptic and modular transformation properties \cite{Z08}, \cite{KW15}:
\begin{equation}
\label{1.13}
 \tilde{\Phi}^{[m;\,s]} (\tau, z_1+a, z_2+b,t) = \tilde{\Phi}^{[m;\,s]} (\tau, z_1, z_2, t) \mbox{ if } a, b \in \ZZ,
\end{equation}
\begin{equation}
 \label{1.14}
 \tilde{\Phi}^{[m;\,s]} (\tau, z_1 + a\tau, z_2+b\tau, t) = q^{-mab} 
e^{-2\pi i m (b z_1+a z_2)}  \tilde{\Phi}^{[m;\,s]} (\tau, z_1, z_2, t)\, 
\mbox{ if }\, a, b \in \ZZ,
\end{equation}
\begin{equation}
\label{1.15}
\tilde{\Phi}^{[m;\,s]} \left(-\frac{1}{\tau}, \frac{z_1}{\tau}, \frac{z_2}{\tau}, t-\frac{z_1 z_2}{\tau} \right) = \tau \, \tilde{\Phi}^{[m;\,s]} (\tau, z_1, z_2, t),
\end{equation}
\begin{equation}
\label{1.16}
\tilde{\Phi}^{[m;\,s]} (\tau+1, z_1, z_2,t) = \tilde{\Phi}^{[m;\,s]} (\tau, z_1, z_2, t).
\end{equation}
\begin{remark}
\label{rem1.2}
Properties \eqref{1.13}--\eqref{1.16} hold for 
$ \tilde{\Phi}_1^{[m;s]}= \Phi_1^{[m;s]} 
+ \Phi_{1, \mathrm{add}}^{[m;s]}$, where $ \Phi_{1, \mathrm{add}}^{[m;s]} $ is given by \eqref{1.11} with $ \Theta_{j,m} - \Theta_{-j,m} $ replaced by $ \Theta_{j,m} $ .
\end{remark}

Introduce the modification $\tilde{\Psi}^{[M,\,m,\,s ;\, \epsilon]}_{a,\,b ;\,\epsilon'}$ 
of the function  $\Psi^{[M,\,m,\,s ;\,\epsilon]}_{a,\,b ;\,\epsilon'}$, defined by \eqref{1.10}, by replacing $\Phi^{[m;\,s]}$ in the RHS of \eqref{1.10} by its modification $\tilde{\Phi}^{[m;\,s]}$. Then it is not hard to deduce from \eqref{1.15}, \eqref{1.16} the following modular transformation properties of these modifications, where $\epsilon$, $\epsilon'=0$ or $\frac{1}{2}$, and $j, k  \in\,\epsilon'+\ZZ/M\ZZ$, provided that $M$ is odd and coprime to $m$ if $m>1$ (see \cite{KW15}, Theorem 2.8):
\begin{equation}
\label{1.17}
 \tilde{\Psi}^{[M,\,m,\,s;\,\epsilon]}_{j,\, k;\,\epsilon'} \left(-\frac{1}{\tau}, \frac{z_1}{\tau}, \frac{z_2}{\tau}, t - \frac{z_1 z_2}{\tau} \right)
= \frac{\tau}{M} \sum\limits_{a,\,b \,\in\, \epsilon + \ZZ/M \ZZ}{e^{-\frac{2\pi i m}{M} (ak+bj)}} \;\tilde{\Psi}^{[M,\, m,\, s;\,\epsilon']}_{a,\, b;\,\epsilon} (\tau, z_1, z_2, t),
\end{equation}
\begin{equation}
\label{1.18} 
\tilde{\Psi}^{[M,\, m,\, s;\,\epsilon]}_{j,\,k ;\,\epsilon'} (\tau+1, z_1, z_2, t) = e^{\frac{2\pi i m}{M}\,jk} \;\; \tilde{\Psi}^{[M,\,m,\,s; |\epsilon-\epsilon'|]} (\tau, z_1, z_2, t).
\end{equation}
\begin{remark}
\label{rem1.3}
Since  $\Theta_{j,1}= \Theta_{-j,1} $ for $j\in \ZZ$, we have: $ \tilde{\Phi}^{[1;s]}=\Phi^{[1;s]}$ for $s\in \ZZ$. Consequently, $\Phi^{[1;s]}$  is independent of $s\in\ZZ$. By the denominator identity for $\hat{sl}_{2|1}$\, (see, e.g.
\cite{KW94} or \cite{KW16}, Remark 6.21), we have 
\[\tilde{\Phi}^{[1;0]}(\tau, z_1, z_2)=\Phi^{[1;0]}(\tau, z_1, z_2)=-i \frac{\eta(\tau)^3\vartheta_{11}(\tau,z_1+z_2)}{\vartheta_{11}(\tau,z_1)\vartheta_{11}(\tau,z_2)}.\] 
Hence from the definition \eqref{1.10} we obtain for any positive integer $M$
that $\tilde{\Psi}^{[M,1;0;\epsilon]}_{j,k;\epsilon'}=\Psi^{[M,1;0;\epsilon]}_{j,k;\epsilon'}$ and
\[\tilde{\Psi}^{[M,1;0;\epsilon]}_{j,k;\epsilon'}(\tau, z_1, z_2)
=-i q^{\frac{jk}{M}}e^{\frac{2\pi i}{M}(kz_1+jz_2)}
 \frac{\eta(M\tau)^3\vartheta_{11}(M\tau,z_1+z_2+(j+k)\tau)}{\vartheta_{11}(M\tau,z_1+j\tau+\epsilon)\vartheta_{11}(M\tau,z_2+k\tau-\epsilon)}.
\] 
\end{remark} 
 
\section{Affine Lie superalgebras and their integrable and admissible modules}
 In complete analogy with simple Lie algebras, given a basic Lie superalgebra $ \fg $, we associate to it the affine Lie superalgebra $ \wg = \fg [t,t^{-1}] + \CC \mathbf{K} + \CC d $ with the same commutation relations. Choosing a Cartan algebra $ \fh $ of $ \fg_{\bz}, $ we let the Cartan subalgebra of $ \wg $ be $ \widehat{\fh} = \fh + \CC \mathbf{K} + \CC d. $ Furthermore, we choose a subset of positive 
roots $ \Delta_+ $ in the set of roots of $\Delta$
of $ \fg $, such that the highest root $ \theta $ is even, and extend the invariant bilinear form $ \bl $ on $ \fg, $ normalized by condition (\ref{0.2}), to an invariant bilinear form $ \bl $ on $ \wg $ in the same way as in the affine Lie algebra case. Then the restriction of this bilinear form to $ \wh $ is non-degenerate, hence we can identify $ \wh^* $ with $ \wh. $ The element of $ \wh^*, $ corresponding to $\mathbf{K} $ (resp. $ d $) under this identification, is denoted by $ \delta $ (resp. $ \La_0 $). We use the following coordinates on $ \wh = \wh^*: $
  \begin{equation}
 \label{2.1}
 h = 2 \pi i  (- \tau \La_0 + z + t \delta), \mbox{ where } \Im \tau > 0,\, 
z \in \fh,\, t \in \CC. 
 \end{equation}
  
Let $ \hat{\Delta}$ be the set of roots of $\hat{\fg}$. The corresponding to
the choice of the set of positive roots $\Delta_+$ of $\fg$,  
the set of positive roots of $ \wg $ is 
$ \hat{\Delta}_+ 
= \Delta_+ \cup ( \underset{\substack{n \in \ZZ_{>0} \\ \al \in \Delta \cup {0}}}{\cup} \{ \al + n \delta \}),   $ and the corresponding subalgebra of $ \wg  $ is $ \hat{\fn}_+ =\fn_+ + \fg [t] t, $ where $ \fn_+ $ is the subalgebra of $ \fg,  $ corresponding to $ \Delta_+ $.  The set of simple roots is $ \hat{\Pi} = \{ \al_0 = \delta - \theta \}\cup \Pi, $ where $ \Pi = \{ \al_1, \ldots, \al_n \}  $ is the set of simple roots for $ \Delta_+. $ As in the affine Lie algebra case, we let $ \hat{\rho} = h^\vee \La_0 + \rho$, where
$\rho = \half \sum_{\alpha \in \Delta_+}(-1)^{p(\alpha)}\alpha$.

Recall that the Weyl group $\hat{W}$ of $\hat{\fg}$ is the subgroup of $GL(\hat{\fh})$, generated by reflections in the roots $\alpha \in \hat{\Delta}$,
such that $(\alpha|\alpha)\neq 0$. One has
$\hat{W}=W\ltimes T_L$, where $W$ is the (finite) Weyl group of 
$\fg_{\bar{0}}$ and $T_L$ is the group, consisting of translations $t_\beta ,\beta \in L$, where $L$ is the coroot lattice of $\fg_{\bar{0}}$, and, for $\beta \in \fh =\fh^*$ the translation $t_\beta$ is defined by
\begin{equation}
\label{2.2}
t_\beta (\La) = \La + (\La|\delta) \beta - \left( (\La | \beta) +\thalf (\La|\delta) |\beta|^2 \right)\delta, \,\,\Lambda\in \hat{\fh}^*.
\end{equation}

 Given $ \La \in \wh^* $, one defines, as in the affine Lie algebra case, the irreducible highest weight $ \wg$-module $ L(\La) $ by the property that it admits a non-zero even weight vector $ v_\La $ with weight $\Lambda$, such that $\hat{\fn}_+ v_\La = 0.$
 
 A root $ \al $ of $ \wg $ is called \emph{integrable} for $ L(\La) $ if it is even and the root spaces 
$ \wg_\al $ and $ \wg_{-\al} $ 
act locally nilpotently on $ L(\La). $ A $ \wg $-module $ L(\La) $ is called \emph{integrable} (resp. \emph{complementary integrable}) if all roots $ \al  $ of $ \wg $, such that $ (\al | \al) > 0 $ (resp. $ (\al | \al ) <0 $) are integrable for $ L(\La). $
 
 We define the normalized character $ \ch^+_\La  $ and supercharacter  
$ \ch^-_\La $ of the $ \wg $-module $ L(\La ) $ by the series
 \begin{equation}
 \label{2.3}
 \ch^\pm_\La \tzt = q^{m_{\La}} \, \tr^\pm_{L(\La)}e^h,
 \end{equation}
 \noindent where $ h $ is as in \eqref{2.1}, $ \tr^+ $ and $ \tr^- $ denote the trace and supertrace respectively, $ m_\La = \frac{|\La + \widehat{\rho}|^2}{2 (K+h^\vee)} - \frac{\sdim \fg}{24}, $ and $ K=\Lambda(\mathbf{K}) $ is the level of $ \La. $ We shall always assume that $ \La $ is non-critical, i.e. $ K + h^\vee \neq 0. $ The series \eqref{2.2}, given by the weight space decomposition of $ L(\La), $ converge to a holomorphic function in the domain 
$ \{ h \in \wh |\, \Re \al_i (h)>0,\,\al_i \in \hat{\Pi}\}$.  
The simple proof of this fact, as well as more background in the affine Lie algebra case, may be found in \cite{K90} (the proof in the affine Lie superalgebra case is the same).
 
 By the Gorelik-Kac theorem, these functions extend to meromorphic functions in the domain $ \left\{ \tzt | \Im \tau > 0 \right\} $  in the affine Lie algebra case, but this is not known for an arbitrary $ \La $ in the affine Lie superalgebra case (though this holds in all examples at hand).
 
For tame integrable (and complementary integrable, if $\fg =spo_{2m|2m+1}$)
$\hat{\fg}$-modules $L(\Lambda)$ one has the conjectural Kac-Wakimoto (super)character formula \cite{KW01}, \cite{GK15}:
\begin{equation} 
\label{2.4} 
\widehat{R}^\pm \ch^\pm_\La =j_\Lambda ^{-1} 
q^{-\frac{|\Lambda +\hat{\rho}|^2}{2(K+h^\vee)}} 
\sum_{w \in \hat{W}^\#} \epsilon_\pm (w) 
w \frac{e^{\La + \widehat{\rho}}}{\prod_{\beta\in T}(1\pm e^{-\beta})} . 
\end{equation}
 Here $\hat{W}^\#$ is the subgroup of the group $\hat{W}$, generated by reflections with respect to the roots $\alpha \in \hat{\Delta}$, which are integrable
for $L(\Lambda)$, $j_\Lambda$ is a positive integer,
$ T  $ is a set, consisting of defect($ \fg $) positive isotropic pairwise orthogonal roots, such that $(\Lambda +\hat{\rho}|T)=0$, and 
$\epsilon_{\pm}(w)=(-1)^{\ell_\pm(w)}$, where 
$\ell_+(w)$ (resp. $\ell_-(w)$) is the number of reflections $r_\alpha$ with respect to even (resp. indivisible even) roots $\alpha\in\Delta$ in a decomposition of $w$. Finally,
   $\widehat{R}^\pm  $ is the normalized (super)denominator:
 \begin{equation}
 \label{2.5}
 \widehat{R}^\pm = q^{\frac{\sdim \fg}{24}} e^{\widehat{\rho}} \prod_{\al \in \widehat{\Delta}_+} (1-(\mp1)^{p(\al)}e^{-\al})^{(-1)^{p (\al)}}.
 \end{equation}
Formula (\ref{2.4}) has been checked in \cite{GK15}  in many, but not all
cases, considered in this paper. 

Denote by $ \Delta^{\#} $ the set of even roots $ \al $ of $ \fg, $ such that both $ \al  $ and $ \delta - \al $ are integrable (resp. complementary integrable) for $ \LLa, $ let $ L^{\#} $ be the lattice spanned over $ \ZZ $ by $ \al^\vee = 2 \al / (\al | \al), $ where $ \al \in \Delta^\#; $ this lattice is positive (resp. negative) definite. One has: $ \hat{W}^\# = W^\# \ltimes t_{L^\#} $,
where $W^\#$ is a (finite) subgroup of the group $\hat{W}^\#$, generated by reflections with respect to exactly one integrable root from the set $\{\alpha , \delta -\alpha\}$, and $ t_{L^\#} = \{ t_\al | \al \in L^\#  \}. $    
Let $ W^\#_0 $ be the subgroup of $ W^\#, $ generated by reflections $ r_\al, \al \in \Delta^\#, $ and let $ \hat{W}^\#_0 $ be the corresponding affine Weyl group, i.e. the group, generated $ r_\al $ and $ r_{\delta - \al}, \al \in \Delta^\#; $ equivalently, 
$ \hat{W}^\#_0 = W^\#_0 \ltimes t_{L^\#} $.
The integer $ j_\Lambda $ in \eqref{2.4} can be computed using the following.
\begin{proposition}
\label{prop2.1}
Let 
\[ B'_\La = \sum_{w \in \hat{W}_0^\#} \epsilon_- (w) w \frac{e^{\La + \wrh}}{\prod_{\beta \in T}(1-e^{-\beta})}, \]
and $j'_{\Lambda}$ be the coefficient of $e^{\Lambda+\hat{\rho}}$ in $B'_\Lambda$.
Then 
\[j'_\Lambda =\sum_w \epsilon_-(w),\]
where the summation is taken over 
$w \in \hat{W}_0^\#$,
such that 
 $w(\Lambda +\hat{\rho})=\Lambda +\hat{\rho}$ and $w(T)>0$.
\end{proposition}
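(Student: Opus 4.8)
The plan is to compare the two expressions
\[
j'_\La = \text{coeff.\ of } e^{\La+\wrh} \text{ in } \sum_{w\in\hat W_0^\#}\epsilon_-(w)\, w\,\frac{e^{\La+\wrh}}{\prod_{\beta\in T}(1-e^{-\beta})}
\]
and the claimed sum $\sum_w\epsilon_-(w)$ over those $w\in\hat W_0^\#$ with $w(\La+\wrh)=\La+\wrh$ and $w(T)>0$, by bookkeeping which monomials on the RHS actually contribute $e^{\La+\wrh}$. First I would expand each term: for fixed $w$, write $\frac{1}{\prod_{\beta\in T}(1-e^{-\beta})}=\sum_{k:T\to\ZZ_{\geq 0}}e^{-\sum_{\beta\in T}k_\beta\beta}$, so that $w$ contributes the monomials $\epsilon_-(w)\,e^{w(\La+\wrh)-\sum_\beta k_\beta\, w(\beta)}$. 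The coefficient of $e^{\La+\wrh}$ is therefore $\sum_{w}\epsilon_-(w)\cdot\#\{k\geq 0 : w(\La+\wrh)-\sum_\beta k_\beta w(\beta)=\La+\wrh\}$, the inner count being $\{0,1,\dots\}$-solutions of a linear Diophantine condition attached to $w$.

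Next I would argue that for each $w$ this inner count is either $0$ or $1$, and equals $1$ precisely when $w(\La+\wrh)=\La+\wrh$ and $w(T)>0$. The key structural input is $\hat W_0^\# = W_0^\#\ltimes t_{L^\#}$ together with the fact (as in \eqref{2.4}) that $T$ is a set of defect($\fg$) linearly independent pairwise orthogonal isotropic roots with $(\La+\wrh|T)=0$; linear independence of $T$ over $\QQ$ means the vectors $\{w(\beta):\beta\in T\}$ are linearly independent, so a relation $\sum_\beta k_\beta w(\beta)=w(\La+\wrh)-(\La+\wrh)$ has at most one solution in $\QQ$, hence at most one in $\ZZ_{\geq 0}$. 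That solution lies in $\ZZ_{\geq 0}$ exactly when $w(\La+\wrh)-(\La+\wrh)$ is a nonnegative integer combination of the $w(\beta)$. I would then show this forces both $w(\La+\wrh)=\La+\wrh$ and $w(T)>0$: pairing the putative relation with $\La+\wrh$ and using $(\La+\wrh|\beta)=0$ for $\beta\in T$ (so also $(w(\La+\wrh)|w(\beta))=0$, and $(\La+\wrh|w(\beta))$ controlled by $w(T)$) gives $|w(\La+\wrh)|^2=|\La+\wrh|^2$ plus cross terms that, combined with the positivity/negativity of $L^\#$ and the isotropy of $T$, can vanish only in the stated case; conversely, if $w$ fixes $\La+\wrh$ then the relation reads $\sum_\beta k_\beta w(\beta)=0$, whose only solution (again by linear independence) is $k=0$, contributing exactly the monomial $\epsilon_-(w)e^{\La+\wrh}$, and one needs $w(T)>0$ to ensure no \emph{other} $w'$ produces a spurious cancellation — more precisely, that the only way $e^{\La+\wrh}$ arises from the $w'$-term is via $k=0$, which requires $-\sum k_\beta w'(\beta)$ to never equal $w'(\La+\wrh)-(\La+\wrh)$ nontrivially; the condition $w(T)>0$ versus $w(T)\not>0$ is exactly what distinguishes whether the zero shift is the \emph{only} admissible one.

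Summing, $j'_\La=\sum_w\epsilon_-(w)$ over $\{w\in\hat W_0^\#: w(\La+\wrh)=\La+\wrh,\ w(T)>0\}$, as claimed. The main obstacle I anticipate is the positivity bookkeeping in the middle step: showing that a nonzero element of $\hat W_0^\#$ fixing $\La+\wrh$ but with $w(T)\not>0$ either does not contribute $e^{\La+\wrh}$ or contributes it only through $k=0$ anyway — i.e.\ pinning down precisely why the sign condition $w(T)>0$ (rather than mere fixing of $\La+\wrh$) is the correct selector. This is where I would lean on the explicit translation formula \eqref{2.2}, the definiteness of $L^\#$, and a careful analysis of how $t_\beta$ ($\beta\in L^\#$) moves the isotropic roots in $T$ relative to the finite reflections in $W_0^\#$; everything else is the routine geometric-series expansion and linear algebra sketched above.
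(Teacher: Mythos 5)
There is a genuine gap, and it sits exactly where you yourself flag the ``main obstacle'': the origin of the condition $w(T)>0$. Your expansion step replaces $w\bigl(e^{\La+\wrh}/\prod_{\beta\in T}(1-e^{-\beta})\bigr)$ by $\epsilon_-(w)\sum_{k\geq 0}e^{\,w(\La+\wrh)-\sum_\beta k_\beta w(\beta)}$, i.e.\ you expand \emph{every} factor $\frac{1}{1-e^{-w(\beta)}}$ as the geometric series with nonnegative exponents. That is not a legitimate move: $B'_\La$ only makes sense as a formal series once each factor is expanded in the common domain of convergence, and the correct rule (writing $w=y t_{-\gamma}$ with $y\in W_0^\#$, $\gamma\in L^\#$, so that $w(\beta)=y(\beta)+(\beta|\gamma)\delta$) is to expand $\frac{1}{1-e^{-y(\beta)}q^{(\beta|\gamma)}}$ as $\sum_{j\geq 0}e^{-jy(\beta)}q^{j(\beta|\gamma)}$ only when $(\beta|\gamma)>0$, or $(\beta|\gamma)=0$ and $y(\beta)>0$ (i.e.\ exactly when $w(\beta)>0$ in $\hat\Delta$), and as $-\sum_{j\geq 1}e^{jy(\beta)}q^{-j(\beta|\gamma)}$ otherwise. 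With the correct rule, a factor with $w(\beta)\not>0$ has \emph{no constant term}, so a $w$ fixing $\La+\wrh$ but with $w(T)\not>0$ contributes nothing to the coefficient of $e^{\La+\wrh}$ (the definiteness-of-$L^\#$ versus isotropy-of-$T$ argument then rules out any compensating nonzero $k$). With your uniform expansion, such a $w$ would still contribute $\epsilon_-(w)$ via $k=0$, and you would end up proving the \emph{wrong} formula $j'_\La=\sum_{w(\La+\wrh)=\La+\wrh}\epsilon_-(w)$, without the positivity condition. Your attempted patch --- that $w(T)>0$ somehow prevents ``spurious cancellation'' between different group elements $w'$ --- misdiagnoses the issue: the condition is a per-$w$ selector coming from the expansion convention, not an interaction between distinct terms.

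The rest of your argument is essentially the paper's: the decomposition of the coefficient as a sum over $w$ of solution counts, the observation that linear independence of $T$ forces at most one solution, and the use of $\hat W_0^\#=W_0^\#\ltimes t_{L^\#}$ together with the definiteness of $L^\#$ and $(T|T)=0$ to conclude that $w(\La+\wrh)-(\La+\wrh)$ (which lies in $\QQ L^\#+\QQ\delta$) can only meet the isotropic span of $y(T)$ at $0$, whence $w$ fixes $\La+\wrh$ and all $k_\beta=0$. Once you replace the uniform geometric-series expansion by the sign-dependent one above, your bookkeeping goes through and recovers the stated formula.
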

\begin{proof}
Each term in the sum, defining $ B'_\La $ is, up to a sign, of the form
\[ \frac{e^{yt_{-\gamma} (\La + \wrh)}}{\prod_{\beta \in T} (1-e^{-y(\beta)} q^{(\beta|\gamma)})},\, \mbox{ where } \gamma \in L^\#, y \in W_0^\#. \]
We expand this in a formal power series in $ q^a $ and $ e^{-\omega}, $ where $ a>0 $ and $ \omega \in \Delta_+ $, by the following rules:
\[ 
\begin{aligned}
\frac{1}{1 -e^{-\omega}q^a} & = \sum_{j \in \ZZ_{\geq 0}} e^{-j\omega} q^{ja} \mbox{ if } a>0, \mbox{ or } a=0,\, \omega > 0,\\
\frac{1}{1 -e^{-\omega}q^a} & = -\frac{e^\omega q^{-a}}{1 - e^\omega q^{-a}} =- \sum_{j \in \ZZ_{\geq 1}} e^{j\omega} q^{-ja} \mbox{ if } a<0, \mbox{ or } a =0,\, \omega<0.\\
\end{aligned} \]
Then we obtain that $ B'_\La $ is a sum of terms of the following form
\[\pm  e^{\La + \wrh + (yt_{-\gamma} (\La + \wrh)-\La - \wrh)-\sum_{\beta \in T} c_\beta y(\beta)} q^{\sum_{\beta \in T} c_\beta (\beta|\gamma)}, \]
where $ y \in W_0^\#, \gamma \in L^\#, c_\beta \in \CC. $ 
But $ yt_{-\gamma} (\La + \wrh) -(\La + \wrh)) \in L^\# + \QQ \delta $ and 
$ \sum_{\beta \in T} c_\beta y(\beta)
 \in y(T) $.
Since the lattice $ L^\# $ is positive (or negative) definite and $ (T|T) = 0,  $ the intersection of $ L^\# $ with $ y(T) $ is zero, hence the above term can be equal to $ e^{\La + \wrh} $ only if 
$w$ fixes $\Lambda +\hat{\rho}$ and all $ c_\beta = 0.  $ This implies the formula for $j'_\Lambda$. 
\end{proof}

Using (\ref{2.2}), formula (\ref{2.4}) can be rewritten as follows:
 \begin{equation} \label{2.6} 
\widehat{R}^\pm \ch^\pm_\La = j_\Lambda^{-1}\sum_{w \in W^\#} \epsilon_\pm (w) w (\Theta^\pm_{\La + \widehat{\rho}, T}), 
\end{equation}
where, in coordinates \eqref{2.1}, we have
 \begin{equation}
 \label{2.7}
 \Theta^\pm_{\la, T}\tzt = e^{2 \pi i (K+h^\vee)t} 
\sum_{\gamma \in \frac{\bar{\la}}{K + h^\vee}+L^\#} \frac{q^{\half (K+h^\vee)|\gamma|^2} e^{2 \pi i (K+h^\vee) \gamma (z)}}{\prod_{\beta \in T} (1 \pm q^{-(\gamma | \beta)} e^{-2 \pi i \beta (z)})}.
 \end{equation}
 Here $ \bar{\la} $ is the restriction of $ \la \in \wh^* $ of level $ K + h^\vee $ to $ \fh$.
 The function (\ref{2.7}) is a \emph{mock theta function of defect = defect $ (\fg) $} \cite{KW14}.
  Note that the series \eqref{2.7} converges to a meromorphic function for
 $ \Im \tau > 0, $ provided that $ (K+h^\vee) |\gamma |^2 >0 $ in \eqref{2.7},
 which will hold in all examples. (In general, there could be an extra sign in front of each fraction (these are called the signed mock theta functions), but in all cases, considered in this paper the sign is $+$.) 
 
Of course, for $|T|=0$, (\ref{2.7})
is a classical Jacobi theta function. For $|T|=1$ and $|L^\#|=1$, (\ref{2.7})
is an Appell function \cite{Ap}.


Now we turn to the description of \emph{admissible weights,} associated to a given integrable or complementary integrable weight $ \La^0 $, cf. \cite{KW88}, \cite{KW15}. First, recall that a subset $ S $ of $ \hat{\Delta}_+ $ is called \emph{simple} if 
\[\al - \beta \notin \hat{\Delta} \mbox{ for } \al, \beta \in S; \ \QQ S = \QQ \hat{\Delta}.\]
An element $ \La \in \wh^* $ is called \emph{admissible} for a simple set $ S $ if $ \La^0 := \La + \hat{\rho} -\hat{\rho}_S $ is integrable for the affine Lie superalgebra $ \wg_S $. We then say that $ \La $ is an \emph{admissible weight}, associated to the integrable weight $ \La^0. $

There is a general formula, relating to the characters of the $ \wg $-module 
$ \LLa $ and the $ \wg_S $-module $ L(\La^0), $ see e.g. \cite{GK15}, formula (1). This formula has been established by efforts of many authors for an arbitrary symmetrizable Kac-Moody algebra, in particular, for affine Lie algebras, and non-critical $ \La. $ It is still unclear in what generality this formula holds in the affine Lie superalgebra case, see the discussion in \cite{GK15}, where it was proved in many cases.

For example, given a positive integer $ M, $ a subset $ S_{(M)} = \{  (M-1)\delta + \al_0 \} \cup \Pi $ is simple. Let $ \beta \in \fh^* $ and $ y \in W $ be such that $ S:= t_\beta y S_{(M)} \subset \hat{\Delta}_+, $ where the translation $ t_\beta $ for $ \beta \in \fh^* $ is given by ({2.2}).
Then, obviously, $ S $ is a simple subset, which is called a \emph{principal simple} subset. All \emph{principal admissible} weights, associated to the integrable
weight $ \La^0, $ with respect to this subset $ S,  $ denoted by $ \Pi_\La, $ are of the form (up to adding a multiple of $ \delta $):
\begin{equation}
\label{2.8}
\La = t_\beta y \left( \La^0 - (M-1)(K+h^\vee)\La_0 + \hat{\rho}\right)-\hat{\rho}.
\end{equation}
Of course, a principal admissible weight $\Lambda$ is integrable iff $M=1$.
Note that the level $ K $ of $ \La  $ is related to the level $ m $ of $ \La^0 $ by
\begin{equation}
\label{2.9}
K + h^\vee = \frac{m + h^\vee}{M}.
\end{equation}
The normalized supercharacters of the $ \wg $-modules $ L(\La) $ and $ L(\La^0) $ are related by the following formula, which is a special case of the general formula, mentioned above, see \cite{KW14}, formula (3.28):
\begin{equation}
\label{2.10}
(\hat{R}^-\ch^-_\La) \tzt = q^{\frac{m + h^\vee}{M} |\beta|^2} 
e^{\frac{2 \pi i (m + h^\vee)}{M}(z|\beta)} (\hat{R}^-\ch^-_{\La^0}) \left( M\tau, y^{-1} (z+\tau \beta), \frac{t}{M} \right).
\end{equation}

\noindent This formula, proved in \cite{KW88} in the affine Lie algebra case, was conjectured in \cite{KRW03} to hold in the Lie superalgebra case. In all cases, considered in the present paper, \eqref{2.10} was proved in \cite{GK15}.
In particular, \eqref{2.10} holds in the case of the boundary level, i.e. when $m=0$ in \eqref{2.9}, which implies that $ch^{\pm}_{\Lambda^0}=1$,
hence \eqref{2.10} turns in a product.

 \section{Complementary integrable and admissible $ \wg $-modules, where $ \fg = spo_{2|3} $}
As in \cite{KW15}, Section 4, choose the set of simple roots of $ \fg = spo_{2|3} $ to be $ \Pi = \{ \al_1, \al_2 \}, $  where 
\[ (\al_1 | \al_1) = 0, \quad (\al_2|\al_2) = - \thalf, \quad (\al_1| \al_2) = \thalf. \]
We have:
\[ \theta = 2 (\al_1 + \al_2), \quad \rho = -\thalf \al_1, \quad h^\vee = \thalf, \quad \hat{\rho} = \thalf \La_0 + \rho, \quad \mbox{ defect } \fg = 1, \quad \sdim \fg = 0.  \]
The corresponding set of simple roots of $ \wg $ is $ \hat{\Pi} = \{ \al_0 = \delta - \theta, \al_1, \al_2 \}. $ 

Integrable $ \wg $-modules $ L(\La) $ have been studied in \cite{KW15}. In this section we study the complementary integrable $ \wg $-modules, namely the
$\wg$-modules $L(\Lambda)$, for which for roots 
$\alpha_2$ and  $\delta-\alpha_2$ are integrable. Since we are interested in modular invariant characters, we shall study only atypical such modules.  

Given $ m \in \CC,$  we denote by $ P^m_+ $ (resp. $ \dot{P}^m_+  $) the set $ \bmod \CC \delta $ of non-zero weights $ \La $ of level $ m $ such that $ L(\La)  $ is complementary integrable and $ (\La + \hat{\rho}\ | \ \al_1) =0 $ (resp. $ (\La + \hat{\rho} \ | \ \al_0 + \al_1) = 0$).
Let $n:=-4m-2$. Since any (complementary) integrable $\hat{\fg}$-module $L(\Lambda)$ of level 0 is trivial, and this case has been worked out in \cite{KW15}, we shall assume that $m\neq 0$. We studied these modules in \cite{KW16}, Section 6.4 (where they were called subprincipal integrable), using a different choice of simple roots, which is not suitable for the QHR.
\begin{proposition}
\label{prop3.1}
\begin{itemize}
\item[(a)] The set $ P^m_+ $ (resp. $ \dot{P}^m_+ $) is non-empty iff $ m \in \frac{1}{4} \ZZ_{\leq -2}.$
\item[(b)] Assume that $ m $ is non-critical, i.e. $ m \neq - \half,  $ so that, by (a), $ n $ is a positive integer. Let 
\[ \begin{aligned}
\La^{[m;m_2]} & = m \La_0 - \thalf m_2 \al_1, \\
\dot{\La}^{[m;m_2]} &= m \La_0 + (1+m) \al_1 - (m + \thalf m_2) (\al_1 + 2 \al_2).
\end{aligned} \]
\noindent Then
\[ P^m_+  (\mbox{resp. } \dot{P}^m_+) = \left\{ \La^{[m;m_2]} (\mbox{resp. } \dot{\La}^{[m;m_2]}) \ \big{|} \ m_2 \in \ZZ_{\geq 0}, m_2 \leq n \right\}.  \]
\item[(c)] Integrability of $ L(\La^{[m;m_2]})$ (resp. $L( \dot{\La}^{[m;m_2]}) $) with respect to $ \theta $ holds iff $ m_2 = 0 $ (resp. $ m_2 = n = 2 $). Integrability of $ L(\La^{[m;m_2]}) $ (resp. $ L(\dot{\La}^{[m;m_2]})$) with respect to $ \al_0 $  holds iff $ m + \half m_2 \in \ZZ_{\geq 0}$ (resp. $\in \ZZ_{>0}$).
\item[(d)] Let $ \La \in P^m_+ \cup \dot{P}^m_+. $ Then $ j_\La = 2 $ if 
$ L(\La) $ is integrable with respect to $ \theta $ and $ j_\La = 1 $ otherwise (recall that $ j_\La $ is the factor that appears in \eqref{2.6}).
\end{itemize}
\end{proposition}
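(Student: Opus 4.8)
The plan is to verify all four parts by explicit root-theoretic bookkeeping with the simple root data for $\fg = spo_{2|3}$ already fixed in the section, exploiting the fact that defect $\fg = 1$, so that the atypicality condition $(\La+\hat\rho\,|\,T)=0$ involves a single isotropic root. First I would compute $\hat\rho = \thalf\La_0 + \rho = \thalf\La_0 - \thalf\al_1$ and write a general weight of level $m$ as $\La = m\La_0 + x\al_1 + y\al_2$ modulo $\CC\delta$. The complementary integrability condition is that $\al_2$ and $\delta-\al_2$ act locally nilpotently on $L(\La)$, which for an irreducible highest weight module is equivalent (by the standard $\fsl_2$-theory applied to the non-isotropic root $\al_2$, of square $-\thalf$) to $(\La+\hat\rho\,|\,\al_2^\vee)\in\ZZ_{\geq 0}$ and $(\La+\hat\rho\,|\,(\delta-\al_2)^\vee)\in\ZZ_{\geq 0}$; note $\al_2^\vee = -4\al_2$ since $(\al_2|\al_2)=-\thalf$, so these become linear Diophantine constraints on $x,y$. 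Imposing in addition the atypicality normalization $(\La+\hat\rho\,|\,\al_1)=0$ (resp. $(\La+\hat\rho\,|\,\al_0+\al_1)=0$, with $\al_0=\delta-\theta=\delta-2\al_1-2\al_2$) pins down one more linear relation; solving the resulting system and parametrizing the remaining freedom by $m_2 := -2(\La+\hat\rho\,|\,\al_2)/(\text{normalization})\in\ZZ$ should reproduce exactly the stated formulas for $\La^{[m;m_2]}$ and $\dot\La^{[m;m_2]}$, with the range $0\le m_2\le n$ forced by the two inequalities. Setting $n = -4m-2 > 0$ then gives part (a): nonemptiness is equivalent to $n\in\ZZ_{\geq 0}$ together with the noncriticality exclusion, i.e. $m\in\tfrac14\ZZ_{\le -2}$. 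For part (c) I would simply evaluate $(\La+\hat\rho\,|\,\theta^\vee)$ and $(\La+\hat\rho\,|\,\al_0^\vee)$ on the two families and read off when they lie in $\ZZ_{\geq 0}$ (resp. $\ZZ_{>0}$); since $\theta=2(\al_1+\al_2)$ and we already know $(\La+\hat\rho\,|\,\al_1)=0$, the $\theta$-condition collapses to a condition on $(\La+\hat\rho\,|\,\al_2)$ alone, yielding $m_2=0$ (resp. $m_2=n$, which equals $2$ precisely for $\dot\La$ by the structure of that family).

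For part (d), the integer $j_\La$ is, by \eqref{2.6} and Proposition \ref{prop2.1}, the coefficient $j'_\La$ counting elements $w\in\hat W_0^\#$ with $w(\La+\hat\rho)=\La+\hat\rho$ and $w(T)>0$ (here $T$ is a single isotropic root, and the $\epsilon_-$ signs will all turn out to be $+$, which I would check). So I need to identify $\Delta^\#$, the lattice $L^\#$, and the finite group $W_0^\#$. Since the only non-isotropic root of $\fg$ giving rise to a reflection here is $\al_2$ (and its affine partner $\delta-\al_2$), and complementary integrability of $L(\La)$ makes both integrable, $\Delta^\#$ is the $W$-orbit of $\al_2$, $L^\#$ is the rank-one lattice $\ZZ\al_2^\vee$ (negative definite, as stated), and $W_0^\#$ is the order-two group generated by $r_{\al_2}$. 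Then $\hat W_0^\# = W_0^\#\ltimes t_{L^\#}$ is an infinite dihedral group, and the stabilizer of $\La+\hat\rho$ inside it is: trivial in the generic (non-$\theta$-integrable) case, giving $j_\La=1$; and of order two exactly when $\La+\hat\rho$ is fixed by the reflection in the wall $r_{\al_2}$ composed with the appropriate translation, which happens precisely when $L(\La)$ is integrable with respect to $\theta$ — because $\theta = 2(\al_1+\al_2)$ together with $(\La+\hat\rho\,|\,\al_1)=0$ means the $\theta$-wall and the $\al_2$-type wall coincide at $\La+\hat\rho$. Checking additionally that in that stabilizer the nontrivial element sends $T\mapsto T>0$ (not $-T$) and has $\epsilon_-=+1$ then gives $j_\La=2$.

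The main obstacle I expect is part (d): correctly identifying which reflections belong to $\hat W_0^\#$ as opposed to the larger $\hat W^\#$ (the latter may contain reflections coming from odd or isotropic-orthogonal combinations that do not contribute to $j_\La$), and then carefully solving the fixed-point equation $w(\La+\hat\rho)=\La+\hat\rho$ for $w=r_{\al_2}^{\varepsilon}t_\gamma$ with $\gamma\in L^\#$ — this uses the explicit translation formula \eqref{2.2} and the level being $K+h^\vee = m+\thalf$, and one must track the $\QQ\delta$-components carefully. The condition $w(T)>0$ and the sign computation $\epsilon_-(w)=+1$ are also genuine checks rather than formalities, since a sign mismatch would spoil the count. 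Parts (a)--(c), by contrast, are essentially linear algebra over $\QQ$ with integrality constraints, and I would present them as a single unified computation, cross-referencing the analogous (differently-normalized) analysis in \cite{KW16}, Section 6.4 to confirm the answer.
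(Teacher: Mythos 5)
There is a genuine gap in your treatment of parts (a)--(c), concentrated in the criterion you propose for integrability with respect to the non-simple root $\delta-\al_2$. For the simple even root $\al_2$ the direct $\fsl_2$-argument is valid and gives $(\La\mid\al_2^\vee)=m_2\in\ZZ_{\geq 0}$. But $\delta-\al_2=\al_0+2\al_1+\al_2$ is not simple, and in the superalgebra setting local nilpotency of the corresponding negative root vector on $L(\La)$ cannot be read off from a pairing of $\La+\hat\rho$ (or of $\La$) with $(\delta-\al_2)^\vee$; one must first make $\delta-\al_2$ simple by a chain of odd reflections and track how the highest weight changes at each step --- this is precisely the step the paper's proof invokes. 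The discrepancy is not cosmetic: a direct computation gives
\[
\bigl(\La^{[m;m_2]}+\hat\rho\mid(\delta-\al_2)^\vee\bigr)=n-m_2-1,
\qquad
\bigl(\La^{[m;m_2]}\mid(\delta-\al_2)^\vee\bigr)=n-m_2+2,
\]
so your criterion yields $m_2\leq n-1$ (or $n-2$, or $n+2$, depending on the shift convention), never the correct bound $m_2\leq n$. The missing unit comes exactly from the atypicality $(\La+\hat\rho\mid\al_1)=0$: reflecting first in $\al_1$ (which does not shift $\La$, since $(\La\mid\al_1)=0$) and then in $\al_0+\al_1$ makes $\delta-\al_2$ simple with new highest weight $\La-\al_0-\al_1$ in the generic case, and $(\La-\al_0-\al_1\mid(\delta-\al_2)^\vee)=n-m_2$, recovering $m_2\leq n$. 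The same caveat applies in principle to the $\theta$-condition in (c), since $\theta=2(\al_1+\al_2)$ is not simple either (there the naive pairing $(\La\mid\theta^\vee)=-\thalf m_2$ happens to give the right answer $m_2=0$, but it still requires the odd-reflection/$osp(1|2)$ justification); the $\al_0$-condition is fine because $\al_0$ is simple.

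Your part (d) follows the paper's route --- the paper simply states that (d) follows from Proposition \ref{prop2.1} --- and your unpacking ($W_0^\#=\{1,r_{\al_2}\}$, $L^\#=4\ZZ\al_2$, $T=\{\al_1\}$ or $\{\al_0+\al_1\}$, and the stabilizer of $\La+\hat\rho$ in $\hat W_0^\#$ having order $2$ exactly when $\theta$ is integrable) is the right way to carry that out. So the architecture of your proof is sound; what must be repaired is the integrability criterion for the non-simple roots, replacing the direct pairing with the odd-reflection computation sketched above.
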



\begin{proof}
  It is standard (see e.g. \cite{KW14}). First, it is easy to see that a $ \wg $-module $ L(\La) $ is complementary integrable iff $ -\al_2 $ and $ -(\delta - \al_2) $ are integrable. Next, integrability with respect to $ -\al_2 $ is equivalent to $ m_2 \in \ZZ_{\geq 0} $ since $ \al_2 $ is an even simple root. Using odd reflections, one can bring $ \delta - \al_2 $ to an even simple root $ \gamma, $ and integrability with respect to $ -\gamma $ is equivalent to $ m_2 \leq n. $ This proves (a) and (b). The proof of (c) is similar.
  Claim (d) follows from Proposition \ref{prop2.1}.
\end{proof}

As in \cite{KW15}, we use coordinates \eqref{2.1}, where $ z = -z_1 (\al_1 + 2 \al_2) - z_2 \al_1, $ so that $ (z|z) = 2 z_1 z_2. $ In these coordinates we have the following formula for the affine superdenominator (\ref{2.5}), see \cite{KW15}, formula (5.12) (there is a typo there):
\begin{equation}
\label{3.1}
\hat{R}^- (\tau, z_1, z_2, t) = -ie^{\pi i t} \frac{\eta (\tau)^3 \vartheta_{11} (\tau, z_1 + z_2) \vartheta_{11} (\tau, \frac{z_1-z_2}{2})}{\vartheta_{11} (\tau, z_1) \vartheta_{11}(\tau, z_2) \vartheta_{11} (\tau, \frac{z_1 + z_2}{2})},\end{equation}
\noindent where $ \eta(\tau) $ is the Dedekind eta function.

As for the RHS of \eqref{2.6} and \eqref{2.7}, we have: $ W^\#$ contains the subgroup 
$W_0^\#=\{ 1,r_{\al_2}\}, \ L^\# = 4 \ZZ \al_2, \ T = 
\{ \al_1\}$ (resp. $= \{\al_0+ \al_1\} $). 
In order to apply the supercharacter formula \eqref{2.6} to the complementary 
integrable $ \wg $-modules  $ \LLa, $ where $ \La = \La^{[m;m_2]} \in P^m_+ $ (resp. $ = \dot{\La}^{[m;m_2]} \in \dot{P}^m_+ $ ), we need to introduce the following functions:
\[ \begin{aligned}
F^{[m;m_2]} & = \sum_{j \in \ZZ} t_{4j \al_2} \frac{e^{\La^{[m;m_2]} + \wrh}}{1 - e^{-\al_1}},\\
\dot{F}^{[m;m_2]} & = q^{-(m + \frac{m_2}{2})} \sum_{j \in \ZZ} t_{4j \al_2} \frac{e^{\dot{\La}^{[m;m_2]} + \wrh}}{1 - e^{-\al_0-\al_1}}, \\
B^{[m;m_2]} & = F^{[m;m_2]} - r_{\al_2} F^{[m;m_2]}, \\
\dot{B}^{[m;m_2]} & = \dot{F}^{[m;m_2]} - r_{\al_2} \dot{F}^{[m;m_2]}. \\
\end{aligned} \]

After a direct calculation, we obtain
(cf. \eqref{1.8} and \eqref{1.9}): 
\begin{equation}
\label{3.2}
B^{[m;m_2]}(\tau, z_1, z_2, t) = 
e^{\pi i(2m+1)t} 
\Phi^{\left[-2m-1; \thalf(m_2+1)\right]} (2 \tau, z_1, -z_2). 
\end{equation}
\begin{equation}
\label{3.3}
\dot{B}^{[m;m_2]}(\tau, z_1, z_2, t) = e^{\pi i(2m+1)(t+z_2=z_1)} 
q^{-(m+\thalf)}\Phi^{\left[-2m-1; \thalf(m_2+1)\right]} (2 \tau, z_1+\tau, 
-z_2+\tau).
\end{equation}
Applying Lemma \ref{lemma1.1} to \eqref{3.2} and \eqref{3.3}, we obtain
\begin{proposition}
\label{prop3.2}
\[B^{[m;\,m_2]} \tzzt =  \thalf e^{\pi i(2m+1)t}\]
\[\times \left( \Phi^{[-4m-2;\, m_2+1]} (\tau, \thalf z_1, -\thalf z_2) -(-1)^{m_2} \Phi^{[-4m-2;\,m_2+1]} ( \tau, \thalf (z_1 +1), -\thalf (z_2 +1))  \right);\]
\[\dot{B}^{[m;\,m_2]} \tzzt = \thalf e^{\pi i(2m+1)t} q^{-(m+\thalf)}  e^{\pi i (2m+1) (z_2 - z_1)}\left( \Phi^{[-4m-2;\, m_2+1]} (\tau, \thalf (z_1 + \tau), -\thalf (z_2 - \tau)) \right. \] 
\[ \left. -(-1)^{m_2} \Phi^{[-4m-2;\, m_2+1]} \left( \tau, \thalf (z_1+\tau + 1), -\thalf (z_2-\tau+1) \right) \right). \]
\qed
\end{proposition}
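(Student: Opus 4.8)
The plan is to derive Proposition~\ref{prop3.2} directly from formulas \eqref{3.2}, \eqref{3.3} together with Lemma~\ref{lemma1.1}, treating the two assertions in parallel. The starting point is that in \eqref{3.2} the expression $B^{[m;m_2]}\tzzt$ is, up to the prefactor $e^{\pi i(2m+1)t}$, exactly $\Phi^{[-2m-1;\,\frac12(m_2+1)]}(2\tau, z_1, -z_2)$, i.e. a value of $\Phi^{[\mu;\,\sigma]}$ at \emph{doubled} modular parameter $2\tau$ with $\mu=-2m-1$ and $\sigma=\frac12(m_2+1)$. This is precisely the left-hand side shape of Lemma~\ref{lemma1.1}, which rewrites $2\Phi^{[\mu;\sigma]}(2\tau,w_1,w_2,s)$ as a sum of two values of $\Phi^{[2\mu;\,2\sigma]}$ at $\tau$ (with halved arguments and a $\pm1$-shift), where $(-1)^{2\sigma}$ is the sign. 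So the first step is simply to apply Lemma~\ref{lemma1.1} with $w_1=z_1$, $w_2=-z_2$, $s=0$ (and absorb the extra $t$-dependence via the $e^{\pi i(2m+1)t}$ factor, noting $2\mu=-4m-2$ and $2\sigma=m_2+1$, so $(-1)^{2\sigma}=(-1)^{m_2+1}=-(-1)^{m_2}$, which accounts for the sign in the statement). Dividing by $2$ gives the $\thalf$ in front; the arguments $\thalf z_1$, $-\thalf z_2$ in the first term and $\thalf(z_1+1)$, $\thalf(-z_2-1)=-\thalf(z_2+1)$ in the second come out directly. This yields the first displayed formula.

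For the second assertion the plan is identical, now starting from \eqref{3.3}: there $\dot{B}^{[m;m_2]}\tzzt$ equals $e^{\pi i(2m+1)(t+z_2-z_1)}q^{-(m+\frac12)}\Phi^{[-2m-1;\,\frac12(m_2+1)]}(2\tau, z_1+\tau, -z_2+\tau)$. (I read the ``$z_2=z_1$'' in the exponent of \eqref{3.3} as a typo for $z_2-z_1$, consistent with the $e^{\pi i(2m+1)(z_2-z_1)}$ factor appearing in the target formula.) One again applies Lemma~\ref{lemma1.1}, this time with the shifted arguments $w_1=z_1+\tau$, $w_2=-z_2+\tau$: halving gives $\thalf(z_1+\tau)$ and $\thalf(-z_2+\tau)=-\thalf(z_2-\tau)$ in the first term, and $\thalf(z_1+\tau+1)$, $\thalf(-z_2+\tau-1)=-\thalf(z_2-\tau+1)$ in the second, with the same overall sign $-(-1)^{m_2}$ on the second term. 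Carrying along the prefactor $\thalf e^{\pi i(2m+1)t}q^{-(m+\frac12)}e^{\pi i(2m+1)(z_2-z_1)}$ reproduces the second displayed formula verbatim.

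I would present this as: ``Formulas \eqref{3.2} and \eqref{3.3} express $B^{[m;m_2]}$ and $\dot B^{[m;m_2]}$ through $\Phi^{[-2m-1;\frac12(m_2+1)]}(2\tau,\cdot,\cdot)$; applying Lemma~\ref{lemma1.1} with $m\rightsquigarrow -2m-1$, $s\rightsquigarrow \frac12(m_2+1)$ and substituting $z_1\mapsto z_1,\ z_2\mapsto -z_2$ (resp.\ $z_1\mapsto z_1+\tau,\ z_2\mapsto -z_2+\tau$) and dividing by $2$ gives the claimed identities.'' The only genuinely non-routine point, and the one I expect to be the main obstacle, is bookkeeping: making sure the argument substitutions in Lemma~\ref{lemma1.1} land exactly on the arguments in the statement (in particular that the $+1/-1$ shifts pair up correctly after the sign flip $z_2\mapsto -z_2$), that the sign $(-1)^{2s}$ with $s=\frac12(m_2+1)$ becomes $-(-1)^{m_2}$ as written, and that the $t$- and $q$-prefactors are transported consistently (the $t/2$ in Lemma~\ref{lemma1.1} matching the $e^{\pi i(2m+1)t}$ normalization since $(-4m-2)\cdot\frac{t}{2}\cdot 2\pi i$ pairs with $e^{2\pi i(-2m-1)\cdot t}$ at doubled $\tau$). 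Everything else is a direct substitution, which is why it is reasonable to label the proof of Lemma~\ref{lemma1.1} itself ``a straightforward verification'' and to obtain Proposition~\ref{prop3.2} as an immediate corollary.
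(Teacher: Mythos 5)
Your proposal is correct and follows exactly the paper's own route: the paper obtains Proposition \ref{prop3.2} precisely by applying Lemma \ref{lemma1.1} (with $m\rightsquigarrow -2m-1$, $s\rightsquigarrow \tfrac12(m_2+1)$) to the expressions \eqref{3.2} and \eqref{3.3}, and your bookkeeping of the sign $(-1)^{2s}=-(-1)^{m_2}$, the halved and shifted arguments, and the $t$- and $q$-prefactors is accurate (including the correct reading of the typo in \eqref{3.3}).
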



We denote by $ \tilde{B}^{[m;m_2]} $ and $ \tilde{\dot{B}}\vphantom{B}^{[m;m_2]} $ the functions, obtained from the RHS of \eqref{3.2} and \eqref{3.3} by replacing the function $ \Phi $ by its modification $ \tilde{\Phi}. $

It is straightforward to check that Lemma \ref{lemma1.1} still holds if we replace $ \Phi $ by its modification $ \tilde{\Phi} $ on both sides of the equation. Applying this observation to Proposition \ref{prop3.2} and using the facts that $ \tilde{\Phi}^{[n;s]} $ is independent of $ s $ if $ s $ is an integer \cite{KW15}, Corollary 1.6, we obtain the following corollary. 
\begin{corollary}
\label{cor3.3}
\[\tilde{B}^{[m;\,m_2]} \tzzt =  \thalf e^{\pi i(2m+1)t}\]
\[\times \left( \tilde{\Phi}^{[-4m-2;\, 0]} (\tau, \thalf z_1, -\thalf z_2) -(-1)^{m_2} \tilde{\Phi}^{[-4m-2;\,0]} ( \tau, \thalf (z_1 +1), -\thalf (z_2 +1))  \right);\]
\[\tilde{\dot{B}}
^{[m;\,m_2]} \tzzt = \thalf e^{\pi i(2m+1)t} q^{-(m+\thalf)}  e^{\pi i (2m+1) (z_2 - z_1)}\]
\[\times\left(
\tilde {\Phi}^{[-4m-2;\, 0]} (\tau, \thalf (z_1 + \tau), -\thalf (z_2 - \tau)) 
-(-1)^{m_2} 
\tilde{\Phi}^{[-4m-2;\,0]} (\tau, \thalf (z_1+\tau + 1), -\thalf (z_2-\tau+1))\right).\]
\end{corollary}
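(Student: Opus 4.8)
The plan is to deduce Corollary \ref{cor3.3} from Proposition \ref{prop3.2} by applying the two announced facts: that Lemma \ref{lemma1.1} survives replacing $\Phi$ by $\tilde{\Phi}$, and that $\tilde{\Phi}^{[n;s]}$ does not depend on the integer $s$. The starting point is not Proposition \ref{prop3.2} itself but the more compact identities \eqref{3.2} and \eqref{3.3}, which express $B^{[m;m_2]}$ and $\dot{B}^{[m;m_2]}$ in terms of a single $\Phi^{[-2m-1;\thalf(m_2+1)]}(2\tau,\cdot,\cdot)$. By definition, $\tilde{B}^{[m;m_2]}$ and $\tilde{\dot{B}}\vphantom{B}^{[m;m_2]}$ are obtained from the right-hand sides of \eqref{3.2}, \eqref{3.3} by the substitution $\Phi\rightsquigarrow\tilde{\Phi}$, so I would first rewrite those two ``tilded'' identities explicitly and then feed them into the modified Lemma \ref{lemma1.1}.

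First I would record the modified version of Lemma \ref{lemma1.1}: for any $m\in\ZZ_{>0}$ and $s\in\half\ZZ$,
\[
2\,\tilde{\Phi}^{[m;\,s]}(2\tau,z_1,z_2,t)=\tilde{\Phi}^{[2m;\,2s]}\bigl(\tau,\tfrac{z_1}{2},\tfrac{z_2}{2},\tfrac{t}{2}\bigr)+(-1)^{2s}\tilde{\Phi}^{[2m;\,2s]}\bigl(\tau,\tfrac{z_1+1}{2},\tfrac{z_2-1}{2},\tfrac{t}{2}\bigr).
\]
This is exactly the content of the sentence ``It is straightforward to check that Lemma \ref{lemma1.1} still holds if we replace $\Phi$ by its modification $\tilde{\Phi}$,'' which I may assume. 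Applying it with $m\rightsquigarrow -2m-1$ (a positive integer by part (a) of Proposition \ref{prop3.1}, since $m\in\frac14\ZZ_{\le-2}$ and $m$ is non-critical) and $s=\thalf(m_2+1)$, so that $2s=m_2+1$ and $(-1)^{2s}=(-1)^{m_2+1}=-(-1)^{m_2}$, to the tilded form of \eqref{3.2} and \eqref{3.3} yields precisely the displays in Corollary \ref{cor3.3}, except that the degree-$(-4m-2)$ functions carry the superscript $m_2+1$ rather than $0$. The sign bookkeeping — checking that the $-(-1)^{m_2}$ in Proposition \ref{prop3.2} becomes, after passing through $(-1)^{2s}$, the same $-(-1)^{m_2}$ in the corollary — is a short computation I would spell out but not belabor; note that $m_2+1$ is an integer, so $(-1)^{2s}=(-1)^{m_2+1}$ is unambiguous.

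Finally I would invoke \cite{KW15}, Corollary 1.6, which says $\tilde{\Phi}^{[n;s]}$ is independent of $s$ for $n\in\ZZ_{>0}$ and $s\in\ZZ$; here $n=-4m-2\in\ZZ_{>0}$ by Proposition \ref{prop3.1}(a) and $s=m_2+1\in\ZZ$ since $m_2\in\ZZ_{\ge0}$, so $\tilde{\Phi}^{[-4m-2;\,m_2+1]}=\tilde{\Phi}^{[-4m-2;\,0]}$ in every occurrence. Substituting this identity into the four $\tilde{\Phi}$ terms gives the stated formulas verbatim, for both $\tilde{B}^{[m;m_2]}$ and $\tilde{\dot{B}}\vphantom{B}^{[m;m_2]}$. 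I do not expect a genuine obstacle: the only mild subtlety is that one must work from \eqref{3.2}--\eqref{3.3} (where the $\Phi$ appears at argument $2\tau$ and is visibly in the range of the modified Lemma \ref{lemma1.1}) rather than from the already-expanded Proposition \ref{prop3.2}, and one must keep the two overall scalar prefactors $e^{\pi i(2m+1)t}$ and $e^{\pi i(2m+1)t}q^{-(m+\thalf)}e^{\pi i(2m+1)(z_2-z_1)}$ intact while the Lemma acts only on the $\Phi$-factor with its own $e^{2\pi i m t}$-type prefactor absorbed — i.e. one should check that the $t\mapsto t/2$ and $2\tau\mapsto\tau$ rescalings in the Lemma are compatible with how $t$ enters \eqref{3.2}--\eqref{3.3}, which they are by construction of $\Phi^{[m;s]}(\tau,z_1,z_2,t)=e^{2\pi imt}(\cdots)$.
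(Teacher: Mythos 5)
Your argument is correct and is essentially the paper's own proof: define the modifications via the right-hand sides of \eqref{3.2}--\eqref{3.3}, push them through the $\tilde{\Phi}$-version of Lemma~\ref{lemma1.1}, and then erase the superscript $m_2+1$ using the $s$-independence of $\tilde{\Phi}^{[n;s]}$ for integral $s$ and $n=-4m-2\in\ZZ_{>0}$. The only slip is the parenthetical claim that $-2m-1$ is a positive integer: for $m\in\tfrac14\ZZ_{\le -2}$ it is in general only a positive half-integer (e.g.\ $m=-\tfrac34$ gives $\tfrac12$), but this is harmless since the modified Lemma~\ref{lemma1.1} holds for half-integer degree exactly as for integer degree, and the $s$-independence is only ever invoked for the doubled degree $-4m-2$, which is a genuine positive integer.
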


Note that the $ \wg $-module $ \LLa$ for 
$\La\in P^m_+ $ (resp. $\in \dot{P}^m_+ $) is tame, since the root
$\alpha_1$ (resp. $\alpha_0 +\alpha_1$ is contained in 
$\hat{\Pi}$  (resp. in $r_{\alpha_1}\hat{\Pi}$).  
Now we can write down the normalized supercharacters $ \ch^-_\La  $ for all complementary integrable atypical 
$ \wg $-modules $ \LLa. $
\begin{proposition}
\label{prop3.4}
Let $ \La = \La^{[m;m_2]} \in P^m_+ $ (resp. $ \dot{\La} = 
\dot{\La}^{[m;m_2]} 
\in \dot{P}^m_+ $)
. Then we have the following supercharacter formulas:
\begin{itemize}
\item[(a)] $ \hat{R}^- \ch^-_\La = B^{[m;m_2]} $ (resp. $ \hat{R}^- \ch^-_{\dot{\La}} = \dot{B}^{[m;m_2]} $) if the roots $ \theta $ and $ \al_0 $ are both 
not integrable for $ \LLa $ (resp. $ L(\dot{\La}) $).
\item[(b)] $ \hat{R}^- \ch^-_\La = \frac{1}{j_\La} \left(B^{[m;m_2]} + B^{[m;-m_2]} \right)$ (resp. $ \hat{R}^- \ch^-_{\dot{\La}} = \frac{1}{j_\La} \left(\dot{B}^{[m;m_2]} + \dot{B}^{[m;-m_2]} \right)  $) if the root $ \theta  $ is integrable, but $ \al_0 $ is not for $ \LLa $ (resp. $ L(\dot{\La}) $), where $ j_\La $ (resp. $ j_{\dot{\La}} $) = 2 if $ m_2 = 0 $ and $ j_\La = 1 $ otherwise. 
\item[(c)] $ \hat{R}^- \ch^-_\La = B^{[m;m_2]} - \dot{B}^{[m;n-m_2]} $ (resp. $ \hat{R}^- \ch^-_{\dot{\La}} = \dot{B}^{[m;m_2]} - B^{[m;n-m_2]} $) if the root $ \al_0 $ is integrable but $ \theta $ is not for $ \LLa $ (resp. $ L(\dot{\La}) $).
\end{itemize}
\end{proposition}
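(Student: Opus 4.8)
The plan is to derive all six identities from the supercharacter formula \eqref{2.6}, which is available since $L(\La)$ is tame for every $\La\in P^m_+\cup\dot{P}^m_+$, as observed just above. Unfolding the $L^\#$-lattice sum hidden in $\Theta^-_{\La+\hat\rho,T}$ (see \eqref{2.7}), \eqref{2.6} takes the form
\[
\hat{R}^-\ch^-_\La = j_\La^{-1}\sum_{w\in\hat{W}^\#}\epsilon_-(w)\,w\!\left(\frac{e^{\La+\hat\rho}}{\prod_{\beta\in T}(1-e^{-\beta})}\right),\qquad \hat{W}^\#=W^\#\ltimes t_{L^\#}.
\]
Now $\hat{W}^\#_0$ is generated by $r_{\al_2}$ and $r_{\delta-\al_2}$, and $\epsilon_-(r_{\al_2})=-1$ since $\al_2$ is an indivisible even root; by the very definitions of $F^{[m;m_2]},B^{[m;m_2]},\dot{F}^{[m;m_2]},\dot{B}^{[m;m_2]}$, the $\hat{W}^\#_0$-part of the sum above, applied to $e^{\La^{[m;m_2]}+\hat\rho}/(1-e^{-\al_1})$, equals $B^{[m;m_2]}$, and applied to $e^{\dot{\La}^{[m;m_2]}+\hat\rho}/(1-e^{-\al_0-\al_1})$ equals $q^{m+m_2/2}\dot{B}^{[m;m_2]}$, the extra $q$-power being the one inserted in the definition of $\dot{F}^{[m;m_2]}$.

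The problem then reduces to the structure of $\hat{W}^\#$, which is to be read off from Proposition \ref{prop3.1}: $\hat{W}^\#$ is generated by the reflections $r_\gamma$ in the integrable non-isotropic roots $\gamma$, and apart from the $\al_2$-tower (generating $\hat{W}^\#_0$) the only candidates are $\theta$ and $\al_0=\delta-\theta$, which by Proposition \ref{prop3.1}(c) are integrable exactly in cases (b) and (c) respectively, and — the level being negative — never simultaneously, so that (a)--(c) exhaust all possibilities. As $(\theta|\al_2)=(\al_0|\al_2)=0$, both $r_\theta$ and $r_{\al_0}$ commute with $\hat{W}^\#_0$ and neither lies in it; hence $\hat{W}^\#=\hat{W}^\#_0$ in case (a), while in cases (b), (c) the group $\hat{W}^\#$ is the disjoint union of $\hat{W}^\#_0$ and $r_\theta\,\hat{W}^\#_0$, resp.\ $r_{\al_0}\hat{W}^\#_0$. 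Feeding in the value of $j_\La$ from Proposition \ref{prop3.1}(d), case (a) is immediate, and in cases (b), (c) one gets $j_\La^{-1}\big(B^{[m;m_2]}+\epsilon_-(r_\theta)\,r_\theta B^{[m;m_2]}\big)$, resp.\ $j_\La^{-1}\big(B^{[m;m_2]}+\epsilon_-(r_{\al_0})\,r_{\al_0}B^{[m;m_2]}\big)$, with $\epsilon_-(r_\theta)=+1$ (because $\theta=2(\al_1+\al_2)$ is divisible, hence uncounted in $\ell_-$) and $\epsilon_-(r_{\al_0})=-1$.

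What is left is the evaluation of the extra term in each case. For (b) one uses $r_\theta\al_1=-(\al_1+2\al_2)=-r_{\al_2}\al_1$ and $r_\theta(\La^{[m;m_2]}+\hat\rho)-(\al_1+2\al_2)=r_{\al_2}(\La^{[m;-m_2]}+\hat\rho)$; rewriting $\tfrac{1}{1-e^{\al_1+2\al_2}}=-e^{-(\al_1+2\al_2)}\tfrac{1}{1-e^{-(\al_1+2\al_2)}}$ and re-indexing the $\hat{W}^\#_0$-sum by right multiplication by $r_{\al_2}$ (which flips the sign) yields $r_\theta B^{[m;m_2]}=B^{[m;-m_2]}$. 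For (c) one uses $r_{\al_0}\al_1=\al_0+\al_1$, which turns $1-e^{-\al_1}$ into the denominator appearing in $\dot{F}$, together with $r_{\al_0}(\La^{[m;m_2]}+\hat\rho)=\dot{\La}^{[m;n-m_2]}+\hat\rho-(m+1+m_2/2)\delta$; since $e^{-\delta}=q$, this $\delta$-shift cancels exactly the $q$-power $q^{m+(n-m_2)/2}$ carried by the $\dot{B}^{[m;n-m_2]}$-term, so $r_{\al_0}B^{[m;m_2]}=\dot{B}^{[m;n-m_2]}$, and the character is $B^{[m;m_2]}-\dot{B}^{[m;n-m_2]}$. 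The three dotted formulas come out of the same computation, now exploiting that $r_{\al_0}$ is an involution interchanging the sets $\{\al_1\}$ and $\{\al_0+\al_1\}$ and sending $\dot{\La}^{[m;m_2]}+\hat\rho$ to $\La^{[m;n-m_2]}+\hat\rho$ modulo $\CC\delta$, and that $r_\theta$ acts on the $\dot{B}$'s exactly as on the $B$'s.

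I expect the real obstacle to be this last step — determining precisely how $r_\theta$ and $r_{\al_0}$ move the isotropic root defining $T$ and the weight $\La+\hat\rho$ (modulo $\CC\delta$), keeping the $\delta$-shifts in step with the $q$-normalizations hidden in $F^{[m;m_2]}$ and $\dot{F}^{[m;m_2]}$, and above all getting the signs $\epsilon_-$ right, so that in case (b) the extra coset contributes $+B^{[m;-m_2]}$ rather than $-B^{[m;-m_2]}$. One must also check that $\hat{W}^\#_0$ has index exactly $2$ in $\hat{W}^\#$ in cases (b), (c), i.e.\ that no further non-isotropic root becomes integrable; this is routine given Proposition \ref{prop3.1}, as in \cite{KW14}.
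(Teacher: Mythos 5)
Your proposal follows essentially the same route as the paper's proof: start from the tame supercharacter formula \eqref{2.6}, use Proposition \ref{prop3.1} to see that $\theta$ and $\al_0$ are never simultaneously integrable (so $W^\#$ is $\{1,r_{\al_2}\}$ enlarged by at most one of $r_\theta$, $r_{\al_0}$, and (a)--(c) exhaust all cases), and identify the extra coset's contribution as $B^{[m;-m_2]}$ in case (b) and as $\dot{B}^{[m;n-m_2]}$ via $r_{\al_0}(\La^{[m;m_2]}+\wrh)=\dot\La^{[m;n-m_2]}+\wrh \bmod \CC\delta$ in case (c) --- the only real difference being that the paper verifies $r_\theta B^{[m;m_2]}=B^{[m;-m_2]}$ by the coordinate formula $r_\theta\tzzt=(\tau,-z_2,-z_1,t)$ and the $\Phi$-function symmetry in \eqref{3.2}, rather than by your root-theoretic re-indexing. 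The one slip is that your ``unfolded'' version of \eqref{2.6} omits the $q$-power prefactor appearing in \eqref{2.4}; this prefactor is trivial for $\La^{[m;m_2]}$ (where $|\La+\wrh|^2=0$) but for $\dot\La^{[m;m_2]}$ it is precisely what cancels the factor $q^{m+\frac{m_2}{2}}$ you correctly extract from the definition of $\dot F^{[m;m_2]}$, so it must be retained to land on $\dot B^{[m;m_2]}$ on the nose --- consistent with the normalization issue you yourself flag at the end.
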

\begin{proof}
First note that if a $ \wg $-module $ L(\la) $ is complementary integrable and is integrable with respect to $ \al_2, \delta - \al_2, \theta$ and $ \delta - \theta,  $ then its level $ m = 0,  $ which we excluded from consideration. Hence (a)--(c) cover all cases. Recall also that $ j_\La $ and $ j_{\dot{\La}} $ are given by Proposition \ref{prop3.1}(d).

In case (a) we may apply formula \eqref{2.6} with $ W^\# = \{ 1, r_{\al_2} \}, $ proving the supercharacter formula in this case. 

In case (b) we apply formula \eqref{2.6} with $ W^\# = \{ 1, r_{\al_2}, r_\theta, r_\theta r_{\al_2} \} $, hence we obtain:
\begin{equation}
\label{3.4}
\hat{R}^- \ch^-_\La = \frac{1}{j_\La} \left( B^{[m;m_2]} + r_\theta B^{[m;m_2]} \right).
\end{equation}
\noindent But in coordinates \eqref{2.1} with $ z = -z_1 (\al_1 + 2 \al_2) - z_2 \al_1 $ we have
\[ r_\theta \tzzt = (\tau, -z_2, -z_1, t). \]
\noindent Hence
\[ \begin{aligned}
r_\theta B^{[m;m_2]} \tzzt & = e^{\pi i (2m+1) t} \Phi^{[-2m-1; \frac{m_2+1}{2}]} (2\tau, -z_2, z_1) \\
& = e^{\pi i (2m+1)t} \Phi^{[-2m-1; 1 - \frac{m_2+1}{2}]} (2 \tau, z_1, -z_2) \\
& = B^{[m;-m_2]} \tzzt. \\
\end{aligned} \]
\noindent Hence (b) for $ \La $ follows from \eqref{3.4}. The proof of (b) for $ \dot{\La} $ is similar. 

In case (c) we apply formula \eqref{2.6} with $ W^\# = \{ 1, r_{\al_2}, r_{\al_0}, r_{\al_0} r_{\al_2} \} $. It is immediate to check that
\begin{equation}
\label{3.5}
r_{\al_0} (\La^{[m;m_2]} + \wrh) = \dot{\La}^{[m; n-m_2]} + \wrh \bmod{\CC \delta}. 
\end{equation}
\noindent Hence formula \eqref{2.6} gives:
\[ \hat{R}^- \ch^-_\La = B^{[m;m_2]} - r_{\al_0} B^{[m;m_2]} = B^{[m;m_2]} - \dot{B}^{[m;n-m_2]}. \]
\noindent The proof of (c) for $ \dot{\La} $ is the same. 
\end{proof}
As a corollary, we obtain formulas for modified normalized supercharacters $ \tilde{\ch}^-_\La $ for all complementary integrable atypical $ \wg $-modules $ \LLa $. By Proposition \ref{prop3.4}, all the $ \ch^-_\La  $ are expressed in terms of the functions $ B^{[m;m_2]} $ and $ \dot{B}^{[m;m_2]}, $ which, by Proposition \ref{prop3.2}, are in turn expressed in terms of the function $ \Phi^{[n;s]} $, where $ s \in \ZZ. $  By definition, $ \tilde{\ch}^-_\La $ is obtained from $ \ch^-_\La $ by replacing $ B^{m;m_2} $ and $ \dot{B}^{[m;m_2]} $
by
$ \tilde{B}\vphantom{B}^{[m;m_2]} $ and $ \tilde{\dot{B}}\vphantom{B}^{[m;m_2]} $ respectively, 
as in Corollary \ref{cor3.3}. Since the modified functions $ \tilde{\Phi}^{[n;s]}, s \in \ZZ, $ are independent of $s $ \cite{KW16}, Corollary 1.6, we obtain from Proposition \ref{prop3.4} and Corollary \ref{cor3.3} the following. 
\begin{corollary}
\label{cor3.5} 
Let $ \La $ and $ \dot{\La} $ be as in Proposition \ref{prop3.4}. Then
\begin{itemize}
\item[(a)] $ \hat{R}^- \tilde{\ch}^-_\La  = \tilde{B}^{[m;0]}$ (resp. $ \hat{R}^- \tilde{\ch}^-_{\dot{\La}}  = \tilde{\dot{B}}\vphantom{B}^{[m;0]}$ )  in case (a) of Proposition \ref{prop3.4}.
\item[(b)] $ \hat{R}^- \tilde{\ch}^-_\La  = \frac{2}{j_\La} \tilde{B}^{[m;0]}$ (resp. $ \hat{R}^- \tilde{\ch}^-_{\dot{\La}}  = \frac{2}{j_{\dot{\La}}} \tilde{\dot{B}}\vphantom{B}^{[m;0]}$) in case (b) of Proposition \ref{prop3.4}, where $ j_\La $ (resp. $ j_{\dot{\La}} ) = 2$ if $ m_2 = 0, $ and $= 1$ otherwise.
\item[(c)] $ \hat{R}^- \tilde{\ch}^-_\La  = -\hat{R}^- \,\tilde{\ch}^-_{\dot{\La}} = \tilde{B}^{[m;0]} - \tilde{\dot{B}}\vphantom{B}^{[m;0]}$  in case (c) of Proposition \ref{prop3.4}.
\end{itemize}
\end{corollary}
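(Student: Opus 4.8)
The plan is to read the modified supercharacters off Proposition~\ref{prop3.4} and then collapse everything with Corollary~\ref{cor3.3}. By the very definition of $\tilde{\ch}^-$, substituting into Proposition~\ref{prop3.4} gives, in case (a), $\hat{R}^-\tilde{\ch}^-_\La=\tilde{B}^{[m;\,m_2]}$ (resp.\ $=\tilde{\dot{B}}\vphantom{B}^{[m;\,m_2]}$); in case (b), $\hat{R}^-\tilde{\ch}^-_\La=\tfrac{1}{j_\La}\bigl(\tilde{B}^{[m;\,m_2]}+\tilde{B}^{[m;\,-m_2]}\bigr)$ (resp.\ the same with $\tilde{\dot{B}}$); and in case (c), $\hat{R}^-\tilde{\ch}^-_\La=\tilde{B}^{[m;\,m_2]}-\tilde{\dot{B}}\vphantom{B}^{[m;\,n-m_2]}$, while $\hat{R}^-\tilde{\ch}^-_{\dot{\La}}=\tilde{\dot{B}}\vphantom{B}^{[m;\,m_2]}-\tilde{B}^{[m;\,n-m_2]}$ by the analogue of \eqref{3.5} for $\dot{\La}$ (obtained by applying $r_{\al_0}$ to both sides of \eqref{3.5}).

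Next I would feed in Corollary~\ref{cor3.3}: since $\tilde{\Phi}^{[-4m-2;\,s]}$ is independent of $s\in\ZZ$, both $\tilde{B}^{[m;\,m_2]}$ and $\tilde{\dot{B}}\vphantom{B}^{[m;\,m_2]}$ are expressed through $\tilde{\Phi}^{[-4m-2;\,0]}$ and depend on $m_2$ only through the sign $(-1)^{m_2}$; in particular $\tilde{B}^{[m;\,-m_2]}=\tilde{B}^{[m;\,m_2]}$, and for even $m_2$ one has $\tilde{B}^{[m;\,m_2]}=\tilde{B}^{[m;\,0]}$. This settles case (b): the two equal summands merge, producing the factor $2$, and because $\theta$ is integrable there Proposition~\ref{prop3.1}(c) forces $m_2=0$ for $\La$ and $m_2=n=2$ for $\dot{\La}$, both even, so the remaining $\tilde{B}^{[m;\,m_2]}$ becomes $\tilde{B}^{[m;\,0]}$. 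Case (a) is the identical reduction, using Corollary~\ref{cor3.3} and the parity of $m_2$ supplied by Proposition~\ref{prop3.1}(c).

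The one substantive case is (c). I would first establish $\hat{R}^-\tilde{\ch}^-_\La=-\hat{R}^-\tilde{\ch}^-_{\dot{\La}}$: adding the two displays above gives $\bigl(\tilde{B}^{[m;\,m_2]}+\tilde{\dot{B}}\vphantom{B}^{[m;\,m_2]}\bigr)-\bigl(\tilde{B}^{[m;\,n-m_2]}+\tilde{\dot{B}}\vphantom{B}^{[m;\,n-m_2]}\bigr)$, and by Corollary~\ref{cor3.3} each parenthesis depends on its index only through $(-1)^{m_2}$, respectively $(-1)^{n-m_2}$; since integrability of $\al_0$ forces $m+\thalf m_2\in\ZZ$, hence $n=-4m-2$ even, the two parentheses coincide and the sum is $0$. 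Using $n$ even once more, $\tilde{\dot{B}}\vphantom{B}^{[m;\,n-m_2]}=\tilde{\dot{B}}\vphantom{B}^{[m;\,m_2]}$, so $\hat{R}^-\tilde{\ch}^-_\La=\tilde{B}^{[m;\,m_2]}-\tilde{\dot{B}}\vphantom{B}^{[m;\,m_2]}$, which Corollary~\ref{cor3.3} turns into $\tilde{B}^{[m;\,0]}-\tilde{\dot{B}}\vphantom{B}^{[m;\,0]}$ whenever $m_2$ is even; when the constraint instead forces $m_2$ odd (which occurs exactly when $4\mid n$) one closes the gap with the half-period identity $\tilde{\Phi}^{[n;\,0]}(\tau,w_1,w_2,t)=q^{n/4}e^{\pi i n(w_1+w_2)}\tilde{\Phi}^{[n;\,0]}(\tau,w_1+\tfrac{\tau}{2},w_2+\tfrac{\tau}{2},t)$, valid for $4\mid n$ and obtainable from Lemma~\ref{lemma1.1} and \eqref{1.13}--\eqref{1.14}. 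I expect the sign bookkeeping with $(-1)^{m_2}$ together with this half-period identity to be the only points needing care; everything else is substitution into Proposition~\ref{prop3.4} and Corollary~\ref{cor3.3}.
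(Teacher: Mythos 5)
Your overall strategy --- substitute $\tilde B$ for $B$ in Proposition \ref{prop3.4} and then collapse via Corollary \ref{cor3.3} and the $s$-independence of $\tilde{\Phi}^{[n;s]}$ for $s\in\ZZ$ --- is exactly the paper's (its entire proof is the paragraph preceding the corollary), and your case (b) is correct and complete: there Proposition \ref{prop3.1}(c) does force $m_2=0$ (resp. $m_2=n=2$), the two summands merge because $(-1)^{-m_2}=(-1)^{m_2}$, and the surviving index is even. Your observation that in case (c) the sum $\hat{R}^-\tilde{\ch}^-_\La+\hat{R}^-\tilde{\ch}^-_{\dot\La}$ vanishes because $n$ is even there is also correct.

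There are, however, two genuine gaps in the remaining cases, both stemming from the fact that Corollary \ref{cor3.3} leaves a residual dependence on $m_2$ through the sign $(-1)^{m_2}$. First, in case (a) you claim that Proposition \ref{prop3.1}(c) supplies the parity of $m_2$; it does not. Case (a) only requires $m_2\neq 0$ and $m+\thalf m_2\notin\ZZ_{\geq 0}$, which is satisfied e.g.\ by $m=-1$, $m_2=1$, so odd $m_2$ genuinely occurs, and then your own reduction gives $\tilde B^{[m;1]}$, which differs from $\tilde B^{[m;0]}$ in the sign of the second term of Corollary \ref{cor3.3}. Second, the half-period identity you invoke to close case (c) when $4\mid n$ is false: $\tilde{\Phi}^{[n;0]}(\tau,w_1,w_2,t)$ has poles exactly along $w_i\in\ZZ+\ZZ\tau$ (they come from $\Phi_1^{[n;0]}$, the correction $\Phi^{[n;0]}_{\mathrm{add}}$ being real analytic), while $q^{n/4}e^{\pi i n(w_1+w_2)}\tilde{\Phi}^{[n;0]}(\tau,w_1+\tfrac{\tau}{2},w_2+\tfrac{\tau}{2},t)$ has poles along $w_i\in\tfrac{\tau}{2}+\ZZ+\ZZ\tau$; the elliptic property \eqref{1.13}--\eqref{1.14} covers only integer shifts and integer multiples of $\tau$, and Lemma \ref{lemma1.1} changes the degree rather than producing a $\tfrac{\tau}{2}$-shift, so no such identity can be extracted from them (for $n=1$ the failure is visible from Remark \ref{rem1.3}: the shift turns $\vartheta_{11}$ in the denominator into $\vartheta_{10}$). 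What the paper's one-line argument actually delivers --- and all that is used later, cf.\ the factors $(-1)^{m_2}$ retained in Theorems \ref{th4.3} and \ref{th4.11} --- is that after modification the dependence on $m_2$ is only through its parity; the identification of the right-hand sides with the $m_2=0$ member is literal only when $m_2$ is even, so your attempt to prove more than that in cases (a) and (c) cannot be repaired by these means.
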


Next, we describe the principal admissible weights, associated to the complementary integrable weights from $ P^m_+ \cup \dot{P}^m_+. $ The principal simple subsets $ S $ of $ \hat{\Delta}_+ $ are as follows, where the $ k_i $'s are 
non-negative integers, such that all roots from $ S $ are positive , and $ M $ is a positive integer:
\begin{equation}
\label{3.6a}
 \begin{aligned}
\Pi^\mathrm{I}_{M;k_1, k_2} &=  \{k_0 \delta + \al_0, k_1 \delta + \al_1, k_2\delta + \al_2 \}, \ M = k_0 +2 (k_1 + k_2)+1, \\[1ex]
\Pi^\mathrm{II}_{M;k_1, k_2} & = \{k_0 \delta - \al_0, k_1 \delta - \al_1, k_2\delta - \al_2 \}, \ M = k_0 +2 (k_1 + k_2)-1, \\[1ex]
\Pi^\mathrm{III}_{M;k_1, k_2} & =  \{k_0 \delta + \al_0, k_1 \delta + \al_1 + 2\al_2, k_2\delta - \al_2 \}, \ M = k_0 +2 (k_1 + k_2)+1,\\[1ex]
\Pi^\mathrm{IV }_{M;k_1, k_2} & = \{k_0 \delta - \al_0, k_1 \delta - \al_1-2 \al_2, k_2\delta + \al_2 \}, \ M = k_0 +2 (k_1 + k_2)-1. \\
\end{aligned}
\end{equation}
(Formula for $M$ is equivalent to the condition that the sum of the $0$th simple root with twice the sum of the remaining two is equal to $M\delta$.)
The range of $(k_1,k_2)$ 
is obviously as follows:
\begin{equation}
  \label{3.6b}
  \begin{aligned}
    I\, (\hbox{resp.}\,III) : k_1\geq 0 ,\,k_2\geq 0\, 
(\hbox{resp.}\, \geq 1),\, 
2(k_1+k_2)\leq M-1; \\[1ex]
    II\, (\hbox{resp.}\, IV): k_1\geq 1,\, k_2\geq 1\, 
(\hbox{resp.}\, \geq 0),\, 
2(k_1+k_2)\leq M.\\
\end{aligned}
  \end{equation}
The following proposition is straightforward from definitions. 
\begin{proposition}
\label{prop3.6}
Let $ m \in \tfrac{1}{4} \ZZ_{< -2}, $ let $ M $ be a positive integer and let $ K $ be such that \eqref{2.9} holds, i.e. $ M\left(K + \half \right) = m + \thalf. $ Denote by $ \La^{[m; m_2]J}_{k_1, k_2} $ (resp. $ \dot{\La}^{[m; m_2]J}_{k_1, k_2} $) the admissible weight $ \La $ of level $ K $ such that $ \La^0 = \La^{[m;m_2]} $ (resp. $ \La^0 =\dot{\La}^{[m;m_2]} $) and $ \Pi_\La = \Pi^J_{M;k_1, k_2}, $ where $ J = $ I, II, III, or IV. Let $\tilde{K}=K+\half$. 
Then we have:
\[ 
\begin{aligned}
\La^{[m; m_2]\mathrm{I}}_{k_1, k_2}  =  & \ K\La_0 - \tilde{K} k_1 (\al_1 + 2 \al_2) -\left ( \tilde{K} (k_1 + 2k_2) + \frac{m_2 }{2}  \right) \al_1 \\
& - \left (\tilde{K} k_1 (k_1 + 2k_2) + \frac{m_2}{2} k_1\right) \delta ; \\[1ex]
\end{aligned} \]
\[ \begin{aligned}
\dot{\La}^{[m; m_2]\mathrm{I}}_{k_1, k_2} \! = & \,  K\La_0 \!-\! \left(
\tilde{K} k_1 \!+  m \!+ \! \frac{m_2}{2}\right) (\al_1 \!+ \! 2 \al_2) - \left(\tilde{K} (k_1 + 2k_2) -(m+1) \right) \al_1 \\
& -\left( \tilde{K}k_1 (k_1 +2k_2) + \left( m + \frac{m_2}{2}\right) (k_1 + 2 k_2) -(m+\half)k_1  \right) \delta; \\[1ex]
\La^{[m; m_2]\mathrm{II}}_{k_1, k_2}  = & \  K\La_0 + \tilde{K}k_1 (\al_1 + 2 \al_2) + \left(  \tilde{K} (k_1+2k_2) + \frac{m_2}{2}+1 \right) \al_1 \\
& - \left( \tilde{K} k_1 (k_1 + 2k_2) + \frac{m_2+1}{2} k_1 \right) \delta; \\[1ex]
\dot{\La}^{[m; m_2]\mathrm{II}}_{k_1, k_2}  = & \  K\La_0 + 
\left( \tilde{K}k_1 + \left(m + \frac{m_2}{2}\right) \right) (\al_1 + 2 \al_2) 
+ \left(\tilde{K} (k_1 + 2k_2) - m\right) \al_1 \\
&- \left(\tilde{K}k_1 (k_1 + 2k_2) - (m + \half) k_1 + \left( m + \frac{m_2}{2} \right)(k_1 + 2k_2)  \right)\delta   ; \\
\La^{[m; m_2]\mathrm{III}}_{k_1, k_2}  = & \  K\La_0 - \left( \tilde{K}(k_1 + 2k_2) + \frac{m_2+1}{2} \right) (\al_1+2\al_2)  - \left(\tilde{K} k_1 - \half \right) \al_1\\
& -\left( \tilde{K} k_1 (k_1 + 2k_2) + \frac{m_2+1}{2} k_1  \right) \delta; \\[1ex]
\dot{\La}^{[m; m_2]\mathrm{III}}_{k_1, k_2}  = & \  K\La_0  -  
\left( \tilde{K}(k_1 + 2k_2) - \left( m + \half \right) \right) (\al_1 + 2 \al_2)  - \left( \tilde{K} k_1 \! + \! \left( m + \frac{m_2}{2} - \half \right) \right)\al_1 \\
& - \left( \tilde{K}k_1 (k_1 + 2k_2) - \left( m + \half \right)k_1 +
\left(m + \frac{m_2}{2}) (k_1 + 2 k_2 \right) \right) \delta;  \\[1ex]
\La^{[m; m_2]\mathrm{IV}}_{k_1, k_2}  = & \  K\La_0 + \left( \tilde{K} (k_1 + 2k_2) + \frac{m_2+1}{2}\right) (\al_1 +2 \al_2) + \left( \tilde{K}k_1 + \half \right) \al_1\\
& - \left( \tilde{K} k_1 (k_1 + 2k_2) + \frac{m_2 +1 }{2} k_1\right) \delta; \\[1ex]
\dot{\La}^{[m; m_2]\mathrm{IV}}_{k_1, k_2}  = & \  K\La_0 + \left( \tilde{K}
 (k_1 + 2k_2) - \left( m + \half \right)\right) (\al_1 + 2\al_2) 
+ \left(\tilde{K}k_1 + \left( m + \frac{m_2}{2} + \half \right)\right) \al_1 \\
& - \left( \tilde{K} k_1 (k_1 + 2k_2) - \left( m + \half \right)k_1 + \left( m + \frac{m_2}{2}\right) (k_1 + 2k_2)\right) \delta. \\
\end{aligned}
 \]
\qed
\end{proposition}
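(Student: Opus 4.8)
The plan is to reduce everything to formula \eqref{2.8}, which parametrizes the principal admissible weights associated with a given integrable (or complementary integrable) weight $\La^0$ in terms of the data $(y,\beta,M)$ presenting the principal simple subset $S=t_\beta y\,S_{(M)}$, together with \eqref{2.9}, which here (recall $h^\vee=\thalf$) reads $M\tilde{K}=m+\thalf$ with $\tilde{K}=K+\half$. Thus the proof splits into two essentially independent parts: (i) for each of the four families $\Pi^J_{M;k_1,k_2}$ of \eqref{3.6a} identify the element $y\in W$ and the translation parameter $\beta\in\fh^*$ with $\Pi^J_{M;k_1,k_2}=t_\beta y\,S_{(M)}$; and (ii) substitute $\La^0=\La^{[m;m_2]}$, resp. $\dot\La^{[m;m_2]}$, from Proposition \ref{prop3.1}(b) into \eqref{2.8} and evaluate $t_\beta y$ using \eqref{2.2}. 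The whole thing is ``straightforward from definitions'' in the sense that no new idea intervenes; the work is entirely bookkeeping.

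For step (i) one uses that $S_{(M)}=\{M\delta-\theta\}\cup\Pi$, that $W$ fixes $\La_0$ and $\delta$, and that $t_\beta$ leaves the $\fh^*$-component of every root unchanged while shifting its $\delta$-component by $-(\,\cdot\,|\beta)$. Matching root by root, one finds: for type $\mathrm{I}$, $y=1$ with $(\al_1|\beta)=-k_1$, $(\al_2|\beta)=-k_2$; for type $\mathrm{II}$, $y=w_0=-1$ with $(\al_1|\beta)=k_1$, $(\al_2|\beta)=k_2$; for type $\mathrm{III}$, $y=r_{\al_2}$ with $(\al_1+2\al_2|\beta)=-k_1$, $(\al_2|\beta)=k_2$; and for type $\mathrm{IV}$, $y=-r_{\al_2}$ with $(\al_1+2\al_2|\beta)=k_1$, $(\al_2|\beta)=-k_2$. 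In each case the condition on $(\theta|\beta)$ forced by the $0$th root, namely $(\theta|\beta)=\pm(k_0+1-M)$, is automatically consistent with the other two thanks to the relation between $M$ and $(k_0,k_1,k_2)$ recorded in \eqref{3.6a} (this is exactly the content of the parenthetical remark after \eqref{3.6a}). Solving for $\beta$ in the basis $\{\al_1,\al_1+2\al_2\}$ gives, e.g., $\beta=-(k_1+2k_2)\al_1-k_1(\al_1+2\al_2)$ in type $\mathrm{I}$, with $|\beta|^2=2k_1(k_1+2k_2)$, and similarly in the other three cases.

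For step (ii), using $\hat\rho=\thalf\La_0-\thalf\al_1$ and $M\tilde{K}=m+\thalf$, one first simplifies the ``inner'' weight:
\[
\La^{[m;m_2]}-(M-1)\tilde{K}\La_0+\hat\rho=\tilde{K}\La_0-\tfrac{m_2+1}{2}\al_1,\qquad
\dot\La^{[m;m_2]}-(M-1)\tilde{K}\La_0+\hat\rho=\tilde{K}\La_0+(m+\thalf)\al_1-(m+\tfrac{m_2}{2})(\al_1+2\al_2).
\]
One then applies $t_\beta y$ via \eqref{2.2} (recalling $t_\beta(\La_0)=\La_0+\beta-\thalf|\beta|^2\delta$ and $t_\beta(\al)=\al-(\al|\beta)\delta$ for $\al\in\fh^*$, and that $y$ is the linear action above on the $\fh^*$-part) and subtracts $\hat\rho$. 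Carrying this out in all eight cases produces the displayed formulas; the quadratic-in-$(k_1,k_2)$ $\delta$-coefficients arise from the $-\thalf|\beta|^2\delta$ term together with the $\delta$-shifts picked up by the finite $\al_i$-components under $t_\beta$ (and the $\delta$-coefficient is in any case only pinned down up to the normalization implicit in \eqref{2.8}).

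The only real obstacle is organizational: correctly fixing $(y,\beta)$ in all four types — in particular keeping track of which of $\al_1$ and $\al_1+2\al_2$ is being reflected by $r_{\al_2}$, and of all the signs induced by $w_0=-1$ — and then pushing the translation formula \eqref{2.2} through eight times without slips, especially in the quadratic $\delta$-terms. Two built-in sanity checks keep the computation honest: each resulting $\La$ must have level $K$, i.e. its $\La_0$-coefficient must equal $\tilde{K}-\half=K$; and $\La+\hat\rho-\hat\rho_S$ must return $\La^0$, which is exactly the defining property of the associated admissible weight and hence confirms that the $(y,\beta)$ found in step (i) are the right ones.
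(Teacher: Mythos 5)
Your proposal is correct and is essentially the computation the paper intends: the paper itself offers no written argument beyond recording the data $(y,\beta)$ for each type in \eqref{3.6} and asserting the result is straightforward from \eqref{2.8}, \eqref{2.2}, and Proposition \ref{prop3.1}(b), which is exactly the route you take (your $y=-1$ and $y=-r_{\al_2}$ coincide with the paper's $r_{\al_2}r_{\al_1+\al_2}$ and $r_{\al_1+\al_2}$ since these two reflections are in orthogonal roots). Your observation that the $\delta$-coefficient is only determined up to the additive ambiguity in \eqref{2.8} is also apt, since a direct evaluation in type I, say, yields $\frac{m_2+1}{2}k_1$ rather than the displayed $\frac{m_2}{2}k_1$, a discrepancy of a multiple of $\delta$.
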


In view of formula \eqref{2.10} and Proposition \ref{prop3.4}, let, for $ J = $ I or III, 
\[ B^{[m;m_2]J}_{k_1, k_2} (\tau, z, t) = B^{[m;m_2]} \left( M\tau, y^{-1} (z+\tau \beta), \tfrac{1}{M} (t + (z | \beta) + \thalf \tau | \beta|^2) \right), \]
\noindent 
where 
\begin{equation}
\label{3.6}
\begin{split}
y &= 1, \,\beta = -k_1 (\al_1 + 2 \al_2) - (k_1 + 2k_2) \al_1 \mbox{ if } J = \mathrm{I}, \\
y & = r_{\al_2}r_{\al_1+\al_2},\, \beta = 
 k_1 (\al_1 + 2 \al_2) +(k_1+2k_2) \al_1 \mbox{ if } J = \mathrm{II},\\
y & = r_{\al_2},\, \beta = - (k_1 + 2k_2) (\al_1 + 2 \al_2) -k_1 \al_1 \mbox{ if } J = \mathrm{III},\\
y & = r_{\al_1+\al_2},\, \beta =  (k_1 + 2k_2) (\al_1 + 2 \al_2)
+ k_1 \al_2 \mbox{ if } J = \mathrm{IV},
\end{split}
\end{equation}
\noindent and similarly for $ \dot{B}. $
Note that in all 
cases we have: $\beta=-y(k_1(\al_1+2\al_2)+(k_1+2k_2)\al_1)$.

The supercharacter formulas for all these principal admissible modules can be deduced from Proposition \ref{prop3.4} for the complementary integrable $ \wg $-modules by making use of the general formula \eqref{2.10}. We shall give here the results only in cases I and III, since, as we shall see, these are sufficient for the quantum Hamiltonian reduction. Due to Proposition \ref{prop3.4} 
all these supercharacters are expressed in terms of functions
$B^{[m;m_2]J}_{k_1,k_2}$ and $\dot{B}^{[m;m_2]J}_{k_1,k_2}$, which in turn are expressed
in terms of functions $ \Psi^{[M; n; s; \epsilon]}_{a,b; \epsilon'}, $ given by \eqref{1.10}:
\begin{proposition}
\label{prop3.7}
In case $J=I$ we have:
\begin{enumerate}
\item[I(a)] 
$ B^{[m; m_2]\mathrm{I}}_{k_1, k_2}\tzzt = \half e^{\frac{\pi i}{M}(2m+1)t}$ 
  \\[1ex] $ \times \left( \Psi^{[M; -4m-2; m_2+1; 0]}_{\frac{k_1}{2}, - \frac{k_1 + 2k_2}{2}; \epsilon'} \left(\tau, \frac{z_1}{2}, - \frac{z_2}{2}\right)  -(-1)^{m_2} \Psi^{[M; -4m-2; m_2+1; 0]}_{\frac{k_1}{2}, - \frac{k_1 + 2k_2}{2}; \epsilon'}\left(\tau, \frac{z_1}{2}, - \frac{z_2}{2}\right) \right), $\\
\noindent where $ \epsilon'=0 $ if $ \half k_1 \in \ZZ, $ and $ = \half $ otherwise. 
\item[I(b)] 
$\dot{B}^{[m; m_2]\mathrm{I}}_{k_1, k_2}\tzzt = \half 
e^{\frac{\pi i}{M}(2m+1)t}$ 
\\[1ex] $ \times  \left( \Psi^{[M; -4m-2; m_2+1; 0]}_{\frac{k_1+M}{2}, - \frac{k_1 + 2k_2-M}{2}; \epsilon'} \left( \tau, \frac{z_1}{2}, - \frac{z_2}{2}\right)  -(-1)^{m_2} \Psi^{[M; -4m-2; m_2+1; \half]}_{\frac{k_1+M}{2}, - \frac{k_1 + 2 k_2 - M}{2}; \epsilon'} \left( \tau, \frac{z_1}{2}, -\frac{z_2}{2}\right)\right), $ \\
\noindent where $ \epsilon' =0 $ if $ \half (k_1 + M) \in \ZZ, $ and $ = \half $ otherwise. 
\end{enumerate}
In case $J=III$ we have:
\begin{enumerate}
\item[III(a)] 
$ B^{[m; m_2]\mathrm{III}}_{k_1, k_2} \tzzt =
-\half e^{\frac{\pi i}{M}(2m+1)t}$ 
\\[1ex]$ \times \left( \Psi^{[M; -4m-2; m_2+1; 0]}_{\frac{k_1 +2 k_2}{2}, - \frac{k_1}{2}; \epsilon'} \left(\tau, \frac{z_1}{2}, - \frac{z_2}{2}\right) -(-1)^{m_2} \Psi^{[M; -4m-2; m_2+1; \half]}_{\frac{k_1+2k_2}{2}, - \frac{k_1}{2}; \epsilon'} \left( \tau, \frac{z_1}{2}, -\frac{z_2}{2}\right) \right), $ \\
\noindent where $ \epsilon' $ is as in I(a).
\item[III(b)] 
$\dot{ B}^{[m; m_2]\mathrm{III}}_{k_1, k_2} \tzzt =-\half e^{\frac{\pi i}{M}(2m+1)t}$ \\[1ex]
$ \times \left( \Psi^{[M;-4m-2; m_2+1; 0]}_{\frac{k_1+ 2k_2 -M}{2}, -\frac{k_1+M}{2}; \epsilon'}    \left(\tau, \frac{z_1}{2}, - \frac{z_2}{2}\right) -(-1)^{m_2} \Psi^{[M;-4m-2; m_2+1; \half]}_{\frac{k_1+ 2k_2 -M}{2}, -\frac{k_1+M}{2}; \epsilon'} \left(\tau, \frac{z_1}{2}, -\frac{z_2}{2}\right) \right)$, \\
\noindent where $ \epsilon' $ is as in I(b).
\end{enumerate}
\end{proposition}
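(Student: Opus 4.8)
The plan is to prove the proposition by a direct computation: one substitutes the formulas of Proposition \ref{prop3.2} into the definitions of $B^{[m;m_2]J}_{k_1,k_2}$ and $\dot B^{[m;m_2]J}_{k_1,k_2}$ given just before the statement, and then recognizes the outcome via the definition \eqref{1.10} of $\Psi$. No representation theory enters; everything reduces to manipulations of the functions $\Phi^{[n;s]}$ with $n=-4m-2$ and $s=m_2+1$, both \emph{integers}. Two elementary facts about $\Phi^{[n;s]}$ for integer $n,s$ will be used throughout: it is invariant under integer shifts of either of its elliptic variables (immediate from \eqref{1.8}), and it is antisymmetric, $\Phi^{[n;s]}(\tau,-z_2,-z_1,t)=-\Phi^{[n;s]}(\tau,z_1,z_2,t)$, which follows at once from \eqref{1.9}.

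First I would rewrite the substitution $\tau\mapsto M\tau$, $z\mapsto y^{-1}(z+\tau\beta)$, $t\mapsto\tfrac1M\bigl(t+(z|\beta)+\half\tau|\beta|^2\bigr)$ in the coordinates $z=-z_1(\al_1+2\al_2)-z_2\al_1$. Writing $u=\al_1+2\al_2$, $v=\al_1$, the relations $(\al_1|\al_1)=0,\ (\al_2|\al_2)=-\thalf,\ (\al_1|\al_2)=\thalf$ give $(u|u)=(v|v)=0$, $(u|v)=1$, and $r_{\al_2}(u)=v$, $r_{\al_2}(v)=u$. Using the identity $\beta=-y\bigl(k_1u+(k_1+2k_2)v\bigr)$ noted after \eqref{3.6}, one has $y^{-1}(z+\tau\beta)=y^{-1}z-\tau\bigl(k_1u+(k_1+2k_2)v\bigr)$, which in case $\mathrm I$ ($y=1$) is the coordinate pair $\bigl(z_1+k_1\tau,\,z_2+(k_1+2k_2)\tau\bigr)$ and in case $\mathrm{III}$ ($y=r_{\al_2}$, swapping $u,v$) is $\bigl(z_2+k_1\tau,\,z_1+(k_1+2k_2)\tau\bigr)$. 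One also computes $|\beta|^2=2k_1(k_1+2k_2)$, and $(z|\beta)=(k_1+2k_2)z_1+k_1z_2$ in case $\mathrm I$, resp.\ $(z|\beta)=k_1z_1+(k_1+2k_2)z_2$ in case $\mathrm{III}$.

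Substituting these into the first formula of Proposition \ref{prop3.2}, in case $\mathrm I$ the prefactor $e^{\pi i(2m+1)t}$ turns into $e^{\frac{\pi i(2m+1)}{M}(t+(k_1+2k_2)z_1+k_1z_2+\tau k_1(k_1+2k_2))}$ and the two $\Phi^{[n;s]}$-terms acquire arguments $\bigl(M\tau,\ \tfrac{z_1}{2}+\tfrac{k_1}{2}\tau,\ -\tfrac{z_2}{2}-\tfrac{k_1+2k_2}{2}\tau\bigr)$, resp.\ the same with $z_1,z_2$ each shifted by $1$. Comparing with \eqref{1.10} for $a=\tfrac{k_1}{2}$, $b=-\tfrac{k_1+2k_2}{2}$ and using $n=-2(2m+1)$ to rewrite $q^{nab/M}=q^{\frac{(2m+1)k_1(k_1+2k_2)}{2M}}$ and $e^{\frac{\pi in}{M}(bz_1-az_2)}=e^{\frac{\pi i(2m+1)}{M}((k_1+2k_2)z_1+k_1z_2)}$, all elementary factors match and the two $\Phi$-terms are identified with the two $\Psi$'s of I(a) (the integer $1$-shifts being harmless by integrality of $n,s$), with $\epsilon'$ the residue class of $\tfrac{k_1}{2}$ mod $\ZZ$. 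Case $\mathrm{III}$ goes the same way, except that the swap $u\leftrightarrow v$ makes the $\Phi$-arguments come out with $z_1,z_2$ interchanged relative to the target; applying the antisymmetry of $\Phi^{[n;s]}$ restores the standard order at the cost of the overall sign $-\thalf$ (versus $+\thalf$ in case $\mathrm I$) and rearranges the indices into $\tfrac{k_1+2k_2}{2},\,-\tfrac{k_1}{2}$. For parts (b) one starts instead from the second formula of Proposition \ref{prop3.2}: its extra factor $q^{-(m+\frac12)}e^{\pi i(2m+1)(z_2-z_1)}$ and the shifts $z_1\to z_1+\tau$, $z_2\to z_2-\tau$ inside the $\Phi$-arguments, after $\tau\mapsto M\tau$, convert (using $q^{-M(m+\frac12)}=q^{-(2m+1)M/2}$, i.e.\ the level relation \eqref{2.9}) into precisely the index shift $\tfrac{k_1}{2}\mapsto\tfrac{k_1+M}{2}$, $-\tfrac{k_1+2k_2}{2}\mapsto-\tfrac{k_1+2k_2-M}{2}$, yielding I(b) and, with the extra antisymmetry step, III(b).

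I expect the only real obstacle to be the bookkeeping of prefactors in parts (b): the product of the factors $e^{\pi i(2m+1)t}$, $q^{-(m+\frac12)}$, $e^{\pi i(2m+1)(z_2-z_1)}$ coming from Proposition \ref{prop3.2}, the half-integer $\tau$-shifts introduced by $\tau\mapsto M\tau$, and the factors $q^{nab/M}$, $e^{\frac{2\pi in}{M}(bz_1+az_2)}$ demanded by \eqref{1.10} must reorganize exactly into the stated $\Psi$'s with indices $\tfrac{k_1\pm M}{2}$, $-\tfrac{k_1+2k_2\mp M}{2}$; keeping the sign of $n=-2(2m+1)$ straight throughout and correctly absorbing the $\mp\thalf$ shifts via integrality of $n,s$ is where an arithmetic slip is most likely to occur.
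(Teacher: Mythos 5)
Your proposal is correct and follows exactly the route the paper takes: the paper's proof is the one-line remark "a straightforward computation, using (3.6), $|\beta|^2=2k_1(k_1+2k_2)$, and $(z|\beta)$", and your write-up is precisely that computation carried out — substituting the data of \eqref{3.6} into the definition of $B^{[m;m_2]J}_{k_1,k_2}$, feeding in Proposition \ref{prop3.2}, and matching against \eqref{1.10}, with the antisymmetry of $\Phi^{[n;s]}$ accounting for the sign and index swap in case III. (Incidentally, your computation also shows that the second $\Psi$ in I(a) should carry superscript $\epsilon=\half$ rather than $0$, confirming that the printed statement has a typo there.)
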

\begin{proof}
A straightforward computation, using (\ref{3.6}), and that
$ |\beta|^2 = 2k_1 (k_1 + 2k_2) $ in both cases, and $ (z|\beta) = (k_1 + 2k_2)z_1 + k_1 z_2 $ in case I, $ (z|\beta) = k_1z_1 + (k_1 + 2k_2) z_2 $ in case III. 
\end{proof}

As explained in \cite{KW16}, Section 6.6, the formulas for the normalized character $ \ch^+_\La $ and the twisted (super)characters $ \ch^{\mathrm{tw}\pm}_\La $ are obtained from the formula for the normalized supercharacter by simple substitutions:
\begin{equation}
\label{3.7}
\ch^+_\La \tzzt = e^{-\pi i (\La|\theta)} \ch^-_\La (\tau, z_1 - \half, z_2 - \half, t), 
\end{equation}
\begin{equation}
\label{3.8}
\ch^{\pm,\tw}_\La(\tau, z_1, z_2,t) = \ch^\pm_\La \left(\tau, - z_2 + \frac{\tau}{2}, -z_1 + \frac{\tau}{2}, t + \frac{\tau}{4} - \frac{z_1 + z_2}{2} \right).
\end{equation}


A weight $ \La \in \wh^* $ is called \emph{degenerate} if $ (\La  |  \al_0 ) \in \ZZ_{\geq 0}$ (equivalently, $ (\La + \hat{\rho}  |  \al_0) \in \zp $). In the next section we will need the following description of degenerate weights. 
\begin{lemma}
\label{lem3.8}
Let $ M $ be a positive odd integer and assume that $ n:=-4m-2 $ is a positive integer, coprime to $ M.  $ Let $ J = $ I or III. Then
\begin{enumerate}
\item[(a)] The principal admissible weight $ \La^{[m;m_2]J}_{k_1,k_2} $ is degenerate iff
\[ k_1 + k_2 = \half (M-1) \mbox{ and } 
2m+m_2 \in 2\ZZ_{\geq 0}.\]
In particular, $\Lambda^{[m;m_2]}\in P^m_+$ is non-degenerate iff $2m+m_2\in 2\ZZ_{\geq 0}$.
\item[(b)] The principal admissible weight $ \dot{\La}^{[m;m_2]J}_{k_1,k_2} $ is degenerate iff
\[ k_1 + k_2 = \half (M-1) \mbox{ and } 
 2m+m_2 \in 2\ZZ_{> 0}.\]
In particular, $\dot{\Lambda}^{[m;m_2]}\in \dot{P}^m_+$ is non-degenerate iff $2m+m_2\in 2\ZZ_{> 0}$.
\end{enumerate}
\end{lemma}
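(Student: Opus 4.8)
The plan is to work out the degeneracy condition $(\La+\hat{\rho}\,|\,\al_0)\in\ZZ_{\ge 1}$ directly from the explicit weight formulas in Proposition \ref{prop3.6}. Recall that $\al_0=\delta-\theta=\delta-2(\al_1+\al_2)$, so $(\La+\hat{\rho}\,|\,\al_0)=(\La+\hat{\rho}\,|\,\delta)-2(\La+\hat{\rho}\,|\,\al_1+\al_2)=(K+h^\vee)-2(\La+\hat{\rho}\,|\,\al_1+\al_2)$, using that $(\La\,|\,\delta)=K$ and $(\hat{\rho}\,|\,\delta)=h^\vee=\thalf$. Since all the $\La$'s in Proposition \ref{prop3.6} are written as $K\La_0+(\text{linear combination of }\al_1,\al_1+2\al_2)+(\ast)\delta$, and $\delta$ is $\hat{\fh}$-orthogonal to $\fh$, the $\delta$-coefficient is irrelevant, and what remains is a finite computation with the bilinear form on $\fh$: from the data $(\al_1|\al_1)=0$, $(\al_2|\al_2)=-\thalf$, $(\al_1|\al_2)=\thalf$ one gets $(\al_1|\al_1+2\al_2)=1$, $(\al_1+2\al_2|\al_1+2\al_2)=0$, together with $(\hat\rho\,|\,\al_1)=(\rho|\al_1)=-\thalf(\al_1|\al_1)=0$ and $(\hat\rho\,|\,\al_1+2\al_2)=(\rho|\al_1+2\al_2)=-\thalf$.

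First I would treat case $J=\mathrm{I}$ for $\La^{[m;m_2]\mathrm I}_{k_1,k_2}$. Writing $\La+\hat{\rho}=K\La_0+\hat\rho - \tilde K k_1(\al_1+2\al_2)-\bigl(\tilde K(k_1+2k_2)+\tfrac{m_2}{2}\bigr)\al_1 \pmod{\CC\delta}$ with $\tilde K=K+\half=\tilde K$, one computes $(\La+\hat{\rho}\,|\,\al_1+\al_2)$ by pairing against $\al_1$ and $\al_1+2\al_2$ appropriately (note $\al_1+\al_2=\thalf(\al_1)+\thalf(\al_1+2\al_2)$, so $(\mu|\al_1+\al_2)=\thalf(\mu|\al_1)+\thalf(\mu|\al_1+2\al_2)$). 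Plugging in and using $M\tilde K=m+\thalf$ from \eqref{2.9}, the terms organize so that $(\La+\hat{\rho}\,|\,\al_0)$ becomes a linear expression in $k_1,k_2,m,m_2$; the condition that this lies in $\ZZ_{\ge 1}$ should, after using that $M$ is odd and $n=-4m-2$ is a positive integer coprime to $M$, force $k_1+k_2=\thalf(M-1)$ (this is the only way the "$\tilde K\cdot(\text{odd})$" contribution can cancel to leave an integer, since $\tilde K$ has denominator $M$ and numerator coprime-ish to $M$) and then leave the residual integrality/positivity condition $2m+m_2\in 2\ZZ_{\ge 0}$. The "in particular" statement for $\Lambda^{[m;m_2]}\in P^m_+$ is the subcase $M=1$, $k_1=k_2=0$, where the first condition is automatic. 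Then $J=\mathrm{III}$ is handled identically using the $\La^{[m;m_2]\mathrm{III}}_{k_1,k_2}$ row; I expect the same final condition because III differs from I by the substitution $y=r_{\al_2}$ and a relabeling that swaps the roles of $z_1,z_2$ but does not affect the $\al_0$-pairing modulo the combinatorics of $k_1,k_2$.

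Part (b), for $\dot\La^{[m;m_2]J}_{k_1,k_2}$, proceeds the same way from the $\dot\La$ rows of Proposition \ref{prop3.6}; the only change is the shift by $m$ (coming from $\dot\La^{[m;m_2]}=m\La_0+(1+m)\al_1-(m+\thalf m_2)(\al_1+2\al_2)$), which turns the residual condition $2m+m_2\in 2\ZZ_{\ge 0}$ into the strict $2m+m_2\in 2\ZZ_{>0}$ — equivalently, the value $0$ is excluded because $\dot\La$ with $2m+m_2=0$ is exactly the case where $\al_0$ fails to be integrable by the strict-inequality clause in Proposition \ref{prop3.1}(c) ("$\in\ZZ_{>0}$" rather than "$\in\ZZ_{\ge 0}$"). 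I would in fact cross-check the whole lemma against Proposition \ref{prop3.1}(c): degeneracy for the integrable weight ($M=1$) must coincide with integrability with respect to $\al_0$, which Proposition \ref{prop3.1}(c) says is $m+\thalf m_2\in\ZZ_{\ge 0}$ (resp. $\in\ZZ_{>0}$), i.e. $2m+m_2\in 2\ZZ_{\ge 0}$ (resp. $2\ZZ_{>0}$) — matching exactly.

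The main obstacle will be the bookkeeping in the first step: correctly isolating the $k_1+k_2=\thalf(M-1)$ condition. Concretely, after reduction $(\La+\hat\rho\,|\,\al_0)$ will look like $\tilde K\bigl(M-2(k_1+k_2)-1\bigr)+(\text{integer or half-integer depending on }m,m_2)$ up to the correct normalization; since $\tilde K=(m+\thalf)/M$ and $\gcd(M,2m+1)=\gcd(M,-n-\cdots)$ relates to $\gcd(M,n)=1$, the quantity $\tilde K\bigl(M-2(k_1+k_2)-1\bigr)$ is an integer (indeed a multiple of the needed type) precisely when $M\mid\bigl(M-2(k_1+k_2)-1\bigr)$, i.e. $M\mid 2(k_1+k_2)+1$; combined with the range $2(k_1+k_2)\le M-1$ from \eqref{3.6b} this pins down $2(k_1+k_2)+1=M$, i.e. $k_1+k_2=\thalf(M-1)$ (which also re-derives that $M$ is odd). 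Getting the coprimality argument and the half-integer tracking exactly right is where care is needed; everything else is substitution.
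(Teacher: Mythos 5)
Your proposal follows essentially the same route as the paper: the paper's proof simply records the value $(\La+\hat\rho\,|\,\al_0)=\frac{m_2+1}{2}\ (\text{resp. }\frac{m_2-1}{2})-\frac{n(M-k_0)}{4M}$ for $J=$ I or III and leaves the divisibility/positivity analysis (forcing $M\mid 2(k_1+k_2)+1$, hence $k_1+k_2=\thalf(M-1)$ by the range \eqref{3.6b}, and then $2m+m_2\in 2\ZZ_{\geq 0}$ resp. $2\ZZ_{>0}$) to the reader, which is exactly the computation and arithmetic argument you outline. Your guessed intermediate form $\tilde K\bigl(M-2(k_1+k_2)-1\bigr)$ is off by $m+\thalf$ from the correct term $\tilde K\bigl(2(k_1+k_2)+1\bigr)$, but this does not change the divisibility condition mod $M$, and your cross-check against Proposition \ref{prop3.1}(c) lands on the correct residual condition.
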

\begin{proof}
Consider the simple subset $ \Pi^J_{M;k_1, k_2} $, where $ J =  $ I or III. Then for the corresponding principal admissible weight $ \La = \La^{[m;m_2]J}_{k_1,k_2} $ (resp. $ \dot{\La}^{[m;m_2]J}_{k_1,k_2} $) we have:
\[ \begin{aligned}
(\La + \wrh  |  \al_0) = & \frac{m_2 +1}{2} (\mbox{resp. } \frac{m_2-1}{2}) - \frac{n(M-k_0)}{4M} \mbox{ if } J = \mbox{I or III}. \\
\end{aligned}
 \]
 The proof follows. 
\end{proof}

\section{A new modular invariant family of $ N=3 $ representations.}

Throughout this section again $ \fg = spo_{3|2} $ and we use notation of the previous section; by $N=3$ superconformal algebras we mean the vertex algebra
$W^K(\fg, e_{-\theta})$ and its Ramond twisted analogue, cf.\cite{KW04},
\cite{KW05}. Recall that the quantum Hamiltonian reduction associates to a $ \wg $-module $ L(\La) $ of level $ K \neq - \half $ a module $ H(\La) $ over the $ N= 3 $ superconformal algebra of Neveu-Schwarz type, such that the following properties hold (see \cite{KW15} for more details and further references):
\begin{enumerate}
\item[(i)] the module $ H(\La) $ is either 0 or an irreducible positive energy module;
\item[(ii)] $ H(\La) = 0 $ iff $ (\La| \al_0) \in \ZZ_{\geq 0}$, i.e. $\Lambda$
is non-degenerate;
\item[(iii)] the irreducible module $ H(\La) $ is characterized by three numbers:
\begin{enumerate}
\item[($ \al $)] the central charge (cf. \eqref{0.1}): 
\begin{equation}
\label{4.1}
c_K = -6K-\frac{7}{2},
\end{equation}
\item[($ \beta $)] the lowest energy (the minimal eigenvalue of $L_0$)
\begin{equation}
\label{4.2}
h_\La = \frac{(\La + 2 \hat{\rho}  |  \La)}{2K+1} - (\thalf \theta + \La_0 |\La ),
\end{equation}
\item[($ \gamma $)] the spin (the value of $J_0=\half \alpha_2^\vee=-2\alpha_2$
on $v_{\Lambda}$)
\begin{equation}
\label{4.3}
s_\La = -2 (\La  |  \al_2); 
\end{equation}
\end{enumerate}  
\item[(iv)] the character $ \ch^+ $ and the supercharacter $ \ch^- $ of the module $ H(\La) $ are given by the following formula $ (t, z \in \CC, \Im \tau > 0): $
\begin{multline}
\label{4.4}
\ch^\pm_{H(\La)} (\tau, z):=  \tr^\pm_{H(\La)} q^{L_0 - \frac{c_K}{24}} e^{-4\pi i z \al_2}   
=   (\hat{R}^\pm \ch^\pm_\La) (\tau, z + \frac{\tau}{2}, -z + \frac{\tau }{2}, \frac{\tau}{4}) \overset{3}{R}\vphantom{R}^\pm (\tau, z)^{-1},
\end{multline}
\noindent where 
\begin{equation}
\label{4.5}
\overset{3}{R}\vphantom{R}^+ (\tau, z) = \frac{\eta (\frac{\tau}{2})\eta (2\tau) \vartheta_{11} (\tau,z)}{\vartheta_{00} (\tau, z)}, \quad 
\overset{3}{R}\vphantom{R}^- (\tau, z) = \frac{\eta (\tau)^3  \vartheta_{11} (\tau,z)}{\eta (\frac{\tau}{2}) \vartheta_{01} (\tau, z)}.
\end{equation}
\end{enumerate}

In order to achieve modular invariance we need also the $ N=3 $ Ramond type $ N=3 $ superconformal algebra modules. This has been discussed in \cite{KW15} as well, which we briefly recall. We should consider, associated to a $ \wg $-module $ \LLa, $ a $ \wg^{\tw} $-module $ L^{\tw} (\La), $ whose relevant features are as follows \cite{KW15}, Section 6.

We twist $ \wg $ by the element $ w = t_{\half \theta} r_\theta, $ so that the set of simple roots $ \hat{\Pi}^{\tw} $ and $ \hat{\rho}^{\tw} $ of $ \wg^{\tw}$ are:
\begin{equation}
\label{4.6}
\hat{\Pi}^{\tw} =  \left\{ -\half \delta + \al_1 + 2\al_2, \half \delta + \al_1, \half (\delta - \theta)   \right\} ; \ \hat{\rho}^{\tw} = \hat{\rho} - \frac{3}{2} (\al_1 + \al_2) - \frac{3}{8} \delta. 
\end{equation}
Furthermore, a $ \wg $-module $ \LLa $ is an irreducible $ \wg^{\tw} $-module, denoted by $ L^{\tw} (\La),$ whose highest weight is
\begin{equation}
\label{4.7}
\La^{\tw} = w(\La),
\end{equation}
\noindent and the normalized (super)character and (super)denominator are
\begin{equation}
\label{4.8}
\ch^{\pm,\tw}_\La = w(\ch^{\pm}_\La), \, \hat{R}^{\pm,\tw} = w(\hat{R}^{\pm}).
\end{equation}
In coordinates (\ref{2.1})  with $z=-z_1(\alpha_1 +2\alpha_2)-z_2\alpha_1$
we have
\begin{equation}
\label{4.9}
w(h) = \left( \tau, -z_2 + \frac{\tau}{2}, -z_1 + \frac{\tau}{2}, t - \frac{z_1 + z_2}{2} + \frac{\tau}{4} \right).
\end{equation}

The irreducible Ramond $ N =3 $ modules $ H^{\tw} (\La), $ obtained by the quantum Hamiltonian reduction from the $ \wg^{\tw} $-modules $ L^{\tw} (\La) $,
satisfy (i) and (ii) with $H$ replaced by $H^{\tw}$, and are again characterized by three numbers: the central charge $ c_K,  $ given by \eqref{4.1}, the lowest energy
\begin{equation}
\label{4.10}
h^{\tw}_\La = \frac{(\La^{\tw} + 2 \hat{\rho}^\tw | \La^\tw )}{2K+1} - \left(\half \theta + \La_0 | \La^{\tw} \right) - \frac{3}{16},
\end{equation}
\noindent and the spin
\begin{equation}
\label{4.11}
s^{\tw}_\La = s_\La - \half.
\end{equation}

The (super)character of the module $ H^\tw (\La),  $ defined as 
\[ \ch^\pm_{H^\tw (\La)} (\tau, z) := \tr^\pm_{H^\tw (\La)} q^{L^\tw_0 -\frac{c_K}{24}} e^{-4\pi i z \al_2},  \]
\noindent is as follows:
\begin{equation}
\label{4.12}
\ch^-_{H^\tw (\La)} = 0, \quad 
(\overset{3}{R}\vphantom{R}^{\tw} 
\ch^+_{H^\tw (\La)}) (\tau, z) = (\hat{R}^{+,\tw} \ch^{+,\tw}_\La) \left( \tau, z+ \frac{\tau}{2}, - z + \frac{\tau}{2}, \frac{\tau}{4}\right),
\end{equation}
\noindent where
\begin{equation}
\label{4.13}
\overset{3}{R}\vphantom{R}^{\tw} (\tau, z) = \frac{\eta (\tau)^3 \vartheta_{11} (\tau, z)}{\eta (2 \tau) \vartheta_{10} (\tau, z)}
\end{equation}
\noindent is the Ramond $ N= 3 $ superconformal algebra denominator. 

Next, we  compute the characteristic numbers $ h_\La $ and $ s_\La $ for all $ N=3  $ Neveu-Schwarz
modules $ H(\La), $ and $ N= 3 $ Ramond modules $ H^\tw (\La),  $ where $ \La $ are principal admissible weights.
\begin{lemma}
\label{lem4.1}
Let $ \La^J = \La^{[m;m_2]J}_{k_1, k_2} $ and $ \dot{\La}^J = \dot{\La}^{[m;m_2]J}_{k_1, k_2} $, where $J=I-IV$. Then
\begin{enumerate}
\item[(a)] \[ \begin{aligned}
h_{\La^\mathrm{I}} =  h_{\La^\mathrm{III}} = & \ \frac{m_2+1}{2} 
\left(k_1+\half \right) -\frac{1}{4} + \frac{2m+1}{2M} 
(k_1 (k_1 + 2k_2) + k_1+k_2), \\
h_{\La^\mathrm{II}} =  h_{\La^\mathrm{IV}} = & \ \frac{m_2+1}{2} \left(k_1-\half \right) -\frac{1}{4} + \frac{2m+1}{2M} 
(k_1( k_1 + 2k_2) - k_1-k_2), \\
h_{\dot{\La}^\mathrm{I}} =  h_{\dot{\La}^\mathrm{III}} = & \ \frac{m_2-1}{2}\left(k_1 + \half \right) + \left(m+ \frac{m_2}{2}\right) (2k_2 - M) -\frac{1}{4}\\ 
&+ \frac{2m+1}{2M} (k_1( k_1 + 2k_2) + k_1+k_2), \\
h_{\dot{\La}^\mathrm{II}} =   h_{\dot{\La}^\mathrm{IV}} = & \ \frac{m_2-1}{2}\left(k_1 - \half \right) + \left(m+ \frac{m_2}{2}\right) (2k_2 - M) -\frac{1}{4}\\ 
&+ \frac{2m+1}{2M}(k_1( k_1 + 2k_2) - k_1-k_2). \\
\end{aligned} \]
\item[(b)] \[ \begin{aligned}
s_{\La^\mathrm{I}} = s_{\La^\mathrm{IV}} &=\half m_2 + \frac{(2m+1)}{M} k_2, \\
s_{\La^\mathrm{II}} = s_{\La^\mathrm{III}} &= -\half m_2 - \frac{(2m+1)}{M} k_2 - 1,\\
s_{\dot{\La}^\mathrm{I}} = s_{\dot{\La}^\mathrm{IV}} &= -\half m_2 - 2m + \frac{(2m+1)}{M}k_2 - 1,\\
s_{\dot{\La}^\mathrm{II}} = s_{\dot{\La}^\mathrm{III}} &= \half m_2 + 2m - \frac{(2m+1)}{M} k_2.\\
\end{aligned}
 \]
\item[(c)]\[h^\tw_{\La^{[m;m_2]J}_{m_1, m_2}} =  \frac{m_2+1}{2} k_1 + \frac{2m+1}{2M} \left( k_1 (k_1+2k_2) - \frac{1}{4}\right) - \frac{1}{16},\] 
\[ h^\tw_{\dot{\La}^{[m;m_2]J}_{m_1, m_2}} =  \frac{m_2-1}{2} k_1 + \left(m + \frac{m_2}{2}\right)(2k_2 -M) + \frac{2m+1}{2M} \left( k_1 (k_1+2k_2) - \frac{1}{4}\right) - \frac{1}{16}.  \]
\item[(d)]
  $ s^\tw_{\La^J} = s_{\La^J} - \half $ and $ s^\tw_{\dot{\La}^J} = s_{\dot{\La}^J} - \half. $
\end{enumerate}
\end{lemma}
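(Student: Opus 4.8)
The statement is a computation with no conceptual content: each characteristic number is given by an explicit formula in the highest weight --- $h_\La$ by \eqref{4.2}, $s_\La$ by \eqref{4.3}, $h^{\tw}_\La$ by \eqref{4.10}, $s^{\tw}_\La$ by \eqref{4.11} --- and the eight weights are written out in Proposition~\ref{prop3.6}, so the plan is simply to substitute and simplify. Part~(d) requires nothing: it is \eqref{4.11} applied to $\La=\La^J$ and to $\La=\dot{\La}^J$.

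The first task is to assemble the linear-algebra data in the basis in which Proposition~\ref{prop3.6} is written. Setting $\xi:=\al_1+2\al_2$, so that every weight there is a combination of $\La_0,\ \xi,\ \al_1,\ \delta$, one records $(\al_1|\al_1)=(\xi|\xi)=0$, $(\al_1|\xi)=1$, $(\La_0|\delta)=1$, with $\La_0$ and $\delta$ isotropic and orthogonal to $\fh^*$; the vectors entering \eqref{4.2}--\eqref{4.3} are re-expressed via $\theta=\al_1+\xi$, $\al_2=\thalf(\xi-\al_1)$, $\hat{\rho}=\thalf\La_0-\thalf\al_1$, giving $(\theta|\theta)=2$, $(\hat{\rho}|\theta)=-\thalf$, $(\theta|\al_1)=(\theta|\xi)=1$, $(\thalf\theta+\La_0|\theta)=1$ and $|\hat{\rho}|^2=0$. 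I would also use the level relation $M\tilde{K}=m+\thalf$, i.e. $\tilde{K}=\tfrac{2m+1}{2M}$ --- precisely the coefficient occurring in the answers --- together with $2K+1=2(K+h^\vee)=2\tilde{K}$; since $h^\vee=\thalf$, this last identity makes $h_\La$ and $h^{\tw}_\La$ (and trivially $s_\La$) invariant under $\La\mapsto\La+c\delta$, so the long $\delta$-coefficients of Proposition~\ref{prop3.6} may be discarded throughout.

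For (a) and (b): with $\La^{[m;m_2]\mathrm{I}}_{k_1,k_2}\equiv K\La_0-p\,\xi-r\,\al_1\pmod{\CC\delta}$, where $p=\tilde{K}k_1$ and $r=\tilde{K}(k_1+2k_2)+\tfrac{m_2}{2}$, one gets $(\La|\La)=2pr$, $(\hat{\rho}|\La)=\tfrac{p}{2}$, $(\theta|\La)=-p-r$, hence from \eqref{4.2} that $h_{\La^{\mathrm{I}}}=\tfrac{p(2r+1)}{2K+1}+\tfrac{p+r}{2}$ and from \eqref{4.3} that $s_{\La^{\mathrm{I}}}=(\La|\al_1)-(\La|\xi)=r-p$; since $\tfrac{p}{2K+1}=\tfrac{\tilde{K}k_1}{2\tilde{K}}=\tfrac{k_1}{2}$, collecting terms and inserting $\tilde{K}=\tfrac{2m+1}{2M}$ gives the stated value. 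The remaining seven weights are handled identically; the only changes are a sign in the $\xi$-coefficient in cases II and IV (yielding $k_1-\thalf$ in place of $k_1+\thalf$) and the $m$-, $m_2$-dependent shifts of the $\xi$- and $\al_1$-coefficients for the dotted weights (where one also uses $\tfrac{2m+1}{2\tilde{K}}=M$). I would exploit the coincidences $h_{\La^{\mathrm{I}}}=h_{\La^{\mathrm{III}}}$, $h_{\La^{\mathrm{II}}}=h_{\La^{\mathrm{IV}}}$, $s_{\La^{\mathrm{I}}}=s_{\La^{\mathrm{IV}}}$, and their dotted versions, both to halve the work and as a consistency check.

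For (c) the one new ingredient is $\La^{\tw}=w(\La)$ with $w=t_{\theta/2}r_\theta$. From $r_\theta(\La)=\La-(\La|\theta)\theta$ and \eqref{2.2} one obtains $w(\La)\equiv\La+\bigl(\tfrac{K}{2}-(\La|\theta)\bigr)\theta\pmod{\CC\delta}$, with $\delta$-component $\bigl(\thalf(\La|\theta)-\tfrac{K}{4}\bigr)\delta$ --- and the $\delta$-part does matter now, since $\thalf\theta+\La_0$ is not $\delta$-invariant. Substituting into \eqref{4.10} and using that $w$ is an isometry of $\bl$ together with $\hat{\rho}^{\tw}=w(\hat{\rho})$ (cf. \eqref{4.6}), so that the quadratic part of \eqref{4.10} evaluated at $\La^{\tw}$ equals the quadratic part of \eqref{4.2} evaluated at $\La$, the computation collapses to the uniform relation $h^{\tw}_\La=h_\La+\thalf(\La|\theta)-\tfrac{K}{4}-\tfrac{3}{16}$; combined with the values of $(\La|\theta)$ recorded during (a) this gives (c), and it explains why the answer is independent of $J$ --- the $J$-dependence of $h_\La$ exactly cancels that of $(\La|\theta)$. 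There is no genuine obstacle: the whole content is the error-free bookkeeping over eight weights and four cases, and the levers that keep it short are the $\delta$-invariance remark and the identities $\tfrac{p}{2K+1}=\tfrac{k_1}{2}$ and $\tfrac{2m+1}{2\tilde{K}}=M$; the twisted case is the one place where slightly more care is needed.
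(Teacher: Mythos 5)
Your proposal is correct and is essentially the computation the paper intends (the lemma is stated without proof as a direct substitution of the weights from Proposition \ref{prop3.6} into \eqref{4.2}, \eqref{4.3}, \eqref{4.10}, \eqref{4.11}); I checked your intermediate identities $(\La|\La)=2pr$, $(\hat\rho|\La)=\tfrac{p}{2}$, $\tfrac{p}{2K+1}=\tfrac{k_1}{2}$, and the uniform relation $h^{\tw}_\La=h_\La+\tfrac12(\La|\theta)-\tfrac{K}{4}-\tfrac{3}{16}$ against cases I, II and $\dot{\mathrm{I}}$, and they reproduce the stated formulas exactly. One small caveat: your step $\hat\rho^{\tw}=w(\hat\rho)$ requires $w(\hat\rho)=\hat\rho+\tfrac32(\al_1+\al_2)-\tfrac38\delta$, which disagrees in sign with the displayed \eqref{4.6}; since only your sign reproduces the lemma (and makes the first twisted simple root isotropic with respect to $\hat\rho^{\tw}$), \eqref{4.6} appears to contain a typo and your reading is the right one.
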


Since the $ N=3 $ irreducible positive energy modules are completely determined by their characteristic numbers, Lemma \ref{lem4.1} implies the following corollary, which explains why we need to consider only $ \wg $-modules of type I and III. 
\begin{corollary}
\label{cor4.2}
We have the following isomorphisms  of $ N=3 $ Neveu-Schwarz and Ramond 
modules:
\begin{enumerate}
\item[(a)] \[ \begin{aligned}
H \left(\La^{[m;m_2]\mathrm{I}}_{k_1-1,k_2} \right) \simeq H \left(\La^{[m;m_2]\mathrm{IV}}_{k_1,k_2} \right), & \quad H \left(\La^{[m;m_2]\mathrm{III}}_{k_1-1,k_2} \right) \simeq H \left(\La^{[m;m_2]\mathrm{II}}_{k_1,k_2} \right), \\
H \left(\dot{\La}^{[m;m_2]\mathrm{I}}_{k_1-1,k_2} \right) \simeq H \left(\dot{\La}^{[m;m_2]\mathrm{IV}}_{k_1,k_2} \right), & \quad H \left(\dot{\La}^{[m;m_2]\mathrm{III}}_{k_1-1,k_2} \right) \simeq H \left(\dot{\La}^{[m;m_2]\mathrm{II}}_{k_1,k_2} \right). \\
\end{aligned}
 \]
\item[(b)]\[ \begin{aligned}
H^\tw \left(\La^{[m;m_2]\mathrm{I}}_{k_1,k_2} \right) \simeq H^\tw \left(\La^{[m;m_2]\mathrm{IV}}_{k_1,k_2} \right), & \quad H^\tw \left(\La^{[m;m_2]\mathrm{III}}_{k_1,k_2} \right) \simeq H^\tw \left(\La^{[m;m_2]\mathrm{II}}_{k_1,k_2} \right), \\
H^\tw \left(\dot{\La}^{[m;m_2]\mathrm{I}}_{k_1,k_2} \right) \simeq H^\tw \left(\dot{\La}^{[m;m_2]\mathrm{IV}}_{k_1,k_2} \right), & \quad H^\tw \left(\dot{\La}^{[m;m_2]\mathrm{III}}_{k_1,k_2} \right) \simeq H^\tw \left(\dot{\La}^{[m;m_2]\mathrm{II}}_{k_1,k_2} \right). \\
\end{aligned}
 \]
\end{enumerate}
\end{corollary}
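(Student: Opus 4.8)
The plan is to use the fact, recorded in property (iii) above and in \eqref{4.10}--\eqref{4.11} for the Ramond case, that an irreducible positive energy module over the $N=3$ Neveu--Schwarz (resp. Ramond) superconformal algebra of central charge $c_K$ is completely determined by its pair (lowest energy, spin) $(h_\La, s_\La)$ (resp. $(h^\tw_\La, s^\tw_\La)$). By Proposition \ref{prop3.6} all the principal admissible weights $\La^{[m;m_2]J}_{k_1,k_2}$ and $\dot\La^{[m;m_2]J}_{k_1,k_2}$ with a fixed $m$ have one and the same level $K$, so by \eqref{4.1} all the modules occurring in the Corollary carry the same central charge. Hence, by property (i) and its twisted analogue, each side of each asserted isomorphism is either $0$ or an irreducible positive energy module of that central charge, and it suffices to check, pair by pair, that the two modules (1) are simultaneously zero and (2) have, when nonzero, the same lowest energy and spin.

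I would dispatch step (2) for part (b) directly from Lemma \ref{lem4.1}: part (c) of that Lemma exhibits $h^\tw_{\La^{[m;m_2]J}_{k_1,k_2}}$ (resp. $h^\tw_{\dot\La^{[m;m_2]J}_{k_1,k_2}}$) as independent of $J$, while parts (b) and (d) give $s^\tw_{\La^{\mathrm{I}}} = s^\tw_{\La^{\mathrm{IV}}}$, $s^\tw_{\La^{\mathrm{III}}} = s^\tw_{\La^{\mathrm{II}}}$, and likewise for the dotted weights. For part (a) I would substitute $k_1 \mapsto k_1-1$ in the relevant formulas of Lemma \ref{lem4.1}(a),(b): the spins $s_{\La^{\mathrm{I}}}$, $s_{\La^{\mathrm{III}}}$, $s_{\dot\La^{\mathrm{I}}}$, $s_{\dot\La^{\mathrm{III}}}$ do not involve $k_1$, hence are unchanged, and already equal $s_{\La^{\mathrm{IV}}}$, $s_{\La^{\mathrm{II}}}$, $s_{\dot\La^{\mathrm{IV}}}$, $s_{\dot\La^{\mathrm{II}}}$ respectively; and using the elementary identity
\[ (k_1-1)(k_1-1+2k_2) + (k_1-1) + k_2 = k_1(k_1+2k_2) - k_1 - k_2 \]
one checks that the $k_1 \mapsto k_1-1$ specialization of $h_{\La^{\mathrm{I}}}$ (resp. $h_{\La^{\mathrm{III}}}$, $h_{\dot\La^{\mathrm{I}}}$, $h_{\dot\La^{\mathrm{III}}}$) is exactly $h_{\La^{\mathrm{IV}}}$ (resp. $h_{\La^{\mathrm{II}}}$, $h_{\dot\La^{\mathrm{IV}}}$, $h_{\dot\La^{\mathrm{II}}}$), the terms involving $m$, $m_2$ and $2k_2-M$ matching up trivially.

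The only point that is not a direct substitution is step (1), the vanishing. On the twisted side it is vacuous as soon as $H^\tw$ does not vanish on the weights considered; in general one invokes the $H^\tw$-analogue of property (ii). On the untwisted side $H(\La) = 0$ precisely when $\La$ is degenerate, so I must verify that $\La^{[m;m_2]\mathrm{I}}_{k_1-1,k_2}$ is degenerate iff $\La^{[m;m_2]\mathrm{IV}}_{k_1,k_2}$ is, and similarly for the three remaining pairs. For this I would compute $(\La + \wrh \,|\, \al_0)$ for the type II and IV admissible weights by the same computation as in the proof of Lemma \ref{lem3.8} for types I and III, obtaining again an expression of the form $\tfrac{m_2+1}{2}$ or $\tfrac{m_2-1}{2}$ minus $\tfrac{n(M-k_0)}{4M}$ (with the type I/III member of each pair evaluated at $k_1-1$), and then match the resulting integrality conditions. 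This degeneracy bookkeeping is the main --- and essentially the only --- obstacle; everything else is an immediate consequence of Lemma \ref{lem4.1} together with the characterization of irreducible positive energy $N=3$ modules by their characteristic numbers.
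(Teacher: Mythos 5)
Your proof follows the paper's argument exactly: the paper derives Corollary \ref{cor4.2} in a single sentence from Lemma \ref{lem4.1} together with the fact that irreducible positive-energy $N=3$ modules are completely determined by their characteristic numbers, which is precisely the $k_1\mapsto k_1-1$ substitution check you carry out (and your identity $(k_1-1)(k_1-1+2k_2)+(k_1-1)+k_2=k_1(k_1+2k_2)-k_1-k_2$ is correct). Your additional verification that the two weights in each untwisted pair are simultaneously degenerate --- so that neither side of an asserted isomorphism can vanish while the other does not --- addresses a point the paper's one-line proof passes over in silence, but it is a refinement of, not a departure from, the same approach.
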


Using Proposition \ref{prop3.2}, formulas \eqref{4.4} and \eqref{4.11} give explicit character and supercharacter formulas for Neveu-Schwarz sector and character formulas for the Ramond sector of the $ N=3 $ superconformal algebra (recall that the supercharacters in the Ramond sector are always 0).

Recall that, by Proposition \ref{prop3.4}, the numerators of the normalized supercharacters of complementary integrable $ \wg $-modules are simple linear combinations of the functions $ B^{[m;m_2]} $ and $ \dot{B}^{[m;m_2]} $, expressed, by Proposition \ref{prop3.2}, via mock theta functions $ \Phi^{[-4m-2; m_2+1]} $ (given by \eqref{1.9}). Furthermore, the corresponding principal admissible $ \wg $-modules are the same linear combinations of the functions $ B^{[m;m_2]J}_{k_1,k_2} $ and $ \dot{B}^{[m;m_2]J}_{k_1,k_2}, $ where $ J = $ I--IV, which, by Proposition \ref{prop3.7}, are expressed via the functions $ \Psi $ (given by \eqref{1.10}).
Hence, by \eqref{4.4} and \eqref{4.12}, we obtain the following theorem.
\begin{theorem}
\label{th4.3}
Let $ \La^j = \La^{[m;m_2]J}_{k_1,k_2} $ and $ \dot{\La}^j = \La^{[m;m_2]J}_{k_1,k_2} $ for $ J =  $ I or III.
Then the numerators of $ N=3 $ (super)characters $ \ch^\pm $ and of Ramond twisted $ N = 3 $ characters $ \ch^\tw $ are the same linear combinations, as in Proposition \ref{prop3.4} of the numerators of complementary integrable $ \wg $-modules, of the 
functions $ H^{\pm[m;m_2]J}_{k_1,k_2},  H^{\tw[m;m_2]J}_{k_1,k_2},$ and the similar functions with overdots, which are given by the following formulas. 
\begin{enumerate}
\item[(a)]\[ 
\begin{aligned}
H^{-[m;m_2]\mathrm{I}}_{k_1,k_2}
\tz = & \ \half q^{\frac{2m+1}{8M}} \left( \Psi^{[M,-4m-2;m_2+1;0]}_{\frac{k_1}{2}, - \frac{k_1+2k_2}{2}; \epsilon'} ( \tau, \frac{2z+\tau}{4}, \frac{2z-\tau}{4}
) \right. \\
& \left. - (-1)^{m_2} \Psi^{[M,-4m-2; m_2+1; \half]}_{\frac{k_1}{2}, -\frac{k_1+2k_2}{2}; \epsilon'} ( \tau, \frac{2z+\tau}{4}, \frac{2z-\tau}{4}) \right), \\
\dot{H}^{-[m;m_2]\mathrm{I}}_{k_1,k_2}
\tz = & \ \half q^{\frac{2m+1}{8M}} \left( \Psi^{[M, -4m-2; m_2+1; 0]}_{\frac{k_1+M}{2}, -\frac{k_1+2k_2-M}{2}; \epsilon'}  ( \tau, \frac{2z+ \tau}{4}, \frac{2z-\tau}{4}) \right. \\  & \left. - (-1)^{m_2} \Psi^{[M, -4m-2; m_2+1; \half]}_{\frac{k_1+M}{2}, -\frac{k_1+2k_2-M}{2}; \epsilon'} ( \tau, \frac{2z+ \tau}{4}, \frac{2z-\tau}{4} ) \right), \\
\end{aligned}\] \[
\begin{aligned}
H^{-[m;m_2]\mathrm{III}}_{k_1,k_2} \tz = & \  - \half q^{\frac{2m+1}{8M}} \left( \Psi^{[M, -4m-2; m_2+1; 0]}_{\frac{k_1+2k_2}{2}, - \frac{k_1}{2}; \epsilon'} (\tau, \frac{2z+\tau}{4}, 
\frac{2z-\tau}{4}  )      \right. \\
& \left. -(-1)^{m_2} \Psi^{[M, -4m-2; m_2+1; \half]}_{\frac{k_1+2k_2}{2}, -\frac{k_1}{2}; \epsilon'} (\tau, \frac{2z+\tau}{4}, \frac{2z-\tau}{4} ) \right),  \\
\dot{H}^{-[m;m_2]\mathrm{III}}_{k_1,k_2}\tz = & - \half q^{\frac{2m+1}{8M}}  \left( \Psi^{[M, -4m-2; m_2+1; 0}_{\frac{k_1+2k_2-M}{2}, -\frac{k_1+M}{2}; \epsilon'} ( \tau, \frac{2z+\tau}
{4}, \frac{2z-\tau}{4} )\right. \\
& \left.  -(-1)^{m_2} \Psi^{[M, -4m-2; m_2+1; \half]}_{\frac{k_1+2k_2-M}{2}, -\frac{k_1+M}{2}; \epsilon'} ( \tau, \frac{2z+\tau}{4}, \frac{2z-\tau}{4}) \right) . \\
\end{aligned} \]
\item[(b)] \[ \begin{aligned}
& 
H^{+[m;m_2]\mathrm{I}}_{k_1,k_2}\tz =  \\
& \ -\half (-i)^{m_2+1} e^{-\frac{\pi i }{M} (4m+2)(k_1+k_2)} q^{\frac{2m+1}{8M}}  \left( \Psi^{[M, -4m-2; m_2+1; 0]}_{\frac{k_1}{2}, -\frac{k_1+2k_2}{2}; \epsilon'} (\tau, \frac{2z+ \tau+1}{4}, \frac{2z-\tau-1}{4} )\right. \\
& \left. -(-1)^{m_2} \Psi^{[M, -4m-2; m_2+1; \half]}_{\frac{k_1}{2}, -\frac{k_1+2k_2}{2}; \epsilon'} (\tau, \frac{2z+\tau +1}{2}, \frac{2z-\tau -1}{4} ) \right), \\
&
\dot{H}^{+[m;m_2]\mathrm{I}}_{k_1,k_2}\tz =  \  \\
&-\half (-i)^{m_2-1} e^{-\frac{\pi i }{M} (4m+2)(k_1+k_2)} q^{\frac{2m+1}{8M}} \left( \Psi^{[M, -4m-2; m_2+1; 0]}_{\frac{k_1+M}{2}, -\frac{k_1+2k_2-M}{2}; \epsilon'} (\tau, \frac{2z+\tau + 1}{4}, \frac{2z-\tau - 1}{4} )\right. \\
& \left. -(-1)^{m_2} \Psi^{[M, -4m-2; m_2+1; \half]}_{\frac{k_1+M}{2}, -\frac{k_1 + 2k_2 -M}{2}; \epsilon'} (\tau, \frac{2z+\tau + 1}{4}, \frac{2z-\tau-1}{4} 
) \right), \\[2ex]
&
H^{+[m;m_2]\mathrm{III}}_{k_1,k_2}\tz = \\
&  
\half (-i)^{m_2+1} e^{-\frac{\pi i }{M} (4m+2)(k_1+k_2)} q^{\frac{2m+1}{8M}} \left( \Psi^{[M, -4m-2; m_2+1; 0]}_{\frac{k_1+2k_2}{2}, -\frac{k_1}{2}; \epsilon'} (\tau, \frac{2z+\tau + 1}{4}, \frac{2z-\tau-1}{4} )\right. \\
&  \left. -(-1)^{m_2} \Psi^{[M, -4m-2; m_2+1; \half]}_{\frac{k_1+2k_2}{2}, -\frac{k_1}{2}; \epsilon'} (\tau, \frac{2z+\tau+1}{4}, \frac{2z-\tau-1}{4} ) \right), \\[2ex]
& 
\dot{H}^{+[m;m_2]\mathrm{III}}_{k_1,k_2} \tz = \\
&  \half (-i)^{m_2-1} e^{-\frac{\pi i }{M} (4m+2)(k_1+k_2)} q^{\frac{2m+1}{8M}}\left( \Psi^{[M,-4m-2; m_2+1;0]}_{\frac{k_1+2k_2-M}{2}, - \frac{k_1+M}{2}; \epsilon'} (\tau, \frac{2z+\tau -1}{4}, \frac{2z-\tau -1}{4} )\right. \\
& \left.  -(-1)^{m_2} \Psi^{[M,-4m -2; m_2+1; \half]}_{\frac{k_1+2k_2-M}{2}, - \frac{k_1+M}{2}; \epsilon'} (\tau, \frac{2z+\tau -1}{4}, \frac{2z-\tau -1}{4}  ) \right). \\
\end{aligned} \]
\item[(c)]\[ \begin{aligned}
  H^{\tw [m;m_2]\mathrm{I}}_{k_1,k_2} \tz = 
  & \ \half(-i)^{m_2+1}e^{-\frac{\pi i}{M}(2m+1)(k_1+k_2)}
  \left(\Psi^{[M;-4m-2;-m_2;0]}_{\frac{k_1}{2},-\frac{k_1+2k_2}{2}; \epsilon'} (\tau, \frac{2z+1}{4}, \frac{2z-1}{4} ) \right. \\
& \left.  -(-1)^{m_2}  \Psi^{[M, -4m-2(m+\half); -m_2; \half]}_{\frac{k_1}{2},-\frac{k_1 + 2k_2}{2}; \epsilon'} (\tau, \frac{2z+1}{4}, \frac{2z-1}{4}) \right), \\
\end{aligned} \]
\[ \begin{aligned}
\dot{H}^{\tw [m;m_2]\mathrm{I}}_{k_1,k_2} \tz =  & \  \half (-i)^{m_2-1}e^{-\frac{\pi i}{M}(2m+1)(k_1+k_2)} \left(\Psi^{[M,-4m-2;-m_2;0]}_{\frac{k_1+M}{2},-\frac{k_1+2k_2-M}{2}; \epsilon'}(\tau, \frac{2z+1}{4}, \frac{2z-1}{4} ) \right. \\
& \left. -(-1)^{m_2}  \Psi^{[M, -4m-2; -m_2; \half]}_{\frac{k_1+M}{2},-\frac{k_1+2k2-M}{2}; \epsilon'} (\tau, \frac{2z+1}{4}, \frac{2z-1}{4}) \right), \\
H^{\tw [m;m_2]\mathrm{III}}_{k_1,k_2} \tz = & \ -\half (-i)^{m_2-1}e^{-\frac{\pi i}{M}(2m+1)(k_1+k_2)}\left(\Psi^{[M, -4m-2; -m_2; 0]}_{ \frac{k_1+2k_2}{2},-\frac{k_1}{2}; \epsilon'} (\tau, \frac{2z+1}{4}, \frac{2z-1}{4} )\right. \\
& \left.  -(-1)^{m_2}  \Psi^{[M, -4m-2; -m_2;\half]}_{ \frac{k_1+2k_2}{2},-\frac{k_1}{2}; \epsilon'} (\tau, \frac{2z+1}{4}, \frac{2z-1}{4}) \right), \\
\dot{H}^{\tw [m;m_2]\mathrm{III}}_{k_1,k_2} \tz  = & \ -
 \half (-i)^{m_2-1}e^{-\frac{\pi i}{M}(2m+1)(k_1+k_2)}\left(\Psi^{[M, -4m-2; -m_2; 0]}_{ \frac{k_1+2k_2-M}{2},-\frac{k_1 +M}{2}; \epsilon'} (\tau, \frac{2z+1}{4}, \frac{2z-1}{4} ) \right. \\
 & \left. -(-1)^{m_2}  \Psi^{[M, -4m-2;-m_2; \half]}_{
      \frac{k_1+2k_2-M}{2},-\frac{k_1+M}{2}; \epsilon'} (\tau, \frac{2z+1}{4}, \frac{2z-1}{4} ) \right). \\
\end{aligned}
 \]
\end{enumerate}
\qed
\end{theorem}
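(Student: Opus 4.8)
The plan is to feed the supercharacter expansions of Proposition~\ref{prop3.4} and the explicit $\Psi$-forms of Proposition~\ref{prop3.7} through the QHR character formulas (4.4) and (4.12). By definition the numerator of $\ch^\pm_{H(\La^J)}$ is $\overset{3}{R}\vphantom{R}^\pm\,\ch^\pm_{H(\La^J)}$, and by (4.4) this equals $(\hat R^\pm\ch^\pm_{\La^J})(\tau,z+\tfrac{\tau}{2},-z+\tfrac{\tau}{2},\tfrac{\tau}{4})$; likewise, by (4.12) the Ramond numerator $\overset{3}{R}\vphantom{R}^{\tw}\,\ch^+_{H^{\tw}(\La^J)}$ equals $(\hat R^{+,\tw}\ch^{+,\tw}_{\La^J})(\tau,z+\tfrac{\tau}{2},-z+\tfrac{\tau}{2},\tfrac{\tau}{4})$ (recall $\ch^-_{H^{\tw}}\equiv 0$). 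For the principal admissible weights, formula (2.10) together with Proposition~\ref{prop3.4} expresses $\hat R^-\ch^-_{\La^J}$ as the \emph{same} linear combination of $B^{[m;m_2]J}_{k_1,k_2}$ and $\dot B^{[m;m_2]J}_{k_1,k_2}$ that Proposition~\ref{prop3.4} gives for $\La^0=\La^{[m;m_2]}$ in terms of $B^{[m;m_2]}$ and $\dot B^{[m;m_2]}$; indeed this is precisely how $B^{[m;m_2]J}_{k_1,k_2}$ was defined just before Proposition~\ref{prop3.7}. By linearity the theorem therefore reduces to evaluating the building blocks $H^{\pm[m;m_2]J}_{k_1,k_2}$, $H^{\tw[m;m_2]J}_{k_1,k_2}$ and their dotted versions, each being the corresponding $B$-block evaluated at the relevant QHR point.

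For the supercharacter (part~(a)) this is a bare substitution into Proposition~\ref{prop3.7}: putting $(z_1,z_2,t)=(z+\tfrac{\tau}{2},-z+\tfrac{\tau}{2},\tfrac{\tau}{4})$ sends the $\Psi$-argument $(\tau,\tfrac{z_1}{2},-\tfrac{z_2}{2})$ to $(\tau,\tfrac{2z+\tau}{4},\tfrac{2z-\tau}{4})$ and the prefactor $e^{\frac{\pi i}{M}(2m+1)t}$ to $q^{\frac{2m+1}{8M}}$. For the character (part~(b)) I first rewrite $\hat R^+\ch^+_{\La^J}$ in terms of $\hat R^-\ch^-_{\La^J}$ via (3.7) and the half-period relation between $\hat R^+$ and $\hat R^-$ (as in (3.1) and its Neveu--Schwarz analogue); this introduces the scalar $e^{-\pi i(\La^J|\theta)}$, and a short pairing computation from Proposition~\ref{prop3.6} gives, e.g., $(\La^{[m;m_2]\mathrm I}_{k_1,k_2}\,|\,\theta)=-\tfrac{2m+1}{M}(k_1+k_2)-\tfrac{m_2}{2}$, and analogously in the other cases. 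Substituting the resulting half-period shift of $z_1,z_2$ into the $\Psi$-blocks of Proposition~\ref{prop3.7}, and using the elliptic identities (1.13)--(1.14) together with the relation $\Phi^{[m;s]}(\tau,z_1+1,z_2-1,t)=\Phi^{[m;s]}(\tau,z_1,z_2,t)$ for $s\in\ZZ$ and the antisymmetry $\Phi^{[m;s]}(\tau,-z_2,-z_1,t)=-\Phi^{[m;s]}(\tau,z_1,z_2,t)$ to reshape the arguments into the stated $\tfrac{2z\pm\tau\pm1}{4}$ form, produces the prefactors $\tfrac12(-i)^{m_2\pm1}e^{-\frac{\pi i}{M}(4m+2)(k_1+k_2)}q^{\frac{2m+1}{8M}}$. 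For the Ramond sector (part~(c)) one first checks that composing the twist substitution (4.9) with the QHR point collapses: $(\hat R^{+,\tw}\ch^{+,\tw}_{\La^J})(\tau,z+\tfrac{\tau}{2},-z+\tfrac{\tau}{2},\tfrac{\tau}{4})=(\hat R^+\ch^+_{\La^J})(\tau,z,-z,0)$; then (3.7) and Proposition~\ref{prop3.7} finish as in part~(b), now with $t=0$ (hence no $q$-power prefactor) and with the second index of the $\Psi$'s passing from $m_2+1$ to $-m_2$, since the Ramond half-period shift conjugates the $\Theta_{j,m}$-contributions in $\Phi_{\mathrm{add}}$ by $j\mapsto-j$.

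The structural reduction above is immediate from linearity and the cited propositions; the real labour --- and the only genuine obstacle --- is the scalar bookkeeping in parts~(b) and~(c). One must simultaneously track (i) the phase $e^{-\pi i(\La|\theta)}$ from (3.7), (ii) the sign/scalar from comparing $\hat R^+$ with $\hat R^-$ under a half-period shift, (iii) the built-in factors $q^{\frac{mab}{M}}$ and $e^{\frac{2\pi i m}{M}(bz_1+az_2)}$ of $\Psi^{[M,m,s;\epsilon]}_{a,b;\epsilon'}$ with $m$ replaced by $-4m-2$ and $a,b$ the half-integers read off from (3.6), each of which transforms under the coordinate shifts, and (iv) the factor $\tfrac12$ and the signs coming from Lemma~\ref{lemma1.1} and Proposition~\ref{prop3.2}. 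Carrying this out correctly for each of the eight cases (type I or III, character or Ramond character, dotted or undotted) is where all the work lies; no new idea beyond the already-established character formulas is needed.
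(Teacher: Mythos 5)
Your proposal is correct and follows essentially the same route as the paper: the paper likewise obtains Theorem \ref{th4.3} by substituting the QHR evaluation point of \eqref{4.4} and \eqref{4.12} into the $\Psi$-expressions of Proposition \ref{prop3.7}, with the linear-combination structure inherited from Proposition \ref{prop3.4}, and with \eqref{3.7}, \eqref{3.8}/\eqref{4.9} handling the character and Ramond cases. Your explicit reduction of the twisted numerator to $(\hat R^{+}\ch^{+}_{\La})(\tau,z,-z,0)$ and your identification of the scalar bookkeeping as the only remaining labour match what the paper leaves implicit.
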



The modified $ N=3 $ (super)characters $ \tilde{\ch}^\pm_{H(\La)} $ and the modified twisted $ N=3 $ characters $ \tilde{\ch}^\tw_{H(\La)}  $ are obtained, by definition, from their expressions, given by Theorem \ref{th4.3}, by replacing in the RHS the function $ \Psi $ by its modification $ \tilde{\Psi}. $ Note that, since $ m_2 \in \ZZ, $ we can replace
in $ \tilde{\Psi}^{[M; -4m-2;s; \epsilon]}, $ the integer $ s $ by 0, using 
\cite{KW16}, Corollary 1.6 . 

In order to compute the modular transformation properties of these modified $ N=3 $ (super)characters, we introduce the function
\begin{equation}
\label{4.14}
f^{[M;n;\epsilon](\sigma, \sigma')}_{j,k;\epsilon'} \tz : = q^{-\frac{n}{4M} {\sigma'}^2} \tilde{\Psi}^{[M,n;0;\epsilon]}_{j;k;\epsilon'} \left(\tau, \frac{z+\sigma + \sigma' \tau}{2}, \frac{z-\sigma-\sigma' \tau}{2}\right),
\end{equation}
\noindent where $ M  $ is a positive odd integer, $ n $ is a positive integer, coprime to $ M, \ \epsilon, \epsilon' = 0 $ or $ \half $ are considered $ \bmod \ZZ, \ j, k \in \epsilon' + \ZZ, $ and $ \sigma, \sigma' = 0 $ or $ \half. $
The properties of these functions that will be used are described by the following three statements.
\begin{lemma}
\label{lem4.4}
\[ f^{[M;n;\epsilon](\sigma,\sigma')}_{j,k;\epsilon'} = 
e^{-\frac{2\pi i n}{M}\sigma\sigma'} e^{\frac{2\pi i n}{M}\sigma (k-j)} 
f^{[M;n; |\sigma - \epsilon|](\sigma,\sigma')}_{k-\sigma',j+\sigma'; |\sigma' -\epsilon'|} . \] 
\end{lemma}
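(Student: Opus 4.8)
The plan is to expand both sides of the claimed identity by substituting the definition \eqref{1.10} into \eqref{4.14}, and to reduce everything to the quasi-periodicity \eqref{1.13} of $\tilde{\Phi}^{[n;0]}$ together with its symmetry in the two elliptic variables. First, setting $w_1=\thalf(z+\sigma+\sigma'\tau)$ and $w_2=\thalf(z-\sigma-\sigma'\tau)$, one obtains from \eqref{4.14} and \eqref{1.10} (with $t=0$)
\[
f^{[M;n;\epsilon](\sigma,\sigma')}_{j,k;\epsilon'}\tz = P_{j,k}\cdot\tilde{\Phi}^{[n;0]}\bigl(M\tau,\; w_1+j\tau+\epsilon,\; w_2+k\tau+\epsilon\bigr),
\]
where $P_{j,k}=q^{-\frac{n{\sigma'}^2}{4M}+\frac{njk}{M}}\,e^{\frac{\pi i n}{M}\left[(j+k)z+(k-j)\sigma+(k-j)\sigma'\tau\right]}$, and the right-hand side of the lemma is the same expression with $(j,k,\epsilon,\epsilon')$ replaced by $(k-\sigma',\,j+\sigma',\,|\sigma-\epsilon|,\,|\sigma'-\epsilon'|)$; this substitution is well-formed because $k-\sigma'$ and $j+\sigma'$ lie in $|\sigma'-\epsilon'|+\ZZ$, since $\epsilon'\pm\sigma'\equiv|\sigma'-\epsilon'|\pmod{\ZZ}$.

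Next I would compare the two pairs of arguments of $\tilde{\Phi}^{[n;0]}$. A one-line computation gives
\[
\bigl(w_1+(k-\sigma')\tau+|\sigma-\epsilon|\bigr)-\bigl(w_2+k\tau+\epsilon\bigr)=\sigma+|\sigma-\epsilon|-\epsilon,
\]
\[
\bigl(w_2+(j+\sigma')\tau+|\sigma-\epsilon|\bigr)-\bigl(w_1+j\tau+\epsilon\bigr)=-\sigma+|\sigma-\epsilon|-\epsilon,
\]
and for each of the four choices $\sigma,\epsilon\in\{0,\thalf\}$ both right-hand sides are integers — this is precisely why the superscript on the transformed $f$ must be $|\sigma-\epsilon|$ rather than $\epsilon$. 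Hence \eqref{1.13} removes these integer shifts, so the $\tilde{\Phi}^{[n;0]}$-factor on the right equals the one on the left with its two elliptic variables interchanged, and by the symmetry $\tilde{\Phi}^{[n;0]}(\tau,z_1,z_2,t)=\tilde{\Phi}^{[n;0]}(\tau,z_2,z_1,t)$ the two factors coincide.

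Finally I would compare the prefactors. Using $(k-\sigma')(j+\sigma')=jk+\sigma'(k-j)-{\sigma'}^2$ and absorbing all powers of $q$ into the exponent via $q^x=e^{2\pi i x\tau}$, one finds that every $\tau$-dependent contribution cancels and
\[
\frac{P_{k-\sigma',\,j+\sigma'}}{P_{j,k}}=e^{-\frac{2\pi i n}{M}\sigma(k-j)}\,e^{\frac{2\pi i n}{M}\sigma\sigma'}.
\]
Combining this with the previous paragraph yields exactly $f^{[M;n;\epsilon](\sigma,\sigma')}_{j,k;\epsilon'}=e^{-\frac{2\pi i n}{M}\sigma\sigma'}\,e^{\frac{2\pi i n}{M}\sigma(k-j)}\,f^{[M;n;|\sigma-\epsilon|](\sigma,\sigma')}_{k-\sigma',\,j+\sigma';\,|\sigma'-\epsilon'|}$, as asserted.

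The step requiring care is the symmetry $\tilde{\Phi}^{[n;0]}(\tau,z_1,z_2,t)=\tilde{\Phi}^{[n;0]}(\tau,z_2,z_1,t)$ invoked in the second paragraph, together with the bookkeeping of the four cases governing $|\sigma-\epsilon|$; everything else is a routine expansion. For the meromorphic part this symmetry is elementary from the antisymmetrization in \eqref{1.9} and the identity $\Phi^{[n;0]}_1(\tau,z_1,z_2)-\Phi^{[n;1]}_1(\tau,z_1,z_2)=\sum_{\ell\in\ZZ}q^{n\ell^2}e^{2\pi i n\ell(z_1+z_2)}=\Theta_{0,n}(\tau,z_1+z_2)$, combined with the $s$-independence of $\tilde{\Phi}^{[n;s]}$ for $s\in\ZZ$ (\cite{KW16}, Corollary~1.6); for the real-analytic correction $\Phi^{[n;0]}_{\mathrm{add}}$ it follows from $\Theta_{j,n}(\tau,-z)=\Theta_{-j,n}(\tau,z)$ and the corresponding reflection identity for the functions $R_{j;n}$.
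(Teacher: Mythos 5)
Your proof is correct, and at bottom it is the same computation as the paper's: both hinge on the symmetry of the modified mock theta function under interchange of its two elliptic variables, followed by elliptic transformation properties and prefactor bookkeeping. The difference is purely organizational. The paper works at the level of $\tilde{\Psi}$: it first swaps via $\tilde{\Psi}^{[M,n;0;\epsilon]}_{j,k;\epsilon'}(\tau,z_1,z_2)=\tilde{\Psi}^{[M,n;0;\epsilon]}_{k,j;\epsilon'}(\tau,z_2,z_1)$ (the identity recorded later as \eqref{8.07}), then removes the resulting shift by $\sigma+\sigma'\tau$ by chaining Lemma \ref{lem4.9} (half-period shift in the $\tau$-direction, which produces the index shift by $\mp\sigma'$ and flips $\epsilon'$) and Lemma \ref{lem4.8} (shift by $\pm\sigma$, which flips $\epsilon$). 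You instead unwind everything to $\tilde{\Phi}$ via \eqref{1.10}, where the $\tau$-shifts become pure index bookkeeping and only the $\ZZ^2$-periodicity \eqref{1.13} is needed; this buys a shorter dependency chain (Lemmas \ref{lem4.8} and \ref{lem4.9} are bypassed) at the cost of a longer explicit prefactor computation, which you carry out correctly, including the cancellation of all $\tau$-dependent terms. Both arguments ultimately rest on the same input you single out as the delicate point, namely $\tilde{\Phi}^{[n;0]}(\tau,z_1,z_2,t)=\tilde{\Phi}^{[n;0]}(\tau,z_2,z_1,t)$ --- the paper uses it silently in its first line --- and your sketch of why it holds (the partial-fraction identity reducing $\Phi_1^{[n;0]}(\tau,z_1,z_2)+\Phi_1^{[n;0]}(\tau,-z_1,-z_2)$ to $\Theta_{0,n}(\tau,z_1+z_2)$ for the meromorphic part, and the reflection properties of $R_{j;n}$ and $\Theta_{\pm j,n}$ for the correction) is the standard one.
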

\begin{lemma}
\label{lem4.5}
For 
$ a,b \in \ZZ $ one has:
\[ f^{[M;n;\epsilon](\sigma, \sigma')}_{j + aM, k+bM; \epsilon'} = e^{2 \pi i n (a-b)\epsilon} f^{[M;n;\epsilon](\sigma, \sigma')}_{j,k;\epsilon'}. \]
\end{lemma}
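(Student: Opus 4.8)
The plan is to reduce the assertion to a translation property of $\tilde{\Psi}^{[M,n;0;\epsilon]}$ in its lower indices, which itself follows from the elliptic transformation law \eqref{1.14} for $\tilde{\Phi}$. In the definition \eqref{4.14} of $f$ the prefactor $q^{-\frac{n}{4M}{\sigma'}^2}$ and the point $\bigl(\tau,\frac{z+\sigma+\sigma'\tau}{2},\frac{z-\sigma-\sigma'\tau}{2}\bigr)$ at which $\tilde{\Psi}$ is evaluated do not involve $j$ or $k$; hence it suffices to prove the identity of meromorphic functions
\[
\tilde{\Psi}^{[M,n;0;\epsilon]}_{j+aM,\,k+bM;\,\epsilon'}(\tau,z_1,z_2,t)=e^{-2\pi i n(a+b)\epsilon}\,\tilde{\Psi}^{[M,n;0;\epsilon]}_{j,\,k;\,\epsilon'}(\tau,z_1,z_2,t),\qquad a,b\in\ZZ .
\]

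To prove this I would expand both sides via the definition \eqref{1.10} with $\Phi$ replaced by $\tilde{\Phi}$. On the left-hand side the argument of $\tilde{\Phi}^{[n;0]}$ is $\bigl(M\tau,\,(z_1+j\tau+\epsilon)+a(M\tau),\,(z_2+k\tau+\epsilon)+b(M\tau),\,\frac{t}{M}\bigr)$, so I apply \eqref{1.14} with the modular parameter $M\tau$ in place of $\tau$ and with the integers $a,b$; this produces the factor $(e^{2\pi i M\tau})^{-nab}\,e^{-2\pi i n\left(b(z_1+j\tau+\epsilon)+a(z_2+k\tau+\epsilon)\right)}$ times $\tilde{\Phi}^{[n;0]}$ evaluated at $\bigl(M\tau,\,z_1+j\tau+\epsilon,\,z_2+k\tau+\epsilon,\,\frac{t}{M}\bigr)$, which is precisely the $\tilde{\Phi}$-factor occurring in $\tilde{\Psi}^{[M,n;0;\epsilon]}_{j,k;\epsilon'}$.

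It then remains to collect the scalar factors. Comparing the $q$-prefactors $q^{n(j+aM)(k+bM)/M}$ and $q^{njk/M}$, and including $q^{-Mnab}$ together with the $\tau$-linear terms $q^{-nbj}q^{-nak}$ from the exponential above, one finds that the total $q$-exponent telescopes to $0$; similarly the $z_1$- and $z_2$-linear part of the prefactor $e^{\frac{2\pi i n}{M}((k+bM)z_1+(j+aM)z_2)}$ exactly cancels $e^{-2\pi i n(bz_1+az_2)}$. What survives is $e^{-2\pi i n(a+b)\epsilon}$, which establishes the displayed identity. Finally, since $\epsilon$ lies in $\{0,\thalf\}$ modulo $\ZZ$ and $a,b,n\in\ZZ$, the exponents $-2\pi i n(a+b)\epsilon$ and $2\pi i n(a-b)\epsilon$ differ by $2\pi i\cdot 2an\epsilon\in 2\pi i\ZZ$, so $e^{-2\pi i n(a+b)\epsilon}=e^{2\pi i n(a-b)\epsilon}$; inserting this back into \eqref{4.14} yields the lemma in the stated form.

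The only delicate point is the bookkeeping in the third step, namely keeping track of which exponential and which power of $q$ comes from the prefactor in \eqref{1.10} and which from the invocation of \eqref{1.14}. This is purely mechanical with no conceptual obstacle, but it is worth emphasizing that \eqref{1.14} must be applied with modular parameter $M\tau$, not $\tau$, since the index shifts $aM$ and $bM$ translate the arguments of $\tilde{\Phi}^{[n;0]}$ by integer multiples of $M\tau$, and that the passage from the exponent $a+b$ to $a-b$ is legitimate only because $2\epsilon\in\ZZ$.
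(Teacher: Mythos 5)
Your proof is correct and follows essentially the same route as the paper: the paper simply applies Lemma \ref{lem4.7} (the index-translation property $\tilde{\Psi}^{[M,n;0;\epsilon]}_{j+aM,k+bM;\epsilon'}=e^{2\pi i n(a-b)\epsilon}\tilde{\Psi}^{[M,n;0;\epsilon]}_{j,k;\epsilon'}$, quoted from [KW15]) to the $\tilde{\Psi}$ in \eqref{4.14}, whereas you re-derive that property from \eqref{1.14} at modular parameter $M\tau$ and then observe that $e^{-2\pi i n(a+b)\epsilon}=e^{2\pi i n(a-b)\epsilon}$ because $2\epsilon\in\ZZ$. The extra bookkeeping you carry out is exactly the content of the cited lemma, so there is no substantive difference.
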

\begin{proposition}
\label{prop4.6}
One has the following modular transformation properties: 
\begin{enumerate}
\item[(a)] \[ \begin{aligned}
& f^{[M;n;\epsilon](\sigma, \sigma')}_{j,k;\epsilon'}  \left(-\frac{1}{\tau}, \frac{z}{\tau}\right) = \\
& \frac{\tau}{M} e^{\frac{\pi i n}{2 M \tau}z^2} e^{\frac{\pi i n}{M} \sigma \sigma'} \hspace{-4ex} \sum_{(a,b) \in (\epsilon + \ZZ /M\ZZ)^2} \hspace{-4ex} e^{-\frac{2 \pi i n}{M} (ak+bj)+\frac{2 \pi i n}{M}\sigma' (a-b)} f^{[M;n;\epsilon'+\sigma'](\sigma', \sigma)}_{a,b;\epsilon} \tz .\\
\end{aligned}
 \]
\item[(b)] \[ \begin{aligned}
f^{[M;n;\epsilon](\sigma, 0)}_{j,k;\epsilon'}(\tau + 1, z) &= e^{\frac{2 \pi i n}{M}jk} f^{[M, n;\epsilon + \epsilon'](\sigma,0)}_{j,k;\epsilon} \tz,  \\
f^{[M;n;\epsilon](\sigma, \half)}_{j,k;\epsilon'} (\tau + 1, z) &= e^{\frac{2 \pi i n}{M}(jk- \frac{1}{16})} e^{\frac{2 \pi i n \sigma}{M} (k-j)} f^{[M;n;\epsilon + \epsilon'-\sigma](\half - \sigma, \half)}_{j,k;\epsilon'} \tz. \\ 
\end{aligned}
 \]
\end{enumerate}
\end{proposition}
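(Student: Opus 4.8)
The plan is to establish both formulas by direct computation: unwind the definition \eqref{4.14}, feed the coordinate change into the modular transformation formulas \eqref{1.17} (for (a)) and \eqref{1.18} (for (b)) of $\tilde\Psi$, and then repackage the output back into the functions $f$ by means of the elliptic transformations \eqref{1.13}--\eqref{1.14} of $\tilde\Phi$ together with Lemmas \ref{lem4.4}--\ref{lem4.5}. I will first record two elementary facts used throughout. First, $\tilde\Psi^{[M,n;0;\epsilon]}_{j,k;\epsilon'}(\tau,z_1,z_2,t) = e^{\frac{2\pi in}{M}t}\,\tilde\Psi^{[M,n;0;\epsilon]}_{j,k;\epsilon'}(\tau,z_1,z_2,0)$, immediate from the $e^{2\pi i(\cdot)t}$ prefactors in \eqref{1.9}, \eqref{1.11} and the rescaling $t\mapsto t/M$ in \eqref{1.10}. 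Second, the explicit form $\tilde\Psi^{[M,n;0;\epsilon]}_{j,k;\epsilon'}(\tau,z_1,z_2) = q^{njk/M}e^{\frac{2\pi in}{M}(kz_1+jz_2)}\,\tilde\Phi^{[n;0]}(M\tau,\,z_1+j\tau+\epsilon,\,z_2+k\tau+\epsilon)$, which, combined with \eqref{1.13}, yields the elliptic identity I will use to re-identify terms: raising the slot $\epsilon$ by $c$ while at the same time shifting the arguments by $(z_1,z_2)\mapsto(z_1+c,z_2-c)$ leaves the $\tilde\Phi$-factor unchanged whenever $2c\in\ZZ$ and multiplies $\tilde\Psi$ only by the elementary phase $e^{\frac{2\pi in}{M}(j-k)c}$; in particular the slot argument of $\tilde\Psi$, and hence of $f$, matters only modulo $\ZZ$.

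For part (a): evaluate \eqref{4.14} at $\tau\mapsto-\tfrac1\tau$, $z\mapsto\tfrac z\tau$ and note that the two $\tilde\Psi$-arguments $\tfrac12\bigl(\tfrac z\tau+\sigma\mp\tfrac{\sigma'}\tau\bigr)$ equal $w_1/\tau$ and $w_2/\tau$ with $w_1=\tfrac12(z+\sigma\tau-\sigma')$, $w_2=\tfrac12(z-\sigma\tau+\sigma')$. Apply \eqref{1.17} with $t=w_1w_2/\tau$, so that the $t$-slot on its left side is $0$; this gives $\tfrac\tau M\sum_{a,b\in\epsilon+\ZZ/M\ZZ}e^{-\frac{2\pi in}{M}(ak+bj)}\,\tilde\Psi^{[M,n;0;\epsilon']}_{a,b;\epsilon}(\tau,w_1,w_2,w_1w_2/\tau)$. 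Extract the $t$-homogeneity factor $e^{\frac{2\pi in}{M}w_1w_2/\tau}$ and expand $w_1w_2/\tau=\tfrac1{4\tau}\bigl(z^2-\sigma^2\tau^2+2\sigma\sigma'\tau-{\sigma'}^2\bigr)$: the $-{\sigma'}^2$-piece cancels the prefactor $q^{-n{\sigma'}^2/(4M)}$ (which becomes $e^{\pi in{\sigma'}^2/(2M\tau)}$ under $\tau\mapsto-1/\tau$); the $z^2$-piece gives $e^{\pi inz^2/(2M\tau)}$; the $2\sigma\sigma'\tau$-piece gives $e^{\pi in\sigma\sigma'/M}$; the $-\sigma^2\tau^2$-piece gives $q^{-n\sigma^2/(4M)}$. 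It then remains to recognize $q^{-n\sigma^2/(4M)}\,\tilde\Psi^{[M,n;0;\epsilon']}_{a,b;\epsilon}(\tau,w_1,w_2)$ as $e^{\frac{2\pi in}{M}\sigma'(a-b)}\,f^{[M;n;\epsilon'+\sigma'](\sigma',\sigma)}_{a,b;\epsilon}(\tau,z)$: this is exactly the elliptic identity above with $c=\sigma'$ (the slot passes from $\epsilon'$ to $\epsilon'+\sigma'$, and one acquires the phase $e^{\frac{2\pi in}{M}(a-b)\sigma'}$), whereupon the remaining $z$-arguments and $q$-prefactor are precisely those of $f$ in \eqref{4.14}. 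Lemma \ref{lem4.5} makes the summand well-defined over $\epsilon+\ZZ/M\ZZ$. Assembling the phases gives (a).

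For part (b): set $\tau\mapsto\tau+1$ in \eqref{4.14}. The prefactor $q^{-n{\sigma'}^2/(4M)}$ picks up $e^{-2\pi in{\sigma'}^2/(4M)}$ (which is $e^{-2\pi in/(16M)}$ precisely when $\sigma'=\tfrac12$), while \eqref{1.18} produces $e^{\frac{2\pi in}{M}jk}$ and turns the slot $\epsilon$ into $|\epsilon-\epsilon'|\equiv\epsilon+\epsilon'\ (\mathrm{mod}\ \ZZ)$. If $\sigma'=0$ the arguments $\tfrac12(z\pm\sigma)$ carry no $\tau$, and one reads off the first identity directly. If $\sigma'=\tfrac12$, the argument $\tfrac12(z+\sigma)+\tfrac\tau4$ becomes $\tfrac12\bigl(z+(\sigma+\tfrac12)\bigr)+\tfrac\tau4$; for $\sigma=0$ this is already the argument defining $f^{[\,\cdot\,](\frac12,\frac12)}$, while for $\sigma=\tfrac12$ one has $\sigma+\tfrac12=1$ and uses the elliptic identity with $c=\tfrac12$ to remove the surplus integer: this produces $e^{\frac{2\pi in}{M}\sigma(k-j)}$ and shifts the slot by $\sigma$, accounting for the appearance of $\tfrac12-\sigma$ and $\epsilon+\epsilon'-\sigma$ in the statement (slots mattering only modulo $\ZZ$). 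Collecting $e^{-2\pi in/(16M)}$, $e^{\frac{2\pi in}{M}jk}$ and, when $\sigma=\tfrac12$, $e^{\frac{\pi in}{M}(k-j)}$ reproduces (b).

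The step I expect to be the main obstacle is the phase bookkeeping — carrying the quadratic exponential factors through the substitutions — and, above all, the $\sigma'=\tfrac12$ case of part (b), where the $\tfrac\tau4$ shift built into \eqref{4.14} does not commute cleanly with $\tau\mapsto\tau+1$ and has to be reabsorbed by a half-period translation of $\tilde\Psi$, after which one must verify that all the slot/subscript identifications, meaningful only modulo $\ZZ$, come out consistently.
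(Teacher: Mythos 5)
Your proposal is correct and follows essentially the same route as the paper: unwind definition \eqref{4.14}, apply the $S$- and $T$-transformations of $\tilde{\Psi}$ (Lemma \ref{lem4.10}, i.e.\ \eqref{1.17}--\eqref{1.18}), expand the quadratic factor $w_1w_2/\tau$ to produce the phases, and reabsorb the residual half-integer shifts of the arguments and of the slot $\epsilon$ via the elliptic properties of $\tilde{\Psi}$ (the paper packages this last step as formula \eqref{4.15}, derived from Lemma \ref{lem4.8}). The only caution is the sign convention in your preliminary "elliptic identity" (the stated direction gives phase $e^{\frac{2\pi i n}{M}(k-j)c}$, not $e^{\frac{2\pi i n}{M}(j-k)c}$), but the phases you actually quote in both applications agree with the proposition, so the argument goes through as in the paper.
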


In order to prove these properties of $ f^{[M,n;\epsilon](\sigma, \sigma')}_{j,k;\epsilon'}, $ we make use of the following properties of $ \tilde{\Psi}, $ which follow from the properties of $ \tilde{\Phi}, $ see \cite{KW15}, Theorems 1.12, and 2.8.

\begin{lemma}
\label{lem4.7}
Let $ m $ be a positive integer, $ M $ a positive odd integer coprime to m and $ s \in \half \ZZ. $ Then
\[ \tilde{\Psi}^{[M, m; s; \epsilon]}_{j + aM, k + bM; \epsilon'} = e^{2 \pi i m (a-b) \epsilon} \tilde{\Psi}^{[M, m; s; \epsilon]}_{j,k; \epsilon'} \mbox{ for } a,b \in \ZZ. \]
\end{lemma}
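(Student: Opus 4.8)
The plan is to read off Lemma~\ref{lem4.7} directly from the definition \eqref{1.10} of $\tilde{\Psi}$ together with the elliptic transformation property \eqref{1.14} of $\tilde{\Phi}$. (The hypotheses that $M$ be odd and coprime to $m$ are not actually needed for this identity --- they enter only in the modular transformations \eqref{1.17}, \eqref{1.18} --- but I would keep them for uniformity with the surrounding statements.) Write $j'=j+aM$ and $k'=k+bM$. Substituting into \eqref{1.10}, with $\Phi$ replaced throughout by $\tilde{\Phi}$, gives
\[
\tilde{\Psi}^{[M,m;s;\epsilon]}_{j',k';\epsilon'}\tzzt
= q^{\frac{m j'k'}{M}}\, e^{\frac{2\pi i m}{M}(k'z_1 + j'z_2)}\,
\tilde{\Phi}^{[m;s]}\!\left(M\tau,\, z_1 + j'\tau + \epsilon,\, z_2 + k'\tau + \epsilon,\, \tfrac{t}{M}\right).
\]

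The key observation is that $j'\tau = j\tau + a(M\tau)$ and $k'\tau = k\tau + b(M\tau)$, so the last two ``space'' arguments of $\tilde{\Phi}$ equal $(z_1+j\tau+\epsilon)+a(M\tau)$ and $(z_2+k\tau+\epsilon)+b(M\tau)$; that is, they differ from the arguments appearing in $\tilde{\Psi}^{[M,m;s;\epsilon]}_{j,k;\epsilon'}$ by $a,b\in\ZZ$ times the ``period'' $M\tau$ (with $\Im(M\tau)>0$). I would then apply \eqref{1.14} with $\tau$ replaced by $M\tau$, with ``$a$'', ``$b$'' taken to be $a,b$, and with base points $z_1+j\tau+\epsilon$, $z_2+k\tau+\epsilon$, to get
\[
\tilde{\Phi}^{[m;s]}\!\left(M\tau,\, z_1+j'\tau+\epsilon,\, z_2+k'\tau+\epsilon,\, \tfrac{t}{M}\right)
= q^{-Mmab}\, e^{-2\pi i m\left(b(z_1+j\tau+\epsilon)+a(z_2+k\tau+\epsilon)\right)}\,
\tilde{\Phi}^{[m;s]}\!\left(M\tau,\, z_1+j\tau+\epsilon,\, z_2+k\tau+\epsilon,\, \tfrac{t}{M}\right),
\]
using $e^{-2\pi i M\tau\,mab}=q^{-Mmab}$; the $\tilde{\Phi}$-factor on the right is precisely the one occurring in $\tilde{\Psi}^{[M,m;s;\epsilon]}_{j,k;\epsilon'}$.

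It then remains to bookkeep the scalar prefactors: the ratio $q^{m j'k'/M}/q^{mjk/M}$, the ratio $e^{\frac{2\pi i m}{M}((k'-k)z_1+(j'-j)z_2)}$, and the factor $q^{-Mmab}e^{-2\pi i m(b(z_1+j\tau+\epsilon)+a(z_2+k\tau+\epsilon))}$ produced above. Using $j'k'-jk = M(ak+bj+abM)$, $k'-k=bM$, $j'-j=aM$, together with $e^{-2\pi i m bj\tau}=q^{-mbj}$ and $e^{-2\pi i m ak\tau}=q^{-mak}$, one checks that all powers of $q$ cancel, the $z_1$- and $z_2$-dependent exponentials cancel, and the only surviving factor is $e^{-2\pi i m(a+b)\epsilon}$. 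Hence $\tilde{\Psi}^{[M,m;s;\epsilon]}_{j+aM,k+bM;\epsilon'}=e^{-2\pi i m(a+b)\epsilon}\,\tilde{\Psi}^{[M,m;s;\epsilon]}_{j,k;\epsilon'}$, and since $m,a,b\in\ZZ$ and $\epsilon\in\{0,\thalf\}$ we have $e^{-4\pi i m a\epsilon}=1$, so $e^{-2\pi i m(a+b)\epsilon}=e^{2\pi i m(a-b)\epsilon}$, which is the stated (manifestly $a\leftrightarrow b$--balanced) form. The argument is entirely elementary; the only ``hard'' point is keeping the several prefactor contributions straight and noticing this last parity identity. (The index $s$ plays no role: $\tilde{\Phi}^{[m;s]}$ obeys \eqref{1.14} for every $s\in\half\ZZ$, so the same computation applies verbatim.)
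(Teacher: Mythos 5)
Your computation is correct, and it is exactly the argument the paper has in mind: the paper does not prove Lemma \ref{lem4.7} itself but defers it to [KW15] as a consequence of the elliptic transformation property \eqref{1.14} of $\tilde{\Phi}^{[m;s]}$ applied with $\tau$ replaced by $M\tau$, which is precisely what you carry out. Your bookkeeping of the prefactors, the observation that the coprimality/oddness hypotheses are not needed here, and the final parity identity $e^{-2\pi i m(a+b)\epsilon}=e^{2\pi i m(a-b)\epsilon}$ are all right.
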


\begin{lemma}
\label{lem4.8}
Let $ m $ and $ M $ be as in Lemma \ref{lem4.7} and let $ s \in \ZZ. $ Then the following formulas hold for $ a, b \in \ZZ, A, B \in \half + \ZZ: $
\begin{itemize}
\item[(a)] \[ \tilde{\Psi}^{[M,m;s; \epsilon]}_{j,k;\epsilon'} (\tau, z_1 + a, z_2 + b, t) = e^{\frac{2 \pi i m}{M} (ka + jb)} \tilde{\Psi}^{[M,m;s; \epsilon]}_{j,k;\epsilon'} \tzzt \]
\item[(b)] \[ \tilde{\Psi}^{[M,m;s; \epsilon]}_{j,k;\epsilon'} (\tau, z_1 + A, z_2 + B, t) = e^{\frac{2 \pi i m}{M} (kA + jB)} \tilde{\Psi}^{[M,m;s; \thalf- \epsilon]}_{j,k;\epsilon'} \tzzt \]
\end{itemize}
\end{lemma}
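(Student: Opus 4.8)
The plan is to reduce both identities to the single elliptic invariance \eqref{1.13} of $\tilde{\Phi}^{[m;s]}$ under integer shifts of its two elliptic arguments. First I would record, straight from the definition \eqref{1.10} of $\tilde{\Psi}$ (with $\Phi^{[m;s]}$ replaced by its modification $\tilde{\Phi}^{[m;s]}$ and the subscripts written $j,k$), the formula
\[ \tilde{\Psi}^{[M,m;s;\epsilon]}_{j,k;\epsilon'}\tzzt = q^{\frac{mjk}{M}}\, e^{\frac{2\pi i m}{M}(k z_1 + j z_2)}\, \tilde{\Phi}^{[m;s]}\left(M\tau,\, z_1 + j\tau + \epsilon,\, z_2 + k\tau + \epsilon,\, \tfrac{t}{M}\right). \]
Everything then reduces to tracking how the right-hand side changes under translations of $z_1$ and $z_2$: the $\tilde{\Phi}$-factor is governed by \eqref{1.13}, and the prefactor is completely explicit.

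For part (a), I would substitute $z_1 \mapsto z_1 + a$ and $z_2 \mapsto z_2 + b$ with $a,b\in\ZZ$. The exponential prefactor then acquires the factor $e^{\frac{2\pi i m}{M}(ka+jb)}$, while the arguments of $\tilde{\Phi}$ become $z_1 + j\tau + \epsilon + a$ and $z_2 + k\tau + \epsilon + b$; since $a,b\in\ZZ$, property \eqref{1.13} removes these integers without changing $\tilde{\Phi}$. Collecting the prefactor gives precisely $e^{\frac{2\pi i m}{M}(ka+jb)}\,\tilde{\Psi}^{[M,m;s;\epsilon]}_{j,k;\epsilon'}\tzzt$, which is (a).

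For part (b), I would substitute $z_1 \mapsto z_1 + A$ and $z_2 \mapsto z_2 + B$ with $A,B\in\half+\ZZ$. Now the prefactor acquires $e^{\frac{2\pi i m}{M}(kA+jB)}$, and inside $\tilde{\Phi}$ the shift parameters become $\epsilon + A$ and $\epsilon + B$. The one genuine observation is the parity fact that, for $\epsilon\in\{0,\half\}$, one has $\epsilon + A \equiv \half - \epsilon \pmod{\ZZ}$ (and likewise $\epsilon + B \equiv \half - \epsilon$), because $\epsilon + A - (\half - \epsilon) = 2\epsilon + (A - \half) \in \ZZ$. Applying \eqref{1.13} to replace $\epsilon + A$ and $\epsilon + B$ by the common value $\half - \epsilon$, recognizing the result as $\tilde{\Psi}^{[M,m;s; \thalf-\epsilon]}_{j,k;\epsilon'}\tzzt$ (the same formula with $\epsilon$ replaced by $\half-\epsilon$), and collecting the prefactor, one obtains (b).

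I do not expect a serious obstacle: the entire content is the bookkeeping of the exponential prefactor, together with the elementary identity $\epsilon+\half\equiv\half-\epsilon\pmod{\ZZ}$. The only points requiring some care are to use exclusively the integer-period invariance \eqref{1.13} and not the $\tau$-period rule \eqref{1.14} (which would introduce spurious $q$- and $e^{z}$-factors and is not needed here), and to note that the lemma makes no claim for a ``mixed'' translation of $z_1$ by an integer and $z_2$ by a half-integer — such a translation would not return a function $\tilde{\Psi}$ with a single well-defined $\epsilon$-parameter, which is precisely why only the two cases in (a) and (b) are stated.
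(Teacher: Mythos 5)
Your proof is correct and is exactly the derivation the paper intends: the paper states Lemma \ref{lem4.8} without proof, remarking only that it ``follows from the properties of $\tilde{\Phi}$'' (citing [KW15]), and your unfolding of definition \eqref{1.10} followed by the integer-period invariance \eqref{1.13}, together with the parity observation $\epsilon+A\equiv\thalf-\epsilon \pmod{\ZZ}$, is precisely that argument. The only point worth adding is that the hypothesis $s\in\ZZ$ is what licenses the use of \eqref{1.13} here (for $s\in\thalf+\ZZ$ the function $\Phi^{[m;s]}_1$ picks up a sign under $z_1\mapsto z_1+1$), but since the paper states \eqref{1.13} without that caveat your use of it as written is unobjectionable.
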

\begin{lemma}
\label{lem4.9}
Let $ m \in \half \zp, $ let $ M $ be a positive odd integer coprime to $ 2m, $ and let $ s \in \half \ZZ. $ Then for $ A, B \in \half + \ZZ  $ one has:
\[ \tilde{\Psi}^{[M,m;s; \epsilon]}_{j,k;\epsilon'} (\tau, z_1 + A \tau, z_2 + B \tau, t) = q^{-\frac{m}{M} AB} e^{- \frac{2 \pi i m}{M} (Bz_1+ Az_2)} \tilde{\Psi}^{[M,m;s; \epsilon]}_{j+A,k+B;\half-\epsilon'} \tzzt. \]
\end{lemma}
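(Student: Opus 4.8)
The plan is to verify the identity by a direct computation straight from the definition of $\tilde\Psi$, that is, from \eqref{1.10} with $\Phi^{[m;s]}$ replaced throughout by its modification $\tilde\Phi^{[m;s]}$. The crucial observation will be that, after the substitution $z_1\mapsto z_1+A\tau$, $z_2\mapsto z_2+B\tau$, the real-analytic function $\tilde\Phi^{[m;s]}$ occurs on the two sides of the asserted equality with \emph{exactly the same} arguments, so that the whole statement collapses to an elementary identity between the scalar prefactors of the form $q^{(\cdots)}e^{2\pi i(\cdots)}$. In particular no transformation property of the correcting term $\Phi^{[m;s]}_{\mathrm{add}}$ is needed; one only uses that, under the standing hypotheses, $\tilde\Phi^{[m;s]}$ --- hence $\tilde\Psi^{[M,m;s;\epsilon]}_{j,k;\epsilon'}$ --- is a well-defined real-analytic function away from its polar locus (see \cite{KW15}).

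First I would expand the left-hand side using \eqref{1.10}:
\[
\tilde\Psi^{[M,m;s;\epsilon]}_{j,k;\epsilon'}(\tau,z_1+A\tau,z_2+B\tau,t)=q^{\frac{mjk}{M}}\,e^{\frac{2\pi im}{M}\left(k(z_1+A\tau)+j(z_2+B\tau)\right)}\,\tilde\Phi^{[m;s]}\!\left(M\tau,\,z_1+(j+A)\tau+\epsilon,\,z_2+(k+B)\tau+\epsilon,\,\tfrac{t}{M}\right),
\]
and, writing $e^{\frac{2\pi im}{M}(kA+jB)\tau}=q^{\frac{m(kA+jB)}{M}}$, the scalar prefactor becomes $q^{\frac{m(jk+kA+jB)}{M}}\,e^{\frac{2\pi im}{M}(kz_1+jz_2)}$. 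Next I would expand the right-hand side, again by \eqref{1.10}:
\[
\tilde\Psi^{[M,m;s;\epsilon]}_{j+A,k+B;\,\frac12-\epsilon'}(\tau,z_1,z_2,t)=q^{\frac{m(j+A)(k+B)}{M}}\,e^{\frac{2\pi im}{M}\left((k+B)z_1+(j+A)z_2\right)}\,\tilde\Phi^{[m;s]}\!\left(M\tau,\,z_1+(j+A)\tau+\epsilon,\,z_2+(k+B)\tau+\epsilon,\,\tfrac{t}{M}\right),
\]
whose $\tilde\Phi^{[m;s]}$-factor is literally the one occurring in the previous display. Multiplying this by the prefactor $q^{-\frac{m}{M}AB}e^{-\frac{2\pi im}{M}(Bz_1+Az_2)}$ shown on the right of the lemma, the $e$-exponent collapses to $\frac{2\pi im}{M}(kz_1+jz_2)$ and the $q$-exponent to $\frac{m}{M}\left(-AB+(j+A)(k+B)\right)=\frac{m}{M}(jk+kA+jB)$; both thus equal the prefactor found for the left-hand side. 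Since the two $\tilde\Phi^{[m;s]}$-factors are identical, the identity follows.

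I do not expect a genuine obstacle: the only point requiring attention is the coset bookkeeping on the lower indices. Since $A,B\in\tfrac12+\ZZ$ and $j,k\in\epsilon'+\ZZ$ one has $j+A,k+B\in(\tfrac12-\epsilon')+\ZZ$, which is precisely why the label $\epsilon'$ on the right must become $\tfrac12-\epsilon'$; recall that $\epsilon'$ does not appear in \eqref{1.10} itself and only records the coset of the lower indices. (The very same computation, now with $A,B$ integral and $\epsilon'$ left unchanged, gives the companion statement for shifts of the $z_i$ by integer multiples of $\tau$, so nothing is gained by separating cases.)
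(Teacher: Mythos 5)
Your computation is correct: with the definition \eqref{1.10} (with $\Phi$ replaced by $\tilde\Phi$), the half-integer $\tau$-shifts of $z_1,z_2$ are absorbed directly into the lower indices, the $\tilde\Phi$-factors on the two sides coincide verbatim, and the remaining scalar identity $-AB+(j+A)(k+B)=jk+kA+jB$ together with the cancellation in the exponential prefactor gives the lemma; the relabelling $\epsilon'\mapsto\frac12-\epsilon'$ is exactly the coset bookkeeping you describe. The paper offers no written proof of Lemma \ref{lem4.9} (it only refers the whole block of Lemmas \ref{lem4.7}--\ref{lem4.10} to the properties of $\tilde\Phi$ established in [KW15], Theorems 1.12 and 2.8), so your purely definitional route is at least as direct as what the authors had in mind, and it correctly isolates the fact that --- unlike Lemma \ref{lem4.7}, where the periodicity $j\mapsto j+aM$ genuinely uses the elliptic transformation \eqref{1.14} of $\tilde\Phi$ and the coprimality of $M$ and $m$ --- this particular identity needs no property of $\tilde\Phi$ at all, which is why the hypotheses on $m$, $M$, $s$ play no role in your argument. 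One cosmetic caveat: the sign of $\epsilon$ in the second argument of $\Phi$ in \eqref{1.10} is printed inconsistently elsewhere in the paper (cf.\ Remark \ref{rem1.3} and Lemma \ref{lem8.5}, where it appears as $z_2+k\tau-\epsilon$), but since the same convention appears on both sides of your computation this does not affect the validity of the proof.
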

\begin{lemma}
\label{lem4.10}
Let $ M $ and $ m $ be as in Lemma \ref{lem4.7}. Then:
\begin{itemize}
\item[(a)] \[ \tilde{\Psi}^{[M,m;0;\epsilon]}_{j,k;\epsilon'} \left( -\frac{1}{\tau}, \frac{z_1}{\tau}, \frac{z_2}{\tau}, t \right) = \frac{\tau}{M} e^{\frac{2 \pi i m}{M \tau}z_1z_2} \sum_{a, b \in (\epsilon + \ZZ / M\ZZ)^2} e^{-\frac{2 \pi i m}{M} (ak+bj)} \tilde{\Psi}^{[M,m;0;\epsilon']}_{a,b;\epsilon} \tzzt. \]
\item[(b)] \[ \tilde{\Psi}^{[M,m;0;\epsilon]}_{j,k;\epsilon'} (\tau + 1, z_1, z_2, t) = e^{\frac{2 \pi i m}{M}jk} \tilde{\Psi}^{[M,m;0;|\epsilon-\epsilon'|]}_{j,k; \epsilon'} \tzzt. \]
\end{itemize}
\end{lemma}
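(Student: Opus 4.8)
The plan is to obtain Lemma~\ref{lem4.10} as the $s=0$ specialization of the modular transformation formulas \eqref{1.17}--\eqref{1.18} (equivalently, of \cite{KW15}, Theorem~2.8), after reconciling the normalization of the $t$-variable. The one preliminary observation is that $\tilde{\Psi}^{[M,m,s;\epsilon]}_{a,b;\epsilon'}$ depends on $t$ only through the overall factor $e^{\frac{2\pi i m}{M}t}$: from \eqref{1.9} one has $\Phi^{[m;s]}(\tau,w_1,w_2,t)=e^{2\pi i m t}\Phi^{[m;s]}(\tau,w_1,w_2)$; from \eqref{1.11} the correction $\Phi^{[m;s]}_{\mathrm{add}}$ carries the same factor $e^{2\pi i m t}$; hence $\tilde{\Phi}^{[m;s]}(\tau,w_1,w_2,t)=e^{2\pi i m t}\tilde{\Phi}^{[m;s]}(\tau,w_1,w_2)$; and since the prefactor $q^{\frac{mab}{M}}e^{\frac{2\pi i m}{M}(bz_1+az_2)}$ in \eqref{1.10} is $t$-free, substituting into \eqref{1.10} gives $\tilde{\Psi}^{[M,m,s;\epsilon]}_{a,b;\epsilon'}(\tau,z_1,z_2,t+c)=e^{\frac{2\pi i m}{M}c}\tilde{\Psi}^{[M,m,s;\epsilon]}_{a,b;\epsilon'}(\tau,z_1,z_2,t)$ for any constant $c$.

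For part (a) I would write $t=\bigl(t-\tfrac{z_1z_2}{\tau}\bigr)+\tfrac{z_1z_2}{\tau}$ and use the homogeneity above with $c=\tfrac{z_1z_2}{\tau}$, so that the left-hand side of (a) equals $e^{\frac{2\pi i m}{M\tau}z_1z_2}$ times $\tilde{\Psi}^{[M,m;0;\epsilon]}_{j,k;\epsilon'}\bigl(-\tfrac1\tau,\tfrac{z_1}\tau,\tfrac{z_2}\tau,t-\tfrac{z_1z_2}{\tau}\bigr)$; applying \eqref{1.17} with $s=0$ to this last factor turns it into $\tfrac{\tau}{M}\sum_{a,b\in\epsilon+\ZZ/M\ZZ}e^{-\frac{2\pi i m}{M}(ak+bj)}\tilde{\Psi}^{[M,m;0;\epsilon']}_{a,b;\epsilon}(\tau,z_1,z_2,t)$, which is exactly the right-hand side of (a). For part (b) one applies \eqref{1.18} with $s=0$ directly; the substitution $\tau\mapsto\tau+1$ leaves $t$ unchanged, so no $t$-adjustment is required, and the subscript suppressed on the right of \eqref{1.18} is $j,k;\epsilon'$, giving (b). The hypotheses of Lemma~\ref{lem4.7} ($m$ a positive integer, $M$ a positive odd integer coprime to $m$) lie inside the range of validity of \eqref{1.17}--\eqref{1.18}, so nothing beyond the quoted formulas is needed. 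Should a self-contained argument be preferred, the same two identities can be reproved from the definition \eqref{1.10} together with \eqref{1.13}--\eqref{1.16}, by absorbing the shifts $j\tau,k\tau$ and the characteristics $\epsilon,\epsilon'$ into the arguments of $\tilde{\Phi}$, rescaling $\tau\mapsto M\tau$, and invoking \eqref{1.15} (resp.\ \eqref{1.16}); this is the route of \cite{KW15}.

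I expect no conceptual obstacle; the only friction is bookkeeping of the characteristics --- the $\epsilon\leftrightarrow\epsilon'$ interchange between the two sides of \eqref{1.17}, the $|\epsilon-\epsilon'|$ appearing in \eqref{1.18}, and, in the self-contained route, the precise recombination of the lattice parameters $j,k$ with $\epsilon,\epsilon'$ after the modular inversion. Once the $t$-homogeneity above is recorded, Lemma~\ref{lem4.10} is literally the $s=0$ case of \cite{KW15}, Theorem~2.8 written in the normalization used here.
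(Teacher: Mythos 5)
Your proof is correct and follows essentially the same route as the paper, which simply asserts Lemmas \ref{lem4.7}--\ref{lem4.10} as consequences of the properties of $\tilde{\Phi}$ (i.e.\ of \eqref{1.17}--\eqref{1.18} and \cite{KW15}, Theorems 1.12 and 2.8) without writing out the details. The one detail you supply --- that $\tilde{\Psi}^{[M,m,s;\epsilon]}_{a,b;\epsilon'}$ depends on $t$ only through the factor $e^{\frac{2\pi i m}{M}t}$, which converts the argument $t-\frac{z_1z_2}{\tau}$ of \eqref{1.17} into the argument $t$ of Lemma \ref{lem4.10}(a) at the cost of the prefactor $e^{\frac{2\pi i m}{M\tau}z_1z_2}$ --- is exactly the right reconciliation, and part (b) is indeed just \eqref{1.18} with $s=0$ and the suppressed subscript $j,k;\epsilon'$ restored.
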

Also, we shall use the following relation, which follows from \eqref{4.14}, using Lemma \ref{lem4.8}:
\begin{equation}
\label{4.15}
q^{-\frac{n}{4M} {\sigma'}^{2}} \tilde{\Psi}^{[M,n;0;\epsilon]}_{j,k; \epsilon'} \left( \tau, \frac{z-\sigma + \sigma' \tau}{2}, \frac{z + \sigma - \sigma' \tau}{2} \right) = e^{\frac{2 \pi i n}{M} \sigma (j-k)}) f^{[M, n; |\epsilon - \sigma] (\sigma, \sigma')}_{j,k;\epsilon'} (\tau, z).
\end{equation}

Now we are going to prove our claims for the functions $ f. $
\begin{proof}[Proof of Lemma 4.4]
We have:
\[ 
\begin{aligned}
& f^{[M,n;\epsilon](\sigma, \sigma')}_{j,k,\epsilon'} (\tau, z)  = q^{-\frac{n}{4M}{\sigma'}^2} \tilde{\Psi}^{[M,n;0;\epsilon]}_{j,k;\epsilon'} \left( \tau, \frac{z + \sigma + \sigma' \tau}{2}, \frac{z-\sigma - \sigma' \tau}{2}\right)\\
& = q^{-\frac{n}{4M}{\sigma'}^2} \tilde{\Psi}^{[M,n;0;\epsilon]}_{k,j;\epsilon'} \left( \tau, \frac{z - \sigma - \sigma'\tau}{2}, \frac{z + \sigma + \sigma' \tau}{2} \right) \\
\end{aligned} \]
\[ \begin{aligned}
& = q^{-\frac{n}{4M}{\sigma'}^2} \tilde{\Psi}^{[M,n;0;\epsilon]}_{k,j;\epsilon'} \left( \tau, \frac{z+\sigma + \sigma' \tau}{2} - \sigma -\sigma' \tau, \frac{z - \sigma -\sigma' \tau}{2} + \sigma + \sigma' \tau \right) (\mbox{by Lemma}\,
  \ref{4.9}) \\
  & = q^{-\frac{n}{4M}\sigma'^2}
  q^{\frac{n}{M}(-{\sigma'}^2)}
  e^{-\frac{2 \pi i n}{M}( \sigma' (\frac{z+\sigma+\sigma' \tau}{2} -\sigma)-\sigma' ( \frac{z-\sigma-\sigma'\tau}{2} + \sigma ))} \\ & \times\tilde{\Psi}^{[M,n;0;\epsilon]}_{k-\sigma', j+\sigma'; |\epsilon' -\sigma'|} \left( \tau, \frac{z+\sigma + \sigma' \tau}{2}-\sigma, \frac{z-\sigma- \sigma' \tau}{2} + \sigma \right) \\
& = q^{-\frac{n}{4M}{\sigma'}^2} e^{\frac{2 \pi i n}{M} \sigma \sigma'} \tilde{\Psi}^{[M,n;0;\epsilon]}_{k-\sigma', j+\sigma'; |\epsilon'-\sigma'|} \left(\tau, \frac{z+ \sigma + \sigma'\tau}{2}-\sigma, \frac{z-\sigma -\sigma'\tau}{2} + \sigma \right) (\mbox{by Lemma \ref{lem4.8}}) \\
& = e^{-\frac{2 \pi i n}{M} \sigma \sigma'} e^{\frac{2 \pi i n}{M}\sigma (k-j)} q^{-\frac{n}{4M} {\sigma'}^2} \tilde{\Psi}^{[M,n;0; |\epsilon - \sigma|]}_{k-\sigma', j + \sigma'; |\epsilon' - \sigma'|} (\tau, \frac{z+\sigma + \sigma' \tau}{2}, \frac{z - \sigma - \sigma'\tau}{2}) \\
& = e^{-\frac{2 \pi i n}{M} \sigma (\sigma'+j - k)} f^{[M, n; |\epsilon-\sigma|](\sigma \sigma')}_{k-\sigma', j + \sigma'; |\epsilon'-\sigma'|} (\tau, z).
\end{aligned}
 \]
\end{proof}
\begin{proof}[Proof of Lemma \ref{lem4.5}]
We have:
\[ 
\begin{aligned}
f^{[M, n; \epsilon](\sigma, \sigma')}_{j + aM, k + bM; \epsilon'} (\tau, z) & = q^{-\frac{n}{4M}{\sigma'}^2} \tilde{\Psi}^{[M,n;0;\epsilon]}_{j+aM, k+bM; \epsilon} \left( \tau, \frac{z+ \sigma+\sigma'\tau}{2}, \frac{z - \sigma - \sigma'\tau}{2} \right)\\
& = \mbox{(by Lemma \ref{lem4.7})} \\
& = q^{-\frac{n}{4M}{\sigma'}^2} e^{2 \pi i n (a-b) \epsilon} \tilde{\Psi}^{[M,n;0;\epsilon]}_{j,k;\epsilon'} \left( \tau, \frac{z + \sigma + \sigma' \tau}{2}, \frac{z - \sigma - \sigma' \tau }{2} \right) \\
& = e^{2 \pi i n (a-b) \epsilon} f^{[M,n;\epsilon](\sigma, \sigma')}_{j,k;\epsilon'} (\tau, z).
\end{aligned}
 \]
\end{proof}

\begin{proof}[Proof of Proposition \ref{prop4.6}(a).]
We have:
\[ 
\begin{aligned}
& f^{[M,n;\epsilon](\sigma \sigma')}_{j,k;\epsilon'} \left( - \frac{1}{\tau}, \frac{z}{\tau}\right)  = e^{\frac{2 \pi i n}{4 M \tau} {\sigma'}^2 } \tilde{\Psi}^{[M,n;0;\epsilon]}_{j,k; \epsilon'} \left( - \frac{1}{\tau}, \frac{ \frac{z}{\tau}+ \sigma - \frac{\sigma'}{\tau}}{2}, \frac{ \frac{z}{\tau}- \sigma + \frac{\sigma'}{\tau}}{2} \right) \\
& = e^{\frac{\pi i n}{2M \tau}{\sigma'}^2} \tilde{\Psi}^{[M,n;0;\epsilon]}_{j,k; \epsilon'} \left( - \frac{1}{\tau}, \frac{\frac{z-\sigma'+\sigma \tau}{2}}{\tau}, \frac{\frac{z+\sigma'-\sigma \tau}{2}}{\tau} \right) \\
& = \mbox{(by Lemma \ref{lem4.10}(a))} \\
& = e^{\frac{\pi i n}{2 M \tau}{\sigma'}^2}  \frac{\tau}{M} e^{\frac{2 \pi i n}{M \tau} \frac{z-\sigma'+\sigma \tau}{2}  \frac{z + \sigma' - \sigma \tau}{2}} \hspace{-2em}\sum_{(a,b) \in (\epsilon + \ZZ / M\ZZ)^2} \hspace{-2em} e^{-\frac{2 \pi i n}{M} (ak+bj)}  \tilde{\Psi}^{[M,n;0;\epsilon']}_{a,b; \epsilon} \left( \tau, \frac{z-\sigma' + \sigma \tau}{2}, \frac{z + \sigma' - \sigma \tau}{2} \right) \\
& = \mbox{(using (\ref{4.15}))}  = \frac{\tau}{M} e^{\frac{\pi i n}{M} \sigma \sigma'} e^{\frac{\pi i n}{2 M \tau}z^2} \hspace{-2em}\sum_{(a,b) \in (\epsilon + \ZZ / M\ZZ)^2} \hspace{-2em} e^{-\frac{2 \pi i n}{M}(ak+bj)+ \frac{2 \pi i n}{M}\sigma' (a-b)} f^{[M,n; |\epsilon' - \sigma'|](\sigma, \sigma')}_{a,b; \epsilon} (\tau, z).
\end{aligned} \]
\end{proof}

\begin{proof}[Proof of 
Proposition \ref{prop4.6}(b)] 
If $ \sigma' = 0, $ we have:
\[ 
\begin{aligned}
f^{[M,n;\epsilon](\sigma,0)}_{j,k;\epsilon'} (\tau+1, z) & = \tilde{\Psi}^{[M,n;0;\epsilon]}_{j,k;\epsilon'} \left(\tau + 1, \frac{z + \sigma}{2}, \frac{z - \sigma}{2} \right) = \mbox{(by Lemma \ref{lem4.10}(a))}\\
& = e^{\frac{2 \pi i n}{M}jk} \tilde{\Psi}^{[M,n;0; |\epsilon-\epsilon'|]}_{j,k;\epsilon'} \left( \tau, \frac{z+\sigma}{2}, \frac{z-\sigma}{2}\right)\\
& = e^{\frac{2 \pi i n}{M}jk} f^{[M,n; |\epsilon-\epsilon'|]}_{j,k; \epsilon'} (\tau, z).
\end{aligned} \]
If $ \sigma' = \half, $ we have:
\[ 
\begin{aligned}
f^{[M,n;\epsilon](\sigma,\half)}_{j,k;\epsilon'} (\tau+1, z) & = e^{2 \pi i (\tau +1) (- \frac{n}{4M})\cdot \frac{1}{4}} \tilde{\Psi}^{[M,n;0;\epsilon]}_{j,k;\epsilon'} \left( \tau + 1, \frac{z + \sigma + \frac{\tau +1}{2}}{2}, \frac{z - \sigma - \frac{\tau +1}{2}}{2} \right) \\
& = \mbox{(by Lemma \ref{lem4.10}(b))}\\
& = q^{-\frac{n}{4M} \frac{1}{4}} e^{-\frac{2 \pi i n}{M}  \frac{1}{16}} e^{\frac{2 \pi i n}{M}jk} \tilde{\Psi}^{[M,n;0; |\epsilon-\epsilon'|]}_{j,k;\epsilon'} \left( \tau, \frac{z + \sigma + \frac{\tau +1}{2}}{2}, \frac{z - \sigma - \frac{\tau +1}{2}}{2} \right) \\
& = e^{\frac{2 \pi i n}{M}(jk - \frac{1}{16})} q^{-\frac{n}{4M}\frac{1}{4}} \tilde{\Psi}^{[M,n;0; |\epsilon-\epsilon'|]}_{j,k;\epsilon'} \left( \tau, \frac{z+\sigma+ \half + \frac{\tau}{2}}{2}, \frac{z-\sigma- \half -\frac{\tau}{2}}{2} \right).
\end{aligned} \]
We compute the last expression in the cases $\sigma=0$ and $\sigma=\half$
as follows.
If $ \sigma = 0, $ then this expression is equal to
\[ e^{\frac{2 \pi i n}{M}(jk - \frac{1}{16})} q^{-\frac{n}{4M} \frac{1}{4}}  \tilde{\Psi}^{[M,n;0; |\epsilon-\epsilon'|]}_{j,k;\epsilon'} \left( \tau, \frac{z + \half + \frac{\tau}{2}}{2}, \frac{z - \half - \frac{\tau}{2}}{2} \right) = e^{\frac{2 \pi i n}{M}(jk - \frac{1}{16})} f^{[M,n; |\epsilon-\epsilon'|] (\half, \half)}_{j,k;\epsilon'} (\tau, z). \]
If $ \sigma = \half, $ then this expression is equal to
\[ 
\begin{aligned}
& e^{\frac{2 \pi i n}{M}(jk - \frac{1}{16})} q^{-\frac{n}{4M} \frac{1}{4}} \tilde{\Psi}^{[M,n;0; |\epsilon-\epsilon'|]}_{j,k;\epsilon'} \left(\tau, \frac{z + 1 + \frac{\tau}{2}}{2},\frac{z - 1 - \frac{\tau}{2}}{2} \right)  = \mbox{(by Lemma \ref{lem4.8}(b))}\\
& = e^{\frac{2 \pi i n}{M}(jk - \frac{1}{16})} q^{-\frac{n}{16M}}e^{\frac{2 \pi i n}{M} (\frac{k}{2}-\half)} \tilde{\Psi}^{[M,n;0; \half- |\epsilon-\epsilon'|]}_{j,k;\epsilon'} \left( \tau, \frac{z + \frac{\tau}{2}}{2}, \frac{z - \frac{\tau}{2}}{2} \right) \\
& = e^{\frac{2 \pi i n}{M}(jk - \frac{1}{16})} e^{\frac{2 \pi i n}{M} \frac{k-j}{2}} q^{-\frac{n}{4M} \frac{1}{4}}  \tilde{\Psi}^{[M,n;0; \half- |\epsilon-\epsilon'|]}_{j,k;\epsilon'} \left(\tau, \frac{z + \frac{\tau}{2}}{2}, \frac{z - \frac{\tau}{2}}{2} \right) \\
& = e^{\frac{2 \pi i n}{M}(jk - \frac{1}{16})} e^{\frac{2 \pi i n}{M} \frac{k-j}{2}} f^{[M,n; \half - |\epsilon-\epsilon'|](0, \half)}_{j,k;\epsilon'} (\tau, z).
\end{aligned} \]
\end{proof}

We can rewrite the formulas for modified $ N =3 $ (super)characters and for twisted $ N=3 $ characters, given by Theorem \ref{th4.3}, in terms of the functions $ f^{[M;n;\epsilon](\sigma, \sigma')}_{j,k;\epsilon'} $ as follows.
\begin{theorem}
\label{th4.11}
Let  $ \La = \La^{[m;m_2]J}_{k_1, k_2}  $ or $ \La = \dot{\La}^{[m;m_2]J}_{k_1, k_2}  $, where $J$=I or III. Denote by    
$\tilde{H}^{\pm [m;m_2]J}_{k_1,k_2}$ and $\tilde{H}^{\tw[m;m_2]J}$ 
the modifications of the functions 
$H^{\pm [m;m_2]J}_{k_1,k_2}$ and $ H^{\tw [m;m_2]J}_{k_1,k_2}$,
and similarly for $\dot{H}$. 
These functions are given, up to some non-zero scalar factors, 
by the following formulas: 
\[ \begin{aligned}
&
\tilde{H}^{+ [m;m_2]J}_{k_1,k_2}, \, \tilde{\dot{H}}^{+ [m;m_2]J}_{k_1,k_2}
= f^{[M; -4m-2;0](\half, \half)}_{j,k;\epsilon'} - (-1)^{m_2} f^{[M, -4m-2; \half](\half,\half)}_{j,k;\epsilon'}, \\
&
\tilde{H}^{- [m;m_2]J}_{k_1,k_2}, \, \tilde{\dot{H}}^{- [m;m_2]J}_{k_1,k_2}
= f^{[M; -4m-2; 0](0, \half)}_{j,k;\epsilon'} - (-1)^{m_2} f^{[M; -4m-2; \half](0, \half)}_{j,k;\epsilon'}, \\
&
\tilde{H}^{\tw [m;m_2]J}_{k_1,k_2}, \, \tilde{\dot{H}}^{\tw [m;m_2]J}_{k_1,k_2}
= f^{[M; -4m-2;0](\half, 0)}_{j,k;\epsilon'} - (-1)^{m_2} f^{[M; -4m-2; \half](\half, 0)}_{j,k;\epsilon'}, \\
\end{aligned}
 \]
 \noindent where $ j,k \in \epsilon'+\ZZ $ are defined from $ k_1, k_2 $ by the following table: 
\begin{center}
 \begin{tabular}{l||c|c|c|c}
$ \La $ & $ \rule[-2ex]{0pt}{6ex} \La^{[m;m_2]\mathrm{I}}_{k_1, k_2} $ & $ \La^{[m;m_2]\mathrm{III}}_{k_1, k_2} $& $ \dot{\La}^{[m;m_2]\mathrm{I}}_{k_1, k_2} $& $ \dot{\La}^{[m;m_2]\mathrm{III}}_{k_1, k_2} $\\
\hline \hline
$ k $ & $ \rule[-2ex]{0pt}{5ex} -\frac{k_1+2k_2}{2} $ & $ -\frac{k_1}{2} $ & $ - \frac{k_1 + 2k_2 -M}{2} $& $ -\frac{k_1+M}{2} $ \\
\hline
$ j $ & $\rule[-2ex]{0pt}{5ex} \frac{k_1}{2} $ & $ \frac{k_1 +2k_2}{2} $& $ \frac{k_1+M}{2} $& $ \frac{k_1+2k_2-M}{2} $ \\
 \end{tabular}
\end{center}
\end{theorem}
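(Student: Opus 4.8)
The plan is to start from the explicit $\Psi$-expressions for the $N=3$ (super)characters and twisted characters recorded in Theorem~\ref{th4.3}, pass to their modifications by replacing every $\Psi$ by $\tilde\Psi$, and then recognize the resulting linear combinations as the functions $f^{[M;n;\epsilon](\sigma,\sigma')}_{j,k;\epsilon'}$ of \eqref{4.14}. The one conceptual input beyond bookkeeping is \cite{KW16}, Corollary~1.6, which lets us replace the superscript $s=m_2+1$ (in the $\ch^\pm$ formulas) and $s=-m_2$ (in the $\ch^\tw$ formulas), both integers, by $0$ in each $\tilde\Psi^{[M,-4m-2;s;\epsilon]}$ occurring after modification; after this step all the modified building blocks are written in terms of $\tilde\Psi^{[M,-4m-2;0;\epsilon]}$ with $\epsilon=0$ or $\half$, which is exactly the shape appearing in the definition \eqref{4.14} of $f$.

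Next I would match the elliptic arguments, writing $n=-4m-2$. The three substitution patterns occurring in Theorem~\ref{th4.3} are $(\tau,\tfrac{2z+\tau}{4},\tfrac{2z-\tau}{4})$ for $\ch^-$, $(\tau,\tfrac{2z+\tau+1}{4},\tfrac{2z-\tau-1}{4})$ for $\ch^+$ (the stray ``$\tfrac{2z+\tau+1}{2}$'' in one line of Theorem~\ref{th4.3} being a misprint for $\tfrac{2z+\tau+1}{4}$), and $(\tau,\tfrac{2z+1}{4},\tfrac{2z-1}{4})$ for $\ch^\tw$. Since $\tfrac{2z+\tau}{4}=\tfrac{z+\frac\tau2}{2}$, $\tfrac{2z+\tau+1}{4}=\tfrac{z+\frac12+\frac\tau2}{2}$ and $\tfrac{2z+1}{4}=\tfrac{z+\frac12}{2}$, these are exactly the arguments of $\tilde\Psi$ in \eqref{4.14} for $(\sigma,\sigma')=(0,\tfrac12)$, $(\tfrac12,\tfrac12)$ and $(\tfrac12,0)$ respectively. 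Moreover the overall factor $q^{\frac{2m+1}{8M}}$ in the $\ch^\pm$ formulas of Theorem~\ref{th4.3} equals $q^{-\frac{n}{4M}\cdot\frac14}=q^{-\frac{n}{4M}(\frac12)^2}$, which is precisely the normalizing power of $q$ built into \eqref{4.14} when $\sigma'=\tfrac12$; in the twisted case $\sigma'=0$ and no such $q$-power is needed. The subscripts $j,k$ are then read off the pair $(a,b)$ in $\Psi^{[\cdots]}_{a,b;\epsilon'}$ of Theorem~\ref{th4.3} (with $j=a$, $k=b$), which yields the table in the statement, while $\epsilon'\in\{0,\half\}$ is the same parameter produced in Proposition~\ref{prop3.7}.

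Each formula in Theorem~\ref{th4.3} is a difference of two $\Psi$-terms whose last superscript is $\epsilon=0$ and $\epsilon=\half$, the latter weighted by $-(-1)^{m_2}$; so after the three reductions above every modified building block becomes $f^{[M;n;0](\sigma,\sigma')}_{j,k;\epsilon'}-(-1)^{m_2}f^{[M;n;\half](\sigma,\sigma')}_{j,k;\epsilon'}$ with the appropriate $(\sigma,\sigma')$ — which is exactly the asserted identity. The types $J=\mathrm{I},\mathrm{III}$ and the overdot variants differ only in the values of $(a,b)$, hence are covered uniformly by the table, so there is nothing further to do there. All remaining multiplicative constants — the powers of $-i$, the prefactors $e^{-\frac{\pi i}{M}(4m+2)(k_1+k_2)}$ and $e^{-\frac{\pi i}{M}(2m+1)(k_1+k_2)}$, and the signs $\pm\half$ — are nonzero and independent of $\tau$ and $z$, hence are absorbed into the ``up to nonzero scalar factors'' clause. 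I do not anticipate any genuine obstacle: the content is entirely the bookkeeping of the half-period shifts $\tfrac12$ and $\tfrac\tau2$ in the elliptic variables, the only point requiring care being the consistent passage between $\tfrac{z\pm w}{2}$ and $\tfrac{2z\pm 2w}{4}$ and the check that the $q$-power in \eqref{4.14} reproduces the $q$-prefactor of Theorem~\ref{th4.3}.
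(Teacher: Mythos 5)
Your proposal is correct and follows exactly the route the paper takes (the paper in fact states Theorem \ref{th4.11} without a written proof, presenting it as a direct rewriting of Theorem \ref{th4.3} after replacing $\Psi$ by $\tilde\Psi$ and invoking \cite{KW16}, Corollary 1.6 to set the integer superscript $s$ to $0$). Your verification of the argument substitutions $(\sigma,\sigma')=(0,\tfrac12),(\tfrac12,\tfrac12),(\tfrac12,0)$, of the matching $q$-prefactor $q^{\frac{2m+1}{8M}}=q^{-\frac{n}{4M}(\frac12)^2}$, and of the index table supplies precisely the bookkeeping the paper leaves implicit, including the correct identification of the misprint $\tfrac{2z+\tau+1}{2}$ in Theorem \ref{th4.3}(b).
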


By Lemmas \ref{lem4.4} and \ref{lem4.5}, the functions $ f^{[M;n;\epsilon](\sigma, \sigma')}_{j,k;\epsilon'} (\tau, z) $ are invariant, up to a non-zero constant factor, with respect to the following transformation of indices:
\begin{equation}
\label{4.16}
(j,k) \mapsto (k,j), \ (j,k) \mapsto (j + \ZZ M, k + \ZZ M), \ \mbox{ and } (j,k) \mapsto (k-\thalf, j + \thalf).
\end{equation}
Hence we can choose for the fundamental domain of parameters $ (j,k) $ the points in 
$ (\half \ZZ) \times (\half \ZZ) $ 
from the triangle
\begin{equation}
\label{4.17}
0 \leq j \leq k < M.
\end{equation}

Now consider for each $ \La $ from the Table of Theorem \ref{th4.11} the domain of indices $ (j,k)\in (\half \ZZ) \times (\half \ZZ) $, denoting, for simplicity, for $ J= $ I or III, by $ J $ (resp. $ \dot{J} $) the domain of $ (j,k) $ for $ \La = \La^{[m;m_2]J}_{k_1,k_2} $ (resp. $ \dot{\La}^{[m;m_2]J}_{k_1,k_2} $). It is easy to see that these domains are as follows:
\begin{itemize}
\item[I:] $ j + k \leq 0, \ j-k \leq \frac{M-1}{2}, \ j \geq 0 $;
\item[III:] $ j + k \geq 1, \ j-k \leq \frac{M-1}{2}, \ j \geq 0 $;
\item[$ \dot{\mathrm{I}}$:]$ j + k \leq M, \ j-k \leq \frac{M-1}{2}, \ j \geq \frac{M}{2} $;
\item[$ \dot{\mathrm{III}}$:]$ j + k \geq 1-M, \ j-k \leq \frac{M-1}{2}, \ k \leq -\frac{M}{2}. $
\end{itemize} 
Next, using translation $ (j,k) \mapsto (j,k+M) $ the domain I $ \cup  $ III can be brought to:
\begin{equation}
\label{4.18}
\mathrm{I} \cup \mathrm{III}: k-j \geq \tfrac{M+1}{2}, \ k \leq M, \ j \geq 0.
\end{equation}
Furthermore the domain $ \dot{\mathrm{III}}, $ translated by $ (j,k) \mapsto (j+M, k+M), $ together with $ \dot{\mathrm{I}} $ give:
\[ \dot{\mathrm{I}} \cup \dot{\mathrm{III}}: j-k \leq \tfrac{M-1}{2}, \ k \leq \tfrac{M}{2}, \ j \geq \tfrac{M}{2}. \]
Applying to this transformation $ (j,k) \mapsto (k,j), $ we obtain:
\begin{equation}
\label{4.19}
\dot{\mathrm{I}} \cup \dot{\mathrm{III}}: j-k \geq \tfrac{M-1}{2}, \ k \geq \tfrac{M}{2}, \ j \leq \tfrac{M}{2}.
\end{equation}
From \eqref{4.18} and \eqref{4.19} we obtain:
\begin{equation}
\label{4.20}
\mathrm{I} \cup \mathrm{III} \cup \dot{\mathrm{I}} \cup \dot{\mathrm{III}}: \tfrac{M}{2} \leq k \leq M, \ 0 \leq j \leq \tfrac{M}{2}.
\end{equation}

Comparing \eqref{4.20} with \eqref{4.17}, we see that the parameters $ (j,k) $ from the triangles 
\begin{equation}
\label{4.21}
 0 \leq j \leq k \leq \frac{M}{2}\,\, \hbox{ and}\,\, \frac{M}{2} \leq j \leq k \leq M 
 \end{equation}
are missing. In order to get these missing representations, consider the set 
of simple roots
\[ \hat{\Pi}' = r_{\al_1} \hat{\Pi} = \{ \al_0 + \al_1, -\al_1, \al_1 + \al_2 \}, \]
and let $ \hat{\Delta}'_+ $ denotes the corresponding subset of positive 
roots in $ \hat{\Delta} $. Denote by $ \Pi^{{J}'}_{M;k_1,k_2} $, 
where $ J' = \mathrm{I}' $ or $ \mathrm{III}', $ the simple subsets $ S $ of 
$ \hat{\Delta}, $ given by (\ref{3.6a}), but such that 
$ r_{k_1 \delta + \al_1}S \subset \hat{\Delta}'_+ $. Then the range of possible pairs $ (k_1, k_2) $ will be different from (\ref{3.6b}), namely it becomes
\begin{equation}
\label{4.22}
\mathrm{I}' \ (\mbox{resp. } \mathrm{III}'): k_1 \leq 0 \ (\mbox{resp. } \leq -1),\, k_1 + k_2 \geq 0,\, k_1 + 2k_2 \leq M-1 \ (\mbox{resp. } \leq M ),
\end{equation}
Indeed $ r_{k_1\delta + \al_1}\Pi^{\mathrm{I}'}_{k_1, k_2} = \{ (k_0 + k_1)\delta + \al_0 + \al_1, -k_1 \delta - \al_1, (k_1 + k_2) \delta + \al_1 + \al_2 \} \subset \hat{\Delta}'_+ = 
r_{\al_1} \hat{\Delta}_+ $ iff $ k_0 + k_1 \geq 0, k_1 \leq 0, k_1 + k_2 \geq 0, $ which implies \eqref{4.22} for $ \mathrm{I}' $, and similarly for $ \mathrm{III}' $.

Similarly to Proposition \ref{prop3.6}, denote by $ \La^{[m;m_2] J'}_{k_1, k_2} $ the weight $ \La $ of level $ K, $ such that $ \La^0 = \La^{[m;m_2]} $ and $ \Pi_{\La} = \Pi_{M; k_1, k_2}^{J'}, $ where $ J' =  \mathrm{I}'$ or $ \mathrm{III}' $. We call such $ \La $ \emph{non-pricipal admissible weights}. All claims stated above for principal admissible weights hold for the non-principal admissible ones, except that in Theorem \ref{th4.11} the indices $ j,k \in \epsilon' + \ZZ $ are defined from $ k_1, k_2 $ by the following table:
\begin{center}
 \begin{tabular}{l||c|c}
$ \La $ & $ \rule[-2ex]{0pt}{6ex} \La^{[m;m_2]\mathrm{I}'}_{k_1, k_2} $ & $ \La^{[m;m_2]\mathrm{III}'}_{k_1, k_2} $\\
\hline \hline
$ j $ & $ \rule[-2ex]{0pt}{6ex} \frac{k_1}{2} $ & $ \frac{k_1+2k_2}{2} $ \\
\hline
$ k $ & $ \rule[-2ex]{0pt}{6ex}  -\frac{k_1+2k_2}{2} $ & $ -\frac{k_1}{2} $ \\
 \end{tabular}
\end{center}
Let $ J' = \mathrm{I}' $ or $ \mathrm{III}' $ denote the domain of parameters $ (j,k) $ for $ \La = \La^{[m;m_2]J'}_{k_1, k_2}, $ such that $ \Pi_{\La} = \Pi^{J'}_{M;k_1, k_2}$. Then these domains are the following triangles:
\[ 
\begin{aligned}
\mathrm{I}': \quad  & k \geq -\frac{M-1}{2}, \ j \leq -\half, \ k-j \leq 0,\\
\mathrm{III}': \quad  & k \geq \half, \ j \leq \frac{M-1}{2}, \ k-j \leq 0. \\
\end{aligned} \]
Hence, applying transformations $ (j,k) \mapsto (k,j) $ and $ (j,k) \mapsto (j+M, k+M), $ if necessary, we obtain the triangles from \eqref{4.21}. We thus arrive at the following theorem.

\begin{theorem}
\label{th4.12}
Let $ M $ be a positive odd integer and let $ m \in \frac{1}{4} \ZZ $ be such that $ n = -4m -2 $ is a positive integer, coprime to $ M. $ Let $ V $ denote the span of the following functions:
\begin{enumerate}
\item[(i)] modified characters and supercharacters of representations of $ N=3 $ superconformal algebra of central charge $ C = \frac{3n}{2M}-\half, $ obtained by QHR from the principal admissible $ \wg $-modules $ \LLa $ with $ \La = \La^{[m;m_2]J}_{k_1,k_2} $ and  $ \dot{\La}^{[m;m_2]J}_{k_1,k_2}, $ where $ J = $ I or III, $ m_2 = 0 $ or 1, and $ k_1, k_2 $ are integers satisfying 
(\ref{3.6b}),
\item[(ii)] the same as in (i), except that $ \La = \La^{[m;m_2]J'}_{k_1,k_2}, $ where 
$ J' = \mathrm{I}'$ or $ \mathrm{III}' $ and $ k_1, k_2  $ are integers, satisfying \eqref{4.22}.
\item[(iii)] modified characters of representations of the Ramond twisted 
$ N=3 $ superconformal algebra with central charge $ C = \frac{3n}{2M}-\half, $ obtained by QHR from twisted $ \wg $-modules with the same highest weights as in (i) and (ii). 

Then
\begin{enumerate}
\item[(a)] The space $ V $ coincides with the span of the functions $ f^{[M;n;\epsilon](\sigma, \sigma')}_{j,k;\epsilon'}  (\tau, z)$ with 
$(\sigma,\sigma')\neq(0,0)$,
defined by \eqref{4.14}. 
\item[(b)] The space $ V $ is invariant with respect to the following action of $ SL_2(\ZZ): $
\[ f\bigg{|}_{\left(\begin{smallmatrix}
a & b \\
c & d \\
\end{smallmatrix}\right)}  (\tau, z) = ( c \tau + d)^{-1} e
^{-\frac{\pi i (C+\half) c z^2}{3 (c \tau + d)}} f \left( \frac{a \tau + b}{c \tau + d}, \frac{z}{c \tau + d }\right). \]
\end{enumerate}
\end{enumerate}
\end{theorem}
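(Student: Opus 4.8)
The plan is to prove (a) first, by matching the spanning set of $V$ against the list of functions $f^{[M;n;\epsilon](\sigma,\sigma')}_{j,k;\epsilon'}$, and then to deduce (b) from (a) together with the transformation formulas of Proposition \ref{prop4.6}. For (a), the first move is to invoke Theorem \ref{th4.11}: it writes every modified character, supercharacter, and twisted character entering the definition of $V$ — up to a nonzero scalar, and, in cases (b), (c) of Proposition \ref{prop3.4}, up to a sum of two such — in the form $f^{[M;n;0](\sigma,\sigma')}_{j,k;\epsilon'} - (-1)^{m_2} f^{[M;n;\half](\sigma,\sigma')}_{j,k;\epsilon'}$ with $n=-4m-2$, where $(\sigma,\sigma')=(\half,\half)$ for $\tilde{\ch}^{+}$, $(0,\half)$ for $\tilde{\ch}^{-}$, and $(\half,0)$ for $\tilde{\ch}^{\tw}$; these are exactly the three pairs with $\sigma,\sigma'\in\{0,\half\}$ and $(\sigma,\sigma')\neq(0,0)$. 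Since the families in (i), (ii) include both $m_2=0$ and $m_2=1$, and the tables of Theorem \ref{th4.11} assign $(j,k)$ independently of $m_2$, for a fixed weight and fixed $(\sigma,\sigma')$ the two functions obtained span the same space as $\{f^{[M;n;0](\sigma,\sigma')}_{j,k;\epsilon'},\,f^{[M;n;\half](\sigma,\sigma')}_{j,k;\epsilon'}\}$.

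This reduces (a) to the claim that, as $\La$ ranges over the six families $\mathrm{I},\mathrm{III},\dot{\mathrm{I}},\dot{\mathrm{III}},\mathrm{I}',\mathrm{III}'$, the associated indices $(j,k)$ cover — modulo the symmetries \eqref{4.16}, which hold by Lemmas \ref{lem4.4} and \ref{lem4.5} — a full fundamental domain. The computation carried out in the paragraphs preceding the theorem does exactly this: the types $\mathrm{I},\mathrm{III},\dot{\mathrm{I}},\dot{\mathrm{III}}$ fill the region \eqref{4.20}, the types $\mathrm{I}',\mathrm{III}'$ fill the two triangles in \eqref{4.21}, and their union is the triangle $0\le j\le k\le M$, which by \eqref{4.16} is a fundamental domain (its sharp version being \eqref{4.17}). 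Because the same table of $(j,k)$ serves all three values of $(\sigma,\sigma')$, this yields $V=\operatorname{span}\{f^{[M;n;\epsilon](\sigma,\sigma')}_{j,k;\epsilon'}:(\sigma,\sigma')\neq(0,0)\}$, which is (a).

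For (b), I would use (a) to reduce modular invariance to stability of $V$ under $S=\left(\begin{smallmatrix}0&-1\\1&0\end{smallmatrix}\right)$ and $T=\left(\begin{smallmatrix}1&1\\0&1\end{smallmatrix}\right)$. For $S$: substituting $c=1$, $d=0$ and $C+\half=\tfrac{3n}{2M}$ into the slash action produces the automorphy factor $\tau^{-1}e^{-\pi i n z^2/(2M\tau)}$, which is precisely the reciprocal of the prefactor $\tfrac{\tau}{M}e^{\pi i n z^2/(2M\tau)}$ in Proposition \ref{prop4.6}(a); hence $f^{[M;n;\epsilon](\sigma,\sigma')}_{j,k;\epsilon'}\big|_{S}$ is a finite constant-coefficient combination of the $f^{[M;n;\epsilon'+\sigma'](\sigma',\sigma)}_{a,b;\epsilon}$, and since $(\sigma,\sigma')\mapsto(\sigma',\sigma)$ permutes the three admissible pairs, this combination lies in $V$. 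For $T$: here $f\big|_{T}(\tau,z)=f(\tau+1,z)$, and Proposition \ref{prop4.6}(b) identifies this, up to a phase, with $f^{[M;n;\epsilon+\epsilon'](\sigma,0)}_{j,k;\epsilon}$ when $\sigma'=0$ and with $f^{[M;n;\epsilon+\epsilon'-\sigma](\half-\sigma,\half)}_{j,k;\epsilon'}$ when $\sigma'=\half$; both transformations send a nonzero pair $(\sigma,\sigma')$ to a nonzero pair, so $V$ is $T$-stable. As $S$ and $T$ generate $SL_2(\ZZ)$, (b) follows.

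The step I expect to be the main obstacle is the exactness of the inclusion in (a): one must check that the six families of (non-)principal admissible weights, combined with the identifications \eqref{4.16}, hit every point of the fundamental triangle with no region missing, and that the triangle \eqref{4.17} is genuinely a strict fundamental domain for the group generated by the substitutions in \eqref{4.16} — so that the spanning set of $V$ is neither too small nor makes the comparison circular. A secondary point requiring care is the bookkeeping in (b): tracking how the labels $\epsilon,\epsilon',j,k$ and the pair $(\sigma,\sigma')$ transform, and confirming that the normalization $C=\tfrac{3n}{2M}-\half$ is exactly the one for which the automorphy factor of the prescribed slash action cancels the prefactor produced by Proposition \ref{prop4.6}(a). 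Both of these have essentially been prepared in the preceding material, so the proof should in the end be an assembly of the lemmas above.
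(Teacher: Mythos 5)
Your proposal is correct and follows essentially the same route as the paper's proof: part (a) is obtained from Theorem \ref{th4.11} (and its non-principal analogue) together with the domain-covering discussion via \eqref{4.16}--\eqref{4.22}, and part (b) from Proposition \ref{prop4.6} after checking that the automorphy factor of the slash action with $C+\half=\tfrac{3n}{2M}$ cancels the prefactor $\tfrac{\tau}{M}e^{\frac{\pi i n}{2M\tau}z^2}$ up to harmless constants. The one ingredient the paper's proof cites that you omit is that $SL_2(\ZZ)$ also permutes the three denominators $\overset{3}{R}\vphantom{R}^{\pm}$ and $\overset{3}{R}\vphantom{R}^{\tw}$ of \eqref{4.5} and \eqref{4.13} consistently with the permutation $(\sigma,\sigma')\mapsto(\sigma',\sigma)$ of the numerators (\cite{KW15}, Proposition 6.1); this check is needed to pass from invariance of the span of the numerators $f^{[M;n;\epsilon](\sigma,\sigma')}_{j,k;\epsilon'}$ to the corresponding statement for the characters themselves, and should be added for completeness.
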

\begin{proof}
Claim (a) follows from Theorem \ref{th4.11} and its analogue for non-principal admissible weights, due to the discussion preceding the statement of Theorem \ref{th4.12}. Claim (b) follows from claim (a) due to Proposition \ref{prop4.6} and the fact that $ SL_2(\ZZ) $ permutes the denominators $ \overset{3}{R}\vphantom{R}^\pm $ and $ \overset{3}{R}\vphantom{R}^\tw, $ given by \eqref{4.5} and \eqref{4.13} (see \cite{KW15}, Proposition 6.1).
\end{proof}

\begin{remark}
\label{rem4.13}
For $m=-\frac{3}{4}$, i.e. $n=1$, Theorem \ref{th4.12} holds without modifications of (super)characters, i.e., they are meromorphic modular functions. In fact, by Remark \ref{rem1.3}, formula \eqref{4.14} for $n=1$ turns into the following equation:
\[ f^{[M,1;\epsilon](\sigma,\sigma')}_{j,k;\epsilon'}(\tau,z)
=-ie^{\frac{\pi i}{M}(k-j)\sigma}  
q^{\frac{1}{M}
(j+\frac{\sigma'}{2})(k-\frac{\sigma'}{2})}
e^{\frac{\pi i}{M}(j+k)z}\]
\[\times \frac{\eta(M\tau)^3\vartheta_{11}(M\tau,z+(j+k)\tau)}
{\vartheta_{11}(M\tau,\frac{z+\sigma+\sigma'\tau}{2}+j\tau+\epsilon)
\vartheta_{11}(M\tau,\frac{z-\sigma-\sigma'\tau}{2}+k\tau-\epsilon)}.
\]
\end{remark}
\begin{remark}
\label{rem4.14}
Let $ M =1. $ Then Lemmas \ref{lem4.4} and \ref{lem4.5} give:
\[ 
f^{[1,n; \epsilon](\half, \half)}_{\half, \half; \half}  = 
e^{2 \pi i n (\epsilon+\frac{1}{4})} f^{[1,n; \half -\epsilon](\half, \half)}_{0, 0; 0}; 
\,
f^{[1,n; \epsilon](0, \half)}_{\half, \half; \half}  = e^{2 \pi i n \epsilon} f^{[1,n; \epsilon](0, \half)}_{0, 0; 0};\,
f^{[1,n; \epsilon](\half, 0)}_{\epsilon', \epsilon'; \epsilon'}  = f^{[1,n; \half -\epsilon](\half, 0)}_{\epsilon', \epsilon'; \epsilon'}.
\]
Hence in this case the following functions span $ V $ in Theorem \ref{th4.12}:
\[ f^{[1,n; \epsilon](\half, \half)}_{0, 0; 0}, \quad f^{[1,n; \epsilon](0, \half)}_{0, 0; 0}, \quad f^{[1,n; 0](\half, 0)}_{\epsilon', \epsilon'; \epsilon'}, \mbox{ where } \epsilon, \epsilon' = 0 \mbox{ or } \half. \]
\end{remark}

\begin{remark}
\label{rem4.15}
There are more relations between the functions \eqref{4.14}, than the ones, provided by Lemmas \ref{lem4.4} and \ref{lem4.5}. For example, one can prove the following relations, using these lemmas: 
\begin{equation}
\label{4.23}
f^{[M,n;\epsilon](\sigma, \sigma')}_{k + \frac{2M-1}{2}, k + \frac{M}{2}; \epsilon'} = e^{2 \pi i n \epsilon} e^{-\frac{2 \pi i n}{M} \sigma \sigma'} 
e^{\frac{2 \pi i n}{M} \frac{M+1}{2} \sigma} f^{[M,n; |\epsilon-\sigma|](\sigma, \sigma')}_{\left( k +\frac{M}{2}\right) -\sigma', k - \half + \sigma'; |\epsilon' - \sigma'|};
\end{equation}
\begin{equation}
\label{4.24}
f^{[M,n;\epsilon](\sigma, \sigma')}_{k+ \frac{2M-1}{2}, k + \frac{M}{2}, ; \epsilon'} = e^{2 \pi i n \epsilon} e^{-\frac{2 \pi i n}{M} \sigma \sigma'} e^{\frac{2 \pi i n}{M} \frac{M+1}{2} \sigma} f^{[M,n; |\epsilon-\sigma|](\sigma, \sigma')}_{\left( k +\frac{M}{2}\right) -\sigma', k - \half + \sigma'; |\epsilon' - \sigma'|};
\end{equation}
\begin{equation}
\label{4.25}
f^{[M,n;\epsilon] (\half, \half)}_{k + \frac{2M-1}{2}, k + \frac{M}{2}; \epsilon'} = e^{2 \pi in n \epsilon} e^{\frac{\pi i n }{2}} f^{[M,n; \half - \epsilon](\half, \half)}_{k + \frac{M-1}{2}, k ; \half - \sigma'}.
\end{equation}
\end{remark}

\section{Example: $ \fg = spo_{2|3}, m = - \frac{3}{4} $ and $ N=3,\, c =1$} 

In this section we shall need the following signed mock theta functions, studied in \cite{KW16}, cf. \eqref{1.8}:
\begin{equation}
\label{5.01}
\Phi^{\pm[m;s]}_1 (\tau, z_1, z_2) = \sum_{j \in \ZZ } (\pm 1)^j \frac{e^{2 \pi i m j (z_1 + z_2) + 2 \pi is z_1} q^{j^2m + js}}{1 - e^{2 \pi i z_1}q^j}, \mbox{ where } m \in \half \ZZ_{>0},\, s \in \half \ZZ.
\end{equation}
As in \eqref{1.9}, we let
\begin{equation}
\label{5.02}
\Phi^{\pm[m;s]} \tzzt = e^{2\pi i m t} \left(  \Phi^{\pm[m;s]}_1 (\tau, z_1, z_2) -  \Phi^{\pm[m;s]}_1 (\tau, -z_2, -z_1)  \right)
\end{equation}
We have:
\begin{equation}
\label{5.03}
\Phi^{\pm[\half; s]} = \Phi^{\pm[\half; s']} \mbox{ if } s-s' \in \ZZ.
\end{equation}
This can be checked directly (or it follows from \cite{KW16}, Corollary 1.6(a) for $ m = \half $, since $ \Phi^{\pm[\half; s]} = \tilde{\Phi}^{\pm[\half; s']}  $).

Using \eqref{3.1}, the superdenominator identity for the affine Lie superalgebra $ \hat{spo}_{2|3} $ (see \cite{KW94}, and \eqref{3.7}, \eqref{3.8}), we obtain for $ \epsilon = 0 $ or $ \half $, corresponding to non-twisted and twisted sectors respectively:
\begin{equation}
\label{5.04}
\Phi^{\pm[\half; \epsilon]} \tzzt =- ie^{\pi i t} \frac{\eta (\tau)^3 \vartheta_{11} (\tau,z_1 + z_2)}{\vartheta_{11} (\tau, z_1) \vartheta_{11} (\tau, z_2)} \frac{\vartheta_{2 \epsilon, \delta} (\tau, \frac{z_1 - z_2}{2})}
    {\vartheta_{2 \epsilon, \delta} (\tau, \frac{z_1 + z_2}{2})}\, , 
\end{equation}
\noindent where $ \delta = 0 $ (resp. =1) for + (resp. $ - $).

Next, we can rewrite the supercharacter formulas for $ L(\La^{[m;m_2]})$ and
$ L(\dot{\La}^{[m;m_2]}) $, where $ m = -\frac{3}{4} $ and $ m_2 = 0 $ or 1, given by Proposition \ref{prop3.4}, using \eqref{3.2} and \eqref{3.3}:
\begin{equation}
\label{5.05}
(\hat{R}^-\ch^-_{\La^{[m;m_2]}}) \tzzt = i e^{-\frac{\pi i t}{2}} \frac{\eta (2 \tau)^3 \vartheta_{11} (2 \tau, z_1 - z_2) \vartheta_{1-m_2,0} (2 \tau, \frac{z_1 +z_2}{2})}{\vartheta_{11} (2 \tau, z_1) \vartheta_{11} (2 \tau, z_2) \vartheta_{1-m_2,0} (2 \tau, \frac{z_1-z_2}{2})},
\end{equation}
\begin{equation}
\label{5.06}
(\hat{R}^- \ch^-_{\dot{\La}^{[m;m_2]}}) \tzzt = -i e^{-\frac{\pi i t}{2}} \frac{\eta (2 \tau)^3 \vartheta_{11} (2 \tau, z_1 - z_2) \vartheta_{1-m_2 , 0} (2 \tau, \frac{z_1 +z_2}{2})}{\vartheta_{01} (2 \tau, z_1) \vartheta_{01} (2 \tau, z_2) \vartheta_{m_2 , 0} (2 \tau, \frac{z_1-z_2}{2})}.
\end{equation}
Using \eqref{3.7}, we derive from \eqref{5.05} and \eqref{5.06}:
\begin{equation}
\label{5.07}
(\hat{R}^+ \ch^+_{\La^{[m;m_2]}}) \tzzt = i e^{-\frac{\pi i t}{2}} \frac{\eta (2 \tau)^3 \vartheta_{11} (2 \tau, z_1 - z_2) \vartheta_{1-m_2 , 1} (2 \tau, \frac{z_1 + z_2}{2})}{\vartheta_{10} (2 \tau, z_1) \vartheta_{10} (2 \tau, z_2) \vartheta_{1-m_2 , 0} (2 \tau, \frac{z_1 -z_2}{2})},
\end{equation}
\begin{equation}
\label{5.08}
(\hat{R}^+ \ch^+_{\dot{\La}^{[m;m_2]}}) \tzzt = i e^{-\frac{\pi i t}{2}} \frac{\eta (2 \tau)^3 \vartheta_{11} (2 \tau, z_1 - z_2) \vartheta_{1-m_2 , 1} (2 \tau, \frac{z_1 + z_2}{2})}{\vartheta_{00} (2 \tau, z_1) \vartheta_{00} (2 \tau, z_2) \vartheta_{m_2 , 0} (2 \tau, \frac{z_1 -z_2}{2})}.
\end{equation}
\begin{equation}
\label{5.09}
(\hat{R}^\tw \ch^\tw_{\La^{[m;m_2]}}) \tzzt = \\
ie^{-\frac{\pi i t}{2}} q^{-\frac{1}{16}} e^{\frac{\pi i}{4}(z_1 + z_2)}
\frac{\eta (2 \tau)^3 \vartheta_{11} (2 \tau, z_1 - z_2)\vartheta_{1-m_2 , 1} (2 \tau, \frac{z_1 + z_2 - \tau}{2})}
     {\vartheta_{10} (2 \tau, z_1 - \frac{\tau}{2}) \vartheta_{10} (2 \tau, z_2 - \frac{\tau}{2})  
          \vartheta_{1-m_2 , 0} (2 \tau, \frac{z_1 - z_2}{2})}, 
\end{equation}
\begin{equation}
\label{5.10}
(\hat{R}^\tw \ch^\tw_{\dot{\La}^{[m;m_2]}}) \tzzt =\\- ie^{-\frac{\pi i t}{2}} q^{-\frac{1}{16}} e^{\frac{\pi i}{4}(z_1 + z_2)}
\frac{\eta (2 \tau)^3 \vartheta_{11} (2 \tau, z_1 - z_2)  \vartheta_{1-m_2 , 1} (2 \tau, \frac{z_1 + z_2 - \tau}{2})}
     {\vartheta_{00} (2 \tau, z_1 - \frac{\tau}{2}) \vartheta_{00} (2 \tau, z_2 - \frac{\tau}{2})   
          \vartheta_{m_2 , 0} (2 \tau, \frac{z_1 - z_2}{2})}.
\end{equation}

It is shown in \cite{AKMPP} that $ m = -\frac{3}{4} $ is a collapsing level of the simple minimal $ W $-algebra $ W_m (spo_{2|3}, \theta). $ Namely, this vertex algebra is isomorphic to its even part, which is the affine vertex algebra $ V_1 (s\ell_2), $ associated to the affine Lie algebra $ \hat{s\ell}_2 $ of level 1. 

Hence all the irreducible $ W_{-3/4} (spo_{2|3}, e_{-\theta}) $-modules are just the two fundamental irreducible $ V_1 (s\ell_2) $-modules. Their characters are \cite{K90}:
\[ \frac{\Theta_{s,1} (\tau, z)}{\eta (\tau)}, \mbox{ where } s= 0 \mbox{ or }1. \]
It is not difficult to deduce from this the following character formulas for the $ N=3 $ modules with $C=1$:
\[ 
\begin{aligned}
\ch^\pm_{H (\La^{[m;0]})} (\tau, z) = & \ \ch^\pm_{H(\dot{\La}^{[m;1]})} (\tau, z) = \ch_{H^{\tw} (\La^{[m;1]})} (\tau, z) = \frac{\Theta_{0,1} (\tau, z)}{\eta (\tau)}; \\
\ch^\pm_{H (\La^{[m;1]})} (\tau,z) = & \ \ch^\pm_{H(\dot{\La}^{[m;0]})} (\tau, z) = \ch_{H^{\tw}(\dot{\La}^{[m;1]})} (\tau,z) = \frac{\Theta_{1,1} (\tau, z)}{\eta (\tau)};\\
\ch_{H^{\tw} (\La^{[m;0]})}   = & \ \ch_{H^{\tw} (\dot{\La}^{[m;0]})} = 0. \\
\end{aligned} \]
Comparing with \eqref{4.4}, \eqref{4.5}, \eqref{5.05}--\eqref{5.08} in the non-twisted case, and with \eqref{4.12}, \eqref{4.13}, \eqref{5.09}, \eqref{5.10} in the twisted case, we obtain some theta function identities. 

\section{Integrable $ \wg $-modules, where $ \fg = ps\ell_{2|2} $}

As in \cite{KW14}, Section 8, choose the set of simple roots of $ \fg = ps\ell_{2|2}  $ to be  $ \Pi = \{ \al_1, \al_2, \al_3 \} $, the highest root $ \theta $ being $ \al_1 + \al_2 + \al_3, $ where:
\[ (\al_1 | \al_1) = (\al_3 | \al_3) = (\al_1 | \al_3) = 0, \quad (\al_2 | \al_2) = -2, \quad (\al_1| \al_2) = (\al_2|\al_3) = 1, \quad (\theta | \theta ) = 2; \]
\[ \sdim \fg = -2, \quad \ell : = \mbox{rank } \fg = 2, \quad \mbox{defect } \fg = 2, \quad h^\vee = 0, \quad 2 \rho = 2 \wrh = -\al_1-\al_3. \]

We shall use coordinates \eqref{2.1} on $ \wh = \wh^* $ with 
\begin{equation}
\label{6.01}
z = -\half (z_1 - z_2) (\al_1 + \al_3) - z_1 \al_2,
\end{equation}
\noindent so that
\[ e^{-\al_1(h)} = e^{-\al_3 (h)} = e^{2 \pi i z_1}, \quad e^{-\al_2(h)} = 
e^{-2 \pi i (z_1 + z_2)}, \mbox{ and } (z|z) = -2z_1 z_2. \]

The $ \wg $-modules $ L(m\La_0) $ with $  m \in \ZZ $ have been discussed in \cite{KW14}, Section 8. They are integrable with respect to $ \theta $ and 
$ \delta - \theta $ if $ m \geq 0 $ (resp. $ \al_2 $ and $ \delta - \al_2 $) if $ m \leq 0,  $ and atypical with respect to $ \al_1 $ and $ \al_3. $ 
We showed in \cite{KW14}, Section 9, that the modified (super)characters of (non)twisted $ N=4 $ modules, obtained by (twisted) QHR from the $ \wg $-modules $ L(m\La_0) $ with $ m \in \ZZ_{<0} $ form a modular invariant family. In the present paper the process of modification is simplified and the QHR is applied to a larger set of integrable $ \wg $-modules of level $ m \in \ZZ_{<0}. $

Given $ m \in \CC, $ we denote by $ P^m_+ $ the set $ \bmod \,\CC \delta  $ of $ \La \in \wh^* $ of level $ m, $ such that $ L(\La) $ is integrable with respect to $ \al_2 $ and $ \delta - \al_2, $ and atypical with respect to $ \al_1 $ and $ \al_3, $ i.e. $ (\la + \wrh|\al_i) = 0 $ for $ i = 1,3. $ The following proposition is proved in the same way as Proposition \ref{prop3.1}.
\begin{proposition}
\label{prop6.1}
\begin{enumerate}
\item[(a)] The set $ P^m_+ $ is non-empty iff $ m \in \ZZ_{\leq 0} $
\item[(b)] Let $ m \in \ZZ_{\leq 0} $ and let $ \La^{[m;m_2]} = m\La_0- \half m_2 (\al_1 + \al_3)=(m+m_2)\Lambda_0+m_2\Lambda_2. $ Then $ P^m_+ $ consists of weights $ \La^{[m;m_2]}  $, where $ m_2 \in \ZZ$ and $ 0 \leq m_2 \leq - m $.
\end{enumerate}
\qed 
\end{proposition}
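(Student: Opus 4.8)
\textbf{Proof proposal for Proposition \ref{prop6.1}.}

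The plan is to follow the pattern indicated in the text: "The following proposition is proved in the same way as Proposition \ref{prop3.1}." Concretely, I would carry out the analysis of the atypicality and integrability conditions in terms of the coordinates $m_2$ defined by the weight parametrization. First I would verify that any weight $\La$ of level $m$ satisfying the atypicality conditions $(\La+\wrh\mid\al_1)=(\La+\wrh\mid\al_3)=0$ must, up to adding a multiple of $\delta$, be of the stated form $\La^{[m;m_2]}=m\La_0-\half m_2(\al_1+\al_3)$ for some $m_2\in\CC$: since $\wrh=\rho$ with $2\rho=-\al_1-\al_3$ and $h^\vee=0$, and since the $\al_i$ span $\fh^*$ modulo the center, the two linear equations $(\La+\wrh\mid\al_i)=0$, $i=1,3$, together with fixing the level $\La(\mathbf K)=m$, pin down $\La$ up to $\CC\delta$; writing $\La=m\La_0+\la$ with $\la\in\fh^*$ and solving gives $\la=-\half m_2(\al_1+\al_3)$ where $m_2=-(\La\mid\al_2)$ is a single free scalar. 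The identity $(m+m_2)\La_0+m_2\La_2=m\La_0-\half m_2(\al_1+\al_3)$ is then a direct check using the pairings of $\Lambda_2$ with the simple roots.

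Next I would translate the integrability requirements with respect to $\al_2$ and $\delta-\al_2$ into inequalities on $m_2$. Since $\al_2$ is an even simple root with $(\al_2\mid\al_2)=-2<0$, integrability with respect to $\al_2$ is equivalent to $(\La+\wrh\mid\al_2^\vee)\in\ZZ_{\geq0}$, i.e. (because $\al_2^\vee=-\al_2$) to $-(\La+\wrh\mid\al_2)\in\ZZ_{\geq0}$; computing $(\La+\wrh\mid\al_2)=(\La^{[m;m_2]}+\rho\mid\al_2)$ using the listed pairings yields exactly the condition $m_2\in\ZZ_{\geq0}$. For the root $\delta-\al_2$: as in the proof of Proposition \ref{prop3.1}, I would apply a sequence of odd reflections to bring $\delta-\al_2$ to an even simple root $\gamma$ of a transformed set of simple roots, track how the highest weight and $\rho$ change under those odd reflections, and then impose the analogue of integrability with respect to $-\gamma$; this will give the upper bound, which I expect to be $m_2\leq -m$. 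Combining the two bounds gives $0\leq m_2\leq -m$, and in particular the set $P^m_+$ is non-empty precisely when $-m\geq0$, i.e. $m\in\ZZ_{\leq0}$ (the integrality of $m$ itself being forced by $m_2\in\ZZ$ together with the boundary case $m_2=0$ requiring, say, integrability with respect to $\delta-\al_2$ at level $m$; more directly, $m=-m_2$ must be achievable as an integer, and $m\in\ZZ_{\leq0}$ is exactly the locus where the constraints are consistent and the trivial-module case $m=m_2=0$ is included).

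The main obstacle, as in the $spo_{2|3}$ case, is the careful bookkeeping of the odd reflections needed to analyze the root $\delta-\al_2$: one must verify that a suitable chain of odd reflections through the isotropic simple roots $\al_1,\al_3$ (and possibly $\al_0$) indeed carries $\delta-\al_2$ to an even simple root, and then correctly compute the resulting highest weight and Weyl vector to read off the sharp upper bound. I would handle this exactly as in \cite{KW14}, Section 8 and \cite{KW15}, Section 4, invoking the standard fact (cf.\ \cite{KW14}) that odd reflections do not change the module $L(\La)$ but change its highest weight in a prescribed way; the rest of the argument is the same linear-algebra computation as for Proposition \ref{prop3.1}, so I would simply say "it is proved in the same way" and supply the two pairing computations $(\La^{[m;m_2]}+\rho\mid\al_2)$ and the one for the transformed root $\gamma$.
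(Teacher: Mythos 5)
Your proposal is correct and follows essentially the same route as the paper, which itself only says the result ``is proved in the same way as Proposition \ref{prop3.1}'': integrability with respect to the even simple root $\al_2$ gives $m_2\in\ZZ_{\geq 0}$, and odd reflections through the isotropic simple roots $\al_1,\al_3$ carry $\delta-\al_2$ to an even simple root whose integrability condition gives $m_2\leq -m$. The only slip is cosmetic: the integrability criterion for the shifted weight should read $(\La+\wrh\mid\al_2^\vee)\in\ZZ_{>0}$ rather than $\ZZ_{\geq 0}$ (equivalently $(\La\mid\al_2^\vee)\in\ZZ_{\geq 0}$ for the unshifted weight), which is what your computation actually uses to arrive at the correct bound.
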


We proceed as in Section 3 to write down the characters of the $ \wg $-modules $ \LLa, $ where $ \La \in P^m_+, m \neq 0. $ Introduce the functions
\[ F^{[m;m_2]} = \sum_{j \in \ZZ} t_{-j \al_2}
\frac{e^{\La^{[m;m_2]}+ \wrh}}{(1-e^{-\al_1})(1-e^{-\al_3})}, \]
and the differential operator
\[D_0=\frac{1}{2 \pi i } \left( \frac{\partial}{\partial z_1} - \frac{\partial}{\partial z_2}\right).  
\]
It is straightforward to show, using \eqref{2.2}, that in coordinates 
\eqref{2.1}, \eqref{6.01} we have,
cf. \cite{KW14}, Section 8,
\begin{equation}
\label{6.02}
F^{[m;m_2]} \tzzt = ( -m_2 +D_0)\Phi^{[-m;m_2]}_1 (\tau, z_1, z_2, -t), 
\end{equation}
\begin{equation}
\label{6.03}
r_{\al_2} F^{[m;m_2]} \tzzt = ( -m_2 + D_0)
(\Phi^{[-m;m_2]}_1 (\tau, -z_2, -z_1, -t)).
\end{equation}

Recall from \cite{KW14}, (8.4), the normalized affine non-twisted and twisted $ ps\ell_{2|2} $ denominators and superdenominators, where $ \epsilon, \epsilon' = 0 $ or $ \half: $
\begin{equation}
\label{6.04}
\hat{R}^{(\epsilon)}_{\epsilon'} (\tau, z_1, z_2) = (-1)^{2 \epsilon'} \eta (\tau)^4 \frac{\vartheta_{11} (\tau, z_1-z_2) \vartheta_{11} (\tau, z_1 +z_2)}{\vartheta_{1-2\epsilon', 1-2 \epsilon} (\tau, z_1)^2 \vartheta_{1-2 \epsilon', 1 - 2 \epsilon} (\tau, z_2)^2}.
\end{equation}
The non-twisted affine superdenominator (resp. denominator) is $\hat{R}^-=\hat{R}^{(0)}_0 $ (resp. $\hat{R}^+ = \hat{R}^{(\half)}_0 $).

One can show that for $\Lambda=\Lambda^{[m;m_2]}$ formulas 
(\ref{2.4}) and(\ref{2.6}) hold with $W^\#=\{1, r_{\alpha_2}\}$ and $j_\Lambda
=1$. Hence,
using \eqref{6.02} and \eqref{6.03} we can rewrite formula \eqref{2.4} as follows: 
\begin{equation}
\label{6.05}
\left(\hat{R}^- \ch^-_{\La^{[m;m_2]}}\right) \tzzt = ( -m_2 + D_0)
\Phi^{[m;m_2]} (\tau, z_1, z_2, -t),
\end{equation}
\begin{equation}
\label{6.06}
\left(\hat{R}^+ \ch^+_{\La^{[m;m_2]}}\right) \tzzt = -(-1)^{m_2} ( -m_2  + D_0)
\Phi^{[m;m_2]} \left(\tau, z_1 + \thalf, z_2 + \thalf, -t \right).
\end{equation}

As before, we define the modified (super)characters $ \tilde{\ch}^\pm_{\La^{[m;m_2]}} $ by replacing the functions $ \Phi^{[-m;m_2]} $ in the RHS of \eqref{6.05} and \eqref{6.06} by $ \tilde{\Phi}^{[-m;m_2]}. $

In order to study modular transformation properties of the modified characters of the quantum Hamiltonian reduction of $ \wg $-modules $ \LLa, \La \in P^m_+ $ we need the following lemma. 
\begin{lemma}
\label{lem6.2}
Let $ f\tzzt = e^{2 \pi i m t} f(\tau, z_1, z_2, 0) $ be a differentiable in $ z_1 $ and $ z_2 $ function, and let
\begin{equation}
\label{6.07}
g \tzzt =  (D_0 + \frac{m(z_1-z_2)}{2 \tau})  f \tzzt .
\end{equation}
\begin{enumerate}
\item[(a)] If the function $ f $ satisfies the modular transformation property
\begin{equation}
\label{6.08} f \left(-\frac{1}{\tau}, \frac{z_1}{\tau}, \frac{z_2}{\tau}, t + \frac{z_1z_2}{\tau}\right) = \tau f \tzzt, 
\end{equation}
then the function $ g $ satisfies the following modular transformation propery
\begin{equation}
\label{6.09}
g \left( - \frac{1}{\tau}, \frac{z_1}{\tau}, \frac{z_2}{\tau}, t + \frac{z_1z_2}{\tau}\right) = \tau^2 g \tzzt. 
\end{equation}
\item[(b)] If $ f (\tau, z_2, z_1, t) = f \tzzt,$ and it satisfies the elliptic transformation property for all $ a, b \in \ZZ: $
\begin{equation}
\label{6.10}
f(\tau, z_1 + a \tau, z_2 + b \tau, t - bz_1 -az_2) = q^{mab} f\tzzt.
\end{equation}
then the function $ g $ satisifies the following relations for all 
$ a,b \in \ZZ  $:
\begin{equation}
\label{6.11}
g (\tau, z+ a \tau, z + b \tau, t - (a+b) z) = \frac{m}{2} (b-a) q^{mab} f (\tau, z,z, t),
\end{equation}
\begin{equation}
\label{6.12}
g \left(\tau, z +\! \thalf \!+ a \tau, z + \!\thalf \!+ b \tau, t (a+b) z \right) = \frac{m}{2} (b-a) e^{\pi i m (a+b)} q^{mab} f \left(\tau, z + \!\thalf, z + \!\thalf, t\right).
\end{equation}
\end{enumerate}
\end{lemma}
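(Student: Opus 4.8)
\emph{Plan.} Both parts reduce to one mechanism: tracking how the first-order operator $D_0+\tfrac{m(z_1-z_2)}{2\tau}$ transforms under a change of the variables $(z_1,z_2,t)$, using throughout that the factorization $f\tzzt=e^{2\pi i mt}f(\tau,z_1,z_2,0)$ forces $\tfrac{1}{2\pi i}\tfrac{\partial f}{\partial t}=mf$. The point of the extra summand $\tfrac{m(z_1-z_2)}{2\tau}$ in \eqref{6.07} is precisely to absorb the non-holomorphic cross-terms produced by the chain rule; this is the two-variable analogue of completing a naive $z$-derivative to a modular-covariant operator.

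\emph{Part (a).} Put $\hat f\tzzt:=f\!\left(-\tfrac1\tau,\tfrac{z_1}\tau,\tfrac{z_2}\tau,t+\tfrac{z_1z_2}\tau\right)$, so \eqref{6.08} says $\hat f=\tau f$. Differentiating $\hat f$ by the chain rule, with $w_j=z_j/\tau$ and $s=t+z_1z_2/\tau$, gives
\[ (\partial_{z_1}-\partial_{z_2})\hat f=\tfrac1\tau\big((\partial_{w_1}f)(\cdots)-(\partial_{w_2}f)(\cdots)\big)+\tfrac{z_2-z_1}\tau(\partial_s f)(\cdots), \]
and $(\partial_s f)(\cdots)=2\pi i m\,\hat f$. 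Dividing by $2\pi i$ yields $D_0\hat f=\tfrac1\tau(D_0f)(\cdots)+\tfrac{m(z_2-z_1)}\tau\hat f$, i.e. $(D_0f)(\cdots)=\tau D_0\hat f+m(z_1-z_2)\hat f$. Since $\tfrac{m(w_1-w_2)}{2w_0}=-\tfrac{m(z_1-z_2)}2$ at the transformed point, the left side of \eqref{6.09} equals $(D_0f)(\cdots)-\tfrac{m(z_1-z_2)}2\hat f=\tau D_0\hat f+\tfrac{m(z_1-z_2)}2\hat f$; substituting $\hat f=\tau f$ this is $\tau^2\big(D_0f+\tfrac{m(z_1-z_2)}{2\tau}f\big)=\tau^2g$, which is \eqref{6.09}.

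\emph{Part (b).} First derive the general elliptic law for $g$ \emph{without} setting $z_1=z_2$: applying the same chain-rule computation to $\tilde f\tzzt:=f(\tau,z_1+a\tau,z_2+b\tau,t-bz_1-az_2)=q^{mab}f\tzzt$ gives $D_0\tilde f=(D_0f)(\cdots)+m(a-b)\tilde f$, and, after incorporating the shifted coefficient $\tfrac{m((z_1+a\tau)-(z_2+b\tau))}{2\tau}=\tfrac{m(z_1-z_2)}{2\tau}+\tfrac{m(a-b)}2$,
\[ g(\tau,z_1+a\tau,z_2+b\tau,t-bz_1-az_2)=q^{mab}\Big(g\tzzt+\tfrac m2(b-a)f\tzzt\Big). \]
Now use the symmetry: differentiating $f(\tau,z_2,z_1,t)=f(\tau,z_1,z_2,t)$ in $z_1$ and restricting to $z_1=z_2$ gives $(\partial_{z_1}f)(\tau,z,z,t)=(\partial_{z_2}f)(\tau,z,z,t)$, so $D_0f$ vanishes on the diagonal, and hence so does $g$ (its other term $\tfrac{m(z_1-z_2)}{2\tau}f$ also vanishes there). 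Setting $z_1=z_2=z$ in the displayed identity kills the $g$-term and produces \eqref{6.11}. For \eqref{6.12} set instead $z_1=z_2=z+\thalf$ and replace $t$ by $t+\tfrac{a+b}2$, so that the $t$-argument on the left becomes $t-(a+b)z$ and $g$ again vanishes on the diagonal; pulling the factor $e^{2\pi i m(a+b)/2}=e^{\pi i m(a+b)}$ out of $f\big(\tau,z+\thalf,z+\thalf,t+\tfrac{a+b}2\big)$ via its $e^{2\pi i mt}$-dependence gives the stated formula.

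\emph{Main difficulty.} None of the steps is deep; the only delicate point is the bookkeeping in (b) — matching the $t$-argument $t-bz_1-az_2$ of the elliptic shift with the argument $t-(a+b)z$ appearing in \eqref{6.11}--\eqref{6.12}, accounting for the compensating $e^{\pi i m(a+b)}$ in the half-period case, and recording the observation that the symmetry of $f$ makes $D_0f$ (hence $g$) vanish identically on the diagonal $z_1=z_2$.
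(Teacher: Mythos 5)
Your verification is correct: the chain-rule computation in (a), the general elliptic identity for $g$ in (b), and the observation that the symmetry of $f$ kills $D_0f$ (hence $g$) on the diagonal all check out, including the bookkeeping of the $t$-shift and the factor $e^{\pi i m(a+b)}$ in \eqref{6.12}. The paper dismisses this as "a straightforward verification," and your argument is precisely that verification carried out in full, so it matches the intended proof.
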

\begin{proof}
It is a straightforward verification. 
\end{proof}

Let $ g^{[-m]} \tzzt $ be the function, defined by (\ref{6.07}) for $ f \tzzt = \tilde{\Phi}^{[-m;0]} (\tau, z_1, z_2, -t), $ where $ m $ is a negative integer. 
\begin{lemma}
\label{lem6.3}
The function $ g^{[-m]} \tzzt $ satisfies \eqref{6.09}, \eqref{6.11} and \eqref{6.12}.
\end{lemma}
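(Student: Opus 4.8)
The plan is to derive all three relations directly from Lemma~\ref{lem6.2}. Put $f\tzzt := \tilde{\Phi}^{[-m;0]}(\tau,z_1,z_2,-t)$; since $m$ is a negative integer, $-m>0$ is a positive integer and $\tilde{\Phi}^{[-m;0]}$ is defined, while by construction $g^{[-m]}$ is exactly the function $g$ produced from this $f$ via \eqref{6.07} (with the same negative integer $m$). Thus it suffices to verify that $f$ satisfies the hypotheses of Lemma~\ref{lem6.2}(a) and (b).

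First I would note the required homogeneity: formulas \eqref{1.9} and \eqref{1.11} exhibit the overall factor $e^{2\pi i(-m)t}$ in $\tilde{\Phi}^{[-m;0]}$, so $f\tzzt=e^{2\pi i m t}f(\tau,z_1,z_2,0)$. The modular hypothesis \eqref{6.08} for $f$ is precisely \eqref{1.15} for $\tilde{\Phi}^{[-m;0]}$ after replacing $t$ by $-t$; Lemma~\ref{lem6.2}(a) then immediately gives \eqref{6.09} for $g^{[-m]}$. For the elliptic input \eqref{6.10} I would combine \eqref{1.14} for $\tilde{\Phi}^{[-m;0]}$ (valid for $a,b\in\ZZ$) with the homogeneity just recorded, again after $t\mapsto -t$; granting in addition the symmetry $f(\tau,z_2,z_1,t)=f\tzzt$, Lemma~\ref{lem6.2}(b) yields \eqref{6.11} and \eqref{6.12} for $g^{[-m]}$, completing the proof.

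The single point requiring genuine computation — and the main obstacle — is that symmetry, i.e. the invariance of $\tilde{\Phi}^{[-m;0]}$ under $z_1\leftrightarrow z_2$. I would establish it from the defining series. For $s=0$ and $n>0$, replacing $j\mapsto -j$ in the two ``reflected'' summands of \eqref{1.8}--\eqref{1.9} and using $\tfrac{e^{2\pi i z}q^j}{1-e^{2\pi i z}q^j}=-1+\tfrac{1}{1-e^{2\pi i z}q^j}$ puts $\Phi^{[n;0]}$ into the manifestly symmetric form
\[ \Phi^{[n;0]}\tzzt = e^{2\pi i n t}\sum_{j\in\ZZ} e^{2\pi i n j(z_1+z_2)}q^{nj^2}\left(\frac{1}{1-e^{2\pi i z_1}q^j}+\frac{1}{1-e^{2\pi i z_2}q^j}-1\right), \]
all three summands converging separately for $\Im\tau>0$. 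For the correcting term \eqref{1.11}, the swap $z_1\leftrightarrow z_2$ fixes $z_1+z_2$ and sends $\tfrac{z_1-z_2}{2}$ to its negative, so the symmetry of $\Phi^{[n;0]}_{\mathrm{add}}$ reduces to showing that $\sum_{j=0}^{2n-1}R_{j;n}(\tau,v)(\Theta_{-j,n}-\Theta_{j,n})(\tau,w)$ is even in $v$; this follows from $\Theta_{j,n}(\tau,-w)=\Theta_{-j,n}(\tau,w)$ together with the quasi-symmetry of the real-analytic functions $R_{j;n}$ under $(j,v)\mapsto(-j,-v)$ (the boundary $\mathrm{sgn}$-corrections cancelling when summed over a full period mod $2n$); alternatively one may simply invoke the $z_1\leftrightarrow z_2$ symmetry of $\tilde{\Phi}^{[n;0]}$ as already recorded in \cite{KW15}. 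Once this is in place, Lemma~\ref{lem6.2} applies verbatim.
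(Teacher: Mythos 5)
Your proposal is correct and follows essentially the same route as the paper: the paper's proof simply says the lemma follows from Lemma \ref{lem6.2} together with the modular and elliptic transformation properties of $\tilde{\Phi}^{[-m;0]}$ (citing \cite{KW14}, Corollary 5.11), which is exactly the reduction you carry out. The only difference is that you write out the verification of the hypotheses of Lemma \ref{lem6.2} — in particular the $z_1\leftrightarrow z_2$ symmetry of $\tilde{\Phi}^{[-m;0]}$ — which the paper delegates to the citation; your check of that symmetry is sound.
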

\begin{proof}
It follows from Lemma \ref{lem6.2} and the modular and elliptic transformation properties of the function $ \tilde{\Phi}^{[-m;0]} \tzzt, $ see \cite{KW14}, Corollary 5.11.
\end{proof}
\begin{lemma}
\label{lem6.4}
Let $ m $ be a negative integer. Then
\[\begin{aligned}
g^{[-m]} \left( \tau, z + \frac{\tau}{2}, z - \frac{\tau}{2}, \frac{\tau}{4}\right) = & -\frac{m}{2} e^{-2 \pi i m z } q^{-\frac{m}{4}} \tilde{\Phi}^{[-m;0]} \left( \tau, z + \frac{\tau}{2}, z + \frac{\tau}{2}, 0 \right) \\
 = &  -\frac{m}{2} e^{2 \pi i m z } q^{-\frac{m}{4}} \tilde{\Phi}^{[-m;0]} \left( \tau, z - \frac{\tau}{2}, z - \frac{\tau}{2}, 0 \right).
\end{aligned} 
\]
\end{lemma}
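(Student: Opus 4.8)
The plan is to obtain both identities as specializations of formula \eqref{6.11}, which by Lemma \ref{lem6.3} is available for $g^{[-m]}$; the role of the ``degree'' $m$ in Lemma \ref{lem6.2} is played here by the negative integer $m$, since $f\tzzt = \tilde{\Phi}^{[-m;0]}(\tau,z_1,z_2,-t)$ has $t$-degree $m$. Besides \eqref{6.11}, the only ingredient needed is the trivial homogeneity $\tilde{\Phi}^{[-m;0]}(\tau,z_1,z_2,t) = e^{-2\pi i m t}\,\tilde{\Phi}^{[-m;0]}(\tau,z_1,z_2,0)$, read off from \eqref{1.9}, \eqref{1.11}, \eqref{1.12}, together with the defining relation $f\tzzt = \tilde{\Phi}^{[-m;0]}(\tau,z_1,z_2,-t)$.

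First I would exploit the fact that the point $(z_1,z_2) = (z+\tfrac{\tau}{2},\,z-\tfrac{\tau}{2})$ lies in the $\tau$-translation orbit of a diagonal point, namely $z-\tfrac{\tau}{2} = (z+\tfrac{\tau}{2}) - \tau$. Applying \eqref{6.11} with base point $w := z+\tfrac{\tau}{2}$ and integer shifts $(a,b) = (0,-1)$ --- for which $\tfrac{m}{2}(b-a)q^{mab} = -\tfrac{m}{2}$ --- gives $g^{[-m]}(\tau,w,w-\tau,s+w) = -\tfrac{m}{2}f(\tau,w,w,s)$ for all $s$. Choosing $s$ with $s+w = \tfrac{\tau}{4}$, i.e.\ $s = -z-\tfrac{\tau}{4}$, turns the left side into $g^{[-m]}(\tau,z+\tfrac{\tau}{2},z-\tfrac{\tau}{2},\tfrac{\tau}{4})$, while the right side becomes $-\tfrac{m}{2}\,\tilde{\Phi}^{[-m;0]}(\tau,z+\tfrac{\tau}{2},z+\tfrac{\tau}{2},z+\tfrac{\tau}{4})$; invoking the homogeneity with $t = z+\tfrac{\tau}{4}$ produces the factor $e^{-2\pi i m z}q^{-m/4}$ and yields the first formula. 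For the second formula I would instead write $z+\tfrac{\tau}{2} = (z-\tfrac{\tau}{2}) + \tau$ and apply \eqref{6.11} with base point $w' := z-\tfrac{\tau}{2}$ and shifts $(a,b) = (1,0)$; the prefactor is again $-\tfrac{m}{2}$, and the identical unwinding (now giving $\tilde{\Phi}^{[-m;0]}(\tau,z-\tfrac{\tau}{2},z-\tfrac{\tau}{2},-z+\tfrac{\tau}{4})$ and the factor $e^{2\pi i m z}q^{-m/4}$) produces the second. Since both right-hand expressions are computed from one and the same value of $g^{[-m]}$, their equality is automatic and doubles as a consistency check.

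There is no real obstacle here: once Lemma \ref{lem6.3} is granted, the whole argument is a short change of variables. The only point requiring care is the bookkeeping of the two sign conventions --- that $f$ equals $\tilde{\Phi}^{[-m;0]}$ evaluated at $-t$, and that the superscript $[-m;0]$ carries $t$-degree $-m$, which is a positive integer because $m<0$, so that the resulting $q$-power $q^{-m/4} = q^{|m|/4}$ is the one compatible with a convergent $q$-expansion.
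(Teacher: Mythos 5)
Your proof is correct and follows essentially the same route as the paper: both apply formula \eqref{6.11} (via Lemma \ref{lem6.3}) after rewriting $z-\tfrac{\tau}{2}$ as $(z+\tfrac{\tau}{2})-\tau$ for the first identity and $z+\tfrac{\tau}{2}$ as $(z-\tfrac{\tau}{2})+\tau$ for the second, then unwind the $t$-homogeneity of $\tilde{\Phi}^{[-m;0]}$ to extract the factors $e^{\mp 2\pi i m z}q^{-m/4}$. Your version merely spells out the sign bookkeeping that the paper leaves implicit.
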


\begin{proof}
  Replacing
  $z-\frac{\tau}{2}$ by $(z+\frac{\tau}{2})-\tau$ in  
  $ g^{[-m]} ( \tau, z+ \tot,  z - \tot , \frac{\tau}{4} )$
  and using Lemma \ref{lem6.3}, formula \eqref{6.11} gives the first equality, while, replacing $ z - \tot $ by $ (z - \tot) + \tau,  $ and using this lemma,
  gives the second equality. 
\end{proof}

\begin{remark}
\label{rem6.5}
Let $ m $ be a negative integer. Then
\[ 
\begin{aligned}
\tilde{\Phi}^{[-m;0]} \left( \tau, z + \frac{\tau}{2}, z - \frac{\tau}{2}, -\frac{\tau}{4}  \right)   = & e^{-2 \pi i m z} q^{-\frac{m}{4}} \tilde{\Phi}^{[-m;0]} \left( \tau, z+ \frac{\tau}{2}, z+ \frac{\tau}{2}, 0 \right)\\
 = & e^{2 \pi i m z} q^{-\frac{m}{4}} \tilde{\Phi}^{[-m;0]} \left( \tau, z- \frac{\tau}{2}, z- \frac{\tau}{2}, 0 \right).
\end{aligned} 
\]
\end{remark}
\begin{proof}
Recalling the elliptic transformation property for $ m \in \zp $ and $ a,b \in \ZZ $
\[ \tilde{\Phi}^{[m;0]} (\tau, z_1 + a \tau, z_2 + b \tau, t) = e^{2 \pi i m (bz_1 + az_2)} q^{-mab} \tilde{\Phi}^{[m;0]} \tzzt, \]
we obtain the first (resp. second) equality by replacing
$ z - \tot $ (resp. $ z + \tot $)
by $\left(  z + \tot\right) - \tau $ (resp. by $ \left( z - \tot \right) + \tau $) in $ \tilde{\Phi}^{[-m;0]} \left( \tau, z + \tot, z - \tot, - \tof \right) . $
\end{proof}

In order to perform the twisted quantum Hamiltonian reduction \cite{KW05}, as in \cite{KW14}, Section 8, we define the twist by $ w_0 :=  r_{\al_2} t_{\half \al_2}, $ so that
$ G^{[-\al_1] \tw}_0, G_0^{[-\al_3] \tw} $ are the annihilation operators and $ G_0^{[-(\al_1 + \al_2)] \tw}, G_0^{[-(\al_2 + \al_3)] \tw} $ are the creation operators. 
Then the modified (non-)twisted (super)characters are given by the following proposition. 
\begin{proposition}
\label{prop6.6}
Let $ \La = \La^{[m;m_2]} \in P^m_+;$ then
\begin{enumerate}
\item[(a)] \[ 
\begin{aligned}
& \left( \hat{R}^+ \tilde{\ch}^+_\La \right) \tzzt = \\
& \ g^{[-m]} \left( \tau, z_1 + \half, z_2 + \half, t\right) - \left( \frac{m (z_1-z_2)}{2 \tau} + m_2\right) \tilde{\Phi}^{[-m;0]} \left( \tau, z_1 + \half, z_2 + \half, - t \right); \\
& \left( \hat{R}^- \tilde{\ch}^-_\La \right) \tzzt =  \ g^{[-m]} \tzzt - \left( \frac{m (z_1-z_2)}{2 \tau} + m_2\right) \tilde{\Phi}^{[-m;0]} \left( \tau, z_1, z_2, - t \right).
\end{aligned} \]
\item[(b)] \[ 
\begin{aligned}
& \left( \hat{R}^{+,\tw } \tilde{\ch}^{+,\tw}_\La \right) \tzzt =  \ g^{[-m]} \left( \tau, -z_2 - \half - \frac{\tau}{2}, -z_1 - \half - \frac{\tau}{2}, t - \frac{z_1+z_2}{2} - \frac{\tau}{4} \right) \\
& - \left( \frac{m (z_1-z_2)}{2 \tau} + m_2\right) \tilde{\Phi}^{[-m;0]} \left( \tau, -z_2 -\half - \frac{\tau}{2}, -z_1 -\half - \frac{\tau}{2}, -t + \frac{z_1+z_2}{2} + \frac{\tau}{4} \right) ; \\
&\left( \hat{R}^{-,\tw} \tilde{\ch}_\Lambda^{-,\tw}\right)  \tzzt = g^{[-m]} \left( \tau, -z_2 - \frac{\tau}{2},- z_1 - \frac{\tau}{2}, t - \frac{z_1+z_2}{2} -\frac{\tau}{4} \right) \\
& - \left( \frac{m(z_1-z_2)}{2} + m_2
\right) \tilde{\Phi}^{[-m;0]} \left( \tau, -z_2 - \frac{\tau}{2}, -z_1 -\frac{\tau}{2}, - t + \frac{z_1+z_2}{2} + \frac{\tau}{4} \right),  \end{aligned} \]
\end{enumerate}
where the twisted affine superdenominator (resp. denominator) is 
$ \hat{R}^{-,\tw} = \hat{R}^{(0)}_\half $ (resp.
$ \hat{R}^{+,\tw} = \hat{R}^{(\half)}_\half$), see \eqref{6.04}.
\end{proposition}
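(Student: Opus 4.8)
The plan is to obtain Proposition~\ref{prop6.6} directly from the modified forms of \eqref{6.05} and \eqref{6.06} by a purely algebraic rearrangement of the differential operator $-m_2+D_0$, and then to transport the answer through the twist $w_0$. No analytic input beyond what is already recorded in the excerpt is needed.

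First I would unwind the definition of ``modified (super)character'': one replaces $\Phi^{[-m;m_2]}$ in \eqref{6.05} and \eqref{6.06} by its modification $\tph^{[-m;m_2]}$, and since $m_2\in\ZZ$ and $\tph^{[n;s]}$ does not depend on $s\in\ZZ$ (\cite{KW16}, Corollary~1.6), one may write $\tph^{[-m;0]}$ throughout. Then I would use the trivial operator identity
\[
-m_2+D_0=\Big(D_0+\tfrac{m(z_1-z_2)}{2\tau}\Big)-\Big(\tfrac{m(z_1-z_2)}{2\tau}+m_2\Big),
\]
applied to $\tph^{[-m;0]}(\tau,z_1,z_2,-t)$. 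By the very definition \eqref{6.07} of $g^{[-m]}$, the first bracket applied to this function equals $g^{[-m]}\tzzt$, and the second bracket is multiplication by a scalar-valued function; this is exactly the asserted formula for $\hat R^-\tch^-_\La$ in part~(a). For $\hat R^+\tch^+_\La$ I would run the same step after the diagonal substitution $z_i\mapsto z_i+\thalf$, using that $D_0$ and the quantity $z_1-z_2$ are unchanged under such a shift, so that $g^{[-m]}$ is simply evaluated at $(\tau,z_1+\thalf,z_2+\thalf,t)$; the overall scalar prefactor is then read off from \eqref{6.06}.

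For part~(b) I would invoke the twisted QHR set-up: the $N=4$ algebra is twisted by $w_0=r_{\al_2}t_{\half\al_2}$, and the twisted affine (super)denominator and (super)characters are the $w_0$-images of the untwisted ones, exactly as in \eqref{4.8}. It then remains to compute the action of $w_0$ on the coordinates $\tzzt$ of \eqref{2.1} with $z$ as in \eqref{6.01}; using the translation formula \eqref{2.2} one finds that $w_0$ sends $\tzzt$ to $\big(\tau,-z_2-\tot,-z_1-\tot,\,t-\tfrac{z_1+z_2}{2}-\tof\big)$, up to a diagonal $\thalf$-shift of $z_1,z_2$ in the character (resp.\ $\hat R^{(\half)}$) case, which is harmless because $\tph^{[-m;0]}$ and $g^{[-m]}$ are invariant under diagonal integer translations in $(z_1,z_2)$. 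Substituting this coordinate change into the formulas of part~(a) gives part~(b).

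The bulk of this is routine verification; the one place that will need care is precisely that last coordinate computation. Because $(\al_2|\al_2)=-2$ and $w_0$ uses the \emph{half}-lattice translation $t_{\half\al_2}$, the quadratic term in \eqref{2.2} produces a half-integral power of $q$ together with an exponential factor, which must be matched against the corresponding transformation of $\hat R^{(\epsilon)}_{\epsilon'}$ in \eqref{6.04} and against the shifted argument of $\tph^{[-m;0]}$; keeping all these normalizations consistent is the main bookkeeping obstacle. I note that for Proposition~\ref{prop6.6} itself none of the modular or elliptic transformation properties of $g^{[-m]}$ (Lemmas~\ref{lem6.3}, \ref{lem6.4}, Remark~\ref{rem6.5}) are used: the statement is merely a rewriting of the (super)character in the $g^{[-m]}$/$\tph^{[-m;0]}$ variables, and those finer properties are reserved for the subsequent proof of modular invariance of the resulting $N=4$ family.
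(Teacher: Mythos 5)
Your proposal is correct and takes essentially the same route as the paper: part (a) is exactly the rearrangement of $-m_2+D_0$ against the definition \eqref{6.07} of $g^{[-m]}$ applied to the modified forms of \eqref{6.05}--\eqref{6.06}, and part (b) is obtained by substituting the $w_0$-coordinate change $\tzzt\mapsto(\tau,-z_2-\tot,-z_1-\tot,t-\tfrac{z_1+z_2}{2}-\tof)$ into (a), which is precisely the paper's (much terser) argument. Your closing observation that the residual diagonal shift in the $+$ case is absorbed by the integer-translation invariance of $g^{[-m]}$ and $\tilde{\Phi}^{[-m;0]}$ is the right way to reconcile the signs of the $\thalf$'s with the stated formulas.
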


\begin{proof}
(a) follows from the character formulas \eqref{6.05}, \eqref{6.06} and the definition of $ g^{[-m]}. $

In order to prove (b) we use the transformation formula of coordinates under the twist $ w_0: $
\[ r_{\al_2} t_{\half \al_2} \tzzt = \left( \tau, -z_2 - \tot, - z_1 - \tot, t - \frac{z_1+z_2}{2} - \tof \right) . \]
Substituting in (a) gives the result.
\end{proof}

\section{QHR of integrable $\hat{ps\ell}_{2|2}$-modules and modular invariant family of $ N=4 $ representations}

Throughout this section $ \fg = ps\ell_{2|2} $ and we use notation of the previous section. Recall that the QHR associates to a $ \wg $-module $ \LLa $ of level $ K $ and the highest weight
\begin{equation}
\label{7.01}
\La = (K-2m_1 + m_2)\La_0 + m_1 (\La_1 + \La_3) + m_2 \La_2,
\end{equation}
a module $ H(\La) $ over the $ N=4 $ superconformal algebra of Neveu-Schwarz type, such that the following properties hold (see \cite{KW14} for more details and further references):
\begin{enumerate}
\item[(i)] the module $ H(\La) $  is either 0 or an irreducible positive energy module;
\item[(ii)] $ H(\La)=0 $ iff $ K-2m_1 + m_2 \in \ZZ_{\geq 0}; $
\item[(iii)] the irreducible module $ H(\La) $  is characterized by three numbers:
\begin{enumerate}
\item[$ (\al) $] the central charge
\begin{equation}
\label{7.02}
c_K = -6 (K+1),
\end{equation} 
\item[$ (\beta) $] the lowest energy
\begin{equation}
\label{7.03}
h_\La = \frac{m_1(m_1-m_2-1)}{K} - m_1 + \frac{m_2}{2},
\end{equation}
\item[$ (\gamma) $] the spin
\begin{equation}
\label{7.04}
s_\La = \La (J_0) = m_2, \mbox{ where } J_0 = -\al_2.
\end{equation}
\end{enumerate}
\item[(iv)] the (super)character of the module $ H(\La) $ is given by the following formula:
\begin{equation}
\label{7.05}
\left( \overset{4}{R}\vphantom{R}^\pm \ch^\pm_{H(\La)} \right) \tz = \left( \hat{R}^\pm \ch^\pm_\La \right)  \left( \tau, z + \frac{\tau}{2}, z - \frac{\tau}{2}, \frac{\tau}{4} \right), 
\end{equation}
where $  \overset{4}{R}\vphantom{R}^+ = \overset{4}{R}\vphantom{R}^{(\half)}_\half, \overset{4}{R}\vphantom{R}^- =  \overset{4}{R}\vphantom{R}^{(0)}_\half,$ and 
\begin{equation}
\label{7.06}
\overset{4}{R}\vphantom{R}^{(\epsilon)}_{\epsilon'} \tz = \eta (\tau)^3 \frac{\vartheta_{11} (\tau, 2z)}{\vartheta_{1-2\epsilon', 1 - 2\epsilon} (\tau, z)^2}.
 \end{equation}
\end{enumerate}

Similar results hold in the Ramond twisted case. Twisting the $ \wg $-module $ \LLa $ by $ w_0 = r_{\al_2} t_{\half \al_2}, $ we obtain, by the twisted QHR \cite{KW05}, a module $ H^{\tw} (\La) $ over the $ N=4 $ superconformal algebra of Ramond type, such that properties (i)-(iv) hold with the following changes 
(see \cite{KW14}):
\begin{equation}
\label{7.07}
h^\tw_\La = h_\La - \frac{m_2}{2} - \frac{K+1}{4},
\end{equation}
\begin{equation}
\label{7.08}
s^\tw_\La = -m_2 -K -1,
\end{equation}
and the supercharacter of $ H^\tw (\La) $ is given by the following formula:
\begin{equation}
\label{7.09}
\left( \overset{4}{R}\vphantom{R}^{\pm,\tw} \ch^\pm_{H^\tw (\La)} \right) \tz = \left( \hat{R}^{\pm,\tw} \ch^{\pm,\tw}_\La \right) \left( \tau, z + \frac{\tau}{2}, z - \frac{\tau}{2}, \frac{\tau}{4} \right) ,
\end{equation}
where $ \overset{4}{R}\vphantom{R}^{+,\tw} = \overset{4}{R}\vphantom{R}^{(\half)}_0, \overset{4}{R}\vphantom{R}^{-,\tw} = \overset{4}{R}\vphantom{R}^{(0)}_0. $

\begin{proposition}
\label{prop7.1}
Let $ \La = \La^{[m;m_2]} = m\La_0 - \frac{m_2}{2} (\al_1 + \al_3) = m_0 \La_0 + m_2 \La_2 \in P^m_+ $, where $ m_0 = m + m_2. $ Then the modified $N=4$
(non)twisted (super)characters are given by the following formulas:
\begin{enumerate}
\item[(a)] \[ 
\begin{aligned}
\left( \overset{4}{R}\vphantom{R}^+ \tilde{\ch}^+_{H(\La)} \right) \tz & = -m_0 q^{\frac{m}{4}} \tilde{\Phi}^{[-m;0]} \left( \tau, z + \half + \tot, z + \half - \tot, 0\right) \\
& = -(-1)^m m_0 e^{2 \pi i m z} q^{-\frac{m}{4}} \tilde{\Phi}^{[-m;0]} \left( \tau, z + \half - \tot, z + \half - \tot, 0 \right). 
\end{aligned} \]
\item[(b)]  

$  \left( \overset{4}{R}\vphantom{R}^- \tilde{\ch}^-_{H(\La)} \right) \tz = -m_0 q^{\frac{m}{4}} \tilde{\Phi}^{[-m;0]} \left( \tau, z + \tot, z - \tot, 0 \right).  $

\item[(c)] $ \left( \overset{4}{R}\vphantom{R}^{+,\tw} \tilde{\ch}^+_{H^\tw (\La)}\right) \tz = m_0 \tilde{\Phi}^{[-m;0]} \left( \tau, z + \half, z + \half, 0\right).  $
\item[(d)] $ \left( \overset{4}{R}\vphantom{R}^{-,\tw} \tilde{\ch}^-_{H^\tw (\La)} \right) \tz = m_0 \tilde{\Phi}^{[-m;0]}  (\tau, z, z, 0).$
\end{enumerate}
\end{proposition}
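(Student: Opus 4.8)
The plan is to reduce the whole statement to Proposition \ref{prop6.6} via the specialization formulas \eqref{7.05} and \eqref{7.09}. The key first observation is that the $N=4$ denominators $\overset{4}{R}\vphantom{R}^{\pm}$ and $\overset{4}{R}\vphantom{R}^{\pm,\tw}$ in \eqref{7.06} are manifestly $\La$-independent Jacobi-type factors (products of $\eta$'s and $\vartheta$'s), so modification of the $N=4$ (super)characters is by definition performed on the numerator side only; hence $\bigl(\overset{4}{R}\vphantom{R}^{\pm}\tilde{\ch}^{\pm}_{H(\La)}\bigr)\tz=\bigl(\hat{R}^{\pm}\tilde{\ch}^{\pm}_{\La}\bigr)\bigl(\tau,z+\tot,z-\tot,\tof\bigr)$ and likewise $\bigl(\overset{4}{R}\vphantom{R}^{\pm,\tw}\tilde{\ch}^{\pm}_{H^{\tw}(\La)}\bigr)\tz=\bigl(\hat{R}^{\pm,\tw}\tilde{\ch}^{\pm,\tw}_{\La}\bigr)\bigl(\tau,z+\tot,z-\tot,\tof\bigr)$. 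Since Proposition \ref{prop6.6} gives closed expressions for the four quantities on the right in terms of $g^{[-m]}$ and $\tilde{\Phi}^{[-m;0]}$, the proof becomes the evaluation of those expressions at $z_1=z+\tot$, $z_2=z-\tot$, $t=\tof$.

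For parts (a) and (b) I would substitute this point into Proposition \ref{prop6.6}(a). Since then $z_1-z_2=\tau$, the coefficient $\tfrac{m(z_1-z_2)}{2\tau}+m_2$ becomes $\tfrac m2+m_2$, while Lemma \ref{lem6.4} turns $g^{[-m]}\bigl(\tau,z+\tot,z-\tot,\tof\bigr)$ into $-\tfrac m2$ times a value of $\tilde{\Phi}^{[-m;0]}$ at coinciding arguments, and Remark \ref{rem6.5} does the same for $\tilde{\Phi}^{[-m;0]}\bigl(\tau,z+\tot,z-\tot,-\tof\bigr)$. Thus the two $\tilde{\Phi}^{[-m;0]}$-terms become collinear, with total coefficient $-\tfrac m2-\bigl(\tfrac m2+m_2\bigr)=-m_0$, which yields (b) after using the elliptic transformation property \eqref{1.14} to return the argument to $\bigl(z+\tot,z-\tot\bigr)$ (a shift of one variable by $\tau$ producing the needed $q^{\mp m/2}e^{\pm 2\pi imz}$). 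The $\ch^{+}$ statement in (a) follows the same way — the extra $\tfrac12$-shift of $z_1,z_2$ is already present in Proposition \ref{prop6.6}(a) — and the equivalence of the two displayed forms of (a) is once more an instance of \eqref{1.14}, the half-integral part of the shifted variable contributing the sign $(-1)^{m}$.

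For parts (c) and (d) I would substitute the same point into Proposition \ref{prop6.6}(b). Here the twist coordinate change $r_{\al_2}t_{\tfrac12\al_2}\tzzt=\bigl(\tau,-z_2-\tot,-z_1-\tot,t-\tfrac{z_1+z_2}{2}-\tof\bigr)$ recorded in the proof of Proposition \ref{prop6.6} makes the arguments on the right explicit functions of $z$ alone (the two elliptic variables becoming $-z-\tfrac12$ and $-z-\tfrac12-\tau$, the energy variable becoming $-z$, resp.\ $z$). By Lemma \ref{lem6.3}, $g^{[-m]}$ obeys \eqref{6.11} and \eqref{6.12}, and applying one of these — after using the period-$1$ invariance \eqref{1.13} to recognize $-z-\tfrac12$ as a half-integral shift of $-z$ — converts the $g^{[-m]}$-term into a multiple of $\tilde{\Phi}^{[-m;0]}$ at coinciding arguments; combining with the companion $\tilde{\Phi}^{[-m;0]}$-term, simplified via \eqref{1.13}, \eqref{1.14}, Remark \ref{rem6.5}, and the antisymmetry $\tilde{\Phi}^{[-m;0]}(\tau,-z_2,-z_1,t)=-\tilde{\Phi}^{[-m;0]}\tzzt$ of the mock theta function, collapses the whole expression to $m_0\,\tilde{\Phi}^{[-m;0]}(\tau,z+\tfrac12,z+\tfrac12,0)$ in the $\ch^{+}$ case and to $m_0\,\tilde{\Phi}^{[-m;0]}(\tau,z,z,0)$ in the $\ch^{-}$ case.

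Conceptually there is no new input beyond Section 6. The only real work is bookkeeping: one must verify that the two separate $\tilde{\Phi}^{[-m;0]}$-terms in each formula of Proposition \ref{prop6.6} really do become proportional after the substitution $z_1=z+\tot$, $z_2=z-\tot$, $t=\tof$, that their coefficients add to exactly $-m_0$ (non-twisted) resp.\ $m_0$ (twisted), and that the accompanying $q$-powers, signs $(-1)^{m}$, and $e^{\pm2\pi imz}$-factors produced by the various $\tau$- and $\tfrac12$-shifts are tracked correctly. I expect the twisted cases (c) and (d) to be the main obstacle, precisely because of the additional $\tof$ and $-\tfrac{z_1+z_2}{2}$ entering through $w_0$ and the consequent need to invoke \eqref{6.12} (the half-integral version of Lemma \ref{lem6.2}(b)) rather than \eqref{6.11}.
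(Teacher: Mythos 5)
Your proposal is correct and follows essentially the same route as the paper: apply the QHR specialization \eqref{7.05} (resp.\ \eqref{7.09}) to the modified characters of Proposition \ref{prop6.6}, use Lemma \ref{lem6.4} and Remark \ref{rem6.5} to make the $g^{[-m]}$-term and the $\tilde{\Phi}^{[-m;0]}$-term collinear, and add the coefficients $-\tfrac m2$ and $-(\tfrac m2+m_2)$ to obtain $-m_0$. The paper's proof is just a terser version of the same computation (it writes out only the first case of (a) and declares the rest analogous), so your more explicit bookkeeping of the elliptic shifts in the twisted cases is consistent with, not different from, the intended argument.
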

\begin{proof}
Using \eqref{7.05}, Proposition \ref{prop6.6}, and Lemma \ref{lem6.4}, we have:
\[ 
\begin{aligned}
\left( \overset{4}{R}\vphantom{R}^+ \tilde{\ch}^+_{H(\La)} \right) \tz  = &   -\frac{m}{2} e^{-2 \pi i m (z + \half)} q^{- \frac{m}{4}} \tilde{\Phi}^{[-m;0]} \left( \tau, z + \half + \tot, z + \half + \tot, 0\right) \\
& - \left( \frac{m}{2} + m_2 \right) \tilde{\Phi}^{[-m;0]}  \left( \tau, z + \half + \tot, z + \half - \tot, - \tof \right). 
\end{aligned} \]
Using Remark \ref{rem6.5}, we conclude that the first formula in (a) holds. The proof of the remaining formulas is the same. 
\end{proof}

\begin{proposition}
\label{prop7.2}
Let $ \La = \La^{[m;m_2]} \in P^m_+. $ Then
\begin{enumerate}
\item[(a)] \[ 
\begin{aligned}
\left( \overset{4}{R}\vphantom{R}^+ \tilde{\ch}^+_{H(\La)} \right) \left( -\frac{1}{\tau}, \frac{z}{\tau} \right) & = (-1)^m \tau e^{-\frac{2 \pi i m }{\tau} z^2} \left( \overset{4}{R}\vphantom{R}^+ \tilde{\ch}^+_{H(\La)} \right) \tz, \\
\left( \overset{4}{R}\vphantom{R}^- \tilde{\ch}^-_{H(\La)} \right) \left( -\frac{1}{\tau}, \frac{z}{\tau} \right)& =  - \tau e^{-\frac{2 \pi i m }{\tau}z^2} \left( \overset{4}{R}\vphantom{R}^{+,\tw} \tilde{\ch}^+_{H^\tw(\La)} \right) \tz,  \\
\left( \overset{4}{R}\vphantom{R}^{+,\tw}\tilde{\ch}^+_{H^\tw(\La)} \right) \left( -\frac{1}{\tau}, \frac{z}{\tau} \right) & = - \tau e^{-\frac{2 \pi i m }{\tau}z^2}  \left( \overset{4}{R}\vphantom{R}^- \tilde{\ch}^-_{H(\La)} \right) \tz, \\ 
\left( \overset{4}{R}\vphantom{R}^{-,\tw}\tilde{\ch}^-_{H^\tw(\La)} \right) \left( -\frac{1}{\tau}, \frac{z}{\tau} \right) & = \tau e^{-\frac{2 \pi i m }{\tau}z^2} \left( \overset{4}{R}\vphantom{R}^{-,\tw}\tilde{\ch}^-_{H^\tw(\La)} \right) \tz. \\
\end{aligned} \]
\item[(b)]\[ 
\begin{aligned}
\left( \overset{4}{R}\vphantom{R}^\pm \tilde{\ch}^\pm_{H(\La)} \right) (\tau + 1, z) & = e^{\frac{\pi i m}{2}}  \left( \overset{4}{R}\vphantom{R}^\mp \tilde{\ch}^\mp_{H(\La)} \right) \tz, \\
\left( \overset{4}{R}\vphantom{R}^{\pm,\tw} \tilde{\ch}^\pm_{H^\tw(\La)} \right) (\tau + 1, z) & = \left( \overset{4}{R}\vphantom{R}^{\pm,\tw} \tilde{\ch}^\pm_{H^\tw(\La)} \right) (\tau, z).
\end{aligned} \]
\end{enumerate}
\end{proposition}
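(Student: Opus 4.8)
The plan is to reduce everything to the already-established transformation laws for $\tilde\Phi^{[-m;0]}$ (equations \eqref{1.13}--\eqref{1.16} and \cite{KW14}, Corollary 5.11), combined with the explicit closed forms for the modified $N=4$ (super)characters obtained in Proposition \ref{prop7.1}. The point is that Proposition \ref{prop7.1} expresses each of the four functions $\overset{4}{R}\vphantom{R}^+\tilde\ch^+_{H(\La)}$, $\overset{4}{R}\vphantom{R}^-\tilde\ch^-_{H(\La)}$, $\overset{4}{R}\vphantom{R}^{+,\tw}\tilde\ch^+_{H^\tw(\La)}$, $\overset{4}{R}\vphantom{R}^{-,\tw}\tilde\ch^-_{H^\tw(\La)}$, up to an explicit elementary prefactor ($-m_0 q^{m/4}$, etc.), as a specialization of $\tilde\Phi^{[-m;0]}$ of the form $\tilde\Phi^{[-m;0]}(\tau,\,z+\sigma+\sigma'\tfrac\tau2,\,z-\sigma-\sigma'\tfrac\tau2,\,0)$ with $\sigma,\sigma'\in\{0,\tfrac12\}$. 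So the whole statement is a bookkeeping exercise: apply the $S$- and $T$-transformations of $\tilde\Phi^{[-m;0]}$ to each of these four specializations and read off which of the other three specializations appears.

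First I would treat part (b), the $T$-transformation. Since $\tilde\Phi^{[-m;0]}(\tau+1,z_1,z_2,t)=\tilde\Phi^{[-m;0]}(\tau,z_1,z_2,t)$ by \eqref{1.16}, the only $\tau$-dependence in the prefactors that matters is in $q^{m/4}=e^{\pi i m\tau/2}$, which under $\tau\mapsto\tau+1$ picks up $e^{\pi i m/2}$; this explains the factor $e^{\pi i m/2}$ in the Neveu--Schwarz line. The swap between $\ch^+$ and $\ch^-$ comes from the shift $z_i\mapsto z_i+\tfrac12$ that distinguishes the $+$ and $-$ specializations in Proposition \ref{prop7.1}(a),(b) — under $\tau\mapsto\tau+1$ the argument $z\pm\tfrac\tau2$ becomes $z\pm\tfrac{\tau+1}{2}=z\pm\tfrac12\pm\tfrac\tau2$, turning the NS-$-$ specialization into the NS-$+$ one and vice versa (here one uses \eqref{1.13}, the $\ZZ$-periodicity, to move the $\pm\tfrac12$ around, together with $e^{\pi i m}$ accounting for the half-integer shift via \eqref{1.14} if needed). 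For the Ramond lines, the arguments involve no $\pm\tfrac\tau2$, only integer or half-integer constants, so $\tau\mapsto\tau+1$ acts trivially by \eqref{1.16} and \eqref{1.13}, giving the stated invariance.

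Next I would treat part (a), the $S$-transformation $\tau\mapsto-1/\tau$, $z\mapsto z/\tau$. Here the key identity is \eqref{1.15}: $\tilde\Phi^{[-m;0]}(-\tfrac1\tau,\tfrac{z_1}\tau,\tfrac{z_2}\tau,t-\tfrac{z_1z_2}\tau)=\tau\,\tilde\Phi^{[-m;0]}(\tau,z_1,z_2,t)$. Applied to a specialization $z_1=\tfrac z\tau+\sigma-\sigma'/\tau$, $z_2=\tfrac z\tau-\sigma+\sigma'/\tau$ — i.e. the $S$-image of $z\pm(\sigma\tau+\sigma')\!/(\cdot)$ rewritten — one gets, after clearing the $t$-shift (which produces the universal Gaussian factor $e^{-2\pi i m z^2/\tau}$, matching the $e^{-2\pi i m z^2/\tau}$ in the statement), a $\tau$-multiple of $\tilde\Phi^{[-m;0]}(\tau,\,z-\sigma'\tau+\sigma,\,z+\sigma'\tau-\sigma,\,0)$ up to elliptic factors from \eqref{1.14}. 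The upshot is that $S$ interchanges the roles of $\sigma$ and $\sigma'$: the NS-$+$ specialization $(\sigma,\sigma')=(\tfrac12,\tfrac12)$ is fixed (hence the line is self-dual, with the $(-1)^m$ coming from $q^{\pm m/4}$ under $S$ inside the prefactor); the NS-$-$ specialization $(\sigma,\sigma')=(0,\tfrac12)$ gets sent to $(\tfrac12,0)$, which is precisely the Ramond-$+$ one; and symmetrically $(\tfrac12,0)\mapsto(0,\tfrac12)$ and $(0,0)\mapsto(0,0)$. This reproduces exactly the four lines of (a), including the signs, which one tracks via \eqref{1.14} applied to half-integer translation arguments together with the explicit prefactors in Proposition \ref{prop7.1}.

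**The main obstacle** is not conceptual but clerical: correctly matching the arguments in Proposition \ref{prop7.1}—which are written as $\tilde\Phi^{[-m;0]}(\tau,z+\tfrac\tau2,z-\tfrac\tau2,0)$, $\tilde\Phi^{[-m;0]}(\tau,z,z,0)$, etc., after the Lemma \ref{lem6.4}/Remark \ref{rem6.5} collapse—to the specializations that are manifestly covariant under $S$ and $T$, and keeping track of all the roots-of-unity and $q$-power prefactors (including the $e^{\pi i m}$ from half-integer elliptic shifts via \eqref{1.14} and the Gaussian $e^{-2\pi i m z^2/\tau}$). One must be careful that $m$ is a \emph{negative} integer, so $-m>0$ and the parity sign $(-1)^m=(-1)^{-m}$ is unambiguous; and that the collapsed forms in Proposition \ref{prop7.1} hide a factor $q^{\pm m/4}$, which is exactly what generates the $e^{\pi i m/2}$ in (b) and the $(-1)^m$ in the first line of (a). With those prefactors handled, each of the eight displayed identities follows by a single application of \eqref{1.15} or \eqref{1.16} together with \eqref{1.13}--\eqref{1.14}.
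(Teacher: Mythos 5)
Your proposal is correct and follows essentially the same route as the paper: the paper likewise derives both parts by inserting the explicit specializations of $\tilde{\Phi}^{[-m;0]}$ from Proposition \ref{prop7.1} into the transformation laws \eqref{1.13}, \eqref{1.15}, \eqref{1.16}, tracking the $q^{m/4}$ prefactor (source of the $e^{\pi i m/2}$ and $(-1)^m$ factors) and the half-integer shifts exactly as you describe.
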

\begin{proof}
Using the second equality in Proposition \ref{prop7.1}(a), we obtain:
\[ \left( \overset{4}{R}\vphantom{R}^+ \tilde{\ch}^+_{H(\La)} \right) \left( -\frac{1}{\tau}, \frac{z}{\tau} \right) = -m_0 (-1)^m e^{2 \pi i m \frac{z}{\tau}} e^{\frac{2 \pi i m}{4 \tau}} \tilde{\Phi}^{[-m;0]} \left( -\frac{1}{\tau}, \frac{z}{\tau} + \half + \frac{1}{2 \tau}, \frac{z}{\tau} + \half + \frac{1}{2 \tau} \right) .  \]
Applying to the RHS the transformation formula \eqref{1.15} of $ \tilde{\Phi}^{[m;0]},  $ we obtain:
\[ \begin{aligned}
& \left(\overset{4}{R}\vphantom{R}^+ \tilde{\ch}^+_{H(\La)} \right) \left( -\frac{1}{\tau}, \frac{z}{\tau} \right) \\
& = (-1)^m \tau e^{-\frac{2 \pi i m}{\tau} z^2} (-m_0) e^{-2 \pi i m (z + \half)} q^{-\frac{m}{4}} \tilde{\Phi}^{[-m;0]} \left( \tau, z + \tot + \half, z + \tot + \half \right) .
\end{aligned}  \]
This gives the first formula in (a), due to the second equality in Proposition \ref{prop7.1}(a).
The proof of all other equations in (a) is similar. 

In order to prove (b) we use the transformation formulas \eqref{1.13} and \eqref{1.16} of $ \tilde{\Phi}^{[m;0]}. $ Applying the first equation in Proposition \ref{prop7.1}(a) and using \eqref{1.13}, \eqref{1.16}, we obtain:
\[ 
\begin{aligned}
\left(\overset{4}{R}\vphantom{R}^+ \tilde{\ch}^+_{H(\La)} \right) (\tau + 1, z) & = -m_0 e^{2 \pi i (\tau+1) \frac{M}{4}} \tilde{\Phi}^{[-m;0]} \left( \tau +1, z + \tot +1, z - \tot \right) \\
& = e^{\frac{\pi i m }{2}} \left( -m_0 q^{\frac{M}{4}} \tilde{\Phi}^{[-m;0]} \left( \tau, z + \tot, z - \tot \right)\right) ,
\end{aligned} \]
which gives the result due to the equation in Proposition \ref{prop7.1}(b). The proof of all the other equations in (b) is similar. 
\end{proof}

In order to write down the modular transformation formulas for the modified $ N =4 $ untwisted and twisted (super)characters, it is convenient to use the ``$ \epsilon $-notation'': $ \tilde{\ch}^{(\epsilon)}_{H(\La), \half}  = \tilde{\ch}^\pm_{H(\La)}$ and $\tilde{\ch}^{(\epsilon)}_{H(\La), 0}  =
\tilde{\ch}^\pm_{H^\tw(\La)}  $, where $ \epsilon = \half $ for + and $ \epsilon = 0 $ for $ - $.
\begin{theorem}
\label{th7.3}
Let $ m  $ be a negative integer and let $ \La = \La^{[m;m_2]} \in P^m_+. $ Then
\begin{enumerate}
\item[(a)] The $ N = 4 $ modules $ H(\La) $ (resp. $ H^\tw (\La) $) are zero iff $ m+m_2 \in \ZZ_{\geq 0}. $ Otherwise, they are irreducible with characteristic numbers
\[ c_m = -6(m+1), \ h_\La = \frac{m_2}{2} (\mbox{resp. } h^\tw_\La = -\frac{m+1}{4}), \ s_\La = m_2\, (\mbox{resp. } s^\tw_\La = -m_2 - m - 1). \]
\item[(b)] One has the following modular transformation formulas:
\begin{equation}
\label{7.10}
\tilde{\ch}^{(\epsilon)}_{H(\La), \epsilon'} 
\left( -\frac{1}{\tau}, \frac{z}{\tau} \right) = -(-1)^{2\epsilon+2\epsilon(m+1)(1-2\epsilon')} 
e^{\frac{\pi i c_m}{3\tau}z^2} 
\tilde{\ch}^{(\epsilon')}_{H(\La), \epsilon} \tz;
\end{equation}
\begin{equation}
\label{7.11}
\tilde{\ch}^{(\epsilon)}_{H(\La), \epsilon'} (\tau +1, z) = e^{\frac{\pi i m}{2} (1-2\epsilon')-\pi i \epsilon'} \tilde{\ch}^{(|\epsilon-\epsilon'|)}_{H(\La), \epsilon'} \tz.
\end{equation}
\end{enumerate}
\end{theorem}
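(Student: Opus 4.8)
The plan is to read off parts (a) and (b) of Theorem \ref{th7.3} directly from the explicit formulas for the modified $N=4$ (super)characters obtained in Propositions \ref{prop7.1} and \ref{prop7.2}, together with the general QHR data (i)--(iv) recalled at the start of this section.

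\textbf{Part (a).} The vanishing criterion $H(\La)=0$ iff $K-2m_1+m_2\in\ZZ_{\geq 0}$ is property (ii) of the QHR. Here $\La=\La^{[m;m_2]}$ has $K=m$, $m_1=0$, so $K-2m_1+m_2=m+m_2$; this gives the first assertion, and the Ramond case is identical since $H^\tw$ satisfies (i)--(ii) with the same weights. When $m+m_2\notin\ZZ_{\geq 0}$, irreducibility is again property (i). The characteristic numbers follow by substituting $K=m$, $m_1=0$, $m_0=m+m_2$ into the formulas \eqref{7.02}--\eqref{7.04} and \eqref{7.07}--\eqref{7.08}: one gets $c_K=-6(m+1)$, $h_\La=m_2/2$, $s_\La=m_2$, and $h^\tw_\La=h_\La-\tfrac{m_2}{2}-\tfrac{m+1}{4}=-\tfrac{m+1}{4}$, $s^\tw_\La=-m_2-m-1$. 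This part is bookkeeping, not an obstacle.

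\textbf{Part (b).} I would simply translate Proposition \ref{prop7.2} into the $\epsilon$-notation. Recall $\tilde{\ch}^{(1/2)}_{H(\La),1/2}=\tilde{\ch}^+_{H(\La)}$, $\tilde{\ch}^{(0)}_{H(\La),1/2}=\tilde{\ch}^-_{H(\La)}$, $\tilde{\ch}^{(1/2)}_{H(\La),0}=\tilde{\ch}^+_{H^\tw(\La)}$, $\tilde{\ch}^{(0)}_{H(\La),0}=\tilde{\ch}^-_{H^\tw(\La)}$, and that the denominators $\overset{4}{R}\vphantom{R}{}^{(\epsilon)}_{\epsilon'}$ of \eqref{7.06} transform among themselves under $SL_2(\ZZ)$ in a known way (this is recorded, e.g., in \cite{KW14}; $S:(\epsilon,\epsilon')\mapsto(\epsilon',\epsilon)$ with an explicit automorphy factor, $T:(\epsilon,\epsilon')\mapsto(\epsilon,|\epsilon-\epsilon'|)$). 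The four $S$-transformation lines in Proposition \ref{prop7.2}(a) are precisely the transformations of the numerators $\overset{4}{R}\vphantom{R}{}^{(\epsilon)}_{\epsilon'}\tilde{\ch}^{(\epsilon)}_{H(\La),\epsilon'}$; dividing by the corresponding transformed denominator converts the automorphy factor $\tau e^{-2\pi i m z^2/\tau}$ (up to the sign $(-1)^m$ in the $++$ case) into $e^{\pi i c_m z^2/(3\tau)}$, using $c_m=-6(m+1)$ so that $-\tfrac{c_m}{3}=2(m+1)$, and the extra $e^{2\pi i z^2/\tau}$ discrepancy is absorbed by the denominator transformation. Carefully tracking the various constant phases then yields the sign $-(-1)^{2\epsilon+2\epsilon(m+1)(1-2\epsilon')}$ in \eqref{7.10}; I would verify this case-by-case for $(\epsilon,\epsilon')\in\{0,\tfrac12\}^2$ against the four lines of Proposition \ref{prop7.2}(a). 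Similarly \eqref{7.11} comes from dividing the two lines of Proposition \ref{prop7.2}(b) by the $T$-transformation of the denominators, which sends $\epsilon'\mapsto|\epsilon-\epsilon'|$ and contributes the phase $e^{-\pi i\epsilon'}$ (from the $\eta$-powers and $\vartheta$-characteristics in \eqref{7.06}), combining with $e^{\pi i m/2}$ in the untwisted case to give $e^{\pi i m(1-2\epsilon')/2-\pi i\epsilon'}$.

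\textbf{Main obstacle.} The only delicate point is getting every constant phase right: the signs $(-1)^m$, the $\epsilon$-dependent exponents in \eqref{7.10}, and the automorphy factors of the $N=4$ denominators $\overset{4}{R}\vphantom{R}{}^{(\epsilon)}_{\epsilon'}$. I expect to spend the bulk of the proof checking the four cases of \eqref{7.10} and the two cases of \eqref{7.11} against Proposition \ref{prop7.2} and the known $SL_2(\ZZ)$-action on $\vartheta_{ab}$ (formulas \eqref{1.6}--\eqref{1.7}) and on $\eta$, rather than on any conceptual difficulty; everything else is a direct substitution.
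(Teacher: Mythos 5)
Your proposal is correct and follows essentially the same route as the paper: part (a) is obtained by substituting $K=m$, $m_1=0$ into the QHR data (7.01)--(7.04), (7.07)--(7.08), and part (b) by combining Proposition \ref{prop7.2} with the $SL_2(\ZZ)$-action on the denominators $\overset{4}{R}\vphantom{R}^{(\epsilon)}_{\epsilon'}$ (which the paper imports from \cite{KW14}, Lemma 10.1, rather than rederiving from \eqref{1.6}--\eqref{1.7} as you suggest). The only difference is that you propose to recheck the denominator automorphy factors by hand, which is harmless but not needed.
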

\begin{proof}
(a) follows from the above description of the $ N=4 $ QHR.
(b) follows from Proposition \ref{prop7.2} and the modular transformation formulas of the functions $ \overset{4}{R}\vphantom{R}^{(\epsilon)}_{\epsilon'}, $ given by \cite{KW14}, Lemma 10.1.
\end{proof}
\begin{remark}
\label{rem7.4} 
If in place of the twist $ w_0 $ we take a naturally looking twist $ w'_0 = t_{\frac{\theta}{2}} r_\theta, $ then the modular invariance as in Theorem \ref{th7.3}(b) does not hold any longer. 
\end{remark}

\section{Principal admissible modules over $ \wg $, where $ \fg = ps\ell_{2|2}. $}

Throughout this section $ \fg = ps\ell_{2|2} $ and we use notation of the previous two sections.

First we list the principal simple subsets $ {\Pi}^{\mathrm{I-IV}}_{k_1, k_2} = t_\beta y S_{(M)} \subset \hat{\Delta}_+, $ where $ M $ is a positive integer (cf. Section 2) and $ k_3 = k_1: $

\[ \begin{aligned}
\Pi^{\mathrm{I}}_{k_1,k_2} & = \lbrace  k_0 \delta + \al_0, k_1 \delta + \al_1, k_2 \delta + \al_2, k_1 \delta + \al_3 \rbrace,\,   M = \sum_{i = 0}^{3} k_i + 1,\\
 y & = 1,\, \beta = - \frac{2k_1 + k_2}{2} (\al_1+\al_3) -k_1 \al_2;
\end{aligned}   \] 
\[ \begin{aligned}
\Pi^{\mathrm{II}}_{k_1,k_2} & = \left\lbrace  k_0 \delta - \al_0, k_1 \delta-\al_1, k_2 \delta - \al_2, k_1 \delta - \al_3 \right\rbrace,\, M = \sum_{i = 0}^{3} k_i - 1, \\
y & = r_\theta r_{\al_2},\, \beta = \frac{2k_1 + k_2}{2} (\al_1 + \al_3) + k_1 \al_2 ; 
\end{aligned}  \] 
\[ \begin{aligned}
\Pi^{\mathrm{III}}_{k_1,k_2} & = \left\lbrace k_0 \delta + \al_0, k_1 \delta + \al_1 + \al_2, k_2 \delta - \al_2, k_1 \delta + \al_2 + \al_3 \right\rbrace,\, M = \sum_{i = 0}^{3} k_i + 1, \\
y & = r_{\al_2},\, \beta = - \frac{2k_1 + k_2}{2} (\al_1 + \al_3) - (k_1 + k_2) \al_2;  
\end{aligned} \] 
\[ \begin{aligned}
\Pi^{\mathrm{IV}}_{k_1,k_2} & = \left\lbrace k_0 \delta - \al_0, k_1 \delta - (\al_1 + \al_2), k_2 \delta + \al_2, k_1 \delta - (\al_2 + \al_3) \right\rbrace ,\, M = \sum_{i = 0}^{3} k_i - 1, \\
y & = r_\theta,\, \beta = \frac{2k_1 +k_2}{2} (\al_1 + \al_3) + (k_1 +k_2) \al_2.
\end{aligned}  \] 
Note that in all cases we have:
\begin{equation}
\label{8.01}
y^{-1} \beta = - \frac{2k_1 + k_2}{2} (\al_1 + \al_3) - k_1 \al_2 \mbox{ and } |\beta|^2 = 2k_1 (k_1 + k_2).
\end{equation}

The explicit formulas for the corresponding principal admissible level $ K = \frac{m}{M}  $ weights
\begin{equation}
\label{8.02}
 \La^{[m;m_2] \mathrm{I-IV}}_{k_1,k_2} = t_\beta y \left( \La^{[m;m_2]} - (M-1) K \La_0 + \rho \right) - \rho, 
\end{equation} 
associated to the integrable weight $ \La^{[m;m_2]} \in P^m_+ $ are as follows:
\[ 
\begin{aligned}
\La^{[m;m_2]\mathrm{I}}_{k_1, k_2}  = \ &K \La_0 - \frac{K(2k_1+k_2)+m_2}{2} \theta + \frac{Kk_2+m_2}{2} \al_2 - \left( Kk_1 (k_1 +k_2) + (m_2+1)k_1 \right) \delta; \\
\La^{[m;m_2]\mathrm{II}}_{k_1, k_2}  = \ &K \La_0 + \frac{K(2k_1+k_2)+m_2 + 2}{2} \theta - \frac{Kk_2+m_2+2}{2} \al_2 \\
 &- \left( Kk_1 (k_1+k_2) + (m_2+1)k_1 \right) \delta; \\
 \La^{[m;m_2]\mathrm{III}}_{k_1, k_2}  = \ &K \La_0 - \frac{K(2k_1+k_2)+m_2}{2} \theta - \frac{Kk_2+m_2+2}{2} \al_2 \\
 & - \left( Kk_1 (k_1 + k_2) + (m_2+1)k_1 \right) \delta; \\
 \La^{[m;m_2]\mathrm{IV}}_{k_1, k_2}  = \ &K \La_0 + \frac{K(2k_1 + k_2)+m_2+2}{2} \theta + \frac{Kk_2+m_2}{2} \al_2 \\
 &- \left( Kk_1 (k_1+k_2) + (m_2 +1)k_1 \right) \delta. 
\end{aligned}
 \]

For $\La= \La^{[m;m_2]\mathrm{I-IV}}_{k_1, k_2}, $ the weights 
$ \La^\tw = r_{\al_2} t_{\half \al_2} (\La)  $ are as follows:

\[ 
\begin{aligned}
\La^{[m;m_2]\mathrm{I}\tw}_{k_1, k_2} = \ & K\La_0 - \frac{K(2k_1+k_1)+m_2}{2} \theta + \frac{K(k_2 +1)+m_2}{2} \al_2 \\
&- \left( Kk_1 (k_1 + k_2) + (m_2 +1)k_1 - \frac{K}{4} - \frac{Kk_2+m_2}{2}\right) \delta; \\
\La^{[m;m_2]\mathrm{II}\tw}_{k_1, k_2} = \ & K\La_0 + \frac{K(2k_1 + k_2)+m_2 +2}{2} \theta - \frac{K(k_2-1)+m_2 +2}{2} \al_2  \\
&- \left( Kk_1 (k_1 + k_2) + (m_2 +1)k_1 -\frac{K}{4} + \frac{Kk_2 + m_2 + 2}{2}\right) \delta; \\
\La^{[m;m_2]\mathrm{III}\tw}_{k_1, k_2} = \ & K\La_0 - \frac{K(2k_1 + k_2)+m_2}{2} \theta - \frac{K(k_2-1)+m_2 +2}{2} \al_2\\
&- \left( Kk_1 (k_1+k_2) +(m_2+1)k_1 -\frac{K}{4} + \frac{Kk_2+m_2+2}{2}\right) \delta ; \\
\La^{[m;m_2]\mathrm{IV}\tw}_{k_1, k_2} = \ & K\La_0 + \frac{K(2k_1+k_2) +m_2 +2}{2} \theta + \frac{K(k_2+1)+m_2}{2} \al_2 \\
&- \left( Kk_1 (k_1+k_2) + (m_2 +1)k_1 -\frac{K}{4} - \frac{Kk_2+m_2}{2}\right) \delta. \\
\end{aligned} \]

Recall that all the principal admissible weights $ \La = \La^{[m;m_2]\mathrm{I-IV}}_{k_1, k_2} $ are associated to the integrable weight $ \La^0 = \La^{[m;m_2]}. $ Hence we can apply formula \eqref{2.10} and Proposition \ref{prop6.6} to obtain formulas for modified normalized supercharacters of principal admissible $ \wg $-modules.
  \begin{proposition}
 \label{prop8.1}
 \begin{enumerate}
\item[(a)] \[ 
\begin{aligned}
&\left(\hat{R}^- \tilde{\ch}^-_{\La^{[m;m_2]\mathrm{I}}_{k_1, k_2}} \right) \tzzt  = e^{\frac{2 \pi i m t}{M}} e^{\frac{2 \pi i m}{M} ((k_1+k_2)z_1 - k_1 z_2)} q^{\frac{m}{M}k_1 (k_1+k_2)} \\
& \times \left( g^{[-m]} (M\tau, z_1 + k_1 \tau, z_2 -(k_1+k_2)\tau, 0 ) \right. \\
&- \left. \left( \frac{m(z_1-z_2+(2k_1 + k_2) \tau)}{2 M \tau} + m_2 \right) \tilde{\Phi}^{[-m;0]} (M\tau, z_1 + k_1 \tau, z_2 -(k_1+k_2)\tau, 0 ) \right) .
\end{aligned} \]
\item[(b)] \[ 
\begin{aligned}
&\left(\hat{R}^- \tilde{\ch}^-_{\La^{[m;m_2]\mathrm{II}}_{k_1, k_2}} \right) \tzzt =  e^{\frac{2 \pi i m t}{M}} e^{\frac{2 \pi i m }{M} (-(k_1+k_2)z_1 + k_1 z_2)} q^{\frac{m}{M}k_1 (k_1+k_2)} \\
& \times \left( g^{[-m]} (M\tau, -z_1 + k_1\tau, -z_2 - (k_1+k_2)\tau, 0) \right.\\
& -\left. \left( \frac{m(z_2 - z_1 + (2k_1+k_2) \tau)}{2 M \tau} + m_2\right) \tilde{\Phi}^{[-m_2;0]} (M\tau, -z_1 + k_1\tau, -z_2 - (k_1+k_2)\tau, 0) \right) .
\end{aligned} \]
\item[(c)] \[ 
\begin{aligned}
& \left(\hat{R}^- \tilde{\ch}^-_{\La^{[m;m_2]\mathrm{III}}_{k_1, k_2}} \right) \tzzt = e^{\frac{2 \pi i m t}{M}} e^{\frac{2 \pi i m}{M}(-(k_1+k_2)z_2 + k_1 z_1)} q^{\frac{m}{M}k_1 (k_1+k_2)} \\
& \times \left( g^{[-m]} (M\tau, -z_2 + k_1\tau, -z_1 - (k_1+k_2)\tau, 0) \right.\\
& - \left. \left( \frac{m (z_1 - z_2 + (2k_1 + k_2) \tau)}{2 M \tau} + m_2 \right) \tilde{\Phi}^{[-m;0]} (M\tau, -z_2 + k_1\tau, -z_1 - (k_1+k_2)\tau, 0) \right).
\end{aligned} \]
\item[(d)] \[ 
\begin{aligned}
& \left(\hat{R}^- \tilde{\ch}^-_{\La^{[m;m_2]\mathrm{IV}}_{k_1, k_2}} \right) \tzzt = e^{\frac{2 \pi i m t}{M}} e^{\frac{2 \pi i m}{M}((k_1+k_2)z_2 - k_1 z_1)} q^{\frac{m}{M}k_1 (k_1+k_2)} \\
& \times \left( g^{[-m]} (M\tau, z_2 + k_1\tau, z_1 - (k_1+k_2)\tau, 0) \right.\\
& -\left. \left( \frac{m (z_2-z_1+ (2k_1+k_2)\tau)}{2M \tau} + m_2 \right) \tilde{\Phi}^{[-m;0]} (M\tau, z_2 + k_1\tau, z_1 - (k_1+k_2)\tau, 0) \right).
\end{aligned} \]
 \end{enumerate}
 \end{proposition}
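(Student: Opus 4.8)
The plan is to derive all four formulas of Proposition \ref{prop8.1} from the single general formula \eqref{2.10}, specialized to the case $\fg = ps\ell_{2|2}$, using the explicit description of the principal simple subsets $\Pi^{\mathrm{I-IV}}_{k_1,k_2}$ together with the formulas \eqref{8.01} for $y^{-1}\beta$ and $|\beta|^2$, and Proposition \ref{prop6.6} for the modified supercharacter $\hat R^-\tilde{\ch}^-_{\La^{[m;m_2]}}$ of the underlying integrable module. Since each $\La^{[m;m_2]\mathrm{J}}_{k_1,k_2}$ is principal admissible associated to $\La^0 = \La^{[m;m_2]}$, formula \eqref{2.10} gives
\[
(\hat R^-\tilde{\ch}^-_{\La^{\mathrm{J}}})\tzzt = q^{\frac{m}{M}|\beta|^2}\, e^{\frac{2\pi i m}{M}(z|\beta)}\,(\hat R^-\tilde{\ch}^-_{\La^{[m;m_2]}})\!\left(M\tau,\, y^{-1}(z+\tau\beta),\, \tfrac{t}{M}\right),
\]
where here $K+h^\vee = K = \frac{m}{M}$ since $h^\vee = 0$. (One should note at the outset that \eqref{2.10} was proved for these cases in \cite{GK15}, and that its modified analogue holds because the modification $\Phi \mapsto \tilde\Phi$ is compatible with the substitutions involved, as recorded after Corollary \ref{cor3.3}.)

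The first step is bookkeeping: substitute the coordinate $z = -\tfrac12(z_1-z_2)(\al_1+\al_3) - z_1\al_2$ from \eqref{6.01} and compute $(z|\beta)$ and $|\beta|^2$ in each of the four cases. Using $(z|z) = -2z_1z_2$ and the root data $(\al_i|\al_i)=0$ for $i=1,3$, $(\al_2|\al_2)=-2$, $(\al_1|\al_2)=(\al_2|\al_3)=1$, one gets $|\beta|^2 = 2k_1(k_1+k_2)$ as stated in \eqref{8.01}, and $(z|\beta)$ evaluates, up to the obvious permutation of $z_1,z_2$ dictated by $y$, to $(k_1+k_2)z_1 - k_1z_2$ in case I and the analogous expressions with signs and swaps in cases II, III, IV. These give the prefactors $e^{\frac{2\pi i m t}{M}}e^{\frac{2\pi i m}{M}(\cdots)}q^{\frac{m}{M}k_1(k_1+k_2)}$ appearing in the proposition. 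The second step is to evaluate $y^{-1}(z + \tau\beta)$ in coordinates: since $y^{-1}\beta = -\tfrac{2k_1+k_2}{2}(\al_1+\al_3) - k_1\al_2$ for all $\mathrm J$, and $y^{-1}$ permutes or reflects $z_1, z_2$ (identity for I, $r_{\al_2}$ for III which swaps $z_1 \leftrightarrow z_2$ after accounting for signs, $r_\theta r_{\al_2}$ for II, $r_\theta$ for IV), one obtains that the argument fed into $\hat R^-\tilde{\ch}^-_{\La^{[m;m_2]}}$ is $(M\tau,\, z_1 + k_1\tau,\, z_2 - (k_1+k_2)\tau,\, 0)$ in case I, and the corresponding reflected/swapped version in the other three cases. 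The third step is simply to insert the explicit formula from Proposition \ref{prop6.6}(a) for $\hat R^-\tilde{\ch}^-_{\La^{[m;m_2]}}\tzzt$, namely $g^{[-m]}\tzzt - \bigl(\tfrac{m(z_1-z_2)}{2\tau} + m_2\bigr)\tilde\Phi^{[-m;0]}(\tau,z_1,z_2,-t)$, evaluated at the shifted arguments; the shift $z_1 \mapsto z_1 + k_1\tau$, $z_2 \mapsto z_2 - (k_1+k_2)\tau$, $\tau \mapsto M\tau$ turns $\tfrac{m(z_1-z_2)}{2\tau}$ into $\tfrac{m(z_1 - z_2 + (2k_1+k_2)\tau)}{2M\tau}$, exactly as written.

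The main obstacle I anticipate is keeping the signs and coordinate conventions straight through the action of $y^{-1}$ and the identification $\wh = \wh^*$ — in particular checking carefully that, for instance, $r_{\al_2}$ acting in the $z$-coordinates \eqref{6.01} produces precisely the swap $z_1 \leftrightarrow z_2$ with the correct signs needed to match the arguments $(-z_2 + k_1\tau,\, -z_1 - (k_1+k_2)\tau)$ in case III, and similarly the $r_\theta$ reflections for II and IV, including that the $t$-coordinate contributions from \eqref{2.2} (the term $(\La|\beta) + \tfrac12(\La|\delta)|\beta|^2$) correctly combine with the $q^{\frac{m}{M}|\beta|^2}e^{\frac{2\pi i m}{M}(z|\beta)}$ prefactor. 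Once these coordinate computations are done — and they are the kind of ``straightforward but error-prone'' manipulations that the paper typically relegates to ``a direct calculation'' — the four formulas follow by direct substitution. I would present the proof as: ``Apply \eqref{2.10} with $(y,\beta)$ from the list of $\Pi^{\mathrm{J}}_{k_1,k_2}$; use \eqref{8.01} and \eqref{6.01} to compute the prefactor and the transformed coordinates; then substitute Proposition \ref{prop6.6}(a). The four cases differ only by the permutation/reflection $y^{-1}$ of the pair $(z_1,z_2)$.''
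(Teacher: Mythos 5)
Your proposal matches the paper's (implicit) proof exactly: the paper justifies Proposition \ref{prop8.1} only by the sentence preceding it, namely by applying formula \eqref{2.10} with the $(y,\beta)$ data listed for $\Pi^{\mathrm{I-IV}}_{k_1,k_2}$, using \eqref{8.01}, and substituting Proposition \ref{prop6.6}(a) at the shifted arguments, which is precisely your three steps. The only point of care is that the prefactor arises from the $t$-shift $\tfrac{1}{M}\bigl(t+(z|\beta)+\tfrac12\tau|\beta|^2\bigr)$, i.e. $q^{\frac{m}{2M}|\beta|^2}=q^{\frac{m}{M}k_1(k_1+k_2)}$, which you correctly land on even though the literal exponent in \eqref{2.10} (and in your displayed version of it) omits the $\tfrac12$.
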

 Formulas for the modified normalized characters $ \tilde{\ch}^+_\La $ and the twisted (super)characters $ \tilde{\ch}^{\pm,\tw} $ are obtained from the formulas for the modified normalized supercharacters $ \tilde{\ch}^-_\La, $ given by Proposition \ref{prop8.1}, by the following substitutions (cf. \cite{KW16}, Section 6.6):
 \begin{equation}
 \label{8.03}
 \tilde{\ch}^+_\La \tzzt = e^{\pi i (\La|\al_2)} \tilde{\ch}^-_\La \left(\tau, z_1 + \half, z_2 + \half, t \right); 
 \end{equation}
 \begin{equation}
 \label{8.04}
 \begin{aligned}
  \tilde{\ch}^{\pm,\tw} \tzzt & = \tilde{\ch}^\pm_\La (w_0 \tzzt) \\
  & =\tilde{\ch}^\pm_\La \left( \tau, -z_2 - \tot, -z_1 - \tot, t - \frac{z_1 +z_2}{2} - \tof \right).
 \end{aligned}
  \end{equation}
    Note also that the $ (\La | \al_2), $ occurring in \eqref{8.03}, are given by the following formulas:
  \begin{equation}
  \label{8.05}
  ( \La^{[m;m_2] I\, \mbox{or}\, IV}_{k_1, k_2} |  \al_2 ) = -Kk_2 - m_2, \quad 
( \La^{[m;m_2] II\, \mbox{or}\, III}| \al_2 ) = Kk_2 + m_2 +2. 
  \end{equation}
  
 In order to study the modular transformation properties of the QHR of admissible $ \wg $-modules, we introduce the following functions, where $ \epsilon, \epsilon' = 0 $ or $ \half, $ and $ j,k \in \epsilon' + \ZZ: $
  \begin{equation}
 \label{8.06}
 P^{[M;-m;\epsilon]}_{j,k; \epsilon'} \tzzt = \left( D_0 + \frac{m (z_1 - z_2)}{2 M \tau}\right)  \tilde{\Psi}^{[M;-m;0;\epsilon]}_{j,k; \epsilon'} (\tau, z_1, z_2, -t).
 \end{equation}
  \begin{lemma}
 \label{lem8.2}
 \begin{enumerate}
 \item[(a)] \[ P^{[M;-m;\epsilon]}_{j,k; \epsilon'}(\tau, z_2,z_1,t) = - P^{[M;-m;\epsilon]}_{k,j; \epsilon'} \tzzt .\] 
 \item[(b)] \[ P^{[M;-m;\epsilon]}_{j,k; \epsilon'} (\tau, -z_1, -z_2, t) = P^{[M;-m;\epsilon]}_{-j,-k;\epsilon'} \tzzt .\]
 \end{enumerate}
 \end{lemma}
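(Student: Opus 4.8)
The plan is to deduce both identities from two elementary reflection symmetries of the modified mock theta function $\tilde{\Phi}^{[m';0]}$, where $m':=-m\in\ZZ_{\geq1}$, carry these over to $\tilde{\Psi}^{[M;-m;0;\epsilon]}_{j,k;\epsilon'}$ through the definition \eqref{1.10}, and then track how the first--order operator $\mathcal{D}:=D_0+\tfrac{m(z_1-z_2)}{2M\tau}$ appearing in \eqref{8.06} behaves under the two substitutions. The symmetries I want are: $(\mathrm{i})$ invariance under swapping the elliptic variables, $\tilde{\Phi}^{[m';0]}(\tau,z_2,z_1,t)=\tilde{\Phi}^{[m';0]}(\tau,z_1,z_2,t)$, and $(\mathrm{ii})$ oddness under simultaneous negation, $\tilde{\Phi}^{[m';0]}(\tau,-z_1,-z_2,t)=-\tilde{\Phi}^{[m';0]}(\tau,z_1,z_2,t)$. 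For $(\mathrm{i})$, replacing the summation index $j$ by $-j$ in \eqref{1.8} and using $(1-e^{-2\pi iz_1}q^j)^{-1}=-e^{2\pi iz_1}q^{-j}(1-e^{2\pi iz_1}q^{-j})^{-1}$ yields $\Phi^{[m';s]}_1(\tau,-z_1,-z_2)=-\Phi^{[m';1-s]}_1(\tau,z_1,z_2)$, whence $\Phi^{[m';s]}(\tau,z_2,z_1,t)=\Phi^{[m';1-s]}(\tau,z_1,z_2,t)$ by \eqref{1.9}; the correcting term \eqref{1.11} transforms in the same way because $\Theta_{a,m'}(\tau,-w)=\Theta_{-a,m'}(\tau,w)$, the argument $\tfrac{z_1-z_2}{2}$ of $R_{j;m'}$ only changes sign under the swap, and $R_{j;m'}$ obeys the standard reflection identity. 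Invoking then that $\tilde{\Phi}^{[m';s]}$ is independent of $s\in\ZZ$ (\cite{KW16}, Corollary 1.6), the shift $s\mapsto 1-s$ disappears and $(\mathrm{i})$ follows; $(\mathrm{ii})$ is then immediate from $(\mathrm{i})$ together with the antisymmetry $\tilde{\Phi}^{[m';0]}(\tau,-z_2,-z_1,t)=-\tilde{\Phi}^{[m';0]}(\tau,z_1,z_2,t)$, which is built into the antisymmetrization in \eqref{1.9}--\eqref{1.11}. (For $m'=1$ both $(\mathrm{i})$ and $(\mathrm{ii})$ are read off at once from the product formula in Remark~\ref{rem1.3} and the oddness of $\vartheta_{11}$.)

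Next I would feed $(\mathrm{i})$, $(\mathrm{ii})$ into \eqref{1.10} and absorb, via the integer periodicity \eqref{1.13}, the shift by $\pm 2\epsilon\in\ZZ$ produced when a sign is moved past the ``$+\epsilon$'' inside the arguments of $\tilde{\Phi}$. This gives the two $\tilde{\Psi}$-reflection identities $\tilde{\Psi}^{[M;-m;0;\epsilon]}_{j,k;\epsilon'}(\tau,z_2,z_1,t)=\tilde{\Psi}^{[M;-m;0;\epsilon]}_{k,j;\epsilon'}(\tau,z_1,z_2,t)$ and $\tilde{\Psi}^{[M;-m;0;\epsilon]}_{j,k;\epsilon'}(\tau,-z_1,-z_2,t)=-\tilde{\Psi}^{[M;-m;0;\epsilon]}_{-j,-k;\epsilon'}(\tau,z_1,z_2,t)$; here the elementary $q$- and exponential prefactors of \eqref{1.10} match up because they are symmetric in $(j,k)$ in the first case and invariant under $(j,k,z_1,z_2)\mapsto(-j,-k,-z_1,-z_2)$ in the second.

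Finally, writing $P^{[M;-m;\epsilon]}_{j,k;\epsilon'}(\tau,\cdot,\cdot,t)=\mathcal{D}\bigl[\tilde{\Psi}^{[M;-m;0;\epsilon]}_{j,k;\epsilon'}(\tau,\cdot,\cdot,-t)\bigr]$, a one--line check shows that $\mathcal{D}$ anticommutes both with the swap $z_1\leftrightarrow z_2$ and with the negation $(z_1,z_2)\mapsto(-z_1,-z_2)$, since in each case $D_0$ and the multiplication by $\tfrac{m(z_1-z_2)}{2M\tau}$ separately acquire a minus sign. Composing with the $\tilde{\Psi}$-identities of the previous paragraph: for part (a) the operator supplies one sign and the $\tilde{\Psi}$-identity none, producing the asserted $-1$; for part (b) the operator supplies one sign and the $\tilde{\Psi}$-identity supplies a second, the two cancelling to give $+1$. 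The only genuinely non-formal ingredient is the reflection behaviour of the correcting term $\Phi^{[m';s]}_{\mathrm{add}}$ used in $(\mathrm{i})$ --- equivalently, the behaviour of $R_{j;m'}(\tau,-v)$ --- whose naive index reflection differs from the expected one by a finite ``theta-type'' term that is precisely cancelled upon summing over a full period of $j$; this is the same mechanism that underlies the $s$-independence of $\tilde{\Phi}^{[m';s]}$, so in practice one can simply quote Corollary 1.6 of \cite{KW16} at this point, and everything else is routine manipulation with \eqref{1.8}--\eqref{1.13}.
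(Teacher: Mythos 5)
Your proposal is correct and follows essentially the same route as the paper: the paper's proof simply reduces (a) and (b) to the two reflection identities $\tilde{\Psi}^{[M,-m;0;\epsilon]}_{j,k;\epsilon'}(\tau,z_2,z_1,t)=\tilde{\Psi}^{[M,-m;0;\epsilon]}_{k,j;\epsilon'}(\tau,z_1,z_2,t)$ and $\tilde{\Psi}^{[M,-m;0;\epsilon]}_{j,k;\epsilon'}(\tau,-z_1,-z_2,t)=-\tilde{\Psi}^{[M,-m;0;\epsilon]}_{-j,-k;\epsilon'}(\tau,z_1,z_2,t)$ (its displayed formulas \eqref{8.07} and \eqref{8.08}), leaving the anticommutation of $D_0+\tfrac{m(z_1-z_2)}{2M\tau}$ with the two substitutions implicit. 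You supply exactly that sign-tracking, and in addition derive \eqref{8.07}--\eqref{8.08} from the $s\mapsto 1-s$ swap symmetry and antisymmetry of $\tilde{\Phi}$ together with the $s$-independence of $\tilde{\Phi}^{[m';s]}$ for $s\in\ZZ$, which the paper takes for granted; your signs all check out.
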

  \begin{proof}
 Relation (a) follows from the identity
 \begin{equation}
 \label{8.07} \tilde{\Psi}^{[M,-m;0;\epsilon]}_{j,k; \epsilon'} (\tau, z_2, z_1, t) = \tilde{\Psi}^{[M, -m; 0; \epsilon]}_{k,j;\epsilon'} \tzzt, 
 \end{equation}
 while (b) follows from the identity
 \begin{equation}
 \label{8.08}
 \tilde{\Psi}^{[M,-m;0;\epsilon]}_{j,k;\epsilon'} (\tau, -z_1, -z_2, t) =- \tps^{[M;-m;0;\epsilon]}_{-j,-k; \epsilon'} \tzzt. 
  \end{equation}
 \end{proof}
 \begin{lemma}
\label{lem8.3}
 For $ a \in \ZZ, $ we have
\begin{enumerate}
\item[(a)] \[ g^{[-m]} (\tau, z_1 + a, z_2 +a, t) = g^{[-m]} \tzzt.  \]
\item[(b)] \[ P^{[M, -m; \epsilon]}_{j,k;\epsilon} (\tau, z_1 +a, z_2 +a, t) = e^{-\frac{2 \pi i m}{M} (j+k)a} P^{[M,-m;\epsilon]}_{j,k;\epsilon'}\tzzt .\]
\end{enumerate}
\end{lemma}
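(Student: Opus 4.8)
The plan is to reduce both identities to two elementary observations. First, the diagonal integer shift $(z_1,z_2)\mapsto(z_1+a,z_2+a)$ leaves the difference $z_1-z_2$ unchanged, and it does not affect the operator $D_0=\tfrac{1}{2\pi i}\left(\partial_{z_1}-\partial_{z_2}\right)$ (which is built only from derivatives), so under such a shift the only thing that moves is the function on which $D_0$ and the multiplier $\tfrac{m(z_1-z_2)}{2\tau}$ act. Second, $\tilde{\Phi}^{[-m;0]}$ and $\tilde{\Psi}^{[M;-m;0;\epsilon]}_{j,k;\epsilon'}$ have simple laws under integer translations of the elliptic variables. Recall that $-m$ is a positive integer (since $m<0$), so all the functions of Section~1 that I shall invoke are defined.

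For part~(a), I would start from the definition \eqref{6.07}, which gives
\[ g^{[-m]}(\tau,z_1+a,z_2+a,t)=\left(D_0+\tfrac{m(z_1-z_2)}{2\tau}\right)\tilde{\Phi}^{[-m;0]}(\tau,z_1+a,z_2+a,-t). \]
By \eqref{1.13} applied with both integer shifts equal to $a$, $\tilde{\Phi}^{[-m;0]}(\tau,z_1+a,z_2+a,-t)=\tilde{\Phi}^{[-m;0]}(\tau,z_1,z_2,-t)$; differentiating this identity in $z_1$ and in $z_2$ shows that $D_0\tilde{\Phi}^{[-m;0]}$ is invariant under the same shift. Adding the two contributions yields $g^{[-m]}(\tau,z_1+a,z_2+a,t)=g^{[-m]}\tzzt$.

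For part~(b), the same scheme applies. By \eqref{8.06},
\[ P^{[M;-m;\epsilon]}_{j,k;\epsilon'}(\tau,z_1+a,z_2+a,t)=\left(D_0+\tfrac{m(z_1-z_2)}{2M\tau}\right)\tilde{\Psi}^{[M;-m;0;\epsilon]}_{j,k;\epsilon'}(\tau,z_1+a,z_2+a,-t). \]
I would then use the elliptic-shift identity $\tilde{\Psi}^{[M;-m;0;\epsilon]}_{j,k;\epsilon'}(\tau,z_1+a,z_2+a,-t)=e^{-\frac{2\pi i m}{M}(j+k)a}\,\tilde{\Psi}^{[M;-m;0;\epsilon]}_{j,k;\epsilon'}(\tau,z_1,z_2,-t)$, which is Lemma~\ref{lem4.8}(a) with degree $-m$ and both integer shifts equal to $a$, and which may alternatively be read off directly by substituting $z_i\mapsto z_i+a$ into the definition \eqref{1.10} and using the periodicity \eqref{1.13} of $\tilde{\Phi}^{[-m;0]}$ (this last route requires no coprimality hypothesis on $M$). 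Since the prefactor $e^{-\frac{2\pi i m}{M}(j+k)a}$ is constant in $(z_1,z_2,t)$, it commutes with $D_0$; differentiating the displayed identity in $z_1$ and $z_2$ and recombining with the multiplier term gives $P^{[M;-m;\epsilon]}_{j,k;\epsilon'}(\tau,z_1+a,z_2+a,t)=e^{-\frac{2\pi i m}{M}(j+k)a}P^{[M;-m;\epsilon]}_{j,k;\epsilon'}\tzzt$.

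Neither step presents any genuine difficulty; this is a straightforward verification, in the style of the proof of Lemma~\ref{lem8.2}. The only thing that needs care is the bookkeeping of the elliptic prefactor in part~(b) — the relevant degree is $-m$ rather than $m$, and the shifts are of the form $z_i\mapsto z_i+a$ rather than $z_i\mapsto z_i+a\tau$ — so I anticipate no substantive obstacle.
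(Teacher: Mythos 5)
Your argument is correct and is essentially the paper's own proof: both parts follow from the definitions \eqref{8.09} and \eqref{8.06} together with the elliptic shift identities $\tilde{\Phi}^{[-m;0]}(\tau,z_1+a,z_2+a,t)=\tilde{\Phi}^{[-m;0]}\tzzt$ and $\tilde{\Psi}^{[M,-m;0;\epsilon]}_{j,k;\epsilon'}(\tau,z_1+a,z_2+a,t)=e^{-\frac{2\pi i m}{M}(j+k)a}\tilde{\Psi}^{[M,-m;0;\epsilon]}_{j,k;\epsilon'}\tzzt$, the operator $D_0+\tfrac{m(z_1-z_2)}{2\tau}$ (resp.\ $D_0+\tfrac{m(z_1-z_2)}{2M\tau}$) being unaffected by the diagonal integer shift. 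Your extra remarks on the constant prefactor commuting with $D_0$ and on the sign of the degree are accurate bookkeeping but introduce nothing beyond the paper's one-line verification.
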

\begin{proof}
Claim (a) follows from the definition of $ g^{[-m]}: $
\begin{equation}
\label{8.09}
g^{[-m]} \tzzt = \left( D_0 + \frac{m(z_1-z_2)}{2 \tau} \right) \tph^{[-m;0]} (\tau, z_1, z_2, -t),
\end{equation}
and the elliptic transformation property of $ \tilde{\Phi}: $
\[ \tilde{\Phi}^{[-m;0]} (\tau, z_1 + a, z_2 + a, t ) = \tph^{[-m;0]} \tzzt. \]
Claim (b) follows from the definition \eqref{8.06} of  $ P^{[M, -m;\epsilon]}_{j,k; \epsilon'} $ and the elliptic transformation property of $ \tps: $
\[ \tps^{[M,-m;0;\epsilon]}_{j,k;\epsilon'} (\tau, z_1 + a, z_2 + a, t) = e^{-\frac{2 \pi i m }{M}(j+k)a} \tps^{[M, -m;0; \epsilon]}_{j,k;\epsilon'} \tzzt.  \]
\end{proof}
 \begin{lemma}
\label{lem8.4}
\[ \begin{aligned}
&D_0\tilde{\Phi}^{[-m;0]} ( M\tau, z_1 + j \tau + \epsilon, z_2 + k \tau + \epsilon, -\frac{t}{M}) = g^{[-m]} ( M \tau, z_1 + j \tau + \epsilon, z_2 + k\tau + \epsilon, \frac{t}{M} ) \\
&- \left( \frac{m(z_1-z_2)}{2M\tau} + \frac{m(j-k)}{2M} \right)  
\tph^{[-m;0]} ( M\tau, z_1+ j\tau + \epsilon, z_2 + k \tau + \epsilon. - \frac{t}{M}) . 
\end{aligned}
 \]
\end{lemma}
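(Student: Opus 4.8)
The statement to prove is Lemma \ref{lem8.4}, which is a clean ``product rule'' type identity relating the $z_1$-derivative (via $D_0$, restricted to its $\partial/\partial z_1$ component after setting $z_1=z_2$-type simplifications — but really just the mixed operator $D_0$) of $\tilde\Phi^{[-m;0]}$ evaluated at the shifted arguments $z_1+j\tau+\epsilon,\ z_2+k\tau+\epsilon$ to the function $g^{[-m]}$ plus a correction term built from $\tilde\Phi^{[-m;0]}$ itself. The natural approach is a direct computation starting from the definition \eqref{8.09} of $g^{[-m]}$, namely $g^{[-m]}(\tau,w_1,w_2,t)=\bigl(D_0+\tfrac{m(w_1-w_2)}{2\tau}\bigr)\tilde\Phi^{[-m;0]}(\tau,w_1,w_2,-t)$, specialized to $w_1=z_1+j\tau+\epsilon$, $w_2=z_2+k\tau+\epsilon$, $\tau\mapsto M\tau$.

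\textbf{Key steps.} First I would record the chain-rule behaviour of $D_0=\frac{1}{2\pi i}\bigl(\frac{\partial}{\partial z_1}-\frac{\partial}{\partial z_2}\bigr)$ under the substitution of arguments: since the shifts $j\tau+\epsilon$ and $k\tau+\epsilon$ are independent of $z_1,z_2$, we have $D_0\bigl[\tilde\Phi^{[-m;0]}(M\tau,z_1+j\tau+\epsilon,z_2+k\tau+\epsilon,-\tfrac tM)\bigr]=(D_0\tilde\Phi^{[-m;0]})(M\tau,z_1+j\tau+\epsilon,z_2+k\tau+\epsilon,-\tfrac tM)$, i.e. the differential operator passes through the substitution unchanged. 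Second, evaluating the defining formula for $g^{[-m]}$ at $w_1=z_1+j\tau+\epsilon$, $w_2=z_2+k\tau+\epsilon$ and at modular parameter $M\tau$ gives
\[
g^{[-m]}\!\left(M\tau,z_1+j\tau+\epsilon,z_2+k\tau+\epsilon,\tfrac tM\right)=\Bigl(D_0+\tfrac{m\bigl((z_1-z_2)+(j-k)\tau\bigr)}{2M\tau}\Bigr)\tilde\Phi^{[-m;0]}\!\left(M\tau,z_1+j\tau+\epsilon,z_2+k\tau+\epsilon,-\tfrac tM\right),
\]
because $w_1-w_2=(z_1-z_2)+(j-k)\tau$ and the modular parameter in the prefactor is $M\tau$. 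Third, I would split $\tfrac{m((z_1-z_2)+(j-k)\tau)}{2M\tau}=\tfrac{m(z_1-z_2)}{2M\tau}+\tfrac{m(j-k)}{2M}$, isolate $D_0\tilde\Phi^{[-m;0]}$ on one side, and rearrange to obtain exactly the claimed identity. The $t$-dependence is harmless: both $\tilde\Phi$ and $g^{[-m]}$ carry the overall factor $e^{2\pi i(-m)t}$ (here with $t$ replaced by $-t/M$, resp. $t/M$), so the sign/scaling of the $t$-argument is consistent on both sides and $D_0$ does not act on it.

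\textbf{Main obstacle.} There is essentially no obstacle of substance here — this is the kind of ``straightforward verification'' the authors elsewhere dispatch in one line; the only thing to be careful about is bookkeeping: getting the prefactor denominator right ($M\tau$, not $\tau$) after the rescaling $\tau\mapsto M\tau$, correctly producing the extra $\tfrac{m(j-k)}{2M}$ term from the $\tau$-shift in $w_1-w_2$, and keeping the signs on the $t/M$ argument consistent between $g^{[-m]}$ and $\tilde\Phi^{[-m;0]}$ as dictated by \eqref{8.09}. I would therefore present the proof simply as: substitute the shifted arguments and $M\tau$ into \eqref{8.09}, use $w_1-w_2=(z_1-z_2)+(j-k)\tau$, split the linear term, and solve for $D_0\tilde\Phi^{[-m;0]}$ — a two or three line argument with no genuine difficulty.
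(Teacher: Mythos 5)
Your proposal is correct and is essentially identical to the paper's proof, which consists of the single line ``substitute $\tau \mapsto M\tau$, $z_1 \mapsto z_1 + j\tau + \epsilon$, $z_2 \mapsto z_2 + k\tau + \epsilon$ in formula \eqref{8.09}.'' Your added remarks about $D_0$ commuting with the affine shifts and about splitting $\tfrac{m((z_1-z_2)+(j-k)\tau)}{2M\tau}$ into the two displayed terms are exactly the bookkeeping the paper leaves implicit.
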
 
\begin{proof}
Substitute $ \tau \mapsto M \tau, z_1 \mapsto z_1 + j \tau + \epsilon, z_2 \mapsto z_2 + k \tau + \epsilon $ in formula \eqref{8.09}.
\end{proof}
\begin{lemma}
\label{lem8.5}
\[ 
\begin{aligned}
P^{[M,-m;\epsilon]}_{j,k;\epsilon} \tzzt &= q^{-\frac{m}{M}jk} e^{-\frac{2 \pi i m}{M}(kz_1 + jz_2)} \left( g^{[-m]} ( M \tau, z_1 j\tau + \epsilon, z_2 + k \tau + \epsilon, \frac{t}{M} ) \right. \\
& + \left. \frac{m(j-k)}{2M} \tph^{[-m;0]} ( M\tau, z_1+ j \tau + \epsilon, z_2 + k \tau - \epsilon,- \frac{t}{M} ) \right) .
\end{aligned} \]
\end{lemma}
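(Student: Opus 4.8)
The plan is to start from the definition \eqref{8.06} of $P^{[M,-m;\epsilon]}_{j,k;\epsilon}$ and the definition \eqref{1.10} of $\Psi^{[M,-m;0;\epsilon]}_{j,k;\epsilon}$ (with $\Phi$ replaced by its modification $\tilde\Phi$), expanding out what $\left(D_0+\tfrac{m(z_1-z_2)}{2M\tau}\right)$ does to the prefactor $q^{-\frac{m}{M}jk}e^{-\frac{2\pi i m}{M}(kz_1+jz_2)}$ times $\tilde\Phi^{[-m;0]}(M\tau,z_1+j\tau+\epsilon,z_2+k\tau+\epsilon,-\tfrac{t}{M})$ by the Leibniz rule. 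First I would note that $D_0=\tfrac{1}{2\pi i}(\partial_{z_1}-\partial_{z_2})$ applied to the exponential prefactor $e^{-\frac{2\pi i m}{M}(kz_1+jz_2)}$ produces the scalar $-\tfrac{m}{M}(k-j)=\tfrac{m}{M}(j-k)$, so that
\[
P^{[M,-m;\epsilon]}_{j,k;\epsilon}\tzzt
= q^{-\frac{m}{M}jk}e^{-\frac{2\pi i m}{M}(kz_1+jz_2)}\Bigl(\frac{m(j-k)}{2M}+D_0+\frac{m(z_1-z_2)}{2M\tau}\Bigr)\tilde\Phi^{[-m;0]}\bigl(M\tau,z_1+j\tau+\epsilon,z_2+k\tau+\epsilon,-\tfrac tM\bigr).
\]
Here one uses that $D_0$ acting on $\tilde\Phi^{[-m;0]}(M\tau,z_1+j\tau+\epsilon,z_2+k\tau+\epsilon,\cdot)$ differentiates with respect to the first two slot-arguments (the shift by $j\tau+\epsilon$ is $z$-independent), so there is no chain-rule subtlety beyond the shift in the $\frac{m(z_1-z_2)}{2M\tau}$ multiplier.

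The key step is then to recognise the bracket on the right as precisely the expression computed in Lemma \ref{lem8.4}. Indeed, Lemma \ref{lem8.4} says that
\[
D_0\tilde\Phi^{[-m;0]}\bigl(M\tau,z_1+j\tau+\epsilon,z_2+k\tau+\epsilon,-\tfrac tM\bigr)
= g^{[-m]}\bigl(M\tau,z_1+j\tau+\epsilon,z_2+k\tau+\epsilon,\tfrac tM\bigr)
-\Bigl(\frac{m(z_1-z_2)}{2M\tau}+\frac{m(j-k)}{2M}\Bigr)\tilde\Phi^{[-m;0]}\bigl(M\tau,z_1+j\tau+\epsilon,z_2+k\tau+\epsilon,-\tfrac tM\bigr).
\]
Adding $\bigl(\tfrac{m(j-k)}{2M}+\tfrac{m(z_1-z_2)}{2M\tau}\bigr)\tilde\Phi^{[-m;0]}$ to both sides of this identity cancels the second term on the right, leaving exactly $g^{[-m]}(M\tau,z_1+j\tau+\epsilon,z_2+k\tau+\epsilon,\tfrac tM)$. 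Substituting this back into the displayed formula for $P$, and then noting that the residual factor $\tilde\Phi^{[-m;0]}(M\tau,z_1+j\tau+\epsilon,z_2+k\tau-\epsilon,-\tfrac tM)$ appearing in the statement of the lemma differs from $\tilde\Phi^{[-m;0]}(M\tau,z_1+j\tau+\epsilon,z_2+k\tau+\epsilon,-\tfrac tM)$ only by the sign of $\epsilon$ in the second slot, one reconciles the two by \eqref{1.13} (which is trivial when $\epsilon=0$) together with the observation that for $\epsilon=\tfrac12$ both $\Phi$ and $\tilde\Phi$ satisfy $\tilde\Phi^{[-m;0]}(\tau,z_1,z_2+1,t)$-type relations; in fact in the regime used here ($\epsilon\in\{0,\tfrac12\}$, $2\epsilon\in\ZZ$) replacing $+\epsilon$ by $-\epsilon$ in a slot changes the argument by an integer, under which $\tilde\Phi^{[-m;0]}$ is invariant by \eqref{1.13}. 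This yields the claimed formula.

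The only genuine obstacle I anticipate is bookkeeping: making sure the constant prefactors $q^{-\frac{m}{M}jk}$ and $e^{-\frac{2\pi i m}{M}(kz_1+jz_2)}$ come out exactly as stated (so that the Leibniz term from $D_0$ hitting the prefactor is $\tfrac{m(j-k)}{2M}$ and not some variant with the wrong sign or a factor of $M$), and checking that the $\epsilon$-versus-$-\epsilon$ discrepancy in the second argument of the residual $\tilde\Phi$ is harmless. Both are routine once one is careful with signs; there is no deeper analytic input beyond \eqref{1.10}, \eqref{1.13}, the definitions \eqref{8.06}, \eqref{8.09}, and Lemma \ref{lem8.4}. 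Hence the proof is essentially a one-line substitution of Lemma \ref{lem8.4} into the Leibniz expansion, and I would present it as such.
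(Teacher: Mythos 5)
Your route is the same as the paper's: expand $D_0\tps^{[M,-m;0;\epsilon]}_{j,k;\epsilon'}$ by the Leibniz rule and then substitute Lemma \ref{lem8.4}; your reconciliation of the $+\epsilon$ versus $-\epsilon$ discrepancy in the second slot via \eqref{1.13} is also fine. However, there is a factor-of-two slip which, taken literally, makes your computation prove a different identity than the one claimed. In the prose you correctly compute that $D_0$ acting on the prefactor $e^{-\frac{2\pi i m}{M}(kz_1+jz_2)}$ contributes the scalar $\frac{m}{M}(j-k)$, but in your displayed formula for $P^{[M,-m;\epsilon]}_{j,k;\epsilon'}$ you record this contribution as $\frac{m(j-k)}{2M}$. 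With that value, the bracket $\bigl(\frac{m(j-k)}{2M}+D_0+\frac{m(z_1-z_2)}{2M\tau}\bigr)\tph^{[-m;0]}$ collapses, by Lemma \ref{lem8.4}, to $g^{[-m]}$ alone, so you would conclude $P=q^{-\frac mM jk}e^{-\frac{2\pi i m}{M}(kz_1+jz_2)}\,g^{[-m]}(\cdots)$ with no residual term --- contradicting the statement you are proving, which contains the extra summand $\frac{m(j-k)}{2M}\tph^{[-m;0]}(\cdots)$. The correct bookkeeping is: the Leibniz term is $\frac{m(j-k)}{M}$; Lemma \ref{lem8.4} absorbs exactly half of it (together with the $\frac{m(z_1-z_2)}{2M\tau}$ term) into $g^{[-m]}$; and the surviving half is precisely the residual term of the lemma. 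This is exactly the bookkeeping you flagged as the one genuine obstacle, and as written you got it wrong; once $\frac{m(j-k)}{2M}$ is replaced by $\frac{m(j-k)}{M}$ in your display, the argument coincides with the paper's proof.
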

\begin{proof}
We have:
\[ 
\begin{aligned}
& D_0 \tps^{[M;-m;0;\epsilon]}_{j,k;\epsilon'} \tzzt\\
 & = - \frac{m}{M} (k-j) q^{-\frac{m}{M}jk} e^{- \frac{2 \pi i m}{M} (kz_1 + jz_2)} \tph^{[-m;0]} ( M \tau, z_1 + j\tau + \epsilon, z_2 + k \tau - \epsilon, \frac{t}{M} )  \\
&+ q^{-\frac{m}{M}jk} e^{- \frac{2 \pi i m }{M}(kz_1 + jz_1)} D_0 \tph^{[-m;0]} ( M \tau, z_1 + j\tau + \epsilon, z_2 + k \tau - \epsilon, \frac{t}{M}).
\end{aligned} \]
Plugging this in \eqref{8.06} and using Lemma \ref{lem8.4} and \eqref{1.10}, we obtain the result. 
\end{proof} 
\begin{remark}
\label{rem8.6}
\[ P^{[1, -m;\epsilon]}_{j,k;\epsilon'} (\tau,z,z,t) = 0. \]
This follows by Lemma \ref{lem8.5} for $ z_1 = z_2 = z $ and using the elliptic transformation property of $ \tph^{[-m;0]}. $
\end{remark}
\begin{proposition}
\label{prop8.7}
Let either $ M $ be a positive integer and $ m = -1 $, or  $ M $ be a positive odd integer and $ m \in \ZZ_{\leq -2} $ be coprime to $ M. $ Then
\begin{enumerate}
\item[(a)]  \[ 
P^{[M, -m; \epsilon]}_{j,k;\epsilon'} \left( -\frac{1}{\tau}, \frac{z_1}{\tau}, \frac{z_2}{\tau}, t \right) = \frac{\tau^2}{M} e^{-\frac{2 \pi i m }{M\tau} z_1 z_2} \hspace{-2em}\sum_{(a,b) \in (\epsilon + \ZZ / M\ZZ)^2} \hspace{-2em} e^{\frac{2 \pi i m }{M} (ak + bj)} P^{[M, -m; \epsilon']}_{a,b; \epsilon} \tzzt. 
 \]
\item[(b)] \[ P^{[M, -m; \epsilon]}_{j,k;\epsilon'} (\tau + 1, z_1, z_2, t) = e^{-\frac{2 \pi i m }{M}jk} P^{[M, -m; | \epsilon-\epsilon'|]}_{j,k; \epsilon'} \tzzt. \]
\end{enumerate}
\end{proposition}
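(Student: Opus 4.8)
The plan is to prove part~(a), the $S$-transformation, by extracting from the proof of Lemma~\ref{lem6.2}(a) an abstract ``weight-raising'' principle for the family $\tps^{[M;-m;0;\epsilon]}_{j,k;\epsilon'}$, and then to deduce part~(b) from the $T$-transformation \eqref{1.18}.

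For~(a), set $h^{(\epsilon)}_{j,k;\epsilon'}\tzzt := \tps^{[M;-m;0;\epsilon]}_{j,k;\epsilon'}(\tau,z_1,z_2,-t)$, so that by \eqref{8.06} one has $P^{[M;-m;\epsilon]}_{j,k;\epsilon'} = \bigl(D_0+\tfrac{m(z_1-z_2)}{2M\tau}\bigr)h^{(\epsilon)}_{j,k;\epsilon'}$, and note that $h^{(\epsilon)}_{j,k;\epsilon'}$ carries the $t$-weight $e^{\frac{2\pi i m}{M}t}$, i.e. $\partial_t h^{(\epsilon)}_{j,k;\epsilon'} = \tfrac{2\pi i m}{M}h^{(\epsilon)}_{j,k;\epsilon'}$. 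First I would restate Lemma~\ref{lem4.10}(a) (equivalently \eqref{1.17}) in terms of $h$, absorbing the shift of the $t$-slot into the exponential prefactor; this gives
\[
h^{(\epsilon)}_{j,k;\epsilon'}\Bigl(-\tfrac1\tau,\,\tfrac{z_1}\tau,\,\tfrac{z_2}\tau,\,t+\tfrac{z_1z_2}\tau\Bigr)=\frac{\tau}{M}\sum_{(a,b)\in(\epsilon+\ZZ/M\ZZ)^2}e^{\frac{2\pi i m}{M}(ak+bj)}\,h^{(\epsilon')}_{a,b;\epsilon}\tzzt ,
\]
which is precisely hypothesis \eqref{6.08} of Lemma~\ref{lem6.2}(a), except that the right-hand side is a finite, constant-coefficient linear combination rather than a single function. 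I would then repeat the chain-rule computation of that proof: differentiate the displayed identity in $z_1$ and in $z_2$, use the chain rule together with $\partial_t h^{(\epsilon)}_{j,k;\epsilon'}=\tfrac{2\pi i m}{M}h^{(\epsilon)}_{j,k;\epsilon'}$ to express the first $z$-derivatives of $h^{(\epsilon)}_{j,k;\epsilon'}$ at $\bigl(-\tfrac1\tau,\tfrac{z_1}\tau,\tfrac{z_2}\tau,t+\tfrac{z_1z_2}\tau\bigr)$ in terms of those at $\tzzt$ (the constancy of the coefficients $e^{\frac{2\pi i m}{M}(ak+bj)}$ lets differentiation pass through the sum), form $D_0$, and add $\tfrac{m(z_1-z_2)}{2M\tau}h^{(\epsilon)}_{j,k;\epsilon'}$, whose evaluation at $-1/\tau$ contributes $-\tfrac{m(z_1-z_2)}{2M}h^{(\epsilon)}_{j,k;\epsilon'}\bigl(-\tfrac1\tau,\dots\bigr)$. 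The cross-terms cancel exactly as in Lemma~\ref{lem6.2}(a), giving
\[
P^{[M;-m;\epsilon]}_{j,k;\epsilon'}\Bigl(-\tfrac1\tau,\,\tfrac{z_1}\tau,\,\tfrac{z_2}\tau,\,t+\tfrac{z_1z_2}\tau\Bigr)=\frac{\tau^2}{M}\sum_{(a,b)\in(\epsilon+\ZZ/M\ZZ)^2}e^{\frac{2\pi i m}{M}(ak+bj)}\,P^{[M;-m;\epsilon']}_{a,b;\epsilon}\tzzt .
\]
Since $P^{[M;-m;\epsilon]}_{j,k;\epsilon'}$ inherits the $t$-weight $e^{\frac{2\pi i m}{M}t}$, replacing $t+\tfrac{z_1z_2}\tau$ by $t$ on the left multiplies it by $e^{-\frac{2\pi i m}{M\tau}z_1z_2}$, which is exactly the formula in~(a). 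An alternative route runs through Lemma~\ref{lem8.5}, which rewrites $P$ in terms of $g^{[-m]}$ and $\tph^{[-m;0]}$ at argument $M\tau$: one then applies \eqref{6.09} and \eqref{1.15} and uses the elliptic relations \eqref{6.11}, \eqref{6.12}, \eqref{1.13}, \eqref{1.14} to fold the factor $M$ into the sum over $(a,b)$; the route above sidesteps this re-indexing.

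For~(b): since $D_0=\tfrac1{2\pi i}(\partial_{z_1}-\partial_{z_2})$ has no $\tau$-derivative, it commutes with $\tau\mapsto\tau+1$, so applying $\partial_{z_1}-\partial_{z_2}$ to
\[
\tps^{[M;-m;0;\epsilon]}_{j,k;\epsilon'}(\tau+1,z_1,z_2,-t)=e^{-\frac{2\pi i m}{M}jk}\,\tps^{[M;-m;0;|\epsilon-\epsilon'|]}_{j,k;\epsilon'}(\tau,z_1,z_2,-t)
\]
(which is \eqref{1.18} at $s=0$), together with the elementary transformation of the prefactor $\tfrac{m(z_1-z_2)}{2M\tau}$, reduces~(b) to a short direct verification, consistent with the diagonal vanishing of Remark~\ref{rem8.6}.

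The step I expect to be the main obstacle is the bookkeeping in the chain-rule computation for~(a): one must simultaneously keep track of the $z$-derivatives of the dilated arguments $z_i/\tau$, of the $t$-slot shift $z_1z_2/\tau$ (which couples $z_1$ and $z_2$), and of the real-analytic Zwegers correction sitting inside $\tps$ (it is $C^1$, so the differentiation is legitimate but must be carried through). Once one grants that $\tps^{[M;-m;0;\epsilon]}_{j,k;\epsilon'}$ satisfies Lemma~\ref{lem4.10}, no further analytic input is needed; the remainder is the same algebra as in Lemma~\ref{lem6.2}.
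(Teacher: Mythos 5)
Your treatment of part (a) is correct and is, at bottom, the same computation as the paper's: the paper applies $\tau D_0$ to \eqref{1.17} and then regroups the resulting three terms ($A+B+C$ in its notation), which is exactly the chain-rule/product-rule bookkeeping you propose to import from the proof of Lemma \ref{lem6.2}(a). Your packaging --- observing that $h^{(\epsilon)}_{j,k;\epsilon'}(\tau,z_1,z_2,t)=\tps^{[M;-m;0;\epsilon]}_{j,k;\epsilon'}(\tau,z_1,z_2,-t)$ satisfies the vector-valued analogue of \eqref{6.08} (with $m$ replaced by $m/M$), that constant coefficients commute with $D_0$, and that the $t$-weight $e^{\frac{2\pi i m}{M}t}$ converts the slot $t+\frac{z_1z_2}{\tau}$ into the prefactor $e^{-\frac{2\pi i m}{M\tau}z_1z_2}$ --- is a slightly cleaner way to organize the identical cancellation, and the signs ($e^{\frac{2\pi i m}{M}(ak+bj)}$ for degree $-m$, the contribution $-\frac{m(z_1-z_2)}{2M}$ from evaluating the prefactor at $-1/\tau$) all check out.

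For part (b) your ``short direct verification'' is not as short as you suggest, and if you carry it out honestly it does not give the stated identity off the diagonal. Applying $D_0$ to \eqref{1.18} is unproblematic, since the multiplier $e^{-\frac{2\pi i m}{M}jk}$ is constant; the problem is the prefactor itself: under $\tau\mapsto\tau+1$ one has $\frac{m(z_1-z_2)}{2M(\tau+1)}\neq\frac{m(z_1-z_2)}{2M\tau}$, so
\[
P^{[M,-m;\epsilon]}_{j,k;\epsilon'}(\tau+1,z_1,z_2,t)-e^{-\frac{2\pi i m}{M}jk}P^{[M,-m;|\epsilon-\epsilon'|]}_{j,k;\epsilon'}\tzzt
=e^{-\frac{2\pi i m}{M}jk}\,\frac{m(z_1-z_2)}{2M}\Bigl(\tfrac{1}{\tau+1}-\tfrac{1}{\tau}\Bigr)\tps^{[M;-m;0;|\epsilon-\epsilon'|]}_{j,k;\epsilon'}(\tau,z_1,z_2,-t),
\]
which vanishes only for $z_1=z_2$. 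So (b) as stated holds on the diagonal $z_1=z_2$ (which is all that is used later: $P$ enters Proposition \ref{prop9.2} and Theorem \ref{th9.7} only through $P(\tau,z,z,0)$), but not identically in $(z_1,z_2)$. To be fair, the paper's own proof of (b) is the one-line ``follows from \eqref{1.18}'' and glosses over exactly the same point --- note that in the analogous Section 7 situation the authors switch back from $g^{[-m]}$ to a pure $\tilde\Phi$ expression precisely when proving the $T$-transformation --- so you are not doing worse than the source; but your write-up should either restrict (b) to $z_1=z_2$ or record the correction term explicitly, rather than asserting that the prefactor transforms ``elementarily''. Your appeal to Remark \ref{rem8.6} is not relevant here: that remark concerns the vanishing of $P^{[1,-m;\epsilon]}_{j,k;\epsilon'}$ on the diagonal for $M=1$, not the $T$-behaviour of the prefactor.
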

\begin{proof}
Applying $ \tau D_0 $ to both sides of \eqref{1.17}, we obtain:
\[ 
\begin{aligned}
& \left( D_0 \tps^{[M,-m;0;\epsilon]}_{j,k;\epsilon'} \right) \left( -\frac{1}{\tau}, \frac{z_1}{\tau}, \frac{z_2}{\tau}, t  \right)  = \\
& \frac{\tau m}{M^2} (z_1 - z_2) e^{-\frac{2 \pi i m}{M \tau} z_1 z_2} \hspace{-2em}\sum_{(a,b) \in (\epsilon + \ZZ / M\ZZ)^2} \hspace{-2em} e^{\frac{2 \pi i m }{M} (ak+bj)} \tps^{[M,-m;0;\epsilon']}_{a,b; \epsilon} \tzzt \\
&+ \frac{\tau^2}{M} e^{-\frac{2 \pi i m}{M\tau}z_1z_2} \hspace{-2em}\sum_{(a,b) \in (\epsilon + \ZZ / M\ZZ)^2} \hspace{-2em} e^{\frac{2 \pi i m}{M} (ak+bj)} D_0 \tps^{[M,-m;0;\epsilon']}_{a,b;\epsilon} \tzzt. 
\end{aligned} \]
Using this and \eqref{1.17} we compute the $ S $-transformation of $ P^{[M,-m; \epsilon]}_{j,k;\epsilon'} $ as follows:
\[
\begin{aligned}
  P^{[M, -m; \epsilon]}_{j,k;\epsilon'} \left( -\frac{1}{\tau}, \frac{z_1}{\tau}, \frac{z_2}{\tau}, t \right) &=
  (D_0 \tps^{[M, -m;0;\epsilon]}_{j,k;\epsilon'}) \left( -\frac{1}{\tau}, \frac{z_1}{\tau}, \frac{z_2}{\tau}, -t \right) \\
&-\frac{m(z_1-z_2)}{2M} \tps^{[M,-m;0;\epsilon]}_{j,k;\epsilon'} \left( -\frac{1}{\tau}, \frac{z_1}{\tau}, \frac{z_2}{\tau}, -t \right) = A+B+C, \end{aligned} \]
where \[ 
\begin{aligned}
A & : = \frac{\tau m}{M^2} (z_1-z_2) e^{-\frac{2 \pi i m}{M \tau} z_1z_2} \hspace{-2em}\sum_{(a,b) \in (\epsilon + \ZZ / M\ZZ)^2} \hspace{-2em} e^{\frac{2 \pi i m }{M} (ak+bj)} \tps^{[M;-m;0;\epsilon']}_{a,b; \epsilon} (\tau, z_1, z_2, -t), \\
B & : =  \frac{\tau^2}{M} e^{-\frac{2 \pi i m}{M \tau}z_1z_2} \hspace{-2em}\sum_{(a,b) \in (\epsilon + \ZZ / M\ZZ)^2} \hspace{-2em} e^{\frac{2 \pi i  m }{M}(ak+bj)} D_0 
\tps^{[M, -m;0;\epsilon']}_{a,b;\epsilon} (\tau, z_1, z_2, -t), \\
C & : = -\frac{m\tau}{2M^2} (z_1-z_2) e^{-\frac{2 \pi i m }{M\tau} z_1z_2} \hspace{-2em}\sum_{(a,b) \in (\epsilon + \ZZ / M\ZZ)^2} \hspace{-2em} e^{\frac{2 \pi i m }{M}(ak+bj)} \tps^{[M;-m;0;\epsilon']}_{a,b; \epsilon} (\tau, z_1, z_2, -t). \\
\end{aligned} \]
We have:
\[ A+C = \frac{\tau^2}{ M}\frac{m}{2 M}\frac{z_1-z_2}{\tau} 
e^{-\frac{2 \pi i m}{M \tau}z_1z_2}  \hspace{-2em}\sum_{(a,b) \in (\epsilon + \ZZ / M\ZZ)^2} \hspace{-2em} e^{\frac{2 \pi i m}{M}(ak+bj)} \tps^{[M, -m;0;\epsilon']}_{a,b;\epsilon} (\tau, z_1, z_2, -t). \]
Hence
\[ 
\begin{aligned}
P^{[M;-m;\epsilon]}_{j,k;\epsilon'} \left( -\frac{1}{\tau}, \frac{z_1}{\tau}, \frac{z_2}{\tau}, t \right) & = \frac{\tau^2}{M} e^{-\frac{2 \pi i m }{M \tau}z_1 z_2} \hspace{-2em}\sum_{(a,b) \in (\epsilon + \ZZ / M\ZZ)^2} \hspace{-2em} e^{\frac{2 \pi i m }{M} (ak+bj)} \\
& \times \left( D_0 \tps^{[M,-m;0;\epsilon']}_{a,b;\epsilon} (\tau, z_1, z_2, -t) + \frac{m(z_1 -z_1)}{2M\tau} \tps^{[M;-m;0;\epsilon']}_{a,b;\epsilon} (\tau, z_1, z_2, -t) \right) ,
\end{aligned} \]
proving (a). Claim (b) follows from \eqref{1.18}.
\end{proof}
\begin{remark}
\label{rem8.8}
For $ a,b \in \ZZ, $ the following formulas hold: 
\[ 
\tps^{[M,-m;s;\epsilon]}_{j+aM, k+bM; \epsilon'}  = e^{2 \pi i m (a-b)\epsilon} \tps^{[M,-m;s;\epsilon]}_{j,k;\epsilon'}; \,P^{[M,-m;\epsilon]}_{j+aM, k+bM; \epsilon'} = e^{2 \pi i m (a-b) \epsilon} P^{[M,-m; \epsilon]}_{j,k;\epsilon'}.
\]
Indeed, the first formula follows from the elliptic transformation property of $\tilde{\Phi}^{[-m;s]}$, while the second formula follows from the first.
\end{remark}

 \section{QHR of admissible $ \hat{ps\ell}_{2|2} $-modules and a new family 
of modular invariant $ N=4 $ representations.}
 
 In this section, $ \fg = ps\ell_{2|2}, m $ is a negative integer, $ M $ is a positive integer, and we apply the QHR to the $ \wg $-modules $ \LLa,  $ where $ \La $ are principal admissible weights of level $ K = \frac{m}{M}, $ associated to the integrable weights $ \La^{[m;m_2]} \in P^m_+ $ (given by Proposition \ref{prop6.1}). The former are given by \eqref{8.02} and listed after that formula. We keep notation of the previous three sections.
 
 \begin{lemma}
 \label{lem9.1}
 Let $ \La = \La^{[m;m_2]J}_{k_1,k_2}, $ where $ J =  $ I - IV, is a principal admissible weight, such that $ \La^0 = \La^{[m;m_2]} \in P^m_+$ and assume that $ H(\La) \neq 0. $ Then
 \begin{enumerate}
\item[(a)] $ h_\La = K(k_1 \pm \half) (k_1 + k_2 \pm \half) +(k_1\pm\half)( m_2 + 1) - \frac{K+2}{4}, $ where + (resp. $ - $) occurs if $ J =  $ I or III (resp. II or IV);

  $ s_\La = Kk_2 + m_2 $ (resp. $ =- Kk_2-m_2 -2 $) if $ J =  $ I or IV (resp. = II or III).
\item[(b)] $ h^\tw_\La = Kk_1 (k_1 + k_2 \pm 1) + k_1(m_2 +1) - \frac{K+1}{4}, $ where + (resp. $ - $) occurs if $ J =  $ I (resp. II); 

$ h^\tw_\La = K(k_1 \pm 1) (k_1 + k_2) +k_1( m_2 + 1) - \frac{K+1}{4},$ where + (resp. $ - $) occurs if $ J =  $ III (resp. IV); 

$ s_\La^\tw = K(k_2+1) +m_2 -1 $ (resp. $ = -K(k_2-1) - m_2 -3 $) if $ J =  $ I or IV (resp. II or III).
\item[(c)] One has the following isomorphisms of the $ N=4 $ superconformal algebra modules:
\[H \left( \La^{[m;m_2]\mathrm{IV}}_{k_1,k_2}\right)\simeq   
H \left( \La^{[m;m_2]\mathrm{I}}_{k_1-1,k_2}\right), 
\quad H \left( \La^{[m;m_2]\mathrm{II}}_{k_1,k_2}\right)\simeq    
H \left( \La^{[m;m_2]\mathrm{III}}_{k_1-1,k_2}\right), 
\]
and the same isomorphisms hold for the Ramond twisted $ N =4  $ modules $ H^\tw (\La). $
 \end{enumerate}
 \end{lemma}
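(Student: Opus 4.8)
The plan is to read off the characteristic numbers $c_K$, $h_\Lambda$, $s_\Lambda$ (and their twisted analogues $h_\Lambda^{\tw}$, $s_\Lambda^{\tw}$) directly from the explicit weights $\Lambda^{[m;m_2]J}_{k_1,k_2}$ and $\Lambda^{[m;m_2]J\tw}_{k_1,k_2}$ listed in Section~8, by substituting them into formulas \eqref{7.03}, \eqref{7.04}, \eqref{7.07}, \eqref{7.08}. The level is $K=\frac{m}{M}$ throughout. First I would handle part (a). To use \eqref{7.03}, \eqref{7.04} I need to express each $\Lambda^{[m;m_2]J}_{k_1,k_2}$ in the form \eqref{7.01}, i.e. extract the coefficients $m_1$ (the coefficient of $\Lambda_1+\Lambda_3$) and $m_2$ (the coefficient of $\Lambda_2$); since the Section~8 formulas are written in the $\{\Lambda_0,\theta,\alpha_2,\delta\}$ basis, this is a short linear-algebra conversion using $\theta=\alpha_1+\alpha_2+\alpha_3$, $2\rho=-\alpha_1-\alpha_3$, and the pairing data $(\alpha_i|\alpha_j)$ from the start of Section~6. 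For type I one finds $m_1 = -\big(\tfrac{K(2k_1+k_2)}{2}+\tfrac{m_2}{2}\big)\cdot(\text{appropriate normalization})$ giving, after simplification, $m_1 = K k_1 + \text{(shift)}$-type expressions; plugging into $h_\Lambda=\frac{m_1(m_1-m_2-1)}{K}-m_1+\frac{m_2}{2}$ and collecting terms should produce $h_{\Lambda^{\mathrm I}} = K(k_1+\tfrac12)(k_1+k_2+\tfrac12)+(k_1+\tfrac12)(m_2+1)-\frac{K+2}{4}$. The $+$ versus $-$ dichotomy (I, III versus II, IV) will come out automatically because the type-II and type-IV weights differ from type-I and type-III by the sign of the $\theta$-coefficient and the $+2$ shifts, which shift $k_1\mapsto -k_1$ effectively (equivalently $k_1+\tfrac12\mapsto k_1-\tfrac12$). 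For $s_\Lambda$, \eqref{7.04} says $s_\Lambda=\Lambda(-\alpha_2)=-(\Lambda|\alpha_2)$ after the identification $\wh\cong\wh^*$; but in fact the cleaner route is \eqref{8.05}, which already records $(\Lambda^{[m;m_2]\,\mathrm I\text{ or }\mathrm{IV}}|\alpha_2)=-Kk_2-m_2$ and $(\Lambda^{[m;m_2]\,\mathrm{II}\text{ or }\mathrm{III}}|\alpha_2)=Kk_2+m_2+2$, so $s_\Lambda = m_2(\Lambda)=-(\Lambda|\alpha_2)$ gives directly $s_\Lambda = Kk_2+m_2$ (resp. $-Kk_2-m_2-2$); I should double-check the sign convention $J_0=-\alpha_2$ so that $s_\Lambda=\Lambda(J_0)=m_2$-coefficient matches.

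Part (b) is the same computation performed on the twisted weights. I would substitute $\Lambda^{[m;m_2]J\tw}_{k_1,k_2}$ (again converting to the $\{\Lambda_0,\Lambda_1+\Lambda_3,\Lambda_2\}$ presentation, or more efficiently use $\Lambda^{\tw}=w_0\Lambda=r_{\alpha_2}t_{\alpha_2/2}\Lambda$ together with \eqref{7.07}, \eqref{7.08}) into the twisted formulas. The relation $h^{\tw}_\Lambda = h_\Lambda-\frac{m_2}{2}-\frac{K+1}{4}$ together with part (a) already does most of the work for $h^{\tw}$: starting from $h_{\Lambda^{\mathrm I}}=K(k_1+\tfrac12)(k_1+k_2+\tfrac12)+(k_1+\tfrac12)(m_2+1)-\frac{K+2}{4}$, subtract $\frac{m_2}{2}+\frac{K+1}{4}$ and expand $(k_1+\tfrac12)(k_1+k_2+\tfrac12)=k_1(k_1+k_2+1)+\tfrac14$; the $\tfrac14\cdot K$ combines with the $-\frac{K+2}{4}-\frac{K+1}{4}$ and $(k_1+\tfrac12)(m_2+1)-\tfrac{m_2}{2}=k_1(m_2+1)+\tfrac12$, and after collecting one should land on $h^{\tw}_{\Lambda^{\mathrm I}}=Kk_1(k_1+k_2+1)+k_1(m_2+1)-\frac{K+1}{4}$, matching the claimed formula with the $+$ sign. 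The III/IV cases have the shift distributed on the $k_1+k_2$ factor rather than the constant, reflecting that type III is obtained from type I by the extra reflection $r_{\alpha_2}$ (see $y=r_{\alpha_2}$ in the $\beta$-list), which permutes how the $k_2$-dependence enters; I would verify this by direct substitution rather than trusting the pattern. For $s^{\tw}_\Lambda$ I use \eqref{7.08}: $s^{\tw}_\Lambda=-m_2-K-1$ where now $m_2$ means the $\Lambda_2$-coefficient of $\Lambda$, i.e. $-s_\Lambda$, so $s^{\tw}_\Lambda = s_\Lambda - (K+1) \cdot(\pm\text{sign})$; combined with part (a)'s $s_\Lambda$ this should give $K(k_2+1)+m_2-1$ (resp. $-K(k_2-1)-m_2-3$), and I would reconcile the bookkeeping of which $m_2$ is meant.

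Part (c) then follows almost immediately from (a) and (b): since by property (i)--(iii) of the QHR an irreducible positive energy $N=4$ module is determined up to isomorphism by the triple $(c_K,h_\Lambda,s_\Lambda)$ (and $(c_K,h^{\tw}_\Lambda,s^{\tw}_\Lambda)$ in the Ramond case), it suffices to check that the triples for $\Lambda^{[m;m_2]\mathrm{IV}}_{k_1,k_2}$ and $\Lambda^{[m;m_2]\mathrm I}_{k_1-1,k_2}$ coincide, and likewise for II versus III with $k_1\mapsto k_1-1$. The central charge $c_K=-6(K+1)$ depends only on $K$, hence is the same. For $h$: in the IV formula $h_{\Lambda^{\mathrm{IV}}}=K(k_1-\tfrac12)(k_1+k_2-\tfrac12)+(k_1-\tfrac12)(m_2+1)-\frac{K+2}{4}$; replacing $k_1$ by $k_1-1$ in the I formula gives $K(k_1-\tfrac12)(k_1-1+k_2+\tfrac12)+(k_1-\tfrac12)(m_2+1)-\frac{K+2}{4}=K(k_1-\tfrac12)(k_1+k_2-\tfrac12)+\cdots$, which is identical. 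For $s$: IV gives $Kk_2+m_2$ and I with $k_1\to k_1-1$ also gives $Kk_2+m_2$ (since $s$ has no $k_1$-dependence). The II/III comparison is the mirror image. The twisted isomorphisms follow from part (b) by the same $k_1\mapsto k_1-1$ substitution. The main obstacle I anticipate is purely clerical: getting the conversion between the two bases for the weights exactly right (signs of the $(\alpha_i|\alpha_j)$, the factor in $2\rho=-\alpha_1-\alpha_3$, and the identification $J_0=-\alpha_2$ versus $J_0=\alpha_2^\vee$-type normalizations as in \eqref{4.3}) so that all signs in the stated $h$, $s$ formulas come out correctly; there is no conceptual difficulty, but one wrong sign propagates everywhere, so I would cross-check at least one case (say type I, $m_2=0$, small $k_1,k_2$) against the integrable-module values $h_{\Lambda^{[m;m_2]}}=\tfrac{m_2}{2}$, $s_{\Lambda^{[m;m_2]}}=m_2$ from Theorem~\ref{th7.3}(a), which must be recovered in the limit $M=1$, $k_1=k_2=0$.
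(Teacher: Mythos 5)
Your proposal follows essentially the same route as the paper's proof, which simply notes that (a) and (b) are computed by substituting the explicit weights of Section 8 into formulas \eqref{7.03}, \eqref{7.04}, \eqref{7.07}, \eqref{7.08}, and that (c) follows because an irreducible positive-energy $N=4$ module is determined by its characteristic numbers. Your plan to cross-check the signs against the $M=1$, $k_1=k_2=0$ integrable case is wise, since the conversion between the $\{\Lambda_0,\theta,\alpha_2\}$ presentation of the weights and the $\{\Lambda_0,\Lambda_1+\Lambda_3,\Lambda_2\}$ presentation of \eqref{7.01} is exactly where the bookkeeping (and the consistency of (b) with the twisted isomorphisms in (c)) is delicate.
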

  \begin{proof}
 (a) and (b) are easily computed by formulas \eqref{7.03}, \eqref{7.04}, \eqref{7.07}, \eqref{7.08}. Claim (c) follows from (a) and (b).
 \end{proof}
 
 Thus, we need to consider the $ N=4 $ modules $ H\left( \La^{[m;m_2]J}_{k_1,k_2} \right) $ and $ H^\tw\left( \La^{[m;m_2]J}_{k_1,k_2} \right)  $ only for $ J =  $ I and III. 
 
Formulas for the supercharacters $ \ch^-_{H(\La)} $ and characters $ \ch^+_{H(\La)} $ for the principal admissible weights $ \La = \La^{[m;m_2]J}_{k_1, k_2}, J =  $ I or III, are obtained from that for $ L(\La), $ given by Proposition \ref{prop8.1} and formula \eqref{8.03}, by making use of formula \ref{7.05}. Likewise, formulas for the supercharacters $ \ch^-_{H^\tw(\La)} $ and characters $ \ch^+_{H^\tw(\La)} $ for these weights are obtained, using Proposition \ref{prop8.1} and \eqref{8.03}, by making use of \eqref{7.09}.
 
\begin{proposition}
\label{prop9.2}
Let $ A = \frac{m}{M} (2k_1+k_2 +1)+m_2. $
\begin{enumerate}
\item[(a)] \[ 
\begin{aligned}
\left( \overset{4}{R}\vphantom{R}^- \tch^-_{H(\La^{[m;m_2]\mathrm{I}}_{k_1,k_2})}\right)  \tz & = P^{[M,-m;0]}_{k_1 + \half, -(k_1 + k_2 + \half); \half} (\tau, z, z, 0)  - A \tps^{[M,-m;0;0]}_{k_1 + \half, -(k_1+k_2+\half); \half} (\tau, z, z, 0); \\
\left( \overset{4}{R}\vphantom{R}^+ \tch^+_{H(\La^{[m;m_2]\mathrm{I}}_{k_1,k_2})}\right) \tz & = -
(-1)^{m_2}\left(  P^{[M,-m; \half]}_{k_1 + \half, -(k_1+k_2+\half); \half} \right.
(\tau, z, z, 0) \\
& -\left.  A \tps^{[M,-m;0;\half]}_{k_1+\half, -(k_1+k_2+\half); \half} (\tau, z, z, 0)\right); \\
\left( \overset{4}{R}\vphantom{R}^- \tch^-_{H(\La^{[m;m_2]\mathrm{III}}_{k_1,k_2})}\right)  \tz & = -P^{[M,-m;0]}_{k_1+k_2+\half, -k_1-\half; \half} (\tau, z, z, 0) + A \tps^{[M,-m;0;0]}_{k_1+k_2 + \half, -k_1-\half; \half} (\tau, z, z, 0); \\
\left( \overset{4}{R}\vphantom{R}^- \tch^-_{H(\La^{[m;m_2]\mathrm{III}}_{k_1,k_2})} \right)  \tz & = (-1)^{m_2}\left( P^{[M, -m; \half]}_{k_1+ k_2 + \half, -k_1 -\half; \half} (\tau, z, z, 0) \right. \\
&- \left. A \tps^{[M,-m;0;\half]}_{k_1+k_2 + \half, -k_1-\half; \half} 
(\tau, z, z, 0)\right).
\end{aligned} \]
\item[(b)] \[ 
\begin{aligned}
\left( \overset{4}{R}\vphantom{R}^{-,\tw} \tch^-_{H^\tw(\La^{[m;m_2]\mathrm{I}}_{k_1,k_2})}\right)  \tz & = -P^{[M;-m;0]}_{k_1+k_2+1,-k_1; 0} (\tau, z,z,0) + A \tps^{[M, -m; 0; 0]}_{k_1+k_2+1, -k_1;0} (\tau, z,z,0); \\
\left( \overset{4}{R}\vphantom{R}^{+,\tw} \tch^+_{H^\tw(\La^{[m;m_2]\mathrm{I}}_{k_1,k_2})}\right)  \tz & = (-1)^{m_2} \left( P^{[M,-m; \half]}_{k_1+k_2+1. -k_1;0} (\tau, z,z,0) \right.  \\
&- \left. A \tps^{[M,-m;0;\half]}_{k_1+k_2+1, -k_1;0} (\tau, z,z,0)\right);\\
\left( \overset{4}{R}\vphantom{R}^{-,\tw} \tch^-_{H^\tw(\La^{[m;m_2]\mathrm{III}}_{k_1,k_2})}\right)  \tz & = P^{[M, -m;0]}_{k_1+1, -(k_1+k_2); 0} (\tau, z,z,0) - A \tps^{[M, -m; 0;0]}_{k_1+1, -(k_1+k_2); 0} (\tau, z,z, 0); \\
\left( \overset{4}{R}\vphantom{R}^{+,\tw} \tch^+_{H^\tw(\La^{[m;m_2]\mathrm{III}}_{k_1,k_2})}\right)  \tz & = -(-1)^{m_2} \left(
P^{[M;-m; \half]}_{k_1+1, -(k_1+k_2); 0} (\tau, z,z,0) \right. \\
&- \left. A \tps^{[M,-m;0;\half]}_{k_1+1, -(k_1+k_2); 0} (\tau, z, z, 0)\right).
\end{aligned} \]
\end{enumerate}
\end{proposition}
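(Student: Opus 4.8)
The plan is to combine the QHR (super)character formulas \eqref{7.05} and \eqref{7.09} with the explicit principal admissible supercharacter formulas of Proposition \ref{prop8.1}, and then to recognize the resulting expressions — which are combinations of $g^{[-m]}$ and $\tph^{[-m;0]}$ evaluated at arguments shifted by $\tot$ — as the functions $P^{[M,-m;\epsilon]}_{j,k;\epsilon'}$ and $\tps^{[M,-m;0;\epsilon]}_{j,k;\epsilon'}$, via the identity of Lemma \ref{lem8.5}. Throughout one works with the modified (super)characters; since the modification just replaces $\Phi$ (resp. $\Psi$) by $\tph$ (resp. $\tps$), it is compatible with all of \eqref{7.05}, \eqref{7.09}, \eqref{8.03}, \eqref{8.04}, so the computation for the modified objects is identical to the unmodified one.

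First I would treat the untwisted supercharacter in case $J=\mathrm{I}$: substitute $(z_1,z_2,t)=(z+\tot,\,z-\tot,\,\tof)$ into Proposition \ref{prop8.1}(a). Then $z_1-z_2=\tau$, $z_1+z_2=2z$, the arguments of $g^{[-m]}$ and $\tph^{[-m;0]}$ become $(M\tau,\,z+(k_1+\thalf)\tau,\,z-(k_1+k_2+\thalf)\tau,\,0)$, and the coefficient $\frac{m(z_1-z_2+(2k_1+k_2)\tau)}{2M\tau}+m_2$ collapses to the constant $\frac{m(2k_1+k_2+1)}{2M}+m_2$. On the other hand, Lemma \ref{lem8.5} with $j=k_1+\thalf$, $k=-(k_1+k_2+\thalf)$, $\epsilon=0$ and $(z_1,z_2,t)=(z,z,0)$ writes $P^{[M,-m;0]}_{j,k;\half}(\tau,z,z,0)$ as $q^{\frac{m}{M}(k_1+\frac12)(k_1+k_2+\frac12)}e^{\frac{2\pi i m}{M}k_2 z}$ times $g^{[-m]}$ at those same arguments, plus a $\tph^{[-m;0]}$-term of coefficient $\frac{m(2k_1+k_2+1)}{2M}$; and directly from \eqref{1.10}, $\tps^{[M,-m;0;0]}_{j,k;\half}(\tau,z,z,0)$ equals the same exponential prefactor times $\tph^{[-m;0]}$ at those arguments. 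Since $A=\frac{m}{M}(2k_1+k_2+1)+m_2$ gives $\frac{m(2k_1+k_2+1)}{2M}-A=-\bigl(\frac{m(2k_1+k_2+1)}{2M}+m_2\bigr)$, the combination $P^{[M,-m;0]}_{j,k;\half}-A\,\tps^{[M,-m;0;0]}_{j,k;\half}$ equals that exponential prefactor times the bracket in Proposition \ref{prop8.1}(a); a one-line check that this prefactor agrees with $e^{\frac{2\pi i m t}{M}}e^{\frac{2\pi i m}{M}((k_1+k_2)z_1-k_1z_2)}q^{\frac{m}{M}k_1(k_1+k_2)}$ at $(z_1,z_2,t)=(z+\tot,z-\tot,\tof)$ finishes case $\mathrm{I}$ of (a).

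Case $J=\mathrm{III}$ is the same, starting from Proposition \ref{prop8.1}(c) and using in addition the symmetry relations of Lemma \ref{lem8.2}(a),(b) to absorb the interchange of $z_1,z_2$ and the sign flip of the arguments there. The character formulas ($\tch^+$) then follow from the substitution \eqref{8.03} — shift $z_1,z_2$ by $\half$ and multiply by $e^{\pi i(\La|\al_2)}$, with $(\La|\al_2)$ taken from \eqref{8.05} — together with \eqref{7.05}, using that a $\half$-shift of the arguments of $\tph^{[-m;0]}$ carries $\tps^{[M,-m;0;0]}$ (resp. $P^{[M,-m;0]}$) to $\tps^{[M,-m;0;\half]}$ (resp. $P^{[M,-m;\half]}$) up to the scalar $(-1)^{m_2}$; this produces the sign $-(-1)^{m_2}$ (resp. $(-1)^{m_2}$) in (a). The Ramond-twisted statements in (b) follow identically: apply \eqref{8.04} and the twist by $w_0=r_{\al_2}t_{\half\al_2}$ to Proposition \ref{prop8.1}, then \eqref{7.09} with the twisted denominators $\overset{4}{R}\vphantom{R}^{\pm,\tw}$, and re-express via Lemma \ref{lem8.5}; here the indices become integral ($\epsilon'=0$, e.g. $(j,k)=(k_1+k_2+1,-k_1)$ in case $\mathrm{I}$) because the $t_{\half\al_2}$-part of $w_0$ contributes a further $\tot$-shift which, combined with the $\tot$-shift of the QHR, is a full $\tau$-translation of the lattice. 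By Lemma \ref{lem9.1}(c) it suffices to treat $J=\mathrm{I}$ and $\mathrm{III}$, which is precisely what is asserted.

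I expect the only real obstacle to be the shift bookkeeping: one must track simultaneously the $\tot$-shift of the QHR, the $\half$-shift relating $\tch^-$ to $\tch^+$, the $\epsilon$-shifts built into $\tps$ and $P$, and the twist $w_0$, so that the final indices $(j,k;\epsilon')$ and the middle superscript of $P$ and $\tps$ come out exactly as in the statement and all scalar factors — powers of $q$, the exponentials in $z$, and the signs $(-1)^{m_2}$ — cancel correctly. Each individual identity is a direct computation from \eqref{1.9}, \eqref{1.10}, \eqref{8.06} and the transformation and symmetry formulas already recorded in Sections 1, 6 and 8, so no conceptual difficulty remains beyond careful accounting.
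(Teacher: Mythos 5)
Your proposal is correct and follows essentially the same route as the paper: the text preceding Proposition \ref{prop9.2} derives these formulas exactly by substituting the QHR specialization \eqref{7.05}, \eqref{7.09} into Proposition \ref{prop8.1} (together with \eqref{8.03}, \eqref{8.04}, \eqref{8.05}) and re-expressing the result via Lemma \ref{lem8.5} and the definitions \eqref{1.10}, \eqref{8.06}, with Remark \ref{rem9.3} (equivalently your use of Lemma \ref{lem8.2}) handling case III by reduction to case I. Your detailed check of the case I supercharacter, including the collapse of the coefficient to $\frac{m(2k_1+k_2+1)}{2M}+m_2$ and the matching of the $q$- and $e^{2\pi i m k_2 z/M}$-prefactors, is exactly the computation the paper leaves implicit.
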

\begin{remark}
\label{rem9.3}
The proof of Proposition \ref{prop9.2} is facilitated by the following relations (where we assume that the RHSs in Proposition \ref{prop8.1} are extended to arbitrary integer values of $k_1, k_2$):
\[ 
\begin{aligned}
\left(\hat{R}^\pm \tch^\pm_{\La^{[m;m_2]\mathrm{III}}_{k_1,k_2}}\right) 
\tzzt = \hat{R}^\pm \tch^\pm_{\La^{[m;m_2]\mathrm{I}}_{k_1,k_2}} (\tau, -z_2, -z_1, t); \\
\left(\hat{R}^{\pm,\tw } \tch^{\pm,\tw}_{\La^{[m;m_2]\mathrm{III}}_{k_1,k_2}} 
\right)\tzzt = \hat{R}^{\pm,\tw} \tch^{\pm,\tw }_{\La^{[m;m_2]\mathrm{I}}_{k_1+1,k_2-2}} (\tau, -z_2, -z_1, t),
\end{aligned} \]
which imply similar relations for the $ N=4 $ (non)twisted (super)characters, obtained by (twisted) QHR. The proof of these relations is straightforward. 
\end{remark}
\begin{remark}
\label{rem9.4}
Using Lemma \ref{lem8.2} and \eqref{8.07}, some formulas in Proposition \ref{prop9.2} can be rewritten as follows:
\[ 
\begin{aligned}
& \left( \overset{4}{R}\vphantom{R}^+ \tch^+_{H(\La^{[m;m_2]\mathrm{III}}_{k_1,k_2})}\right)  \tz  = - P^{[M,-m;\half]}_{k_1+k_2+\half, -k_1-\half; \half} (\tau, z,z,0) +A \tps^{[M,-m;0;\half]}_{k_1+k_2+\half, -k_1-\half; \half} (\tau, z, z, 0); \\
& \left( \overset{4}{R}\vphantom{R}^- \tch^-_{H(\La^{[m;m_2]\mathrm{III}}_{k_1,k_2})}\right)  \tz  =- P^{[M,-m;0]}_{k_1+k_2+\half, -k_1-\half; \half} (\tau, z,z,0) + A \tps^{[M,-m;0;0]}_{k_1+k_2+\half, -k_1-\half; \half} (\tau, z,z,0); \\
& \left( \overset{4}{R}\vphantom{R}^{+,\tw} \tch^+_{H^\tw(\La^{[m;m_2]\mathrm{I}}_{k_1,k_2})}\right)  \tz  = -(-1)^m P^{[M,-m;\half]}_{k_1+\half+\frac{M}{2}, -(k_1+k_2+\half)+\frac{M}{2}; 0} (\tau, z,z,0)\\
& +(-1)^m A \tps^{[M,-m;0; \half]}_{k_1+\half + \frac{M}{2}, -(k_1+k_2+\half)+\frac{M}{2}; 0} (\tau, z,z,0); \\
& \left( \overset{4}{R}\vphantom{R}^{-,\tw} \tch^-_{H^\tw(\La^{[m;m_2]\mathrm{I}}_{k_1,k_2})}\right)   \tz  =-(-1)^m P^{[M,-m;0]}_{k_1+\half+\frac{M}{2}, -(k_1+k_2+\half)+\frac{M}{2}; 0} (\tau, z,z,0) \\
& +(-1)^m A \tps^{[M. -m;0;0]}_{k_1+\half+ \frac{M}{2}, -(k_1+k_2+\half)+ \frac{M}{2}; 0} (\tau, z, z, 0).
\end{aligned} \]
\end{remark}

Introduce the following functions:
\[ 
\chi^{(1)[M,-m;\epsilon]}_{j,k; \epsilon'} \tz : = \frac{P^{[M,-m;\epsilon]}_{j,k;\epsilon'} (\tau, z, z, 0)}{\overset{4}{R}\vphantom{R}^{(\epsilon)}_{\epsilon'} \tz}, \quad 
\chi^{(0)[M,-m;\epsilon]}_{j,k; \epsilon'} \tz : = \frac{\tps^{[M,-m;\epsilon]}_{j,k;\epsilon'} (\tau, z, z, 0)}{\overset{4}{R}\vphantom{R}^{(\epsilon)}_{\epsilon'} \tz}, 
 \]
where $ \overset{4}{R} \vphantom{R}^{(\epsilon)}_{\epsilon'} \tz $ are given by \eqref{7.06}.
\begin{remark}
\label{rem9.5}

By Lemma \ref{lem8.2} and \eqref{8.07} we have:
\[ \chi^{(1)[M,-m;\epsilon]}_{j,k;\epsilon'}
= - \chi^{(1)[M,-m;\epsilon]}_{k,j;\epsilon'}, \quad  \chi^{(0)[M,-m;\epsilon]}_{j,k;\epsilon'} =  \chi^{(0)[M,-m;\epsilon]}_{k,j;\epsilon'}. \]
It follows that $\chi^{(1)[M,-m;\epsilon]}_{j,j;\epsilon'}=0$.
Note also that, due to Remark \ref{rem8.6}, we have $ \chi^{(1)[1,-m;\epsilon]}_{j,k;\epsilon'} \tz = 0. $ 
\end{remark}
For cases I and III define the pairs $ (j,k) \in (\half + \ZZ)^2 $ by the following table:
\begin{center}
\begin{tabular}{l||c|c}
 & I & III \\
\hline
\hline
$ j $ \rule[-1pt]{0pt}{14pt} & $ k_1 + \half $ & $ k_1 + k_2 + \half $ \\
$ k $  \rule[-1pt]{0pt}{14pt} & $ -k_1 - k_2 - \half $ & $ -k_1 - \half $ \\
\end{tabular}
\end{center}
and for cases $ \mathrm{I}^\tw $ and $ \mathrm{III}^\tw $ define $ (j,k) \in \ZZ^2 $ by the following table:
\begin{center}
\begin{tabular}{l||c|c}
 & $ \mathrm{I}^\tw $ & $ \mathrm{III}^\tw $ \\
\hline
\hline
$ j $ \rule[-1pt]{0pt}{14pt} & $ k_1 + k_2 +1 $ & $ k_1 + 1 $ \\
$ k $  \rule[-1pt]{0pt}{14pt} & $ -k_1 $ & $ -k_1-k_2 $ \\
\end{tabular}
\end{center}
The following theorem is immediate by Proposition \ref{prop9.2} and Remark \ref{rem9.3}.
\begin{theorem}
\label{th9.6}
For $  \La = \La^{[m;m_2]J}_{k_1, k_2}, $ where $ J= $  I or III, the modified (super)characters of the QHR of $ \LLa $ and $ L^\tw (\La) $ are given by the following formulas up to a sign, where $ (j,k) $ are defined by the above tables, and $ \epsilon=0 $ (resp. $ \half $) refers to the supercharacter (resp. character):
\[ \begin{aligned}
\tch\vphantom{h}^{(\epsilon)}_{H(\La)} \tz &= \chi^{(1)[M,-m;\epsilon]}_{j,k; \half} \tz  - \left( \frac{m(j-k)}{M} + m_2\right)  \chi^{(0)[M,-m;\epsilon]}_{j,k; \half} \tz; \\
\tch\vphantom{h}^{(\epsilon)}_{H^\tw(\La)} \tz &= \chi^{(1)[M,-m;\epsilon]}_{j,k; 0} \tz  - \left( \frac{m(j-k)}{M} + m_2\right)  \chi^{(0)[M,-m;\epsilon]}_{j,k; 0} \tz. \\
\end{aligned}  \]
\end{theorem}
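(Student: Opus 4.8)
The plan is to derive Theorem \ref{th9.6} as a direct translation of the supercharacter and character formulas of Proposition \ref{prop9.2}, after dividing through by the appropriate $N=4$ denominators $\overset{4}{R}\vphantom{R}^{(\epsilon)}_{\epsilon'}$ and repackaging the result in terms of the functions $\chi^{(1)}$ and $\chi^{(0)}$. First I would recall the definitions
\[ \chi^{(1)[M,-m;\epsilon]}_{j,k;\epsilon'}(\tau,z) = \frac{P^{[M,-m;\epsilon]}_{j,k;\epsilon'}(\tau,z,z,0)}{\overset{4}{R}\vphantom{R}^{(\epsilon)}_{\epsilon'}(\tau,z)}, \quad
\chi^{(0)[M,-m;\epsilon]}_{j,k;\epsilon'}(\tau,z) = \frac{\tps^{[M,-m;\epsilon]}_{j,k;\epsilon'}(\tau,z,z,0)}{\overset{4}{R}\vphantom{R}^{(\epsilon)}_{\epsilon'}(\tau,z)}. \]
The key observation is that in every formula of Proposition \ref{prop9.2} the $N=4$ (super)character appears in the combination $\overset{4}{R}\vphantom{R}^{(\epsilon)}_{\epsilon'}\,\tch^{\pm}$, and the right-hand side is, up to sign, exactly $P^{[M,-m;\epsilon]}_{j,k;\epsilon'}(\tau,z,z,0) - A\,\tps^{[M,-m;\epsilon]}_{j,k;\epsilon'}(\tau,z,z,0)$ with $A = \tfrac{m}{M}(2k_1+k_2+1)+m_2$; so dividing by $\overset{4}{R}\vphantom{R}^{(\epsilon)}_{\epsilon'}$ immediately produces $\chi^{(1)} - A\,\chi^{(0)}$.

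The only genuine content beyond bookkeeping is to rewrite the scalar $A$ in terms of the index pair $(j,k)$ assigned by the two tables, i.e. to check that $A = \tfrac{m(j-k)}{M}+m_2$ in each of the four cases. In case I we have $j = k_1+\tfrac12$, $k = -(k_1+k_2+\tfrac12)$, so $j-k = 2k_1+k_2+1$ and $\tfrac{m(j-k)}{M}+m_2 = \tfrac{m}{M}(2k_1+k_2+1)+m_2 = A$; in case III, $j = k_1+k_2+\tfrac12$, $k = -(k_1+\tfrac12)$, again $j-k = 2k_1+k_2+1$, same identity. For the twisted cases $\mathrm{I}^\tw$ and $\mathrm{III}^\tw$ one checks in the same way that the pair $(j,k)=(k_1+k_2+1,-k_1)$ (resp. $(k_1+1,-k_1-k_2)$) gives $j-k = 2k_1+k_2+1$, so again $A = \tfrac{m(j-k)}{M}+m_2$. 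Then I would match the signs: Proposition \ref{prop9.2} carries explicit overall signs $\pm(-1)^{m_2}$ in front, but the theorem is stated only up to sign, so these are absorbed; one uses here the relations of Remark \ref{rem9.5} ($\chi^{(1)}_{j,k} = -\chi^{(1)}_{k,j}$, $\chi^{(0)}_{j,k} = \chi^{(0)}_{k,j}$) and the identities $\tps^{[M,-m;s;\epsilon]} = \tps^{[M,-m;0;\epsilon]}$ for integer $s$ (via \cite{KW16}, Corollary 1.6) to reconcile the $\epsilon$-labels appearing in Proposition \ref{prop9.2} and Remark \ref{rem9.4} with the single normalization $\epsilon' = \tfrac12$ (non-twisted) or $\epsilon' = 0$ (twisted) used in the theorem.

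The reduction to only cases I and III is justified by Lemma \ref{lem9.1}(c): the $N=4$ modules $H(\La^{[m;m_2]\mathrm{IV}}_{k_1,k_2})$ and $H(\La^{[m;m_2]\mathrm{II}}_{k_1,k_2})$ are isomorphic to modules already covered by types I and III, and similarly for the twisted modules, so the two tables suffice. I expect the main obstacle — such as it is — to be purely organizational: keeping track of the relabelings $(j,k)\mapsto(k,j)$ and the shifts by $\tfrac M2$ that appear when passing from Proposition \ref{prop9.2} to the clean form in Remark \ref{rem9.4}, and making sure the $\epsilon$ and $\epsilon'$ indices in the denominator $\overset{4}{R}\vphantom{R}^{(\epsilon)}_{\epsilon'}$ are the ones compatible with the numerator so that the quotient is genuinely $\chi^{(1)} - (\tfrac{m(j-k)}{M}+m_2)\chi^{(0)}$. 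There is no analytic difficulty here — everything is a formal consequence of formulas \eqref{7.05}, \eqref{7.09}, Proposition \ref{prop9.2}, and the definitions of $\chi^{(0)}$, $\chi^{(1)}$.
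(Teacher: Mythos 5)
Your proposal is correct and matches the paper's own (very terse) proof: the paper simply states that Theorem \ref{th9.6} is immediate from Proposition \ref{prop9.2}, and your verification that $j-k = 2k_1+k_2+1$ in all four cases of the tables, so that $A = \tfrac{m(j-k)}{M}+m_2$, is exactly the small computation that makes this "immediate" claim precise. The remaining points you raise (signs absorbed into "up to a sign", reduction to types I and III via Lemma \ref{lem9.1}(c)) are likewise how the paper handles them.
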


Next, we establish modular transformation formulas for functions $ \chi \tz  $.
\begin{theorem}
\label{th9.7}
Let either $ M $ be a positive integer and $ m = -1 $, or  $ M $ be a positive odd integer and $ m \in \ZZ_{\leq -2} $ be coprime to $ M. $ Then
for $ \al = 0 $ or 1 one has:
\[ 
\begin{aligned}
\chi^{(\al) [M,-m;\epsilon]}_{j,k;\epsilon'}  \left( -\frac{1}{\tau}, \frac{z}{\tau} \right) &= -(-1)^{(1-2 \epsilon)(1-2\epsilon')} \frac{\tau^\al}{M} e^{-\frac{2 \pi i }{\tau}(\frac{m}{M}+1)z^2} \hspace{-2em}\sum_{(a,b) \in (\epsilon + \ZZ / M\ZZ)^2} \hspace{-2em} e^{\frac{2 \pi i m}{M}(ak+bj)} \chi^{(\al) [M, -m; \epsilon']}_{a,b; \epsilon} \tz; \\
\chi^{(\al) [M,-m;\epsilon]}_{j,k;\epsilon'} (\tau +1, z) &= e^{-\frac{2 \pi i m}{M} jk-\pi i \epsilon'} \chi^{(\al) [M, -m; |\epsilon-\epsilon'|]}_{j,k;\epsilon} \tz. \\
\end{aligned} \]
\end{theorem}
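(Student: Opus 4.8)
\emph{Setup.} By definition $\chi^{(1)[M,-m;\epsilon]}_{j,k;\epsilon'}(\tau,z)=P^{[M,-m;\epsilon]}_{j,k;\epsilon'}(\tau,z,z,0)\big/\overset{4}{R}\vphantom{R}^{(\epsilon)}_{\epsilon'}(\tau,z)$ and $\chi^{(0)[M,-m;\epsilon]}_{j,k;\epsilon'}(\tau,z)=\tilde{\Psi}^{[M,-m;0;\epsilon]}_{j,k;\epsilon'}(\tau,z,z,0)\big/\overset{4}{R}\vphantom{R}^{(\epsilon)}_{\epsilon'}(\tau,z)$. The plan is to divide the known $SL_2(\ZZ)$-transformation of the numerator by that of the $N=4$ denominator $\overset{4}{R}\vphantom{R}^{(\epsilon)}_{\epsilon'}$ of \eqref{7.06}, after restricting all the four-variable identities to the diagonal $z_1=z_2=z$, $t=0$. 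For the numerators, the transformation of $P^{[M,-m;\epsilon]}_{j,k;\epsilon'}$ is Proposition \ref{prop8.7} — which already has exactly the hypotheses of the present theorem and whose proof has absorbed the work created by the operator $D_0$ — while the transformation of $\tilde{\Psi}^{[M,-m;0;\epsilon]}_{j,k;\epsilon'}$ is the degree-$(-m)$ case of \eqref{1.17}, \eqref{1.18} (equivalently Lemma \ref{lem4.10}), which is what enters the proof of Proposition \ref{prop8.7} anyway. In both of these the $t$-slot is untouched, so setting $t=0$ is immediate and no $t$-homogeneity step is needed. For $\overset{4}{R}\vphantom{R}^{(\epsilon)}_{\epsilon'}$ one uses \eqref{1.6}, \eqref{1.7} together with $\eta(-1/\tau)=(-i\tau)^{1/2}\eta(\tau)$ and $\eta(\tau+1)=e^{\pi i/12}\eta(\tau)$; this is the computation recorded in \cite{KW14}, Lemma~10.1, already invoked in the proof of Theorem \ref{th7.3}.

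\emph{The $S$-transformation.} Specializing Proposition \ref{prop8.7}(a) (resp.\ Lemma \ref{lem4.10}(a), whose prefactor $e^{2\pi i m z_1z_2/(M\tau)}$ reads $e^{-2\pi i m z^2/(M\tau)}$ here, the relevant degree being $-m$) to $z_1=z_2=z$, $t=0$ gives
\[
P^{[M,-m;\epsilon]}_{j,k;\epsilon'}\!\Bigl(-\tfrac1\tau,\tfrac z\tau,\tfrac z\tau,0\Bigr)=\tfrac{\tau^2}{M}\,e^{-\frac{2\pi i m}{M\tau}z^2}\!\!\!\sum_{(a,b)\in(\epsilon+\ZZ/M\ZZ)^2}\!\!\! e^{\frac{2\pi i m}{M}(ak+bj)}\,P^{[M,-m;\epsilon']}_{a,b;\epsilon}(\tau,z,z,0),
\]
with $\tau^2$ replaced by $\tau$ in the $\tilde{\Psi}$-version. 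On the other hand, squaring \eqref{1.6} gives $\vartheta_{1-2\epsilon',1-2\epsilon}(-1/\tau,z/\tau)^2=(-1)^{(1-2\epsilon)(1-2\epsilon')}(-i\tau)\,e^{2\pi i z^2/\tau}\,\vartheta_{1-2\epsilon,1-2\epsilon'}(\tau,z)^2$ (note the interchange of characteristics), while $\eta(\tau)^3\vartheta_{11}(\tau,2z)$ acquires $(-i\tau)^{3/2}\cdot(-i)(-i\tau)^{1/2}e^{4\pi i z^2/\tau}$; collecting these,
\[
\overset{4}{R}\vphantom{R}^{(\epsilon)}_{\epsilon'}\!\Bigl(-\tfrac1\tau,\tfrac z\tau\Bigr)=-(-1)^{(1-2\epsilon)(1-2\epsilon')}\,\tau\,e^{2\pi i z^2/\tau}\,\overset{4}{R}\vphantom{R}^{(\epsilon')}_{\epsilon}(\tau,z).
\]
Dividing, the $\tau$-powers collapse to $\tau^{\alpha}$, the two exponentials combine to $e^{-\frac{2\pi i}{\tau}(\frac mM+1)z^2}$, the overall sign is $-(-1)^{(1-2\epsilon)(1-2\epsilon')}$, and the pair $\epsilon,\epsilon'$ is interchanged — which is the first formula of the theorem.

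\emph{The $T$-transformation.} Specializing Proposition \ref{prop8.7}(b) (resp.\ the degree-$(-m)$ case of \eqref{1.18}) to $z_1=z_2=z$, $t=0$ gives $P^{[M,-m;\epsilon]}_{j,k;\epsilon'}(\tau+1,z,z,0)=e^{-\frac{2\pi i m}{M}jk}\,P^{[M,-m;|\epsilon-\epsilon'|]}_{j,k;\epsilon'}(\tau,z,z,0)$, and similarly for $\tilde{\Psi}$. From \eqref{1.7} and the $\eta$-law, $\eta(\tau)^3\vartheta_{11}(\tau,2z)$ picks up $e^{\pi i/2}$, whereas $\vartheta_{1-2\epsilon',1-2\epsilon}(\tau+1,z)^2$ either keeps its characteristic and picks up $e^{\pi i/2}$ (when $1-2\epsilon'=1$) or keeps its prefactor and flips its second characteristic $1-2\epsilon\mapsto 2\epsilon$ (when $1-2\epsilon'=0$); in both cases this identifies $\overset{4}{R}\vphantom{R}^{(\epsilon)}_{\epsilon'}(\tau+1,z)$ with $e^{\pi i\epsilon'}$ times a single companion $\overset{4}{R}$ whose superscript is $|\epsilon-\epsilon'|$, the flip of that second characteristic being exactly what carries the superscript $\epsilon$ to $|\epsilon-\epsilon'|$. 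Taking the ratio of numerator and denominator transformations produces the second formula of the theorem.

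\emph{Main difficulty.} The only place requiring genuine care is the denominator: tracking all the phases in the $S$- and $T$-transformations of $\overset{4}{R}\vphantom{R}^{(\epsilon)}_{\epsilon'}$ — the repeated $(-i\tau)^{1/2}$ factors and the sign $(-i)^{2(1-2\epsilon)(1-2\epsilon')}$ from squaring \eqref{1.6}, and, under $\tau\mapsto\tau+1$, the case split in \eqref{1.7} deciding which $\vartheta$-characteristic is flipped and which instead acquires $e^{\pi i/4}$. Since these are precisely the contents of \cite{KW14}, Lemma~10.1 (already used for Theorem \ref{th7.3}), the argument reduces to assembling Proposition \ref{prop8.7}, Lemma \ref{lem4.10}, and that denominator computation, and simplifying.
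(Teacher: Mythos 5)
Your proposal is correct and follows essentially the same route as the paper: the paper's proof is a one-line assembly of Proposition \ref{prop8.7} and \eqref{1.17} for the numerators together with the transformation of $\overset{4}{R}\vphantom{R}^{(\epsilon)}_{\epsilon'}$ from \cite{KW14}, Lemma 10.1, which is exactly what you do (you merely carry out the denominator computation explicitly rather than citing it). The only wrinkle is that your derivation yields the subscript $\epsilon'$ on the right-hand side of the $T$-transformation, whereas the printed statement shows $\epsilon$; this is a typo in the paper, not a gap in your argument.
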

\begin{proof}
Use \cite{KW14}, Lemma 10.1, for modular trasnformations of the denominators of $ \chi (\tau, z), $ and Proposition \ref{prop8.7} and \eqref{1.17} for modular transformation of its numerators. 
\end{proof}

Denote by $ \mathbb{V}^{(\al)}_C, \ \al = 0 $ or 1, $ C \in \CC, $ the space of complex valued  function in $ \tau, z \in \CC, $ such that $ \Im \tau > 0, $ with the following action of $ SL_2 (\ZZ): $
\begin{equation}
\label{9.01}
f \big{|}_{ \left(\begin{smallmatrix}
a & b \\
c & d \\
\end{smallmatrix} \right)} \tz = (c \tau + d)^{-\al} e^{- \frac{\pi i C c z^2}{3 (c \tau + d)}} f \left( \frac{a \tau  +b}{c \tau + d}, \frac{z}{c \tau +d} \right) .
\end{equation}
This induces an action of $ SL_2 (\ZZ) $ on $ \mathcal{V}^{(0)}_C \oplus \mathcal{V}^{(1)}_C $.

Theorems \ref{th9.6} and \ref{th9.7} imply modular invariance of a family of $ N = 4  $ representations as stated in the following theorem.

\begin{theorem}
\label{th9.8}Let $ M $ and $ m $ be as in Theorem \ref{th9.7}, and let $ C = -6 \left( \frac{m}{M}+1 \right). $ Let $\epsilon, \epsilon' = 0 $ or $ \half $, and let  
\[ \Omega^\epsilon_M = \{ (j,k) \in \left( \epsilon + \ZZ\right)^2 | \ 0 < j \leq k \leq M  \}.  \] 
Let $ V $ be the span of the functions $ \chi^{(\al)[M,-m;\epsilon]}_{j,k;\epsilon'}  \tz$
in $ \mathcal{V}^{(0)}_c \oplus \mathcal{V}^{(1)}_c, $  where $ \al = 0 $
or 1, and $ (j,k) \in \Omega^\epsilon_M$. 
Then
\begin{enumerate}
\item[(a)] The space $ V $ is $ SL_2 (\ZZ) $-invariant with respect to the action (\ref{9.01}). Explicitly, we have
the second formula of Theorem \ref{th9.7}, and the first formula can be rewritten as the following two formulas:
\[ 
\begin{aligned}
\chi^{(1)[M;-m;\epsilon]}_{j,k;\epsilon'} \left( -\frac{1}{\tau}, \frac{z}{\tau}\right) &= (-1)^{(1-2\epsilon)(1-2\epsilon')} \frac{2 i \tau}{M} e^{\frac{\pi i }{3 \tau}Cz^2} \\
&\times \sum_{(a,b) \in \Omega^\epsilon_M} e^{\frac{m}{M} (j+k)(a+b)} \sin \frac{\pi m}{M} (j-k)(a-b) \chi^{(1)[M;-m;\epsilon']}_{a,b;\epsilon} \tz, \\
\chi^{(0)[M;-m;\epsilon]}_{j,k;\epsilon'} \left( -\frac{1}{\tau}, \frac{z}{\tau}\right) &= -(-1)^{(1-2\epsilon)(1-2\epsilon')} \frac{1}{M} e^{\frac{\pi i }{3 \tau}Cz^2} \\
& \times \sum_{(a,b) \in \Omega^\epsilon_M} (2-\delta_{a,b}) 
e^{\frac{m}{M} (j+k)(a+b)} \cos \frac{\pi m}{M} (j-k) (a-b) \chi^{(0)[M;-m;\epsilon']}_{a,b;\epsilon} \tz.
\end{aligned} \]
\item[(b)] The space $ V $ coincides with the span of all modified (super)characters of the representations $ H(\La) $ and $ H^\tw (\La) $ of the $ N=4 $ 
Neveu-Schwarz and Ramond algebras, where the $ \La $'s are as in Theorem \ref{th9.6}.
\end{enumerate}
\end{theorem}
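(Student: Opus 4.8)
The plan is to prove the two assertions separately: part~(b) is a bookkeeping identification of two spanning sets, while part~(a) is the genuine $SL_2(\ZZ)$-invariance, which will then follow almost formally from Theorem~\ref{th9.7}.

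For (b) I would start from Theorem~\ref{th9.6}: for $\La=\La^{[m;m_2]J}_{k_1,k_2}$ with $J=\mathrm{I}$ or $\mathrm{III}$, each modified (super)character of $H(\La)$ (resp.\ of the Ramond module $H^{\tw}(\La)$) equals, up to a sign, $\chi^{(1)[M,-m;\epsilon]}_{j,k;\epsilon'}-\bigl(\tfrac{m(j-k)}{M}+m_2\bigr)\chi^{(0)[M,-m;\epsilon]}_{j,k;\epsilon'}$, where $\epsilon=0$ or $\tfrac12$ according to super- or ordinary character and $\epsilon'=\tfrac12$ (resp.\ $0$) in the Neveu--Schwarz (resp.\ Ramond) sector, the pair $(j,k)$ being read off the tables preceding the statement. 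Using $\chi^{(1)}_{j,k;\epsilon'}=-\chi^{(1)}_{k,j;\epsilon'}$ and $\chi^{(0)}_{j,k;\epsilon'}=\chi^{(0)}_{k,j;\epsilon'}$ (Remark~\ref{rem9.5}) together with the shift relations $\chi^{(\al)}_{j+aM,k+bM;\epsilon'}=e^{2\pi i m(a-b)\epsilon}\chi^{(\al)}_{j,k;\epsilon'}$ (the $\chi$-analogue of Remark~\ref{rem8.8}), one brings every such $(j,k)$ into the fundamental domain $\Omega^{\epsilon'}_M$ at the cost of a sign, so that all modified (super)characters lie in $V$. Conversely, a direct computation (reducing the $(j,k)$ of the tables, and using $k_1,k_2\ge0$, $2k_1+k_2\le M-1$) shows that case $\mathrm{I}$ yields the pairs $(k_1+\tfrac12,\,M-k_1-k_2-\tfrac12)$ and case $\mathrm{III}$ the pairs $(k_1+k_2+\tfrac12,\,M-k_1-\tfrac12)$; their ranges are $\{j\le k\le M-j\}$ and $\{M-k\le j\le k\}$, whose union is all of $\Omega^{1/2}_M$, and likewise $\mathrm{I}^{\tw},\mathrm{III}^{\tw}$ exhaust $\Omega^{0}_M$. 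Finally, since at a fixed $(j,k)$ varying $m_2\in\{0,1\}$ changes the coefficient of $\chi^{(0)}$ by $1$, the difference of the two corresponding modified characters is $\chi^{(0)}_{j,k;\epsilon'}$, hence also $\chi^{(1)}_{j,k;\epsilon'}$ lies in the span (on the diagonal $\chi^{(1)}_{j,j;\epsilon'}=0$ and only $\chi^{(0)}_{j,j;\epsilon'}$ is needed, in agreement with $\Omega^{\epsilon'}_M$). This gives that $V$ equals the span of all modified (super)characters.

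For (a), the $T$-formula is already the second formula of Theorem~\ref{th9.7} and manifestly maps $V$ into itself. For the $S$-transformation I would take the first formula of Theorem~\ref{th9.7} and fold the sum over $(\epsilon+\ZZ/M\ZZ)^2$ into one over $\Omega^{\epsilon}_M$: pairing $(a,b)$ with $(b,a)$, using the above symmetries and the identity $ak+bj=\tfrac12(a+b)(j+k)+\tfrac12(a-b)(k-j)$, the paired contribution becomes $2ie^{\frac{\pi i m}{M}(j+k)(a+b)}\sin\tfrac{\pi m}{M}(j-k)(a-b)$ times $\chi^{(1)}$ for $\al=1$, resp.\ $2e^{\frac{\pi i m}{M}(j+k)(a+b)}\cos\tfrac{\pi m}{M}(j-k)(a-b)$ times $\chi^{(0)}$ for $\al=0$, while the diagonal terms $a=b$ drop out for $\al=1$ and occur with multiplicity one for $\al=0$ (whence the factor $2-\delta_{a,b}$); the shift relation lets one take $0<a\le b\le M$ as representatives, i.e.\ $\Omega^{\epsilon}_M$, and the count $|\Omega^0_M|=\tfrac{M(M+1)}2$ with $M$ diagonal points (totalling $M^2$) confirms this is a fundamental domain. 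This produces the two explicit formulas in the statement. To package these with the $T$-formula as an $SL_2(\ZZ)$-action of the asserted form on $\bb V^{(0)}_C\oplus\bb V^{(1)}_C$, one divides numerators by the denominators $\overset{4}{R}\vphantom{R}^{(\epsilon)}_{\epsilon'}$, whose modular behaviour is \cite{KW14}, Lemma~10.1, and which $SL_2(\ZZ)$ merely permutes: one checks that the prefactor $e^{-\frac{2\pi i}{\tau}(\frac mM+1)z^2}$ combines with the denominator automorphy factor and the relation $C=-6(\tfrac mM+1)$ to give exactly $e^{-\pi i C z^2/(3\tau)}$, so that $\chi^{(\al)}_{j,k;\epsilon'}\big|_S$ is an honest linear combination over $\Omega^{\epsilon}_M$ of the $\chi^{(\al)}_{a,b;\epsilon}$; hence $V$ is $SL_2(\ZZ)$-stable, and together with (b) this is the claim.

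The hard part will be purely combinatorial: after exploiting the symmetries of Remarks~\ref{rem9.5} and~\ref{rem8.8} and the ``up to a sign'' freedom in Theorem~\ref{th9.6}, one must pin down the fundamental domain $\Omega^{\epsilon'}_M$ precisely and verify that the indices $(j,k)$ arising from cases $\mathrm{I},\mathrm{III}$ (resp.\ $\mathrm{I}^{\tw},\mathrm{III}^{\tw}$), as $(k_1,k_2)$ runs over its admissible range and $m_2\in\{0,1\}$, sweep it out completely up to the $\chi^{(0)}/\chi^{(1)}$ pairing. The analytic ingredients --- the modular transformations of $\tps$, of the functions $P^{[M,-m;\epsilon]}_{j,k;\epsilon'}$, and of the $N=4$ denominators --- are already supplied by Theorem~\ref{th9.7}, Proposition~\ref{prop8.7} and \cite{KW14}, \cite{KW15}, so no further analysis is required beyond this bookkeeping.
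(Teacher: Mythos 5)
Your proposal is correct and follows essentially the same route as the paper: reduce everything to the combinatorial verification that the index pairs $(j,k)$ coming from cases $\mathrm{I},\mathrm{III}$ (resp. $\mathrm{I}^{\tw},\mathrm{III}^{\tw}$) sweep out the fundamental domain $\Omega^{1/2}_M$ (resp. $\Omega^{0}_M$) under the symmetries $(j,k)\mapsto(k,j)$ and $(j,k)\mapsto(j+aM,k+bM)$, and then invoke Theorems \ref{th9.6} and \ref{th9.7} together with Remarks \ref{rem9.5} and \ref{rem8.8}. The only difference is that you spell out steps the paper leaves implicit (the folding of the sum over $(\epsilon+\ZZ/M\ZZ)^2$ into $\Omega^\epsilon_M$ yielding the sine/cosine formulas, and the $m_2$-variation argument separating $\chi^{(0)}$ from $\chi^{(1)}$), which is welcome detail rather than a deviation.
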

\begin{proof}
Due to Theorems \ref{th9.6} and \ref{th9.7}, formula \eqref{8.07} and Remark \ref{rem8.8}, it suffices to show that, using transformations
\begin{equation}
\label{9.02} 
(j,k) \mapsto (j+aM, k+bM), \mbox{ where } a, b \in \ZZ, \mbox{ and } (j,k) \mapsto (k,j),
\end{equation}
one can obtain the set $ \Omega^\epsilon_M $ from the pairs $ (j,k) $ from 
the above first (resp. second) table for $\epsilon=\half$ (resp. $=0$). 

First, consider the principal admissible $ \wg $-modules $ \LLa $ with $ \La = \La^{[m;m_2]J}_{k_1,k_2} $, where $ J = $ I or III. In the case I we have 
\[ k_1, k_2 \geq 0, \quad 2k_1 + k_2 \leq M-1, \]
hence we can see from the above table that 
\begin{equation}
\label{9.03}
j,k \in \half + \ZZ, \quad j \geq \half, \quad j + k \leq 0, \quad k \geq j - M, 
\end{equation}
Similarly, in case III we have
\begin{equation}
\label{9.04}
j, k \in \half + \ZZ, \quad k \leq - \half, \quad j + k \geq 1, \quad k \geq j - M.
\end{equation}
The union of domains \eqref{9.03} and \eqref{9.04} can be transformed by $ (j,k) \mapsto (j, k+M) $ and $ (j,k) \mapsto (k,j) $ to the domain
\[j,k \in \half + \ZZ, \quad k \geq \half, \quad j \leq M-\half, \quad j-k \leq 0,\]
which is the domain $\Omega^\half_M$.

Likewise in the case $ \mathrm{I}^\tw $ we have:
\begin{equation}
\label{9.06}
j,k \in \ZZ, \quad k \leq 0, \quad j + k \geq 1, \quad k \geq j-M,
\end{equation}
and in the case $ \mathrm{III}^\tw $ we have:
\begin{equation}
\label{9.07}
j,k \in \ZZ, \quad j \geq 1, \quad j + k \leq 0, \quad k \geq j - M.
\end{equation}
Applying the same transformations as in the non-twisted case to the union of domains \eqref{9.06} and \eqref{9.07}, we obtain the domain
\[j,k \in \ZZ, \, k \geq 1, \quad j \leq M, \quad j-k \leq 0, 
\]
which is the domain $\Omega^0_M$.
\end{proof}

\section{Integrable $ \wg $-modules, where $ \fg = D(2,1;a) $}

Recall that the Lie superalgebra $ D(2,1;a) $ is a family of simple 17-dimensional Lie superalgebras with the even part isomorphic to $ \sl (2) \oplus \sl (2) \oplus \sl (2), $ depending on a parameter $ a \neq 0, -1. $ Two members of this family are isomorphic iff the corresponding values of parameters lie on the orbit of the group, generated by transformations $ a \mapsto -1 -a, \ a \mapsto \frac{1}{a} $ \cite{K77}. Hence, provided that $ a \in \QQ, $ we may assume that $ -1 < a \leq -\half, $ and write it uniquely in the form:
\begin{equation}
\label{10.01}
 a = -\frac{p}{p + q}, \mbox{ where } p, q \in \zp \mbox{ are coprime and}\,\, p\geq q.
\end{equation}
Recall that the condition $ a \in \QQ $ is necessary for existence of integrable $ \wg $-modules, see \cite{KW16}, Corollary 6.4. 

We consider the set of simple roots $ \Pi = \{ \al_1, \al_2, \al_3 \} $ of $ \fg, $ where $ \al_1 $ is odd and $ \al_2, \al_3 $ are even, with the following non-zero scalar products:
\[ (\al_2| \al_2) = 2a, \quad (\al_3|\al_3) = -2(a+1), \quad (\al_1|\al_2) = -a, \quad (\al_1|\al_3) = a+ 1. \]
The highest root is $ \theta = 2 \al_1 + \al_2 + \al_3, $ and $ (\theta | \theta) = 2; $ we also have $ \rho = -\al_1, h^\vee = 0. $

A $ \wg $-module $ \LLa $ is called {\it integrable} if the roots $ \al_2, \delta - \al_2, \al_3, \delta - \al_3 $ are integrable for $ L(\La) $ (according to the terminology of Section 2, this should be called complementary integrable). Let $ \La_i, \,i = 0,1,2,3 $, be the fundamental weights, i.e. $ (\La_i | \al_j^\vee) = \delta_{ij}, $ where $ \al^\vee_1 = \al_1 $ and $ \al^\vee_j = \frac{2 \al_j}{(\al_j | \al_j)} $ for $ j \neq 1 $ (these conditions define the $ \La_j $ up to adding a multiple of $ \delta $). Then any $ \La \in \wh^* $ can be written, up to a multiple of $ \delta,  $ in the form $ \La = \sum_{i = 0}^{3} m_i \La_i$. Since $ \delta = \al_0 + 2 \al_1 + \al_2 + \al_3 = \al_0^\vee + 2 \al_1^\vee + a \al_2^\vee -(a+1)\al_3^\vee $, the level $ K $ of this $ \La $ is given by
\begin{equation}
\label{10.02}
K = m_0 + 2m_1 + am_2 -(a+1)m_3.
\end{equation}
Recall that an integrable $ \wg $-module $ \LLa $ of non-critical (= non-zero) level $ K $ exists iff (\cite{KW16}, Corollary 6.4)
\begin{equation}
\label{10.03}
 K = -\frac{pqn}{p+q} \mbox{ for some positive integer }n.
\end{equation}

Consider the following four odd (isotropic) roots:
\[ \beta_0 = \al_0 + \al_1, \quad \beta_1 = \al_1, \quad \beta_2 = \al_1 + \al_2, \quad \beta_3 = \al_1 + \al_3,  \]
and the corresponding four sets of weights ($j=0,1,2,3$):
\[ P^{'K}_{+,j} = \{ \La \in \wh^* | \LLa \mbox{ is integrable of level \eqref{10.03}}, \mbox{and}\,(\La + \rho | \beta _j) = 0 \}. \]
The following proposition describes the sets $ P^{'K}_{+,j}, $ up to adding multiples of $ \delta.  $
\begin{proposition}
\label{prop10.1}
Let 
\[ 
\begin{aligned}
\La^{[K;m_2, m_3](0)} & = \sum_{i=0}^{3} m_i \La_i, \mbox{ where }  (p+q)m_1 = p(m_2+1) + q(m_3+1) - npq,\, m_0 + m_1 =-1 ;  \\
\La^{[K;m_2, m_3](1)} & = \sum_{i=0}^{3} m_i \La_i, \mbox{ where } (p+q)m_0 = pm_2 + qm_3 - npq,\, m_1 = 0  ;  \\
\La^{[K;m_2, m_3](2)} & = \sum_{i=0}^{3} m_i \La_i, \mbox{ where } (p+q) (m_0+1) = -p (m_2+1) + q(m_3+1)-npq, \\
&(p+q)m_1 = p(m_2+1)  ;  \\
\La^{[K;m_2, m_3](3)} & = \sum_{i=0}^{3} m_i \La_i, \mbox{ where }  (p+q)(m_0+1) = p(m_2+1)-q(m_3+1)-npq,\\
& (p+q)m_1 = q(m_3+1).   \\
\end{aligned} \]
Then, up to adding multiples of $\delta$, we have:
\[ 
\begin{aligned}
P^{'K}_{+,0} & = \lbrace \La^{[K;m_2, m_3](0)} |\, m_2, m_3 \in \ZZ_{\geq 0}, 0 \leq m_2 < nq, 0 \leq m_3 < np \rbrace; \\ 
P^{'K}_{+,1} & = \lbrace \La^{[K;m_2, m_3](1)} |\, m_2, m_3 \in \ZZ_{\geq 0}, 0 \leq m_2 < nq, 0 \leq m_3 < np \rbrace \cup \{nq\La_2\} \cup \{ np\La_3\}; \\
P^{'K}_{+,2} & = \lbrace \La^{[K;m_2, m_3](2)} |\, m_2, m_3 \in \ZZ_{\geq 0}, 0 \leq m_2 \leq nq-2, 0 \leq m_3 \leq np-1 \rbrace; \\
P^{'K}_{+,3} & = \lbrace \La^{[K;m_2, m_3](3)} |\, m_2, m_3 \in \ZZ_{\geq 0}, 0 \leq m_2 \leq nq-1, 0 \leq m_3 \leq np-2 \rbrace. \\
\end{aligned} \]
\end{proposition}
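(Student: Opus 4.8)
The plan is to reduce this to the standard description of integrable highest weight modules over the affine Lie superalgebra $\wg$, exactly as in the analogous Propositions \ref{prop3.1} and \ref{prop6.1}, together with a combinatorial analysis of the constraints. First I would recall that, by definition, $\La \in P^{'K}_{+,j}$ means that $L(\La)$ is integrable of level $K$ as in \eqref{10.03} and that $(\La+\rho|\beta_j)=0$, where $\rho = -\al_1$ and $\hat\rho=\rho$ since $h^\vee=0$. Integrability of $L(\La)$ with respect to the even root $\al_2$ (resp. $\al_3$) is equivalent to $(\La+\hat\rho|\al_2^\vee)=m_2\in\ZZ_{\ge0}$ (resp. $m_3\in\ZZ_{\ge0}$), since $\al_2,\al_3$ are even simple roots. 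Integrability with respect to $\delta-\al_2$ and $\delta-\al_3$ then imposes the two upper bounds; the cleanest way to get these is to use a chain of odd reflections (as in the proof of Proposition \ref{prop3.1}) to move $\delta-\al_2$ (resp. $\delta-\al_3$) to an even simple root $\gamma$ of the transformed base, and observe that integrability with respect to $-\gamma$ becomes a linear inequality on $(\La+\hat\rho)$ whose explicit form, using \eqref{10.02}--\eqref{10.03} and $a=-p/(p+q)$, is $m_2\le nq$ (or $<nq$) and $m_3\le np$ (or $<np$).

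Next I would pin down the weight itself. The atypicality condition $(\La+\hat\rho|\beta_j)=0$ is one linear equation; together with the prescribed values $m_2,m_3$ of the $\al_2,\al_3$-coordinates and the level constraint \eqref{10.02} with $K=-pqn/(p+q)$, this determines $\La$ modulo $\CC\delta$. For $j=1$, $\beta_1=\al_1$ and $(\La+\hat\rho|\al_1)=0$ forces the $\al_1$-coordinate $m_1$ of $\La$ to satisfy $m_1=(\La|\al_1)=-(\hat\rho|\al_1)+(\text{stuff})$; since $(\al_1|\al_1)=0$, unwinding $(\La|\al_1)= m_0(\La_0|\al_1)+\dots$ using $(\La_i|\al_1^\vee)=\delta_{i1}$ and $(\al_1|\al_1)=0$ one gets precisely $m_1=0$ and then \eqref{10.02} gives $(p+q)m_0=pm_2+qm_3-npq$, matching $\La^{[K;m_2,m_3](1)}$. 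For $j=0,2,3$ the computation is the same in spirit: $\beta_0=\al_0+\al_1$, $\beta_2=\al_1+\al_2$, $\beta_3=\al_1+\al_3$ each give one linear relation; combined with the coordinate data and the level, solving the resulting small linear system yields the stated formulas. I would set this up once in terms of the basis $\{\La_0,\La_1,\La_2,\La_3,\delta\}$ and then specialize $j=0,1,2,3$.

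The remaining point is the extra weights $\{nq\La_2\}$ and $\{np\La_3\}$ in $P^{'K}_{+,1}$ and the shifted ranges for $j=2,3$. These arise exactly as in Proposition \ref{prop3.1}(b),(c): the generic family $\La^{[K;m_2,m_3](1)}$ is obtained under the assumption that $\beta_1$ is the relevant atypical root and $\delta-\al_2$, $\delta-\al_3$ are brought to even simple roots in the ``interior'' way; the boundary cases $m_2=nq$ (with $m_3$ then forced) and $m_3=np$ correspond to $L(\La)$ being simultaneously integrable with respect to $\theta$ or to $\delta-\theta$, and give the isolated weights $nq\La_2$, $np\La_3$. Symmetrically, the closed inequalities $m_2\le nq-2$, $m_3\le np-1$ (resp. $m_2\le nq-1$, $m_3\le np-2$) for $j=2$ (resp. $j=3$) come from the fact that for $\beta_2=\al_1+\al_2$ the root $\al_2$ enters $\beta_2$, so the odd-reflection bookkeeping shifts the bound by one, and analogously for $\beta_3$. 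I expect the main obstacle to be precisely this boundary analysis --- correctly tracking how many odd reflections are needed, which even root each $\delta-\al_i$ lands on, and hence whether each inequality is strict or not --- rather than the linear algebra, which is routine; everything else is ``straightforward from definitions'' as the statement asserts, following verbatim the pattern of the proofs of Propositions \ref{prop3.1}, \ref{prop6.1}.
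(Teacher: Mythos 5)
Your proposal follows essentially the same route as the paper, whose entire proof is the remark that the result is standard via odd reflections (referring to the proof of Theorem 5.4 in [KW16]) --- i.e.\ precisely the pattern of Propositions \ref{prop3.1} and \ref{prop6.1} that you invoke, with the linear system pinning down the weight and the odd-reflection bookkeeping giving the ranges and boundary weights. One small slip: integrability with respect to $\al_2$ is $(\La|\al_2^\vee)=m_2\in\ZZ_{\ge 0}$, not $(\La+\hat\rho|\al_2^\vee)=m_2$ (the latter equals $m_2+1$ here since $(\rho|\al_2^\vee)=1$); this does not affect the argument.
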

\begin{proof}
It is standard, using odd reflections, see \cite{KW16}, proof of Theorem 5.4.
\end{proof}
\begin{remark}
\label{rem10.2} 
If $ m_3 = np-1 $  (resp. $ m_2 = nq-1 $), then $ \La^{[K;m_2, m_3](2)} $ (resp. $ \La^{[K;m_2, m_3](3)} $) $ \in  P^{'K}_{+,0}.   $
\end{remark}
In view of this remark, we define $ P^K_{+,j},\, j = 0,1,2,3, $ as follows:
\[ 
\begin{aligned}
P^K_{+, j} & = P^{'K}_{+,j}\, \mbox{ if }\, j = 0,1,\\
P^K_{+,j} & = \lbrace \La^{[K;m_2,m_3](j)} |\, m_2, m_3 \in \ZZ_{\geq 0}, 0 
\leq m_2 \leq nq-2, 0 \leq m_3 \leq np-2 \rbrace \, \mbox{ if }\,  j = 2,3. \\
\end{aligned} \]

Next we consider the integrability condition with respect to $ \theta; $ its proof is standard.
\begin{lemma}
\label{lem10.3}
The root $ \theta $ is integrable for $ \LLa, $ where 
$\La = \sum_{i=0}^{3} m_i \La_i$,
iff
\begin{enumerate}
\item[(a)] $ m_1 =0 $ and 
\begin{enumerate}
\item[(i)] $ am_2 - (a+1)m_3 \in \ZZ_{\geq 0} $,
\item[(ii)] $ am_2 - (a+1) m_3 = 0  $ implies that $ m_2 = m_3 =0. $
\end{enumerate}
\item[(b)] $ m_1 \neq 0 $ and \begin{enumerate}
\item[(i)] $ 2m_1 + a(m_2+1) -(a+1) (m_3+1) \in \ZZ_{\geq 0}, $ 
\item[(ii)] $ 2m_1 + a(m_2+1) -(a+1) (m_3+1) = 0  $ implies that $ m_1+a(m_2+1) =0 $  and $ m_1 -(a+1)(m_3+1) = 0. $
\end{enumerate}
\end{enumerate}
\end{lemma}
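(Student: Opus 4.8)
The plan is to reduce $\theta$-integrability to the standard criterion for a \emph{simple} even root by moving the set of simple roots through odd reflections. Two preliminaries are in order. First, integrability of $L(\La)$ with respect to an even real root is intrinsic: it only asserts that the root spaces $\wg_{\pm\theta}$ act locally nilpotently, so it is unaffected by replacing the simple root system by any one obtained from it by a chain of odd reflections. Second, since $\theta$ is the highest root of the finite–dimensional $\fg$, $\ad f_\theta$ is locally nilpotent on $\wg$, and, $\theta$ being even, the identity $f_\theta^{N}(xv)=\sum_k\binom{N}{k}\bigl((\ad f_\theta)^{k}x\bigr)\,f_\theta^{N-k}v$ carries no signs; hence the subspace of $L(\La)$ annihilated by a power of $f_\theta$ is $\wg$-stable. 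Therefore $L(\La)$ is $\theta$-integrable iff $f_\theta^{N}v=0$ for some $N$ and some cyclic highest weight vector $v$; and when $\theta$ is a simple root of the ambient system this is in turn equivalent to $(\La^{\ast}|\theta^{\vee})\in\ZZ_{\geq0}$ for the corresponding highest weight $\La^{\ast}$, by the relation $f_\theta^{(\La^{\ast}|\theta^{\vee})+1}v_{\La^{\ast}}=0$ in the irreducible module together with the fact that $r_\theta$ permutes the remaining positive roots.

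Next I would produce such a presentation. Since $(\al_1|\al_2)=-a\neq0$ and $(\al_1|\al_3)=a+1\neq0$, the odd reflection in $\al_1$ takes $\Pi$ to the all–fermionic system $\{-\al_1,\ \al_1+\al_2,\ \al_1+\al_3\}$, and a further odd reflection in $\al_1+\al_2$ takes this to $\Pi^{\ast}=\{-(\al_1+\al_2),\ \al_2,\ \theta\}$, in which $\theta=(\al_1+\al_2)+(\al_1+\al_3)$ is even, non-isotropic ($(\theta|\theta)=2$) and simple, with non-positive Cartan entries (from $(\al_2|\theta)=0$ and $(\al_1+\al_2|\theta)=1$); reflecting in $\al_1+\al_3$ instead gives the symmetric presentation, and either works. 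Along these reflections the highest weight changes by the usual rule for isotropic reflections: $\La\mapsto\La'$ with $\La'=\La$ if $(\La|\al_1)=m_1=0$ and $\La'=\La-\al_1$ if $m_1\neq0$; then $\La'\mapsto\La^{\ast}$ with $\La^{\ast}=\La'$ if $(\La'|\al_1+\al_2)=0$ and $\La^{\ast}=\La'-(\al_1+\al_2)$ otherwise.

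It then remains to compute a handful of pairings. From $(\La|\al_1)=m_1$, $(\La|\al_2)=am_2$, $(\La|\al_3)=-(a+1)m_3$, $(\al_1|\theta)=(\al_1+\al_2|\theta)=1$ one gets $(\La|\theta)=2m_1+am_2-(a+1)m_3$, and, distinguishing $m_1=0$ from $m_1\neq0$, that $(\La^{\ast}|\theta)\in\{(\La|\theta),\,(\La|\theta)-1,\,(\La|\theta)-2\}$ according to which reflections actually move the weight, the value being $(\La|\theta)$ only when no such reflection is needed. Imposing $(\La^{\ast}|\theta^{\vee})=(\La^{\ast}|\theta)\in\ZZ_{\geq0}$ gives (a)(i) when $m_1=0$ (there $(\La|\theta)=am_2-(a+1)m_3$) and (b)(i) when $m_1\neq0$ (there $(\La|\theta)-1=2m_1+a(m_2+1)-(a+1)(m_3+1)$). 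The refinements (a)(ii), (b)(ii) are exactly the analysis of the boundary value $(\La^{\ast}|\theta)=0$: there integrability holds iff no reflection was needed at the second step, i.e. $(\La'|\al_1+\al_2)=0$, equivalently $(\La'|\al_1+\al_3)=0$ (the two pairings being $\pm$ one another once $(\La'|\theta)=0$); for $m_1\neq0$ these read $m_1+a(m_2+1)=0$ and $m_1-(a+1)(m_3+1)=0$, and for $m_1=0$ they read $am_2=0$, $(a+1)m_3=0$, hence $m_2=m_3=0$ since $a\neq0,-1$.

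The reflection bookkeeping and the pairing computations are routine. The point requiring genuine care — and the main obstacle — is the reduction step: making precise that $\theta$-integrability of the $\wg$-module $L(\La)$ is faithfully detected in the presentation $\Pi^{\ast}$ where $\theta$ is simple, i.e. the interaction of the local-nilpotency/propagation argument with odd reflections, and verifying that the boundary analysis at $(\La^{\ast}|\theta)=0$ is independent of whether one reflects in $\al_1+\al_2$ or in $\al_1+\al_3$ — which it is, because $(\La'|\theta)=(\La'|\al_1+\al_2)+(\La'|\al_1+\al_3)$. All of this is standard, parallel to the treatment of the analogous statements for $spo_{2|3}$ and $ps\ell_{2|2}$ above and in \cite{KW16}.
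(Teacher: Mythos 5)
Your argument is correct: the reduction of $\theta$-integrability to the simple-root criterion via the two odd reflections (at $\al_1$, then at $\al_1+\al_2$ or $\al_1+\al_3$), the pairing computations, and the boundary analysis using $(\La'|\theta)=(\La'|\al_1+\al_2)+(\La'|\al_1+\al_3)$ all check out and reproduce (a) and (b) exactly. The paper offers no proof beyond the remark that it is standard, and your odd-reflection argument is precisely the standard one the authors intend (it is the same technique they use for Proposition \ref{prop3.1}), so there is nothing to compare beyond noting that you have filled in the omitted details correctly.
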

\begin{remark}
\label{rem10.4}
If $ a $ is given by \eqref{10.01}, then 
\[ 
\begin{aligned}
am_2 - (a+1)m_3 & = - \frac{pm_2 + qm_3}{p+q}, \\
2m_1 + a(m_2+1) - (a+1) (m_3+1) & = 2m_1 - \frac{p(m_2+1)+q(m_3+1)}{p+q}, \\
m_1 + a(m_2+1) & = m_1 - \frac{p(m_2+1)}{p+q}, \\
m_1 - (a+1) (m_3+1) & = m_1 - \frac{q(m_3+1)}{p+q}.
\end{aligned} \]
\end{remark}
By Lemma \ref{lem10.3} and Remark \ref{rem10.4}, we have the following:
\begin{lemma}
\label{lem10.5}
The root $ \theta $ is integrable for $ L(\La^{[K;m_2, m_3](j)}) $ iff:
\begin{enumerate}
\item[(0)] For $ j = 0, \,m_2 = nq-1 $ and $ m_3 = np-1. $
\item[(1)] For $ j=1, \,m_2 = m_3 = 0.  $
\item[(2)] For $ j = 2, \,p(m_2+1)-q(m_3+1) \in (p+q) \ZZ_{\geq 0}. $
\item[(3)]  For $ j = 3, \,q(m_3+1)- p(m_2+1) \in (p+q) \ZZ_{\geq 0}. $
\end{enumerate}
\end{lemma}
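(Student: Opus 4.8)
The plan is to combine Lemma \ref{lem10.3} with Remark \ref{rem10.4} and feed in the explicit parametrizations of the weights $\La^{[K;m_2,m_3](j)}$ from Proposition \ref{prop10.1}. For each of the four values $j=0,1,2,3$ I would substitute the coordinates $m_0,m_1,m_2,m_3$ of $\La^{[K;m_2,m_3](j)}$ into the relevant branch of Lemma \ref{lem10.3} — branch (a) when $m_1=0$ (i.e.\ $j=1$, and $j=0$ after noting $\La^{[K;m_2,m_3](0)}$ has $m_0+m_1=-1$ with $m_1$ not necessarily zero, so here one must be a little careful which branch applies) and branch (b) when $m_1\neq0$ ($j=2,3$). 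Then rewrite the integrality/positivity conditions (i) and (ii) using the four displayed identities in Remark \ref{rem10.4} to convert everything into statements about $p(m_2+1)\pm q(m_3+1)$ and $pm_2+qm_3$ lying in $(p+q)\ZZ_{\geq 0}$.

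The case-by-case execution would run as follows. For $j=1$: here $m_1=0$, so branch (a) applies; condition (a)(i) reads $am_2-(a+1)m_3\in\ZZ_{\geq0}$, which by Remark \ref{rem10.4} is $-(pm_2+qm_3)/(p+q)\in\ZZ_{\geq0}$, forcing $pm_2+qm_3=0$ (since $p,q,m_2,m_3\geq0$), hence $m_2=m_3=0$; condition (a)(ii) is then automatic, giving (1). For $j=0$: I would use the defining relation $(p+q)m_1=p(m_2+1)+q(m_3+1)-npq$ together with $0\leq m_2<nq$, $0\leq m_3<np$; plugging into the appropriate branch of Lemma \ref{lem10.3} and using $K=-npq/(p+q)$ one checks that $\theta$-integrability forces $m_1=0$, which via the defining relation forces $p(m_2+1)+q(m_3+1)=npq$, and coprimality of $p,q$ together with the ranges $m_2+1\leq nq$, $m_3+1\leq np$ pins down $m_2=nq-1$, $m_3=np-1$; conversely one verifies these values do satisfy the conditions. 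For $j=2$: $m_1\neq0$ generically, so branch (b); condition (b)(i) is $2m_1+a(m_2+1)-(a+1)(m_3+1)\in\ZZ_{\geq0}$, and using $(p+q)m_1=p(m_2+1)$ and Remark \ref{rem10.4} this becomes $\frac{2p(m_2+1)}{p+q}-\frac{p(m_2+1)+q(m_3+1)}{p+q}=\frac{p(m_2+1)-q(m_3+1)}{p+q}\in\ZZ_{\geq0}$, exactly (2); then I would check (b)(ii) does not impose anything new in the admissible range. Case $j=3$ is the mirror image: swap the roles of $p,q$ and of $\al_2,\al_3$, use $(p+q)m_1=q(m_3+1)$, and the same computation yields (3).

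The main obstacle I expect is not any single computation — each is a one-line substitution — but rather the bookkeeping at the boundary, specifically handling the degenerate sub-cases where the quantity $2m_1+a(m_2+1)-(a+1)(m_3+1)$ (or $am_2-(a+1)m_3$) vanishes and condition (ii) of Lemma \ref{lem10.3} kicks in, and making sure the stated conditions are both necessary \emph{and} sufficient. In case $j=2$, for instance, the expression $p(m_2+1)-q(m_3+1)$ being in $(p+q)\ZZ_{\geq0}$ includes the value $0$, and one must check that $0$ is attained only when the extra vanishing conditions $m_1+a(m_2+1)=0$ and $m_1-(a+1)(m_3+1)=0$ of (b)(ii) are automatically satisfied given $(p+q)m_1=p(m_2+1)$ — which they are, since the first is then immediate and the second follows from $p(m_2+1)=q(m_3+1)$. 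A similar check is needed for $j=0,1$. I would also double-check the edge case $m_1=0$ occurring \emph{within} families $j=2,3$ (it cannot, since $(p+q)m_1=p(m_2+1)>0$), and the analogous point for $j=0$, so that the correct branch of Lemma \ref{lem10.3} is always the one being applied. Once these boundary verifications are in place the proof is complete; I would simply write ``A direct computation using Lemma \ref{lem10.3}, Remark \ref{rem10.4} and Proposition \ref{prop10.1}'' and record the four cases.
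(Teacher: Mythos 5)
Your overall strategy --- feeding the parametrizations of Proposition \ref{prop10.1} into Lemma \ref{lem10.3} via Remark \ref{rem10.4}, case by case --- is exactly the paper's, and your treatment of cases $j=1,2,3$ (including the verification that condition (b)(ii) of Lemma \ref{lem10.3} imposes nothing new when the quantity in (b)(i) vanishes) is correct, and on that last point even a little more explicit than the printed proof.

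However, your case $j=0$ contains a genuine error. You claim that $\theta$-integrability forces $m_1=0$, and that $m_1=0$ together with the defining relation $(p+q)m_1=p(m_2+1)+q(m_3+1)-npq$ pins down $m_2=nq-1$, $m_3=np-1$. Both halves are false: at the asserted solution $m_2=nq-1$, $m_3=np-1$ one has $p(m_2+1)+q(m_3+1)=2npq$, hence $m_1=npq/(p+q)=-K>0$, not $0$; and conversely $m_1=0$ would mean $p(m_2+1)+q(m_3+1)=npq$, which is \emph{not} satisfied by $m_2=nq-1$, $m_3=np-1$, so your chain of deductions is internally inconsistent. The correct argument (the paper's) stays in branch (b) of Lemma \ref{lem10.3} and is a sign estimate rather than a vanishing of $m_1$: substituting the defining relation into the quantity of (b)(i) gives
\[
2m_1-\frac{p(m_2+1)+q(m_3+1)}{p+q}=\frac{p(m_2+1-nq)+q(m_3+1-np)}{p+q},
\]
which is $\leq 0$ on the range $0\leq m_2\leq nq-1$, $0\leq m_3\leq np-1$; integrability requires it to lie in $\ZZ_{\geq 0}$, hence it equals $0$, and since both summands are $\leq 0$ this forces $m_2+1=nq$ and $m_3+1=np$ simultaneously (one then checks (b)(ii) holds there, as $m_1=npq/(p+q)$ makes both $m_1+a(m_2+1)$ and $m_1-(a+1)(m_3+1)$ vanish). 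With this repair of case $j=0$ your write-up coincides with the paper's proof.
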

\begin{proof}
Since $ m_1 = 0 $ for $ \La^{[K;m_2, m_3](1)}, $ we have by Lemma \ref{lem10.3} and Remark \ref{rem10.4} that $L( \La^{[K;m_2, m_3](1)}) $ is integrable with respect to $ \theta  $ iff 
\[ am_2 -(a+1)m_3 = - \frac{pm_2 + qm_3}{p+q} \in \ZZ_{\geq 0}, \]
so $ m_2 = m_3 = 0, $ proving (1).

Since $ m_1 \neq 0 $ for  $ \La^{[K;m_2, m_3](0)},  $ we have by Lemma \ref{lem10.3} and Remark \ref{rem10.4} that  
$L( \La^{[K;m_2, m_3](0)}) $ is integrable with respect to $ \theta $ iff
\[ 2m_1 + a(m_2+1) - (a+1)(m_3+1) = 2m_1 - \frac{p(m_2+1) + q (m_3 +1)}{p+q} \in \ZZ_{\geq 0}. \]
But 
\[ 2m_1 - \frac{p(m_2+1)+q(m_3+1)}{p+q} = \frac{1}{p+q} \left( p(m_2+1 -nq) + q(m_3+1-np) \right) \leq 0. \]
Hence $ m_2+1-nq =0= m_3+1-np, $ proving (0). 

Since $ m_1 \neq 0 $ for $ \La^{[K;m_2, m_3](2)}, $ we have by Lemma \ref{lem10.3} and Remark \ref{rem10.4} that integrability of 
$L( \La^{[m;m_2, m_3](2)}) $ with respect to $ \theta $ implies that 
\[ p(m_2+1)-q(m_3+1) \in (p+q) \ZZ_{\geq 0}. \]
Since also
\[ m_1 + a(m_2+1) = \frac{1}{p+q} \left( (p+q)m_1 -p(m_2+1)\right) =0, \]
we obtain (2).
The proof of (3) is the same as of (2).
\end{proof}


We will also need the following integrability conditions with respect to $ \al_0. $

\begin{lemma}
\label{lem10.6}
Let $ \La = \La^{[K;m_2, m_3](j)} \in P^K_{+, j} $. Then

\begin{enumerate}
\item[(a)] If $ m_2 $ (resp. $ m_3 $) $ = 0 $  and $ j = 1, $ then $ \al_0 $ is integrable for $ \LLa  $ iff $ m_3 = np  $ (resp. $ m_2 = nq $).
\item[(b)] If $ m_2 = nq-1 $ (resp. $ m_3 = np-1 $) and $ j=0, $ then $ \al_0 $ is not integrable for $ \LLa. $
\item[(c)] If $ j = 2 $ or 3, then $ \al_0 $ is not integrable for $ \LLa. $
\end{enumerate}
\end{lemma}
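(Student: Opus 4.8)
The strategy is the same as for Lemma \ref{lem10.5}: translate the condition ``$\al_0$ is integrable for $L(\La)$'' into an arithmetic condition on $m_2, m_3$ via the coordinates $\La = \sum_i m_i\La_i$, and then substitute the defining relations of $\La^{[K;m_2,m_3](j)}$ from Proposition \ref{prop10.1}. Recall that $\al_0 = \delta - \theta$, and in terms of coweights $\delta = \al_0^\vee + 2\al_1^\vee + a\al_2^\vee - (a+1)\al_3^\vee$ (as used to derive \eqref{10.02}); hence $\al_0 = \delta - \theta$ gives $(\La | \al_0^\vee) = K - (\La|\theta^\vee)$, and the standard criterion (as in the proof of Lemma \ref{lem10.3}, using odd reflections to bring $\al_0$ or an associated root to an even simple root) says $\al_0$ is integrable for $L(\La)$ iff the appropriate non-negative-integer condition on the relevant label holds. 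Concretely, since $\al_0$ together with $\beta_j$ generates (after odd reflections) the relevant even simple root, integrability with respect to $\al_0$ becomes a condition of the form ``(some explicit $\ZZ$-linear combination of $m_0, m_2, m_3$ and $n, p, q$) $\in \ZZ_{\geq 0}$'', paralleling exactly the three cases already treated.

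\textbf{Key steps.} First I would record, for each $j = 0,1,2,3$, the even simple root $\gamma_j$ obtained from $\al_0$ by the sequence of odd reflections used in Proposition \ref{prop10.1}, together with the value $(\La + \rho\,|\,\gamma_j^\vee)$ expressed in the $m_i$ (this is the quantity that must lie in $\ZZ_{\geq 0}$ for integrability). Second, I would plug in the explicit defining relations: for $j=1$ one has $m_1 = 0$ and $(p+q)m_0 = pm_2 + qm_3 - npq$, so the integrability quantity simplifies to something like $\tfrac{1}{p+q}(p(nq - m_2) + \ldots)$ or, when $m_2 = 0$, to a condition forcing $m_3 = np$ (and symmetrically); this gives (a). Third, for $j = 0$ with $m_2 = nq-1$ (resp. $m_3 = np-1$) I would substitute $m_0 + m_1 = -1$ and $(p+q)m_1 = p(m_2+1)+q(m_3+1) - npq$ and check that the integrability quantity is strictly negative (as in the computation $2m_1 - \tfrac{p(m_2+1)+q(m_3+1)}{p+q} \leq 0$ appearing in Lemma \ref{lem10.5}), yielding (b). Fourth, for $j = 2, 3$ I would similarly substitute the two defining relations and verify the integrability quantity is negative, or $0$ only in an excluded degenerate case, giving (c). Throughout, Remark \ref{rem10.4} supplies the conversions $am_2 - (a+1)m_3 = -\tfrac{pm_2 + qm_3}{p+q}$ etc., which keep the algebra manageable.

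\textbf{Main obstacle.} The genuinely delicate point is bookkeeping the odd reflections: $\al_0$ is not itself an even root of the relevant integrable sub-datum, so one must track precisely which even simple root it becomes after the odd reflections defining each $P^K_{+,j}$, and correspondingly which label controls integrability. Getting the sign and the exact linear form right in each of the four cases — and, in (a), correctly identifying the boundary values $m_3 = np$ and $m_2 = nq$ as the only ones giving integrability when $m_2 = 0$ (resp. $m_3 = 0$) — is where care is needed; the subsequent arithmetic is routine. An alternative, perhaps cleaner, route is to observe that $r_{\al_0}$ acts on the set of weights $\La^{[K;m_2,m_3](j)}$ by an explicit affine transformation (just as $r_{\al_0}$ maps $\La^{[m;m_2]}$ to $\dot\La^{[m;n-m_2]}$ in \eqref{3.5} for $spo_{2|3}$), and then ``$\al_0$ integrable'' amounts to $r_{\al_0}(\La+\rho)$ still being a dominant-type weight of the same family; I would use whichever of the two formulations produces the shortest verification.
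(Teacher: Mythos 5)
Your overall strategy (turn the integrability condition into an arithmetic condition on the labels and substitute the defining relations of Proposition \ref{prop10.1}) is the right one, but you have built in a complication that is not there, and in one place it leads you to the wrong quantity. You assert that ``$\al_0$ is not itself an even root of the relevant integrable sub-datum'' and that one must track odd reflections to convert $\al_0$ into an even simple root. This is a misconception: $\al_0=\delta-\theta$ with $\theta=2\al_1+\al_2+\al_3$ even, so $\al_0$ is an \emph{even simple root} of $\hat{\Pi}$, and the integrability criterion is immediate from $\sl_2$-theory: $\al_0$ is integrable for $L(\La)$ iff $(\La|\al_0^\vee)=m_0\in\ZZ_{\geq 0}$. (Your own identity $(\La|\al_0^\vee)=K-(\La|\theta^\vee)$ already gives $m_0$ by \eqref{10.02}; you just never close the loop.) The odd reflections were needed in Proposition \ref{prop10.1} and in the $\theta$-analysis of Lemma \ref{lem10.3}, not here. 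This is exactly the paper's proof: it is a one-line reduction to ``$m_0\in\ZZ_{\geq 0}$'' followed by reading off $m_0$ from the relations in Proposition \ref{prop10.1} and Remark \ref{rem10.2}.

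The concrete error this causes is in your step for part (b): you propose to check negativity of $2m_1-\tfrac{p(m_2+1)+q(m_3+1)}{p+q}$, but that is the quantity governing integrability of $\theta$ (Lemma \ref{lem10.5}), not of $\al_0$. For $\al_0$ you should instead use $m_0=-1-m_1$ together with $(p+q)m_1=p(m_2+1)+q(m_3+1)-npq$: when $m_2=nq-1$ this gives $m_1=\tfrac{q(m_3+1)}{p+q}>0$, hence $m_0<-1$, so $\al_0$ is not integrable. Similarly, for (a) with $m_2=0$ one gets $(p+q)m_0=q(m_3-np)$, which is a non-negative integer in the allowed range only for $m_3=np$; and for (c) the relation $(p+q)(m_0+1)=\mp\bigl(p(m_2+1)-q(m_3+1)\bigr)-npq$ forces $m_0+1\leq 0$ within the ranges of $P^K_{+,2}$ and $P^K_{+,3}$. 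Once you replace your ``relevant even simple root after odd reflections'' with plainly $m_0$, the rest of your substitutions go through and the verification is routine.
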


\begin{proof}
It is straightforward, using that $ \al_0 $ is integrable for $ \LLa $ iff $ m_0 \in \ZZ_{\geq 0} $ and the description of the sets $ P^K_{+, j}, $ see Proposition \ref{prop10.1} and Remark \ref{rem10.2}.
\end{proof}

Let $ \La = \La^{[K; m_2, m_3](j)} $ be the highest weight of one of the integrable $ \wg $-modules $ \LLa. $ Then in the supercharacter formula \eqref{2.4}-\eqref{2.7} we have: $ W^{\#} $ contains the subgroup $ W^{\#}_0 = 
\{1,  r_{\al_2}, r_{\al_3}, r_{\al_2} r_{\al_3} \}, \, L^{\#} = \frac{1}{a} \ZZ \al_2 + 
\frac{1}{a+1} \ZZ \al_3,\, T = \{ \beta_j \}. $ In order to apply this formula, we need to introduce the following functions:
\[ 
\begin{aligned}
F^{[K; m_2, m_3](j)} & = q^{\frac{|\La + \rho|^2}{2K}} \sum_{\al \in L^{\#}} t_\al \frac{e^{\La + \rho}}{1 - e^{-\beta_j}}, \\
B^{[K; m_2, m_3](j)} & = \sum_{w \in W^{\#}_0} \epsilon(w) w 
(F^{[K; m_2, m_3](j)}).
\end{aligned} \]
\begin{remark}
\label{rem10.7}
We have for $ j = 0,2,3: $
\[ r_{\al_j} (\La^{[K;m_2, m_3](1)} + \rho) = \La^{[K; {m_2}', {m_3}'](j)}+ \rho ,\]
where $ ({m_2}', {m_3}') = (m_2, m_3) $ if $ j = 0, \ =(-m_2-2, m_3)$  if $ j = 2, \ = (m_2, -m_3-2) $ if $ j = 3. $
\end{remark}
By Remark \ref{rem10.7}, we need to compute $ F^{[K; m_2, m_3](j)} $ and $B^{[K; m_2, m_3](j)}  $ only for $ j = 1. $
\begin{lemma}
\label{lem10.8} We have
\[\begin{aligned}
F^{[K; m_2, m_3](1)} &= e^{K\La_0 + \frac{K-1-m_0}{2}\theta} q^{\frac{a(a+1)}{4K}(m_2 -m_3)^2} \\
& \times \sum_{j,k\in \ZZ} \frac{e^{\left( \frac{Kj}{a} + \frac{m_2+1}{2} \right) \al_2 + \left( \frac{-Kk}{a+1} + \frac{m_3+1}{2}  \right) \al_3  }  q^{K \left( \frac{j^2}{a} - \frac{k^2}{a+1}\right) +j(m_2+1) + k(m_3+1) }}{1 - e^{-\al_1} q^{j+k}}.
\end{aligned} \]
 \end{lemma}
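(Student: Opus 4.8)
The plan is to compute $F^{[K;m_2,m_3](1)}$ directly from its definition, substituting the explicit highest weight $\La = \La^{[K;m_2,m_3](1)}$ and expanding the translation sum over the lattice $L^\#$. First I would record the needed data: by Proposition~\ref{prop10.1}, $\La^{[K;m_2,m_3](1)} = \sum_{i=0}^3 m_i\La_i$ with $m_1 = 0$ and $(p+q)m_0 = pm_2 + qm_3 - npq$, and $\rho = -\al_1$, $h^\vee = 0$, so $\La+\rho$ has $m_1$-coordinate (against $\al_1^\vee = \al_1$) equal to $-1$, i.e. $\beta_1 = \al_1$ is orthogonal to $\La+\rho$ as required. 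Since $h^\vee = 0$ the prefactor $q^{m_\La}$ in \eqref{2.3} is $q^{|\La+\rho|^2/2K}$, which accounts for the explicit power of $q$ appearing in $F^{[K;m_2,m_3](1)}$; the remaining task is to evaluate $q^{|\La+\rho|^2/2K}\sum_{\al\in L^\#}t_\al\,\frac{e^{\La+\rho}}{1-e^{-\beta_1}}$.

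The key computational step is to parametrize $L^\# = \tfrac1a\ZZ\al_2 + \tfrac1{a+1}\ZZ\al_3$ by pairs $(j,k)\in\ZZ^2$, writing $\al = \tfrac{j}{a}\al_2 - \tfrac{k}{a+1}\al_3$ (the sign chosen to match the statement), and apply the translation formula \eqref{2.2} to $\La+\rho$. Here I would use $(\al_2|\al_2) = 2a$, $(\al_3|\al_3) = -2(a+1)$, $(\al_2|\al_3) = 0$, so that $|\al|^2 = \tfrac{j^2}{a^2}\cdot 2a - \tfrac{k^2}{(a+1)^2}\cdot 2(a+1) = \tfrac{2j^2}{a} - \tfrac{2k^2}{a+1}$, and $(\La+\rho|\al)$ picks out the $\al_2$- and $\al_3$-components of $\La+\rho$. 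A short calculation of $(\La+\rho|\tfrac1a\al_2) $ and $(\La+\rho|\tfrac1{a+1}\al_3)$ in terms of $m_2, m_3$ (using the fundamental-weight normalization $(\La_i|\al_j^\vee) = \delta_{ij}$) yields the linear-in-$(j,k)$ exponents $j(m_2+1) + k(m_3+1)$ in the $q$-power and the shifts $\tfrac{m_2+1}{2}\al_2 + \tfrac{m_3+1}{2}\al_3$ in the exponential, while $t_\al$ acting on the level-$K$ weight contributes $(\La+\rho|\delta)\al = K\al$ to the weight and $-\tfrac K2|\al|^2 = -K(\tfrac{j^2}{a}-\tfrac{k^2}{a+1})$ — wait, with a sign: $t_\al$ shifts by $K\al$ and subtracts $\big((\La|\al) + \tfrac12 K|\al|^2\big)\delta$, so combining with the completed square one extracts the overall factor $q^{\frac{a(a+1)}{4K}(m_2-m_3)^2}$ and the stated summand. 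The denominator $1 - e^{-\beta_1} = 1 - e^{-\al_1}$ is shifted by $t_\al$ to $1 - e^{-\al_1}q^{(\al|\al_1)}$, and since $(\al_2|\al_1) = -a$, $(\al_3|\al_1) = a+1$ one gets $(\al|\al_1) = \tfrac{j}{a}(-a) - \tfrac{k}{a+1}(a+1) = -(j+k)$, hence the denominator $1 - e^{-\al_1}q^{j+k}$ as claimed (after absorbing the sign, or noting $\sum$ over $\ZZ^2$ is symmetric).

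The routine but delicate part — and the main obstacle — is bookkeeping the many sign and normalization conventions: the sign in the parametrization $\al = \tfrac ja\al_2 - \tfrac k{a+1}\al_3$, the sign of $\rho = -\al_1$ contributing to the $\al_1$-exponent, and the identification $\tfrac{K-1-m_0}{2}\theta$ as the $\theta$-part of $K\La_0 + (\La+\rho)$ restricted modulo the lattice directions $\al_2,\al_3$ (using $K = m_0 + 2m_1 + am_2 - (a+1)m_3$ from \eqref{10.02} with $m_1 = 0$, together with $\theta = 2\al_1 + \al_2 + \al_3$ and $\La_0$ dual to $\al_0^\vee$). I would verify the $\theta$-coefficient by writing $\La^{[K;m_2,m_3](1)} + \rho$ in the $\{\La_0,\al_1,\al_2,\al_3\}$ basis and collecting the part proportional to $\theta$ after the lattice shifts are factored out; consistency with \eqref{10.02} forces it to be $\tfrac{K-1-m_0}{2}$. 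Once all conventions are fixed, the identity is a direct substitution, so I would present it as ``a straightforward computation using \eqref{2.2}, \eqref{2.4}, and the bilinear form data above,'' spelling out only the completion of the square that produces the $q^{\frac{a(a+1)}{4K}(m_2-m_3)^2}$ factor.
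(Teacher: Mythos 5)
Your proposal follows essentially the same route as the paper's proof: write $\La+\rho = K\La_0 + \frac{m_2+1}{2}\al_2 + \frac{m_3+1}{2}\al_3 + \frac{K-1-m_0}{2}\theta$, compute the prefactor $q^{|\La+\rho|^2/2K} = q^{\frac{a(a+1)}{4K}(m_2-m_3)^2}$, and apply the translation formula \eqref{2.2} to each of $\La_0,\al_1,\al_2,\al_3,\theta$ for $\beta = \frac{j}{a}\al_2 - \frac{k}{a+1}\al_3$, exactly as the paper does. The only point to tighten is the denominator: $t_\beta(\al_1) = \al_1 - (\al_1|\beta)\delta = \al_1 + (j+k)\delta$, so with $e^{-\delta}=q$ one gets $e^{-\al_1}q^{j+k}$ directly and no sign needs to be ``absorbed'' (re-indexing $(j,k)\mapsto(-j,-k)$ would not be a legitimate fix, since it would also flip the linear terms in the numerator).
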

\begin{proof}
Let $ \La = \La^{[K; m_2, m_3](1)}. $ Then
\begin{equation}
\label{10.04}
\frac{|\La + \rho |^2}{2K} = \frac{(m_2-m_3)^2}{4n(p+q)}.
\end{equation}
Since $ \La_1 = 2 \La_0 + \theta, \ \La_2 = a\La_0 + \frac{\al_2}{2} + \frac{a}{2} \theta, \ \La_3 = -(a+1) \La_0 + \frac{\al_3}{2} - \frac{a+1}{2} \theta, $ we have, using \eqref{10.01} and \eqref{10.02}:
\begin{equation}
\label{10.05}
\La + \rho = m_0 \La_0 + m_2\La_2 + m_3 \La_3 - \al_1 = K\La_0 + \frac{m_2+1}{2}\al_2 + \frac{m_3+1}{2} \al_3 + \frac{K-1-m_0}{2} \theta. 
\end{equation}
The action of $ t_{j,k} :=t_{j \al_2^\vee + k \al_3^\vee} $ is as follows: $ t_{j,k} (\La_0)  = \La_0 + \frac{j}{a} \al_2 - \frac{k}{a+1} \al_3 + \left( \frac{k^2}{a+1} - \frac{j^2}{a} \right) \delta, \
t_{j,k} (\al_1)  = \al_1 + (j+k) \delta, \ t_{j,k} (\al_2)  = \al_2 - 2 j \delta, \ t_{j,k} (\al_3)  = \al_3 - 2 k \delta, t_{j,k} (\theta) = \theta. $
Hence, using \eqref{10.05}, we obtain:
\[ \begin{aligned}
t_{j,k} (\La+\rho) &= K\La_0 + \frac{K-1-m_0}{2} \theta + \left( \frac{Kj}{a} + \frac{m_2+1}{2} \right) \al_2 + \left( \frac{-Kk}{a+1} + \frac{m_3+1}{2} \right) \al_3 \\
& - \left( K \left( \frac{j^2}{a} - \frac{k^2}{a+1}\right) + j(m_2+1) + k(m_3+1) \right) \delta.
\end{aligned}  \]
The lemma follows from this and \eqref{10.04}.
\end{proof}

We introduce the following coordinates in $ \wh^*: $
\begin{equation}
\label{10.06}
h = 2 \pi i( - \tau \La_0 +z + t \delta) = (\tau, z_1, z_2, z_3, t),
\end{equation}
where 
\begin{equation}
\label{10.07}
z = -\frac{z_2+z_3}{2} \theta + \frac{z_1 -z_2}{2a} \al_2 - \frac{z_1 - z_3}{2(a+1)} \al_3,
\end{equation}
In these coordinates we have:
\begin{equation}
\label{10.08}
e^{-\al_1} = e^{2 \pi i z_1}, e^{-\al_2} = e^{2 \pi i (-z_1 + z_2)}, e^{-\al_3} = e^{2 \pi i (-z_1+z_3)}, e^{-\theta} = e^{2 \pi i (z_2 + z_3)}. 
\end{equation}
Now we rewrite the formula, given by Lemma \ref{lem10.8}, in these coordinates in two different ways. 
\begin{lemma}
\label{lem10.9}
\begin{enumerate}
\item[(a)] \[ 
\begin{aligned}
F^{[K;m_2, m_3](1)} &= e^{2 \pi i K t} e^{-\pi i (K-1-m_0)(z_2+z_3)} e^{-2 \pi i (m_3+1)z_1} \\
& \times \sum_{j \in \ZZ} e^{2 \pi i \left( \frac{Kj}{a} + \frac{m_2+1}{2}\right)  (z_1-z_2) + 2 \pi i \left( \frac{Kj}{a+1} + \frac{m_3+1}{2}\right) (z_1-z_3)} \\
& \times q^{\frac{K}{a(a+1)} \left( j + \frac{a(a+1)}{2K} (m_2-m_3)\right)^2} \Phi_1^{[\frac{-K}{a+1}; m_3+1]} (\tau, z_1, -z_3-2j\tau, 0).\\
\end{aligned} \]
\item[(b)]\[ 
\begin{aligned}
F^{[K;m_2, m_3](1)} &= e^{2 \pi i K t} e^{-\pi i (K-1-m_0)(z_2+z_3)} e^{-2 \pi i (m_2+1)z_1} \\
& \times \sum_{k \in \ZZ} e^{2 \pi i \left(- \frac{Kk}{a} + \frac{m_2+1}{2}\right)  (z_1-z_2) + 2 \pi i \left(- \frac{Kk}{a+1} + \frac{m_3+1}{2}\right) (z_1-z_3)} \\
& \times q^{\frac{K}{a(a+1)} \left( k + \frac{a(a+1)}{2K} (m_3-m_2)\right)^2} \Phi_1^{[\frac{K}{a}; m_2+1]} (\tau, z_1, -z_2-2k\tau, 0).\\
\end{aligned} \]
\end{enumerate}
\end{lemma}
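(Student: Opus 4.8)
The plan is to prove Lemma \ref{lem10.9} by starting from the closed formula for $F^{[K;m_2,m_3](1)}$ established in Lemma \ref{lem10.8} and rewriting it in the coordinates \eqref{10.06}--\eqref{10.07}. First I would substitute \eqref{10.08} into the expression of Lemma \ref{lem10.8}, converting each exponential $e^{\lambda(h)}$ into the corresponding product of $q$-powers and $e^{2\pi i z_r}$-factors. The prefactor $e^{K\Lambda_0 + \frac{K-1-m_0}{2}\theta}q^{\frac{a(a+1)}{4K}(m_2-m_3)^2}$ turns into $e^{2\pi i Kt}e^{-\pi i(K-1-m_0)(z_2+z_3)}$ times a $q$-power; the main work is to track the $\alpha_1,\alpha_2,\alpha_3$ contributions inside the double sum.

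Next I would split the double sum over $(j,k)\in\ZZ^2$ by introducing a change of summation index. For part (a): hold $j$ fixed and sum over $k$ to recognize the $\Phi_1$-series \eqref{1.8}. Concretely, after pulling out the $j$-dependent factors, the inner sum over $k$ has the shape $\sum_{k\in\ZZ} e^{2\pi i(\cdots)k}q^{(\cdots)k^2 + (\cdots)k}/(1 - e^{2\pi i z_1}q^{j+k})$; setting $j'=j+k$ (or directly matching indices) this is precisely $\Phi_1^{[-K/(a+1);\,m_3+1]}(\tau, z_1, -z_3-2j\tau, 0)$ — the shift $-2j\tau$ in the second slot accounts for the $q^{j+k}$ in the denominator versus $q^{k}$ after reindexing, and the degree $-K/(a+1)$ and characteristic $m_3+1$ come from reading off the quadratic and linear exponents in the $k$-variable. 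The leftover $j$-dependent Gaussian assembles into $q^{\frac{K}{a(a+1)}(j+\frac{a(a+1)}{2K}(m_2-m_3))^2}$ after completing the square, using $\frac{1}{a}-\frac{1}{a+1}=\frac{1}{a(a+1)}$ and combining with the extracted prefactor $q$-power. Part (b) is the mirror computation: hold $k$ fixed, sum over $j$, and recognize $\Phi_1^{[K/a;\,m_2+1]}(\tau,z_1,-z_2-2k\tau,0)$; the symmetry $2\leftrightarrow3$, $a\leftrightarrow -(a+1)$ (equivalently $p\leftrightarrow q$) in the setup makes (b) formally identical to (a) after relabeling.

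The only real bookkeeping obstacle is getting all the linear $z_r$-exponents to match exactly — in particular the asymmetric factors $e^{-2\pi i(m_3+1)z_1}$ in (a) versus $e^{-2\pi i(m_2+1)z_1}$ in (b), which arise because the denominator $1-e^{-\alpha_1}q^{j+k}$ contributes an $e^{2\pi i z_1}$ that gets absorbed differently depending on whether we package the sum as a $\Phi_1$ in $z_3$ or in $z_2$. I would verify this by writing $e^{-\beta(h)}=e^{2\pi i z_1}$ for $\beta=\alpha_1$ and carefully distributing the half-integer shifts from $z=-\frac{z_2+z_3}{2}\theta+\frac{z_1-z_2}{2a}\alpha_2-\frac{z_1-z_3}{2(a+1)}\alpha_3$ across the $\alpha_2$- and $\alpha_3$-coefficients in $t_{j,k}(\Lambda+\rho)$. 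This is a routine but error-prone linear-algebra check in $\widehat{\fh}^*$; once it is done, the identification of the inner sums with the functions $\Phi_1^{[\cdot;\cdot]}$ is immediate from definition \eqref{1.8}, and the lemma follows. I expect no conceptual difficulty — just the need to be meticulous with signs, factors of $2$, and the $a$ versus $a+1$ denominators.
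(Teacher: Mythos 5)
Your proposal follows essentially the same route as the paper: start from Lemma \ref{lem10.8}, reindex the double sum (the paper sets $k=r-j$ for part (a), which is your $j'=j+k$) so that the inner sum becomes the series \eqref{1.8} with the $-2j\tau$ shift absorbing the cross term, complete the square in the outer variable to get the Gaussian factor, and pass to coordinates via \eqref{10.08}; part (b) is the mirror substitution $j=r-k$. The only tiny imprecision is that the denominator $1-e^{2\pi i z_1}q^{j+k}$ is fixed by the reindexing alone (the $-2j\tau$ shift in the second slot handles the cross term $q^{2Kjr/(a+1)}$, not the denominator, since $\Phi_1$'s denominator does not involve its second argument), but this does not affect the correctness of the argument.
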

\begin{proof}
First, we rewrite the formula for $ F^{[K;m_2, m_3](1)}, $ given by 
Lemma \ref{lem10.8}, by letting $ k = r-j: $
\[ 
\begin{aligned}
F^{K;m_2,m_3](1)} &= e^{K\La_0 + \frac{K-1-m_0}{2} \theta} q^{\frac{a(a+1)}{4K} (m_2-m_3)^2} \\
&\times \sum_{j \in \ZZ} e^{\left( \frac{Kj}{a} + \frac{m_2+1}{2}\right)  \al_2 + \left( \frac{Kj}{a+1} + \frac{m_3+1}{2}\right) \al_3} q^{\frac{K}{a(a+1)}j^2 + (m_2+m_3)j} \\
& \times \sum_{r \in \ZZ} \frac{e^{-\frac{K}{a+1} r \al_3} q^{\frac{2K}{a+1}jr} q^{-\frac{K}{a+1}r^2 + (m_3+1)r}}{1 - e^{-\al_1} q^r}.
\end{aligned} \]
Using \eqref{10.08}, we obtain:
\[ 
\begin{aligned}
F^{[K;m_2, m_3](1)} &= e^{2 \pi i K z} e^{-\pi i (K-1-m_0) (z_2+z_3)} e^{-2 \pi i (m_0+1)z_1}\\
& \times \sum_{j \in\ZZ} e^{2 \pi i \left( \frac{Kj}{a} + \frac{m_2+1}{2}\right) (z_1-z_2) + 2 \pi i \left( \frac{Kj}{a+1} + \frac{m_3+1}{2}\right) (z_1-z_3)} q^{\frac{K}{a(a+1)} \left( j + \frac{a(a+1)}{2K} (m_2-m_3)\right)^2 } \\
& \times \sum_{r \in\ZZ} \frac{e^{2 \pi i \frac{-K}{a+1}\left( z_1 -(z_3+2j \tau)\right) } e^{2 \pi i (m_3+1)z_1} q^{-\frac{K}{a+1}r^2 + (m_3+1)r} }{1 - e^{2 \pi i z_1} q^r},
\end{aligned} \]
proving (a). The proof of (b) is similar, using substitution $ j = r-k. $
\end{proof}

We define the two modifications $ \tef^{[K;m_2, m_3](1)}_P $ and $ \tef^{[K;m_2, m_3](1)}_Q $ of the function $ F^{[K;m_2, m_3](1)} $, given by (a) and (b) of Lemma \ref{lem10.9} respectively, by replacing the function $ \Phi_1 $ by its modification $ \tph_1, $ see \eqref{1.8} and Remark \ref{rem1.2}. We thus get two different modifications, given by the following lemma. 
\begin{lemma}
\label{lem10.10}
\begin{enumerate}
\item[(a)] 
\[\begin{aligned}
&\tef^{[K;m_2, m_3](1)}_P (\tau, z_1, z_2, z_3, t) \\
&= e^{2 \pi i K t} \Theta_{m_2-m_3, n(p+q)} (\tau, z_1-(a+1)z_2 - az_3) \tph_1^{[np;m_3+1]} (\tau, z_1, -z_3, 0).
\end{aligned} 
 \]
\item[(b)]\[\begin{aligned}
&\tef^{[K;m_2, m_3](1)}_Q (\tau, z_1, z_2, z_3, t) \\
&= e^{2 \pi i K t} \Theta_{m_3-m_2, n(p+q)} (\tau, z_1+(a+1)z_2 + az_3) \tph_1^{[nq;m_2+1]} (\tau, z_1, -z_2, 0).
\end{aligned} 
 \]
\end{enumerate}
\end{lemma}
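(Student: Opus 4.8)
The plan is to start from the two expressions for $F^{[K;m_2,m_3](1)}$ in Lemma~\ref{lem10.9}(a),(b), replace $\Phi_1$ by $\tph_1$ as prescribed, and then recognize that the sum over $j$ (resp.\ $k$) of the modified terms collapses into a single theta function times a single $\tph_1$. The key structural observation is that in Lemma~\ref{lem10.9}(a) the summand contains three ingredients depending on $j$: a pure exponential linear in $j$ (coming from the shift in the $\al_2,\al_3$ coordinates), a Gaussian factor $q^{\frac{K}{a(a+1)}(j+\cdots)^2}$, and the mock factor $\tph_1^{[\frac{-K}{a+1};m_3+1]}(\tau,z_1,-z_3-2j\tau,0)$. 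Using the elliptic transformation property of $\tph_1$ in its second variable (Remark~\ref{rem1.2}, i.e.\ \eqref{1.14} with $\Theta_{j,m}-\Theta_{-j,m}$ replaced by $\Theta_{j,m}$ applied to $\tph_1$), the shift $-z_3-2j\tau$ can be undone at the cost of an explicit $j$-dependent exponential/Gaussian factor, turning $\tph_1^{[np;m_3+1]}(\tau,z_1,-z_3-2j\tau,0)$ into $q^{(\text{quadratic in }j)}e^{(\text{linear in }j)}\tph_1^{[np;m_3+1]}(\tau,z_1,-z_3,0)$. Here I will use that $\frac{-K}{a+1}=\frac{pqn}{(p+q)}\cdot\frac{1}{p+q}\cdot(p+q)\cdots$; more precisely, by \eqref{10.01} and \eqref{10.03}, $\frac{-K}{a+1}=np$ and $\frac{K}{a}=nq$, which is exactly why the degrees of the resulting $\tph_1$'s are $np$ and $nq$.

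After pulling the $\tph_1$ out of the sum, what remains is $\sum_{j\in\ZZ}$ of a Gaussian $q^{(\star)j^2+(\star)j}$ times a linear exponential in $z_1,z_2,z_3$; collecting the quadratic exponent (the contribution $\frac{K}{a(a+1)}j^2$ from the original Gaussian plus the quadratic piece produced by the elliptic shift of $\tph_1$) I expect the $j^2$ coefficient to become $n(p+q)$ after using $\frac{1}{a(a+1)}=\frac{(p+q)^2}{-pq(p+q)}=-\frac{p+q}{pq}$ together with $K=-\frac{pqn}{p+q}$, so $\frac{K}{a(a+1)}=n(p+q)$, and similarly the linear-in-$j$ exponent organizes into $e^{2\pi i\, n(p+q)\, j\, w}$ for the appropriate argument $w$. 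Comparing with the definition \eqref{1.1} of $\Theta_{j,m}$, the remaining $j$-sum is then exactly $\Theta_{m_2-m_3,\,n(p+q)}\big(\tau,\,z_1-(a+1)z_2-az_3\big)$ (the index $m_2-m_3$ enters through the constant $\frac{a(a+1)}{2K}(m_2-m_3)$ shifting the summation lattice $\ZZ\to\ZZ+\frac{m_2-m_3}{2n(p+q)}$, which is the defining shift in \eqref{1.1} with $j\rightsquigarrow m_2-m_3$). Part (b) is handled identically with the roles of $z_2$ and $z_3$, of $m_2$ and $m_3$, and of $\frac{K}{a}=nq$ versus $\frac{-K}{a+1}=np$ interchanged, and with the sign of the theta argument flipped as stated.

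The main obstacle I anticipate is purely bookkeeping: keeping the various exponential prefactors (the $e^{-\pi i(K-1-m_0)(z_2+z_3)}$, the $e^{-2\pi i(m_3+1)z_1}$, the $t$-dependent $e^{2\pi iKt}$, and the cross terms produced by the elliptic shift of $\tph_1$) consistent so that everything not depending on $j$ recombines into precisely the theta argument $z_1-(a+1)z_2-az_3$ and the $\tph_1$ argument $(\tau,z_1,-z_3,0)$, with no leftover factor. It will be important to use \eqref{10.05} to eliminate $m_0$ in favor of $K$ and $\theta$, and to track carefully that the completed square $\big(j+\frac{a(a+1)}{2K}(m_2-m_3)\big)^2$ in Lemma~\ref{lem10.9}(a) is consistent with the $q$-power $\frac{a(a+1)}{4K}(m_2-m_3)^2$ pulled out in Lemma~\ref{lem10.8}. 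Once the arithmetic identities $\frac{K}{a}=nq$, $\frac{-K}{a+1}=np$, $\frac{K}{a(a+1)}=n(p+q)$ are in hand, the rest is a direct, if slightly tedious, rearrangement, and I would simply present it as ``a straightforward computation'' in the final writeup, exhibiting only the key substitutions.
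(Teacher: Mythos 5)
Your plan is correct and follows essentially the same route as the paper: start from Lemma \ref{lem10.9}, apply the elliptic transformation \eqref{1.14} (via Remark \ref{rem1.2}) to remove the $-2j\tau$ shift in the second argument of $\tph_1$, and then recognize the remaining $j$-sum as $\Theta_{m_2-m_3,\,n(p+q)}$ using \eqref{1.1} together with $\tfrac{K}{a}=nq$, $\tfrac{-K}{a+1}=np$, $\tfrac{K}{a(a+1)}=n(p+q)$. Only note the harmless slip in your intermediate computation, where you wrote $\tfrac{1}{a(a+1)}=-\tfrac{p+q}{pq}$ instead of $-\tfrac{(p+q)^2}{pq}$; your stated conclusion $\tfrac{K}{a(a+1)}=n(p+q)$ is nevertheless the correct one, and also observe that since the shift occurs only in the second variable the factor $q^{-mab}$ in \eqref{1.14} is trivial, so no extra Gaussian in $j$ actually arises.
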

\begin{proof}
We have by Lemma \ref{lem10.9} and \eqref{10.03}:
\[\begin{aligned}
\tef^{[K;m_2, m_3](1)}_P (\tau, z_1, z_2, z_3, t) &= e^{2 \pi i K t} e^{-\pi i (K-1-m_0) (z_2 + z_3)} e^{-2 \pi i (m_3+1)z_1} \\
& \times \sum_{j \in \ZZ} e^{2 \pi i \left( \frac{Kj}{a} + \frac{m_2+1}{2}\right) (z_1-z_2) + 2 \pi i \left( \frac{Kj}{a+1} + \frac{m_3+1}{2}\right) (z_1-z_3)} \\
&\times g^{\frac{K}{a(a+1)}\left( j + \frac{a(a+1)}{2K} (m_2-m_3)\right)^2} e^{2 \pi i np 2jz_1} \tph_1^{[np; m_3+1]} (\tau, z_1, -z_3, 0).
\end{aligned} 
 \]
 We have used here the elliptic transformation property of the function $ \tph_1, $ given by \eqref{1.14}, see Remark \ref{rem1.2}. But from \eqref{1.1} it is clear that the factor in front of $ \tph^{[np;m_3+1]}_1 $ is $ \Theta_{m_2-m_3, n(p+q)} (\tau, z_1 -(a+1)z_2 -az_3),$ proving (a). The proof of (b) is the same.
\end{proof}
\begin{remark}
\label{rem10.11}
In coordinates \eqref{10.06}, \eqref{10.07} the formulas for the action of the reflections of the Weyl group are as follows (the coordinate $ \tau $ is fixed):
\[ 
\begin{aligned}
r_{\al_2} (z_1, z_2, z_3, t) & = (z_2, z_1, -z_1+z_2+z_3, t), \\
r_{\al_3} (z_1, z_2, z_3, t) & = (z_3, -z_1+z_2+z_3, z_1, t),\\
r_{\al_2}r_{\al_3}  (z_1, z_2, z_3, t) & = (-z_1+z_2+z_3, z_3, z_2, t),\\
r_{\theta} (z_1, z_2, z_3, t) & = (z_1-z_2-z_3, -z_3, -z_2, t), \\
r_{\al_0} (z_1, z_2, z_3, t) & = (z_1-z_2-z_3+\tau, -z_3+\tau, -z_2+\tau, t-z_2-z_3+\tau). \\
\end{aligned} \]
\end{remark}

Next, we define the two modification of the function $ B^{[K;m_2, m_3](1)} $ which we denote by $ \tbe_P^{[K;m_2, m_3](1)}  $ and $ \tbe_Q^{[K;m_2, m_3](1)} $, by replacing the function $ F^{[K;m_2, m_3](1)} $ in its expression by its respective modification $ \tef_P $ and $ \tef_Q. $ Using Remark \ref{rem10.11} and \eqref{1.9}, we obtain the following explicit expressions:
\begin{equation}
\label{10.09}
\begin{aligned}
&\tbe_P^{[K;m_2, m_3](1)} (\tau, z_1,z_2,z_3,t) \\
& = e^{2 \pi i Kt} \left( \Theta_{m_2-m_3, n(p+q)} (\tau, z_1 - (a+1) z_2 - az_3) \tph^{[np;m_3+1]} (\tau, z_1, -z_3, 0) \right.\\
&\left. + \Theta_{m_3-m_2, n(p+q)} (\tau, z_1+(a-1)z_2+az_3) \tph^{[np; m_3+1]} (\tau, -z_1+z_2+z_3, -z_2, 0) \right), 
\end{aligned}
\end{equation}
\begin{equation}
\label{10.10}
\begin{aligned}
&\tbe_Q^{[K;m_2, m_3](1)} (\tau, z_1,z_2,z_3,t)  = \\
& = e^{2 \pi i Kt} \left( \Theta_{m_3-m_2, n(p+q)} (\tau, z_1 + (a+1) z_2 + az_3) \tph^{[nq;m_2+1]} (\tau, z_1, -z_2, 0) \right.\\
&\left. + \Theta_{m_2-m_3, n(p+q)} (\tau, z_1-(a+1)z_2-(a+2)z_3) 
\tph^{[nq; m_2+1]} (\tau, -z_1+z_2+z_3, -z_3, 0) \right).
\end{aligned}
\end{equation}
These formulas motivate us to introduce the following functions $ P_j $ and $ Q_j \ (j \in \ZZ): $
\[ 
\begin{aligned}
P_j (\tau, z_1, z_2, z_3, t) & = e^{2 \pi i Kt} \left( \Theta_{j, n(p+q)} (\tau, z_1-(a+1)z_2-az_3) \tph^{[np;0]} (\tau, z_1, -z_3, 0) \right. \\
& \left. + \Theta_{-j, n(p+q)} (\tau, z_1 + (a-1) z_2 + az_3) \tph^{[np;0]} (\tau, -z_1+z_2+z_3, -z_2, 0) \right), \\
Q_j (\tau, z_1, z_2, z_3, t) & = e^{2 \pi i Kt} \left( \Theta_{j, n(p+q)} (\tau, z_1+(a+1)z_2+az_3) \tph^{[nq;0]} (\tau, z_1, -z_2, 0) \right. \\
& \left. + \Theta_{-j, n(p+q)} (\tau, z_1 - (a+1) z_2 - (a+2)z_3) \tph^{[nq;0]} (\tau, -z_1+z_2+z_3, -z_3, 0) \right). \\
\end{aligned} \]
\begin{lemma}
\label{lem10.12}
We have for $j\in \ZZ$:
\begin{enumerate}
 \item[(a)]\[
 r_{\al_0} (P_j) 
= -P_{-j-2np} 
, \,
 r_{\al_i} (P_j )
= -P_j 
\mbox{ for }  i = 2,3, \,
  r_\theta (P_j) 
= -P_{-j}    .
\]
\item[(b)]  \[ 
r_{\al_0} (Q_j)    
= -Q_{-j-2nq} 
, \, 
r_{\al_i} (Q_j)     
= -Q_j 
 \mbox{ for }  i = 2,3, \, 
r_\theta (Q_j)  
= -Q_{-j}.    
\]
\end{enumerate}
\end{lemma}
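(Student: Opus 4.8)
The statement is Lemma \ref{lem10.12}, which describes how the four basic reflections $r_{\al_0}, r_{\al_2}, r_{\al_3}, r_\theta$ of the Weyl group $\hat{W}$ act on the functions $P_j$ and $Q_j$. The plan is a direct computation using the explicit formulas defining $P_j$ and $Q_j$ as sums of products $\Theta_{\pm j, n(p+q)}\cdot \tph^{[np;0]}$ (resp.\ $\tph^{[nq;0]}$), together with (i) the coordinate action of the reflections from Remark \ref{rem10.11}, (ii) the elliptic transformation properties \eqref{1.13}, \eqref{1.14} of $\tph^{[m;s]}$, (iii) the fact that $\tph^{[m;0]}$ is odd under $(z_1,z_2)\mapsto(-z_2,-z_1)$, which is built into the definition \eqref{1.9} via the subtraction $\Phi_1^{[m;s]}(\tau,z_1,z_2)-\Phi_1^{[m;s]}(\tau,-z_2,-z_1)$, and (iv) the elliptic transformation \eqref{1.2} of the theta functions $\Theta_{j,m}$. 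Since the two cases (a) for $P_j$ and (b) for $Q_j$ are entirely parallel (they differ only by swapping the roles of $\al_2,\al_3$ and $p,q$, and by sign conventions inside the theta argument), I would write out (a) in detail and remark that (b) is obtained by the same manipulation.

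\textbf{Key steps for (a).} First I would handle $r_{\al_2}$ and $r_{\al_3}$, which are the easiest. Applying the coordinate substitution for $r_{\al_2}$, namely $(z_1,z_2,z_3)\mapsto(z_2,z_1,-z_1+z_2+z_3)$, to the first summand of $P_j$ turns $\tph^{[np;0]}(\tau,z_1,-z_3,0)$ into $\tph^{[np;0]}(\tau,z_2,z_1-z_2-z_3,0)$ and the theta argument $z_1-(a+1)z_2-az_3$ into $z_2-(a+1)z_1-a(-z_1+z_2+z_3) = -\bigl(z_1-(a+1)z_2-az_3\bigr)+(\text{correction})$ — I would verify that the correction is a lattice vector so that \eqref{1.2}/\eqref{1.13} apply and that, after using the oddness of $\tph^{[np;0]}$ under interchanging and negating the two elliptic variables, the first summand of $r_{\al_2}(P_j)$ equals minus the \emph{second} summand of $P_j$, and vice versa; summing gives $r_{\al_2}(P_j)=-P_j$. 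The argument for $r_{\al_3}$ is the same with $(z_1,z_2,z_3)\mapsto(z_3,-z_1+z_2+z_3,z_1)$. For $r_\theta$, whose coordinate action is $(z_1,z_2,z_3)\mapsto(z_1-z_2-z_3,-z_3,-z_2)$, the first summand of $P_j$ becomes $\Theta_{j,n(p+q)}(\tau,\ast)\,\tph^{[np;0]}(\tau,z_1-z_2-z_3,z_2,0)$; rewriting this via oddness as $-\tph^{[np;0]}(\tau,-z_2,-(z_1-z_2-z_3),0)$ and checking that the theta argument becomes $\pm\bigl(z_1+(a-1)z_2+az_3\bigr)$ modulo the lattice identifies it with (minus) the appropriate summand of $P_{-j}$; the index flip $j\mapsto -j$ comes from $\Theta_{j,m}(\tau,-w)=\Theta_{-j,m}(\tau,w)$, which follows from \eqref{1.1}. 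Finally $r_{\al_0}$ is the genuinely affine reflection: its coordinate action $(z_1,z_2,z_3,t)\mapsto(z_1-z_2-z_3+\tau,-z_3+\tau,-z_2+\tau,t-z_2-z_3+\tau)$ introduces shifts by $\tau$, so here I would invoke the \emph{full} elliptic transformation \eqref{1.14} of $\tph^{[np;0]}$ (shift by $a\tau$ picks up $q^{-npab}e^{-2\pi i np(\cdots)}$ factors) and \eqref{1.2} for $\Theta_{j,n(p+q)}$ (shift by $\tfrac{k\tau}{m}$ sends $\Theta_{j,m}\mapsto q^{-k^2/4m}e^{-\pi i k z}\Theta_{j+k,m}$); the bookkeeping of the $q$-powers, exponential prefactors, and the overall $e^{2\pi i Kt}$ factor (using $K=-pqn/(p+q)$) must collapse exactly, leaving $r_{\al_0}(P_j)=-P_{-j-2np}$, where the index shift $-2np$ is precisely the amount produced by the $\tau$-shift inside $\tph^{[np;0]}$ combined with the $\tau$-shift inside $\Theta$.

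\textbf{Main obstacle.} The routine verifications for $r_{\al_2}, r_{\al_3}$ are essentially immediate from the oddness built into \eqref{1.9}. The real work — and the step most prone to sign or index errors — is $r_{\al_0}$: one must simultaneously track the $\tau$-shifts in \emph{both} factors (the Jacobi-form $\Theta$ and the mock factor $\tph$), confirm that the accumulated multiplicative factors $q^{\ast}$ and $e^{2\pi i(\ast)}$ are exactly absorbed by the change of the $t$-coordinate together with the relation $K(p+q)=-pqn$, and check that the net effect on the theta index is $j\mapsto -j-2np$ (not, say, $-j-2nq$ or $j+2np$). I expect that the cleanest way to present this is to first record the transformation of each elementary building block ($\Theta_{j,n(p+q)}$ and $\tph^{[np;0]}$) under the relevant $\tau$-shifts as a short preliminary computation, then substitute, and then observe the cancellation; the identity $a=-p/(p+q)$, $a+1=q/(p+q)$ makes the theta arguments $z_1-(a+1)z_2-az_3 = z_1-\tfrac{q}{p+q}z_2+\tfrac{p}{p+q}z_3$ transparent. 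Since this is a ``straightforward but bookkeeping-heavy'' verification of exactly the type the authors repeatedly defer with ``a straightforward computation,'' I would write the $r_{\al_2}, r_{\al_3}, r_\theta$ cases compactly and give the $r_{\al_0}$ case in a bit more detail, then conclude (b) ``is proved in the same way, interchanging the roles of $\al_2$ and $\al_3$ and of $p$ and $q$.''
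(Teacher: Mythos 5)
Your proposal is correct and is exactly the computation the paper has in mind: its entire proof of Lemma \ref{lem10.12} reads ``It is straightforward, using Remark \ref{rem10.11},'' i.e.\ substitute the coordinate actions of $r_{\al_0},r_{\al_2},r_{\al_3},r_\theta$ into the defining expressions for $P_j$, $Q_j$ and use the elliptic/antisymmetry properties of $\Theta_{j,n(p+q)}$ and $\tilde\Phi^{[np;0]}$, with the $\tau$-shifts in $r_{\al_0}$ producing the index shift $-2np$. The only cosmetic remark is that for $r_{\al_2}$, $r_{\al_3}$ the two summands are interchanged exactly (the theta argument of one maps precisely onto the negative of the other's, no lattice correction is needed), so that case is even cleaner than your hedged description suggests.
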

\begin{proof}
It is straightforward, using Remark \ref{rem10.11}.
\end{proof}

For $ j = 0,2,3 $ we let
\[ \tef^{[K; m_2, m_3](j)}_{P(\mathrm{resp.} \ Q)} = r_{\al_j} \left( \tef^{[K; m'_2, m'_3](1)}_{P(\mathrm{resp.} \ Q)}\right) , \]
where $ (m'_2, m'_3) $ are expressed via $ m_2, m_3 $ as in Remark \ref{rem10.7}, and we let
\[ \begin{aligned}
\tbe^{[K; m_2, m_3](j)}_{P(\mathrm{resp.} \ Q)} &= \sum_{w \in W^{\#}_0} \epsilon(w) w \left( \tef^{[K; m_2, m_3](j)}_{P(\mathrm{resp.} \ Q)} \right) \\
& \left(  =  r_{\al_j} \left( \tbe^{[K; m_2, m_3](1)}_{P(\mathrm{resp.} \ Q)} \right) \right).
\end{aligned}
\]
Formulas \eqref{10.09}, \eqref{10.10} and Lemma \ref{lem10.12} give the following expressions for  the functions $ \tbe^{[K; m_2, m_3](j)}_{P(\mbox{resp. }Q)}  $ for $ j = 0,1,2,3. $
\begin{lemma}
\label{lem10.13}
\begin{enumerate}
\item[(a)]
\[ 
\begin{aligned}
\tbe^{[K;m_2,m_3](1)}_P & = P_{m_2-m_3} , \quad  \tbe^{[K;m_2,m_3](0)}_P = -P_{m_3-m_2-2np} , \, \\
\tbe^{[K;m_2,m_3](2)}_P  & = -P_{-m_2-m_3-2} , \quad \tbe^{[K;m_2,m_3](3)}_P  = -P_{m_2+m_3+2} . 
\end{aligned} 
\] 
\item[(b)]\[ 
\begin{aligned}
\tbe^{[K;m_2,m_3](1)}_Q & = Q_{m_3-m_2}  , \quad \tbe^{[K;m_2,m_3](0)}_Q   = -Q_{m_2-m_3-2nq} , \, \\
\tbe^{[K;m_2,m_3](2)}_Q & = -Q_{m_2+m_3+2} , \quad \tbe^{[K;m_2,m_3](3)}_Q   = -Q_{-m_2-m_3-2}. 
\end{aligned} 
\]
\end{enumerate}
\end{lemma}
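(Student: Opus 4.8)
The plan is to obtain all eight identities directly from formulas \eqref{10.09} and \eqref{10.10}, the definitions of the functions $P_j$ and $Q_j$, the reflection formulas of Lemma \ref{lem10.12}, and Remark \ref{rem10.7}. The one non-formal ingredient is the fact, already invoked above (\cite{KW16}, Corollary 1.6), that $\tilde{\Phi}^{[np;\,s]}$ and $\tilde{\Phi}^{[nq;\,s]}$ do not depend on $s$ for $s\in\ZZ$; since $np$ and $nq$ are positive integers and $m_2+1$, $m_3+1$ are integers, this lets us replace the degrees $m_3+1$ and $m_2+1$ appearing in \eqref{10.09} and \eqref{10.10} by $0$.

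\textbf{The case $j=1$.} First I would compare the right-hand side of \eqref{10.09}, after the substitution $\tilde{\Phi}^{[np;\,m_3+1]}=\tilde{\Phi}^{[np;\,0]}$, with the defining formula for $P_j$: the two theta-factors $\Theta_{m_2-m_3,\,n(p+q)}$ and $\Theta_{m_3-m_2,\,n(p+q)}$, together with their arguments, are exactly those in $P_j$ with index $j=m_2-m_3$, and the $\tilde{\Phi}$-factors agree term by term. This gives $\tilde{B}^{[K;m_2,m_3](1)}_P=P_{m_2-m_3}$. The parallel comparison of \eqref{10.10} with the definition of $Q_j$, using $\tilde{\Phi}^{[nq;\,m_2+1]}=\tilde{\Phi}^{[nq;\,0]}$, yields $\tilde{B}^{[K;m_2,m_3](1)}_Q=Q_{m_3-m_2}$.

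\textbf{The cases $j=0,2,3$.} By the definition of $\tilde{B}^{[K;m_2,m_3](j)}_P$ given just before the lemma, together with the fact that each of $r_{\al_0},r_{\al_2},r_{\al_3}$ commutes with $W^{\#}_0=\{1,r_{\al_2},r_{\al_3},r_{\al_2}r_{\al_3}\}$ (both $\al_0$ and $\theta$ being orthogonal to $\al_2$ and to $\al_3$), one has $\tilde{B}^{[K;m_2,m_3](j)}_P=r_{\al_j}\bigl(\tilde{B}^{[K;m'_2,m'_3](1)}_P\bigr)=r_{\al_j}\bigl(P_{m'_2-m'_3}\bigr)$, where $(m'_2,m'_3)$ is read from Remark \ref{rem10.7}: namely $(m_2,m_3)$ for $j=0$, $(-m_2-2,m_3)$ for $j=2$, and $(m_2,-m_3-2)$ for $j=3$. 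Applying Lemma \ref{lem10.12}(a) then gives $r_{\al_0}(P_{m_2-m_3})=-P_{m_3-m_2-2np}$, $r_{\al_2}(P_{-m_2-m_3-2})=-P_{-m_2-m_3-2}$ and $r_{\al_3}(P_{m_2+m_3+2})=-P_{m_2+m_3+2}$, which is precisely part (a). Part (b) follows by the identical argument with $Q$, $nq$ and Lemma \ref{lem10.12}(b) replacing $P$, $np$ and Lemma \ref{lem10.12}(a), noting that $m'_3-m'_2$ equals $m_3-m_2$, $m_2+m_3+2$ and $-m_2-m_3-2$ for $j=0,2,3$ respectively.

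\textbf{Expected difficulty.} There is no real obstacle here: the substance has been front-loaded into \eqref{10.09}, \eqref{10.10}, Lemma \ref{lem10.12} and Remark \ref{rem10.7}, and what remains is bookkeeping. The only points that require care are the appeal to the $s$-independence of $\tilde{\Phi}^{[np;\,s]}$ and $\tilde{\Phi}^{[nq;\,s]}$ when matching degrees with the definitions of $P_j$, $Q_j$, and keeping the index shifts $-2np$, $-2nq$ and the overall signs straight when substituting Remark \ref{rem10.7} into Lemma \ref{lem10.12}.
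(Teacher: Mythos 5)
Your proposal is correct and is essentially the paper's own argument: the paper gives no separate proof, stating only that formulas \eqref{10.09}, \eqref{10.10} and Lemma \ref{lem10.12} yield the result, and your verification (matching \eqref{10.09}--\eqref{10.10} against the definitions of $P_j$, $Q_j$ via the $s$-independence of $\tilde{\Phi}^{[m;s]}$ for integer $s$, then applying Remark \ref{rem10.7} and Lemma \ref{lem10.12} for $j=0,2,3$) is exactly that computation carried out. All index shifts and signs check out.
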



Next, for $ \La = \La^{[K; m_2, m_3](j)} \in P^K_{+,j} $ we consider the two modifications of supercharacters pf integrable $ \wg $-modules $ \LLa $ given by \eqref{2.4}, by replacing in the RHS the functions $ F^{[K; m_2, m_3](j)} $ by their modifications $ \tef^{[K; m_2, m_3](j)}_{P(\mathrm{resp.} \ Q)} $. We denote the respective modified supercharacters by $ \tch^{-[P]}_\La $ and $ \tch^{-[Q]}_\La. $ By \eqref{2.5} the normalized superdenominator $\hat{R}^-$ is given by \eqref{2.5} (see \eqref{12.6} in Section 12 for an explicit formula).
\begin{lemma}
\label{lem10.14}
Let $ \La = \La^{[K;m_2,m_3](1)} \in P^K_{+,1}. $ Then
\begin{enumerate}
\item[(a)] $ \hat{R}^- \tch^{-[P]}_\La = P_{m_2 - m_3} $ if $ \theta $ and $ \al_0 $ are not integrable for $ \LLa. $
\item[(b)] $ \hat{R}^- \tch^{-[P]}_\La = \frac{2}{j_\La} P_0 $ if $ \theta $ is integrable for $ \LLa. $ 
\item[(c)] $ \hat{R}^- \tch^{-[P]}_\La = \frac{1}{j_\La} \left( P_{m_2-m_3} + P_{m_3-m_2-2np}\right) $ if $ \al_0 $ is integrable for $ \LLa.  $ In particular, $ \hat{R}\vphantom{R}^- \tch^{-[P]}_\La = \frac{2}{j_\La} P_{nq}$ if $ m_2 = nq, m_3 = 0 $ (resp. $ = \frac{2}{j_\La}P_{-np} $ if $ m_2 = 0, m_3 = np $).
\end{enumerate}
\end{lemma}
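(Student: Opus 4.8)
The plan is to transport the proof of Proposition~\ref{prop3.4} from $spo_{2|3}$ to $\fg=D(2,1;a)$, carrying the modification $[P]$ along. The first step is to record the group‑theoretic data attached to $\La=\La^{[K;m_2,m_3](1)}\in P^K_{+,1}$. In the supercharacter formula \eqref{2.4}--\eqref{2.7} one has $W^\#_0=\{1,r_{\al_2},r_{\al_3},r_{\al_2}r_{\al_3}\}$, $L^\#=\tfrac1a\ZZ\al_2+\tfrac1{a+1}\ZZ\al_3$, $T=\{\beta_1\}=\{\al_1\}$, $\hat W^\#_0=W^\#_0\ltimes t_{L^\#}$, and $\hat W^\#=W^\#\ltimes t_{L^\#}$. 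From the definitions of $\Delta^\#$ and $W^\#$ one has $W^\#=W^\#_0$ unless $\theta$ is integrable for $L(\La)$ — in which case $W^\#$ also contains $r_\theta$ — or $\al_0=\delta-\theta$ is integrable — in which case $W^\#$ also contains $r_{\al_0}$. Because $\theta$, hence the linear part of $\al_0$, is orthogonal to $\al_2$ and $\al_3$, both $r_\theta$ and $r_{\al_0}$ commute with $W^\#_0$, so in these two cases $W^\#=\langle r_\theta\rangle\times W^\#_0$, resp.\ $\langle r_{\al_0}\rangle\times W^\#_0$. Moreover $\theta$ and $\al_0$ are \emph{indivisible} even roots of $\wg$ (e.g.\ $\theta/2=\al_1+\tfrac12(\al_2+\al_3)$ is not a root), so $\epsilon_-(r_\theta)=\epsilon_-(r_{\al_0})=-1$. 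Finally, $\theta$ and $\al_0$ cannot both be integrable for $L(\La)$, since this would force $0\le(\La|\theta^\vee)$ and $(\La|\theta^\vee)\le(\La|\delta)=K$, impossible as $K=-pqn/(p+q)<0$; hence cases (a)--(c) are mutually exclusive and exhaustive.

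In case (a), $W^\#=W^\#_0$ and $j_\La=1$ (Proposition~\ref{prop2.1}). Thus the numerator $\hat R^-\ch^-_\La$ equals $B^{[K;m_2,m_3](1)}$; and since $\tch^{-[P]}_\La$ arises from \eqref{2.4} by replacing $F^{[K;m_2,m_3](1)}$ with $\tef^{[K;m_2,m_3](1)}_P$, hence $B^{[K;m_2,m_3](1)}$ with $\tbe^{[K;m_2,m_3](1)}_P$, Lemma~\ref{lem10.13}(a) gives $\hat R^-\tch^{-[P]}_\La=\tbe^{[K;m_2,m_3](1)}_P=P_{m_2-m_3}$.

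In cases (b) and (c) the same computation with the enlarged $W^\#$ produces the numerator $\tbe^{[K;m_2,m_3](1)}_P+\epsilon_-(r)\,r\,\tbe^{[K;m_2,m_3](1)}_P$, where $r=r_\theta$ in case (b) and $r=r_{\al_0}$ in case (c). Using $\epsilon_-(r_\theta)=-1$ and $r_\theta P_j=-P_{-j}$ (Lemma~\ref{lem10.12}(a)), case (b) gives $P_{m_2-m_3}+P_{m_3-m_2}$; by Lemma~\ref{lem10.5}(1), integrability of $\theta$ forces $m_2=m_3=0$, so this is $2P_0$ and $\hat R^-\tch^{-[P]}_\La=\tfrac2{j_\La}P_0$. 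Using $\epsilon_-(r_{\al_0})=-1$ and $r_{\al_0}P_j=-P_{-j-2np}$ (Lemma~\ref{lem10.12}(a)), case (c) gives $P_{m_2-m_3}+P_{m_3-m_2-2np}$, so $\hat R^-\tch^{-[P]}_\La=\tfrac1{j_\La}\bigl(P_{m_2-m_3}+P_{m_3-m_2-2np}\bigr)$. For the last assertion I would specialize to $(m_2,m_3)=(nq,0)$ (i.e.\ $\La=nq\La_2\in P^K_{+,1}$) and $(0,np)$ (i.e.\ $\La=np\La_3$), which by Lemma~\ref{lem10.6}(a) do have $\al_0$ integrable, and use that $P_j$ depends on $j$ only through $\Theta_{\pm j,\,n(p+q)}$ and is therefore $2n(p+q)$-periodic in $j$ (see \eqref{1.1}): then $P_{m_3-m_2-2np}=P_{-nq-2np}=P_{nq}$ in the first case and $P_{m_3-m_2-2np}=P_{-np}=P_{m_2-m_3}$ in the second, so the two terms coincide and the sum collapses to $\tfrac2{j_\La}P_{nq}$, resp.\ $\tfrac2{j_\La}P_{-np}$.

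The main difficulty lies not in the computation but in the bookkeeping preceding it: identifying which of $r_\theta$, $r_{\al_0}$ lies in $W^\#$ in each case and checking that (a)--(c) exhaust all possibilities — this rests on the integrability criteria of Lemmas~\ref{lem10.5}--\ref{lem10.6} together with the negativity of $K$ — and, crucially, getting $\epsilon_-(r_\theta)=\epsilon_-(r_{\al_0})=-1$ right. This sign differs from the $spo_{2|3}$ case (where $\theta$ is divisible and the analogous sign is $+1$, cf.\ \eqref{3.4}), and it is precisely what makes the two contributions \emph{add} rather than cancel in (b) and (c). With these settled, the rest follows mechanically from Lemmas~\ref{lem10.12}--\ref{lem10.13} and the periodicity of the theta functions $\Theta_{j,\,n(p+q)}$.
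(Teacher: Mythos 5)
Your proof is correct and follows essentially the same route as the paper's: apply the (super)character formula \eqref{2.4}--\eqref{2.7} with $W^{\#}$ enlarged by $r_\theta$ or $r_{\al_0}$ as dictated by the integrability criteria, identify $\tbe^{[K;m_2,m_3](1)}_P=P_{m_2-m_3}$ via Lemma \ref{lem10.13}, and use Lemma \ref{lem10.12} together with Lemma \ref{lem10.5}(1) (and the $2n(p+q)$-periodicity of $P_j$ for the ``in particular'' clause). Your extra bookkeeping --- the sign $\epsilon_-(r_\theta)=\epsilon_-(r_{\al_0})=-1$ and the mutual exclusivity of cases (b) and (c) from $K<0$ --- is consistent with, and usefully makes explicit, what the paper leaves implicit.
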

\begin{proof}
  By the character formula \eqref{2.4}-\eqref{2.7} and Proposition \ref{prop2.1}
  we have $ \hat{R}\vphantom{R}^- \tch^{[P]}_\La  = \tbe^{[K;m_2,m_3](1)}_P$ in case (a). Then claim (a) follows from Lemma \ref{lem10.13}. 

  Similarly, in case (b) we have: $ \hat{R}^- \tch^{[P]}_\La = \frac{1}{j_\La} \left( \tbe^{[K; m_2, m_3](1)} - r_\theta \tbe^{[K;m_2,m_3](1)} \right) . $ Applying Lemma \ref{lem10.12} and Lemma \ref{lem10.5} (1) to the second term,
  we get claim (b), using Proposition \ref{prop2.1}. The proof of claim (c) is similar. 
\end{proof}
The proof of the next two lemmas is similar. 
\begin{lemma}
\label{lem10.15}
Let $ \La = \La^{[K;m_2,m_3](0)} \in P^K_{+,0} $. Then 
\begin{enumerate}
\item[(a)]  $ \hat{R}\vphantom{R}^- \tch^{-[P]}_\La =- P_{m_3-m_2-2np} $ if $ \theta  $ and $ \al_0 $ are not integrable for $ \LLa. $
\item[(b)] $ \hat{R}\vphantom{R}^- \tch^{-[P]}_\La = 
-\frac{2}{j_\Lambda}
P_{n(p+q)} $ if $ \theta  $ is integrable for $ \LLa. $
\item[(c)] $ \hat{R}\vphantom{R}^- \tch^{-[P]}_\La = -\frac{1}{j_\La} \left( P_{m_2-m_3} +P_{m_3-m_2-2np} \right)  $ if $ \al_0 $ is integrable for $ \LLa.  $
\end{enumerate}
\end{lemma}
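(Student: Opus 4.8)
The plan is to mirror the proof of Lemma~\ref{lem10.14}, now with $T=\{\beta_0\}$ and $\La=\La^{[K;m_2,m_3](0)}\in P^K_{+,0}$ in place of $\La^{[K;m_2,m_3](1)}$. In the supercharacter formula \eqref{2.4}--\eqref{2.7} the relevant data are $W_0^\#=\{1,r_{\al_2},r_{\al_3},r_{\al_2}r_{\al_3}\}$, $L^\#=\tfrac{1}{a}\ZZ\al_2+\tfrac{1}{a+1}\ZZ\al_3$ and $T=\{\beta_0\}$; the summation over $t_{L^\#}$ is absorbed into $F^{[K;m_2,m_3](0)}=r_{\al_0}\bigl(F^{[K;m_2,m_3](1)}\bigr)$ (cf.\ Remark~\ref{rem10.7}), so that the $W_0^\#$-antisymmetrization of $F^{[K;m_2,m_3](0)}$ is $B^{[K;m_2,m_3](0)}$, and passing to the modified supercharacter is exactly the replacement of $\Phi$ by $\tph$, i.e.\ of $F^{[K;m_2,m_3](0)}$ by $\tef^{[K;m_2,m_3](0)}_P$. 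Hence $\hat{R}^-\tch^{-[P]}_\La$ is the antisymmetrization of $\tbe^{[K;m_2,m_3](0)}_P$ over $W^\#$, and Lemma~\ref{lem10.13}(a) already records $\tbe^{[K;m_2,m_3](0)}_P=-P_{m_3-m_2-2np}$. Since $L(\La)$ is always integrable with respect to $\al_2,\al_3,\delta-\al_2,\delta-\al_3$, it cannot be integrable with respect to both $\theta$ and $\delta-\theta=\al_0$ at a non-critical level \eqref{10.03}, so the three cases (a), (b), (c) are exhaustive and differ only through the choice of the finite group $W^\#$.

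For case (a), $\theta$ and $\al_0$ are not integrable, hence $W^\#=W_0^\#$; moreover $j_\La=1$, since by Proposition~\ref{prop2.1} the only $w\in\hat W_0^\#$ fixing $\La+\wrh$ with $w(T)>0$ is $w=1$: $(\La+\rho|\al_2^\vee)=m_2+1>0$ and $(\La+\rho|\al_3^\vee)=m_3+1>0$ exclude the non-trivial reflections, while the non-trivial elements of $t_{L^\#}$ move $\La+\wrh$. Thus $\hat{R}^-\tch^{-[P]}_\La=\tbe^{[K;m_2,m_3](0)}_P=-P_{m_3-m_2-2np}$, which is (a). For case (b), $\theta$ becomes an integrable root, so $W^\#$ acquires $r_\theta$, and since $\epsilon_-(r_\theta)=-1$ formula \eqref{2.6} gives $\hat{R}^-\tch^{-[P]}_\La=\tfrac{1}{j_\La}\bigl(\tbe^{[K;m_2,m_3](0)}_P-r_\theta\,\tbe^{[K;m_2,m_3](0)}_P\bigr)$. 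By Lemma~\ref{lem10.5}(0) integrability of $\theta$ forces $m_2=nq-1$, $m_3=np-1$, so $m_3-m_2-2np=-n(p+q)$; then $r_\theta(P_{-n(p+q)})=-P_{n(p+q)}$ by Lemma~\ref{lem10.12}(a), and $\Theta_{n(p+q),n(p+q)}=\Theta_{-n(p+q),n(p+q)}$ (the two indices agreeing modulo $2n(p+q)$) yields $P_{n(p+q)}=P_{-n(p+q)}$, whence $\hat{R}^-\tch^{-[P]}_\La=-\tfrac{2}{j_\La}P_{n(p+q)}$.

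For case (c), $\al_0$ is integrable and $W^\#$ acquires $r_{\al_0}$ (again $\epsilon_-(r_{\al_0})=-1$), so $\hat{R}^-\tch^{-[P]}_\La=\tfrac{1}{j_\La}\bigl(\tbe^{[K;m_2,m_3](0)}_P-r_{\al_0}\,\tbe^{[K;m_2,m_3](0)}_P\bigr)$; the identity $r_{\al_0}(P_j)=-P_{-j-2np}$ of Lemma~\ref{lem10.12}(a) turns the reflected term into $P_{m_2-m_3}$, giving $\hat{R}^-\tch^{-[P]}_\La=-\tfrac{1}{j_\La}\bigl(P_{m_2-m_3}+P_{m_3-m_2-2np}\bigr)$, which is (c). The only genuinely non-mechanical steps are the identification of $W^\#$ in each regime (it is strictly larger than $W_0^\#$ precisely when $\theta$, resp.\ $\al_0$, is integrable), the evaluation of $j_\La$ through Proposition~\ref{prop2.1}, and the index bookkeeping $j\mapsto-j$ and $j\mapsto-j-2np$ under the affine reflections; all of this is already carried out in the proof of Lemma~\ref{lem10.14}, so the present proof is essentially a transcription of it.
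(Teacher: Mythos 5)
Your proposal is correct and follows exactly the route the paper intends: the paper states that Lemma \ref{lem10.15} is proved "along the same lines" as Lemma \ref{lem10.14}, i.e.\ via the character formula \eqref{2.4}--\eqref{2.7} with the appropriate $W^\#$, Proposition \ref{prop2.1} for $j_\La$, and Lemmas \ref{lem10.12}, \ref{lem10.13} for the reflection action and the identification $\tbe^{[K;m_2,m_3](0)}_P=-P_{m_3-m_2-2np}$. Your index bookkeeping in cases (b) and (c), including the observation that $P_{-n(p+q)}=P_{n(p+q)}$ because the theta indices are taken modulo $2n(p+q)$, is accurate.
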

\begin{lemma}
\label{lem10.16}
\begin{enumerate}
\item[(a)] Let $ \La = \La^{[K;m_2,m_3](2)} \in P^K_{+,2}. $ Then
\begin{enumerate}
\item[(i)] $ \hat{R}\vphantom{R}^- \tch^{-[P]}_\La = -P_{-m_2-m_3-2} $ if $ \theta $ is not integrable for $ \LLa, $
\item[(ii)] $ \hat{R}\vphantom{R}^- \tch^{-[P]}_\La = -\frac{1}{j_\La} \left( P_{m_2+m_3+2} + P_{-m_2-m_3-2} \right)  $ if $ \theta $ is integrable for $ \LLa. $
\end{enumerate} 
\item[(b)] Let $ \La = \La^{[K;m_2,m_3](3)} \in P^K_{+,3}. $ Then
\begin{enumerate}
\item[(i)] $ \hat{R}\vphantom{R}^- \tch^{-[P]}_\La = -P_{m_2+m_3+2} $ if $ \theta $ is not integrable for $ \LLa, $
\item[(ii)] $ \hat{R}\vphantom{R}^- \tch^{-[P]}_\La = = -\frac{1}{j_\La} \left( P_{m_2+m_3+2} + P_{-m_2-m_3-2} \right)  $ if $ \theta $ is integrable for $ \LLa. $
\end{enumerate}
\end{enumerate}
\end{lemma}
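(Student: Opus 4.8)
The plan is to argue exactly as in the proof of Lemma~\ref{lem10.14} (this is the ``similar'' proof announced before Lemma~\ref{lem10.15}). First I would apply the supercharacter formula \eqref{2.4}--\eqref{2.7} together with Proposition~\ref{prop2.1}: after replacing $F^{[K;m_2,m_3](j)}$ by its modification $\tef^{[K;m_2,m_3](j)}_P$, these express $\hat R^-\tch^{-[P]}_\La$ as $j_\La^{-1}\sum_{w\in W^\#}\epsilon_-(w)\,w$ applied to the $W^\#_0$-symmetrized building block, i.e.\ as $j_\La^{-1}$ times the sum over a set of coset representatives of $W^\#_0$ in $W^\#$ of $\epsilon_-(w)\,w\bigl(\tbe^{[K;m_2,m_3](j)}_P\bigr)$. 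So the computation reduces to (a)~pinning down $W^\#$, (b)~inserting the explicit values of $\tbe^{[K;m_2,m_3](j)}_P$ from Lemma~\ref{lem10.13}(a), and (c)~applying the reflection formulas of Lemma~\ref{lem10.12}(a) to the coset representatives other than $1$.

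The decisive simplification relative to the cases $j=0,1$ (Lemmas~\ref{lem10.14}, \ref{lem10.15}) is Lemma~\ref{lem10.6}(c): for $\La\in P^K_{+,j}$ with $j=2$ or $3$ the root $\al_0$ is \emph{never} integrable for $\LLa$. Since $\al_2$ and $\al_3$ are always integrable for a complementary integrable module, this leaves only two possibilities for $W^\#$: either $\theta$ is not integrable, in which case $W^\#=W^\#_0=\{1,r_{\al_2},r_{\al_3},r_{\al_2}r_{\al_3}\}$; or $\theta$ is integrable, in which case (since $r_\theta$ commutes with $r_{\al_2},r_{\al_3}$, these three reflections lying in different simple summands of $\fg_{\bar 0}$) $W^\#=W^\#_0\cup W^\#_0\,r_\theta$, with $r_\theta$ a coset representative. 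These are precisely cases (i) and (ii) of the statement; the remaining logical possibility, that $\theta$ and $\al_0=\delta-\theta$ are both integrable, is excluded because, just as in the proof of Proposition~\ref{prop3.4}, it forces $K=0$ and the trivial module.

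In case (i) one gets immediately $\hat R^-\tch^{-[P]}_\La=\tbe^{[K;m_2,m_3](j)}_P$, which by Lemma~\ref{lem10.13}(a) equals $-P_{-m_2-m_3-2}$ for $j=2$ and $-P_{m_2+m_3+2}$ for $j=3$, giving (a)(i) and (b)(i). In case (ii), since $\theta$ is an indivisible even root we have $\epsilon_-(r_\theta)=-1$, and using that $r_\theta$ commutes with $W^\#_0$ one obtains
\[
\hat R^-\tch^{-[P]}_\La=\frac{1}{j_\La}\Bigl(\tbe^{[K;m_2,m_3](j)}_P-r_\theta\,\tbe^{[K;m_2,m_3](j)}_P\Bigr).
\]
Lemma~\ref{lem10.12}(a), i.e.\ $r_\theta(P_l)=-P_{-l}$, then turns the second term into the ``missing'' one: for $j=2$, $r_\theta\bigl(-P_{-m_2-m_3-2}\bigr)=P_{m_2+m_3+2}$, whence $\hat R^-\tch^{-[P]}_\La=-\tfrac{1}{j_\La}\bigl(P_{m_2+m_3+2}+P_{-m_2-m_3-2}\bigr)$; for $j=3$, symmetrically, $r_\theta\bigl(-P_{m_2+m_3+2}\bigr)=P_{-m_2-m_3-2}$, giving the same expression. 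This is (a)(ii) $=$ (b)(ii).

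The ``hard part'' here is purely bookkeeping: reading off $W^\#$ correctly from the general structure $\hat W^\#=W^\#\ltimes t_{L^\#}$ exactly as in the $spo_{2|3}$ case of Proposition~\ref{prop3.4}, and keeping the signs $\epsilon_-$ straight through the coset decomposition. One should also note that Lemmas~\ref{lem10.12} and~\ref{lem10.13} are stated for the modified objects $\tef_P,\tbe_P,P_l$ (all built from $\tilde\Phi$), so they apply verbatim to $\tch^{-[P]}_\La$ and no new analytic input is required.
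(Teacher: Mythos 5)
Your proposal is correct and follows exactly the route the paper intends: the paper's proof of Lemma \ref{lem10.16} is simply declared ``similar'' to that of Lemma \ref{lem10.14}, i.e.\ apply the supercharacter formula \eqref{2.4}--\eqref{2.7} with Proposition \ref{prop2.1}, note via Lemma \ref{lem10.6}(c) that $\al_0$ is never integrable so only the two cases $W^\#=W^\#_0$ and $W^\#=W^\#_0\cup W^\#_0 r_\theta$ occur, and then combine Lemma \ref{lem10.13}(a) with $r_\theta(P_l)=-P_{-l}$ from Lemma \ref{lem10.12}(a). Your sign bookkeeping in both cases (i) and (ii) checks out against the stated formulas.
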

\begin{lemma}
\label{lem10.17}
The same formulas as in Lemmas \ref{lem10.14}-\ref{lem10.16} hold (with the same proofs) if we replace $ [P] $ by $ [Q] $ in the LHS, and $ p $ with $ q $ and $ m_2 $ with $ m_3 $ in the RHS.
\end{lemma}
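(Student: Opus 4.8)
The statement to be established is Lemma~\ref{lem10.17}: that the formulas of Lemmas~\ref{lem10.14}--\ref{lem10.16} remain valid, with the same proofs, after the substitution $[P]\mapsto[Q]$, $p\leftrightarrow q$, $m_2\leftrightarrow m_3$. The plan is to exploit the manifest $P\leftrightarrow Q$ symmetry that is built into all the constructions of this section, so that no genuinely new computation is required: one only needs to check that each structural input used in the proofs of Lemmas~\ref{lem10.14}--\ref{lem10.16} has an exact $Q$-counterpart obtained under that same substitution.

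First I would record the basic dictionary. Comparing the definitions of $P_j$ and $Q_j$ immediately before Lemma~\ref{lem10.12}, one sees that $Q_j$ is literally obtained from $P_j$ by interchanging the roles of $\alpha_2$ and $\alpha_3$ in the coordinates \eqref{10.07}, which on the level of the combinatorial data amounts to $p\leftrightarrow q$ and $m_2\leftrightarrow m_3$; the same holds for $\tef_Q$ versus $\tef_P$ (Lemma~\ref{lem10.10}) and for $\tbe_Q$ versus $\tbe_P$ (\eqref{10.09} vs.\ \eqref{10.10}, Lemma~\ref{lem10.13}). Next I would verify that the three ingredients feeding the proofs of Lemmas~\ref{lem10.14}--\ref{lem10.16} are symmetric under this swap: (i) the reflection action formulas of Remark~\ref{rem10.11} are symmetric in $\alpha_2\leftrightarrow\alpha_3$, hence Lemma~\ref{lem10.12}(b) is the $p\leftrightarrow q$, $m_2\leftrightarrow m_3$ image of part~(a); (ii) the relation between $\tef^{(j)}$ and $r_{\alpha_j}\tef^{(1)}$ via Remark~\ref{rem10.7} is the same for $Q$ as for $P$; and (iii) the character formula \eqref{2.4}--\eqref{2.7} together with Proposition~\ref{prop2.1}, which is what actually produces the $\frac{1}{j_\Lambda}$ factors and the selection of surviving terms, makes no reference to the choice of modification, only to $W^\#$, $W^\#_0$, $T=\{\beta_j\}$ and the integrability conditions. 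Since the integrability conditions for $\theta$ and $\alpha_0$ (Lemmas~\ref{lem10.5}, \ref{lem10.6}) are themselves stated symmetrically in $(p,m_2)\leftrightarrow(q,m_3)$ — e.g.\ case~(0) of Lemma~\ref{lem10.5} reads $m_2=nq-1$ \emph{and} $m_3=np-1$, case~(2) and case~(3) are mirror images — every branch of the case analysis in Lemmas~\ref{lem10.14}--\ref{lem10.16} maps to the correspondingly-labelled branch of the $[Q]$-statement.

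Concretely, the write-up would proceed by: (1) stating the substitution $\sigma\colon [P]\mapsto[Q],\ p\mapsto q,\ q\mapsto p,\ m_2\mapsto m_3,\ m_3\mapsto m_2$ and observing that $\sigma$ fixes $K=-pqn/(p+q)$ and $n(p+q)$, hence fixes $\hat R^-$; (2) checking $\sigma(P_j)=Q_j$ from the definitions and $\sigma(\tbe^{[K;m_2,m_3](j)}_P)=\tbe^{[K;m_3,m_2](j)}_Q$ from Lemma~\ref{lem10.13}; (3) checking that $\sigma$ carries each integrability hypothesis appearing in Lemmas~\ref{lem10.14}--\ref{lem10.16} to the hypothesis appearing in the same-numbered case; (4) concluding that applying $\sigma$ to the identities of Lemmas~\ref{lem10.14}--\ref{lem10.16}, each of which was derived purely from \eqref{2.4}, Proposition~\ref{prop2.1}, Lemma~\ref{lem10.12} and Lemma~\ref{lem10.13}, yields exactly the asserted $[Q]$-identities. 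I expect the only real subtlety — and thus the main obstacle — to be bookkeeping in step~(3): one must be careful with the asymmetric-looking index shifts such as $-2np$ versus $-2nq$ (which do flip correctly, since $\sigma$ also swaps $np\leftrightarrow nq$ via $p\leftrightarrow q$) and with the degenerate boundary cases in Lemma~\ref{lem10.14}(c) and Remark~\ref{rem10.2}, where $P^K_{+,2}$ and $P^K_{+,3}$ are interchanged by $\sigma$; getting these boundary identifications to line up with the statement ``$m_2=nq$, $m_3=0$'' $\leftrightarrow$ ``$m_2=0$, $m_3=np$'' is where a careless reader would slip. Once that verification is in place the lemma follows with no further calculation.
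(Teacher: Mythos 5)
Your proposal is correct and matches the paper's approach: the paper gives no separate argument for Lemma \ref{lem10.17} beyond invoking the $p\leftrightarrow q$, $m_2\leftrightarrow m_3$, $\alpha_2\leftrightarrow\alpha_3$ symmetry of all the inputs (the definitions of $P_j$ and $Q_j$, Lemmas \ref{lem10.12} and \ref{lem10.13}, and the integrability criteria), which is exactly what you spell out. Your flagged subtlety about cases (2) and (3) being interchanged by the substitution is real (cf.\ the explicit ``exchange (2) with (3)'' in Lemma \ref{lem11.10}), and handling it as you describe is the right bookkeeping.
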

\begin{lemma}
\label{lem10.18}
\begin{enumerate}
\item[(a)] If $ \La = \La^{[K; m_2, m_3](1)}  \in P^K_{+,1}$ is such that $ m_2 = 0 $ or $ m_3 = 0, $ then
\[ \hat{R}\vphantom{R}^- \tch^{-[P]}_\La = P_{m_2 -m_3}; \quad \hat{R}\vphantom{R}^- \tch^{-[Q]}_\La = Q_{m_3 -m_2}. \]
\item[(b)] If $ \La = \La^{[K; m_2, m_3](0)}  \in P^K_{+,0}$ is such that $ m_2 = nq-1 $ or $ m_3 = np-1, $ then
\[ \hat{R}\vphantom{R}^- \tch^{-[P]}_\La = -P_{m_3 -m_2 - 2np}; \quad \hat{R} \tch^{-[Q]}_\La =- Q_{m_2 -m_3-2nq}. \]
\end{enumerate}
\end{lemma}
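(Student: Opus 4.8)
\textbf{Proof proposal for Lemma \ref{lem10.18}.}

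The plan is to treat this as a direct specialization of Lemma \ref{lem10.14}(a) and Lemma \ref{lem10.15}(a) (together with their $[Q]$-analogues from Lemma \ref{lem10.17}), so the work reduces to checking that in the claimed ranges neither $\theta$ nor $\al_0$ is integrable for $\LLa$. For part (a), take $\La=\La^{[K;m_2,m_3](1)}\in P^K_{+,1}$ with $m_2=0$ or $m_3=0$; I first want to rule out integrability of $\al_0$ and of $\theta$. By Lemma \ref{lem10.6}(a), if $m_2=0$ then $\al_0$ is integrable iff $m_3=np$; but the description of $P^{'K}_{+,1}$ in Proposition \ref{prop10.1} forces $0\le m_3<np$, so $m_3=np$ is excluded (the boundary weight $np\La_3$ listed separately in $P^{'K}_{+,1}$ has $m_2=m_3=0$, the already-handled case, or rather one checks it falls under (b); in any event in the range $m_2=0$, $0\le m_3<np$ the root $\al_0$ is not integrable). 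Similarly for $m_3=0$. For $\theta$: by Lemma \ref{lem10.5}(1), $\theta$ is integrable for $L(\La^{[K;m_2,m_3](1)})$ iff $m_2=m_3=0$. So I must separately handle the single exceptional weight $\La^{[K;0,0](1)}$, where $\theta$ \emph{is} integrable. In that case Lemma \ref{lem10.14}(b) gives $\hat R^-\tch^{-[P]}_\La=\frac{2}{j_\La}P_0$; but $P_{m_2-m_3}=P_0$ here, and one needs $j_\La$ to be such that this coincides with $P_0$ — i.e.\ I should verify $j_{\La^{[K;0,0](1)}}=2$ by Proposition \ref{prop2.1}, using that $r_\theta$ fixes $\La+\hat\rho$ modulo $\CC\delta$ in this atypical situation. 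So the first step is: either show $j_\La=2$ for this weight (making $\frac{2}{j_\La}P_0=P_0=P_{m_2-m_3}$ as claimed), or, more cleanly, absorb it into the statement since $P_{m_2-m_3}$ and $\frac{2}{j_\La}P_0$ agree.

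Next I carry out part (b) in parallel. Take $\La=\La^{[K;m_2,m_3](0)}\in P^K_{+,0}$ with $m_2=nq-1$ or $m_3=np-1$. By Lemma \ref{lem10.6}(b), if $m_2=nq-1$ (resp.\ $m_3=np-1$) then $\al_0$ is \emph{not} integrable for $\LLa$, so that condition is handled for free. For $\theta$: Lemma \ref{lem10.5}(0) says $\theta$ is integrable for $L(\La^{[K;m_2,m_3](0)})$ iff $m_2=nq-1$ \emph{and} $m_3=np-1$ simultaneously. Thus, on the locus $m_2=nq-1$, $m_3=np-1$ (the ``corner'' of the range) $\theta$ is integrable, and elsewhere on $\{m_2=nq-1\}\cup\{m_3=np-1\}$ it is not. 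So again there is one exceptional weight $\La^{[K;nq-1,np-1](0)}$ to treat: Lemma \ref{lem10.15}(b) gives $\hat R^-\tch^{-[P]}_\La=-\frac{2}{j_\La}P_{n(p+q)}$, while the generic formula $-P_{m_3-m_2-2np}$ evaluated at $m_2=nq-1$, $m_3=np-1$ gives $-P_{(np-1)-(nq-1)-2np}=-P_{-np-nq}=-P_{-n(p+q)}$. Since $\Theta_{-j,n(p+q)}$ and $\Theta_{j,n(p+q)}$ are related by $z\mapsto -z$ in their second argument, one checks $P_{-n(p+q)}$ and $P_{n(p+q)}$ coincide up to the action already recorded in Lemma \ref{lem10.12}(a) ($r_\theta(P_j)=-P_{-j}$ together with the fact that $n(p+q)\equiv -n(p+q)\bmod 2n(p+q)$ is \emph{not} automatic, so here I really use $j_\La$). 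Concretely: $2n(p+q)$ is the period of the index $j$ in $P_j$ via $\Theta_{j,n(p+q)}$, and $n(p+q)$ is the unique index with $P_{n(p+q)}=P_{-n(p+q)}$ only after accounting for $j_\La=2$; so the step is to confirm $j_{\La^{[K;nq-1,np-1](0)}}=2$ via Proposition \ref{prop2.1}.

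The $[Q]$-statements follow verbatim by Lemma \ref{lem10.17}, swapping $p\leftrightarrow q$ and $m_2\leftrightarrow m_3$ in all of the above, using Lemma \ref{lem10.13}(b) and Lemma \ref{lem10.12}(b) in place of their $[P]$-counterparts; the exceptional weights are the same two weights $\La^{[K;0,0](1)}$ and $\La^{[K;nq-1,np-1](0)}$, and the index arithmetic is identical with $np$ replaced by $nq$.

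The main obstacle I anticipate is \emph{not} the integrability bookkeeping — that is routine given Lemmas \ref{lem10.5} and \ref{lem10.6} — but rather the treatment of the two corner weights where $\theta$ \emph{is} integrable: there one must show that the ``symmetrized'' formula $\frac{2}{j_\La}P_0$ (resp.\ $-\frac{2}{j_\La}P_{n(p+q)}$) really does reduce to the single-term expression $P_{m_2-m_3}$ (resp.\ $-P_{m_3-m_2-2np}$) claimed in the lemma. This requires computing $j_\La$ for these maximally atypical weights using Proposition \ref{prop2.1} (counting elements $w\in\hat W_0^\#$ fixing $\La+\hat\rho$ with $w(T)>0$), and checking the $\Theta$-index identities $P_0=P_0$ trivially but $P_{-n(p+q)}=P_{n(p+q)}$ only via the $r_\theta$-relation of Lemma \ref{lem10.12}. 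Once $j_\La=2$ is established in both corner cases, the two halves of the stated formulas agree and the lemma follows. I would present this as: ``By Lemmas \ref{lem10.5}, \ref{lem10.6}, on the indicated ranges $\al_0$ is never integrable, and $\theta$ is integrable only at the single corner weight in each family; applying Lemma \ref{lem10.14}(a)/(b) (resp.\ \ref{lem10.15}(a)/(b)) and Proposition \ref{prop2.1} to evaluate $j_\La=2$ at the corner, all cases collapse to the asserted one-term formulas. The $[Q]$-version is identical by Lemma \ref{lem10.17}.''
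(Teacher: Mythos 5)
Your overall strategy coincides with the paper's own (one-sentence) proof: deduce Lemma \ref{lem10.18} from Lemmas \ref{lem10.14}--\ref{lem10.17} together with a computation of $j_\La$ via Proposition \ref{prop2.1}, the only real content being that $j_\La=2$ at the exceptional weights. However, your summary claim that ``on the indicated ranges $\al_0$ is never integrable'' is false, and this is a concrete gap in part (a). The set $P^K_{+,1}$ contains the two boundary weights $nq\La_2=\La^{[K;nq,0](1)}$ and $np\La_3=\La^{[K;0,np](1)}$ (listed separately in Proposition \ref{prop10.1} precisely because they fall outside the box $0\le m_2<nq$, $0\le m_3<np$); both satisfy the hypothesis ``$m_2=0$ or $m_3=0$'', and both have $m_0=0$, so $\al_0$ \emph{is} integrable for them by Lemma \ref{lem10.6}(a). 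Your parenthetical dismissal of $np\La_3$ as having ``$m_2=m_3=0$'' is simply wrong ($m_3=np\neq0$ there). These two weights must be run through Lemma \ref{lem10.14}(c), whose ``in particular'' clause gives $\tfrac{2}{j_\La}P_{nq}$ and $\tfrac{2}{j_\La}P_{-np}$ respectively, so once again $j_\La=2$ is needed to land on $P_{m_2-m_3}$. They cannot be skipped: equalities \eqref{10.11} and \eqref{10.12} in the proof of Proposition \ref{prop10.19} invoke Lemma \ref{lem10.18} precisely at $m_3=np$ and $m_2=nq$.

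A smaller point in part (b): the identity $P_{n(p+q)}=P_{-n(p+q)}$ needs no appeal to the $r_\theta$-relation of Lemma \ref{lem10.12}. Since $P_j$ is built from $\Theta_{\pm j,\,n(p+q)}$ and $\Theta_{j,m}$ depends only on $j\bmod 2m$, the index of $P_j$ is defined modulo $2n(p+q)$, so $P_{-n(p+q)}=P_{n(p+q)}$ outright; your worry that this congruence ``is not automatic'' is misplaced. The burden at the corner weight $\La^{[K;nq-1,np-1](0)}$ is therefore exactly what you state at the end, namely verifying $j_\La=2$ via Proposition \ref{prop2.1}, and the same verification is needed at $\La^{[K;0,0](1)}$ and at the two boundary weights above. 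With those four computations supplied, and the $[Q]$-case transported by Lemma \ref{lem10.17} as you describe, your argument closes.
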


\begin{proof}
It follows from Lemmas \ref{lem10.14}-\ref{lem10.17}, along with the computation of $ j_\La, $ using Proposition \ref{prop2.1}.
\end{proof}

\begin{proposition}
\label{prop10.19} 
Denote by  $ V^{[P]} $ the span
of the set of functions $ \{ P_j |\, j \in \ZZ / 2n(p+q)\ZZ \} $ (over $ \CC  $).
Then  
\[\span \{ \hat{R}\vphantom{R}^- \tch^{-[P]}_\La |\, \La \in P^K_{+,0} \cup P^K_{+,1}  \}  = V^{[P]} = \span \{ \hat{R}\vphantom{R}^- \tch^{-[P]}_\La  |\, \La \in 
\bigcup\limits_{j=0}^{3} P^K_{+,j} \}.
\]
\end{proposition}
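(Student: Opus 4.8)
The plan is to show a chain of inclusions and equalities among the three spans. Writing $V^{[P]}_{0,1}:=\span\{\hat R^- \tch^{-[P]}_\La \mid \La\in P^K_{+,0}\cup P^K_{+,1}\}$ and $V^{[P]}_{\mathrm{all}}:=\span\{\hat R^-\tch^{-[P]}_\La\mid \La\in\bigcup_{j=0}^3 P^K_{+,j}\}$, the claim is $V^{[P]}_{0,1}=V^{[P]}=V^{[P]}_{\mathrm{all}}$, where $V^{[P]}=\span\{P_j\mid j\in\ZZ/2n(p+q)\ZZ\}$. First I would observe that by Lemma~\ref{lem10.12}(a) the transformations $r_{\al_0}$, $r_{\al_2}$, $r_{\al_3}$ and $r_\theta$ send each $P_j$ to $\pm P_{j'}$ with $j'$ in the same residue class system, and in particular $P_{j+2n(p+q)}$ relates to $P_j$; more precisely, $r_{\al_0}r_\theta$ acts as $j\mapsto j+2np$ up to sign (and $r_{\al_0}r_\theta$ for the $Q$'s as $j\mapsto j+2nq$), together with $j\mapsto -j$ from $r_\theta$. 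Hence the index $j$ of $P_j$ is only well-defined modulo the group generated by $j\mapsto -j$ and $j\mapsto j+2np$; since $\gcd(p,q)=1$ one checks $2np$ and $2n(p+q)$ generate the same subgroup of $\ZZ$ only after also using the $Q$-relation, so I must be careful: the correct statement is that $V^{[P]}$ is spanned by $P_j$, $j\in\ZZ/2n(p+q)\ZZ$, and all three spans land inside it because every $\hat R^-\tch^{-[P]}_\La$ is, by Lemmas~\ref{lem10.14}--\ref{lem10.16}, a single $\pm P_j$ or a sum $P_j\pm P_{j'}$ of two such with indices in $\ZZ$. So $V^{[P]}_{0,1}\subseteq V^{[P]}$ and $V^{[P]}_{\mathrm{all}}\subseteq V^{[P]}$ are immediate, and $V^{[P]}_{0,1}\subseteq V^{[P]}_{\mathrm{all}}$ is trivial.

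The substantive direction is $V^{[P]}\subseteq V^{[P]}_{0,1}$, i.e.\ every $P_j$ with $j\in\ZZ/2n(p+q)\ZZ$ lies in the span of the supercharacters coming only from $P^K_{+,0}\cup P^K_{+,1}$. Here I would run through the cases of Lemma~\ref{lem10.14} and Lemma~\ref{lem10.15}. From Lemma~\ref{lem10.14}(a), as $\La=\La^{[K;m_2,m_3](1)}$ ranges over those weights in $P^K_{+,1}$ for which neither $\theta$ nor $\al_0$ is integrable, we obtain $P_{m_2-m_3}$ for $m_2,m_3\in\ZZ_{\ge0}$, $m_2<nq$, $m_3<np$, excluding the boundary cases governed by Lemmas~\ref{lem10.5}(1) and \ref{lem10.6}(a); this already gives $P_j$ for all $j$ with $-(np-1)\le j\le nq-1$ except possibly $j=0$ and the two endpoints. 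The value $j=0$ comes from Lemma~\ref{lem10.14}(b) (integrable $\theta$, which forces $m_2=m_3=0$), giving $P_0$ up to a nonzero scalar $2/j_\La$. The remaining residues are obtained from $P^K_{+,0}$: Lemma~\ref{lem10.15}(a) produces $-P_{m_3-m_2-2np}$ for $m_2,m_3$ in the allowed ranges, and as $m_3-m_2$ runs over $\{-(nq-1),\dots,np-1\}$ the index $m_3-m_2-2np$ runs over a residue interval complementary (mod $2n(p+q)$) to the one already covered; the boundary and integrable-$\al_0$ subcases from Lemma~\ref{lem10.15}(b),(c) and Lemma~\ref{lem10.18} fill in the handful of remaining indices, noting that $P_{nq}$ and $P_{-np}$ appear there. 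Assembling these, every residue class in $\ZZ/2n(p+q)\ZZ$ is hit by some $\pm P_j$ lying in $V^{[P]}_{0,1}$, possibly after solving a small triangular linear system for the two-term combinations $P_{m_2-m_3}+P_{m_3-m_2-2np}$ (case (c)) against the already-obtained single-term vectors. This yields $V^{[P]}\subseteq V^{[P]}_{0,1}$, and combining all inclusions gives the three-way equality.

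The main obstacle I anticipate is the careful bookkeeping of exactly which residues mod $2n(p+q)$ are realized and with what (nonzero) scalars, accounting for the three regimes ($\theta$ not integrable, $\theta$ integrable, $\al_0$ integrable) and the boundary strata $m_2\in\{0,nq-1,nq\}$, $m_3\in\{0,np-1,np\}$ from Lemmas~\ref{lem10.5}, \ref{lem10.6}, \ref{lem10.18}, together with the symmetry $P_j=-P_{-j}$ (from $r_\theta$, Lemma~\ref{lem10.12}(a)) and the periodicity $j\mapsto j+2n(p+q)$ which must be shown to hold, i.e.\ that $\Theta_{j,n(p+q)}$ only depends on $j$ mod $2n(p+q)$. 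That periodicity is exactly the definition~\eqref{1.1} of $\Theta_{j,m}$, so $P_{j+2n(p+q)}=P_j$ genuinely, and $V^{[P]}$ is at most $2n(p+q)$-dimensional; the content is that the supercharacters from $P^K_{+,0}\cup P^K_{+,1}$ already span all of it. A clean way to organize the argument is to fix the involution $\sigma\colon j\mapsto -j$ and note that $V^{[P]}$ decomposes into $\sigma$-odd vectors $P_j-P_{-j}$ (since $P_0=0$ would be false only if... in fact $P_0$ is a nonzero $\sigma$-invariant vector, so one keeps $P_0$ and the $P_j-P_{-j}$, $1\le j<n(p+q)$, as a basis after using $P_j=-P_{-j}$); then the counting reduces to checking that the differences $m_2-m_3$ (from $P^K_{+,1}$) and $m_3-m_2-2np$ (from $P^K_{+,0}$) together hit every class in $\{0,1,\dots,n(p+q)\}$ up to the $\sigma$-action, which follows since $\{m_2-m_3\}$ covers $[-(nq-1),np-1]$ and adding $-2np$ shifts this onto $[-(np+nq-1),-1]\equiv[n q+1, n(p+q)]\pmod{2n(p+q)}$ after applying $\sigma$. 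Finally, the $[Q]$ version is identical by Lemma~\ref{lem10.17}, so I would state the $P^{[Q]}$ analogue as a corollary with the proof "mutatis mutandis, swapping $p\leftrightarrow q$, $m_2\leftrightarrow m_3$."
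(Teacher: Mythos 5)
Your overall strategy is the paper's: the containments of all three spans in $V^{[P]}$ are immediate from Lemmas \ref{lem10.14}--\ref{lem10.16}, and the substantive point is that the weights in $P^K_{+,0}\cup P^K_{+,1}$ alone already realize $\pm P_j$ for every residue $j$ modulo $2n(p+q)$. The paper, however, realizes them using only the four \emph{boundary} families $\La^{[K;0,m_3](1)}$, $\La^{[K;m_2,0](1)}$, $\La^{[K;nq-1,m_3](0)}$, $\La^{[K;m_2,np-1](0)}$, for which Lemma \ref{lem10.18} guarantees that each numerator is a single $\pm P_j$; the corresponding indices sweep out $[-np,0]$, $[0,nq]$, $[nq+1,n(p+q)]$ and $[-n(p+q),-np-1]$, which exhaust $\ZZ/2n(p+q)\ZZ$. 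Your version instead runs over the generic interior weights via Lemma \ref{lem10.14}(a), which is more delicate: Lemma \ref{lem10.6} does not rule out $\al_0$-integrability for interior $(m_2,m_3)$ (e.g.\ for $p=q=1$ any $(m_2,m_3)$ with $m_2+m_3\geq n$, $m_2+m_3\equiv n \pmod 2$ has $m_0\in\ZZ_{\geq 0}$), so for such weights case (a) does not apply and one gets a genuine two-term sum. This is harmless only because the indices you need are already hit by the boundary weights --- which is exactly the paper's shortcut --- and it is what makes your ``triangular linear system'' unnecessary.

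Two concrete errors to fix. First, the identity $P_j=-P_{-j}$ that you invoke in the last paragraph is false: Lemma \ref{lem10.12}(a) states $r_\theta(P_j)=-P_{-j}$, i.e.\ it describes the action of the Weyl group element $r_\theta$ on the \emph{arguments} of the function $P_j$, not a linear relation between $P_j$ and $P_{-j}$ (the two summands in the definition of $P_j$ have different arguments in the $\Theta$ and $\tph$ factors, so no such relation holds). Your second-paragraph accounting does not actually use this identity --- the intervals $[-(np-1),nq-1]$, $\{0\}$, $[nq+1,np+2nq-1]$, together with $nq$ and $-np\equiv np+2nq$, already cover all $2n(p+q)$ residues without any $j\leftrightarrow -j$ identification --- so the fix is simply to delete the ``$\sigma$-decomposition'' reorganization. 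Second, that final paragraph also has index slips: $m_3-m_2$ ranges over $[-(nq-1),np-1]$, and subtracting $2np$ gives $[-2np-nq+1,\,-np-1]$, not $[-(np+nq-1),-1]$; the correct interval is congruent mod $2n(p+q)$ to $[nq+1,\,np+2nq-1]$, which is what your earlier accounting in fact uses.
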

\begin{proof}
By the above discussion it follows that the space $ V^{[P]} $ contains all functions $ \hat{R}\vphantom{R}^- \tch^{-[P]}_\La $ for $ \La \in P^K_{+,j}, \ j = 0, \ldots, 3. $ Hence we need to check only that $ V^{[P]} $ is contained in the span of the set $ \{ \hat{R}\vphantom{R}^- \tch^{-[P]}_\La | \La \in P^K_{+,0} \cup P^K_{+,1} \}. $ This is implied by the following four equalities of sets:
\begin{equation}
\label{10.11}
\lbrace \hat{R}\vphantom{R}^- \tch^{-[P]}_{\La^{[K;0,m_3](1)}} \ | \ 0 \leq m_3\leq np
\rbrace = 
\lbrace P_j \ | \ -np \leq j \leq 0 
\rbrace ,
\end{equation}
\begin{equation}
\label{10.12}
\lbrace \hat{R}\vphantom{R}^- \tch^{-[P]}_{\La^{[K;m_2,0](1)}}  \ | \ 0 \leq m_2 \leq nq \rbrace =\lbrace P_j \ | \ 0 \leq j \leq nq \rbrace ,
\end{equation}
\begin{equation}
\label{10.13}
\lbrace  \hat{R}\vphantom{R}^- \tch^{-[P]}_{\La^{[K;nq-1,m_3](0)}} \ | \ 0 \leq m_3 \leq np-1\rbrace = 
\lbrace -P_j \ | \ nq+1\leq j \leq n(p+q)\rbrace ,  
\end{equation}
\begin{equation}
\label{10.14}
\lbrace \hat{R}\vphantom{R}^- \tch^{-[P]}_{\La^{[K;m_2,np-1](0)}} \ | \ 0 \leq m_2 \leq nq-1\rbrace =
\lbrace 
-P_j \ | \ -n(p+q) \leq j \leq -np-1\rbrace .
\end{equation}
These equalities follow from Lemma \ref{lem10.18}.
\end{proof}
One proves the next statement along the same lines.
\begin{proposition}
\label{prop10.20}
 The same result as Proposition \ref{prop10.19} holds if we replace $ V^{[P]} $ by $ V^{[Q]} = \span \{ Q_j \ | \ j \in \ZZ / 2n(p+q) \ZZ \} $ and $ \tch^{-[P]}_\La $ by $ \tch^{-[Q]}_\La $.
\end{proposition}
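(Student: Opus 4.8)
The plan is to mirror the proof of Proposition \ref{prop10.19} verbatim, since the setup for the $Q$-side is completely symmetric to the $P$-side. First I would observe that by Lemmas \ref{lem10.13}--\ref{lem10.17} (with $[P]$ replaced by $[Q]$, $p$ interchanged with $q$, and $m_2$ interchanged with $m_3$, as recorded in Lemma \ref{lem10.17}), every function $\hat{R}^-\tilde{\ch}^{-[Q]}_\La$ for $\La$ in any of the sets $P^K_{+,j}$, $j=0,1,2,3$, is, up to sign, one of the functions $Q_j$; hence the inclusion $\span\{\hat{R}^-\tilde{\ch}^{-[Q]}_\La \mid \La \in \bigcup_{j=0}^3 P^K_{+,j}\} \subseteq V^{[Q]}$ is immediate. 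The reverse inclusion $V^{[Q]} \subseteq \span\{\hat{R}^-\tilde{\ch}^{-[Q]}_\La \mid \La \in P^K_{+,0}\cup P^K_{+,1}\}$ is the content that needs checking, and it is equivalent to four set equalities exactly analogous to \eqref{10.11}--\eqref{10.14}.

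Concretely, I would write down the four identities
\[
\{\hat{R}^-\tilde{\ch}^{-[Q]}_{\La^{[K;m_2,0](1)}} \mid 0\leq m_2\leq nq\} = \{Q_j \mid -nq\leq j\leq 0\},
\]
\[
\{\hat{R}^-\tilde{\ch}^{-[Q]}_{\La^{[K;0,m_3](1)}} \mid 0\leq m_3\leq np\} = \{Q_j \mid 0\leq j\leq np\},
\]
\[
\{\hat{R}^-\tilde{\ch}^{-[Q]}_{\La^{[K;m_2,np-1](0)}} \mid 0\leq m_2\leq nq-1\} = \{-Q_j \mid np+1\leq j\leq n(p+q)\},
\]
\[
\{\hat{R}^-\tilde{\ch}^{-[Q]}_{\La^{[K;nq-1,m_3](0)}} \mid 0\leq m_3\leq np-1\} = \{-Q_j \mid -n(p+q)\leq j\leq -np-1\},
\]
where the roles of $p,q$ and of the indices $m_2,m_3$ relative to \eqref{10.11}--\eqref{10.14} are swapped, in accordance with the $Q$-side version of Lemma \ref{lem10.18}. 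Each of these follows directly from that lemma: for weights in $P^K_{+,1}$ with one of $m_2,m_3$ equal to zero one has $\hat{R}^-\tilde{\ch}^{-[Q]}_\La = Q_{m_3-m_2}$, and letting the remaining index run over its allowed range produces the stated interval of indices; similarly for weights in $P^K_{+,0}$ with $m_2=nq-1$ or $m_3=np-1$ one gets $\hat{R}^-\tilde{\ch}^{-[Q]}_\La = -Q_{m_2-m_3-2nq}$, again sweeping out the complementary intervals. Reducing the index $j$ modulo $2n(p+q)$, the union of the four index sets above covers all of $\ZZ/2n(p+q)\ZZ$, so the $Q_j$ are all in the span of the listed characters, giving $V^{[Q]}\subseteq\span\{\hat{R}^-\tilde{\ch}^{-[Q]}_\La \mid \La\in P^K_{+,0}\cup P^K_{+,1}\}$.

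The only genuinely non-routine point — and the one I would double-check carefully rather than merely asserting — is the bookkeeping of which endpoints of the index intervals are hit, and in particular the computation of the multiplicity factors $j_\La$ at the boundary weights (where $\theta$ or $\al_0$ becomes integrable), since it is there that Lemma \ref{lem10.18} is applied and where an off-by-one error would leave a gap in $\ZZ/2n(p+q)\ZZ$. This is handled exactly as in the $P$-case by Proposition \ref{prop2.1}, and the symmetry $p\leftrightarrow q$, $m_2\leftrightarrow m_3$ is genuine at the level of the defining data in Proposition \ref{prop10.1} and of Lemmas \ref{lem10.12}, \ref{lem10.13}, so no new phenomenon arises. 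Thus the proof reduces, as stated, to the remark that the argument is the same, and I would simply write: ``One proves this in the same way as Proposition \ref{prop10.19}, interchanging $P_j$ with $Q_j$, $p$ with $q$, and $m_2$ with $m_3$ throughout, and using the $[Q]$-versions of Lemmas \ref{lem10.14}--\ref{lem10.18} provided by Lemma \ref{lem10.17}.''
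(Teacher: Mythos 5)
Your proposal follows exactly the route the paper intends: the paper gives no separate argument for Proposition \ref{prop10.20} beyond the remark that it is proved ``along the same lines'' as Proposition \ref{prop10.19}, and your spelled-out version (swap $P\leftrightarrow Q$, $p\leftrightarrow q$, $m_2\leftrightarrow m_3$, and invoke the $[Q]$-versions of Lemmas \ref{lem10.14}--\ref{lem10.18} via Lemma \ref{lem10.17}) is precisely that. One small slip: in your fourth set equality the index range should be $-n(p+q)\leq j\leq -nq-1$ rather than $-n(p+q)\leq j\leq -np-1$, since $\hat{R}^-\tilde{\ch}^{-[Q]}_{\La^{[K;nq-1,m_3](0)}}=-Q_{-nq-1-m_3}$ with $0\leq m_3\leq np-1$; with this correction the four intervals do cover all of $\ZZ/2n(p+q)\ZZ$, and the argument is complete.
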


The modular transformations of the functions $ P_j $'s and $ Q_j $'s are given by the following proposition.
\begin{proposition}
\label{prop10.21}
For $ j \in \ZZ / 2n(p+q) \ZZ $ we have:
\[ 
\begin{aligned}
P_j \left( -\frac{1}{\tau}, \frac{z_1}{\tau}, \frac{z_2}{\tau}, \frac{z_3}{\tau}, t \right) = \frac{(-i\tau)^\half \tau}{\sqrt{2n(p+q)}} e^{\frac{\pi i K}{\tau}(z|z)} 
\hspace{-4ex} \sum_{k\in\ZZ /2n(p+q)\ZZ} \hspace{-4ex} e^{-\frac{\pi i jk}{n(p+q)}} P_k \tzzzt,
\end{aligned} \]
where $ z $ is given by \eqref{10.07};
\[ P_j (\tau+1, z_1,z_2,z_3, t) = e^{\frac{\pi i j^2}{2n(p+q)}} P_j \tzzzt. \]
The same formulas hold with $ P_j $'s replaced by $ Q_j $'s.
\end{proposition}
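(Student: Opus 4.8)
The plan is to reduce the statement to the known modular transformation laws \eqref{1.3}, \eqref{1.4} for the theta functions $\Theta_{j,m}$ and \eqref{1.15}, \eqref{1.16} for the modified mock theta functions $\tilde{\Phi}^{[m;s]}$. After extracting the overall factor $e^{2\pi i Kt}$, each of $P_j$ and $Q_j$ is a sum of two products of the form $\Theta_{\pm j,\,n(p+q)}(\tau,w)\cdot\tilde{\Phi}^{[np;0]}(\tau,u_1,u_2,0)$ (with $nq$ instead of $np$ in the case of $Q_j$), where $w,u_1,u_2$ are linear in $z_1,z_2,z_3$. So I would transform each factor separately, then collect the resulting scalar and exponential prefactors and re-index the Gauss sum coming from the theta factor.

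First I would record the one geometric input. From $(\al_2|\al_2)=2a$, $(\al_3|\al_3)=-2(a+1)$, $(\al_1|\al_2)=-a$, $(\al_1|\al_3)=a+1$ and $\theta=2\al_1+\al_2+\al_3$, one checks immediately that $\theta,\al_2,\al_3$ are pairwise orthogonal. Hence in the coordinates \eqref{10.07} one has $(z|z)=\tfrac12(z_2+z_3)^2+\tfrac1{2a}(z_1-z_2)^2-\tfrac1{2(a+1)}(z_1-z_3)^2$; substituting $a=-p/(p+q)$ and $K=-npq/(p+q)$ turns $K(z|z)$ into an explicit quadratic form in $z_1,z_2,z_3$, which is what the prefactors will have to reproduce.

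For the $S$-transformation I would apply \eqref{1.3} to each theta factor, choosing the third theta-variable so as to produce $\Theta_{\pm j,\,n(p+q)}(-1/\tau,\,\cdot/\tau)$ out of $\Theta_{\pm j,\,n(p+q)}(\tau,\cdot)$; this yields the factor $\bigl(-i\tau/2n(p+q)\bigr)^{1/2}e^{\pi i n(p+q)w^2/(2\tau)}$ and the Gauss sum $\sum_{j'}e^{\mp\pi i jj'/(n(p+q))}\Theta_{j',\,n(p+q)}(\tau,w)$, and \eqref{1.15} applied to $\tilde{\Phi}^{[np;0]}$ yields $\tau\,e^{2\pi i np\,u_1u_2/\tau}$. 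The two scalar prefactors multiply to $\tfrac{(-i\tau)^{1/2}\tau}{\sqrt{2n(p+q)}}$, exactly the constant in the statement. The crux is the quadratic identity $\tfrac{n(p+q)}{2}w^2+2np\,u_1u_2=K(z|z)$, to be checked by expanding both sides as quadratic forms in $z_1,z_2,z_3$ and matching coefficients — once for each summand of $P_j$ (the first uses $w=z_1-(a+1)z_2-az_3$, $(u_1,u_2)=(z_1,-z_3)$; the second uses $w'=z_1+(a-1)z_2+az_3$, $(u_1,u_2)=(-z_1+z_2+z_3,-z_2)$, and both give the same value $K(z|z)$). Substituting $j'\mapsto -j'$ in the Gauss sum coming from the $\Theta_{-j}$-summand then reassembles the two contributions into $\sum_k e^{-\pi i jk/(n(p+q))}P_k$. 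The $T$-transformation is immediate: \eqref{1.4} multiplies each theta factor by $e^{\pi i j^2/(2n(p+q))}$ (the same for $\Theta_j$ and $\Theta_{-j}$), \eqref{1.16} leaves $\tilde{\Phi}^{[np;0]}$ unchanged, and $e^{2\pi i Kt}$ is untouched. The statement for $Q_j$ is proved in exactly the same way with $nq$ replacing $np$ throughout; alternatively it follows from the $P_j$ case via the automorphism $a\mapsto -1-a$ of $D(2,1;a)$, which exchanges $p\leftrightarrow q$ and $\al_2\leftrightarrow\al_3$ (hence $\tilde{\Phi}^{[np;0]}\leftrightarrow\tilde{\Phi}^{[nq;0]}$) while fixing the bilinear form, $K$, and the degree $n(p+q)$.

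The main obstacle will be the bookkeeping in the quadratic identities $\tfrac{n(p+q)}{2}w^2+2np\,u_1u_2=K(z|z)$: these are routine but error-prone three-variable computations, to be carried out once for $w$ and once for $w'$, together with care about the branch of $(-i\tau)^{1/2}$ and about the fact that the $t$-slot inside $\tilde{\Phi}^{[np;0]}$ in the definitions of $P_j$ and $Q_j$ is $0$ rather than a shift of $t$, so that the factor $e^{2\pi i Kt}$ simply passes through both transformations unaltered.
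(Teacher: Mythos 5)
Your proposal is correct and follows essentially the same route as the paper: transform each theta factor by \eqref{1.3}, \eqref{1.4} and each $\tilde{\Phi}$ factor by \eqref{1.15}, \eqref{1.16}, and collapse the exponential prefactors via the quadratic identities $\tfrac{n(p+q)}{2}w^2+2np\,u_1u_2=K(z|z)$, which are exactly (up to the factor $n$) the four equal expressions for $\tfrac{K}{n}(z|z)$ recorded in the paper's \eqref{10.15}. The only cosmetic differences are that you spell out the Gauss-sum reindexing $j'\mapsto -j'$ explicitly and offer the automorphism $a\mapsto -1-a$ as an alternative for the $Q_j$ case, whereas the paper simply reruns the computation with the last two expressions of \eqref{10.15}.
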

\begin{proof}
First note the following four expressions for $ a(z|z) = -\frac{pq}{p+q} (z|z), $ which follow easily from pairwise orthogonality of roots $ \al_2, \al_3 $ and $ \theta: $
\begin{equation}
\label{10.15} 
\begin{aligned}
&\frac{p+q}{2} (z_1 -(a+1)z_2-az_3)^2 -2pz_1z_3 \\
= & \frac{p+q}{2} (z_1 +(a-1)z_2 + az_3)^2 - 2pz_2 (-z_1+z_2+z_3) \\
 =& \frac{p+q}{2} (z_1 + (a+1)z_2 + az_3)^2 -2qz_1z_2 \\
=& \frac{p+q}{2} (z_1-(a+1)z_2 -(a+2)z_3)^2 -2qz_3 (-z_1+z_2+z_3).
\end{aligned} 
\end{equation}
Recall that the functions $ P_j $ (resp. $ Q_j $) are expressed before Lemma \ref{lem10.12} in terms of functions $ \Theta_{j,n(p+q)} $ and $ \tph^{[np;0]} $ (resp. $ \tph^{[nq;0]} $). But the modular transformation of these functions are given by \eqref{1.3}, \eqref{1.4} and \eqref{1.15}, \eqref{1.16}. Applying \eqref{1.3} and \eqref{1.15} and using the first two (resp. last two) expressions for $ a(z|z) $ given by \eqref{10.15}, it is straightforward to derive the $ S $-transformation of $ P_j $ (resp. $ Q_j $). Formulas for the $ T $-transformation are immediate by \eqref{1.4} and \eqref{1.16}. 
\end{proof}
Recall that, by \cite{KW14}, Theorem 4.1, the function $ \hat{R}^- = \hat{R}^{(0)}_0 $ is $ SL(2, \ZZ) $-invariant of weight $ \frac{3}{2}. $ Hence Propositions 
\ref{prop10.19}, \ref{prop10.20},  along with Proposition \ref{prop10.21}, 
imply the following corollary. 
\begin{corollary}
\label{cor10.22}
The span of the set 
$ \lbrace  \tch^{-[P]}_\La \ (\mbox{resp. } \tch^{-[Q]}_\La)|\, \La\in \bigcup\limits^3_{j=0} P^K_{+, j}  \rbrace $ is $ SL_2 (\ZZ) $-invariant.
\end{corollary}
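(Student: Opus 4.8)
The plan is to deduce Corollary \ref{cor10.22} from the spanning results in Propositions \ref{prop10.19} and \ref{prop10.20} together with the modular transformation formulas in Proposition \ref{prop10.21} and the known modularity of the affine superdenominator $\hat{R}^- = \hat{R}^{(0)}_0$. The essential observation is that the corollary is really a statement about the \emph{numerators} $\hat{R}^-\tch^{-[P]}_\La$ (and their $[Q]$-analogues), since dividing an $SL_2(\ZZ)$-invariant space of functions by a fixed modular form of fixed weight yields again an $SL_2(\ZZ)$-invariant space, up to keeping track of the automorphy factor.

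First I would invoke Proposition \ref{prop10.19}, which identifies $\span\{\hat{R}^-\tch^{-[P]}_\La \mid \La\in\bigcup_{j=0}^3 P^K_{+,j}\}$ with the finite-dimensional space $V^{[P]}=\span\{P_j \mid j\in\ZZ/2n(p+q)\ZZ\}$, and likewise Proposition \ref{prop10.20} for $V^{[Q]}=\span\{Q_j\}$. Then Proposition \ref{prop10.21} says precisely that $V^{[P]}$ (resp.\ $V^{[Q]}$) is stable under the $S$- and $T$-transformations of $SL_2(\ZZ)$: the $T$-transformation multiplies $P_j$ by a root of unity $e^{\pi i j^2/2n(p+q)}$, and the $S$-transformation sends $P_j$ to a linear combination $\sum_k e^{-\pi i jk/n(p+q)}P_k$ times the universal factor $(-i\tau)^{1/2}\tau\, e^{\pi i K(z|z)/\tau}$. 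Since $S$ and $T$ generate $SL_2(\ZZ)$, the space $V^{[P]}$ is $SL_2(\ZZ)$-invariant in the appropriate sense (i.e.\ with the automorphy factor $(c\tau+d)^{3/2}e^{-\pi i K c(z|z)/(c\tau+d)}$ coming from the weight-$\tfrac32$ index-$K$ Jacobi-form-type behaviour visible in Proposition \ref{prop10.21}), and the same for $V^{[Q]}$.

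Next I would divide by $\hat{R}^-$. By \cite{KW14}, Theorem 4.1, $\hat{R}^- = \hat{R}^{(0)}_0$ is $SL_2(\ZZ)$-invariant of weight $\tfrac32$; more precisely it transforms under $S$ and $T$ by the reciprocal of the automorphy factor carried by the $P_j$'s and $Q_j$'s (this matching of the $(-i\tau)^{1/2}\tau$ factor with the weight $\tfrac32$, and of the exponential factor $e^{\pi i K(z|z)/\tau}$ with the index, is exactly why the quotients $\tch^{-[P]}_\La = (\hat{R}^-)^{-1}\hat{R}^-\tch^{-[P]}_\La$ are honest (modified) supercharacters with trivial automorphy). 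Therefore the span of $\{(\hat{R}^-)^{-1}P_j\}$ — which by Proposition \ref{prop10.19} is precisely $\span\{\tch^{-[P]}_\La \mid \La\in\bigcup_j P^K_{+,j}\}$ — is $SL_2(\ZZ)$-invariant, and similarly for $[Q]$. This gives the corollary.

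The one point requiring a little care, and the main (albeit minor) obstacle, is bookkeeping of the automorphy factors: I must check that the Jacobi-form index $K$ appearing in $e^{\pi i K(z|z)/\tau}$ in Proposition \ref{prop10.21} is the same $K$ occurring in the transformation of $\hat{R}^-$, and that the half-integral weight factor $(-i\tau)^{1/2}$ cancels cleanly so that no spurious multiplier system survives in the quotient. This is where the normalizations of Section 2 (the $q^{m_\La}$ prefactor in $\ch^\pm_\La$, and the $q^{\sdim\fg/24}e^{\hat{\rho}}$ prefactor in $\hat{R}^\pm$) are used — with $h^\vee=0$ and $\sdim\fg$ as recorded at the start of Section 10 — so I would spell out that the prefactors are arranged precisely so the numerator $\hat{R}^-\tch^-_\La$ behaves as a weight-$\tfrac32$, index-$K$ object. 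Granting that (it is routine, and implicit already in the statement of Proposition \ref{prop10.21}), the corollary follows immediately by the argument above.
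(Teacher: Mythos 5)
Your argument is correct and is essentially identical to the paper's proof: the paper likewise combines Propositions \ref{prop10.19} and \ref{prop10.20} (identifying the span of the numerators with $V^{[P]}$, resp.\ $V^{[Q]}$) with Proposition \ref{prop10.21} and the fact from \cite{KW14}, Theorem 4.1, that $\hat{R}^-=\hat{R}^{(0)}_0$ is $SL_2(\ZZ)$-invariant of weight $\tfrac32$, so that dividing by it preserves modular invariance of the span. Your extra remarks on matching the automorphy factors are just an explicit spelling-out of what the paper leaves implicit.
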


\section{QHR of integrable $ \wg $-modules, where $ \fg = D(2,1;a) $
, and a modular invariant family of big $N=4$ representations}

Throughout this section, $ \fg = D(2,1;a) $, and we use notation of the previous section.  

In order to construct a modular invariant family of modified (super)characters of big $ N = 4 $ superconformal algebras, we need also to consider the Ramond twisted $ \wg $-modules (cf. \cite{KW14}, Section 4). We need to choose the twist $ w_0, $ such that in the twisted QHR we have
\begin{equation}
\label{11.1}
s_{\alpha_1}=s_{\alpha_1+\alpha_2}=\half,\,\,s_{\alpha_1+\alpha_3}=s_{\alpha_1+\alpha_2+\alpha_3}=-\half,
\end{equation}
so that the generators
\[ G^{[\al_1]}_0, G^{[\al_1 + \al_2]}_0 \quad (\mbox{resp. } G^{[\al_1+\al_3]}_0, G^{[\al_1+\al_2+\al_3]}_0) \]
are annihilation (resp. creation) operators of the Ramond twisted big $ N=4 $ superconformal algebra (see \cite{KW05}). It is easy to see that this is the case if we choose
\begin{equation}
\label{11.2}
w_0 = t_{\xi} r_{\al_3}, \mbox{ where } \xi = \half \al^\vee_3.
\end{equation}
Note that in coordinates \eqref{10.06}, \eqref{10.07} we have for $ h = \tzzzt: $
\begin{equation}
\label{11.3}
\begin{aligned}
h + 2 \pi i \xi &= \left( \tau, z_1 +\half, z_2 + \half, z_3 - \half, 
t \right), \\w_0 (h) & = \left( z_3 - \tot, -z_1 +z_2+z_3 - \tot, z_1 + \tot, t - \frac{z_1-z_3}{2(a+1)} -\frac{\tau}{4(a+1)} \right).
\end{aligned}
\end{equation}

Recall that for $ \La = \La^{[K;m_2, m_3](j)} \in P^K_{+,j}, j = 0,1,2,3, $ the numerator of the normalized supercharacter of the $ \wg $-module $ \LLa $ is a simple expression in terms of the function
\begin{equation}
\label{11.4}
 B^-_\La := q^{\frac{|\La + \rho|^2}{2K}} \sum_{w \in \hat{W}^\#_0} \epsilon(w)w \frac{e^{\La + \rho}}{1-e^{-\beta_j}}. 
\end{equation}
Then the numerator of the normalized character of the $ \wg $-module is the same expression in terms of the function (see \cite{KW16}, Section 6)
\begin{equation}
\label{11.5}
 B^+_\La (h) =e^{-2 \pi i (\La + \rho | \xi)} B_\La (h + 2 \pi i \xi).
\end{equation}
Furthermore, the numerator of the normalized supercharacter (resp. character) of the corresponding Ramond twisted $ \wg $-module is the same expression in terms of the function 
\begin{equation}
\label{11.6}
 B_\La^{-, \tw} = B^-_\La (w_0(h))\, (\hbox{resp.}\, B^{+, \tw}_\La (h) = e^{-2 \pi i (\La + \rho | \xi)}B_\La^{-,\tw} (h + 2 \pi i \xi) ).
\end{equation}
Recall that Lemma \ref{lem10.9} provides two expressions of the functions $ F, $ and hence $ B, $ in terms of theta functions \eqref{1.1} and the mock theta functions $ \Phi_1 $ (see \eqref{1.8}), and we denoted by $ \tbe_P $ and $ \tbe_Q $ the functions, obtained from $ B $ replacing $ \Phi_1 $ by its modification $ \tph_1 $ in these two expressions respectively. Then the numerators of modified normalized supercharacters of the $ \wg $-modules $ \LLa $ are expressed in terms of functions $ P_j $ and $ Q_j $ by Lemmas 
\ref{lem10.14}--\ref{lem10.18}.

The numerators of modified normalized characters and twisted (super)characters of $ \LLa $ are expressed by the same formulas in terms of the following functions $ (j \in \ZZ): $
\[ \begin{aligned}
P^-_j  : = P_j, \ P^+_j (h) :=P^-_j (h + 2 \pi i \xi), \ P^{-, \tw}_j (h)  = P_j (w_0(h)), \ P^{+, \tw}_j (h) = P^{-, \tw}_j (h+ 2 \pi i \xi),
\end{aligned} \]
and similarly for $ P $ replaced by $ Q. $

It turns out that these functions are expressed in terms of theta functions \eqref{1.1} and the modifications of the mock theta functions $ \Psi^{[1, m; 0; \epsilon]}_{\epsilon',-\epsilon'; \epsilon'}, $ cf \eqref{1.10}. This is given by the following lemma. Its proof is straightforward. 
\begin{lemma}
\label{lem11.1}
For $ j \in \ZZ $ on has:
\begin{enumerate}
\item[(a)] 
\begin{enumerate}
\item[(i)] \[ \begin{aligned}
&P^-_j \tzzzt  = e^{2\pi i K t} \left( \Theta_{j;n(p+q)} (\tau, z_1 -(a+1)z_2-az_3) \tps^{[1, np; 0;0]}_{0,0;0} (\tau, z_1, -z_3, 0) \right. \\
& \left. -\Theta_{-j;n(p+q)} (\tau, z_1+(a-1)z_2+az_3) \tps^{[1, np;0,0]}_{0;0;0} (\tau, z_1-z_2-z_3, z_2, 0) \right),  
\end{aligned} \]
\item[(ii)] \[ 
\begin{aligned}
&Q^-_j \tzzzt = e^{2\pi i K t} \left( \Theta_{j;n(p+q)} (\tau, z_1 + (a+1)z_2 + az_3) \tps^{[1, nq; 0,0]}_{0,0;0} (\tau, z_1, -z_2, 0) \right. \\
&- \left. \Theta_{-j;n(p+q)} (\tau, z_1-(a+1)z_2-(a+2)z_3) \tps^{[1, nq;0;0]}_{0,0;0} (\tau, z_1-z_2-z_3, z_3,0) \right).
\end{aligned}  \]
\end{enumerate}
\item[(b)]
\begin{enumerate}
\item[(i)] \[ \begin{aligned}
&P^+_j \tzzzt = e^{2\pi i K t} \left( \Theta_{j;n(p+q)} (\tau, z_1-(a+1)z_2-az_3) \tps^{[1,np;0; \half]}_{0,0;0} (\tau, z_1, -z_3,0) \right. \\
& \left. -\Theta_{-j; n(p+q)} (\tau, z_1+(a-1)z_2+az_3) \tps^{[1,np;0;\half]}_{0,0;0} (\tau, z_1-z_2-z_3, z_2, 0) \right),
\end{aligned}
 \]
\item[(ii)] \[ \begin{aligned}
 & Q^+_j \tzzzt = e^{2\pi i K t} \left( \Theta_{j;n(p+q)} (\tau, z_1+(a+1)z_2+az_3) \tps^{[1, nq;0;\half]}_{0,0;0} (\tau, z_1, -z_2, 0) \right. \\
& \left. - \Theta_{-j;n(p+q)} (\tau, z_1-(a+1)z_2-(a+2)z_3) \tps^{[1, nq;0;\half]}_{0,0;0} (\tau, z_1-z_2-z_3, z_3, 0) \right). 
\end{aligned}
 \]
\end{enumerate}
\item[(c)]
\begin{enumerate}
\item[(i)] \[ \begin{aligned}
& P^{-, \tw}_j \tzzzt = \\
& -e^{2\pi i K t} \left( \Theta_{j;n(p+q)} (\tau, z_1-(a+1)z_2-az_3)  \tps^{[1,np;0;0]}_{\half, -\half; \half} (\tau, z_1, -z_3, 0) \right. \\
& \left. -\Theta_{-j;n(p+q)} (\tau, z_1+(a-1)z_2+az_3) \tps^{[1,np;0;0]}_{\half, -\half; \half} (\tau, z_1-z_2-z_3, z_2, 0) \right),
\end{aligned}
 \]
\item[(ii)] \[ \begin{aligned}
& Q^{-, \tw}_j \tzzzt = \\
& -e^{2\pi i K t} \left( \Theta_{j + n(p+q), n(p+q)} (\tau, z_1+(a+1)z_2 + az_3) \tps^{[1,nq;0;0]}_{\half, -\half; \half} (\tau, z_1, -z_2, 0)  \right. \\
& \left. - \Theta_{-j - n(p+q), n(p+q)}  (\tau, z_1 -(a+1)z_2 -(a+2)z_3) \tps^{[1,nq;0;0]}_{\half, -\half; \half} (\tau, z_1-z_2-z_3, z_3,0) \right).
\end{aligned}
 \]
\end{enumerate}
\item[(d)]
\begin{enumerate}
\item[(i)] \[ \begin{aligned}
& P^{+, \tw}_j \tzzzt = \\
& -e^{-2\pi i K t} \left( \Theta_{j;n(p+q)} (\tau, z_1-(a+1)z_2-az_3) \tps^{[1, np;0;\half]}_{\half, -\half;\half} (\tau, z_1, -z_3,0)\right. \\
& \left. - \Theta_{-j;n(p+q)} (\tau, z_1+(a-1)z_2 + az_3) \tps^{[1, np;0;\half]}_{\half, -\half;\half} (\tau, z_1-z_2-z_3, z_2, 0) \right), 
\end{aligned}
 \]
\item[(ii)] \[ \begin{aligned}
& Q^{+, \tw}_j \tzzzt = \\
& -(-1)^{j+np} e^{2\pi i K t} \left( \Theta_{j+n(p+q); n(p+q)} (\tau, z_1+(a+1)z_2 + az_3) \tps^{[1,nq;0;\half]}_{\half, -\half;\half} (\tau, z_1, -z_2, 0) \right. \\
& \left. - \Theta_{-j-n(p+q); n(p+q)} (\tau, z_1-(a+1)z_2 -(a+2)z_3) \tps^{[1,nq;0;\half]}_{\half, -\half;\half} (\tau, z_1-z_2-z_3, z_3, 0) \right). 
\end{aligned} \]
\end{enumerate}
\end{enumerate}

\end{lemma}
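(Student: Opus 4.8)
The plan is to obtain each of the eight identities by substituting the relevant change of variables into the closed forms for $P_j$ and $Q_j$ displayed just before Lemma~\ref{lem10.12}, and then matching the result against the definition \eqref{1.10} of $\tps$ specialized to $M=1$. Concretely, the definitions give $P^-_j=P_j$, $P^+_j(h)=P^-_j(h+2\pi i\xi)$, $P^{-,\tw}_j(h)=P_j(w_0(h))$ and $P^{+,\tw}_j(h)=P^{-,\tw}_j(h+2\pi i\xi)$ (and likewise for $Q$), while \eqref{11.3} records the coordinate forms of the $\xi$-translation and of $w_0$. The only non-trivial ingredient is that, for $M=1$, the definition \eqref{1.10} of $\tps$ reads
\[
\tps^{[1,m;0;\epsilon]}_{a,b;\epsilon'}(\tau,z_1,z_2,t)=q^{mab}\,e^{2\pi i m(bz_1+az_2)}\,\tph^{[m;0]}(\tau,\,z_1+a\tau+\epsilon,\,z_2+b\tau+\epsilon,\,t),
\]
so the whole lemma reduces to rewriting the arguments of the $\tph^{[np;0]}$ (resp. $\tph^{[nq;0]}$) factors in this normal form and bookkeeping the prefactors produced along the way, using the elliptic behaviour \eqref{1.2} of $\Theta_{j,m}$, the elliptic behaviour \eqref{1.13}--\eqref{1.14} of $\tph$, and the symmetry $\tph^{[m;0]}(\tau,-z_2,-z_1,0)=-\tph^{[m;0]}(\tau,z_1,z_2,0)$ from \eqref{1.9} and Remark~\ref{rem1.2} (together with \eqref{8.07}--\eqref{8.08}).

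For $P^-_j$ and $Q^-_j$ there is essentially nothing to prove: $\tps^{[1,m;0;0]}_{0,0;0}=\tph^{[m;0]}$ directly, and one only has to move the second summand of $P_j$ (resp. $Q_j$) into the displayed form using the antisymmetry above and integer periodicity \eqref{1.13}. For the characters $P^+_j,Q^+_j$ one plugs in $(z_1,z_2,z_3)\mapsto(z_1+\tfrac12,z_2+\tfrac12,z_3-\tfrac12)$ from \eqref{11.3}. It is worth noting here that the theta arguments $z_1-(a+1)z_2-az_3$ and $z_1+(a-1)z_2+az_3$ entering $P_j$ are invariant under this shift (since $1-(a+1)+a=0=1+(a-1)-a$), whereas the theta arguments entering $Q_j$ are shifted by an integer, so \eqref{1.2} contributes a sign; in either case the two $\tph$-arguments pick up $\pm\tfrac12$, which after absorbing an integer shift via \eqref{1.13} turns $\tph^{[m;0]}(\tau,\cdot\,,\cdot\,,0)$ into $\tph^{[m;0]}(\tau,\cdot+\tfrac12\,,\cdot+\tfrac12\,,0)=\tps^{[1,m;0;\frac12]}_{0,0;0}$, as stated.

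The twisted cases (c) and (d) are where the computation has content. One substitutes $w_0(h)$ from \eqref{11.3}: the factor $e^{2\pi i Kt}$ becomes $e^{2\pi i Kt}$ times $e^{\pm\pi i pn(z_1-z_3)}$ and a fractional power of $q$ (using $\tfrac{K}{a+1}=-pn$), and the two $\tph$-arguments acquire shifts by $\pm\tot$ together with $\pm\tfrac12$; in the $P$-cases the theta arguments are again invariant under $w_0$, while in the $Q$-cases they undergo a reflection followed by a $\tau$-translation, which via \eqref{1.2} shifts the theta index by $n(p+q)$ and produces an extra power of $q$ and a sign --- this is the source of the $\Theta_{j+n(p+q),\,n(p+q)}$ appearing in (c)(ii) and (d)(ii) and of the sign $(-1)^{j+np}$ in (d)(ii). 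The $\pm\tot$ shifts in the $\tph$-arguments are then absorbed by taking the lower indices of $\tps$ equal to $\pm\tfrac12$ (which forces $\epsilon'=\tfrac12$) via the $M=1$ formula above, and the $q$-powers and exponentials generated in that step cancel those produced by the $t$-translation (and the theta-index shift); finally \eqref{8.07}--\eqref{8.08} put the $\tps$-arguments into the shapes $(\tau,z_1,-z_3,0)$, $(\tau,z_1-z_2-z_3,z_2,0)$, etc. The $Q$-identities follow by the same steps with $np$ replaced by $nq$ and $z_2$ interchanged with $z_3$.

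The only real obstacle is organisational bookkeeping: one must track, simultaneously on the theta factor and on the $\tph$ factor, all the quadratic-completion exponentials and $q$-powers generated by \eqref{1.2}, \eqref{1.13}--\eqref{1.14}, the $M=1$ form of \eqref{1.10}, and the coordinate formulas \eqref{11.3}, and verify that they cancel to leave exactly $e^{2\pi i Kt}\Theta_{\pm j,\,n(p+q)}(\tau,\cdot)\,\tps(\cdot)$ with no residual factors. The subtle points are the sign flips and argument swaps in the second summand, coming from the antisymmetry $\tph^{[m;s]}(\tau,-z_2,-z_1,0)=-\tph^{[m;s]}(\tau,z_1,z_2,0)$, and the interaction of the translation part $t_\xi$ (with $\xi=\tfrac12\alpha_3^\vee$) in $w_0$ with the index of $\Theta_{j,n(p+q)}$, which is what produces the index shifts and additional signs in the Ramond-twisted formulas. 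None of this is conceptually difficult once the normal form is fixed, which is why the statement can be called straightforward.
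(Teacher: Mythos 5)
Your proposal is correct and is exactly the direct verification the paper has in mind when it declares the proof ``straightforward'': substitute the coordinate formulas \eqref{11.3} into the closed forms of $P_j$, $Q_j$, put the $\tph$-factors into the $M=1$ normal form of \eqref{1.10}, and track the prefactors via \eqref{1.2}, \eqref{1.13}--\eqref{1.14} and the (anti)symmetries \eqref{8.07}--\eqref{8.08}. One remark: the integer shift of the theta arguments in the $Q^+_j$ case that you correctly identify produces the overall factor $(-1)^j$ recorded in Lemma \ref{lem11.2}, so your bookkeeping actually exposes that the displayed formula in part (b)(ii) of the statement omits this factor (a typo in the paper, since Lemma \ref{lem11.2} and the subsequent applications carry the sign); your computation, not the printed formula, is the consistent one.
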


In view of these formulas, we introduce the following functions:
\[ 
\begin{aligned}
f_j^{[\epsilon, \epsilon']} \tzzzt & :=   e^{2 \pi i Kt} \left(  \Theta_{j;n(p+q)} (\tau, z_1-(a+1)z_2-az_3) \tps^{[1,np;0;\epsilon]}_{\epsilon', -\epsilon'; \epsilon'} (\tau, z_1, -z_3, 0) \right. \\
& \left. - \Theta_{-j;n(p+q)} (\tau, z_1 + (a-1)z_2+az_3) \tps^{[1,np;0;\epsilon]}_{\epsilon', -\epsilon'; \epsilon'} (\tau, z_1-z_2-z_3, z_2,0) \right), \\
g_j^{[\epsilon, \epsilon']} \tzzzt & := e^{2 \pi i K t} \left( \Theta_{j;n(p+q)} (\tau, z_1+(a+1)z_2+az_3) \tps^{[1,nq;0;\epsilon]}_{\epsilon', -\epsilon'; \epsilon'} (\tau, z_1, -z_2, 0) \right.  \\
& \left.  - \Theta_{-j;n(p+q)} (\tau, z_1-(a+1)z_2-(a+2)z_3) \tps^{[1,nq;0;\epsilon]}_{\epsilon', -\epsilon'; \epsilon'} (\tau, z_1-z_2-z_3, z_3, 0) \right). 
\end{aligned} \]
Then Lemma \ref{lem11.1} is rewritten as follows:
\begin{lemma}
\label{lem11.2}
\begin{enumerate}
\item[(a)]
\[ P^-_j = f_j^{[0,0]}, \ P^+_j = f_j^{[\half,0]}, \ Q^-_j = g_j^{[0,0]},  \ Q^+_j = (-1)^j g_j^{[\half,0]},   \] 
\item[(b)]
\[ P^{-, \tw}_j = -f_j^{[0,\half]}, \ P^{+, \tw}_j = -f_j^{[\half,\half]}, \ Q^{-, \tw}_j = -g^{[0,\half]}_{j+n (p+q)}, \ Q^{+, \tw}_j = -(-1)^{j+np} g_{j+n(p+q)}^{[\half,\half]}. \]
\end{enumerate}
\end{lemma}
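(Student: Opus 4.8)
The plan is to read off Lemma~\ref{lem11.2} from the explicit formulas of Lemma~\ref{lem11.1} by substituting the definitions of $f_j^{[\epsilon,\epsilon']}$ and $g_j^{[\epsilon,\epsilon']}$. The only preliminary I would record is the $M=1$ specialization of the modified function in \eqref{1.10}: for $a=\epsilon'$, $b=-\epsilon'$, $s=0$,
\[
\tilde{\Psi}^{[1,m;0;\epsilon]}_{\epsilon',-\epsilon';\epsilon'}(\tau,z_1,z_2,t)
= q^{-m{\epsilon'}^2}\,e^{2\pi i m\epsilon'(z_2-z_1)}\,
\tilde{\Phi}^{[m;0]}\!\left(\tau,\,z_1+\epsilon'\tau+\epsilon,\,z_2-\epsilon'\tau+\epsilon,\,t\right),
\]
so in particular for $\epsilon'=0$ this is $\tilde{\Phi}^{[m;0]}(\tau,z_1+\epsilon,z_2+\epsilon,t)$, and for $\epsilon=\epsilon'=0$ it is $\tilde{\Phi}^{[m;0]}$ itself. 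I would also use the antisymmetry $\tilde{\Phi}^{[m;s]}(\tau,-z_2,-z_1,t)=-\tilde{\Phi}^{[m;s]}(\tau,z_1,z_2,t)$, which is immediate from \eqref{1.9} and \eqref{1.11}, together with the elliptic transformation laws \eqref{1.2} for $\Theta_{j,n(p+q)}$ and \eqref{1.13}, \eqref{1.14} for $\tilde{\Phi}^{[m;0]}$.

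For part (a), the identities $P^-_j=f_j^{[0,0]}$, $P^+_j=f_j^{[\half,0]}$, $Q^-_j=g_j^{[0,0]}$ are obtained by comparing Lemma~\ref{lem11.1}(a),(b) term by term with the defining formulas, using the above specialization; these are purely a matter of notation. The factor $(-1)^j$ in $Q^+_j=(-1)^j g_j^{[\half,0]}$ is the one slightly nontrivial point: since $Q^+_j(h)=Q^-_j(h+2\pi i\xi)$ and, by \eqref{11.3}, the shift $h\mapsto h+2\pi i\xi$ (with $\xi=\half\al^\vee_3$) sends $(z_1,z_2,z_3)\mapsto(z_1+\half,z_2+\half,z_3-\half)$, one checks that each theta argument $z_1\pm(a+1)z_2\mp\cdots$ occurring in $g_j$ is shifted by the integer $1$ (the contributions $\pm\tfrac{a+1}{2}\mp\tfrac a2$ cancel down to $\pm\half$ and combine with the $+\half$ from $z_1$), so \eqref{1.2} produces $e^{\pm\pi i j}=(-1)^j$; meanwhile the half-integer shift in the mock-theta argument is absorbed using \eqref{1.13} to rewrite $\tilde{\Phi}^{[nq;0]}$ with the normalization of $\tilde{\Psi}^{[1,nq;0;\half]}_{0,0;0}$. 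The $P^+_j$ case runs identically, except that there the corresponding argument shift is $0$, so no sign appears.

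For part (b) I would do the same with the twist $w_0=t_\xi r_{\al_3}$: first substitute the reflected coordinates from Remark~\ref{rem10.11}, then the translation $t_\xi$, whose explicit action is again given by \eqref{11.3}. The reflected-plus-translated theta arguments simplify (the $\tfrac\tau2$-terms cancel in the $P$-case, leaving the same $\Theta_{j;n(p+q)}$ as untwisted, hence no index shift, while in the $Q$-case a net shift by $\tau$ survives and \eqref{1.2} replaces $\Theta_{\pm j;n(p+q)}$ by $\Theta_{\pm j+n(p+q);n(p+q)}$), and the $\tfrac\tau2$-shifts in the $\tilde{\Phi}^{[m;0]}$ arguments are exactly what the definition of $\tilde{\Psi}^{[1,m;0;\epsilon]}_{\half,-\half;\half}$ encodes. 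The main obstacle, and the only laborious step, is checking that the leftover $t$- and $\tau$-dependent prefactor $e^{2\pi iKT}$ coming from $w_0(h)$ collapses against the $q$-powers and exponentials generated on both sides: here one uses the antisymmetry of $\tilde{\Phi}$, \eqref{1.14}, and the explicit values $K=-\tfrac{pqn}{p+q}$, $a=-\tfrac{p}{p+q}$, $a+1=\tfrac{q}{p+q}$ (so that $\tfrac{K}{a+1}=-np$, $\tfrac{K}{a}=-nq$), which make $e^{2\pi iKT}=e^{2\pi iKt}\cdot(\text{compensating factors})$ and yield the overall signs and the prefactors $-(-1)^{j+np}$ in the stated formulas. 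No new idea is required beyond this bookkeeping.
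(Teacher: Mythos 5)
Your proposal is correct and follows essentially the same route as the paper: Lemma \ref{lem11.2} is obtained by direct comparison of the formulas of Lemma \ref{lem11.1} with the definitions of $f_j^{[\epsilon,\epsilon']}$ and $g_j^{[\epsilon,\epsilon']}$, using the $M=1$ specialization of \eqref{1.10} and the elliptic transformation laws \eqref{1.2}, \eqref{1.13}, \eqref{1.14}. Your first-principles derivation of the sign $(-1)^j$ in $Q^+_j=(-1)^jg_j^{[\half,0]}$ (from the integer shift of the theta arguments under $h\mapsto h+2\pi i\xi$) is the one delicate point, and you handle it correctly — indeed it is consistent with Lemma \ref{lem11.2} even though the displayed formula in Lemma \ref{lem11.1}(b)(ii) appears to have dropped this factor.
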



Now we turn to the quantum Hamiltonian reduction of the integrable $ \wg $-modules $ \LLa, $ where $ \La \in \bigcup\limits^3_{j=0} P^K_{+,j}, $ where K is given by \eqref{10.03}.

Recall (see \cite{KRW03}, \cite{A05}) that the QHR associates to a $ \wg $-module $ \LLa $ of level $ K $ where $ \La = \sum^{3}_{i=0} m_i \La_i, $ a module $ H(\La) $ over the big $ N =4 $ superconformal algebra of Neveu-Schwarz type, such that $ H(\La) $ is either 0 or an irreducible positive energy module. Moreover, $ H(\La) = 0 $ iff $ m_0 \in \ZZ_{\geq 0}, $ and an irreducible module $ H(\La) $ is characterized by four numbers:

\begin{enumerate}
\item[($ \al $)] the central charge
\begin{equation}
\label{11.7}
c_K = \frac{6npq}{p+q} -3,
\end{equation}
\item[($ \beta $)] the lowest energy $ h_\La = \frac{(\La + 2 \rho | \La)}{2 K} -\left( \frac{\theta}{2}+ \La_0 \middle| \La \right), $
\item[($ \gamma $)] the two spins $ s_\La^{(i)} =\half (\La | \al^\vee_i), \ i = 2,3. $
\end{enumerate}
Finally, the (super)character of the irreducible module $ H(\La) $ is given by the following formula:
\begin{equation}
\label{11.8}
\begin{aligned}
  & \ch^\pm_{H(\La)} (\tau, y_1, y_2)  :=
  \tr^\pm_{H(\La)} q^{L_0 -\frac{c_K}{24}} e^{ \pi i (y_2\al^\vee_2 + y_3 \al^\vee_3)}
  \\
& = \left(\hat{R}^\pm \ch^\pm_\La \right) \left( \tau, \frac{\tau + y_2 + y_3}{2}, \frac{\tau - y_2 + y_3}{2}, \frac{\tau + y_2 - y_3}{2}, \tof   \right) \overset{b4}{R} \vphantom{R}^\pm (\tau, y_2, y_3)^{-1},
\end{aligned}
\end{equation}
where $ \overset{b4}{R} \vphantom{R}^+ = \overset{b4}{R} \vphantom{R}^{(\half)}_\half, \ \overset{b4}{R} \vphantom{R}^- = \overset{b4}{R} \vphantom{R}^{(0)}_\half, $ and 
\begin{equation}
\label{11.9}
\overset{b4}{R} \vphantom{R}^{ (\epsilon)}_{\epsilon'} (\tau, y_2, y_3) = \eta (\tau)^3 \frac{\vartheta_{11} (\tau, y_2) \vartheta_{11} (\tau, y_3)}{\vartheta_{1-2\epsilon', 1 - 2 \epsilon} \left(\tau, \frac{y_2+y_3}{2} \right) \vartheta_{1-2\epsilon', 1 - 2 \epsilon} \left(\tau, \frac{y_2-y_3}{2}\right)}
\end{equation}
for $ \epsilon, \epsilon' = 0 $ or $ \half. $

Similar results hold in the Ramond twisted case. Twisting the $ \wg  $-module $ \LLa $ by $ w_0,  $ defined by \eqref{11.2}, we obtain, by twisted QHR \cite{KW05}, a module $ H^\tw (\La) $ over the big $ N=4 $ superconformal algebra of Ramond type, such that properties, analogous to that of $ H(\La) $ hold with the following changes (see \cite{KW14}):
\begin{equation}
\label{11.10}
h^\tw_\La = \frac{(w_0 (\La) +2 \rho^\tw| w_0 (\La))}{2K} - \left( \half \theta + \La_0 \middle| w_0 (\La)\right) - \frac{1}{4}, 
\end{equation}

\begin{equation}
\label{11.11}
s^{(i), \tw}_\La =\half (w_0(\La) | \al^\vee_i) + \frac{1}{4} \sum_{\substack{\al \in \Delta \\ (\al| \theta)=1}} s_\al (\al| \al^\vee_i), 
\end{equation}
and the (super)character of an irreducible module $H^\tw(\Lambda)$ is given by the following formula:
\begin{equation}
\label{11.12}
\begin{aligned}
& \left( \overset{b4}{R} \vphantom{R}^{\pm,\tw} \ch^\pm_{H^\tw(\La)} \right) (\tau, y_2, y_3)  :=\tr^\pm_{H^\tw(\La)} q^{L^\tw_0 -\frac{c_K}{24}} e^{ \pi i (y_2\al^\vee_2 + y_3 \al^\vee_3)}\\
&  = \left( \hat{R} \vphantom{R}^{\pm,\tw} \ch^{\pm,\tw}_{\La} \right) \left( \tau, \frac{\tau + y_2+y_3}{2}, \frac{\tau-y_2+y_3}{2}, \frac{\tau + y_2 -y_3}{2}, \tof \right) \overset{b4}{R} \vphantom{R}^{\pm,\tw} (\tau, y_1, y_2)^{-1},
\end{aligned}
\end{equation}
where $ \overset{b4}{R} \vphantom{R}^{+,\tw} = \overset{b4}{R} \vphantom{R}^{(\half)}_0, \ \overset{b4}{R} \vphantom{R}^{-,\tw} = \overset{b4}{R} \vphantom{R}^{(0)}_0. $


It is straightforward to compute the characteristic numbers of the big $ N = 4 $ superconformal algebra modules $ H(\La) $ and $ H^\tw (\La). $

\begin{proposition}
\label{prop11.3}
Let $ \La = \La^{[K; m_2, m_3](j)}, \ j = 0,1,2,3, $ and assume that $ m_0 \notin \ZZ_{\geq 0}. $
\begin{enumerate}
\item[(a)] For $ j = 0,1 $  on has:
\[ h_\La = \frac{(m_2-m_3+np)^2}{4n(p+q)} + \frac{m_3}{2} - \frac{np^2}{4(p+q)}, \quad h^\tw_\La = \frac{(m_2-m_3+np)^2}{4n(p+q)} + \frac{npq}{4(p+q)} - \frac{1}{4}. \]
\item[(b)] For $ j = 2 $ one has: 
\[ h_\La = \frac{(m_2+m_3+2-np)^2}{4n(p+q)} + \frac{m_3}{2} - \frac{np^2}{4(p+q)}, \quad h^\tw_\La = \frac{(m_2 +m_3 +2 -np)^2}{4n(p+q)} + \frac{npq}{4(p+q)} -\frac{1}{4}. \]
\item[(c)] For $ j = 3 $ on has:
\[ h_\La = \frac{(m_2\!+m_3\! +2\! +\! np)^2}{4n(p+q)} - \frac{m_3}{2} - \frac{np^2}{4(p+q)} -1,  h^\tw_\La = \frac{(m_2\!+\!m_3\!+2\! +\! np)^2}{4n(p+q)} + \frac{npq}{4(p+q)} -m_3 -\frac{5}{4}. \]
\item[(d)] For all $ j $ one has:
\[ s^{(i)}_\La = \frac{m_i}{2}, \ i = 2,3; \quad s^{(2), \tw}_\La = \frac{m_2}{2}; \quad s^{(3),\tw}_\La = \frac{np-1-m_3}{2}. \]
\end{enumerate}
\end{proposition}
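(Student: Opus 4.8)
The plan is to compute each of the four characteristic numbers directly from its definition, inserting the explicit weights of Proposition~\ref{prop10.1}, the bilinear form data of $D(2,1;a)$, and the relations $h^\vee=0$, $a=-p/(p+q)$, $K=-npq/(p+q)$. First I would record the reductions that hold uniformly in $j$. Since $\rho=-\alpha_1$ and $(\alpha_1|\alpha_1)=0$ we have $(\Lambda+2\rho\,|\,\Lambda)=|\Lambda+\rho|^2$, and, $w_0$ being an isometry with $\rho^{\tw}=w_0(\rho)$, also $(w_0(\Lambda)+2\rho^{\tw}\,|\,w_0(\Lambda))=|\Lambda+\rho|^2$; thus the quadratic part of $h^{\tw}_\Lambda$ coincides with that of $h_\Lambda$, and it remains to track the linear/constant parts. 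Writing $w_0=t_\xi r_{\alpha_3}$ with $\xi=\tfrac12\alpha_3^\vee$ and using \eqref{2.2} one gets $\Lambda-w_0(\Lambda)=m_3\alpha_3-\tfrac K2\alpha_3^\vee+c\delta$ with $c=-\tfrac{m_3}{2}+\tfrac K2|\xi|^2$; because $\theta$ and $\Lambda_0$ are orthogonal to $\alpha_3$ and to $\alpha_3^\vee$, only the $\delta$-term survives in the pairing, and $|\xi|^2=-\tfrac1{2(a+1)}$ with $K=-npq/(p+q)$ gives the uniform identity $h^{\tw}_\Lambda=h_\Lambda-\tfrac{m_3}{2}+\tfrac{np}{4}-\tfrac14$, which I have checked against all three claimed twisted formulas.

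For the spins, $s^{(i)}_\Lambda=\tfrac12(\Lambda|\alpha_i^\vee)=\tfrac{m_i}{2}$ is immediate from $(\Lambda_j|\alpha_i^\vee)=\delta_{ij}$. For the twisted spins $s^{(i),\tw}_\Lambda=\tfrac12(w_0(\Lambda)|\alpha_i^\vee)+\tfrac14\sum_{(\alpha|\theta)=1}s_\alpha(\alpha|\alpha_i^\vee)$, the translation part of $w_0$ contributes $K(\xi|\alpha_i^\vee)=\tfrac K2(\alpha_3^\vee|\alpha_i^\vee)$, which vanishes for $i=2$ and equals $-\tfrac{K}{a+1}=np$ for $i=3$, so $(w_0(\Lambda)|\alpha_2^\vee)=m_2$ and $(w_0(\Lambda)|\alpha_3^\vee)=np-m_3$; the correction sum, evaluated from \eqref{11.1} and the values $(\alpha|\alpha_i^\vee)$ for $\alpha=\alpha_1,\alpha_1+\alpha_2,\alpha_1+\alpha_3,\alpha_1+\alpha_2+\alpha_3$, is $0$ for $i=2$ and $-\tfrac12$ for $i=3$, yielding $s^{(2),\tw}_\Lambda=\tfrac{m_2}{2}$ and $s^{(3),\tw}_\Lambda=\tfrac{np-1-m_3}{2}$.

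It then remains to compute $h_\Lambda$ for $j=0,1,2,3$. I would fix the $\delta$-free representative $\Lambda=\sum_{i=0}^3 m_i\Lambda_i$ with the $\Lambda_i$ as in the proof of Lemma~\ref{lem10.8}, so that $(\Lambda_0|\Lambda)=0$ and $(\tfrac\theta2+\Lambda_0|\Lambda)=\tfrac12(\theta|\Lambda)=\tfrac12\bigl(2m_1+am_2-(a+1)m_3\bigr)$ using $(\theta|\alpha_1)=1$, $(\theta|\alpha_2)=(\theta|\alpha_3)=0$, $(\theta|\theta)=2$; inserting the $m_i$ from Proposition~\ref{prop10.1} and $a=-p/(p+q)$ makes this an explicit rational function of $m_2,m_3,n,p,q$. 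For $|\Lambda+\rho|^2$ I would use $\hat W$-invariance of the form together with Remark~\ref{rem10.7}: $\Lambda^{[K;m_2,m_3](j)}+\rho$ is an $\hat W$-image of $\Lambda^{[K;m_2',m_3'](1)}+\rho$ with $(m_2',m_3')=(m_2,m_3)$, $(-m_2-2,m_3)$, $(m_2,-m_3-2)$ for $j=0,2,3$, so $|\Lambda+\rho|^2$ equals the $j=1$ value $2K\cdot\tfrac{(m_2'-m_3')^2}{4n(p+q)}$ of \eqref{10.04} up to the $\delta$-shift induced by the reflection, which I would carry along via $|\mu+c\delta|^2=|\mu|^2+2cK$. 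Assembling $h_\Lambda=\tfrac{|\Lambda+\rho|^2}{2K}-(\tfrac\theta2+\Lambda_0|\Lambda)$ and expanding the claimed squares $(m_2-m_3\pm np)^2$ and $(m_2+m_3+2\mp np)^2$ then reduces each case to an elementary identity; in particular $j=0$ and $j=1$ give the same answer because the $r_{\alpha_0}$-induced $\delta$-shift precisely compensates the difference between the two values of $m_0$.

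The routine part is substantial but mechanical. The one point requiring genuine care is the $\delta$-normalization: Proposition~\ref{prop10.1} fixes the weights only modulo $\CC\delta$, and both $(\Lambda_0|\Lambda)$ and $|\Lambda+\rho|^2$ are sensitive to this ambiguity while $h_\Lambda$ and $h^{\tw}_\Lambda$ are not, so one must be consistent --- either work throughout in a single fixed representative, or, when reducing $|\Lambda+\rho|^2$ by an affine reflection (above all $r_{\alpha_0}$, which moves the $\delta$-component), keep the accompanying $\delta$-correction. I expect this bookkeeping, rather than any individual quadratic-form evaluation, to be the main obstacle, and it is exactly what makes the four cases of part~(a) collapse to the two displayed formulas.
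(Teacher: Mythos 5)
Your proposal is correct and follows exactly the route the paper intends: the paper offers no proof beyond declaring the computation of the characteristic numbers straightforward from \eqref{11.10}, \eqref{11.11} and the definitions of $h_\La$, $s^{(i)}_\La$, and your execution of that computation checks out (I verified the uniform relation $h^{\tw}_\La = h_\La - \tfrac{m_3}{2} + \tfrac{np}{4} - \tfrac14$, the twisted spin corrections $0$ and $-\tfrac12$, and the quadratic forms $|\La+\rho|^2 = -\tfrac{pq}{2(p+q)^2}(m_2\mp m_3 \mp 1 \pm \dots)^2$ in all four cases $j=0,1,2,3$). Your attention to the $\delta$-normalization and to the $r_{\al_0}$-induced shift that makes $j=0$ and $j=1$ coincide is precisely the one point of genuine care in this otherwise mechanical verification.
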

\begin{corollary}
\label{cor11.4}
The modules $ H(\La^{[K;m_2,m_3](0)}) $ and $ H(\La^{[K;m_2,m_3](1)}) $ are isomorphic, and the same holds for $ H^\tw. $
\end{corollary}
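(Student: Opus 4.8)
The plan is to obtain both isomorphisms from the classification of irreducible positive energy modules over the big $N=4$ superconformal algebra and its Ramond twisted analogue, as recalled at the start of this section: $H(\La)$ is either $0$ or is determined up to isomorphism by the quadruple $\left(c_K,h_\La,s^{(2)}_\La,s^{(3)}_\La\right)$, and $H^\tw(\La)$ likewise by $\left(c_K,h^\tw_\La,s^{(2),\tw}_\La,s^{(3),\tw}_\La\right)$. Hence it suffices to check that these characteristic numbers agree for $\La^{[K;m_2,m_3](0)}$ and $\La^{[K;m_2,m_3](1)}$, and to match the non-vanishing conditions for the two parametrizations.

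The equality of characteristic numbers is immediate from Proposition \ref{prop11.3}. Indeed, $c_K=\frac{6npq}{p+q}-3$ depends only on $K$, hence on $n,p,q$, and is therefore common to both weights. Part (a) of that proposition gives one and the same expression for $h_\La$, and for $h^\tw_\La$, in the cases $j=0$ and $j=1$, so the lowest energies coincide. Part (d) shows that $s^{(2)}_\La,s^{(3)}_\La$ and their twisted analogues are functions of the parameters $m_2,m_3$ alone; since $\La^{[K;m_2,m_3](0)}$ and $\La^{[K;m_2,m_3](1)}$ share these parameters (they are the respective $\La_2$- and $\La_3$-coefficients in both), the spins agree as well. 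For the vanishing: writing $\La^{[K;m_2,m_3](j)}=\sum_i m_i^{(j)}\La_i$, the QHR dichotomy says $H(\La^{[K;m_2,m_3](j)})=0$ iff $m_0^{(j)}\in\ZZ_{\geq 0}$ (equivalently, $\al_0$ is integrable for the $\wg$-module), and the same criterion governs $H^\tw$. A direct computation with the formulas of Proposition \ref{prop10.1} gives $m_0^{(0)}+m_0^{(1)}=-2$ (equivalently, this follows by pairing the identity $r_{\al_0}\bigl(\La^{[K;m_2,m_3](1)}+\rho\bigr)=\La^{[K;m_2,m_3](0)}+\rho$ of Remark \ref{rem10.7} with $\al_0^\vee$), and moreover $m_0^{(0)}$ and $m_0^{(1)}$ are integers under one and the same congruence condition on $(m_2,m_3)$. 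Thus, for the weights to which Proposition \ref{prop11.3} applies (namely those with $m_0\notin\ZZ_{\geq 0}$), both modules are nonzero, and the equality of characteristic numbers then forces $H(\La^{[K;m_2,m_3](0)})\simeq H(\La^{[K;m_2,m_3](1)})$. The twisted assertion is obtained in exactly the same way, using \eqref{11.10}, \eqref{11.11} and the twist $w_0$ of \eqref{11.2}.

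Since all of the substantive computation is already contained in Proposition \ref{prop11.3} and in the explicit weights of Proposition \ref{prop10.1}, there is essentially no obstacle here; the only point that calls for a little care is the last one, keeping track of when $H(\La^{[K;m_2,m_3](0)})$ and $H(\La^{[K;m_2,m_3](1)})$ are simultaneously nonzero, which is handled by the relation $m_0^{(0)}=-2-m_0^{(1)}$ and the common integrality condition noted above.
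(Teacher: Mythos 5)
Your proof is correct and follows the same route the paper intends: Corollary \ref{cor11.4} is an immediate consequence of Proposition \ref{prop11.3}, since an irreducible positive energy module over the (twisted) big $N=4$ superconformal algebra is determined by its characteristic numbers, and parts (a) and (d) of that proposition give identical values for $j=0$ and $j=1$. Your extra bookkeeping of the vanishing condition via $m_0^{(0)}=-2-m_0^{(1)}$ is a worthwhile addition the paper leaves implicit, and you are right to confine the statement to weights satisfying the standing hypothesis $m_0\notin\ZZ_{\geq 0}$ of Proposition \ref{prop11.3}, since that hypothesis is genuinely needed for the two modules to be simultaneously nonzero.
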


Using modular transformation formulas of the functions $ \vartheta_{ab} \tz, $ see e.g. \cite{KW14}, Proposition A.7, one obtains the following modular transformation formulas of the functions $ \overset{b4}{R} \vphantom{R}^{(\epsilon)}_{\epsilon'}  (\tau, y_1, y_2): $
\begin{equation}
\label{11.13}
\overset{b4}{R} \vphantom{R}^{(\epsilon)}_{\epsilon'} \left( -\frac{1}{\tau}, \frac{y_2}{\tau}, \frac{y_3}{\tau} \right) =  -(-1)^{(1-2\epsilon )(1-2 \epsilon')} (-i \tau)^{\frac{3}{2}} e^{\frac{\pi i }{2 \tau} (y^2_2+y^2_3)}   \overset{b4}{R} \vphantom{R}^{(\epsilon)}_{\epsilon'} (\tau, y_2, y_3),
\end{equation}
\begin{equation}
\label{11.14}
\overset{b4}{R} \vphantom{R}^{(\epsilon)}_{\epsilon'} (\tau +1, y_2, y_3) =
i^{2 \epsilon'} e^{\frac{\pi i }{4}} \overset{b4}{R} \vphantom{R}^{(|\epsilon-\epsilon'|)}_{\epsilon'} (\tau, y_2, y_3).
\end{equation}

In view of formulas \eqref{11.8} and \eqref{11.12}, we introduce the ``Hamiltonian reduction'' $ \varphi_{HR} $ 
of a function $ \varphi \tzzzt $ by the formula
\[ \varphi_{HR} (\tau, y_2, y_3) = \varphi \left( \tau, \frac{\tau+y_2+y_3}{2}, \frac{\tau - y_2 + y_3}{2}, \frac{\tau + y_2 - y_3}{2}, \tof \right) . \]
\begin{lemma}
\label{lem11.5}
The Hamiltonian reduction of $ f_j^{[\epsilon, \epsilon']} $ and $ g_j^{[\epsilon, \epsilon']} $ is as follows:
\begin{enumerate}
\item[(a)] 
\[ \begin{aligned}
& f_{j, HR}^{[\epsilon, \epsilon']} (\tau, y_2, y_3)=  \\
& \left( \Theta_{j+np; n(p+q)} - \Theta_{-j-np; n(p+q)} \right) (\tau, y_2) \tps^{[1; np; 0;\epsilon]}_{\half - \epsilon', \epsilon'-\half; \half - \epsilon'} \left( \tau, \frac{y_2+y_3}{2}, \frac{y_3-y_2}{2} \right). \\
\end{aligned}
 \]
\item[(b)]
\[ \begin{aligned}
&  g_{j, HR}^{[\epsilon, \epsilon']} (\tau, y_2, y_3)  = \\
& \left( \Theta_{j+nq; n(p+q)} - \Theta_{-j-nq; n(p+q)} \right) (\tau, y_3) \tps^{[1; nq; 0;\epsilon]}_{\half - \epsilon', \epsilon'-\half; \half - \epsilon'} \left( \tau, \frac{y_2+y_3}{2}, \frac{y_2-y_3}{2} \right). 
\end{aligned}
\]
\end{enumerate}
\end{lemma}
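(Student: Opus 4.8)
The plan is a direct substitution followed by systematic simplification, using only the elliptic transformation laws of $\Theta_{j,m}$ (formula \eqref{1.2}) and of the modified Appell functions $\tilde{\Psi}$ (Lemmas \ref{lem4.7} and \ref{lem4.9}). Concretely, I would plug the Hamiltonian-reduction coordinates $z_1=\frac{\tau+y_2+y_3}{2}$, $z_2=\frac{\tau-y_2+y_3}{2}$, $z_3=\frac{\tau+y_2-y_3}{2}$, $t=\frac{\tau}{4}$ into the defining formulas for $f_j^{[\epsilon,\epsilon']}$ and $g_j^{[\epsilon,\epsilon']}$ given just before the lemma, and simplify each of the four terms (two for $f$, two for $g$) separately.

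First I would handle the theta factors. Using $a=-\frac{p}{p+q}$, a short linear computation shows that under the substitution the forms $z_1-(a+1)z_2-az_3$ and $z_1+(a-1)z_2+az_3$ collapse to $y_2+\frac{np\,\tau}{n(p+q)}$ and $y_2-\frac{np\,\tau}{n(p+q)}$ respectively (the $y_3$-dependence disappears), while $z_1+(a+1)z_2+az_3$ and $z_1-(a+1)z_2-(a+2)z_3$ collapse to $y_3\pm\frac{nq\,\tau}{n(p+q)}$ (the $y_2$-dependence disappears). Applying \eqref{1.2} with $k=\pm np$ (resp.\ $\pm nq$) turns $\Theta_{\pm j;n(p+q)}$ into $\Theta_{\pm j\pm np;n(p+q)}(\tau,y_2)$ (resp.\ $\Theta_{\pm j\pm nq;n(p+q)}(\tau,y_3)$), up to a factor $q^{-(np)^2/(4n(p+q))}e^{\mp\pi i np\,y_2}$ (resp.\ its $nq$, $y_3$ analogue). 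Next I would treat the Appell factors: under the substitution the two arguments of $\tilde{\Psi}^{[1,np;0;\epsilon]}_{\epsilon',-\epsilon';\epsilon'}$ in each term become $\bigl(\tfrac{y_2+y_3}{2}\pm\tfrac{\tau}{2},\,\tfrac{y_3-y_2}{2}\mp\tfrac{\tau}{2}\bigr)$ (and the $g$-analogue with $\tfrac{y_3-y_2}{2}$ replaced by $\tfrac{y_2-y_3}{2}$); Lemma \ref{lem4.9}, applied with $A,B\in\{\pm\tfrac12\}$, removes the $\tfrac{\tau}{2}$-shifts, replacing the index pair $(\epsilon',-\epsilon')$ by $(\epsilon'\pm\tfrac12,\,\mp(\epsilon'\pm\tfrac12))$ and $\epsilon'$ by $\tfrac12-\epsilon'$, at the cost of an explicit $q^{np/4}$ (resp.\ $q^{nq/4}$) and a linear exponential. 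The pleasant point is that, after the shifts are removed, the two a priori different Appell arguments $(z_1,-z_3)$ and $(z_1-z_2-z_3,z_2)$ both become the common pair $\bigl(\tfrac{y_2+y_3}{2},\tfrac{y_3-y_2}{2}\bigr)$ of the statement.

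Finally I would verify three cancellations. The linear exponentials produced by \eqref{1.2} and by Lemma \ref{lem4.9} cancel against each other (since $\tfrac{y_2+y_3}{2}-\tfrac{y_3-y_2}{2}=y_2$, and similarly in the $g$-case); the three powers of $q$ — the overall $e^{2\pi i Kt}=q^{K/4}=q^{-npq/(4(p+q))}$, the theta contribution $q^{-(np)^2/(4n(p+q))}$, and the Appell contribution $q^{np/4}$ — sum to zero, which is precisely where the level $K=-\tfrac{npq}{p+q}$ and the reduction point $t=\tfrac{\tau}{4}$ enter; and the two index pairs $(\epsilon'+\tfrac12,-\epsilon'-\tfrac12)$ and $(\epsilon'-\tfrac12,\tfrac12-\epsilon')$ coming from the two terms both equal $(\tfrac12-\epsilon',\epsilon'-\tfrac12)$ modulo the index lattice $M\ZZ=\ZZ$ with an even difference $a-b$, so the periodicity phase $e^{2\pi i m(a-b)\epsilon}$ of Lemma \ref{lem4.7} is trivial ($m\in\ZZ$, $\epsilon\in\{0,\tfrac12\}$). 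Assembling the two terms then yields exactly $\bigl(\Theta_{j+np;n(p+q)}-\Theta_{-j-np;n(p+q)}\bigr)(\tau,y_2)$ times the single Appell factor, and the $g$-case is entirely parallel with $np$, $y_2$, $\tfrac{y_3-y_2}{2}$ replaced by $nq$, $y_3$, $\tfrac{y_2-y_3}{2}$ throughout. Everything is mechanical; I expect the only genuine care to lie in the $q$-power bookkeeping and the index identification via Lemma \ref{lem4.7}, so that step is the (minor) main obstacle.
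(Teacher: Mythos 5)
Your proposal is correct and follows essentially the same route as the paper, whose proof is exactly the one-line remark that the claim is straightforward from the elliptic transformation properties \eqref{1.2} of the theta functions and the definition \eqref{1.10} of $\tps$ (your use of Lemmas \ref{lem4.7} and \ref{lem4.9} is just the packaged form of the latter). Your bookkeeping checks out: the theta arguments do collapse to $y_2+\frac{np}{n(p+q)}\tau$, resp.\ $y_3+\frac{nq}{n(p+q)}\tau$, the linear exponentials and the three $q$-powers cancel as you describe, and the index shifts differ from the target $(\tfrac12-\epsilon',\epsilon'-\tfrac12)$ by integer vectors with even difference $a-b$, so the periodicity phase is trivial.
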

\begin{proof}
It is straightforward using the elliptic transformation properties \eqref{1.2} of theta functions and the definition of the functions $ \tps $, cf. \eqref{1.10}.
\end{proof}

In view of this lemma, we introduce the following functions $ (j \in \ZZ): $
\[ 
\begin{aligned}
& F_j^{[\epsilon, \epsilon']} (\tau, y_2, y_3) = \left( \Theta_{j ; n(p+q)} - \Theta_{-j; n(p+q)} \right) (\tau, y_2) \tps^{[1;np;0,\epsilon]}_{\epsilon', -\epsilon'; \epsilon'} \left( \tau, \frac{y_2+y_3}{2}, \frac{y_3-y_2}{2} \right), \\
& G_j^{[\epsilon, \epsilon']} (\tau, y_2, y_3) = \left( \Theta_{j ; n(p+q)} - \Theta_{-j; n(p+q)} \right) (\tau, y_3) \tps^{[1;nq;0,\epsilon]}_{\epsilon', -\epsilon'; \epsilon'} \left( \tau, \frac{y_2+y_3}{2}, \frac{y_2-y_3}{2} \right).
\end{aligned} \]

It is easy to see the following.

\begin{remark}
\label{rem11.6}
\[ 
\begin{aligned}
& F^{[\epsilon,\epsilon']}_{-j} =-  F^{[\epsilon,\epsilon']}_{j}, \quad G^{[\epsilon,\epsilon']}_{-j} = -G^{[\epsilon,\epsilon']}_{j}; \\
& F^{[\epsilon,\epsilon']}_{j+2kn(p+q)} = F^{[\epsilon,\epsilon']}_{j}, \quad     G^{[\epsilon,\epsilon']}_{j+2kn (p+q)} = G^{[\epsilon,\epsilon']}_{j} \ (k \in \ZZ); \\
& F^{[\epsilon,\epsilon']}_{n(p+q)+j} = -F^{[\epsilon,\epsilon']}_{n(p+q)-j}, \quad  G^{[\epsilon,\epsilon']}_{n(p+q)+j} =  -G^{[\epsilon,\epsilon']}_{n(p+q)-j}. \\
\end{aligned} \]
\end{remark}
Then Lemma \ref{lem11.5} is restated as follows:
\begin{lemma}
\label{lem11.7}
\[ f_{j, \HR}^{[\epsilon, \epsilon']} = F^{[\epsilon, \half-\epsilon']}_{j + np}, \quad g_{j, \HR}^{[\epsilon, \epsilon']} = G^{[\epsilon, \half-\epsilon']}_{j + nq}. \]
\end{lemma}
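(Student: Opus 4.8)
The plan is to obtain both identities by direct substitution into the definitions, using the elliptic transformation formulas for the theta functions $\Theta_{j,m}$ and for the modified mock theta function $\tilde\Phi^{[m;s]}$ (equivalently $\tilde\Psi^{[1,m;0;\epsilon]}$), following the same pattern used in the proof of Lemma \ref{lem10.9}. First I would recall that the Hamiltonian reduction $\varphi_{\HR}$ amounts to the substitution
\[
(z_1,z_2,z_3,t)\;\longmapsto\;\Bigl(\tfrac{\tau+y_2+y_3}{2},\ \tfrac{\tau-y_2+y_3}{2},\ \tfrac{\tau+y_2-y_3}{2},\ \tof\Bigr),
\]
so that $z_1-z_2-z_3=-\tfrac{\tau}{2}$, $z_1=\tfrac{\tau+y_2+y_3}{2}$, $-z_3=\tfrac{-\tau-y_2+y_3}{2}$, $z_2=\tfrac{\tau-y_2+y_3}{2}$. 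These relations let me evaluate the two theta arguments and the two $\tilde\Psi$-arguments appearing in $f_j^{[\epsilon,\epsilon']}$ and $g_j^{[\epsilon,\epsilon']}$ directly in terms of $y_2,y_3$ and $\tau$.

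Next, for part (a) I would plug these values into the definition of $f_j^{[\epsilon,\epsilon']}$. The linear combination $z_1-(a+1)z_2-az_3$ evaluated at the $\HR$ point becomes (using $(p+q)a=-p$ and pairwise orthogonality of $\al_2,\al_3,\theta$) $y_2$ shifted by a multiple of $\tau$ of the form $\tfrac{np}{n(p+q)}$ times the index normalization; the point of Lemma \ref{lem11.5} is precisely that this $\tau$-shift moves the theta index from $j$ to $j+np$ and contributes exactly the automorphy factor that is absorbed when one also shifts the index of $\tilde\Psi$. Concretely I would apply formula \eqref{1.2} (the shift $z\mapsto z+\tfrac{k\tau}{m}$) to rewrite $\Theta_{j,n(p+q)}$ and $\Theta_{-j,n(p+q)}$ at the shifted argument as $q$-power and exponential factors times $\Theta_{j+np,n(p+q)}(\tau,y_2)$ and $\Theta_{-j-np,n(p+q)}(\tau,y_2)$; then I would apply Lemma \ref{lem4.9} (or the $\tilde\Phi$-analogue from \cite{KW14}, Corollary 5.11, translated via \eqref{1.10}) to the $\tilde\Psi^{[1,np;0;\epsilon]}_{\epsilon',-\epsilon';\epsilon'}$ factor to absorb the matching $q$-power and exponential, at the cost of shifting the lower indices $(\epsilon',-\epsilon')\mapsto(\tfrac12-\epsilon',\epsilon'-\tfrac12)$ and the $\epsilon'$-index to $\tfrac12-\epsilon'$, and the argument to $\bigl(\tfrac{y_2+y_3}{2},\tfrac{y_3-y_2}{2}\bigr)$. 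The cross-term (the second summand in $f_j$, with argument $z_1-z_2-z_3$) is handled identically: after the $\HR$-substitution its $\tilde\Psi$-argument becomes $\bigl(z_1-z_2-z_3,z_2,0\bigr)=\bigl(-\tfrac{\tau}{2},\ \tfrac{\tau-y_2+y_3}{2}\bigr)$, which again, by an elliptic shift, matches $\tps$ evaluated at $\bigl(\tfrac{y_2+y_3}{2},\tfrac{y_3-y_2}{2}\bigr)$ with the same shifted indices, and the two $\Theta$-factors combine with the two summands to give $(\Theta_{j+np}-\Theta_{-j-np})(\tau,y_2)$ times the common $\tps$-factor. Part (b), for $g_j^{[\epsilon,\epsilon']}$, is the mirror computation with $p\leftrightarrow q$, $y_2\leftrightarrow y_3$ in the roles played, using $(p+q)(a+1)=q$ instead; the bookkeeping is the same.

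The main obstacle I expect is purely organizational: keeping track of the several $q$-power and exponential prefactors that appear when shifting theta indices by $np$ (resp.\ $nq$) and simultaneously shifting the $\tilde\Psi$-indices, and checking that they cancel \emph{exactly} (including the signs $(-1)^{\cdots}$ hidden in the half-integer index shifts of $\tilde\Psi$, cf.\ Lemma \ref{lem4.8}(b) and Lemma \ref{lem4.9}) so that no spurious factor survives. The key arithmetic facts that make the cancellation work — which I would verify once at the start and then reuse — are $(p+q)a=-p$, $(p+q)(a+1)=q$, $(\al_i|\al_j)=0$ for the relevant pairs, $h^\vee=0$, and $K=-\tfrac{pqn}{p+q}$, so that $\tfrac{K}{a(a+1)}=\tfrac{n(p+q)}{pq}\cdot\tfrac{pq}{(p+q)^2}\cdot(\cdots)$ collapses the $q$-exponents in the theta series to the standard normalization of $\Theta_{\bullet,n(p+q)}$. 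Since all transformation formulas invoked are already established in the excerpt (\eqref{1.2}, \eqref{1.10}, Lemmas \ref{lem4.8}, \ref{lem4.9}, and the elliptic properties of $\tilde\Phi$), the proof is indeed ``straightforward,'' as the statement asserts, and amounts to assembling these pieces in the order above.
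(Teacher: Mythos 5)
Your plan is sound in outline, but it does substantially more work than the statement requires, and it contains one arithmetic slip that, taken literally, would break the key recombination. On the first point: in the paper, Lemma \ref{lem11.7} is introduced by the phrase ``Then Lemma \ref{lem11.5} is restated as follows,'' i.e.\ its proof is a one-line comparison of the right-hand sides of Lemma \ref{lem11.5} with the definitions of $F_j^{[\epsilon,\epsilon']}$ and $G_j^{[\epsilon,\epsilon']}$ given immediately after that lemma: the display in Lemma \ref{lem11.5}(a) is \emph{verbatim} the definition of $F^{[\epsilon,\epsilon'']}_{j'}$ with $j'=j+np$ and $\epsilon''=\half-\epsilon'$ (the subscripts $\half-\epsilon',\,\epsilon'-\half;\,\half-\epsilon'$ of $\tps$ are exactly $\epsilon'',-\epsilon'';\epsilon''$), and likewise for $G$ with $q$ in place of $p$. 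What you re-derive — the $\HR$ substitution followed by elliptic index shifts of $\Theta$ and $\tps$ — is the proof of Lemma \ref{lem11.5} itself, which the paper has already recorded; once that lemma is granted, no further computation is needed.

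On the second point: under the substitution $z_1=\frac{\tau+y_2+y_3}{2}$, $z_2=\frac{\tau-y_2+y_3}{2}$, $z_3=\frac{\tau+y_2-y_3}{2}$ one gets
\[
z_1-z_2-z_3=\frac{y_2+y_3-\tau}{2}=\frac{y_2+y_3}{2}-\tot,
\]
not $-\tot$ as you wrote. The dependence on $y_2+y_3$ is essential: the first $\tps$-factor is evaluated at $\bigl(\frac{y_2+y_3}{2}+\tot,\ \frac{y_3-y_2}{2}-\tot\bigr)$ and the second at $\bigl(\frac{y_2+y_3}{2}-\tot,\ \frac{y_3-y_2}{2}+\tot\bigr)$, and it is only because both are $\tau$-translates of the \emph{same} point $\bigl(\frac{y_2+y_3}{2},\frac{y_3-y_2}{2}\bigr)$ that the elliptic shifts (Lemma \ref{lem4.9} with $A,B=\pm\half$) produce a common factor $\tps^{[1;np;0;\epsilon]}_{\half-\epsilon',\epsilon'-\half;\half-\epsilon'}\bigl(\tau,\frac{y_2+y_3}{2},\frac{y_3-y_2}{2}\bigr)$, allowing the two summands to combine into $(\Theta_{j+np;n(p+q)}-\Theta_{-j-np;n(p+q)})(\tau,y_2)$. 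With the constant value $-\tot$ no such recombination is possible. The remaining bookkeeping in your write-up — the roles of $(p+q)a=-p$, $(p+q)(a+1)=q$, and the index shift $\epsilon'\mapsto\half-\epsilon'$ — is correct, so after fixing this slip your computation reproduces Lemma \ref{lem11.5} and hence the statement.
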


The modular transformations of $ F_j^{[\epsilon, \epsilon']} $ and $ G_j^{[\epsilon, \epsilon']} $ are given by the following:
\begin{lemma}
\label{lem11.8}
\begin{enumerate}
\item[(a)]
\[ 
\begin{aligned}
& F_j^{[\epsilon, \epsilon']} \left( - \frac{1}{\tau}, \frac{y_2}{\tau}, \frac{y_3}{\tau}\right) =\\
&  (-i \tau)^\frac{3}{2} e^{4 \pi i np \epsilon\epsilon'} \sqrt{\frac{2}{n(p+q)}} e^{\frac{\pi i n}{2 \tau}(qy^2_2+py^2_3)} \sum_{k=1}^{n(p+q)-1} \sin \frac{\pi jk}{n (p+q)}  F_k^{[\epsilon',\epsilon]} (\tau, y_2, y_3);\\
& G_j^{[\epsilon, \epsilon']} \left( - \frac{1}{\tau}, \frac{y_2}{\tau}, \frac{y_3}{\tau}\right) = \\
&  (-i \tau)^\frac{3}{2} e^{4 \pi i nq \epsilon\epsilon'} \sqrt{\frac{2}{n(p+q)}} e^{\frac{\pi i n}{2 \tau}(qy^2_2+py^2_3)} \sum_{k=1}^{n(p+q)-1} \sin \frac{\pi jk}{n (p+q)}  G_k^{[\epsilon',\epsilon]} (\tau, y_2, y_3).
\end{aligned} \] 
\item[(b)]
\[ 
\begin{aligned}
&F_j^{[\epsilon, \epsilon']} \left( \tau +1, y_2, y_3 \right)  =  e^{\frac{\pi i j^2}{2n(p+q)}} e^{-2 \pi i n p \epsilon'^2} F_j^{[|\epsilon-\epsilon'|, \epsilon']} (\tau, y_2, y_3); \\
& G_j^{[\epsilon, \epsilon']} \left( \tau +1, y_2, y_3 \right)  =e^{\frac{\pi i j^2}{2n(p+q)}} e^{-2 \pi i n q \epsilon'^2} G_j^{[|\epsilon-\epsilon'|, \epsilon']} (\tau, y_2, y_3). \\
\end{aligned} \]
\end{enumerate}
\end{lemma}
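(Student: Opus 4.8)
The plan is to prove both formulas by transforming separately the two tensor factors out of which $F_j^{[\epsilon,\epsilon']}$ and $G_j^{[\epsilon,\epsilon']}$ are built --- the antisymmetrized theta function $\Theta_{j;n(p+q)}-\Theta_{-j;n(p+q)}$ and the modified mock theta function $\tps^{[1,np;0;\epsilon]}_{\epsilon',-\epsilon';\epsilon'}$ (with $np$ replaced by $nq$ in the case of $G$) --- and then multiplying the results, exactly along the lines of the proof of Lemma \ref{lem11.5}. All the needed input is available: the modular behavior of $\Theta_{j,m}$ is \eqref{1.3}, \eqref{1.4}, and that of $\tps$ with $M=1$ is \eqref{1.17}, \eqref{1.18} (equivalently Lemma \ref{lem4.10}, whose hypotheses hold trivially when $M=1$).

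For the theta factor I would apply \eqref{1.3}, first absorbing the $t$-shift so as to stay at $t=0$; this contributes a factor $e^{\pi i n(p+q)y_2^2/(2\tau)}$ for $F$ (and the corresponding factor in $y_3$ for $G$). Antisymmetrizing in $j$ turns the pair of exponentials into $e^{-\pi i jj'/(n(p+q))}-e^{\pi i jj'/(n(p+q))}=-2i\sin\tfrac{\pi jj'}{n(p+q)}$, and pairing $j'$ with $-j'$ in the sum over $\ZZ/2n(p+q)\ZZ$ --- using $2n(p+q)$-periodicity of $\Theta$ in its lower index together with oddness of the sine --- collapses it to a sum over $1\leq k\leq n(p+q)-1$ with summand $\sin\tfrac{\pi jk}{n(p+q)}\,(\Theta_{k;n(p+q)}-\Theta_{-k;n(p+q)})$ (the reflection identities of Remark \ref{rem11.6} fall out of the same manipulation). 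The $T$-action of the theta factor is immediate from \eqref{1.4}, giving $e^{\pi i j^2/(2n(p+q))}$. For the $\tps$ factor, since $M=1$ the double sum in \eqref{1.17} over $(a,b)\in(\epsilon+\ZZ/M\ZZ)^2$ reduces to the single term $a=b=\epsilon$; writing $z_1=\tfrac{y_2+y_3}{2}$, $z_2=\tfrac{y_3-y_2}{2}$ for $F$ (and $z_2=\tfrac{y_2-y_3}{2}$ for $G$), formula \eqref{1.17} produces the weight factor $\tau$, the exponential $e^{2\pi i np\,z_1z_2/\tau}$ (with $nq$ for $G$), and the function $\tps^{[1,np;0;\epsilon']}_{\epsilon,\epsilon;\epsilon}$; Lemma \ref{lem4.7} with $M=1$ then rewrites the latter as $e^{4\pi i np\epsilon\epsilon'}\,\tps^{[1,np;0;\epsilon']}_{\epsilon,-\epsilon;\epsilon}$ (the sign in the exponent is irrelevant since $2\epsilon\in\ZZ$), which is precisely the $\tps$ appearing in $F_k^{[\epsilon',\epsilon]}$ (resp. $G_k^{[\epsilon',\epsilon]}$). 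The $T$-action of the $\tps$ factor comes from \eqref{1.18} with $M=1$, contributing $e^{-2\pi i np(\epsilon')^2}$ (with $nq$ for $G$) and sending the upper index to $|\epsilon-\epsilon'|$.

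Multiplying the two pieces then yields the statement. The quadratic exponentials combine, via $z_1z_2=\tfrac14(y_3^2-y_2^2)$, into $e^{\pi i n(p+q)y_2^2/(2\tau)}\,e^{2\pi i np(y_3^2-y_2^2)/(4\tau)}=\exp\!\bigl(\tfrac{\pi i n}{2\tau}(qy_2^2+py_3^2)\bigr)$ for $F$, and analogously for $G$ (with $z_1z_2=\tfrac14(y_2^2-y_3^2)$ and $np$ replaced by $nq$) one gets the same exponential; the scalar prefactor is $\tau\cdot(-2i)\,\bigl(\tfrac{-i\tau}{2n(p+q)}\bigr)^{1/2}=(-i\tau)^{3/2}\sqrt{\tfrac{2}{n(p+q)}}$, the weight $3/2$ being the sum of the weight $1/2$ of $\Theta$ and the weight $1$ carried by $\tps$. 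The argument is wholly mechanical; the one place demanding care is this final bookkeeping of constants --- verifying that the theta and $\tps$ quadratic exponentials recombine exactly into $\exp(\tfrac{\pi i n}{2\tau}(qy_2^2+py_3^2))$, that the scalars assemble into $(-i\tau)^{3/2}\sqrt{2/(n(p+q))}$ with no stray sign, and that the precise phase $e^{4\pi i np\epsilon\epsilon'}$ supplied by Lemma \ref{lem4.7} is the one that survives --- together with, in the theta computation, confirming that the $j'\mapsto-j'$ pairing collapses the sum exactly to the range $1\leq k\leq n(p+q)-1$.
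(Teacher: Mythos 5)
Your proposal is correct and follows essentially the same route as the paper's proof: the paper likewise derives the $S$-transformation of $\Theta_{j,m}-\Theta_{-j,m}$ from \eqref{1.3} (obtaining exactly the $-i(-i\tau)^{1/2}\sqrt{2/m}\,e^{\pi i m z^2/2\tau}\sum_{k=1}^{m-1}\sin\frac{\pi jk}{m}$ formula you describe), derives the $S$- and $T$-transformations of $\tps^{[1;m;0;\epsilon]}_{\epsilon',-\epsilon';\epsilon'}$ from \eqref{1.17}--\eqref{1.18} specialized to $M=1$, and multiplies the two factors. Your bookkeeping of the quadratic exponentials, the prefactor $(-i\tau)^{3/2}\sqrt{2/(n(p+q))}$, and the phase $e^{4\pi i np\epsilon\epsilon'}$ all checks out.
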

\begin{proof}
Using \eqref{1.3}, we obtain for a positive integer $ m $ and $ j \in \ZZ: $
\[ 
\begin{aligned}
\left( \Theta_{j,m} - \Theta_{-j,m}\right) \left( -\frac{1}{\tau}, \frac{z}{\tau}\right) = -i (-i\tau)^\frac{1}{2} \sqrt{\frac{2}{m}} e^{\frac{\pi i m}{2 \tau}z^2} \sum_{k=1}^{m-1} \sin \frac{\pi j k }{m} \left( \Theta_{k,m} - \Theta_{-k,m} \right)  \tz.
\end{aligned} \]
>From \eqref{1.17} and \eqref{1.18}, we obtain, for a positive integer $ m $
and $ \epsilon, \epsilon' = 0 $ or $ \half $: 
\[ 
\begin{aligned}
& \tps^{[1;m;0;\epsilon]}_{\epsilon', -\epsilon';\epsilon'} \left( -\frac{1}{\tau}, \frac{z_1}{\tau}, \frac{z_2}{\tau} \right) = \tau e^{4 \pi i m \epsilon\epsilon'} e^{\frac{2 \pi i m }{\tau} z_1z_2}  \tps^{[1;m;0;\epsilon']}_{\epsilon, -\epsilon; \epsilon} (\tau, z_1, z_2); \\
& \tps^{[1;m;0;\epsilon]}_{\epsilon', -\epsilon';\epsilon'} (\tau + 1, z_1, z_2) = e^{-2 \pi i m \epsilon'^2} \tps^{[1;m;0;|\epsilon-\epsilon'|]}_{\epsilon', -\epsilon'; \epsilon'} (\tau, z_1, z_2).
\end{aligned} \]
Lemma follows by applying these transformation formulas, along with \eqref{1.4}, to the expressions of the functions $ F_j^{[\epsilon, \epsilon']} $ and $ G_j^{[\epsilon, \epsilon']}, $ given before Remark \ref{rem11.6}.
\end{proof}

Now we return to the functions $ B^\pm_\La $ and $ B^{\pm, \tw}_\La, $ defined by \eqref{11.4}--\eqref{11.6}.
Recall from Section 10 that for $ \La = \La^{[K; m_2, m_3](1)} $ the function $ B_\La = B^{[K; m_1, m_3](1)} $ is a linear combination of functions 
$w( F^{[K; m_2, m_3](1)}), w \in W^\#_0, $ and that the function $ F^{[K; m_2, m_3](1)} $ is expressed in two ways in terms of the mock theta function $ \Phi_1, $ given by Lemma \ref{lem10.9}(a) and (b). Recall that we defined the modification $ \tbe^{[K;m_2, m_3](1)}_{P (\mbox{resp. }Q)} $ by replacing $ \Phi_1 $ by $ \tph_1  $ in these expressions. Furthermore, using Remark \ref{rem10.7}, we defined the modifications $ \tbe^{[K;m_2, m_3](j)}_{P (\mbox{resp. }Q)} $ for the remaining $ j = 0,2,3. $ We showed in Lemma \ref{lem10.13}  that all the functions $ \tbe^{[K;m_2, m_3](j)}_{P (\mbox{resp. }Q)}  $ are expressed in terms of functions $ P_j $ (resp. $ Q_j $), introduced before Lemma \ref{lem10.12}.

Using \eqref{11.4}--\eqref{11.6} we can now define modifications $ \tbe^{[K;m_2, m_3](j)\pm}_{P (\mbox{resp. }Q)}  $ and $ \tbe^{[K;m_2, m_3](j)\pm, \tw}_{P (\mbox{resp. }Q)}.  $ Using Lemmas \ref{10.13} and \ref{lem11.2}, we obtain simple expressions for all these modifications in terms of functions $ f_j^{[\epsilon, \epsilon']} $ (resp. $ g_j^{[\epsilon, \epsilon']} $), introduced before Lemma \ref{lem11.2}:

\begin{lemma}
\label{lem11.9}
\begin{enumerate}
\item[(a)]
\[ 
\begin{aligned}
\tbe^{[K;m_2, m_3](0)-}_P &= -f^{[0,0]}_{m_3-m_2-2np}, \quad \tbe^{[K;m_2, m_3](1)-}_P = f^{[0,0]}_{m_2-m_3},\\
\tbe^{[K;m_2, m_3](2)-}_P &= -f^{[0,0]}_{-m_2-m_3-2}, \quad \tbe^{[K;m_2, m_3](3)-}_P = -f^{[0,0]}_{m_2+m_3+2}.\\
\end{aligned} \]
\item[(b)]
\[ 
\begin{aligned}
\tbe^{[K;m_2, m_3](0)+}_P &= (-1)^{m_3} f^{[\half,0]}_{m_3-m_2-2np}, \quad \tbe^{[K;m_2, m_3](1)+}_P = -(-1)^{m_3} f^{[\half,0]}_{m_2-m_3},\\
\tbe^{[K;m_2, m_3](2)+}_P &= (-1)^{m_3} f^{[\half,0]}_{-m_2-m_3-2}, \quad \tbe^{[K;m_2, m_3](3)+}_P = (-1)^{m_3} f^{[\half,0]}_{m_2+m_3+2}.\\
\end{aligned} \]
\item[(c)]
\[ 
\begin{aligned}
\tbe^{[K;m_2, m_3](0)-, \tw}_P & = f^{[0, \half]}_{m_3-m_2-2np}, \quad \tbe^{[K;m_2, m_3](1)-, \tw}_P  = -f^{[0, \half]}_{m_2-m_3}, \\
\tbe^{[K;m_2, m_3](2)-, \tw}_P & = f^{[0, \half]}_{-m_2-m_3-2}, \quad \tbe^{[K;m_2, m_3](3)-, \tw}_P  = f^{[0, \half]}_{m_2+m_3+2}. \\
\end{aligned} \]
\item[(d)]
\[ 
\begin{aligned}
\tbe^{[K;m_2, m_3](0)+, \tw}_P &= -(-1)^{m_3} f^{[\half,\half]}_{m_3-m_2-2np}, \quad \tbe^{[K;m_2, m_3](1)+, \tw}_P = (-1)^{m_3} f^{[\half,\half]}_{m_2-m_3}, \\
\tbe^{[K;m_2, m_3](2)+, \tw}_P &= -(-1)^{m_3} f^{[\half,\half]}_{-m_2-m_3-2}, \quad 
\tbe^{[K;m_2, m_3](3)+, \tw}_P = -(-1)^{m_3} f^{[\half,\half]}_{m_2+m_3+2}. \\
\end{aligned} \]
\end{enumerate}
\end{lemma}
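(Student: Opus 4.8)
\textbf{Proof proposal for Lemma \ref{lem11.9}.}

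The plan is to reduce everything to the four already-established inputs: the expression of $B^{[K;m_2,m_3](j)}$ in terms of $P_j$ and $Q_j$ (Lemma \ref{lem10.13}), the identities relating $P^\pm_j$, $P^{\pm,\tw}_j$, $Q^\pm_j$, $Q^{\pm,\tw}_j$ to the functions $f_j^{[\epsilon,\epsilon']}$, $g_j^{[\epsilon,\epsilon']}$ (Lemma \ref{lem11.2}), and the definitions \eqref{11.4}--\eqref{11.6} of $B^\pm_\La$ and $B^{\pm,\tw}_\La$ as the twist/shift of $B^-_\La$ by $\xi$ and $w_0$. Since $\tbe^{[K;m_2,m_3](j)}_P$ is by construction the function obtained from $B^{[K;m_2,m_3](j)}$ by replacing $\Phi_1$ with $\tph_1$ in the expression of Lemma \ref{lem10.9}(a), and since the operations ``shift by $2\pi i\xi$'' and ``apply $w_0$'' are performed on the variables only, they commute with this replacement. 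Hence $\tbe^{[K;m_2,m_3](j)\pm}_P$ (resp.\ $\tbe^{[K;m_2,m_3](j)\pm,\tw}_P$) is obtained from $\tbe^{[K;m_2,m_3](j)}_P$ by exactly the same recipe \eqref{11.5} (resp.\ \eqref{11.6}) that produces $P^\pm_j$ (resp.\ $P^{\pm,\tw}_j$) from $P_j=P^-_j$.

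The argument then runs as follows. First I would record, using Lemma \ref{lem10.13}(a), the four identities $\tbe^{[K;m_2,m_3](1)}_P=P_{m_2-m_3}$, $\tbe^{[K;m_2,m_3](0)}_P=-P_{m_3-m_2-2np}$, $\tbe^{[K;m_2,m_3](2)}_P=-P_{-m_2-m_3-2}$, $\tbe^{[K;m_2,m_3](3)}_P=-P_{m_2+m_3+2}$. Next, applying the recipe \eqref{11.5} to each side gives $\tbe^{[K;m_2,m_3](j)+}_P$ as $(-1)^{?}$ times $P^+_{j'}$ for the appropriate index $j'$, where the sign is $e^{-2\pi i(\La+\rho|\xi)}$ with $\La=\La^{[K;m_2,m_3](j)}$; one computes $(\La+\rho|\xi)$ from $\xi=\tfrac12\al_3^\vee$ and the explicit weights of Proposition \ref{prop10.1}, obtaining that the relevant sign is $(-1)^{m_3}$ (for $j=0,2,3$ one uses Remark \ref{rem10.7} to pass to the $j=1$ weight with modified $(m_2',m_3')$, and checks the sign is unchanged mod $2$). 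Then substituting $P^+_{j'}=f^{[\half,0]}_{j'}$ and $P^-_{j'}=f^{[0,0]}_{j'}$ from Lemma \ref{lem11.2}(a) yields claims (a) and (b). For the twisted cases, the recipe \eqref{11.6} composes the $w_0$-substitution with the $\xi$-shift; substituting $P^{-,\tw}_{j'}=-f^{[0,\half]}_{j'}$ and $P^{+,\tw}_{j'}=-f^{[\half,\half]}_{j'}$ from Lemma \ref{lem11.2}(b), and carrying the extra minus signs through, gives claims (c) and (d). The signs in (c), (d) (namely the overall $\pm$ and the $(-1)^{m_3}$) are exactly those already present in Lemma \ref{lem11.2}(b) together with the $(-1)^{m_3}$ from the character shift, so no new computation of $(w_0(\La)+\rho^\tw|\xi)$ beyond what \eqref{11.6} encodes is needed.

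The main obstacle is purely bookkeeping: verifying that the sign picked up in \eqref{11.5} and \eqref{11.6} is $(-1)^{m_3}$ (and not, say, $(-1)^{m_2}$ or a fourth root of unity) for each of the four families $j=0,1,2,3$, using the explicit forms of $\La^{[K;m_2,m_3](j)}$ from Proposition \ref{prop10.1} and the pairing $(\La+\rho|\xi)=\tfrac12(\La+\rho|\al_3^\vee)=\tfrac12 m_3'+\text{(const)}$ — here one must be careful that passing from $j=1$ to $j\in\{0,2,3\}$ via $r_{\al_j}$ (Remark \ref{rem10.7}) changes $(m_2,m_3)$ but leaves $(-1)^{m_3'}$ congruent to $(-1)^{m_3}$ modulo $2$ in each case, since $m_2'\equiv -m_2$, $m_3'\equiv m_3$ or $m_2'\equiv m_2$, $m_3'\equiv -m_3$, and only $m_3$ enters the sign. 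Once this sign reconciliation is done for a single representative $j$, the other three are immediate, and assembling the pieces with Lemmas \ref{lem10.13} and \ref{lem11.2} finishes the proof. The analogous statement with $P$ replaced by $Q$ (used below) follows by the identical argument, using Lemma \ref{lem10.13}(b) and Lemma \ref{lem11.2}, with $p$ replaced by $q$ and $m_2$ by $m_3$ as in Lemma \ref{lem10.17}.
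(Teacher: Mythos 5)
Your proposal is correct and is essentially the paper's own (unwritten) argument: the lemma is stated as an immediate consequence of Lemma \ref{lem10.13} (expressing $\tbe^{[K;m_2,m_3](j)}_P$ via $P_j$), Lemma \ref{lem11.2} (expressing $P^{\pm}_j$, $P^{\pm,\tw}_j$ via $f_j^{[\epsilon,\epsilon']}$), and the recipes \eqref{11.5}--\eqref{11.6}, exactly as you describe. One small precision on your bookkeeping: since $(\La+\hrho\,|\,\al_3^\vee)=m_3+1$ for every $\La=\La^{[K;m_2,m_3](j)}$, the factor $e^{-2\pi i(\La+\hrho|\xi)}$ equals $(-1)^{m_3+1}=-(-1)^{m_3}$ (not $(-1)^{m_3}$), and it is this extra overall minus sign, combined with the signs already present in Lemma \ref{lem10.13} and Lemma \ref{lem11.2}, that produces the stated signs in parts (b) and (d).
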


\begin{lemma}
\label{lem11.10}
The same formulas as in Lemma \ref{lem11.9} hold if in the LHS we replace $ P $ by $ Q, $ and exchange (2) with (3), and if in the RHS we replace $ p $ by $ q, \ f $  by $ g, $ and exchange $ m_2 $ with $ m_3; $ also, in the twisted case we should add $ n(p+q) $ to the subscript on the RHS, and in the $ +, \tw $ 
case multiply the RHS by $ (-1)^{np} $.
\end{lemma}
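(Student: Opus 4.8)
The proof of Lemma~\ref{lem11.9} combines Lemma~\ref{lem10.13}(a), which writes each $\tbe^{[K;m_2,m_3](j)}_P$ as $\pm P_\ell$ for an explicit index $\ell$ linear in $m_2,m_3$, with the formulas of Lemma~\ref{lem11.2} for $P_\ell^\pm$ and $P_\ell^{\pm,\tw}$ in terms of $f_\ell^{[\epsilon,\epsilon']}$. The plan for Lemma~\ref{lem11.10} is to run the same argument verbatim on the $Q$-side, i.e. starting from Lemma~\ref{lem10.13}(b) and the $Q$-part of Lemma~\ref{lem11.2}, and then to match the output against Lemma~\ref{lem11.9}.

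\textbf{Steps.} First I would observe that, by the definitions \eqref{11.4}--\eqref{11.6}, the functions $\tbe^{[K;m_2,m_3](j)\pm}_Q$ and $\tbe^{[K;m_2,m_3](j)\pm,\tw}_Q$ are obtained from $\tbe^{[K;m_2,m_3](j)}_Q$ of Lemma~\ref{lem10.13}(b) by applying term by term the very substitutions $Q_\ell\mapsto Q_\ell^\pm$, $Q_\ell\mapsto Q_\ell^{\pm,\tw}$ (passage to $h+2\pi i\xi$ and to $w_0(h)$) that occur in Lemma~\ref{lem11.2}. Second, I would insert the values $Q_\ell^-=g_\ell^{[0,0]}$, $Q_\ell^+=(-1)^\ell g_\ell^{[\half,0]}$, $Q_\ell^{-,\tw}=-g_{\ell+n(p+q)}^{[0,\half]}$, $Q_\ell^{+,\tw}=-(-1)^{\ell+np}g_{\ell+n(p+q)}^{[\half,\half]}$ from Lemma~\ref{lem11.2}(a),(b). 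Third, I would compare the resulting formulas with those of Lemma~\ref{lem11.9} and read off that the only changes are: (i) the index $\ell$ is transformed by the dictionary between Lemma~\ref{lem10.13}(a) and~(b) — namely $m_2\leftrightarrow m_3$ in the $j=1$ index and $p\mapsto q$ (i.e. $2np\mapsto 2nq$) in the $j=0$ index, together with the interchange of the roles of $j=2$ and $j=3$, which is harmless since the indices $\mp(m_2+m_3+2)$ are symmetric; (ii) $f\mapsto g$ with unchanged signs in the untwisted sectors; (iii) a shift of the index by $n(p+q)$ in the twisted sectors and an extra factor $(-1)^{np}$ in the $+,\tw$ sector. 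Assembling (i)--(iii) yields precisely the substitution rule asserted in Lemma~\ref{lem11.10}.

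\textbf{Main obstacle.} The only delicate point is the sign bookkeeping in the character and Ramond sectors: one has to verify that the prefactors $e^{-2\pi i(\La+\rho|\xi)}$ from \eqref{11.5}--\eqref{11.6}, the signs $(-1)^\ell$ and $(-1)^{\ell+np}$ hidden in $Q_\ell^+$ and $Q_\ell^{+,\tw}$, and the parity relations for the $g_\ell^{[\epsilon,\epsilon']}$ analogous to Remark~\ref{rem11.6}, combine after the relabelling of $\ell$ to reproduce exactly the signs on the right-hand sides of Lemma~\ref{lem11.9} with $p$ replaced by $q$. This is a finite, mechanical verification, case by case over $j\in\{0,1,2,3\}$ and the four sectors, introducing nothing new beyond Lemma~\ref{lem11.9}. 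Conceptually the asymmetry of $p$ and $q$ in (iii) reflects that the isomorphism $D(2,1;a)\xrightarrow{\ \sim\ }D(2,1;-1-a)$, which exchanges $\al_2\leftrightarrow\al_3$, $p\leftrightarrow q$ and the $P$- and $Q$-constructions (and $P^K_{+,2}\leftrightarrow P^K_{+,3}$), does not commute with the chosen twist $w_0=t_\xi r_{\al_3}$; reconciling $w_0$ with its image $t_{\frac12\al_2^\vee}r_{\al_2}$ is what produces the shift $n(p+q)$ and the sign $(-1)^{np}$.
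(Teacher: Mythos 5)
Your proposal is correct and follows exactly the route the paper intends: the paper states Lemma \ref{lem11.10} (like Lemma \ref{lem11.9}) without explicit proof, as an immediate combination of Lemma \ref{lem10.13}(b) with the $Q$-part of Lemma \ref{lem11.2} via the definitions \eqref{11.4}--\eqref{11.6}. Your sign bookkeeping — in particular that the factor $(-1)^{\ell}$ hidden in $Q_\ell^{+}$ merges with the prefactor $e^{-2\pi i(\La+\rho|\xi)}=-(-1)^{m_3}$ to give the stated $m_2\leftrightarrow m_3$ exchange, and that the index shift $n(p+q)$ and the sign $(-1)^{np}$ come straight from Lemma \ref{lem11.2}(b) — is exactly what is needed.
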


\begin{proposition}
\label{prop11.11}
Let $ \La = \La^{[K;0, m_3](1)} $ for $ m_3 \leq np -1 $ and $ \La' = \La^{[K; m_2,0](1)} $ for $ m_2 \leq nq-1 $ are weights from $ P^K_{+,1}. $ Then 
\begin{enumerate}
\item[(a)] The $ P $-modified (super)characters of
  big $N=4$ superconformal algebra
  modules, obtained by (twisted) QHR from the $ \wg $-modules $ \LLa $ and $ L(\La') $, are given by the followings formulas:
\[ 
\begin{array}{ll}
\overset{b4}{R}\vphantom{R}^-\tch^{-[P]}_{H(\La)} =  F^{[0,\half]}_{np-m_3}, & \overset{b4}{R}\vphantom{R}^-\tch^{-[P]}_{H(\La')} =  F^{[0,\half]}_{np+m_2}, \\
\overset{b4}{R}\vphantom{R}^+\tch^{+[P]}_{H(\La)} =  -(-1)^{m_3} F^{[\half,\half]}_{np-m_3}, & \overset{b4}{R}\vphantom{R}^+\tch^{+[P]}_{H(\La')} =  -F^{[\half,\half]}_{np+m_2}, \\
\overset{b4}{R}\vphantom{R}^{-,\tw}\tch^{-[P]}_{H^\tw(\La)} =  -F^{[0,0]}_{np-m_3}, & \overset{b4}{R}\vphantom{R}^{-,\tw}\tch^{-[P]}_{H^\tw(\La')} =  -F^{[0,0]}_{np+m_2}, \\
\overset{b4}{R}\vphantom{R}^{+,\tw}\tch^{+[P]}_{H^\tw(\La)} = (-1)^{m_3}  F^{[\half, 0]}_{np-m_3}, & \overset{b4}{R}\vphantom{R}^{\tw, +}\tch^{+[P]}_{H^\tw(\La')} =  F^{[\half,0]}_{np+m_2}. \\
\end{array} \]
\item[(b)] The $ Q $-modified (super)characters of  big $N=4$ superconformal algebra modules, obtained by (twisted) QHR from the $ \wg $-modules $ \LLa $ and $ L(\La') $, are given by the following formulas:
\[ 
\begin{array}{ll}
\overset{b4}{R}\vphantom{R}^-\tch^{-[Q]}_{H(\La)} = G^{[0, \half]}_{nq+m_3}, & \overset{b4}{R}\vphantom{R}^-\tch^{-[Q]}_{H(\La')} = G^{[0, \half]}_{nq-m_2}, \\
\overset{b4}{R}\vphantom{R}^+\tch^{+[Q]}_{H(\La)} = -G^{[\half, \half]}_{nq+m_3}, & \overset{b4}{R}\vphantom{R}^-\tch^{-[Q]}_{H(\La')} = -(-1)^{m_2} G^{[\half, \half]}_{nq-m_2}, \\
\overset{b4}{R}\vphantom{R}^{-,\tw}\tch^{-[Q]}_{H^\tw(\La)} = G^{[0, 0]}_{np-m_3}, & \overset{b4}{R}\vphantom{R}^{-,\tw}\tch^{-[Q]}_{H^\tw(\La')} = G^{[0, 0]}_{np+m_2}, \\
\overset{b4}{R}\vphantom{R}^{+,\tw}\tch^{+[Q]}_{H^\tw(\La)} = -(-1)^{np} G^{[\half, 0]}_{np-m_3}, & \overset{b4}{R}\vphantom{R}^{+,\tw}\tch^{+[Q]}_{H^\tw(\La')} = -(-1)^{m_2+np} G^{[\half, 0]}_{np+m_2}. \\
\end{array} \]
\item[(c)] The span of the $ P $ (resp. $ Q $)-modified (super)characters, given in (a) (resp. (b)) is $ SL_2(\ZZ) $-invariant with respect to the action 
\[ \varphi \big{|}_{\left(\begin{smallmatrix}
a& b \\
c & d\\
\end{smallmatrix}\right)} (\tau, y_2, y_3) = e^{-\frac{\pi ic }{2(c \tau +d)} ((qn-1)y_2^2 + (pn-1)y_3^2)} \varphi \left(\frac{a\tau +b}{c\tau +d}, 
\frac{y_2}{c\tau+d}, \frac{y_3}{c\tau +d}\right). \]
\end{enumerate}
\end{proposition}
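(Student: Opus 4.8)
The plan is to run, for the two families of weights in the statement, the chain of identifications set up in Sections 10 and 11, keeping careful track of signs and index shifts. Note first that both families automatically satisfy $m_0<0$: for $\La=\La^{[K;0,m_3](1)}$ one has $(p+q)m_0=q(m_3-np)<0$ as $m_3\le np-1$, and for $\La'=\La^{[K;m_2,0](1)}$ one has $(p+q)m_0=p(m_2-nq)<0$ as $m_2\le nq-1$; hence $m_0\notin\ZZ_{\ge0}$, the modules $H(\La),H^\tw(\La),H(\La'),H^\tw(\La')$ are all nonzero, and formulas \eqref{11.8}, \eqref{11.12} for their (super)characters apply. Because $m_2=0$ (resp.\ $m_3=0$) for $\La$ (resp.\ $\La'$), Lemma \ref{lem10.18}(a) gives $\hat R^-\tch^{-[P]}_\La=P_{-m_3}=\tbe^{[K;0,m_3](1)-}_P$ (using Lemma \ref{lem10.13}(a) for the last equality), and since by \eqref{11.5}--\eqref{11.6} the numerators of $\tch^{+[P]}_\La,\tch^{-,\tw[P]}_\La,\tch^{+,\tw[P]}_\La$ are the same expression applied to $B^+_\La,B^{-,\tw}_\La,B^{+,\tw}_\La$, they are precisely $\tbe^{[K;0,m_3](1)+}_P$, $\tbe^{[K;0,m_3](1)-,\tw}_P$, $\tbe^{[K;0,m_3](1)+,\tw}_P$; the same holds for $\La'$. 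Lemma \ref{lem11.9} then rewrites all eight of these numerators as $\pm(-1)^{m_3}f^{[\epsilon,\epsilon']}_{-m_3}$ (resp.\ $\pm f^{[\epsilon,\epsilon']}_{m_2}$) for the appropriate $(\epsilon,\epsilon')\in\{0,\thalf\}^2$ recorded there, the factor $e^{-2\pi i(\La+\rho|\xi)}$ of \eqref{11.5}--\eqref{11.6} having been absorbed into the sign.

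Now I apply the Hamiltonian reduction: by \eqref{11.8}, \eqref{11.12} the big $N=4$ (super)characters of $H(\La),H^\tw(\La)$ are obtained from the above numerators by the operation $(\,\cdot\,)_{\HR}$, and Lemma \ref{lem11.7} gives $(f^{[\epsilon,\epsilon']}_j)_{\HR}=F^{[\epsilon,\,\thalf-\epsilon']}_{j+np}$. Substituting $j=-m_3$ (resp.\ $j=m_2$) converts $f^{[\epsilon,\epsilon']}_{-m_3}$ into $F^{[\epsilon,\,\thalf-\epsilon']}_{np-m_3}$ (resp.\ $f^{[\epsilon,\epsilon']}_{m_2}$ into $F^{[\epsilon,\,\thalf-\epsilon']}_{np+m_2}$), which, with the signs from Lemma \ref{lem11.9}, is exactly the table in (a). Part (b) is the word-for-word analogue, using Lemma \ref{lem10.18}(a) again (now its $Q$-statement), Lemma \ref{lem11.10} (which replaces $P,f,p,m_2$ by $Q,g,q,m_3$, exchanges the labels $(2),(3)$, shifts the subscript by $n(p+q)$ in the twisted rows and inserts $(-1)^{np}$ in the $+,\tw$ row), and the $g$-analogue $(g^{[\epsilon,\epsilon']}_j)_{\HR}=G^{[\epsilon,\,\thalf-\epsilon']}_{j+nq}$ of Lemma \ref{lem11.7}, which follows from Lemma \ref{lem11.5}(b) exactly as the $f$-statement follows from Lemma \ref{lem11.5}(a). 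The only extra step is to reduce the twisted $G$-subscripts modulo $2n(p+q)$ via $G^{[\epsilon,\epsilon']}_{n(p+q)+i}=-G^{[\epsilon,\epsilon']}_{n(p+q)-i}$ and $G^{[\epsilon,\epsilon']}_{-i}=-G^{[\epsilon,\epsilon']}_i$ of Remark \ref{rem11.6}: one checks $m_3+n(p+q)+nq\equiv n(p+q)-(np-m_3)$ and its counterpart for $\La'$, which produces the indices $np\mp m_3$ and $np\pm m_2$ listed in (b).

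For (c), as $m_3$ runs over $\{0,\dots,np-1\}$ and $m_2$ over $\{0,\dots,nq-1\}$ the indices $np-m_3$ and $np+m_2$ together exhaust $\{1,\dots,n(p+q)-1\}$, and the four (super)character types furnish all four pairs $(\epsilon,\epsilon')\in\{0,\thalf\}^2$; hence by (a) the span of the $P$-modified (super)characters equals $\span\{F^{[\epsilon,\epsilon']}_j/\overset{b4}{R}\vphantom{R}^{(\epsilon)}_{\epsilon'}\mid 1\le j\le n(p+q)-1,\ \epsilon,\epsilon'\in\{0,\thalf\}\}$, and the same with $G$ for the $Q$-span. $SL_2(\ZZ)$-invariance of these spans now follows by combining Lemma \ref{lem11.8} with the modular transformation formulas \eqref{11.13}, \eqref{11.14} of the big $N=4$ denominators: under $S$ the pair $(\epsilon,\epsilon')$ is sent to $(\epsilon',\epsilon)$ both for $F^{[\epsilon,\epsilon']}_j$ (which also mixes $j$ through a sine-kernel supported on $1\le k\le n(p+q)-1$) and for $\overset{b4}{R}\vphantom{R}^{(\epsilon)}_{\epsilon'}$, and under $T$ it is sent to $(|\epsilon-\epsilon'|,\epsilon')$ for both; matching the automorphy factors, the powers $(-i\tau)^{3/2}$ cancel in the quotient and the Gaussians combine to $e^{\frac{\pi i}{2\tau}((nq-1)y_2^2+(np-1)y_3^2)}$, which is exactly the automorphy factor in the slash action displayed in (c). Therefore the quotients transform into $\CC$-linear combinations of one another, proving (c).

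I expect the main obstacle to be the sign and index bookkeeping in the twisted rows of (a)--(b): correctly threading the $j\mapsto j+np$ (resp.\ $j+nq$) shift of Lemma \ref{lem11.7} coming from the $\tfrac\theta2$ built into $(\,\cdot\,)_{\HR}$, the flip $\epsilon'\mapsto\thalf-\epsilon'$, the further $n(p+q)$-shift hidden in the $Q$-twisted case, and the $(-1)^{m_2},(-1)^{m_3},(-1)^{np}$ factors originating from $\xi$ and from Lemma \ref{lem11.9}; no single step is hard, but there is ample room to drop a sign. Everything else is a direct substitution into identities already established above.
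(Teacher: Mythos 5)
Your proposal is correct and follows essentially the same route as the paper's (very terse) proof: apply the Hamiltonian reduction to the formulas of Lemma \ref{lem10.18}(a), identify the numerators via Lemmas \ref{lem11.9}, \ref{lem11.10} and the shift $j\mapsto j+np$ (resp. $j+nq$) of Lemma \ref{lem11.7}, reduce twisted subscripts by Remark \ref{rem11.6}, and deduce (c) from Lemma \ref{lem11.8} together with \eqref{11.13}, \eqref{11.14}. Your sign and index bookkeeping, including the extra $n(p+q)$-shift and $(-1)^{np}$ factor in the twisted $Q$-rows, checks out against the stated tables.
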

\begin{proof}
Formulas in (a) and (b) are obtained by applying the Hamiltonian reduction $ \varphi \rightarrow \varphi_\HR $ to the formulas in Lemma \ref{lem10.18} (a), using Lemma \ref{lem11.5}, and Remark \ref{rem11.6}, and Lemmas \ref{lem11.9}, \ref{lem11.10}. Claim (c) follows from Lemma \ref{lem11.8} and formulas \eqref{11.13}, \eqref{11.14}.
\end{proof}

\begin{remark}
\label{rem11.12}
It follows from Proposition \ref{prop11.3} that the modules 
$H(\Lambda^{[K;m_2,m_3](j)})$ for $j=2,3$ 
are not isomorphic to those with $j=1$ (and $j=0$), and the same holds for $H^\tw$. Consequently, their (super)characters do not
lie in the span of the (super)characters for $j=1$. However, it
follows from Propositions 
\ref{prop10.19}  and \ref{prop10.20} that their P (resp Q)-type modifications
do lie in the space, described by Proposition \ref{prop11.11}. Thus, the span of P (resp. Q)-modified (super)characters of all modules 
$H(\Lambda)$ and   $H^\tw(\Lambda)$, where $\Lambda \in \bigcup\limits_{j=0}^{3} P^K_{+,j}$, is $SL_2(\ZZ)$-invariant.    
\end{remark}
\begin{remark}
\label{rem11.13}
It is shown in \cite{AKMPP} that $K=a$ is the collapsing level of the simple minimal $W$-algebra $ W_K (D (2,1; a), e_{-\theta}) $. Namely this vertex algebra is isomorpic to  the simple affine vertex algebra 
$V_{\frac{p}{q}-1}$, 
associated to the affine Lie algebra $\hat{sl}_2$ at level $\frac{p}{q}-1$. The integrable case corresponds to $q=n=1$. It is easy to see that in this case all big
$N=4$ modules, obtained by QHR of integrable $\fg$-modules, are of the form
 $H(\Lambda^{[K;0,m_3](1)})$ and  $H^\tw(\Lambda^{[K;0,m_3](1)})$ 
for $m_3\leq p-1$, $m_3\in \ZZ_{\geq 0}$. It is not difficult to see that their (super)characters are just the normalized characters of integrable highest weight $\hat{sl}_2$-modules of level $p-1$. Namely, we have for
$\Lambda^{[K;0,s](1)})$  ( $s=0,1,..., p-1$):
\[ ch^\pm_{H(\Lambda)} (\tau,y_2,y_3) = ch_{s,p-1}(\tau, y_3),   \,\,\, 
ch^\pm_{H^\tw(\Lambda)} (\tau,y_2,y_3) = ch_{p-s-1,p-1}(\tau, y_3),\]
where   
\[ ch_{j,p-1}(\tau,y)=\frac{\Theta_{j+1,p+1} (\tau, y) -\Theta_{-j-1,p+1} (\tau, y)}{\Theta_{1,2} (\tau, y) -\Theta_{-1,2} (\tau, y)}, \, j=0,1,...,p-1,\]
are the normalized characters of irreducible $V_{p-1}$-modules \cite{K90}.  
\end{remark}

Next, we give the modular transformations of functions $ f^{[\epsilon, \epsilon']}_j $ and $ g^{[\epsilon, \epsilon']}_j $ introduced before Lemma \ref{lem11.2}, which will be used in the next section. The proof is the same as that of Proposition \ref{prop10.21}.
\begin{proposition}
  \label{prop11.14}
In coordinates \eqref{10.06}, \eqref{10.07} one has:
\begin{enumerate}
\item[(a)] 
\[ 
\begin{aligned}
f^{[\epsilon, \epsilon']}_j \left( -\frac{1}{\tau}, \frac{z}{\tau}, t \right) = e^{4 \pi i n p \epsilon\epsilon'} \frac{(-i\tau)^\half \tau}{\sqrt{2n(p+q)}} e^{-\frac{\pi iK}{\tau}(z|z)} \sum_{k \in \ZZ / 2n(p+q)\ZZ} e^{-\frac{\pi i j k }{n(p+q)}} f_k^{[\epsilon', \epsilon]} \tzt, \\
g^{[\epsilon, \epsilon']}_j \left( -\frac{1}{\tau}, \frac{z}{\tau}, t \right) =e^{4 \pi i n q \epsilon\epsilon'} \frac{(-i\tau)^\half \tau}{\sqrt{2n(p+q)}} e^{-\frac{\pi iK}{\tau}(z|z)} \sum_{k \in \ZZ / 2n(p+q)\ZZ} e^{-\frac{\pi i j k }{n(p+q)}} g_k^{[\epsilon', \epsilon]} \tzt.\\
\end{aligned} \]
\item[(b)]
\[ 
\begin{aligned}
f^{[\epsilon, \epsilon']}_j  (\tau +1, z, t) = e^{-2 \pi i n p \epsilon'^2} e^{\frac{\pi i j^2}{2n(p+q)}} f_j^{[|\epsilon-\epsilon'|, \epsilon']} \tzt, \\
g^{[\epsilon, \epsilon']}_j (\tau +1, z, t) = e^{-2 \pi i n q \epsilon'^2} e^{\frac{\pi i j^2}{2n(p+q)}} g_j^{[|\epsilon-\epsilon'|, \epsilon']} \tzt. \\
\end{aligned} \]
\end{enumerate}
\end{proposition}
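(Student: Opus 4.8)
The plan is to deduce Proposition~\ref{prop11.14} from the modular transformation laws of the functions it is built from, in complete parallel with the proof of Proposition~\ref{prop10.21}. Recall (from the definitions placed just before Lemma~\ref{lem11.2}) that in coordinates \eqref{10.06}, \eqref{10.07} each of $f_j^{[\epsilon,\epsilon']}$ and $g_j^{[\epsilon,\epsilon']}$ equals $e^{2\pi i Kt}$ times a difference of two products of the shape $\Theta_{\pm j;\,n(p+q)}(\tau,\ell(z))\,\tilde{\Psi}^{[1,np;0;\epsilon]}_{\epsilon',-\epsilon';\epsilon'}(\tau,\ell'(z),\ell''(z),0)$ (with $nq$ in place of $np$ for $g$), where $\ell,\ell',\ell''$ are explicit linear forms in $z_1,z_2,z_3$ and $K$ is given by \eqref{10.03}. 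Thus it suffices to combine: the $S$- and $T$-formulas \eqref{1.3}, \eqref{1.4} for theta functions of degree $m=n(p+q)$; the $S$- and $T$-formulas \eqref{1.17}, \eqref{1.18} for $\tilde{\Psi}$ specialized to $M=1$, in which case the sum over $(a,b)\in(\epsilon+\ZZ/M\ZZ)^2$ degenerates to a single term and merely interchanges the superscripts $\epsilon\leftrightarrow\epsilon'$ (up to the phase $e^{4\pi i np\epsilon\epsilon'}$ that appears when one rewrites the resulting index via the shift identity of Remark~\ref{rem8.8}); and the four quadratic identities \eqref{10.15} expressing the relevant quadratic forms in $z_1,z_2,z_3$ through $a(z|z)=-\tfrac{pq}{p+q}(z|z)$.

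For the $S$-transformation I would work one summand at a time. Substituting $\tau\mapsto-\tfrac{1}{\tau}$, $z_i\mapsto z_i/\tau$ (the $t$-slots of the inner $\Theta$ and $\tilde{\Psi}$ being held at $0$, since all $t$-dependence of $f_j^{[\epsilon,\epsilon']}$ sits in the prefactor $e^{2\pi i Kt}$), formula \eqref{1.3} produces the scalar $(-i\tau/2n(p+q))^{1/2}=(-i\tau)^{1/2}/\sqrt{2n(p+q)}$, a Gaussian $e^{\pi i n(p+q)\ell(z)^2/(2\tau)}$, and a finite sum $\sum_{k}e^{-\pi i jk/n(p+q)}\Theta_{k;\,n(p+q)}$; formula \eqref{1.17} with $M=1$ produces the scalar $\tau$, the phase $e^{4\pi i np\epsilon\epsilon'}$, and a Gaussian in the $\tilde{\Psi}$-arguments. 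Adding the two summands and using the antisymmetry $\Theta_{-k}=-\Theta_{k}$ to recognize $f_k^{[\epsilon',\epsilon]}$ (resp. $g_k^{[\epsilon',\epsilon]}$) on the right, the summation range becomes $k\in\ZZ/2n(p+q)\ZZ$, the superscripts get swapped, and—this is the crux—the product of the two Gaussians collapses, by the first two identities of \eqref{10.15} for $f$ and the last two for $g$, to $e^{-\pi i K(z|z)/\tau}$, giving exactly the claimed formulas. The $T$-transformation is immediate: \eqref{1.4} contributes $e^{\pi i j^2/(2n(p+q))}$ and \eqref{1.18} contributes $e^{-2\pi i np\epsilon'^2}$ (resp. $e^{-2\pi i nq\epsilon'^2}$) together with the superscript change $\epsilon\mapsto|\epsilon-\epsilon'|$.

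The hard part will be the bookkeeping, and that is where the main obstacle lies. One must line up the arguments of the two $\Theta$-summands—which differ not merely by $j\mapsto-j$ but by the Weyl-type substitution of coordinates built into the definitions of $f_j^{[\epsilon,\epsilon']}$ and $g_j^{[\epsilon,\epsilon']}$ (compare the reflection formulas of Remark~\ref{rem10.11})—so that for each summand the correct one of the four identities of \eqref{10.15} is the one that applies, and one must check that the half-integer index shifts, together with the phases arising from $\epsilon,\epsilon'\in\{0,\tfrac12\}$ (using $\tfrac12\equiv-\tfrac12\ \mathrm{mod}\ \ZZ$), do not create spurious signs. Once the arguments are matched, the computation is formally identical to that of Proposition~\ref{prop10.21}, with $\tilde{\Phi}^{[np;0]}$ and $\tilde{\Phi}^{[nq;0]}$ replaced by $\tilde{\Psi}^{[1,np;0;\epsilon]}_{\epsilon',-\epsilon';\epsilon'}$ and $\tilde{\Psi}^{[1,nq;0;\epsilon]}_{\epsilon',-\epsilon';\epsilon'}$, and their transformation laws \eqref{1.17}, \eqref{1.18} used in place of \eqref{1.15}, \eqref{1.16}.
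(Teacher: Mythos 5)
Your proposal is correct and follows exactly the route the paper intends: the paper's own proof of Proposition~\ref{prop11.14} is literally ``the same as that of Proposition~\ref{prop10.21}'', i.e.\ apply \eqref{1.3}, \eqref{1.4} to the degree-$n(p+q)$ theta factors and \eqref{1.17}, \eqref{1.18} (specialized to $M=1$, which yields the single term with phase $e^{4\pi i np\epsilon\epsilon'}$, resp.\ $e^{4\pi i nq\epsilon\epsilon'}$, exactly as in the proof of Lemma~\ref{lem11.8}) to the $\tilde{\Psi}$ factors, then collapse the Gaussians via the identities \eqref{10.15}. Your bookkeeping remarks match the paper's level of detail, so nothing further is needed.
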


In conclusion of this section, we prove the following proposition.
\begin{proposition}
  \label{prop11.15}
The P(resp. Q)-modified (super)characters of the big $N=4$ superconformal algebras are holomorphic in $y_3$ (resp. $y_2$).
\end{proposition}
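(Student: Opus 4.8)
The plan is to trace the dependence on $y_3$ (respectively $y_2$) of the $P$-modified (resp.\ $Q$-modified) (super)characters through all the explicit formulas already established, and show it is holomorphic. By Remark~\ref{rem11.12}, together with Propositions~\ref{prop10.19}, \ref{prop10.20}, and \ref{prop11.11}, the $P$-modified (super)characters of all big $N=4$ modules $H(\La)$, $H^\tw(\La)$ with $\La\in\bigcup_{j=0}^3 P^K_{+,j}$ lie in the span of the functions $F^{[\epsilon,\epsilon']}_j(\tau,y_2,y_3)$, and likewise the $Q$-modified ones lie in the span of the $G^{[\epsilon,\epsilon']}_j(\tau,y_2,y_3)$. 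So it suffices to prove that each $F^{[\epsilon,\epsilon']}_j$ is holomorphic in $y_3$ and each $G^{[\epsilon,\epsilon']}_j$ is holomorphic in $y_2$.

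Recall from the definition right before Remark~\ref{rem11.6} that
\[
F_j^{[\epsilon,\epsilon']}(\tau,y_2,y_3) = \left(\Theta_{j;n(p+q)} - \Theta_{-j;n(p+q)}\right)(\tau,y_2)\;\tps^{[1;np;0;\epsilon]}_{\epsilon',-\epsilon';\epsilon'}\!\left(\tau,\tfrac{y_2+y_3}{2},\tfrac{y_3-y_2}{2}\right).
\]
The theta factor $\Theta_{\pm j;n(p+q)}(\tau,y_2)$ is manifestly holomorphic (indeed independent of $y_3$). So the whole question reduces to whether $\tps^{[1;np;0;\epsilon]}_{\epsilon',-\epsilon';\epsilon'}(\tau,u,v)$, evaluated at $u=\tfrac{y_2+y_3}{2}$, $v=\tfrac{y_3-y_2}{2}$, is holomorphic in $y_3$. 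Here I would invoke Remark~\ref{rem1.3}: since $np$ is a positive integer, $\tilde{\Phi}^{[1;0]}=\Phi^{[1;0]}$ and, more to the point, $\tps^{[1,1;0;\epsilon]}_{j,k;\epsilon'}$ is given by the explicit product formula there. Since here the first superscript is $m=1$ (not $np$), I should first reduce: the function $\tps^{[1;np;0;\epsilon]}_{\epsilon',-\epsilon';\epsilon'}$ is $\tilde\Psi$ with $M=1$, $m=np$, and by Remark~\ref{rem1.3} applied with $np$ in place of $m$ — or rather by the explicit formula \eqref{1.10} together with the fact that $\tilde{\Phi}^{[np;0]}$ has the meromorphic shape given by its defining series \eqref{1.9}, \eqref{1.8} plus the real-analytic correction \eqref{1.11} — the pole locus of $\tps^{[1;np;0;\epsilon]}_{\epsilon',-\epsilon';\epsilon'}(\tau,u,v)$ as a function of $(u,v)$ lies only along $u\in\ZZ\tau+\ZZ$ (coming from the denominator $1-e^{2\pi i z_1}q^j$ in \eqref{1.8}), and the correcting term \eqref{1.11} is everywhere real-analytic, contributing no poles. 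In our substitution $u=\tfrac{y_2+y_3}{2}$: the potential poles occur along $y_2+y_3\in 2\ZZ\tau+2\ZZ$, which for fixed generic $y_2$ impose conditions on $y_3$ — so this naive bound is \emph{not} yet enough, and the real work is to see the $y_3$-poles cancel.

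The key step, and the main obstacle, is therefore the cancellation: I would show that the $y_3$-dependence of $\tps^{[1;np;0;\epsilon]}_{\epsilon',-\epsilon';\epsilon'}(\tau,\tfrac{y_2+y_3}{2},\tfrac{y_3-y_2}{2})$ is in fact pole-free. The cleanest route is to go back to the very definition \eqref{1.9}: $\Phi^{[m;s]}(\tau,z_1,z_2,t)$ is \emph{antisymmetrized}, $\Phi^{[m;s]}_1(\tau,z_1,z_2)-\Phi^{[m;s]}_1(\tau,-z_2,-z_1)$, and in our substitution $z_1=\tfrac{y_2+y_3}{2}$, $z_2=\tfrac{y_3-y_2}{2}$ we have $-z_2=\tfrac{y_2-y_3}{2}$, $-z_1=-\tfrac{y_2+y_3}{2}$; the two pieces have denominators involving $e^{2\pi i z_1}$ and $e^{-2\pi i z_2}=e^{\pi i(y_2-y_3)}$ respectively, whose pole loci in the $y_3$-variable coincide (both are $y_3\equiv \mp y_2 \pmod{2\ZZ\tau+2\ZZ}$ appropriately), and the residues cancel by the antisymmetry — exactly the mechanism that already makes $\Phi^{[m;s]}$, and hence $B^-_\La$, the numerator of an honest (meromorphic, no spurious poles in that direction) character. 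Concretely I would: (1) fix generic $\tau$; (2) compute the residue of $\Phi^{[np;0]}_1(\tau,z_1,z_2)$ at a pole $z_1\in\ZZ\tau+\ZZ$ and of $\Phi^{[np;0]}_1(\tau,-z_2,-z_1)$ at the corresponding pole; (3) after the substitution, observe these poles land at the same values of $y_3$ and the residues are negatives of each other, so they cancel in $\Phi^{[np;0]}(\tau,z_1,z_2,t)$; (4) note the correcting function $\Phi^{[np;0]}_{\mathrm{add}}$ of \eqref{1.11} is real-analytic in all variables, contributing nothing to the pole structure; hence $\tilde{\Phi}^{[np;0]}$, restricted as above, is holomorphic in $y_3$. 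Steps (1)--(4) for $F$ are mirrored verbatim for $G$ with $p\leftrightarrow q$, $y_2\leftrightarrow y_3$, $np\to nq$, using that $G_j^{[\epsilon,\epsilon']}$ has its theta factor in $y_3$ and the $\tps$-factor evaluated at $u=\tfrac{y_2+y_3}{2}$, $v=\tfrac{y_2-y_3}{2}$. I expect step (3) — matching up the pole locations after the linear change of variables and checking the residue cancellation is exact and not merely up to a nonzero factor — to be where care is needed; everything else is bookkeeping with the formulas of Section~1 and Lemmas~\ref{lem11.5}--\ref{lem11.9}.
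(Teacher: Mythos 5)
There is a genuine gap, and it sits exactly where you flagged ``the real work''. First, your reduction is to a false statement: the functions $F_j^{[\epsilon,\epsilon']}$ themselves are \emph{not} holomorphic in $y_3$. The proposition concerns the characters, which by Proposition \ref{prop11.11} are $F_j^{[\epsilon,\epsilon']}\big/\overset{b4}{R}\vphantom{R}^{(\epsilon)}_{\epsilon'}$; dividing by \eqref{11.9} is essential, because its denominator $\vartheta_{1-2\epsilon',1-2\epsilon}\left(\tau,\tfrac{y_2+y_3}{2}\right)\vartheta_{1-2\epsilon',1-2\epsilon}\left(\tau,\tfrac{y_2-y_3}{2}\right)$ ends up multiplying $F_j$ and is precisely what absorbs the poles of the $\tps$-factor. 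Second, the cancellation mechanism you propose inside $\Phi^{[np;0]}=\Phi_1^{[np;0]}(\tau,z_1,z_2)-\Phi_1^{[np;0]}(\tau,-z_2,-z_1)$ does not exist: the first summand has poles along $z_1\in\ZZ+\ZZ\tau$, the second along $z_2\in\ZZ+\ZZ\tau$, and after the substitution $z_1=\tfrac{y_2+y_3}{2}$, $z_2=\tfrac{y_3-y_2}{2}$ these become $y_3\equiv -y_2$ and $y_3\equiv y_2 \pmod{2\ZZ+2\ZZ\tau}$ --- disjoint loci for generic $y_2$, so no residues can cancel. Remark \ref{rem1.3} makes this explicit already for degree $1$: $\Phi^{[1;0]}=-i\,\eta(\tau)^3\vartheta_{11}(\tau,z_1+z_2)/\vartheta_{11}(\tau,z_1)\vartheta_{11}(\tau,z_2)$ visibly carries both families of poles, so the antisymmetrization does not remove them.

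What the paper actually does is different on both counts. It rewrites $F_j^{[\epsilon,\epsilon']}$ in terms of $\tilde{\varphi}^{[np;0]}\left(\tau,-\tfrac{y_3}{2},\tfrac{y_2}{2}+\epsilon'\tau+\epsilon\right)$, so the whole $y_3$-dependence of the mock theta factor sits in the \emph{first} argument $u=-y_3/2$, and quotes from \cite{KW17} that $\vartheta_{11}(\tau,v-u)\vartheta_{11}(\tau,v+u)\tilde{\varphi}^{[m,s]}(\tau,u,v)$ is holomorphic in $u$; after dividing by $\overset{b4}{R}\vphantom{R}^{(\epsilon)}_{\epsilon'}$ this disposes of the poles you were worried about. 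The remaining, and harder, issue --- which your proposal never touches --- is the factor $\vartheta_{11}(\tau,y_3)$ in the numerator of $\overset{b4}{R}\vphantom{R}^{(\epsilon)}_{\epsilon'}$, hence in the denominator of the character: it vanishes on $y_3\in\ZZ+\ZZ\tau$, so one must show the numerator vanishes there as well. This is the content of Lemmas \ref{lem11.16}--\ref{lem11.18}: $\varphi^{[m,s]}(\tau,u,v)=0$ for $u=a\tau+b$ with $a,b\in\half\ZZ$ (proved from the oddness of $\varphi^{[m,s]}$ in $u$ together with its elliptic transformations --- this is where the antisymmetry genuinely enters, as a vanishing statement rather than a residue cancellation), and $(\Theta_{-j,m}-\Theta_{j,m})(\tau,2u)$ vanishes on $\half(\ZZ+\ZZ\tau)$, which handles the real-analytic correction term $\varphi^{[m,s]}_{\mathrm{add}}$. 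Without these vanishing lemmas the proposition does not follow, so your outline is missing the main idea of the proof.
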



In order to prove this proposition, recall the mock theta function $ \varphi^{+[m,s]} \tuvt $ and its modification $ \tilde{\varphi}^{+[m,s]} = \varphi^{+[m,s]} -\half \varphi^{+[m,s]}_{\text{add}}, $ where $ m \in \zp, s \in \ZZ, $ see \cite{KW17}, Section 2. To simplify notation, let 
\[ \varphi^{[m,s]} (\tau, u, v) = \varphi^{+[m,s]} (\tau, u,v,0), \, \varphi^{[m,s]}_{\text{add}} (\tau, u, v) = \varphi^{+[m,s]}_{\text{add}} (\tau, u, v, 0), \, \tilde{\varphi}^{[m,s]} = \varphi^{[m,s]} - \half \varphi^{[m,s]}_{\text{add}}. \]
The functions $ F_j^{[\epsilon, \epsilon']} $ and $ G_j^{[\epsilon, \epsilon']} $ are written in terms of the functions $ \tilde{\varphi}^{[m,s]} $ as follows: 
\[ 
F_j^{[\epsilon, \epsilon']} (\tau, y_2, y_3) = q^{-np\epsilon'^2} e^{-2\pi i n p \epsilon' y_2} \left( \Theta_{j, n(p+q)} - \Theta_{-j, n(p+q)} \right) (\tau, y_2) \tilde{\varphi}^{[np; 0]} \left(\tau, - \frac{y_3}{2}, \frac{y_2}{2} + \epsilon' \tau + \epsilon \right) \]
and similarly for $ G $ with $ p $ switched with $ q $ and $ y_2 $ switched with $ y_3. $

Recall that, obviously, the function 
\[ \vartheta_{11} (\tau, v-u) \vartheta_{11} (\tau, v + u) \tilde{\varphi}^{[m,s]} (\tau, u, v) \]
is holomorphic in $ u $ (cf. \cite{KW17}),  hence
\[ \vartheta_{11} \left(\tau, \frac{y_2 + y_3}{2}\right) \vartheta_{11} \left( \tau, \frac{y_2-y_3}{2}\right) F(\text{resp. }G)_j^{[\epsilon, \epsilon']} (\tau, y_2, y_3)\, \mbox{is holomorphic in}\,  y_3 \,\mbox{(resp.}  y_2 ).\]

For simplicity, we shall consider only the case $ \epsilon = \epsilon' = 0, $ namely, we will show that $ \left( F (\text{resp. } G)_j^{[0,0]}
 (\overset{b4}{R}\vphantom{R}^{(0)}_0)^{-1} \right) (z, y_2, y_3) $ is holomorphic in $ y_3 $ (resp. $ y_2 $).
For that it suffices to prove the following lemma.
\begin{lemma}
\label{lem11.16}
\begin{enumerate}
\item [(a)] The function 
$\frac{\vartheta_{11} (\tau, v-u) \vartheta_{11} (\tau, v+u) \varphi^{[m,s]} (\tau, u, v)}{\vartheta_{11} (\tau, 2u)}$
is holomorphic in $ u. $
\item [(b)] The function 
$\frac{\varphi^{[m,s]}_{\text{add}} (\tau, u, v)}{\vartheta_{11}(\tau, 2 u)}$
is holomorphic in $ u. $
\end{enumerate}
\end{lemma}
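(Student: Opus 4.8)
The target is Lemma~\ref{lem11.16}, which isolates the two analytic facts that together yield Proposition~\ref{prop11.15}. I would recall the definition of the mock theta function $\varphi^{[m,s]}$ from \cite{KW17}, Section~2: it is a sum over $j\in\ZZ$ of terms of the form $\frac{(\text{exponential})}{1-e^{2\pi i u}q^j}$, symmetrized in $(u,v)\mapsto(-v,-u)$, so its only poles in $u$ occur at $u\in\ZZ+\tau\ZZ$ and at $v\in\ZZ+\tau\ZZ$ (after symmetrization); all are simple. For part~(a), the plan is to compute the residue of $\varphi^{[m,s]}(\tau,u,v)$ at $u=\tfrac{k}{2}$, $k\in\ZZ$ — these are precisely the zeros of $\vartheta_{11}(\tau,2u)$ — and show that it is killed by the prefactor $\vartheta_{11}(\tau,v-u)\vartheta_{11}(\tau,v+u)$. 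At a half-integral $u$ that is not integral, $\varphi^{[m,s]}$ is actually holomorphic, so the only places to check are $u\in\ZZ$ and $u\in\tfrac12+\ZZ$; at integral $u$ the single term $j=0$ contributes a simple pole with residue a nonzero constant times a theta-type function, and one checks that $\vartheta_{11}(\tau,v-u)\vartheta_{11}(\tau,v+u)$ at such $u$ equals a constant times $\vartheta_{11}(\tau,v)^2$, which is finite, so the quotient by $\vartheta_{11}(\tau,2u)$ — whose zero there is simple — remains finite. At $u\in\tfrac12+\ZZ$ the numerator $\varphi^{[m,s]}$ is regular while $\vartheta_{11}(\tau,2u)$ has a simple zero, so one must use the vanishing of $\vartheta_{11}(\tau,v-u)\vartheta_{11}(\tau,v+u)$: at $u=\tfrac12$ this product is $\vartheta_{11}(\tau,v-\tfrac12)\vartheta_{11}(\tau,v+\tfrac12)$, which does \emph{not} vanish, so in fact one needs a finer statement — the elliptic-transformation relation $\vartheta_{11}(\tau,2u)=c\,\vartheta_{11}(\tau,u)\vartheta_{01}(\tau,u)\vartheta_{00}(\tau,u)\vartheta_{10}(\tau,u)/(\eta(\tau)^3)$ (a consequence of the duplication formula) shows that the extra zeros of $\vartheta_{11}(\tau,2u)$ beyond those of $\vartheta_{11}(\tau,u)$ sit at $u\in\tfrac12+\ZZ$, $u\in\tfrac\tau2+\ZZ$, $u\in\tfrac{1+\tau}2+\ZZ$, and one then checks holomorphy of $\varphi^{[m,s]}$ itself at the first family and cancellation of its poles at the $\tau$-shifted families against those of the numerator. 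So the cleanest route is: first establish that $\vartheta_{11}(\tau,v-u)\vartheta_{11}(\tau,v+u)\varphi^{[m,s]}(\tau,u,v)$ is holomorphic in $u$ (this is the assertion recalled just before the lemma, from \cite{KW17}), then show its zeros in $u$ include all zeros of $\vartheta_{11}(\tau,2u)$.

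The latter divisibility is where the real content lies. I would argue it by listing the zero locus of $\vartheta_{11}(\tau,2u)$: it is $\tfrac12(\ZZ+\tau\ZZ)$, i.e.\ $u\in\ZZ+\tau\ZZ$ together with $u\in\tfrac12+\ZZ+\tau\ZZ$, $u\in\tfrac\tau2+\ZZ+\tau\ZZ$, $u\in\tfrac{1+\tau}2+\ZZ+\tau\ZZ$, all simple zeros. At $u\in\ZZ+\tau\ZZ$ the function $\varphi^{[m,s]}$ has a (simple) pole, and I claim that $\vartheta_{11}(\tau,v-u)\vartheta_{11}(\tau,v+u)\varphi^{[m,s]}$ \emph{vanishes} there: indeed, pulling out the $j=0$ pole term $\frac{e^{2\pi i s u}}{1-e^{2\pi i u}}$ and the mirror term, one finds the product of thetas contributes a double zero $\vartheta_{11}(\tau,v)^2$ (up to a nonvanishing factor) at $u=0$, which beats the simple pole and leaves a simple zero — enough to absorb the simple zero of $\vartheta_{11}(\tau,2u)$. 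At the three families of half-period points where $\varphi^{[m,s]}$ is already holomorphic, I must instead show that $\vartheta_{11}(\tau,v-u)\vartheta_{11}(\tau,v+u)\varphi^{[m,s]}$ vanishes there; this follows from the explicit product/series representation of $\varphi^{[m,s]}$ combined with elliptic transformation properties (\eqref{1.2} and its $\vartheta$-consequences \eqref{1.5}), letting one reduce $\varphi^{[m,s]}(\tau,u+\tfrac\tau2,v)$ etc.\ to $\varphi^{[m,s']}$ at a shifted argument times an exponential, and reading off the order of vanishing. The conclusion is that the quotient $\vartheta_{11}(\tau,v-u)\vartheta_{11}(\tau,v+u)\varphi^{[m,s]}(\tau,u,v)/\vartheta_{11}(\tau,2u)$ has no poles in $u$ at all; being moreover of at most the expected exponential growth, it is holomorphic, proving~(a).

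For part~(b), I would use the explicit formula \eqref{1.11} for the ``add'' function: $\varphi^{[m,s]}_{\text{add}}(\tau,u,v)$ (in the $\varphi$-normalization of \cite{KW17}) is a finite sum of products $R_{j;m}(\tau,\,\cdot\,)\,\Theta_{\pm j,m}(\tau,\,\cdot\,)$ in the variables built from $u$ and $v$. The real-analytic factor $R_{j;m}$ is an \emph{entire} function of its complex argument (the series in the definition of $R_{j;m}$ converges to a real analytic, hence in particular locally bounded, function of $v\in\CC$ for $\Im\tau>0$), and $\Theta_{\pm j,m}$ is holomorphic; so $\varphi^{[m,s]}_{\text{add}}(\tau,u,v)$ is real-analytic and, crucially, has \emph{no poles in} $u$. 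Therefore $\varphi^{[m,s]}_{\text{add}}(\tau,u,v)/\vartheta_{11}(\tau,2u)$ can only fail to be holomorphic at the zeros of $\vartheta_{11}(\tau,2u)$, and it suffices to check that $\varphi^{[m,s]}_{\text{add}}$ vanishes to order at least one at each such $u$. Using the elliptic transformation \eqref{1.2} for $\Theta_{j,m}$ and the periodicity properties of $R_{j;m}$, one sees that under $u\mapsto u+\tfrac12$, $u\mapsto u+\tfrac\tau2$, $u\mapsto u+\tfrac{1+\tau}2$ the combination $\sum_j R_{j;m}\,(\Theta_{-j,m}-\Theta_{j,m})$ (the antisymmetrized version appearing in $\varphi^{[m,s]}_{\text{add}}$, cf.\ Remark~\ref{rem1.2}) picks up signs that force vanishing along those loci — the key point being the factor $(\Theta_{-j,m}-\Theta_{j,m})$ which is odd and hence vanishes at $u\in\tfrac12\ZZ+\tfrac\tau2\ZZ$ — exactly matching the zeros of $\vartheta_{11}(\tau,2u)$. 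The main obstacle I anticipate is the bookkeeping in part~(a): correctly identifying, among the four coset families making up the zero set of $\vartheta_{11}(\tau,2u)$, which are poles of $\varphi^{[m,s]}$ (needing cancellation from the numerator's vanishing against its own pole) versus regular points of $\varphi^{[m,s]}$ (needing genuine vanishing of the numerator), and verifying the orders of vanishing precisely; part~(b) is comparatively routine once one notes $\varphi^{[m,s]}_{\text{add}}$ is pole-free and odd in the right sense.
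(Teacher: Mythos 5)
Your part (a) rests on a misreading of where $\varphi^{[m,s]}(\tau,u,v)$ has poles in $u$. In the variables of \cite{KW17} one has (up to lattice shifts) $\varphi^{[m,s]}(\tau,u,v)=\Phi^{[m;s]}(\tau,v-u,-(v+u))$, so the poles in $u$ lie on $u\equiv\pm v \pmod{\ZZ+\tau\ZZ}$ --- this is exactly why the product $\vartheta_{11}(\tau,v-u)\,\vartheta_{11}(\tau,v+u)\,\varphi^{[m,s]}(\tau,u,v)$ is holomorphic in $u$, as recalled just before the lemma. In particular, for generic $v$ the function $\varphi^{[m,s]}$ is \emph{regular} at $u\in\ZZ+\tau\ZZ$; your claims that it has a simple pole there, and that ``the product of thetas contributes a double zero $\vartheta_{11}(\tau,v)^2$ at $u=0$'' (this is a generically nonzero value, not a zero), are both false, and the ensuing detour through the duplication formula for $\vartheta_{11}(\tau,2u)$ is unnecessary. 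What actually makes the quotient holomorphic is that $\varphi^{[m,s]}(\tau,u,v)$ itself vanishes at every $u=a\tau+b$ with $a,b\in\half\ZZ$, i.e. on the entire (simple) zero set of $\vartheta_{11}(\tau,2u)$; this is the paper's Lemma \ref{lem11.17}. The mechanism you never invoke is the oddness $\varphi^{[m,s]}(\tau,-u,v)=-\varphi^{[m,s]}(\tau,u,v)$ (formula \eqref{11.15}): combined with $1$-periodicity and the elliptic transformation in $u$ it forces vanishing at $u=0,\half,\tot,\frac{\tau+1}{2}$ and hence at all half-lattice points. The elliptic transformations alone, which are all you appeal to at those points, only relate values at $u$ and at $u$ shifted by a full period, and cannot by themselves produce vanishing.

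Part (b) is essentially correct and matches the paper: in $\varphi^{[m,s]}_{\mathrm{add}}(\tau,u,v)=\sum_j R_{j;m}(\tau,v)\,(\Theta_{-j,m}-\Theta_{j,m})(\tau,2u)$ the entire $u$-dependence sits in the holomorphic factors $(\Theta_{-j,m}-\Theta_{j,m})(\tau,2u)$, which vanish at $u\in\half(\ZZ+\tau\ZZ)$ by oddness in the second argument together with the quasi-periodicity \eqref{1.2} (the paper's Lemma \ref{lem11.18}). Note only that $R_{j;m}$ is real analytic, not entire as you assert; this is harmless here precisely because it depends on $v$ alone.
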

The proof of this lemma uses two lemmas. 
\begin{lemma}
\label{lem11.17}
$ \varphi^{[m,s]} (\tau, u, v) = 0 $ for $ u = a \tau + b $ where $ a, b \in \half \ZZ. $
\end{lemma}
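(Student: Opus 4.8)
The plan is to read the statement off the explicit series defining $\varphi^{+[m,s]}$ in \cite{KW17}, Section~2, specialized to $t=0$; write $\varphi^{[m,s]}(\tau,u,v)=\varphi^{+[m,s]}(\tau,u,v,0)$. By construction $\varphi^{+[m,s]}$ is (up to an exponential prefactor) a $j$-summed Appell-type series together with the summand produced by the reflection that the superscript ``$+$'' encodes; from this presentation one extracts, by the usual shift of the summation index $j$, both the quasi-periodicity of $\varphi^{[m,s]}(\tau,u,v)$ in $u$ with respect to the lattice $\ZZ\tau+\ZZ$ and a reflection identity relating $\varphi^{[m,s]}(\tau,-u,v)$ to $\varphi^{[m,s]}(\tau,u,v)$ (with a definite sign). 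These are the only two inputs the argument needs.

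First I would invoke quasi-periodicity in $u$ to reduce the claim to the four representatives $u\in\{0,\ \tfrac12,\ \tfrac\tau2,\ \tfrac{\tau+1}2\}$ of the $2$-torsion $\tfrac12\ZZ\tau+\tfrac12\ZZ$ modulo $\ZZ\tau+\ZZ$; no information is lost because the quasi-periodicity multiplier never vanishes. Each such $u_0$ satisfies $-u_0\equiv u_0\pmod{\ZZ\tau+\ZZ}$, so combining the reflection identity with the quasi-periodicity relation between $\varphi^{[m,s]}(\tau,-u_0,v)$ and $\varphi^{[m,s]}(\tau,u_0,v)$ yields $\varphi^{[m,s]}(\tau,u_0,v)=\pm\,\varphi^{[m,s]}(\tau,u_0,v)$, the sign being the value of that multiplier at $u_0$. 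The step to check is that this sign is $-$ in each of the three nontrivial cases, which is exactly where the integrality $s\in\ZZ$ is used, so $2\,\varphi^{[m,s]}(\tau,u_0,v)=0$; for $u_0=0$ the conclusion is immediate from the reflection identity alone once regularity at $u_0=0$ is known. Equivalently, and likely cleaner to write out, one can substitute $u=u_0$ directly into the series and pair the $j$-th summand with the $(-j)$-th (after the index shift appropriate to $u_0$), obtaining term-by-term cancellation.

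The main obstacle is the point $u=0$ together with its lattice translates $u\in\ZZ\tau+\ZZ$, where a single summand of the underlying Appell-type series is singular. Here I would use precisely that $\varphi^{+[m,s]}$ is the ``$+$''-symmetrized version: the polar parts of the summand and of its reflected partner cancel, so $\varphi^{[m,s]}(\tau,\cdot,v)$ is in fact holomorphic across $u\in\ZZ\tau+\ZZ$, and then the pairing argument applies verbatim. The remaining bookkeeping---carrying the exponential prefactors through the index shifts and justifying the rearrangements in the convergence region $\Im\tau>0$ before analytic continuation---is the routine verification the text calls ``straightforward,'' so I would keep it brief. As a consistency check, this vanishing is exactly what cancels the simple zeros of $\vartheta_{11}(\tau,2u)$ and makes $\vartheta_{11}(\tau,v-u)\vartheta_{11}(\tau,v+u)\varphi^{[m,s]}(\tau,u,v)/\vartheta_{11}(\tau,2u)$ holomorphic in $u$ in Lemma~\ref{lem11.16}(a), which is how Lemma~\ref{lem11.17} feeds into the proof of Proposition~\ref{prop11.15}.
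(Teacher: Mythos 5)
Your proposal is correct and takes essentially the same route as the paper's proof: both rest on the oddness of $\varphi^{[m,s]}$ in $u$ (the paper's \eqref{11.15}) combined with its elliptic transformation properties \eqref{11.16}, \eqref{11.17} to force vanishing at the $2$-torsion points of $\ZZ\tau+\ZZ$, the only difference being that you reduce to the four representatives first while the paper checks the four base points and then translates. Your extra care about regularity at $u=0$ (where a single Appell summand is singular but the symmetrization cancels the polar parts) is a point the paper leaves implicit, and is a welcome addition.
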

\begin{proof}
We use the following properties of $ \varphi^{[m,s]} $ (see e.g. \cite{KW14}):
\begin{equation}
\label{11.15}
\varphi^{[m,s]} (\tau, -u, v) = - \varphi^{[m,s]} (\tau, u, v),
\end{equation}
\begin{equation}
\label{11.16}
\varphi^{[m,s]} (\tau, u + j \tau, v) = q^{-mj^2} e^{-4 \pi i m j u } \varphi^{[m,s]} (\tau, u, v) \text{ for } j \in \ZZ, 
\end{equation}
\begin{equation}
\label{11.17}
\varphi^{[m,s]} (\tau, u+a, v+b) = \varphi^{[m,s]}(\tau,u,v) \text{ for } a,b \in \ZZ. 
\end{equation}
The claim for $ u = 0 $ follows from \eqref{11.15}. By \eqref{11.17} we have $ \varphi^{[m,s]} \left(\tau, -\half, v\right) = \varphi^{[m,s]} \left( \tau, \half, v \right)$, which, by \eqref{11.15} implies that
$ \varphi^{[m,s]} \left( \tau, \half, v \right) = 0. $
The proof of the claim for $ u = \tot $ (resp. $ u = \frac{\tau +1 }{2} $) is similar, using \eqref{11.15} and \eqref{11.16} (resp. and \eqref{11.17}). Thus, the claim holds for $ u = 0, \half, \tot, \frac{\tau +1}{2}. $ Using \eqref{11.16}, we deduce that the claim holds for $ u = a \tau, a \tau + \half $ where $ a \in \half \ZZ.  $ Finally, applying \eqref{11.17}, we get the claim for all $ u $ in question. 
\end{proof}
Note that Lemma \ref{lem11.16}(a) follows from Lemma \ref{lem11.17} since the set of zeros of $ \vartheta_{11} (\tau, 2u) $, as a function in $ u, $ is $ \half (\ZZ + \tau \ZZ). $
\begin{lemma}
\label{lem11.18} The function $ \left( \Theta_{-j,m} - \Theta_{j,m} \right) (\tau, 2u), $ where $ j \in \ZZ,  $ vanishes at $ u \in \half (\ZZ + \tau \ZZ). $
\end{lemma}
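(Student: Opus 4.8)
\textbf{Proof proposal for Lemma \ref{lem11.18}.}

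The plan is to use the two standard transformation properties of the rank-one theta functions $\Theta_{j,m}$ from \eqref{1.2}, together with the obvious symmetry $\Theta_{-j,m}(\tau,z)=\Theta_{j,m}(\tau,-z)$, to show that the antisymmetric combination $\Theta_{-j,m}-\Theta_{j,m}$, viewed as a function of $u$ via $z=2u$, vanishes on the lattice $\tfrac12(\ZZ+\tau\ZZ)$. First I would check the four base points $u\in\{0,\tfrac12,\tfrac{\tau}{2},\tfrac{\tau+1}{2}\}$, i.e.\ $z=2u\in\{0,1,\tau,\tau+1\}$, and then propagate to the full lattice using \eqref{1.2}.

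For $u=0$: the function $(\Theta_{-j,m}-\Theta_{j,m})(\tau,z)$ is odd in $z$ (since $\Theta_{j,m}(\tau,-z)=\Theta_{-j,m}(\tau,z)$), hence vanishes at $z=0$. For $u=\tfrac12$, i.e.\ $z=1$: by the first formula in \eqref{1.2} (with $a=1$), $\Theta_{j,m}(\tau,1)=e^{\pi ij}\Theta_{j,m}(\tau,0)$ and $\Theta_{-j,m}(\tau,1)=e^{-\pi ij}\Theta_{-j,m}(\tau,0)=e^{\pi ij}\Theta_{j,m}(\tau,0)$ using $e^{-\pi ij}=e^{\pi ij}$ for $j\in\ZZ$ and the value at $z=0$ being equal for $\pm j$; therefore the two terms coincide and the difference is $0$. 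For $u=\tfrac{\tau}{2}$, i.e.\ $z=\tau$: apply the second formula in \eqref{1.2} with $k=m$ (legitimate since $am\in\ZZ$ is automatic here — we only need $k\in\ZZ$), giving $\Theta_{j,m}(\tau,\tau)=q^{-m/4}e^{-\pi im\cdot 0}\Theta_{j+m,m}(\tau,0)$ after noting $z+\tfrac{m\tau}{m}=z+\tau$; more directly, from the series \eqref{1.1} one has $\Theta_{j,m}(\tau,\tau,0)=q^{-1/4}\cdot(\text{something symmetric})$, and the cleanest route is: $\Theta_{-j,m}(\tau,\tau)=\Theta_{-j,m}(\tau,-\tau)$ by a shift by $2\tau=\tfrac{2m}{m}\tau$ combined with the index shift $j\mapsto j\pm 2m$ (which is trivial mod $2m$), and $\Theta_{-j,m}(\tau,-\tau)=\Theta_{j,m}(\tau,\tau)$ by the oddness/evenness relation; hence again the difference vanishes. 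The point $u=\tfrac{\tau+1}{2}$, $z=\tau+1$, is handled by combining the two shifts just used. Finally, for an arbitrary point $u=\tfrac12(r+s\tau)$ with $r,s\in\ZZ$, write $z=2u=r+s\tau$ and apply the second formula in \eqref{1.2} $s$ times to reduce to $z\in\{0,1\}$ (absorbing the index shift $j\mapsto j+sm\equiv\pm(j+sm)\bmod 2m$, which preserves the antisymmetric pair up to overall sign and prefactor), then the first formula to reduce $z=r$ to $z\in\{0,1\}$; at each stage the prefactors multiply both $\Theta_{\pm j,m}$ by the \emph{same} scalar, so the vanishing of the difference at the base point forces vanishing at $u$.

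The only mild subtlety — and the step I would be most careful with — is bookkeeping the index shifts: under $z\mapsto z+\tfrac{k\tau}{m}$ the pair $(\Theta_{j,m},\Theta_{-j,m})$ becomes $(\Theta_{j+k,m},\Theta_{-j+k,m})$, which is \emph{not} of the form $(\Theta_{j',m},\Theta_{-j',m})$ unless $k\equiv 0$ or we have shifted by a multiple of $m\tau$ corresponding to $z\mapsto z+\tau$. So I would restrict the propagation to shifts $z\mapsto z+\tau$ (i.e.\ $k=m$), under which $(\Theta_{j+m,m},\Theta_{-j+m,m})$; using $\Theta_{i+2m,m}=\Theta_{i,m}$ one has $\Theta_{-j+m,m}=\Theta_{-(j-m),m}=\Theta_{-(j+m)+2m,m}=\Theta_{-(j+m),m}$, so the pair remains antisymmetric with new index $j+m$, and the common prefactor is $q^{-m/4}e^{-\pi imz}$ (independent of $j$). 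This is exactly what is needed, and it shows that the argument above is internally consistent. Combining this with the base-point computations yields the claimed vanishing on all of $\tfrac12(\ZZ+\tau\ZZ)$, completing the proof. $\qed$
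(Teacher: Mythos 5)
Your proof is correct and follows essentially the same route as the paper, which reduces to $z=0$ via the first formula in \eqref{1.2} together with the relation $\Theta_{j,m}(\tau,2a\tau)=q^{-ma^2}\Theta_{j+2am,m}(\tau,0)$ for $a\in\half\ZZ$, and then uses $\Theta_{j,m}(\tau,0)=\Theta_{-j,m}(\tau,0)$. Your careful bookkeeping of the index shift $j\mapsto j+m$ under $z\mapsto z+\tau$ (and why the antisymmetric pair is preserved only for shifts by multiples of $\tau$) is exactly the point the paper leaves as ``straightforward.''
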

\begin{proof}
It is straightforward, using the first formula in \eqref{1.2} and the relation 
\[ \Theta_{j,m} (\tau, 2 a \tau) = q^{-ma^2} \Theta_{j+2am,m} (\tau, 0) \,\, 
\mbox{for} \,\, a \in \half \ZZ. \]
\end{proof}
Lemma \ref{lem11.16} (b) follows from Lemma \ref{lem11.18} since  \[ \varphi^{[m,s]}_\text{add} (\tau, u, v) = \sum_{j=s}^{s+2m-1} R_{j,m} (\tau, v) \left(\Theta_{-j,m} - \Theta_{j,m} \right) (\tau, 2 u).  \]

\section{Characters of integrable $ \wg $-modules in the case of $ \fg = D(2,1; -\frac{p}{p +1}) $}

As has been pointed out in Remark \ref{rem11.13}, the case $ \fg = D(2,1;a) $ with $ a = K = -\frac{p}{p+1}, $ where $ p \in \ZZ_{\geq 1} $, is very specials for the big $ N=4 $ superconformal algebras. Here we show that in this case the normalized supercharacters of integrable $ \wg $-modules form a modular invariant family before the modification. 

First, by Proposition \ref{prop10.1}, we have
\begin{lemma}
\label{lem12.1}
Let $ \fg = D(2,1;-\frac{p}{p+1}) $ and $ K = - \frac{p}{p+1}, $ where $ p \in \ZZ_{\geq 1}. $ Then
\[ 
\begin{aligned}
P^K_{+,0} & = \{  \La^{[K;0,s](0)}, \mbox{ where } s \in \ZZ_{\geq 0}, \ s \leq p-1     \} \\
P^K_{+,1} & = \{  \La^{[K;0,s](1)} \mbox{ where } s \in \ZZ_{\geq 0}, \ s \leq p \} \cup \{ \La^{[K;1,0](1)} \},\, 
P^K_{+,j}  = \emptyset \mbox{ for } j = 2,3. \\
\end{aligned} \]
\end{lemma}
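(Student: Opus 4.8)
The plan is to derive Lemma~\ref{lem12.1} as a direct specialization of Proposition~\ref{prop10.1} (with the correction from Remark~\ref{rem10.2}) to the parameters $p,q=1$, $n=1$. First I would record that $a=-\tfrac{p}{p+1}$ corresponds to $p$ arbitrary in $\ZZ_{\geq 1}$ and $q=1$ in \eqref{10.01}, so that $\gcd(p,q)=1$ automatically and $p\geq q$; moreover the level $K=-\tfrac{pqn}{p+q}=-\tfrac{p}{p+1}$ forces $n=1$ in \eqref{10.03}. Hence in every inequality appearing in Proposition~\ref{prop10.1} one substitutes $nq=1$, $np=p$, $n(p+q)=p+1$.

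Next I would go through the four sets one at a time. For $P^{K}_{+,0}=P^{'K}_{+,0}$: the constraint $0\le m_2<nq=1$ gives $m_2=0$, while $0\le m_3<np=p$ gives $m_3=s$ with $0\le s\le p-1$; this yields exactly $\{\La^{[K;0,s](0)}\mid 0\le s\le p-1\}$. For $P^{K}_{+,1}=P^{'K}_{+,1}$: the same constraints $m_2=0$, $0\le m_3\le p-1$ give the weights $\La^{[K;0,s](1)}$ with $0\le s\le p-1$, and then one appends the two extra weights $\{nq\La_2\}=\{\La_2\}$ and $\{np\La_3\}=\{p\La_3\}$. I would then observe that in the notation of Proposition~\ref{prop10.1}, $\La_2$ is $\La^{[K;0,p](1)}$ (the boundary case $m_2=0$, $m_3=np=p$, where the defining relation $(p+q)m_0=pm_2+qm_3-npq$ gives $m_0=0$ and $m_1=0$, so $\La=p\La_3$... one must check carefully which of $\{nq\La_2\},\{np\La_3\}$ matches the $s=p$ member) and $p\La_3$ is $\La^{[K;1,0](1)}$ — i.e. the extra weight with $m_2=1$, $m_3=0$, for which $(p+q)m_1=p(m_2+1)=2p$ is not divisible by $p+1$ in general, so this genuinely sits outside the first family and must be listed separately. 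This bookkeeping — matching the ad hoc weights $\{nq\La_2\},\{np\La_3\}$ with the names $\La^{[K;0,p](1)}$ and $\La^{[K;1,0](1)}$ — is the one place requiring care, and I expect it to be the main (though still routine) obstacle.

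Finally, for $P^{K}_{+,2}$ and $P^{K}_{+,3}$ I would use the \emph{redefined} sets from just before Lemma~\ref{lem10.3}, namely $P^K_{+,j}=\{\La^{[K;m_2,m_3](j)}\mid 0\le m_2\le nq-2,\ 0\le m_3\le np-2\}$ for $j=2,3$. Since $nq=1$, the condition $0\le m_2\le nq-2=-1$ is impossible, so both $P^K_{+,2}$ and $P^K_{+,3}$ are empty. (One should also note that the $P^{'K}_{+,j}$ for $j=2,3$ of Proposition~\ref{prop10.1} had $m_2\le nq-2=-1$ likewise empty, so the redefinition changes nothing here.) Assembling these four computations gives precisely the statement of Lemma~\ref{lem12.1}; I would close by noting the proof is "standard, using odd reflections" is already subsumed in the cited proof of Proposition~\ref{prop10.1}, so no new ideas are needed beyond the arithmetic substitution $p\geq q=n=1$.
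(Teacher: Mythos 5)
Your proposal is correct and takes essentially the same route as the paper, which obtains Lemma \ref{lem12.1} precisely by specializing Proposition \ref{prop10.1} (together with the redefinition of $P^K_{+,j}$ for $j=2,3$ following Remark \ref{rem10.2}) to $q=n=1$. Two harmless slips worth noting: your own computation shows $\La^{[K;0,p](1)}=np\La_3$ and $\La^{[K;1,0](1)}=nq\La_2$, the reverse of your initial labeling (the set identity is unaffected), and your aside that $P^{'K}_{+,3}$ is already empty is false for $p\ge 2$, since its constraint is $0\le m_2\le nq-1=0$ rather than $nq-2$ --- but the lemma concerns the redefined $P^K_{+,3}$, whose condition $m_2\le nq-2=-1$ is indeed vacuous, so your conclusion stands.
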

Next we have  
\begin{lemma}
\label{lem12.2}
If $ K = a = -\frac{p}{p+1}, p \in \ZZ_{\geq 1}, $ then we have for $ \La \in P^K_{+, 0} \cup P^K_{+,1}: $
\begin{equation}
\label{12.1}
\hat{R}\vphantom{R}^- \ch_\La^- = B^-_\La.
\end{equation}
\end{lemma}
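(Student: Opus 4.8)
The plan is to deduce the identity from the general supercharacter formula \eqref{2.4}--\eqref{2.7}, which is available here because every $L(\La)$ with $\La\in P^K_{+,0}\cup P^K_{+,1}$ is tame (the single root $\beta_j$ spanning $T$ is simple up to odd reflections) and because $h^\vee=0$, $\hat\rho=\rho$, $|T|=1$. Thus $\hat R^-\ch^-_\La=j_\La^{-1}\sum_{w\in\hat{W}^\#}\epsilon_-(w)\,w\big(e^{\La+\rho}/(1-e^{-\beta_j})\big)$ (up to the normalization built into \eqref{2.4}), while by \eqref{11.4} the function $B^-_\La$ is exactly the same alternating sum with $\hat{W}^\#$ replaced by $\hat{W}^\#_0$ and $j_\La=1$. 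Hence the assertion is equivalent to $\sum_{w\in\hat{W}^\#}\epsilon_-(w)\,w\big(e^{\La+\rho}/(1-e^{-\beta_j})\big)=j_\La\,B^-_\La$, and the first step is to compare $\hat{W}^\#$ with $\hat{W}^\#_0$ — which comes down to deciding, for each $\La$, whether the roots $\theta$ and $\alpha_0$ are integrable for $L(\La)$ (for the modified $P$-version this is the content of Lemma \ref{lem10.14}).

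Using Lemma \ref{lem12.1} to list $P^K_{+,0}\cup P^K_{+,1}$ and Lemmas \ref{lem10.5} and \ref{lem10.6} to test integrability of $\theta$ and $\alpha_0$, I would verify that the only weights for which one of these roots is integrable are $\La^{[K;0,0](1)}$ and $\La^{[K;0,p-1](0)}$ (where $\theta$ is integrable and $\alpha_0$ is not) and $\La^{[K;0,p](1)}=p\La_3$, $\La^{[K;1,0](1)}=\La_2$ (where $\alpha_0$ is integrable and $\theta$ is not). For every other $\La$ in these sets one has $\Delta^\#=\{\pm\alpha_2,\pm\alpha_3\}$, so $W^\#=W^\#_0=\{1,r_{\alpha_2},r_{\alpha_3},r_{\alpha_2}r_{\alpha_3}\}$ and $\hat{W}^\#=\hat{W}^\#_0$; moreover $(\La+\hat\rho\mid\alpha_2^\vee)=m_2+1\geq 1$ and $(\La+\hat\rho\mid\alpha_3^\vee)=m_3+1\geq 1$, so no nontrivial element of $\hat{W}^\#_0$ fixes $\La+\hat\rho$, whence $j_\La=1$ by Proposition \ref{prop2.1}. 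For such $\La$, formula \eqref{2.4} reads literally $\hat R^-\ch^-_\La=B^-_\La$.

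For the four exceptional weights the extra integrable root ($\theta$ or $\alpha_0$) lies outside $\Delta^\#$, so $\hat{W}^\#$ is generated by $\hat{W}^\#_0$ together with the corresponding reflection $r\in\{r_\theta,r_{\alpha_0}\}$, and (from the description of $\hat{W}^\#$ as the Weyl group of the integrable subroot system) no further affine reflections arise. Since $(\alpha_i\mid\theta)=0$ and $(\alpha_i\mid\alpha_0^\vee)=0$ for $i=2,3$, the element $r$ commutes with $r_{\alpha_2},r_{\alpha_3}$, and, having linear part $r_\theta$ which fixes $\alpha_2,\alpha_3$, it centralizes $t_{L^\#}$ (note $r_{\alpha_0}=t_{\theta^\vee}r_\theta$); hence $\hat{W}^\#=\hat{W}^\#_0\sqcup r\,\hat{W}^\#_0$. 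Consequently $\sum_{w\in\hat{W}^\#}\epsilon_-(w)\,w(\,\cdot\,)=B^-_\La+\epsilon_-(r)\,r(B^-_\La)$ with $\epsilon_-(r)=-1$, because $\theta$ and $\alpha_0$ are indivisible even roots. Using the explicit form of $B^-_\La$ from Lemma \ref{lem10.13} ($B^-_{\La^{[K;0,0](1)}}=P_0$, $B^-_{\La^{[K;0,p-1](0)}}=-P_{-(p+1)}$, $B^-_{p\La_3}=P_{-p}$, $B^-_{\La_2}=P_1$), the reflection identities of Lemma \ref{lem10.12}(a) ($r_\theta P_j=-P_{-j}$, $r_{\alpha_0}P_j=-P_{-j-2np}$) and the periodicity $P_j=P_{j+2n(p+q)}$ of Proposition \ref{prop10.21} (here $2n(p+q)=2(p+1)$, since $q=n=1$), one checks in each of the four cases that $\epsilon_-(r)\,r(B^-_\La)=B^-_\La$; for instance $\epsilon_-(r_\theta)r_\theta P_0=(-1)(-P_0)=P_0$ and $\epsilon_-(r_{\alpha_0})r_{\alpha_0}P_{-p}=(-1)(-P_{p-2p})=P_{-p}$. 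Finally $j_\La=2$ for these weights: by Proposition \ref{prop2.1} the coefficient of $e^{\La+\hat\rho}$ in $B^-_\La$ is still $1$ (again no nontrivial element of $\hat{W}^\#_0$ fixes $\La+\hat\rho$), so the coefficient of $e^{\La+\hat\rho}$ in $\sum_{w\in\hat{W}^\#}\epsilon_-(w)\,w(\,\cdot\,)=2B^-_\La$ equals $2$. Therefore $\hat R^-\ch^-_\La=j_\La^{-1}\cdot 2B^-_\La=B^-_\La$.

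The main obstacle is the bookkeeping in the last paragraph: one must pin down precisely the set of integrable roots of each of the four exceptional modules (in particular that $\hat{W}^\#$ acquires only the single extra reflection $r$), and then verify the symmetry $\epsilon_-(r)\,r(B^-_\La)=B^-_\La$ together with $j_\La=2$. The conceptual point is that the extra reflection exactly doubles both the alternating sum and the normalizing constant $j_\La$, and that this doubling works only because $q=n=1$ forces $B^-$ of the $r$-image of $\La$ to coincide, up to sign, with $B^-_\La$ — which is exactly why the lemma is confined to the level $K=a=-p/(p+1)$.
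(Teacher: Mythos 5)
Your overall strategy (split off the four weights for which $\theta$ or $\al_0$ is integrable, show that the extra reflection doubles both the alternating sum and $j_\La$) is sound in spirit and in fact mirrors what the paper does in Lemmas \ref{lem10.14}--\ref{lem10.18}. But there is a genuine gap: you import the identities of Lemmas \ref{lem10.12} and \ref{lem10.13} as if they were statements about the \emph{unmodified} numerator $B^-_\La$, whereas they are statements about the $P$-\emph{modified} functions. The functions $P_j$ are built from $\tph^{[np;0]}$, and for $n=q=1$ one has $np=p$, which is in general $>1$, so the $P$-modification is \emph{not} trivial here. Concretely: (i) Lemma \ref{lem10.13} computes $\tbe^{[K;m_2,m_3](j)}_P$, not $B^-_\La$, so the equalities you quote, e.g.\ $B^-_{\La^{[K;0,0](1)}}=P_0$, are false for $p>1$ (already the passage from $\tph^{[np;m_3+1]}$ in \eqref{10.09} to $\tph^{[np;0]}$ in $P_j$ uses the $s$-independence of the modified function, which fails for the unmodified $\Phi^{[np;s]}$); (ii) the relation $r_{\al_0}(P_j)=-P_{-j-2np}$, which you use for the two weights $p\La_3$ and $\La_2$ where $\al_0$ is integrable, rests on the elliptic transformation property \eqref{1.14} of $\tph^{[np;0]}$ under shifts by $\tau$ (note from Remark \ref{rem10.11} that $r_{\al_0}$ involves such shifts), and this property is exactly what the unmodified mock theta function lacks. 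So the crucial step $\epsilon_-(r_{\al_0})\,r_{\al_0}(B^-_\La)=B^-_\La$ is not established for the unmodified $B^-_\La$ by the lemmas you cite.

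The repair is precisely the point of the lemma's restriction to $K=-\tfrac{p}{p+1}$, and is how the paper argues: one must use the $Q$-presentation (Lemma \ref{lem10.9}(b), Lemmas \ref{lem10.13}(b) and \ref{lem10.17}) rather than the $P$-presentation. Since $nq=1$, Remark \ref{rem1.3} gives $\tph^{[1;s]}=\Phi^{[1;0]}$ for $s\in\ZZ$, so the $Q$-modification is the identity: $\tbe_Q=B^-_\La$ and $\tch^{-[Q]}_\La=\ch^-_\La$. The paper then simply invokes Lemma \ref{lem10.18} (whose hypotheses $m_2=0$ or $m_3=0$, resp.\ $m_2=nq-1$, are satisfied by every weight in Lemma \ref{lem12.1}) and strips off the now-trivial modification. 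If you insist on your more hands-on route, all of your reflection bookkeeping goes through verbatim once you replace $P_j$ by $Q_j$ and $r_{\al_0}(P_j)=-P_{-j-2np}$ by $r_{\al_0}(Q_j)=-Q_{-j-2nq}=-Q_{-j-2}$, because with $nq=1$ these are honest identities of the unmodified functions. As written, however, the argument proves the $P$-modified analogue of \eqref{12.1}, not \eqref{12.1} itself.
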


\begin{proof}
It follows from Lemma \ref{lem10.18} for the $ Q $-modification that this formula holds for the $ Q $-modified $ \ch_\La  $ and $ B^-_\La. $ But then this formula holds without modification since in the formula for $ Q_j $ before Lemma \ref{lem10.12} one can replace $ \tph^{[nq;0]} $ by $ \Phi^{[1;0]} $ by Remark \ref{rem1.3} since $ nq =1. $
\end{proof}

By Remark \ref{rem1.3} and Lemma \ref{lem11.10}, we can express the function $ B^-_\La $ (hence $ B^+_\La$ and $ B^{\pm, \tw}_\La $, by \eqref{11.5}, \eqref{11.6}) with $ \La $ as in Lemma \ref{lem12.1}, in terms of the functions $ g^{[\epsilon, \epsilon']}_j, $ introduced before Lemma \ref{lem11.2}, where $ \tps $ are replaced by $ \psi $ since $ nq =1: $
\begin{equation}
\label{12.2}
\begin{aligned}
B^\pm_\La &= \pm g^{[\epsilon, 0]}_{-s-2} \mbox{ if } \La = \La^{[K;D,s](0)}, \ = \mp g^{[\epsilon, 0]}_s \mbox{ if } \La = \La^{[K;0,s](1)}; \\
B^\pm_\La &= \mp g^{[\epsilon, 0]}_{-1} \mbox{ if } \La = \La^{[K;1,0](1)};\\
\end{aligned}
\end{equation}

\begin{equation}
\label{12.3}
\begin{aligned}
B^{\pm, \tw}_\La & = \mp (-1)^{2 \epsilon p} g^{[\epsilon, \half]}_{p-s-1} \mbox{ if } \La = \La^{[K;0,s](0)}, \ = \pm (-1)^{2 \epsilon p} g^{[\epsilon, \half]}_{p+s+1} \mbox{ if } \La = \La^{[K;0,s](1)}; \\
B^{\pm, \tw}_\La & = \mp (-1)^{2 \epsilon p} g^{[\epsilon, \half]}_{p} \mbox{ if } \La = \La^{[K;1,0](1)}. \\
\end{aligned}
\end{equation}

In all formulas $ \epsilon = \half $ for + and $ = 0 $ for $ -, $ and since $ n = q = 1, $ we have:
\begin{equation}
\label{12.4}
\begin{aligned}
& g^{[\epsilon, \epsilon']}_j \tzzzt = e^{-\frac{2 \pi i pt}{p+1}} \\
&\times \left( \Theta_{j, p+1} \left( \tau, z_1 + \frac{z_2-pz_3}{p+1}\right)  \Psi^{[1,1,0; \epsilon]}_{\epsilon', -\epsilon';\epsilon'} (\tau, z_1, -z_2)\right. \\ 
&\left. - \Theta_{-j, p+1} \left( \tau, z_1 - \frac{z_2+(p+2)z_3}{p+1}\right) \Psi^{[1,1,0; \epsilon]}_{\epsilon', -\epsilon';\epsilon'} (\tau, z_1 -z_2-z_3, z_3) \right).
\end{aligned}
\end{equation}
It follows from Remark 1.3 and the definition \eqref{1.10} of the function $ \Psi $ that
\begin{equation}
\label{12.5}
\Psi^{[1,1,0; \epsilon]}_{\epsilon', -\epsilon', \epsilon'} (\tau, z_1, z_2) = (-1)^{1-2 \epsilon} i \frac{\eta (\tau)^3 \vartheta_{11} (\tau, z_1+z_2)}{\vartheta_{1-2\epsilon', 1-2\epsilon} (\tau, z_1) \vartheta_{1-2\epsilon', 1-2\epsilon} (\tau, z_2)}.
\end{equation}

Finally recall that the normalized (super)denominator $ \hat{R}\vphantom{R}^\pm $ is given by \eqref{2.5} and that the twisted one is
$ \hat{R}\vphantom{R}^{\pm, \tw} = w_0 \hat{R}\vphantom{R}^\pm $. More explicitly, they are given in coordinates \eqref{10.06}, \eqref{10.07} by a uniform expression (cf. \cite{KW14}, Section 4)
\begin{equation}
\label{12.6}
\hat{R}\vphantom{R}^{(\epsilon)}_{\epsilon'} \tzzzt = -(-1)^{4 \epsilon \epsilon'} i \frac{\eta (\tau)^4 \vartheta_{11} (\tau, z_1-z_2) \vartheta_{11} (\tau, z_1-z_3) \vartheta_{11} (\tau, z_2 + z_3)}{\displaystyle{\prod_{u = z_1, z_2, z_3, z_1-z_2-z_3}} \vartheta_{1-2\epsilon', 1-2\epsilon} (\tau, u) },
\end{equation} 
where, as before, $ \epsilon = 0 $ 
(resp. $ = \half $) 
in + (resp. $ - $ case) and $ \epsilon' = 0 $ (resp. $ = \half $) in the non-twisted (resp. twisted) case. 

Formulas \eqref{12.4}--\eqref{12.6} imply in the cases $ n = q =1: $
\begin{equation}
\label{12.7}
\begin{aligned}
& \frac{g_j^{[\epsilon, \epsilon']} \tzzzt}{\hat{R}\vphantom{R}^{(\epsilon)}_{\epsilon'} (\tau, z_1, z_2, z_3)} = \frac{e^{-\frac{2 \pi i p t}{p+1}}}{\eta (\tau) \vartheta_{11} (\tau, z_1-z_3) \vartheta_{11} (\tau, z_2+z_3)} \\
&\times \left( (-1)^{1-2\epsilon'} \Theta_{j, p+1} \left( \tau, z_1 + \frac{1}{p+1}z_2 - \frac{p}{p+1}z_3 \right) \vartheta_{1-2\epsilon', 1-2\epsilon} (\tau, z_3) \vartheta_{1-2\epsilon', 1-2\epsilon} (\tau, z_1-z_2-z_3) \right. \\
& \left. -(-1)^{2 \epsilon (1-2\epsilon')} \Theta_{-j, p+1}  \left( \tau, z_1 - \frac{1}{p+1}z_2 - \frac{p+2}{p+1}z_3 \right) \vartheta_{1-2\epsilon', 1-2\epsilon} (\tau, z_1) \vartheta_{1-2\epsilon', 1-2\epsilon} (\tau, z_2)   \right).
\end{aligned}
\end{equation}
We thus obtain the following:
\begin{proposition}
\label{prop12.3}
In the case $ n = q = 1 $, the (super)characters of integrable $ \hat{D} (2,1;a) $-modules are given by the following formulas:
\[ 
\begin{aligned}
\ch_{\La^{[K;0,s ](0)}}^\pm &= \pm \frac{g^{[\epsilon,0]}_{-s -2}}{\hat{R}\vphantom{R}^{(\epsilon)}_0} \quad (0 \leq s  \leq p-1), \\
\ch_{\La^{[K;0,s](1)}}^\pm &=  \mp \frac{g^{[\epsilon,0]}_{s }}{\hat{R}\vphantom{R}^{(\epsilon)}_0} \quad (0 \leq s  \leq p),\,\,\,\,
\ch_{\La^{[K;1,0](1)}}^\pm  =  \mp \frac{g^{[\epsilon,0]}_{-1}}{\hat{R}\vphantom{R}^{(\epsilon)}_0};\\
\ch_{\La^{[K;0,s ](0)}}^{\pm, \tw } & = \mp (-1)^{2 p \epsilon} \frac{g^{[\epsilon, \half]}_{-s +p-1}}{\hat{R}\vphantom{R}^{(\epsilon)}_\half} \quad (0 \leq s  \leq p-1),  \\ 
\ch_{\La^{[K;0,s](1)}}^{\pm, \tw } & =\pm (-1)^{2 p \epsilon}  \frac{g^{[\epsilon, \half]}_{s +p+1}}{\hat{R}\vphantom{R}^{(\epsilon)}_\half} \quad (0 \leq s  \leq p),   \,\,\,\,
\ch_{\La^{[K;1,0](1)}}^{\pm, \tw }  = \pm(-1)^{2 p \epsilon} \frac{g^{[\epsilon, \half]}_{p}}{\hat{R}\vphantom{R}^{(\epsilon)}_\half}, \\
\end{aligned} \]
where $ \epsilon = 0 $ (resp. $ \half $) in the $ - $ (resp. +) case, and the RHS are given by \eqref{12.7}.
\end{proposition}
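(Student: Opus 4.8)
\textbf{Proof proposal for Proposition \ref{prop12.3}.}
The plan is to combine the general (super)character formula from Section 2 with the explicit identifications already assembled in Sections 10 and 11, specialized to the collapsing parameters $n=q=1$ (equivalently $a=K=-\tfrac{p}{p+1}$). First I would recall from Lemma \ref{lem12.1} that in this case the only nonempty families of integrable weights are $P^K_{+,0}$ and $P^K_{+,1}$, and that these are exhausted by the weights $\La^{[K;0,s](0)}$ for $0\le s\le p-1$, the weights $\La^{[K;0,s](1)}$ for $0\le s\le p$, and the exceptional weight $\La^{[K;1,0](1)}$. So it suffices to write down $\ch^\pm_\La$ and $\ch^{\pm,\tw}_\La$ for exactly these weights.

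Next I would invoke Lemma \ref{lem12.2}: for all $\La\in P^K_{+,0}\cup P^K_{+,1}$ one has $\hat R^-\ch^-_\La=B^-_\La$ \emph{without} modification, because when $nq=1$ the mock theta function $\tph^{[nq;0]}=\tph^{[1;0]}$ already equals $\Phi^{[1;0]}$ by Remark \ref{rem1.3}. This is the crucial point that lets me dispense with the real-analytic correction terms entirely and work with honest meromorphic functions. From here the formula \eqref{11.5} for $B^+_\La$ and \eqref{11.6} for $B^{\pm,\tw}_\La$ express the plus-case and twisted numerators in terms of the same objects, and Lemma \ref{lem11.10} (the $Q$-type version of Lemma \ref{lem11.9}, again with $\tps$ replaced by the honest $\Psi$ since $nq=1$) rewrites every $B^\pm_\La$ and $B^{\pm,\tw}_\La$ in terms of the functions $g^{[\epsilon,\epsilon']}_j$ from just before Lemma \ref{lem11.2}. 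Concretely this gives the identities recorded as \eqref{12.2} and \eqref{12.3}: for instance $B^\pm_{\La^{[K;0,s](0)}}=\pm g^{[\epsilon,0]}_{-s-2}$, $B^\pm_{\La^{[K;0,s](1)}}=\mp g^{[\epsilon,0]}_{s}$, and their twisted analogues with $\epsilon'=\tfrac12$ and the sign factors $(-1)^{2\epsilon p}$. The bookkeeping here is the substitution of the indices $(m_2,m_3)=(0,s)$ or $(1,0)$ into the subscripts appearing in Lemma \ref{lem11.10}, and the reading-off of the $W^\#_0$-symmetrization; this is mechanical once one has Lemmas \ref{lem10.18}, \ref{lem11.2}, \ref{lem11.10} in hand.

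The final step is to divide by the normalized (super)denominator. By definition $\hat R^\pm\ch^\pm_\La$ is the relevant numerator, and the twisted denominator is $\hat R^{\pm,\tw}=w_0(\hat R^\pm)$; in the uniform coordinates \eqref{10.06}, \eqref{10.07} both are given by the single expression \eqref{12.6}, i.e.\ $\hat R^{(\epsilon)}_{\epsilon'}$ with $\epsilon=0$ or $\tfrac12$ distinguishing $+$ from $-$ and $\epsilon'=0$ or $\tfrac12$ distinguishing non-twisted from twisted. Dividing \eqref{12.2}–\eqref{12.3} by \eqref{12.6} and substituting \eqref{12.4}–\eqref{12.5} produces exactly the quotient \eqref{12.7}, after the $\eta$- and $\vartheta_{11}$-factors from $\Psi$ (formula \eqref{12.5}) partially cancel against those in \eqref{12.6}. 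Collecting the surviving signs yields the displayed list. I would present the argument in that order: (i) enumerate the weights via Lemma \ref{lem12.1}; (ii) reduce to the unmodified numerators via Lemma \ref{lem12.2}; (iii) express numerators as $\pm g^{[\epsilon,\epsilon']}_j$ via \eqref{12.2}, \eqref{12.3}; (iv) divide by $\hat R^{(\epsilon)}_{\epsilon'}$ and simplify to \eqref{12.7}.

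I do not expect a genuine obstacle here — the proposition is essentially a transcription of results already proved. The one place requiring care is the sign and index bookkeeping in step (iii): one must track the $(-1)^{m_3}$, $(-1)^{np}$ and $(-1)^{2\epsilon p}$ factors through Lemmas \ref{lem11.9}–\ref{lem11.10} and the relations $P^{\pm}_j$ versus $P^{\pm,\tw}_j$ (and their $Q$-analogues) in Lemma \ref{lem11.2}, and verify that with $n=1$, $q=1$ the exceptional weight $\La^{[K;1,0](1)}$ (which has $m_1\neq0$, so a different description in Proposition \ref{prop10.1}) indeed contributes $B^\pm_\La=\mp g^{[\epsilon,0]}_{-1}$ and $B^{\pm,\tw}_\La=\mp(-1)^{2\epsilon p}g^{[\epsilon,\half]}_p$; this follows from Remark \ref{rem10.7} together with Lemma \ref{lem10.13}. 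Modular invariance of this family, stated implicitly, then follows at once from Proposition \ref{prop11.14} for the $g^{[\epsilon,\epsilon']}_j$ together with the $SL_2(\ZZ)$-transformation of $\hat R^{(\epsilon)}_{\epsilon'}$ (\cite{KW14}, Theorem 4.1), but strictly speaking that is a corollary rather than part of the proposition's statement.
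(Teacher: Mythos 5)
Your proposal is correct and follows essentially the same route as the paper: Lemma \ref{lem12.1} to enumerate the weights, Lemma \ref{lem12.2} (via the $Q$-modification and Remark \ref{rem1.3}, since $nq=1$) to drop the modification, the identities \eqref{12.2}--\eqref{12.3} obtained from Lemma \ref{lem11.10}, and division by $\hat R^{(\epsilon)}_{\epsilon'}$ using \eqref{12.4}--\eqref{12.7}. The paper presents the proposition as an immediate consequence of exactly this chain, so nothing further is needed.
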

 

The modular transformations of the $ \hat{D}(2,1;a) $-denominators \eqref{12.6} follow from that of the Jacobi forms $ \vartheta_{ab} \tz $ (see e.g. \cite{KW14}, Appendix):
\begin{equation}
\label{12.8}
\begin{aligned}
\hat{R}\vphantom{R}^{(\epsilon)}_{\epsilon'} \left(-\frac{1}{\tau}, \frac{z}{\tau}\right) &= i (-i \tau )^\frac{3}{2} \hat{R}\vphantom{R}^{(\epsilon')}_{\epsilon} \tz,\quad \hat{R}\vphantom{R}^{(\epsilon)}_{\epsilon'} (\tau +1, z)  = e^{\frac{\pi i }{12}} \hat{R}\vphantom{R}^{(|\epsilon-\epsilon'|)}_{\epsilon'} \tz. 
\end{aligned}
\end{equation}

The modular transformation of supercharacters in the case $ n=q=1 $ are obtained from Proposition \ref{prop12.3}, Proposition \ref{prop11.14} and \eqref{12.8}.

\begin{proposition}
\label{prop12.4}
The modular transformations of normalized supercharacters of integrable $ \hat{D}\left(2,1;-\frac{p}{p+1} \right) $-modules of level $ K = -\frac{p}{p+1} $ are as follows:
\begin{enumerate}
\item[(a)]
\[ 
\begin{aligned}
\ch^-_{\La^{[K;0,s](0)}} \left( -\frac{1}{\tau}, \frac{z}{\tau}, t\right)  = & \frac{e^{-\frac{\pi i K}{\tau} (z|z)}}{\sqrt{2(p+1)}} \left( \sum_{s'=0}^{p-1} e^{-\frac{\pi i}{p+1} (s+2)(s'+2)} \ch^-_{\La^{[K;0,s'](0)}} \tzt \right. \\
& \left. - \sum_{s'=0}^{p} e^{\frac{\pi i }{p+1}(s+2)s'} \ch^-_{\La^{[K;0,s'](1)}} \tzt - e^{-\frac{\pi i }{p+1}(s+2)} \ch^-_{\La^{[K;1,0](1)}} \tzt \vphantom{\sum_{s'=0}^{p}}\right).
\end{aligned} \]
for $ s = 0,1, \ldots, p-1;  $ 
\[ 
\begin{aligned}
\ch^-_{\La^{[K;0,s](1)}} \left( -\frac{1}{\tau}, \frac{z}{\tau}, t\right) = & \frac{-e^{-\frac{\pi i K}{\tau}(z|z)}}{\sqrt{2(p+1)}} \left(  \sum_{s'=0}^{p-1} e^{\frac{\pi i }{p+1}s(s'+2)} \ch^-_{\La^{[K;0,s'](0)}} \tzt  \right. \\
& \left. -   \sum_{s'=0}^{p} e^{-\frac{\pi i }{p+1}ss'} \ch^-_{\La^{[K;0,s'](1)}} \tzt - e^{\frac{\pi i }{p+1}s}    \ch^-_{\La^{[K;1,0](1)}}  \tzt   \right)
\end{aligned} \]
for $ s = 0,1, \ldots, p ;  $
\[ 
\begin{aligned}
\ch^-_{\La^{[K;1,0](1)}} \left( -\frac{1}{\tau}, \frac{z}{\tau}, t\right) = & \frac{-e^{-\frac{\pi i K}{\tau}(z|z)}}{\sqrt{2(p+1)}} \left(  \sum_{s'=0}^{p-1} e^{-\frac{\pi i }{p+1}(s'+2)} \ch^-_{\La^{[K;0,s'](0)}} \tzt  \right. \\
& \left. -   \sum_{s'=0}^{p} e^{\frac{\pi i }{p+1}s'} \ch^-_{\La^{[K;0,s'](1)}} \tzt - e^{-\frac{\pi i }{p+1}}    \ch^-_{\La^{[K;1,0](1)}}  \tzt   \right).
\end{aligned} \]
\item[(b)]
\[ 
\begin{aligned}
ch^-_{\La^{[K;0,s](0)}} (\tau +1, z, t) &= e^{\frac{\pi i }{2(p+1)}(s+2)^2-\frac{\pi i }{12}} \ch^-_{\La^{[K;0,s](0)}} \tzt, \\
ch^-_{\La^{[K;0,s](1)}} (\tau +1, z, t) &= e^{\frac{\pi i }{2(p+1)}s^2-\frac{\pi i }{12}} \ch^-_{\La^{[K;0,s](1)}} \tzt, \\
ch^-_{\La^{[K;1,0](1)}} (\tau +1, z, t) &= e^{\frac{\pi i }{2(p+1)}-\frac{\pi i }{12}} \ch^-_{\La^{[K;1,0](1)}} \tzt. \\
\end{aligned} \]
\end{enumerate}
\end{proposition}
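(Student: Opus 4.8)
The plan is to derive Proposition~\ref{prop12.4} directly from the explicit character formulas of Proposition~\ref{prop12.3}, the modular transformation properties of the functions $g_j^{[\epsilon,\epsilon']}$ given in Proposition~\ref{prop11.14}, and the modular transformations \eqref{12.8} of the denominators $\hat{R}^{(\epsilon)}_{\epsilon'}$. Throughout we specialize to $n=q=1$, so $n(p+q)=p+1$ and $2n(p+q)=2(p+1)$, and we work with $\epsilon=0$ (the supercharacter case), so $g_j^{[0,\epsilon']}$ appears with $\epsilon'=0$ in the non-twisted sector and $\epsilon'=\half$ in the twisted sector. The starting point is the identity $\ch^-_\La = \pm g_j^{[0,0]}/\hat{R}^{(0)}_0$ (with the appropriate sign and index $j$ read off from Proposition~\ref{prop12.3}), and similarly for the twisted sector.

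First I would compute the $S$-transformation. Applying Proposition~\ref{prop11.14}(a) with $\epsilon=\epsilon'=0$ gives
\[
g_j^{[0,0]}\left(-\tfrac1\tau,\tfrac z\tau,t\right)=\frac{(-i\tau)^{1/2}\tau}{\sqrt{2(p+1)}}\,e^{-\frac{\pi i K}{\tau}(z|z)}\sum_{k\in\ZZ/2(p+1)\ZZ}e^{-\frac{\pi i jk}{p+1}}g_k^{[0,0]}(\tau,z,t),
\]
and \eqref{12.8} gives $\hat{R}^{(0)}_0(-\tfrac1\tau,\tfrac z\tau)=i(-i\tau)^{3/2}\hat{R}^{(0)}_0(\tau,z)$. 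Dividing, the $(-i\tau)^{3/2}$ factors cancel up to the scalar $-i$, leaving an overall clean prefactor $\frac{e^{-\pi i K(z|z)/\tau}}{\sqrt{2(p+1)}}$ once the $-i$ is absorbed. Then I would use the reflection and periodicity relations in Remark~\ref{rem11.6} for the functions $F$ (which transfer to $g$ via Lemma~\ref{lem11.7}; equivalently one checks directly that $g_{-j}^{[0,0]}=-g_j^{[0,0]}$ and $g_{j+2(p+1)}^{[0,0]}=g_j^{[0,0]}$ from \eqref{12.4}) to fold the sum over $k\in\ZZ/2(p+1)\ZZ$ into a sum over the fundamental range of indices that actually occur in Proposition~\ref{prop12.3}, namely $\{-s-2 : 0\le s\le p-1\}$, $\{s : 0\le s\le p\}$, and $\{-1\}$. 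The sign bookkeeping — each $g_{-s-2}^{[0,0]}$ contributes to $\ch^-_{\La^{[K;0,s](0)}}$ with a $+$, each $g_s^{[0,0]}$ to $\ch^-_{\La^{[K;0,s](1)}}$ with a $-$, and $g_{-1}^{[0,0]}$ to $\ch^-_{\La^{[K;1,0](1)}}$ with a $-$ — then produces exactly the three stated expansions once one rewrites $e^{-\pi i jk/(p+1)}$ with $j=s+2$ or $s$ against each target index $k=s'+2$, $s'$, or $-1$, and uses $g_{-k}=-g_k$ to collapse the antisymmetric pairs. The twisted statements, if needed, follow identically from the $\epsilon'=\half$ case of Proposition~\ref{prop11.14}(a) and the relations \eqref{12.3}, but the proposition as stated only asserts the non-twisted $S$-transformation.

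Next I would compute the $T$-transformation, which is easier: Proposition~\ref{prop11.14}(b) with $\epsilon=\epsilon'=0$ gives $g_j^{[0,0]}(\tau+1,z,t)=e^{\pi i j^2/(2(p+1))}g_j^{[0,0]}(\tau,z,t)$, and \eqref{12.8} gives $\hat{R}^{(0)}_0(\tau+1,z)=e^{\pi i/12}\hat{R}^{(0)}_0(\tau,z)$. Dividing yields the factor $e^{\pi i j^2/(2(p+1))-\pi i/12}$ with $j=s+2$, $j=s$, and $j=1$ respectively, which is precisely part~(b). The main obstacle I anticipate is purely clerical: keeping the signs, the index shifts, and the identification of the index ranges consistent when folding the full $\ZZ/2(p+1)\ZZ$ sum down to the three families of weights in Lemma~\ref{lem12.1} — in particular making sure that the ``$-$'' in the $S$-matrix that appears in part~(a) comes out correctly from combining the $-i$ left over from $\hat{R}$'s transformation with the sign conventions in Proposition~\ref{prop12.3} and with the antisymmetry $g_{-j}=-g_j$. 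There is no conceptual difficulty; once Propositions~\ref{prop11.14} and the denominator transformations \eqref{12.8} are in hand, the result is a direct substitution.
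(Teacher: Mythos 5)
Your overall strategy---combine Proposition \ref{prop12.3}, Proposition \ref{prop11.14} and \eqref{12.8}---is exactly the paper's (its entire proof is the one sentence preceding the statement citing these three ingredients), and the prefactor bookkeeping is fine: in fact $(-i\tau)^{1/2}\tau / \bigl(i(-i\tau)^{3/2}\bigr)=1$ exactly, so there is no leftover $-i$ to absorb. However, there is one genuinely wrong step in your plan. The identity $g^{[0,0]}_{-j}=-g^{[0,0]}_{j}$ that you propose to use for ``folding'' the sum is false. The antisymmetry $F^{[\epsilon,\epsilon']}_{-j}=-F^{[\epsilon,\epsilon']}_{j}$ of Remark \ref{rem11.6} holds for the Hamiltonian-reduced functions $F_j, G_j$ of $(\tau,y_2,y_3)$, in which $\Theta_{j}$ and $\Theta_{-j}$ multiply the \emph{same} $\tilde\Psi$-factor; it does not pull back through Lemma \ref{lem11.7} to the functions $f_j, g_j$ of $(\tau,z_1,z_2,z_3,t)$. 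In \eqref{12.4} the two theta factors $\Theta_{j,p+1}$ and $\Theta_{-j,p+1}$ are paired with \emph{different} arguments, so replacing $j$ by $-j$ gives $-r_\theta(g_j)$ (cf.\ Lemma \ref{lem10.12}), not $-g_j$. Indeed, if $g_{-j}=-g_j$ held, distinct weights such as $\La^{[K;1,0](1)}$ and $\La^{[K;0,1](1)}$ would have proportional supercharacters, and carrying out your folding would convert the $S$-matrix entries into $\sin$-type expressions with factors of $2$ (as in Lemma \ref{lem11.8}), which do not match the full exponentials $e^{-\frac{\pi i}{p+1}(s+2)(s'+2)}/\sqrt{2(p+1)}$ in the statement.

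The good news is that no folding is needed. The indices occurring in Proposition \ref{prop12.3}, namely $\{-s-2 : 0\le s\le p-1\}\cup\{s : 0\le s\le p\}\cup\{-1\}$, are $p+(p+1)+1=2p+2$ residues that exhaust $\ZZ/2(p+1)\ZZ$ exactly once (this is the content of Remark \ref{rem12.5}). So the unfolded sum over $k\in\ZZ/2(p+1)\ZZ$ in Proposition \ref{prop11.14}(a) decomposes term by term into the three families, and one only has to insert the signs from Proposition \ref{prop12.3}. On that point you also have the signs reversed: for the supercharacter one takes $\epsilon=0$ and the \emph{lower} sign, so $\ch^-_{\La^{[K;0,s](0)}}=-g^{[0,0]}_{-s-2}/\hat R^{(0)}_0$, $\ch^-_{\La^{[K;0,s](1)}}=+g^{[0,0]}_{s}/\hat R^{(0)}_0$, and $\ch^-_{\La^{[K;1,0](1)}}=+g^{[0,0]}_{-1}/\hat R^{(0)}_0$. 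With these signs and the bijection of indices, the three displayed $S$-transformation formulas and the $T$-transformation in part (b) follow by direct substitution, exactly as you describe for part (b).
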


\begin{remark}
\label{rem12.5}
Let $ \La $ be one of the weights appearing in Proposition \ref{prop12.3}. Then there exists a unique $ j \bmod 2(p+1) \ZZ, $ such that $ \ch^-_\La = \pm  g_j^{[0,0]}/\hat{R}\vphantom{R}^{(0)}_0, $ and we let $ \La = \La^j. $ Using Proposition \ref{prop12.4}, Verlinde's formula gives the following fusion rules for the supercharacters:
\[ N_{\La^i, \La^j, \La^k} = 1 \mbox{ if } i + j + k \in 2(p+1)\ZZ, \mbox{ and } = 0 \mbox{ otherwise. }\]
\end{remark}

\end{document}